\numberwithin{equation}{section}
\numberwithin{figure}{section}
\newtheorem{theorem}{Theorem}[section]
\newtheorem{lemma}{Lemma}[section]
\newtheorem{corollary}[lemma]{Corollary}
\newtheorem{proposition}[lemma]{Proposition}
\theoremstyle{definition}
\newtheorem{remark}[lemma]{Remark}
\DeclarePairedDelimiter{\snorm}{\llbracket}{\rrbracket}
\DeclarePairedDelimiter{\norm}{\lVert}{\rVert}
\newcommand{\abs}[1]{\left|#1\right|}
\newcommand{\average}[1]{\left\langle#1\right\rangle}
\newcommand*{\supp}{\ensuremath{\mathrm{supp\,}}}
\newcommand*{\Id}{\ensuremath{\mathrm{I}_d}}
\newcommand*{\Itwo}{\ensuremath{\mathrm{I}_2}}
\renewcommand*{\div}{\ensuremath{\mathrm{div\,}}}
\newcommand*{\NN}{\ensuremath{\mathbb{N}}}
\newcommand*{\N}{\ensuremath{\mathbb{N}}}
\newcommand*{\TT}{\ensuremath{\mathbb{T}}}
\newcommand*{\ZZ}{\ensuremath{\mathbb{Z}}}
\newcommand*{\Z}{\ensuremath{\mathbb{Z}}}
\newcommand*{\RR}{\ensuremath{\mathbb{R}}}
\newcommand*{\R}{\ensuremath{\mathbb{R}}}
\newcommand*{\Zd}{\ensuremath{\mathbb{Z}^d}}
\newcommand*{\Rd}{\ensuremath{\mathbb{R}^d}}
\newcommand*{\DD}{\mathscr{D}}
\newcommand{\eps}{\varepsilon}
\newcommand{\OO}{\mathcal O}
\renewcommand*{\tilde}{\widetilde}
\newcommand{\Proj}{\ensuremath{\mathbb{P}}}
\renewcommand{\b}{\ensuremath{\mathbf{b}}}
\newcommand{\vv}{\ensuremath{\mathbf{v}}}
\newcommand{\f}{\mathbf{f}}
\newcommand{\J}{\mathbf{J}}
\newcommand{\g}{\mathbf{g}}
\newcommand{\h}{\widetilde{H}}
\newcommand{\s}{\mathbf{s}}
\newcommand{\Reynolds}{\mathring{\mathbf{R}}}
\newcommand{\ep}{\eps}
\DeclareMathOperator{\dist}{dist}
\newcommand{\q}{q}
\DeclareSymbolFont{boldoperators}{OT1}{cmr}{bx}{n}
\newcommand\thickbar[1]{\accentset{\rule{.45em}{.6pt}}{#1}}
\renewcommand{\bar}{\thickbar}
\renewcommand{\a}{\mathbf{a}}
\newcommand{\K}{\mathbf{K}}
\newcommand{\Khom}{\overline{\K}}
\newcommand{\ahom}{\bar{\a}}
\newcommand{\data}{\mathrm{data}}
\def\Xint#1{\mathchoice
{\XXint\displaystyle\textstyle{#1}}%
{\XXint\textstyle\scriptstyle{#1}}%
{\XXint\scriptstyle\scriptscriptstyle{#1}}%
{\XXint\scriptscriptstyle\scriptscriptstyle{#1}}%
\!\int}
\def\XXint#1#2#3{{\setbox0=\hbox{$#1{#2#3}{\int}$}
\vcenter{\hbox{$#2#3$}}\kern-.5\wd0}}
\def\fint{\Xint-}
\let\originalleft\left
\let\originalright\right
\renewcommand{\left}{\mathopen{}\mathclose\bgroup\originalleft}
\renewcommand{\right}{\aftergroup\egroup\originalright}
\def\kk{{\boldsymbol{k}}}
\def\aa{{\boldsymbol{\alpha}}}
\def\bb{{\boldsymbol{\beta}}}
\def\xx{\boldsymbol{x}}
\def\yy{\boldsymbol{y}}
\def\gb{\boldsymbol{g}}
\def\lll{{\boldsymbol{\ell}}}
\def\Chi{{\boldsymbol{\chi}}}
\newcommand{\flow}{X}
\newcommand{\backflow}{\flow^{-1}}
\newcommand{\stream}{\psi}
\newcommand{\streamr}[2]{\psi_{{#1},{#2}}}
\newcommand{\corr}{\chi}
\newcommand{\indc}{{\mathbf{1}}}
\newcommand{\e}{{\mathbf{e}}}
    \edef\sign{\pgfmathresult}%
    \edef\x{\pgfmathresult}%
    \edef\t{\pgfmathresult}%
    \edef\y{\pgfmathresult}%
\newcommand{\addperiod}[1]{#1.}
\titleformat*{\subsection}{\bfseries}
\titleformat{\subsubsection}[runin]
  {\normalfont\bfseries}
  {\thesubsubsection.}
  {0.5em}
  {\addperiod}
\titleformat*{\subsubsection}{\normalfont\itshape}
\titleformat*{\paragraph}{\bfseries}
\titleformat*{\subparagraph}{\large\bfseries}
\title{Anomalous diffusion by fractal homogenization}
\author{Scott Armstrong
\thanks{Courant Institute of Mathematical Sciences, New York University.
{\footnotesize \href{mailto:scotta@cims.nyu.edu}{scotta@cims.nyu.edu}.}
}
\and 
Vlad Vicol
\thanks{Courant Institute of Mathematical Sciences,  New York University.
{\footnotesize \href{mailto:vicol@cims.nyu.edu}{vicol@cims.nyu.edu}.}
}
}
\date{October 9, 2024} 
\begin{document}

\maketitle

\begin{abstract}
For every~$\alpha < \nicefrac13$, we construct an explicit divergence-free vector field~$\b(t,x)$ which is periodic in space and time and belongs to~$C^0_t C^{\alpha}_x \cap C^{\alpha}_t C^0_x$ such that the corresponding scalar advection-diffusion equation 
$$\partial_t \theta^\kappa  + \b \cdot \nabla \theta^\kappa - \kappa \Delta \theta^\kappa = 0$$ 
exhibits anomalous dissipation of scalar variance for arbitrary~$H^1$ initial data:
$$\limsup_{\kappa \to 0} \int_0^{1}  \int_{\TT^d} \kappa \bigl| \nabla \theta^\kappa (t,x) \bigr|^2 \,dx\,dt >0.$$  
The vector field is deterministic and has a fractal structure, with periodic shear flows  alternating in time between different directions serving as the base fractal. These shear flows are repeatedly inserted at infinitely many scales in suitable Lagrangian coordinates. 
Using an argument based on ideas from quantitative homogenization, the corresponding advection-diffusion equation with small~$\kappa$ is progressively renormalized, one scale at a time, starting from the (very small) length scale determined by the molecular diffusivity up to the macroscopic (unit) scale. At each renormalization step, the effective diffusivity is enhanced by the influence of advection on that scale. By iterating this procedure across many scales, the effective diffusivity on the macroscopic scale is shown to be of order one. 
\end{abstract}

\setcounter{tocdepth}{2}
\tableofcontents

\section{Introduction}
\label{s.intro}

We consider the Cauchy problem for the linear advection-diffusion equation 
\begin{equation}
\label{e.passive.scalar}
\left\{
\begin{aligned}
& \partial_t \theta^\kappa  
+ \b \cdot \nabla \theta^\kappa
- \kappa \Delta \theta^\kappa
= 0
&  \mbox{in} & \ (0,\infty) \times \TT^d\,, \\
& \theta^\kappa (0,\cdot) = \theta_0 &  \mbox{on} & \ \TT^d
\,.
\end{aligned}
\right.
\end{equation}
The initial data~$\theta_0$ is assumed to belong to~$L^2(\TT^d)$ and have zero mean; it can also be assumed to be smooth. 
The vector field~$\b(t,x)$ in~\eqref{e.passive.scalar} is assumed to be \emph{incompressible}, that is, divergence-free:
\begin{equation}
\label{e.divfree}
\nabla \cdot \b(t,\cdot) = 0\,, \quad \forall t\in (0,\infty)
\,.
\end{equation}
Physically, the solution~$\theta^\kappa$ represents a scalar quantity, such as temperature or the concentration of a pollutant in a fluid, which is ``passive'' in the sense of having a negligible effect on the flow itself. For this reason, the equation in~\eqref{e.passive.scalar} is often called the \emph{passive scalar equation}. 
We are interested in the case in which the parameter~$\kappa>0$ is very small and the vector field~$\b(t,x)$, although continuous in~$(t,x)$, is still quite rough---possessing certain properties characteristic of turbulent flows.

\smallskip

The main result of this paper is the construction of an explicit vector field~$\b(t,x)$ for which the variance of the corresponding passive scalar~$\theta^\kappa$ exhibits anomalous dissipation.

\begin{theorem}[Anomalous dissipation of scalar variance]
\label{t.anomalous.diffusion}
Let~$d\geq 2$ and~$\alpha \in (0,\nicefrac13)$. 
There exists a vector field
\begin{equation}
\label{e.b.reg}
\mathbf{b} \in 
C^0_t C^{0,\alpha}_x \bigl ([0,1]\times \TT^d\bigr )  
\cap 
C^{0,\alpha}_t C^0_x\bigl ([0,1]\times \TT^d\bigr ) 
\end{equation}
which satisfies~\eqref{e.divfree}
such that,
for every mean-zero initial datum~$\theta_0 \in H^1(\TT^d)$, the family of unique solutions~$\{ \theta^\kappa\}_{\kappa>0} \in C([0,1];L^2(\TT^d))$ of the advection-diffusion equation~\eqref{e.passive.scalar}
satisfy
\begin{equation}
\label{e.anomalous.diffusion}
\limsup_{\kappa \to 0} \,
\kappa \|\nabla\theta^\kappa \|_{L^2( (0,1) \times \TT^d)}^2
\geq 
 \varrho^2   \| \theta_0    \|_{L^2(\TT^d)}^2
\,,
\end{equation}
for some constant~$\varrho=\varrho(d,\theta_0)\in(0,1]$ which depends only on~$d$ and the ratio~$\| \theta_0 \|_{L^2(\TT^d)} / \| \nabla \theta_0 \|_{L^2(\TT^d)}$.
\end{theorem}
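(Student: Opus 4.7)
The plan is to build $\b$ as a sum $\b = \sum_{n \geq 0} \b_n$ across a dyadic sequence of scales $\ell_n := \lambda^{-n}$ for some fixed large $\lambda > 1$. Each building block $\b_n(t,x)$ is an alternating shear flow: on successive subintervals of a small time window it is a pure shear in one coordinate direction, cycling through all $d$ directions. Its spatial frequency is $\lambda^n$, its amplitude is of order $\lambda^{-n\alpha}$ so that the blocks are uniformly bounded in $C^{0,\alpha}_x$ (the alternation period being tuned so that the $C^{0,\alpha}_t C^0_x$ norm is likewise controlled), and crucially the shear is composed with the inverse Lagrangian flow map of the partial sum $\b_0 + \cdots + \b_{n-1}$. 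This nesting ensures that each scale, when viewed in the right moving frame, genuinely looks like an isolated shear on top of a coarsened background. Convergence of the series in \eqref{e.b.reg} is obtained by bootstrapping Lipschitz estimates for the flows against the geometric decay of amplitudes.

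\textbf{Scale-by-scale quantitative homogenization.} Fix $\kappa > 0$ small and let $N = N(\kappa)$ be defined by $\ell_N^2 \sim \kappa$, so that $\ell_N$ is the molecular diffusion length. Shears at scales $n > N$ sit below this length and can be absorbed into the diffusive term. The heart of the argument is a backward induction from $n = N$ down to $n = 0$. At step $n$, I would invoke a quantitative shear-flow homogenization lemma: the advection-diffusion equation with diffusivity $\kappa_n$ advected by the nested shear $\b_n$ is well approximated in $L^2$ by a homogeneous diffusion with enhanced coefficient
\begin{equation*}
\kappa_{n-1} \;\approx\; \kappa_n \;+\; \frac{\lambda^{-2n(1+\alpha)}}{\kappa_n},
\end{equation*}
the second term being the classical Taylor--Aris enhancement at wavelength $\ell_n$ and amplitude $\lambda^{-n\alpha}$. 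Choosing parameters self-consistently so the iteration remains in the high-Péclet regime $\lambda^{-n(1+\alpha)} \gg \kappa_n$, the recursion drives $\kappa_n$ along the trajectory $\kappa_n \sim \lambda^{-n(1+\alpha)}$ and in particular produces $\kappa_0$ of order unity, independent of $\kappa$. The threshold $\alpha < \tfrac13$ arises from balancing the per-step enhancement against the per-step homogenization error, summed geometrically over $O(\log_\lambda \kappa^{-1})$ steps.

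\textbf{From enhanced diffusivity to anomalous dissipation.} Once the macroscopic problem is renormalized to a heat equation with coefficient $\kappa_0 \gtrsim 1$, the homogenized solution $\overline{\theta}$ satisfies a standard spectral decay estimate depending only on $\|\theta_0\|_{L^2}/\|\nabla\theta_0\|_{L^2}$, producing a definite loss of $L^2$ mass on $[0,1]$. Transferring this loss back to $\theta^\kappa$ via the $L^2$-approximation assembled through the iteration, and combining with the basic energy identity
\begin{equation*}
\tfrac{1}{2}\|\theta^\kappa(1,\cdot)\|_{L^2(\TT^d)}^2 - \tfrac{1}{2}\|\theta_0\|_{L^2(\TT^d)}^2 = -\kappa \int_0^1 \|\nabla \theta^\kappa(t,\cdot)\|_{L^2(\TT^d)}^2 \, dt,
\end{equation*}
yields the lower bound \eqref{e.anomalous.diffusion}, with $\varrho$ depending only on $d$ and on $\|\theta_0\|_{L^2}/\|\nabla\theta_0\|_{L^2}$.

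\textbf{Principal obstacle.} The essential difficulty is the quantitative form of the homogenization step: it is not enough to know the shear-flow effective diffusivity qualitatively. Each single renormalization incurs an error that must be a summable negative power of $\lambda$, for otherwise it will accumulate destructively across the $\log_\lambda \kappa^{-1}$ iterations as $\kappa \downarrow 0$. Producing such an estimate requires (i) an $H^1$-type approximation with explicit rate for shear-flow homogenization, (ii) careful control of correctors and effective fluxes when the shear is read through the Lagrangian change of coordinates induced by the coarser scales, and (iii) a regularity-propagation argument ensuring that the renormalized data at scale $n-1$ is admissible input for the lemma at the next step. Calibrating all three of these simultaneously is what pins down the exponent $\alpha < \tfrac13$; any slack in the homogenization rate would reduce this threshold.
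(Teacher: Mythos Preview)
Your high-level strategy---alternating shears composed with inverse Lagrangian flows, backward renormalization of diffusivities, energy identity to conclude---matches the paper's approach, and you correctly locate the main difficulty in the quantitative homogenization step. However, two choices in your construction would not survive the analysis.

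\textbf{Geometric scale separation is insufficient.} You take $\ell_n = \lambda^{-n}$ with fixed $\lambda$, but the paper's scales are \emph{super-geometric}: $\ep_m \simeq \ep_{m-1}^q$ with $q>1$. This is not a cosmetic preference. With your self-consistent scaling $\kappa_n \sim \ell_n^{1+\alpha}$, the Péclet number $a_n\ell_n^2/\kappa_n$ is $O(1)$, not large, and the enhancement per step is only a factor $\sim 2$, forcing $\lambda^{1+\alpha} \approx 2$; so $\lambda$ cannot be taken large. More critically, the constraint chain $\ep_m^2/\kappa_m \ll \tau_m \ll \tau_m'' \ll a_{m-1}^{-1}$---which guarantees the shear homogenizes before the background flow distorts---reduces, in your geometric setting, to an equality with no slack. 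In the paper the slack is encoded by an extra exponent $\gamma>0$, so that $\kappa_m \simeq \ep_m^{1+\alpha+\gamma}$; this $\gamma$ is exactly what makes the per-step homogenization error small like $\ep_{m-1}^\delta$ rather than $O(1)$, and the relation $\gamma = (q-1)\beta/(q+1)$ shows that $\gamma>0$ \emph{requires} $q>1$. Your phrase ``summable negative power of $\lambda$'' elides this: with fixed scale ratio and fixed Péclet, the homogenization error at each step is a fixed constant, so accumulated over $N(\kappa) \sim \log(1/\kappa)$ steps it diverges. (Relatedly, your matching $\ell_N^2 \sim \kappa$ is inconsistent with your own scaling; the paper takes $\kappa \simeq \ep_M^{1+\alpha+\gamma}$, and moreover restricts $\kappa$ to a specific set of admissible values, which is why only a $\limsup$ is claimed.)

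\textbf{Flow renewal is a missing structural ingredient.} You compose the scale-$n$ shear with the inverse flow of $\b_0+\cdots+\b_{n-1}$, but you do not say that this flow must be \emph{periodically reset} on an intermediate time scale $\tau_m'' \ll a_{m-1}^{-1}$. Without renewal, the flow maps distort exponentially in $a_{m-1} t$ and destroy the shear structure; with renewal, the resets themselves create time-periodic coefficients that must be separately homogenized, forcing a hierarchy $\tau_m \ll \tau_m' \ll \tau_m''$ of three fast time scales and a correspondingly more elaborate two-scale ansatz (the paper's $T_{m-1}$ and $\widetilde H_m$ correctors). This is load-bearing, not a detail that fits under ``careful control of correctors.''
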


\smallskip

The initial-value problem~\eqref{e.passive.scalar} has a unique global solution for every~$\kappa>0$ provided that the vector field~$\b(t,x)$ belongs to $L^\infty_t L^d_x$. By the incompressibility condition~\eqref{e.divfree}, this solution satisfies the energy balance relation
\begin{equation}
\label{e.energy.balance}
\norm{\theta_0(\cdot)}_{L^2(\TT^d)}^2
-
\norm{\theta^\kappa(1,\cdot)}_{L^2(\TT^d)}^2
= 2 \kappa \|\nabla\theta^\kappa \|_{L^2( (0,1) \times \TT^d)}^2\,.
\end{equation}
The quantity on the right side of~\eqref{e.energy.balance} is therefore called the \emph{dissipation of scalar variance}. 
While norms of~$\b(t,x)$ do not appear explicitly in \eqref{e.energy.balance}, the solution~$\theta^\kappa$ of course depends on the vector field in a very complicated and nonlinear way.

\smallskip

The family~$\{ \theta^\kappa\}_{\kappa > 0}$ in Theorem~\ref{t.anomalous.diffusion} are actually classical solutions of~\eqref{e.passive.scalar}.
Indeed, the incompressibility condition~\eqref{e.divfree} allows us to write the drift term as part of the second-order diffusion term, using a stream matrix which, in view of~\eqref{e.b.reg}, belongs to~$C^{1,\alpha}$. The standard Schauder estimates therefore imply that, for each~$\kappa>0$, the solution of~\eqref{e.passive.scalar} belongs to~$C^{0}_t C^{2,\alpha}_x \cap C^{1,\nicefrac\alpha2}_t C^0_x$ for positive times. 

\smallskip

As we will see in the proof, the parameter~$\varrho >0$ in Theorem~\ref{t.anomalous.diffusion} can be taken to be 
\begin{equation}
\label{e.varrho.def}
\varrho(d,\theta_0)
=
c  
\biggl( 
\frac{\| \theta_0  \|_{L^2(\TT^d)}}{\| \theta_0  \|_{H^{1}(\TT^d)} }
\biggr)^{\!\frac{1+\alpha}{1-\alpha}+\ep} 
\,,
\end{equation}
where~$\ep>0$ is any positive constant, and~$c = c(d,\ep)>0$ is a positive constant. In particular,~$\varrho(d,\theta_0)$ depends only on~$d$ and a lower bound for the length scale~$\| \theta_0 \|_{L^2(\TT^d)} / \| \nabla \theta_0 \|_{L^2(\TT^d)}$. 

\smallskip

The proof of Theorem~\ref{t.anomalous.diffusion} gives more  information concerning the scalar~$\theta^{\kappa_j}$ along the subsequence~$\kappa_j\downarrow 0$ than what appears in the statement. For instance, it yields a tiny exponent~$\mu>0$ depending only on~$\alpha$ such that~$\| \theta^{\kappa_j} \|_{C^{0,\mu}_t L^2_x([0,1]\times \TT^d)}$ stays bounded along the subsequence~$\kappa_j\downarrow 0$. 
The mapping~$t \mapsto \int_{\TT^d} | \theta^{\kappa_j} (t,x)|^2\,dx$ is therefore  uniformly continuous on~$[0,1]$: see Remark~\ref{r.LeBron.2}. In particular, the diffusive anomaly does not happen at any single ``blow-up time.'' The uniform regularity of the scalar~$\theta^{\kappa_j}$ will be greatly improved in a forthcoming paper~\cite{ARV}. 

\smallskip

Our arguments exhibit an explicit subsequence~$\kappa_j \to 0$ along which the lower bound in~\eqref{e.anomalous.diffusion} is realized, which, in particular, does not depend on~$\theta_0$. 
In fact, we construct a sequence of disjoint intervals~$I_j:= [ \tfrac12 \kappa_j, 2\kappa_j]$ with~$\kappa_j \to 0$ such that, if~$I= \cup_{j\in\N} I_j$, then  
\begin{equation*}
\inf_{\kappa \in I} \, 
\kappa \|\nabla\theta^\kappa \|_{L^2( (0,1) \times \TT^d)}^2
\geq 
\varrho^2  \| \theta_0  \|_{L^2(\TT^d)}^2
\,.
\end{equation*}
The position of~$\kappa$ within the interval~$I_j$ determines, up to an error which can be made arbitrarily small, the value of~$\kappa \|\nabla\theta^\kappa \|_{L^2( (0,1) \times \TT^d)}^2$, with the left and right endpoints of~$I_j$ giving rise to significantly different values, at least for carefully chosen initial data. We therefore demonstrate the lack of a selection principle for vanishing diffusivity limits to the solutions of the transport equation ($\kappa=0$ in \eqref{e.passive.scalar}), which evidently possesses non-unique bounded weak solutions. For a precise statement see Proposition~\ref{p.no.selection.principle}, which is established in Section~\ref{ss.proof.selection}.

\smallskip

The vector field~$\b(t,x)$ appearing in the statement of Theorem~\ref{t.anomalous.diffusion} has an explicit construction using deterministic ingredients, namely periodic shear flows with directions that alternate in time. An infinite sequence of copies of these shear flows are embedded in the vector field, each with a different wave number, with the sequence of wave numbers tending to infinity at a super-geometric rate. 
The proof of Theorem~\ref{t.anomalous.diffusion} is based on a renormalization of effective diffusivities, in which each active scale in the vector field is homogenized, one-by-one. Each homogenization step enhances the effective diffusivity of the equation. After an iteration up the scales, this reveals an effective diffusivity of order one on the macroscopic scale, which implies anomalous diffusion. 
In Section~\ref{ss.outline}, below, we review the motivation for the construction of the vector field and give an outline of the reiterated homogenization method used to prove Theorem~\ref{t.anomalous.diffusion}. In Section~\ref{ss.uniform.estimates} we discuss future extensions of this result. 

\smallskip

Theorem~\ref{t.anomalous.diffusion} and its proof provide only an example of a vector field $\b(t,x)$ such that the advection-diffusion equation displays anomalous diffusion. 
Examples---the more physically realistic the better---are certainly useful for building intuition about very complex phenomena. However, we believe that the main value of this work is in the proof strategy, which is a demonstration of the possibility of rigorously proving anomalous diffusion by analyzing the backwards cascade of eddy diffusivities via quantitative homogenization techniques.
We expect this point of view to be robust and of independent interest to the broader area of rigorous hydrodynamic turbulence.

\subsection{Motivation and prior results on anomalous dissipation of scalar variance}
\label{ss.litreview}

If the vector field~$\b(t,x)$ has significantly more spatial regularity than~\eqref{e.b.reg}---for example, if it belongs to~$L^1_t C^{0,1}_x$---then the flows determined by the vector field are well-defined and the corresponding transport equation is well-posed (by standard ODE theory), which then must be the equation satisfied by the limit as~$\kappa \to 0$ of the solutions~$\theta^\kappa$. 
Consequently, as the flows are measure-preserving by~\eqref{e.divfree}, we deduce that
\begin{equation}
\label{e.no.L2.decay}
\lim_{\kappa \to 0}
\| \theta^\kappa(t,\cdot) \|_{L^2(\TT^d)} = \| \theta_0 \|_{L^2(\TT^d)} 
\,, \quad \forall t\in (0,\infty)\,.
\end{equation}
In view of~\eqref{e.energy.balance}, 
this limit is equivalent to 
\begin{equation}
\label{e.lim}
\lim_{\kappa \to 0} 
\kappa \|\nabla\theta^\kappa\|_{L^2((0,t)\times \TT^d)}^2
=0\,, 
\quad \forall t\in (0,\infty)\,,
\end{equation}
which is evidently in contrast to the conclusion of Theorem~\ref{t.anomalous.diffusion}. 

\smallskip

If the limit in~\eqref{e.lim} does not hold, then we speak of \emph{anomalous dissipation of scalar variance} or, alternatively, \emph{anomalous diffusion}. 
It is widely expected that solutions of~\eqref{e.passive.scalar} with vector fields~$\b(t,x)$ which are rougher than Lipschitz in space (but still H\"older continuous) may exhibit anomalous dissipation of scalar variance. This prediction was first discussed by Obukhov in~\cite{Obukhov}.

Indeed, anomalous diffusion is presumed to occur for vector fields describing the velocity of a turbulent fluid, and is a basic assumption in phenomenological theories of scalar turbulence in the physics literature. This remarkable prediction that the rate of dissipation is independent of~$\kappa$, when~$\b(t,x)$ describes a turbulent flow, is backed by very strong experimental and numerical evidence~\cite{shraiman,Warhaft,DSY}.

\smallskip

The reason that anomalous diffusion is expected to hold for ``turbulent'' velocity fields is explained in the physics literature roughly as follows.
A characteristic of a turbulent velocity field~$\b(t,x)$ is that it exhibits activity across a large range of length scales. Advection by the velocity field rearranges the level sets of the scalar~$\theta^\kappa$, creating wiggles on smaller length scales, which are then mixed by the features of the velocity field on those smaller scales. This process continues across a large number of length scales, called the \emph{inertial-convection range}, with smaller and smaller spatial oscillations created. Finally, the wiggles in the scalar reach down to the very small scale at which the molecular diffusivity dominates advection, at which point they are dissipated away. 

\smallskip

A rigorous theoretical explanation of this phenomenon is still elusive. In fact, the mathematical analysis seems to lag the phenomenological theories by so much that not even a \emph{satisfactory example} of anomalous dissipation for passive scalars is available (the few available results are discussed in~Section~\ref{ss.litreview}, below). 

\smallskip

It is not hard to see why this is so: the physicists' explanation is the only way anomalous dissipation can happen. 
For very small~$\kappa$, the diffusion term~$\kappa \Delta$ essentially acts only on very small length scales---otherwise its effect is negligible and the advection term dominates. But it is clear from the identity~\eqref{e.energy.balance} that the diffusion term is the only thing can be responsible for dissipation. If anomalous dissipation is observed, it must be the vector field that is responsible for pushing the oscillations of the scalar into smaller and smaller scales. Since wiggles in the vector field interact with those of the scalar only if their wave numbers are separated by at most an order of magnitude,\footnote{This is due to the incompressibility condition~\eqref{e.divfree}, and the implicit assumption that~$\b(t,x)$ is continuous. If the constraint~\eqref{e.divfree} is dropped, then it is easy to make examples, for instance by creating a vector field which pushes all particles into a small neighborhood of the origin before suddenly pushing them away in radial directions. If the vector field is allowed to be very rough in time, then small scales can also be created fairly easily, as discussed below.} 
it follows that both the vector field and the scalar must have a large number of active scales whose interactions span the range from the macroscopic scale to the ``inertial'' scale on which the diffusion is felt. Since the~$\theta^\kappa$ depends on~$\b(t,x)$ in a highly nontrivial, nonlinear fashion, it is very challenging to analyze such a situation---even if one is permitted to construct the vector field.

\smallskip
 
There are essentially only two known classes of examples which exhibit anomalous dissipation of scalar variance. The first is a stochastic model which is very rough in time (the Kraichnan model), and the second is a class of deterministic vector fields which are ``quasi self-similar'' and have only one active scale at each time (the singularly focusing alternating shear flows).

\subsubsection*{\bf The Kraichnan model}
Kraichnan introduced in~\cite{K1} a simplified model for passive scalar turbulence, one of the early examples of ``synthetic turbulence''. He proposed that~$\b= \b^\nu$ is taken to be a realization of a statistically homogeneous, isotropic, stationary Gaussian random field, which has zero mean, is very rough in time (it has white-noise correlation), and is colored in space (with a Kolmogorov-type scaling of increments in space, above a certain scale).\footnote{More precisely, $\b = \b^\nu$ (here $\nu$ denotes an inverse Reynolds number) has covariance $\langle (\b_i^\nu(x,t)-\b_i^\nu(y,t))(\b_j^\nu(x,s)-\b_j^\nu(y,s))\rangle = \delta(t-s) D_{ij}(x-y)$, where the matrix $D_{ij}$ is symmetric, it has incompressible rows $\partial_{j} D_{ij} = 0$, and most importantly, the diagonal entries satisfy $D_{ij}(z) = A |z|^{2\alpha}$ for $\ell_\nu \ll |z| \ll 1$, and $D_{ij}(z) = B |z|^2$ for $|z|\ll \ell_\nu$. Here $\alpha \in (0,1]$ measures the space H\"older regularity of the field in the inertial range, and $\ell_\nu$ is the dissipative scale.  The infinite Reynolds number limit corresponds to $\ell_\nu \to 0$ as $\nu\to 0$. See e.g.~\cite{kupiainen2003},~\cite{gawedzki2008},~\cite[(2.26)--(2.27)]{DrivasEyink17}.} Then one is to study the statistics of the field $\theta^\kappa$ solving \eqref{e.passive.scalar} (understood in the Stratonovich sense,  $d\theta^\kappa - \kappa \Delta \theta^\kappa dt = d \b^\nu \circ \nabla \theta^\kappa$). The main result concerning anomalous diffusion~\eqref{e.anomalous.diffusion} in the joint $\nu,\kappa \to 0$ limit, was established by  Bernard, Gawedzki, and Kupiainen~\cite{Bernard}. See also~\cite{gawedzki2000,evanden2000,weinan2001,lejan2002} for further results and refinements. Moreover, the Lagrangian flows $\xi^{\nu,\kappa}$ become non-unique and stochastic in the $\nu,\kappa \to 0$ limit, for a fixed initial particle position and a fixed velocity realization $\b^\nu$. This phenomenon is called \emph{spontaneous stochasticity}. In fact, it was shown by Drivas and Eyink~\cite{DrivasEyink17} that spontaneous stochasticity is equivalent to anomalous dissipation, not just for the Kraichnan model, but for any passive scalar transport of the type \eqref{e.passive.scalar} (even in the presence of boundaries). We refer to~\cite{falkovich2001,kupiainen2003,gawedzki2008,DrivasEyink17} for excellent discussions about the Kraichnan model. 

The main drawback of this model stems from the white-noise temporal correlation of the vector field $\b(t,x)$, which is indeed so rough that it is probably responsible for the anomalous diffusivity. At the experimental level, a consequence of this roughness was already noted by Sreenivasan and Schumacher~\cite{sreenivasan2010}: there are several differences between the predictions of the Kraichnan model and the behavior of a passive scalar in Navier-Stokes turbulence.  At the mathematical level, the  white-noise temporal correlation allows for a certain explicit and exact computation of the statistics of the solution. Namely, one may obtain closed expressions for the correlation functions of the scalar $\theta^\kappa$; therein, the assumed white in time correlation structure of the velocity field plays a crucial role. As a consequence, the ``exact analysis'' developed for the Kraichnan model is not robust, and it did not allow the fluid dynamics community to build sturdy tools for understanding the energy cascade in Navier-Stokes turbulence.  

Nonetheless, as noted by Majda and Kramer in~\cite{MK}, exactly solvable models provide excellent test problems for assessing the strengths and weaknesses of  approximate closure theories in turbulence. Besides the Kraichnan model discussed here, an exact mathematical analysis of diffusion (enhancement and anomalies) is also available for the ``Simple Shear Models'' of Avellaneda and Majda~\cite{AvM1,AvM2, AvM2a}, which generalize an earlier model of Kubo~\cite{Kubo}. These examples emphasize how randomly fluctuating velocity fields act as effective diffusion processes, on large scales and long times. The vector fields in~\cite{AvM1,AvM2, AvM2a} are of a shear flow type $\b(t,x) = (w(t),v(t,x_1))$, where the spatially uniform sweeping component $w(t)$ is taken as a stationary random process with possibly nonzero mean, and the shearing component $v(x_1,t)$ is taken as a homogeneous and stationary, mean zero random field, whose statistics can be fine tuned to match the statistically stationary turbulent flows. Using exactly solvable renormalization group theories and Lagrangian renormalized perturbation theories (available for these simple shear flows), Avellaneda and Majda are able to identify several distinct regimes, as indexed by the mean of $w$, the strength of the infrared divergence in $v$, and the decorrelation time of long-wave portions of the statistical velocity spectrum. Note however that anomalous diffusion~\eqref{e.anomalous.diffusion}, is  {\em not} available in the  ``Simple Shear Models'' of~\cite{AvM1,AvM2, AvM2a}.

\subsubsection*{\bf Singularly-focusing alternating shear flows} To the best of our knowledge, the first example of a deterministic vector field $\b(t,x)$, for which the anomalous dissipation of scalar variance~\eqref{e.anomalous.diffusion} is established rigorously, was recently constructed by Drivas, Elgindi, Iyer, and Jeong~\cite{DEIJ}. In~\cite[Theorem 1]{DEIJ}, it is shown that for any $\alpha \in [0,1)$ and $d\geq 2$, there exists a vector field~$\b \in L^1([0,1];C^\alpha(\TT^d)) \cap L^\infty([0,1];L^\infty(\TT^d))$, such that the following holds: $\b(t,\cdot)$ is smooth for any $t<1$; for any initial data with $\theta_0 \in H^2$ which is sufficiently close (in $L^2$) to a an eigenfunction of the Laplacian, anomalous diffusion~\eqref{e.anomalous.diffusion} holds for some $\varrho \in (0,1)$; and the scalar field $\theta^\kappa$ remains uniformly bounded in $L^\infty([0,1];L^\infty(\TT^d))$ as $\kappa \to 0$.
The above result is sharp in the sense that if\footnote{The Lipschitz regularity may be replaced with merely the integrability of $\nabla \b$. Indeed, it follows from the Di Perna-Lions theory~\cite{diperna1989} that as soon as $\b \in L^1_t W^{1,1}_x$ is divergence free, all bounded weak solutions of the transport equation $\partial_t \theta + \div(\b \, \theta)= 0$ are renormalized, and thus they conserve the energy $\tfrac 12 \| \theta(t,\cdot)\|_{L^2}^2$. See also the work of Ambrosio~\cite{Ambrosio04} for $\b \in L^1_t {\rm BV}_x$, divergence free. Then, as $\kappa \to 0^+$ the a priori (subsequential) weak convergence of $\theta^\kappa$ to a weak solution $\theta$ of the transport equation, is in fact strong (due to the energy balance \eqref{e.energy.balance} and lower-semicontinuity), implying that there is no dissipation anomaly.} $\b \in L^1_t W^{1,\infty}_x$ (corresponding to $\alpha = 1$), then trivially one has $\lim_{\kappa \to 0^+} \kappa \langle |\nabla \theta^\kappa|^2 \rangle = 0$, as discussed in the first paragraph of Section~\ref{ss.litreview}. 

\smallskip

In essence, the construction of the vector field $\b(t,x)$ in~\cite{DEIJ} alternates shear flows with stream function\footnote{The sinusoidal shear velocity profiles are replaced by a smoothened sawtooth function, which makes the computations easier, and in fact almost explicit.} $\sin(2^{(1+\alpha)j} x_{1+j\mod d})$, on intervals $[t_{j-1}, t_j)$, for $\{t_j = 1- 2^{-j} \colon j\geq 1\}$. This construction is on the one hand inspired by the earlier work of Pierrehumbert~\cite{pierrehumbert1994}, who proposed an alternating shear flow of a single frequency, but with random i.i.d. phase shifts, to construct a ``universal mixer'' for the transport equation.\footnote{The proof that the Pierrehumbert construction indeed an universal exponential mixer was recently obtained by Blumenthal, Coti Zelati, and Gvalani~\cite{blumenthal2022}, using a random dynamical systems based perspective.} On the other hand, the idea of a quasi self-similar evolution on  $[t_{j-1},t_j)\times \TT^d$ which singularly focuses as $j\to \infty$ all the ``action'' towards the final time slice $\{t=1\}\times \TT^d$---where all the anomalous dissipation of scalar variance occurs---is inspired by earlier works of Aizenman~\cite{Aizenman78} and Depauw~\cite{Depauw03} concerning the uniqueness of the transport equation below, and Alberti, Bianchini, and Crippa~\cite{ABC} respectively Alberti, Crippa, and Mazzucato~\cite{ACM1,ACM2} regarding mixing for the transport equation.\footnote{The deterministic theory of \emph{mixing} for the linear transport ($\kappa=0$) and of \emph{enhancement of diffusion} for the drift-diffusion ($\kappa>0$) equation~\eqref{e.passive.scalar} is too vast to review here.  Usually these theories consider vector fields $\b$ whose regularity is at least $L^1_t W^{1,p}_x$ for $p\geq 1$, so that the Di~Perna-Lions theory applies to bounded solutions of the scalar linear transport. The questions typically asked are: when $\kappa = 0$, to describe the decreasing function $\varrho(t)$ and the timescale $t_0$ such that  $\|\theta^0(t,\cdot)\|_{H^{-1}} \leq \varrho(t-t_0) \|\theta^0(t_0,\cdot)\|_{H^1}$, see e.g.~\cite{CC08,IKX,Seis,ElgindiUniversal}. In other works, the loss of regularity and nonuniqueness of weak solutions to the continuity equation is discussed~\cite{ABC,Jabin16,YZ,ACM1,ACM2,crippa2022growth} and~\cite{modena2018,brue2021,cheskidov2021}. For $\kappa>0$, it is well-known that some enhancement of diffusion takes place due to mixing properties of the underlying flow of  $\b$~\cite{CKRZ,BCZ,FengIyer,zelati2020relation,zelati2019stochastic}. For such diffusion enhancing flows, the challenge is to quantify the optimal rate $r(\kappa) \gg \kappa$ and the timescale $t_\kappa\ll \kappa^{-1}$ such that $\|\theta^\kappa(t,\cdot)\|_{L^2}^2 \leq C e^{-r(\kappa) t} \|\theta_0\|_{L^2}^2$, for all $t\geq t_\kappa$~\cite{zelati2020relation,zelati2019stochastic,brue2021advection,Elgindi2023}. The dissipation anomaly considered in this paper is an extreme form of enhancement of diffusion, with $r(\kappa) = \OO(1)$ uniformly in $\kappa$ as $\kappa\to 0$.} With $\b(t,x)$ constructed as such, the proof of~\cite{DEIJ} hinges on comparing the family of solutions $\{\theta^\kappa\}_{\kappa>0}$ to a solution $\theta^0$ of the transport equation ($\kappa=0$) which satisfies $\lim_{t\to 1^-} \|\nabla \theta^0\|_{L^2((0,t) \times \TT^d)} = + \infty$ and for which a significant amount of energy travels to higher and higher frequencies as $t\to 1^-$, either as inviscid mixing or as a balanced growth of Sobolev norms, resulting in a lack of compactness at time $t=1$; see the abstract criterion for anomalous dissipation in~\cite[Corollary~1.5]{DEIJ}.

\smallskip

Alternating shear flows which focus in a singular and quasi self-similar way onto a final time slice have also been recently considered by Brue and De Lellis~\cite{BrueDeLellis22}, Colombo, Crippa, and Sorella~\cite{CCS22}, and jointly in~\cite{BCCDLS22}, to give examples of anomalous dissipation of energy for solutions of the \emph{forced} 3D Navier-Stokes equations~\cite{BrueDeLellis22,BCCDLS22}, and to establish anomalous diffusion for the drift-diffusion equation together with \emph{uniform-in-diffusivity H\"older regularity} for the associated passive scalar. At the core of all these works is the anomalous dissipation of scalar variance for the drift-diffusion equation~\eqref{e.anomalous.diffusion}.

Indeed, it is well-known that for $2\frac 12$-dimensional solutions of the 3D Navier-Stokes equations the vertical component of the flow satisfies the linear advection-diffusion equation~\eqref{e.passive.scalar}. More precisely, if $u_H = (u_1,u_2)(x_1,x_2,t)\colon \TT^2 \times \RR \to \RR^2$ and $u_3  = (x_1,x_2,t) \colon \TT^2 \times \RR \to \RR$ satisfy $\partial_t u_H + (u_H \cdot \nabla_H) u_H + \nabla_H p - \nu \Delta_H u_H = f_H$, respectively $\partial_t u_3 + (u_H \cdot \nabla_H) u_3 - \nu \Delta_H u_3 = 0$, where $f_H$ is a horizontal body force, $p$ is a scalar pressure ensuring $\nabla_H \cdot u_H = 0$, and we denote ``horizontal'' differential operators by $\nabla_H= (\partial_{x_1},\partial_{x_2})$ and $\Delta_H=\partial_{x_1 x_1} + \partial_{x_2x_2}$, then the vector field $u = (u_H,u_3)$ solves the 3D Navier-Stokes equations with viscosity $\nu$, pressure $p$, and body force $(f_H,0)$. 
Then, inspired by the constructions in~\cite{Aizenman78,Depauw03,ABC,ACM1,ACM2} the papers~\cite{BrueDeLellis22,CCS22,BCCDLS22} construct both initial data for the scalar $u_3$ (essentially a $\pm 1$ checkerboard at unit scale) and a two-dimensional vector field $u_H$---which is essentially a sequence of alternating shear flows which are quasi self-similar on intervals of the type $[t_{j-1},t_j)$ with amplitudes $a_j$ and frequencies $\lambda_j$, where $t_j \to 0^+$, $a_j, \lambda_j \to \infty$ as $j\to \infty$---such that the the inviscid  transport equation $\partial_t u_3 + (u_H \cdot \nabla_H) u_3 =0$ mixes perfectly as $t \to 1^-$, i.e. $u_3(t,\cdot) \rightharpoonup 0$ as $t\to 1^{-}$. To incorporate the effect of a vanishing sequence of diffusions $\nu_j \to 0^+$ as $j\to \infty$, these authors smooth out the aforementioned vector field at a specific $(a_j,\lambda_j,t_j,\nu_j)$-dependent scale, and then either appeal to the abstract criterion from~\cite{DEIJ} or directly measure the variance of the associated stochastic process, to show that the drift-diffusion equation may be viewed as a perturbation of the transport equation, and hence exhibits anomalous diffusion. The term $f_H$ is then just the remainder obtained by inserting the constructed vector $u_H$ into the horizontal part of the 3D Navier-Stokes equations. A clever fine-tuning of the parameters $(a_j,\lambda_j,t_j,\nu_j)$ in the construction attains both the uniform H\"older regularity of the sequence $\{ u^{\nu_j} \}_{j\geq 1}$ (in the full range strictly below $L^1_t W^{1,1}_x$), and fact that $\limsup_{j\to \infty} \nu_j \|\nabla u^{\nu_j}\|_{L^2((0,1)\times\TT^3)}^2 > 0$. As in~\cite{DEIJ}, in these constructions the anomalous dissipation occurs only on the time slice $\{t=1\} \times \TT^d$. We also note that by adding an extra space dimension to replace time, Johansson and Sorella~\cite{JohanssonSorella2023} have obtained similar results for the advection-diffusion equation in dimensions larger than $3$, for a vector field which is \emph{autonomous}; here, the quasi self-similar singular focusing is achieved on a ``last space slice'' instead of a ``last time slice'' (see also~\cite[Figure~3]{Aizenman78} for a closely related idea).

\smallskip
 
The main drawbacks of the aforementioned constructions of~\cite{DEIJ} and of~\cite{BrueDeLellis22,CCS22,BCCDLS22} are as follows: (i) all the energy that can be dissipated anomalously is dissipated at only one instant in time, (ii) the vector field $\b(t,x)$ has  only one active scale at each time $t \in[0,1)$, (iii)  the drift-diffusion equation is treated as a perturbation of the transport equation, and (iv) the vector field $\b(t,x)$ and the initial datum $\theta_0$ are not constructed independently of each other, and the diffusive anomaly is not proved for all smooth initial data.

\smallskip

Regarding point (i), we note that the existence of a single time (e.g.~$t=1$  for~\cite{DEIJ,BrueDeLellis22,CCS22,BCCDLS22}) at which all of the anomalous diffusion occurs, is incompatible with the (statistical) \emph{stationarity} of the turbulent vector fields, for which anomalous diffusion has been robustly observed in practice. In contrast, the vector field $\b(t,x)$ which we construct in Theorem~\ref{t.anomalous.diffusion} does not distinguish any special times, and for $t_1,t_2 \in [0,1]$ chosen at random, $\b(t_1,\cdot)$ and $\b(t_2,\cdot)$ have the same regularity, are macroscopically undistinguishable. This means in particular that our vector field does not quasi self-similarly focus the dynamics onto a single time slice, leading us to point (ii). In the previous examples of anomalous diffusion~\cite{DEIJ,BrueDeLellis22,CCS22,BCCDLS22} at each instance of time $t \in [0,1)$ only one shear flow is active (at a suitable spatial frequency), which in turn necessitates singular focusing in time for the passive scalar to witness infinitesimally small scales in $\b(t,x)$. This picture is inconsistent with the observed power spectra of turbulent flows in statistical equilibrium~\cite{Frisch}. The vector field $\b(t,x)$ from Theorem~\ref{t.anomalous.diffusion} does not have this property: at a.e.~$t\in [0,1]$ the vector field $\b(t,\cdot)$ contains infinitely many shear flows of diverging frequencies, which are twisted by the Lagrangian flows induced by the sum of the flows at all scales ``above'' that of the shear being considered. At first sight, one may think that this ``feature'' of $\b(t,x)$ comes with a ``bug'': the underlying transport equation is severely ill-posed, leading us to point (iii). At the heart of the proofs in~\cite{DEIJ,BrueDeLellis22,CCS22,BCCDLS22}, the transport equation does the heavy lifting, in a quasi self-similar fashion as $t\to 1^-$. In a sense, it is shown that the non-diffusive picture  is stable in $L^2$ under diffusive perturbations. Our work presents a fundamental difference, as we do not  view \eqref{e.passive.scalar} as a perturbation of the transport equation $(\partial_t + \b \cdot \nabla) \theta = 0$. In fact, the diffusion is used in a fundamental way in the proof (see Section~\ref{ss.outline}). At each scale larger than the smallest active scale (determined by $\kappa$) the advection part of the operator is in balance with a renormalized diffusion operator. A welcome consequence of this perspective and of this proof strategy is that in our analysis the vector field $\b(t,x)$ and the initial data $\theta_0$ are independent of each other, with \eqref{e.anomalous.diffusion} holding for every~$\theta_0 \in \dot{H}^1(\TT^d)$. This ``universality'' was however not present in any of the earlier works on this subject, as mentioned in point (iv) above. In~\cite{BrueDeLellis22,CCS22,BCCDLS22} the main results establish the existence of both a vector field~$\b(t,x)$ and of an initial datum~$\theta_0$ (a~$\pm 1$ checkerboard) for which~\eqref{e.anomalous.diffusion} holds, while in~\cite[Theorem 1]{DEIJ} the initial datum needs to be sufficiently close (with respect to the~$H^2$ topology) to an eigenfunction of the Laplacian on~$\TT^d$.\footnote{This limitation also applies to the constructions based on intermittent convex integration schemes~\cite{Buck2020} applied to the transport and drift diffusion equations e.g.~in~\cite{modena2018,brue2021,cheskidov2021,pitcho2021}: all of these construct the vector field at the same time as the scalar.} This is of course not the physically motivated problem since the turbulent vector field~$\b(t,x)$ should be given in advance (as a solution of, say, 3D Navier-Stokes), and then the passive scalar is to be advected and diffused in this flow. The reason why Theorem~\ref{t.anomalous.diffusion} yields anomalous diffusion for all~$H^1$ initial data of zero mean is not the construction of the vector field~$\b(t,x)$ per se, it is the proof strategy, which shows that the quantity~$\kappa \|\nabla \theta^\kappa\|_{L^2((0,1)\times \TT^d)}^2$ is close (in a $\kappa$-independent sense) to the rate of diffusion experienced by (essentially) a heat equation with the same initial datum, and unit-size diffusivity coefficient.

\subsection{An outline of the proof: fractal homogenization}
\label{ss.outline}

We present the proof of Theorem~\ref{t.anomalous.diffusion} only in dimension~$d=2$ rather than a general dimension~$d\geq 2$ for convenience and readability. The argument in higher dimensions has only notational differences. 

\smallskip

As mentioned above, the proof of Theorem~\ref{t.anomalous.diffusion} is based on the idea that anomalous diffusivity is the consequence of a ``homogenization cascade'' of ``eddy diffusivities,'' which goes from small scales to large scales. We think of each homogenization step as modifying the equation by removing the fastest wiggles in the vector field and---due the enhancement of diffusivity caused by these wiggles---increasing the diffusivity parameter~$\kappa$. The ``effective diffusivities'' thereby increase as we zoom out to larger scales, until finally, at the macroscopic scale, the vector field has no remaining wiggles and the effective diffusivity is of order one. This strategy, which is a renormalization group-type approach, has a very long history dating back to the 19th century (see~\cite[Chapter 9]{Frisch}).

\smallskip

In this subsection, we will give a complete overview of the main ideas behind the construction of the vector field~$\b(t,x)$ and the proof of anomalous dissipation of scalar variance. The full proof is very lengthy, as the justifications of many of the intuitions here require long computations and many estimates.

\subsubsection*{\bf Advection-enhanced diffusion and homogenization}
We briefly review the phenomenon of advection-enhanced diffusion, from the point of view of classical homogenization. Consider a~$\Z\times \Zd$-periodic, mean-zero, incompressible vector field~$\mathbf{u}(t,x)$ and the advection-diffusion equation
\begin{equation}
\label{e.advec.u}
\partial_t \theta_\ep - \kappa \Delta \theta_\ep + \tfrac1\ep \mathbf{u}(\tfrac{t} {\ep^2},\tfrac{x}{\ep} ) \cdot \nabla \theta_\ep  = 0 
\,.
\end{equation}
The advection term may be expressed as a second-order term:
\begin{equation*}
\tfrac1\ep\mathbf{u}(\tfrac{t} {\ep^2},\tfrac{x}{\ep} ) \cdot 
\nabla \theta_\ep = 
- \nabla \cdot \bigl( \s(\tfrac{\cdot} {\ep^2},\tfrac{\cdot}{\ep} )\nabla \theta_\ep \bigr), 
\end{equation*}
where~$\s$ is a \emph{stream matrix} for~$\mathbf{u}$, that is, an anti-symmetric matrix such that~$-\nabla\cdot \s = \mathbf{u}$. This allows us to write~\eqref{e.advec.u} as
\begin{equation}
\label{e.advec.s}
\partial_t \theta_\ep - \nabla \cdot \bigl( \kappa \Id + \s\bigl( \tfrac{\cdot} {\ep^2},\tfrac \cdot \ep \bigr) \bigr) \nabla \theta_\ep  = 0 \,.
\end{equation}
Here there are two scales: the small scale~$\ep>0$ on which the stream matrix oscillates, and the macroscopic scale which is of order one. Classical homogenization theory says that~\eqref{e.advec.s} homogenizes to the \emph{effective equation}
\begin{equation*}
\partial_t \overline{\theta} - \nabla \cdot \ahom \nabla \overline{\theta}  = 0 \,,
\end{equation*}
in the sense that, roughly speaking, solutions of the former converge in~$L^2$, as~$\ep \to 0$, to those of the latter. 
The effective diffusion matrix~$\ahom$ is given by the formula
\begin{equation*}
\ahom e = \bigl\langle\!\!\bigl\langle \bigl( \kappa \Id + \s \bigr) (e + \nabla \chi_e )
\bigl\rangle\!\!\bigl\rangle \,, \quad e \in\Rd\,,
\end{equation*}
where~$\langle\hspace{-2.5pt}\langle\cdot \rangle\hspace{-2.5pt}\rangle$ denotes the (space-time) average of a~$\Z\times\Zd$--periodic function and~$\chi_e$ is the \emph{corrector} with slope~$e$, that is, the unique periodic (in space and time), mean-zero solution of the cell problem
\begin{equation*}
\partial_t \chi_e -\nabla \cdot \bigl( \kappa \Id + \s \bigr)(e+\nabla \chi_e ) = 0 
\,.
\end{equation*}
The symmetric part of~$\ahom$ is given by 
\begin{equation*}
\tfrac12 (\ahom + \ahom^t) _{ij} 
=
\kappa \delta_{ij} 
+ 
\kappa \, \bigl\langle\!\!\bigl\langle \nabla \chi_{e_i} \cdot \nabla \chi_{e_j}\bigl\rangle\!\!\bigl\rangle\,.
\end{equation*}
The second term is positive (in the ordering of nonnegative definite matrices), and therefore the symmetric part of~$\ahom$ is larger than the original diffusion matrix~$\kappa \Id$. This effect is called the enhancement of diffusivity due to advection. 

\smallskip

The enhancement of diffusivity from the point of view of homogenization has been well-studied over the past four decades. There are too many works to cite here, so we refer the reader to~\cite{FP1,MK} and the references therein. 
The proposal to use homogenization methods to turbulence models has however received a great deal of skepticism, primarily due to the lack of asymptotic scale separation. The homogenization limit requires sending the parameter~$\ep$, representing the ratio of the two scales, to zero. 
Such criticisms can be found in~\cite[page 225]{Frisch} and~\cite[page 304]{MK}.

\smallskip

Indeed, the vector field~$\b(t,x)$ we will construct will have certain active scales and the ratio of any pair of these active scales is fixed and not parametrized by a parameter being sent to zero. Moreover, we have infinitely many active scales, and not only two as in the simple setup described above. These issues pose serious analytic challenges, and we will address them using \emph{quantitative} homogenization methods. Rather than reason in terms of asymptotic limits, we need to precisely quantify the length scales and time scales on which homogenization occurs. We will next consider this question in the context of a simple shear flow.

\subsubsection*{\bf Homogenization of a simple shear flow}

The vector field~$\b(t,x)$ will have a fractal-like structure, and so we need to introduce the ``base'' of the fractal, that is, the pattern which links two different scales and will be repeated infinitely many times. This role will be served by a simple alternating shear flow. 

\smallskip

Given parameters~$a,\ep>0$, consider the simple time-independent shear flow~$\mathbf{u}(x)$ defined by
\begin{equation*}
\mathbf{u}(x) = \begin{pmatrix}
0 \\ 
2\pi a \ep \cos(\frac{2\pi x_1}{\ep}) 
\end{pmatrix}
\,.
\end{equation*}
The length scale on which the shear flow varies is~$\ep$, and the parameter~$a$ represents the size of the Lipschitz norm of the vector field. The stream function for $\mathbf{u}$ is~$\psi_{\ep}(x) = a \ep^2 \sin(\frac{2\pi x_1}{\ep})$; in other words, $\mathbf{u} = \nabla^\perp \psi_\ep$. We stress here that~$\ep$ is not a parameter to be sent to zero, it just represents the inverse wave number of~$\mathbf{u}$. 

\smallskip

The equation for a passive scalar~$\theta$ advected by~$\mathbf{u}$ with diffusivity~$\kappa>0$
can be written as 
\begin{align}
\label{e.heur.aep}
\partial_t \theta 
-\nabla \cdot \bigl( \mathbf{K}^\ep( x) \nabla \theta \bigr)
 = 0
\end{align}
where~$\mathbf{K}^\ep$ is the non-symmetric matrix  
\begin{align}
\label{e.aep.def}
\mathbf{K}^\ep(x) 
= \bigl( \kappa \Itwo + \psi_\ep(x) \sigma\bigr) 
=
\begin{pmatrix}
\kappa & - a \ep^2 \sin(\frac{2\pi x_1}{\ep}) \\
a \ep^2 \sin(\frac{2\pi x_1}{\ep}) & \kappa
\end{pmatrix}
\,,
\end{align}
and~$\sigma$ is defined in~\eqref{e.sigma}, below. 
Due to homogenization, we expect that~\eqref{e.heur.aep} should be close, on large enough scales, to its effective equation
\begin{equation*}
\partial_t \theta - \nabla \cdot \Khom_2 \nabla \theta = 0,
\end{equation*}
where the effective diffusivity matrix~$\Khom_2$ in this case can be computed explicitly. It is:
\begin{equation}
\label{e.Khom.int}
\Khom_{2} 
=
\begin{pmatrix}
\kappa &0 \\
0 & \kappa + \frac{a^2\ep^4}{\kappa}
\end{pmatrix}
\,.
\end{equation}
We again stress that we are not sending~$\ep \to 0$ here, the homogenization is with respect to a large-scale limit.

The formula~\eqref{e.Khom.int} was derived more than 70 years ago by Taylor~\cite{Taylor} and is sometimes called the \emph{Taylor dispersion formula} (specialized to the shear flow); see also~\cite{Aris}. As he showed, homogenization is observed on length scales much larger than~$a\ep^3\kappa^{-1}$ and time scales much larger than~$\ep^2\kappa^{-1}$. This can be proved analytically from estimates on the correctors, which can be computed explicitly in this case. 

\smallskip

There is another way to think about this, in terms of the particle trajectories. The diffusion~$Y_t$ process corresponding to~\eqref{e.heur.aep} satisfies the SDE
\begin{equation}
\label{e.SDE}
dY_t = \mathbf{u}(t,Y_t)dt +  \sqrt{2\kappa} dW_t
\,.
\end{equation}
The particle evolving according to these dynamics will move with speed of order~$a\ep$ in the~$x_2$ direction, changing its direction (up or down) and its magnitude on time scales of order $\ep^2 \kappa^{-1}$, which is the time it takes the diffusion to alter its $x_1$ coordinate on the order of~$\ep$. The vector field has typical size $a\ep$, therefore in this time the particle will have travelled a distance of order~$a\ep \cdot \ep^2 \kappa^{-1} = a\ep^3\kappa^{-1}$. If we zoom out and observe the motion of the particle on length scales much larger than~$a\ep^3\kappa^{-1}$ and time scales much larger than~$\ep^2 \kappa^{-1}$, then what we see (roughly) is that the~$x_2$ coordinate of the particle is performing a random walk with steps of size~$a\ep^3\kappa^{-1}$, with~$\ep^2 \kappa^{-1}$ units of time between steps. This leads to a diffusivity in the $x_2$ direction of order 
\begin{align*}
\frac{\bigl( a\ep^3\kappa^{-1}\bigr)^2 }{\ep^2 \kappa^{-1}}
=
\frac{a^2\ep^4}{\kappa}
\,.
\end{align*}
Of course, this diffusive effect caused by the advection should be in addition to the molecular diffusion, so we expect to find an effective diffusion of order 
\begin{equation*}
\mbox{effective diffusivity in~$x_2$ direction} 
=
O\biggl(
\kappa + \frac{a^2 \ep^4}{\kappa}
\biggr)
\,.
\end{equation*}
This rough intuition is in agreement the more precise formula~\eqref{e.Khom.int}. 

\smallskip

The dimensionless quantity~$a \ep^2\kappa^{-1}$, recognized as representing the (square root of the) ellipticity contrast in the matrix~$\mathbf{K}^\ep$ defined in~\eqref{e.heur.aep}, is a measure of the strength of the shear flow term relative to the molecular diffusion. It determines the multiple of the small scale~$\ep$ on which homogenization occurs. 

\smallskip

Now consider a vector field which alternates between shear flows in the~$x_1$ direction and shear flows in the~$x_2$ direction with frequency~$\tau^{-1}$:
\begin{equation}
\label{e.vepatau}
\mathbf{v}_{\ep,a,\tau} (t,x):= 
2\pi a \ep\sum_{k\in \Z} 
\indc_{[k\tau,(k+1)\tau)}(t) 
\biggl( 
\begin{pmatrix}
0 \\ 
\cos(\frac{2\pi x_1}{\ep}) 
\end{pmatrix}
\indc_{\{k \in 2\Z\}} 
+
\begin{pmatrix}
- \cos(\frac{2\pi x_2}{\ep})  \\ 
0
\end{pmatrix}
\indc_{\{k \in 2\Z+1\}} 
\biggr)\,.
\end{equation}
If we require that~$\tau \gg \ep^2\kappa^{-1}$, so that the shear flows have enough time to homogenize, then the corresponding advection-diffusion equation homogenizes to the average of~$\Khom_2$ and the analogous matrix~$\Khom_1$ with the diagonal entries swapped, which is conveniently isotropic.
We find that, on length scales much larger than~$a\ep^3\kappa^{-1}$ and time scales much larger than~$\tau$, the equation with alternating shear flows will homogenize to 
\begin{equation*}
\partial_t \theta -  \overline{\kappa} \Delta \theta = 0,
\end{equation*}
where the effective diffusivity is given by Taylor's formula,~$\overline{\kappa} = \bigl( 1 + \frac{a^2\ep^4}{2\kappa^2} \bigr)\kappa$.

\subsubsection*{\bf The construction of the multiscale vector field~$\b(t,x)$}

The above discussion suggests an idea for setting up a ``homogenization cascade'' by constructing a vector field~$\b(t,x)$ with many copies of the alternating shear flows on different scales. 
We look for a decreasing sequence of length scales~$\ep_m\to 0$, of time scales~$\tau_m\to 0$, and of diffusivities~$\kappa_m\to 0$ and an increasing sequence of parameters~$a_m\to \infty$ which satisfy the following relations:
\begin{equation}
\label{e.four.contraints}
\left\{
\begin{aligned}
& \kappa_{m-1} = \Bigl( 1 + \frac{a_{m}^2\ep_{m} ^4}{2\kappa_{m}^2} \Bigr) \kappa_{m} \,,
\\ & 
a_m = \ep_m^{\alpha-1}\,,
\\ &
\tau_{m} \gg \frac{\ep_m^2}{ \kappa_m}\,,
\\ & 
\ep_{m-1} \gg 
\Bigl(\frac{a_m\ep_m^2}{\kappa_m} \Bigr)
\ep_{m}\,.
\end{aligned}
\right.
\end{equation}
The last condition is to ensure that the wiggles we put in the vector field at scale~$\ep_{m-1}$ do not interfere with the homogenization of those at scale~$\ep_m$. 
The condition on~$a_m$ is because we want the vector field to be H\"older continuous with exponent~$\alpha\in(0,1)$.
We would then like to define a vector field~$\b(t,x)$ in a recursive way, by adding shear flows at each scale~$\ep_m$, roughly as follows: set~$\b_0 = 0$, and then define
\begin{equation*}
\b_{m}(t,x) := \b_{m-1}(t,x)
+
\mathbf{v}_{\ep_m,a_m,\tau_m} (t,x)\,.
\end{equation*}
The idea is that the vector field~$\b_{m-1}$ is ``macroscopic'' from the point of view of~$\mathbf{v}_{\ep_m,a_m,\tau_m}$, which will homogenize before spatial or temporal variations in~$\b_{m-1}$ are noticed. 
We will then define~$\b:= \lim_{m\to \infty} \b_{m}$. Note that this limit makes sense, due to the fact that the supremum of~$|\mathbf{v}_{\ep_m,a_m,\tau_m}|$ is of order~$a_m \ep_m = \ep_m^\alpha$, which is small and can be summed up (since the scales will be at least geometrically separated).
The hope is then that we have set up the parameters in such a way that the advection-diffusion equation with diffusivity~$\kappa_{m}$ and vector field~$\b_m$ will homogenize to the one with diffusivity~$\kappa_{m-1}$ and vector field~$\b_{m-1}$.

\smallskip

This however will not work without another crucial modification. The presence of the ``macroscopic'' vector field~$\b_{m-1}$ actually interferes with the homogenization of the wiggles represented by~$\mathbf{v}_{\ep_m,a_m,\tau_m}$. Indeed, this ``macroscopic'' term is essentially a constant from the point of view of the much faster field~$\mathbf{v}_{\ep_m,a_m,\tau_m}$, and a large constant drift added to a shear flow essentially destroys the shear flow structure, with its very long streamlines, and consequently removes most of the enhancement of the diffusivity. In fact, a large constant background drift will destroy the enhancement of diffusivity of any time-independent flow: see Appendix~\ref{a.nosweep} for details.

\smallskip

This presents a serious obstacle to building examples of continuous vector fields which exhibit of anomalous diffusion, since continuity implies that larger wave numbers should have larger amplitudes. The solution to this problem is to force the small scale shear flow~$\mathbf{v}_{\ep_m,a_m,\tau_m}$ to be swept by the vector field~$\b_{m-1}$, so that they appear to be stationary in the moving reference frame of a particle advected by~$\b_{m-1}$. We do this by modifying the definition of~$\b_m$ as follows:
\begin{equation*}
\b_{m}(t,x) := \b_{m-1}(t,x)
+
\mathbf{v}_{\ep_m,a_m,\tau_m}( t, X_{m-1}^{-1}(t,x) ) \,,
\end{equation*}
where~$X_{m-1}$ is the flow for the vector field~$\b_{m-1}$, that is, the solution of~$\partial_t X = \b_{m-1}(t,X)$.
If changed into Lagrangian coordinates, then the~$\b_{m-1}$ term would disappear, and the Laplacian term would only be slightly distorted. In this way, the vector field~$\mathbf{v}_{\ep_m,a_m,\tau_m}$ can be homogenized without disturbing~$\b_{m-1}$. This ``self-advection'' property---arising here naturally from the renormalization perspective as a way to gain a sufficient enhancement of diffusivity between two scales---is a property that real fluids have.\footnote{This property is not shared by many other models of synthetic turbulence, to our knowledge. See~\cite{TD,EB} for a discussion of this point, and the apparent disparities between models of synthetic turbulence and real turbulent flows. Summarizing~\cite{TD}, the authors of~\cite{EB} write ``The key point is that large-scale eddies in real turbulence advect both particles and smaller scale eddies, while large-scale eddies in synthetic turbulence advect only particle pairs and not smaller eddies.'' We remark that a related difficulty was faced by Kraichnan in his attempts to build his ``DIA" model of turbulence: see~\cite[Section II.A]{EF}, in particular the discussion of random Galilean invariance.}

\smallskip

This introduces a new complexity to our construction, because the inverse flows must be renewed on a time scale which is much less than the inverse of the Lipschitz constant of~$\b_{m-1}$, which is of order~$a_m$. Otherwise the distortion due to the flows becomes intractable. 
Therefore we modify our definition again by introducing~$\tau_m^{\prime\prime} > \tau$ and defining
\begin{equation}
\label{e.bm.intr}
\b_{m}(t,x) := \b_{m-1}(t,x)
+
\sum_{l\in\Z} 
\indc_{[ l \tau_m^{\prime\prime}, (l+1)\tau_m^{\prime\prime})}(t)
\mathbf{v}_{\ep_m,a_m,\tau_m}( t, X_{m-1}^{-1}(t,x,l\tau_m^{\prime\prime}) ) \,,
\end{equation}
where~$X_{m-1}(t,x,s)$ is the flow for~$\b_{m-1}$ with~$X_{m-1}(s,x,s)=x$. This gives us two new constraints:
\begin{equation*}
\tau_m \ll \tau_m^{\prime\prime} \ll a_{m-1}^{-1}\,.
\end{equation*}
The second constraint ensures that we have good estimates on the difference between our flows and the identity matrix. The first constraint~$\tau_m^{\prime\prime} \gg \tau_m$ is needed because the periodic renewal of the inverse flows has caused new periodic wiggles (in time) to appear in our vector field, and these must also be homogenized! We need to make sure that the time scale of these wiggles does not interfere with the homogenization problem for~$\mathbf{v}_{\ep_m,a_m,\tau_m}$. 

\smallskip

We next try to see if we can choose the parameters to satisfy all of the constraints. First, 
in order to have~$\kappa_{m-1}$ much larger than~$\kappa_m$, we need that 
\begin{equation*}
\frac{a_m \ep_m^2}{\kappa_m} \gg 1
\,.
\end{equation*}
This suggests that we should try to pick all the parameters so that this number is a negative power, say~$-\gamma<0$, of~$\ep_m$:
\begin{equation*}
\frac{a_m \ep_m^2}{\kappa_m}
\simeq \ep_m^{-\gamma}\,.
\end{equation*}
In other words, we have now chosen~$\kappa_m = a_m \ep_m^{2+\gamma} = \ep_m^{1+\alpha+\gamma}$. The constraints for~$\tau_m$ reduce to~$\ep_m^2 \kappa_m^{-1} \ll a_{m-1}^{-1}$, which can be written in terms of the~$\ep_m$'s as 
\begin{equation*}
\ep_m^{1-\alpha-\gamma}
\ll 
\ep_{m-1}^{1-\alpha}\,.
\end{equation*}
This is a sharper constraint that the one for the~$\ep_m$'s in~\eqref{e.four.contraints}, so it remains to check if it is compatible with the recurrence relation for~$\kappa_m$. This will be the case if and only if
\begin{equation*}
\frac{2(1-\alpha)}{1+\alpha+\gamma} > 1\,.
\end{equation*}
We can therefore pick an appropriate~$\gamma>0$ if and only if~$2(1-\alpha) / (1+\alpha) > 1$, which is equivalent to~$\alpha < \nicefrac 13$. This is the reason for the restriction on~$\alpha$ in the statement of Theorem~\ref{t.anomalous.diffusion}. 

\smallskip

The scales~$\ep_m$ are decreasing supergeometrically, and thus so are the diffusivities~$\kappa_m$. The recurrence for~$\kappa_m$ in~\eqref{e.four.contraints} is very sensitive to the initial choice of~$\kappa$, and for this reason it must be chosen to be within a factor of two of~$\ep_N^{1+\alpha+\gamma}$, for some~$N$. Otherwise the diffusivities will oscillate between very large and very small numbers, and we will lose control of our homogenization estimates. 

\smallskip

Note that the exponents~$1+\alpha+\gamma$ and~$\nicefrac{(1+\alpha+\gamma)}{(1-\alpha)}$ in scaling of the renormalized diffusivities,~$\kappa_m \simeq \ep_m^{1+\alpha+\gamma} \simeq \tau_m^{\frac{1+\alpha+\gamma}{1-\alpha}}$, tend to~$\nicefrac43$ and~$2$, respectively, as~$\alpha \to \nicefrac13$, which is in agreement with Richardson's~$\nicefrac 43$ law. 
In fact, then the variance in the position~$Y_t$ of a particle trajectories at time~$t$ will indeed scale like~$t^{1+\frac{1+\alpha+\gamma}{1-\alpha}}$ (for small~$\kappa$, well-chosen as explained above), and this exponent is close to~$3$ when~$\alpha$ is close to~$\nicefrac13$, as predicted.
Demonstrating this is outside the scope of the present paper, as it requires some uniform estimates for the passive scalar which will be the focus of a forthcoming paper~\cite{ARV}. These are analogous to large-scale regularity estimates in homogenization theory, adapted to the present situation of ``fractal'' homogenization. See below in Section~\ref{ss.uniform.estimates} for more. 

\smallskip

What is described above is a slight simplification of construction of the vector field~$\b(t,x)$ in Section~\ref{s.construction}. In the actual construction, the indicator functions of the time variable appearing in~\eqref{e.vepatau} and~\eqref{e.bm.intr} are replaced by smooth approximations, so that the vector fields~$\b_m$ are smooth in time as well as space, and uniformly~$\alpha$ H\"older continuous in both variables. We also have ``quiet'' time intervals each time we switch the direction of the shear flows, which is convenient for technical reasons. We similarly arrange for the shear flows to pause on time intervals of length~$\tau_m^{\prime}$ around any change of the inverse flows~$X_{m-1}^{-1}$, where~$\tau_m \ll \tau_m^{\prime} \ll \tau_m^{\prime\prime}$ is intermediate between the other two time scales.

\subsubsection*{\bf The homogenization step}

The reader is hopefully convinced that the vector field~$\b(t,x)$ whose construction we have outlined above is a good candidate for exhibiting anomalous diffusion. 

\smallskip

However, analyzing the effect of the complicated fractal-like structure of the vector field~$\b(t,x)$ on the passive scalar~$\theta^\kappa$ is a challenge. 
Periodic homogenization is of course very well-understood, even if there are a large (but finite) number of well-separated scales (a topic referred to as \emph{reiterated} homogenization).
What is not well-understood is the case in which there are essentially infinitely many scales which are not well-separated. This is the situation we encounter here, because even if the ratio~$\ep_{m-1} / \ep_m$  between scales can be made arbitrarily large in our construction of~$\b(t,x)$, once the vector field is constructed it is a fixed finite number. 

\smallskip

This difficulty has not gone unnoticed. Indeed, the idea of renormalization group-type  approach to anomalous diffusion for a passive scalar equation in which ``eddy diffusivities'' are successively renormalized is described very clearly at a heuristic level in~\cite[Section 9.6]{Frisch}. As explained there, this idea has been present since the 19th century, but it is the lack of clear scale separation that is the  primary reason for the limited applicability of homogenization theory to passive scalar turbulence and the reason ``why the concept of eddy viscosity has been regarded by some theoreticians of turbulence as (at best) a pedagogical device.'' Similar remarks can be found in Majda and Kramer~\cite{MK}. 
The present paper is the first work to our knowledge to address this difficulty in a fully rigorous way.\footnote{Here we are thinking of vector fields which are continuous. There have been previous works, such as~\cite{BO,KO}, which use homogenization methods to prove the superdiffusivity of stochastic processes advected by divergence-free vector fields. 
These papers consider a different scaling---there is no small diffusivity parameter~$\kappa$ being sent to zero---and the vector fields considered in these papers have many active scales, with the property that smaller scale wiggles (larger wave numbers) have much larger amplitudes. Note that the latter property is not consistent with continuity, if the vector field were to be rescaled (blown down). Indeed, viewed from the macroscopic scale, these vector fields will belong to negative regularity spaces. Building examples of such vector fields which exhibit superdiffusivity or anomalous diffusion is a much easier task, since one does not have the problem, mentioned above, of the low wave numbers killing the enhancement of the large wave numbers.}

\smallskip

Let~$\theta_m$ be the solution of the equation
\begin{equation*}
\partial_t \theta_{m} 
-\kappa_{m} \Delta \theta_{m} + \b_{m} \cdot \nabla \theta_m = 0 \quad \mbox{in} \ (0,\infty) \times \TT^2
\end{equation*}
with advecting vector field~$\b_m$.
As alluded to above, the main step in the proof of Theorem~\ref{t.anomalous.diffusion} is the demonstration that the equation for~$\theta_m$ homogenizes to the one for~$\theta_{m-1}$. The precise statement is given below in Proposition~\ref{p.indystepdown}, where one finds the estimate
\begin{equation}
\Biggl| \,
\frac{\kappa_{m} \big\| \nabla {\theta}_m \big\|_{L^2((0,1)\times\TT^2)}^2}
{\kappa_{m-1} \left\| \nabla \theta_{m-1} \right\|_{L^2((0,1)\times\TT^2)}^2}
-1 \,
\Biggr| 
\leq C\ep_{m-1}^\delta \,.
\end{equation}
Here~$C$ is a constant depending only on~$\alpha$ and~$\delta>0$ is an explicit exponent. 
An iteration of this estimate yields that the quantity~$\kappa_{m} \big\| \nabla {\theta}_m \big\|_{L^2((0,1)\times\TT^2)}^2$ is nearly independent of~$m$, up to an error which can be made very small, and in particular much smaller than~$\kappa_{0} \big\| \nabla {\theta}_0 \big\|_{L^2((0,1)\times\TT^2)}^2$ which is of order one. 

\smallskip

The basic idea of the proof of the homogenization step is simple and classical. We build an explicit ansatz for the solution of the equation for~$\theta_m$, which we denote by~$\tilde{\theta}_m$. This function is constructed explicitly using ingredients from the equation for~$\theta_{m-1}$, so we have a good understanding of it---we know in particular that the difference~$\tilde{\theta}_m - \theta_{m-1}$ is small in~$L^2$. 
We then plug~$\tilde{\theta}_m$ into the equation for~$\theta_m$ and carefully compute the error. If the error is sufficiently small, then we can deduce that the difference~$\theta_m - \tilde{\theta}_m$ is small from basic energy estimates. 

\smallskip

The definition of the ansatz~$\tilde{\theta}_m$ can be found in~\eqref{e.ansatz}. It is more complicated than usually expected for periodic homogenization.
The classical two-scale ansatz consists of taking the periodic correctors~$\chi_e$ and attaching them to a solution of the macroscopic equation; in our setting, this suggests that we should define the two-scale ansatz by 
\begin{equation*}
\tilde{\theta}_m(t,x)
:=
\theta_{m-1} (t,x)
+
\sum_{i=1}^d 
\ep_{m}
\chi_{e_i} \bigl(\tfrac{x}{\ep_m} \bigr)
\partial_{x_i} \theta_{m-1} (t,x)
\,,
\end{equation*}
where the~$\chi_{e_i}$ represent the correctors corresponding to the shear flows~$\mathbf{v}_{\ep_m,a_m,\tau_m}$ on scale~$\ep_m$ in the definition of~$\b_{m}$. 
Keeping in mind that these shear flows are composed with the inverse flows~$X_{m-1}^{-1}$ in the definition of~$\b_m$, which are very slow compared to~$\mathbf{v}_{\ep_m,a_m,\tau_m}$, it is reasonable to compose these correctors with the inverse flows. Thus we should modify our ansatz to
\begin{equation}
\label{e.ansatz.guess.0}
\tilde{\theta}_m(t,x)
:=
\theta_{m-1} (t,x)
+
\sum_{l\in\Z} 
\indc_{[ l \tau_m^{\prime\prime}, (l+1)\tau_m^{\prime\prime})}(t)
\sum_{i=1}^d 
\ep_{m}
\bigl(\chi_{e_i} \circ 
X_{m-1}^{-1}(t,\cdot,l\tau_m^{\prime\prime})
\bigr) \bigl(\tfrac{x}{\ep_m} \bigr)
\partial_{x_i} \theta_{m-1} (t,x)
\,,
\end{equation}
where as usual the indicator functions of time are actually replaced by a smoother approximation. 

\smallskip

The actual ansatz we make is actually more complicated than~\eqref{e.ansatz.guess.0}, see~\eqref{e.ansatz} in Section~\ref{ss.ansatz} for the precise expression. 
Ultimately, our choice of~$\tilde{\theta}_m$ is  justified by the estimate~\eqref{e.bigbound}, which says~$\tilde{\theta}_m$ is sufficiently close to being a solution of the equation for~$\theta_m$ that we can deduce that the difference~$\theta_m-\tilde{\theta}_m$ is small. Obtaining the bound~\eqref{e.bigbound} turns out to be quite technical and much of the effort in Sections~\ref{s.renorm}--\ref{s.cascade} is devoted to its proof. 

\subsection{Uniform estimates for the scalar}
\label{ss.uniform.estimates}

The predictions made by the phenomenological theories of scalar turbulence go of course much further than  the anomalous dissipation of scalar variance; we refer the reader to~\cite{Frisch,shraiman,Warhaft,falkovich2001,DSY,sreenivasan2010} and references therein   for a detailed account. For example, by drawing direct analogies with the Kolmogorov theory of fluid turbulence, Obukhov~\cite{Obukhov} and Corrsin~\cite{Corrsin} used scaling arguments to predict that if the vector field~$\b(t,x)$ represents a homogenous isotropic velocity field exhibiting K41 ``monofractal'' scaling in the inertial range, with exponent~$\nicefrac 13$, then the scalar field~$\theta^\kappa$ inherits this property---namely ``monofractal'' scaling of structure functions with exponent~$\nicefrac 13$---in the corresponding~$\kappa$-dependent scalar inertial range. This scaling argument can be directly generalized to say that if the structure functions of~$\b(t,x)$ have monofractal scaling with exponent~$\alpha$, then the structure functions of the scalar~$\theta^\kappa$ have monofractal scaling with exponent~$\nicefrac{(1-\alpha)}{2}$, for any~$0< \alpha < 1$, not just~$\nicefrac 13$ (though this is the relevant exponent in fluid turbulence, at least in three spatial dimensions). 

\smallskip

Just as with the Onsager conjecture, one may propose a \emph{mathematical idealization}, corresponding to simultaneously diverging Reynolds and P\'eclet numbers, and postulate a dichotomy:\footnote{See also the discussions in~\cite[Section~5]{DEIJ} and~\cite[Section~1]{CCS22}.} (i) if~$\b \in C^0_t C^\alpha_x$ and if the solutions~$\{\theta^\kappa\}_{\kappa>0}$ of \eqref{e.passive.scalar} are uniformly in~$\kappa$ bounded in~$C^0_t C^{\alpha^\prime}_x$ with~$\alpha' > \nicefrac{(1-\alpha)}{2}$, then~$\lim_{\kappa\to 0} \kappa \|\nabla \theta^\kappa\|_{L^2_t L^2_x}^2 = 0$, and (ii) there exists~$\b \in C^0_t C^\alpha_x$ (presumably with~$\alpha < \nicefrac 13$), such that for~\emph{all} smooth initial conditions~$\theta_0$, the solutions~$\{\theta^\kappa\}_{\kappa>0}$ of~\eqref{e.passive.scalar} are uniformly in~$\kappa$ bounded in~$C^0_t C^{\alpha^\prime}_x$ for any~$\alpha' < \nicefrac{(1-\alpha)}{2}$, and moreover~$\lim_{\kappa\to 0} \kappa \|\nabla \theta^\kappa\|_{L^2_t L^2_x}^2 > 0$. 

\smallskip
Part (i) of this dichotomy is well-known and follows directly from the commutator estimate of Constantin, E, and Titi~\cite{CET}. As stated above, part (ii) of this dichotomy is open.  Theorem~\ref{t.anomalous.diffusion} does not address the H\"older regularity of the family~$\{\theta^\kappa\}_{\kappa>0}$, only the anomalous dissipation of scalar variance. We note however that the paper~\cite{CCS22} establishes a version of this H\"older regularity, but the uniform in~$\kappa$  bounds for~$\| \theta^\kappa(t,\cdot) \|_{C^{\alpha^\prime}_x}$, in the full range~$\alpha' < \nicefrac{(1-\alpha)}{2}$, are only established for a particular initial datum~$\theta_0$, and only in~$L^2$ with respect to the time variable. 

\smallskip
In forthcoming joint work with~Rowan~\cite{ARV}, we will sharpen the statement of Theorems~\ref{t.anomalous.diffusion} in a number of ways. We show in~\cite{ARV} that with $\b(t,x)$ as in Theorem~\ref{t.anomalous.diffusion}, the advection-diffusion equation~\eqref{e.passive.scalar} regularizes the solutions up to~$C^0_t C_x^{\nicefrac{(1-\alpha)}{2}}$ along the subsequences $\{\kappa_j\}$ exhibiting the diffusive anomaly~\eqref{e.anomalous.diffusion}. We show that for all~$t_0 \in (0,1)$, there exists a constant~$C_{t_0} = C_{t_0}(d,\alpha)>0$ such that, for every mean-zero initial datum~$\theta_0\in L^2(\TT^d)$, we have
\begin{equation*}
\sup_{\kappa \in I} 
\| \theta^\kappa \|_{C^0 ([t_0,1],  C^{\nicefrac{(1-\alpha)}{2}}(\TT^d))}
\leq C_{t_0}\| \theta_0 \|_{L^2(\TT^d)}  < \infty\,,
\end{equation*}
where~$I$ is the interval of diffusivities defined in the paragraph below~\eqref{e.varrho.def} above. This is achieved by complementing the argument in this paper with ``large-scale regularity'' techniques developed in quantitative homogenization theory. 
Another consequence of these estimates, which is obtained in~\cite{ARV}, is that the diffusive anomaly $\varrho$ in~\eqref{e.anomalous.diffusion} is uniform in the initial datum: the dependence of~$\varrho(\theta_0)$ on~$\theta_0 \in L^2(\TT^d)$ in Theorem~\ref{t.anomalous.diffusion} can be removed completely; in fact, we can take~$\varrho = \nicefrac12$. Moreover, by obtaining uniform-in-$\kappa$ estimates for the parabolic Green function associated to the drift-diffusion equation~\eqref{e.passive.scalar}, we obtain in~\cite{ARV} estimates for the rate of separation of the squared distance between two realizations of the SDE process $Y_t$ defined in~\eqref{e.SDE}, which are consistent with Richardson's~$\nicefrac43$-law.

\subsection{Notation}

We denote the positive integers by $\NN = \{1,2,\ldots\}$, and the non-negative integers by $\NN_0 = \NN \cup \{0 \} = \{0,1,2,\ldots\}$. 
We denote
\begin{equation}
\label{e.sigma}
\nabla^\perp f : = \sigma \nabla f \,,
\qquad 
\mbox{where}
\qquad
\sigma:= \begin{pmatrix} 0 & -1 \\ 1 & 0 \end{pmatrix}.
\end{equation} 
We use the brackets~$\langle \cdot \rangle$ to denote the mean of a periodic function of space only (not time), that is, $\langle \cdot \rangle = \fint_{\TT^d} (\cdot) dx$. 
Averages of periodic functions in both space and time (or just time) are denoted by~$\langle \hspace{-2.5pt} \langle
\cdot \rangle \hspace{-2.5pt} \rangle$.
We use $\vee$ and $\wedge$ to denote maximum and minimum operations, that is, $a \vee b := \max\{a,b\}$ and $a\wedge b:= \min\{a,b\}$. 
We denote the indicator function of a set $A$ by $\indc_{A}$.
It is convenient to introduce, for every nonnegative integer~$n\in\N_0$ and~$f\in C^\infty(\R^2)$, 
\begin{align}
\label{e.barf}
\snorm{f}_{n,R} =  \frac{(n+1)^2}{ n!R^{n}} \sup_{|\aa| = n} \  \norm{\partial^\aa f}_{L^\infty(\R^2)} 
\qquad \mbox{and} \qquad 
\snorm{f}_{R} = \sup_{n\in\N_0} \ \snorm{f}_{n,R}
\,.
\end{align}
Estimates involving this seminorm are explored in Appendix~\ref{sec:chain:rule}. The prefactor of~$(n+1)^2/n!$ is chosen so that the multiplicative property of the seminorm stated in Lemma~\ref{l.product} is valid. 
Note that $\snorm{\cdot}_{n,R}$ and $\snorm{\cdot}_R$ are monotone decreasing with respect to $R$. 
For a Banach space $X$ with norm $\|\cdot\|_{X}$, for $n\in \NN_0$, and for a sufficiently smooth function $f$, it is convenient to denote the $X$-norm of the $n$th order symmetric tensor $(\partial^{\aa} f)_{|\aa|=n}$ as
\begin{equation}
\|\nabla^{n} f\|_X := \max_{|\aa| = n} \|\partial^{\aa} f\|_X \,.
\label{e.nabla.n}
\end{equation}
We allow constants~$C$ to vary from line to line. If we need to refer to a specific~$C$ from a particular displayed equation, we put the equation number in the subscript of~$C$; e.g.,~$C_{\eqref{e.phi.m.bounds}}$ refers to the constant~$C$ on the right side of~\eqref{e.phi.m.bounds}. 
 
\subsection{Outline of the paper}

In Section~\ref{s.construction}, we construct the vector field~$\b(t,x)$ and give bounds on the derivatives of its approximations~$\b_m$. We also study the regularity of the flows and inverse flows associated to~$\b_m$.
In Section~\ref{s.renorm}, we build the correctors, define the sequence of renormalized diffusivities and other objects which are needed in the homogenization step. The ansatz~$\tilde{\theta}_m$ is introduced in Section~\ref{s.multiscale}, where some important estimates are also proved, using ingredients from Section~\ref{s.renorm}.
The main part of the argument comes in Section~\ref{s.cascade}, where we estimate the error that is made by plugging~$\tilde{\theta}_m$ into the equation for~$\theta_m$. The proof of Theorem~\ref{t.anomalous.diffusion} appears at the end of that section.

\section{The fractal vector field: construction and regularity}
\label{s.construction}

In this section, we construct the periodic, incompressible vector field~$\b(\cdot)$ in Theorem~\ref{t.anomalous.diffusion} and prove that it is H\"older continuous.

\subsection{A list of the ingredients used in the construction}

We present here a list of the objects used in our construction of the incompressible vector field~$\b$. These parameters are fixed throughout the rest of the paper. 

\begin{itemize}

\item We let~$\beta$ be any positive exponent satisfying 
\begin{align}
\label{e.beta.def}
1 < \beta < \frac43
\,.
\end{align}
This represents the regularity of the stream function~$\phi$ obtained in the construction. We typically think of~$\beta$ as very slightly smaller than~$\nicefrac 43$, perhaps~$1.332$. 
The parameter~$\alpha$ in the statement of Theorem~\ref{t.anomalous.diffusion} will be~$\alpha = \beta -1$.

\item We define an exponent $q$ explicitly in terms of~$\beta$ by
\begin{equation}
\label{e.q.def.0}
\q := 
\frac12 \biggl( 
1 + \frac{2-\beta}{2(\beta-1)} \biggr)
\,,
\end{equation}
which prescribes the rate at which the scale separation~$\nicefrac{\ep_{m-1}}{\ep_m}$ between successive scales~$\ep_{m-1}$ and $\ep_m$ becomes larger as~$m$ becomes larger (and the scale~$\ep_m$ becomes smaller): see~\eqref{e.supergeo}, below. Equivalently, this means that 
\begin{equation}
\label{e.beta.def.0}
\beta = \frac 43 \biggl( 1 - \frac{q-1}{4q-1} \biggr)\,.
\end{equation}
The main point is that~$q$ satisfies
\begin{equation}
\label{e.q.def}
1 < \q < \frac{2-\beta}{2(\beta-1)},
\end{equation}
which follows from the inequality~$\beta < \nicefrac 43$ in~\eqref{e.beta.def}. 

\item We fix the small parameter~$\delta\in (0,\nicefrac1{16}]$ defined explicitly by
\begin{equation}
\label{e.delta}
\delta :=
\frac14(q-1) \left( 1 - \frac{2q+1}{2q+2} \beta \right)
=
\frac{(q-1)^2}{4(q+1)(4q-1)}
\,.
\end{equation}

\item We select a large positive integer~$N_*\in \N$ defined by
\begin{equation}
\label{e.N}
N_*:= \biggl\lceil \frac1{\delta^2} + \frac{500}{\delta} \biggr\rceil\,.
\end{equation}
The integer $N_*$ counts the highest number of derivatives we need to track in our argument.

\item We also define the special exponent 
\begin{equation}
\label{e.gamma}
\gamma := \frac{(\q-1)\beta}{\q+1}
\,.
\end{equation}
This exponent is not used in the construction of the vector field in the next subsection, but it appears in Lemma~\ref{l.recurse} as a correction to the exponent for the renormalized diffusivities and subsequently in many of the computations in Section~\ref{s.multiscale}. 

\item We let~$\Lambda \in \N \cap [2^7,\infty)$ be a constant to be chosen later, at the very end of the arguments in Section~\ref{s.cascade}. It is called the \emph{minimal scale separation} and will be chosen to depend only on~$\beta$.

\item We define a sequences of length scales~$\{ \ep_m \}_{m\in\N_0}$ which satisfies
\begin{equation*}
\ep_{m} \ll \ep_{m-1}^{1+\delta} \ll \ep_{m-1}
\,,
\end{equation*}
in the sense that the separation between these length scales is at least a negative power of~$\ep_{m}$. 
These are defined as follows. 
We set~$\ep_0:=1$ and 
\begin{equation}
\label{e.epm.choice}
\ep_m^{-1} := 
\Big\lceil 
\Lambda^{\frac{\q^{m}}{\q-1}}
\Big\rceil
=
\Big\lceil
\exp\bigl(\tfrac{\q^{m}}{\q-1} \log \Lambda \bigr) 
\Big\rceil,
\quad \forall m\in\N 
\,. 
\end{equation}

Since $\ep_1 \leq \Lambda^{-\frac{\q} {\q-1}} \leq \Lambda^{-1}$ and~$
\big\lceil \Lambda^{\frac{\q^{m+1}}{\q-1}} \big\rceil
=
\big\lceil
\Lambda^{\frac{\q^{m}}{\q-1}}
\Lambda^{\q^{m}}
\big\rceil
\geq \Lambda \big\lceil
\Lambda^{\frac{\q^{m}}{\q-1}}
\big\rceil 
$, we have that
\begin{equation}
\label{e.minsep}
\frac{\ep_m}{\ep_{m+1}} 
\geq \Lambda, 
\quad \forall m\in\N_0
\,. 
\end{equation}
In particular, $\ep_m \leq \Lambda^{-m} \leq 2^{-7m}$. In fact, the sequence $\{ \ep_m\}$ decreases at a super-geometric rate with~$\ep_{m+1} \simeq \ep_{m}^q$, as it is routine to check that, for every~$m\geq 1$,  
\begin{equation}
\label{e.supergeo}
\frac 23 \ep_m^q \leq 
(1-10\ep_m) \ep_m^q
\leq
\ep_{m+1} 
\leq 
(1+10\ep_m) \ep_m^q
\leq
\frac43 \ep_{m}^q
\,.
\end{equation}
Finally, we remark that~$\ep_{m}^{-1} \in\N$.

\item We define a sequence $\{ a_m \}_{m\in\N_0}$ of positive constants by
\begin{equation}
\label{e.am.def}
a_m:= \ep_m^{\beta-2}, 
\quad \forall m\in\N\,.
\end{equation}
The constant $a_m$ gives the strength of the shear flows at length scale~$\ep_m$: see~\eqref{e.def.streamr}, below. 

\item We introduce three sequences of time scales:~$\{ \tau_m\}_{m\in\N_0}$, $\{ \tau^\prime_m\}_{m\in\N}$ and~$\{ \tau^{\prime\prime}_m\}_{m\in\N}$. We define these in such a way that, for every $m\in\N$, 
\begin{equation*}
\tau_m \ll \tau^\prime_m 
\ll \tau^{\prime\prime}_m 
\ll a_{m-1}^{-1}
\ll \tau_{m-1} 
\,,
\end{equation*}
in the sense that the separation of the time scales in each of these inequalities is at least a negative power of~$\ep_{m}$. 
They are defined as follows: we set $\tau_0 := 1$ and, for every $m\in\N$, 
\begin{align}
\label{e.taum.def}
& \tau_m := 
\left(
4 
\Bigl \lceil \ep_{m-1}^{-\delta} \Bigr\rceil 
+1
\right)^{-2}
\tau_m^{\prime\prime}
\,
\\ & 
\label{e.taum.prime.def}
\tau^\prime_m := 
\left(
4 
\Bigl \lceil \ep_{m-1}^{-\delta} \Bigr\rceil 
+1
\right)^{-1}
\tau_m^{\prime\prime} \,,
\\ & 
\label{e.taum.primeprime.def}
\tau^{\prime\prime}_m 
:=
2^{-25}
\biggl\lceil
\frac{a_{m-1}}{\ep_{m-1}^{2\delta}}
\biggr\rceil^{-1}
\,.
\end{align}
In particular, we notice that 
\begin{equation}
\label{e.ratios.taum}
\frac{1}{\tau_m}\,,\ 
\frac{1}{\tau^{\prime}_m}\,,\
\frac{1}{\tau^{\prime\prime}_m}
\in 4 \N
\quad  \mbox{and} \quad 
\frac{\tau_m^\prime}{\tau_m}\,,\
\frac{\tau^{\prime\prime}_m}{\tau_m}
\in 4 \N+1, \quad \forall m\in\N
\,,
\end{equation}
and
\begin{equation}
\label{e.taubounds}
2^{-33}\ep_{m-1}^{2-\beta+4\delta}
\leq 
\tau_m 
\leq
2^{-28}
\ep_{m-1}^{2-\beta+4\delta}
\quad \mbox{and} \quad
2^{-25}
\ep_{m-1}^{2-\beta+2\delta}
\leq 
\tau^{\prime\prime}_m 
\leq
2^{-24}
\ep_{m-1}^{2-\beta+2\delta}
\,.
\end{equation}
Thus these sequences, like $\{\ep_m\}$, are decreasing at a super-geometric rate. We also define
\begin{equation}
\label{e.lk.def}
l_k:=
\left\lceil \frac{ k \tau_m + \frac 12 (\tau_m^{\prime\prime}-\tau_m)}{  \tau^{\prime\prime}_m} \right\rceil,
\qquad k\in\Z 
\,,
\end{equation}
so that
\begin{equation}
\label{e.taum.prime.supp}
k \tau_m + \left[ -  \tfrac 12 \tau_m, \tfrac 12 \tau_m \right]
\subseteq 
l_k \tau^{\prime\prime}_m + \left[ -\tfrac12 \tau^{\prime\prime}_m 
, \tfrac12 \tau^{\prime\prime}_m \right]
\,, 
\qquad \forall k\in\Z
\,.
\end{equation}
Notice that there are~$\frac{\tau_m^{\prime\prime}}{\tau_m}$ consecutive values of $k$ that correspond to the same $l_k$.

\item We define stream functions $\stream_{k}$ for each~$k\in\Z$ by 
\begin{equation}
\label{e.def.streamr.0}
\psi_{0,k}(x) := 
\left\{ 
\begin{aligned}
& 
\sin\bigl({2\pi x_i} \bigr) & \mbox{if} & \ k  \in (4 \Z + 2i - 1), \ i \in \{ 1,2\}, \\
& 0 & \mbox{if} & \ k\in 2\Z \,.
\end{aligned}
\right.
\end{equation}
The function~$\psi_{0,k}$ vanishes for even~$k$ and encodes a vertical shear for~$k\in 4\Z+1$ and a horizontal shear for~$k\in 4\Z+3$. 
We scale these stream functions by defining
\begin{equation}
\label{e.def.streamr}
\stream_{m,k} := a_m \ep_m^2 \stream_{0,k} \bigl( \tfrac \cdot{\ep_m}\bigr),
\quad 
\forall
m\in\N, \ k\in\Z.
\end{equation}
Note that the Lipschitz constant of the shear flows~$\nabla^\perp \stream_{m,k}$ is proportional to~$a_m$. 
In fact, recalling that~$\snorm{\cdot}_{R}$ defined in~\eqref{e.barf}, we note that the stream functions~$\psi_{m,k}$ satisfy
\begin{equation}
\label{e.psimk.snorm}
\snorm{\psi_{m,k}}_{\frac{2\pi}{\ep_m}}
\leq 
5a_m\ep_m^2
\,,
\qquad \forall m\in\N,\, \quad k\in\Z\,.
\end{equation}

\item We select an even cutoff function $\zeta\in C^\infty_c(\R )$ of time satisfying, for some constant~$C\in [1,\infty)$ which depends only on~$N_*$ (and thus only on~$\beta$), 
\begin{equation}
\label{e.zetatimecutoff}
0 \leq \zeta \leq \indc_{\left[ -\frac23, \frac 23 \right]},
\quad 
\sum_{k\in\Z} \zeta(\cdot-k) \equiv 1\,,
\quad
\max_{j\in \{0,\ldots,N_*\}}
\norm{ \partial_t^j \zeta }_{L^\infty(\R)} 
\leq C
\,,
\end{equation}  
and 
\begin{equation}
\label{e.weirdo}
\int_{\R} \zeta^2 = \frac 9{10}
\end{equation}
We also scale the cutoff function~$\zeta$ by setting
\begin{equation}
\label{e.zeta.mk.def}
\zeta_{m,k}(t) : = \zeta\left( \frac{t-k\tau_m}{\tau_m}\right), 
\quad
\forall m \in\N, \ k\in \Z.
\end{equation}
Observe that, for some constant $C\in [1,\infty)$ depending only on~$\beta$ (see Remark~\ref{r.constants} below), 
\begin{equation}
\label{e.zeta.mk}
0 \leq \zeta_{m,k} \leq 
\indc_{\left[ (k-\frac23)\tau_m, (k+\frac 23)\tau_m \right]}\,, 
\quad 
\sum_{k\in\Z} \zeta_{m,k} = 1\,,
\quad
\max_{j\in \{ 0,\ldots,N_*\}}
\tau_m^{j}
\norm{ \partial_t^j \zeta_{m,k} }_{L^\infty(\R)} 
\leq  C 
\,.
\end{equation}
The role of these time cutoffs is to enable us to switch between the stream functions~$\psi_{m,k}$ for different~$k$.

\item
We must define a \emph{second} family of time cutoffs~$\{ \xi_{m,k}  \,:\, m\in\N_0,\, k\in\Z\}$ which live on slightly larger intervals than the~$\zeta_{m,k}$'s. We take~$\xi\in C^{\infty}(\R)$ to be a smooth cutoff function of time, satisfying, again for a constant~$C \in [1,\infty)$ depending only on $\beta$, 
\begin{equation}
\label{e.cutoff.xi}
\indc_{[ -\frac34,\frac{3}4]} \leq \xi \leq \indc_{[-\frac54,\frac{5}4]}\,, 
\qquad
\sum_{k\in 2\Z+1} 
\xi(\cdot-k) \equiv 1
\quad \mbox{and} \quad 
\max_{j\in \{0,\ldots,N_*\}}
\norm{ \partial_t^j \xi }_{L^\infty(\R)} 
\leq C
\end{equation}
and then define, for each $k\in4\Z$, 
\begin{equation}
\label{e.xi.mk.def}
\xi_{m,k}(t):= \xi \left( \frac{t - k\tau_m}{\tau_m} \right).
\end{equation}
Observe that the overlap in the periods between two succesive~$\xi_{m,k}$'s is disjoint from the support of the $\zeta_{m,l}$'s when~$l$ is odd (with some extra room). Precisely, we have 
\begin{equation}
\label{e.goodtransition}
\dist \left( \supp \partial_t \xi_{m,k}, \supp \zeta_{m,l} \right) 
\geq \frac1{12} \tau_m, \quad \forall k\in 2\Z+1, \ l\in 2\Z+1.
\end{equation}
See Figure~\ref{fig.small.cutoffs}
The cutoff functions~$\xi_{m,k}$ are not used in the construction of the vector field in the next subsection, but they are needed in the construction of the correctors in Section~\ref{ss.correctors}, see~\eqref{e.Chim}. 

\begin{figure}
\centering
\includegraphics[]{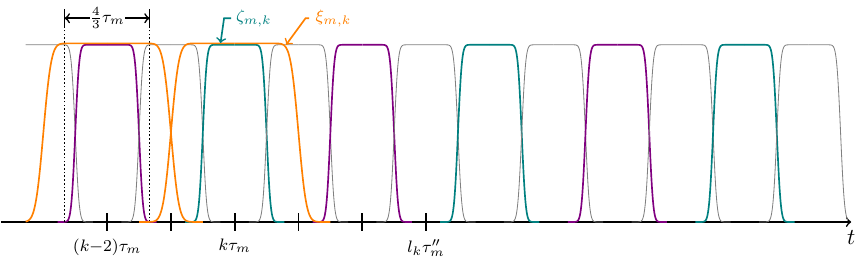}
\caption{
\small{
The families of cutoff functions $\{  {\zeta}_{m,k}\} $ and $\{  {\xi}_{m,k} \}$. These are the small-scale time cutoff functions, and each of them is active on an interval of width close to~$\tau_m$. The~$\zeta_{m,k}$'s corresponding to odd~$k$ are drawn in colors---purple and green corresponding to horizontal and vertical shear flows, respectively---and in grey for even~$k$ as the corresponding vector fields vanish. Only two of the~$\xi_{m,k}$'s are drawn (in orange). These have larger support that the $\zeta_{m,k}$'s and transition between zero and one on intervals in which the latter, for odd $k$, vanish.  
}
}
\label{fig.small.cutoffs}
\end{figure}

\item 
We need to define two more families of time cutoffs
\begin{equation*}
\{ \hat{\zeta}_{m,l} \,:\, m\in\N_0,\, l\in\Z \} 
\quad \text{and} \quad 
\{ \hat{\xi}_{m,l} \,:\, m\in\N_0,\, l\in\Z \}
\,.
\end{equation*}
These have the following properties:
\begin{equation*}
\hat{\zeta}_{m,l} = \hat{\zeta}_{m,0}(\cdot-l \tau_m^{\prime\prime})
\quad \text{and} \quad 
\hat{\xi}_{m,l} = \hat{\xi}_{m,0}(\cdot-l \tau_m^{\prime\prime})
\,,
\end{equation*}
there exists $C \in [1,\infty)$ depending only on $\beta$ such that, for every~$m\in\N_0$ and~$l\in\Z$,
\begin{equation}
\label{e.zeta.prime.ml.fitting}
\left\{
\begin{aligned}
&\indc_{\left[ (l-\frac12) \tau^{\prime\prime}_m +  2\tau_m^\prime, (l+\frac12) \tau^{\prime\prime}_m -2 \tau_m^\prime \right]}
\leq 
\hat{\zeta}_{m,l} 
\leq \indc_{\left[ (l-\frac12) \tau^{\prime\prime}_m +\tau^\prime_m, (l+\frac12) \tau^{\prime\prime}_m - \tau^\prime_m\right]} \,,
\\ 
&
\indc_{\left[ (l-\frac12) \tau^{\prime\prime}_m +\tau^\prime_m, (l+\frac12) \tau^{\prime\prime}_m - \tau^\prime_m\right]}  
\leq 
\hat{\xi}_{m,l} 
\leq 
\indc_{\left[ (l-\frac12) \tau^{\prime\prime}_m- \tau_m' , (l+\frac12) \tau^{\prime\prime}_m+ \tau_m' \right]} \,,
\end{aligned}
\right.
\end{equation}
and
\begin{equation}
\label{e.hatxi.partition}
\sum_{l\in\Z} \hat{\xi}_{m,l} = 1,
\end{equation}
and
\begin{equation}
\label{e.zeta.prime.ml.bounds}
\max_{j\in \{ 0,\ldots,N_*\}}
\bigl(\tau^\prime_m\bigr)^{j}
\Bigl( 
\norm{ \partial_t^j \hat{\zeta}_{m,l} }_{L^\infty(\R)} 
\vee 
\norm{ \partial_t^j \hat{\xi}_{m,l} }_{L^\infty(\R)} 
\Bigr)
\leq  C 
\,.
\end{equation}
Observe that $\{\hat{\zeta}_{m,l}\}_{l\in \ZZ}$ does not form a partition of unity. In view of the definition~\eqref{e.lk.def}, for every $k,l\in\Z$, 
\begin{equation}
\label{e.cutoff.overlaps}
l \neq l_k \implies
\hat{\zeta}_{m,l} \zeta_{m,k} \equiv 0. 
\end{equation}
See Figure~\ref{fig.large.cutoffs}.
The role of~$\hat{\zeta}_{m,l}$ is to smoothly cutoff the vector field near the time at which the flows and inverse flows are refreshed in our construction: see~\eqref{e.psi.recursion}, below. 
The cutoff functions~$\hat{\xi}_{m,k}$ are not used in the construction of the vector field in the next subsection, but they are needed in the construction of the two-scale ansatz in Section~\ref{ss.correctors}, see~\eqref{e.ansatz}.

\begin{figure}
\centering
\includegraphics[]{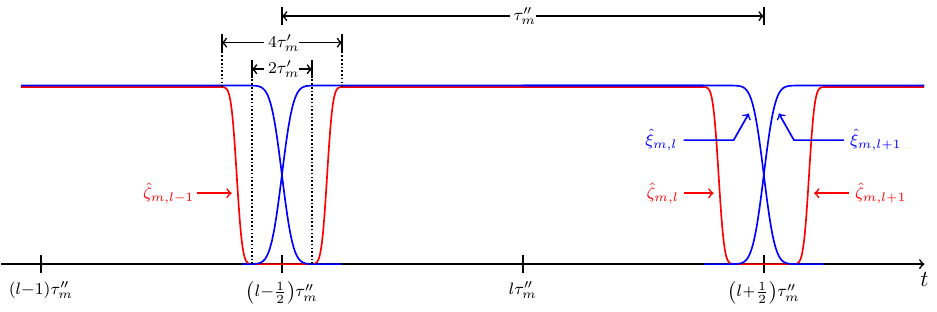}
\caption{
\small{
The families of cutoff functions $\{ \hat{\zeta}_{m,l}\} $ and $\{ \hat{\xi}_{m,l} \}$. These are the large-scale time cutoff functions, and each of them is active on an interval of width close to ~$\tau_m^{\prime\prime}$ and transition between zero and one in intervals of width $\tau_m^\prime \ll \tau_m^{\prime\prime}$. The main difference is that the $\hat{\xi}_{m,l}$'s form a partition of unity, and their transition occurs entirely outside the support of the $\hat{\zeta}_{m,l}$'s. 
}
}
\label{fig.large.cutoffs}
\end{figure}

\end{itemize}

\begin{remark}[Convention for the constants]
\label{r.constants}
Throughout the rest of the paper, we use~$C$ and~$c$ to denote positive constants which depend only on~$\beta$ and may vary in each occurrence. Note that, since the parameters~$q$,~$\delta$ and~$N_*$ are defined explicitly in terms of~$\beta$, our constants may depend on them as well. 
In particular, these constants are understood to never depend on the scale parameter~$m$.
Also, since the parameter~$\Lambda$ will be chosen to be very large at the end of the proof of Theorem~\ref{t.anomalous.diffusion}---precisely to absorb the error terms arising in the proof---it is important that we do not allow the constants~$C$ and~$c$ to depend on an upper bound for~$\Lambda$. Since $\Lambda \geq 32$, they may depend on a lower bound for~$\Lambda$. 

\end{remark}

\subsection{Construction of the vector field}

In this subsection, we construct an incompressible vector field $\b$, which is a sum of rescaled copies of a given family of periodic incompressible vector fields (shear flows) such that each term in the sum is advected by the partial sum of the terms representing larger scales (lower frequencies). 

\smallskip

We proceed by constructing a sequence~$\{ \b_m\}$ of smooth, incompressible vector fields which are~$\Z\times\Z^2$--periodic,   with associated sequences of periodic stream functions~$\{ \phi_m\}$, which satisfy
\begin{equation}
\label{e.phim.bm}
\nabla^\perp \phi_m = \b_m, \quad \langle \phi_m \rangle = 0,
\end{equation}
and their corresponding flows~$\{ \flow_m(\cdot,x,s)\}$ which satisfy, for each~$(x,s) \in \R^2 \times \R$, the ODE \begin{equation}
\label{e.flow.m.def}
\left\{
\begin{aligned}
& \partial_t \flow_m(t,x,s)= \b_m(t,\flow_m(t,x,s)) & \mbox{in} & \ \R, \\
& \flow_m(s,x,s) = x.
\end{aligned}
\right.
\end{equation}
We also denote by~$X^{-1}_m$ the inverse flow, that is,~$X^{-1}_m(t,\cdot,s)$ is the inverse function of~$X_m(t,\cdot,s)$. The construction will be an iterative one, starting with the largest scale shears and progressively building in the smaller scale ones, in the Lagrangian coordinates of the larger scale shears. The vector field~$\b_m$ will have only scales larger than~$\ep_m$ built into it, and we will eventually take~$\b$ as the limit~$m\to \infty$. 

\smallskip

We initialize the construction by setting
\begin{equation}
\label{e.phi0def}
\left\{
\begin{aligned}
&
\phi_0(t,x):= 0 \,,
\\ & 
\b_0(t,x) := \nabla^\perp \phi_0 = 0 \,,
\\ & 
X_0(t,x,s) := x \,.
\end{aligned}
\right.
\end{equation}
Supposing that, for some $m\in\N$, we have defined $\phi_j$, $\b_j$ and $\flow_j$ for every $j\leq m-1$ and that these functions are smooth in all variables and satisfy the properties above. 
We then define~$\phi_m$,~$\b_m$ and~$\flow_m$ as follows. We first define the new stream function~$\phi_m$ by 
\begin{align}
\label{e.psi.recursion}
\phi_m(t,x) 
& :
= 
\phi_{m-1} (t,x)+ 
\sum_{k,l \in\Z} 
\hat{\zeta}_{m,l}(t)
\zeta_{m,k}(t)
\stream_{m,k}\left( \flow_{m-1}^{-1}\left(t, x, l  \tau^{\prime\prime}_m \right) \right)
\notag \\ & \;
= 
\phi_{m-1} (t,x)+ 
\sum_{k \in\Z} 
\hat{\zeta}_{m,l_k}(t)
\zeta_{m,k}(t)
\stream_{m,k}\left( \flow_{m-1}^{-1}\left(t, x, l_k  \tau^{\prime\prime}_m  \right) \right)
\,,
\end{align}
where we recall that~$\psi_{m,k}$ is defined in~\eqref{e.def.streamr}. In the second equality in the display above we have appealed to~\eqref{e.cutoff.overlaps}.
It is clear from induction that~$\phi_m$ is smooth. It also has zero mean by induction, the fact that~$\psi_{m,k}$ has zero mean and the fact that the incompressibility of~$\b_{m-1}$ implies that~$X_{m-1}^{-1}(t,\cdot,s)$ is measure-preserving. We may therefore define~$\b_m(t,x):= \nabla^\perp \phi_m(t,x)$ so that~\eqref{e.phim.bm} is satisfied; and then define~$X_m(\cdot,x,s)$ to be the unique solution of the flow~\eqref{e.flow.m.def}. 
These functions are clearly smooth, so this completes the construction. 

\smallskip

Notice that the~$\psi_{m,k}$'s change each time we increment~$k$, but the inverse flows~$X^{-1}_{m-1}(\cdot,\cdot, l_k  \tau^{\prime\prime}_m)$ depend on~$k$ only through the value of the initial time, namely $l_k\tau_m^{\prime\prime}$. In particular, the inverse flows are the same for~$\nicefrac{\tau_m^{\prime\prime}}{\tau_m} \in 4 \NN + 1$ many consecutive values of~$k$.
To keep the notation short, we define, for every~$m\in\N$ and~$k\in\Z$, 
\begin{align}
\label{e.flowabbrev}
X_{m,l}(t,x) := X_{m}(t,x, l  \tau_m^{\prime\prime}) 
\qquad \mbox{and} \qquad
X^{-1}_{m,l}(t,x):= X_{m}^{-1} (t,x,l  \tau_m^{\prime\prime}).
\end{align}

\smallskip

As far as periodicity is concerned, it is clear from the construction that 
\begin{align*}
\phi_m \quad \mbox{and} \quad \b_{m} \quad \mbox{are  $\Z \times\Z^2$--periodic,}
\end{align*}
that~$x\mapsto \bigl( \flow_{m}(t,x,s) -x \bigr)$ is~$\Z^2$--periodic and~$\flow_{m}$ is $1$--periodic in time, jointly in~$(t,s)$, in the sense that
\begin{equation*}
\flow_m(t+n,x,s+n) = \flow_m(t,x,s), 
\quad \forall 
m\in\N, \,
n\in\Z, \, 
t,s\in\R, \, 
x\in\R^2. 
\end{equation*}
We intend to define the vector field~$\b(t,x)$ by taking a limit:
\begin{align}
\label{e.def.b}
\left\{
\begin{aligned}
&
\phi(t,x) : = \lim_{k\to \infty} \phi_k(t,x),
\\ & 
\b(t,x) : = \nabla^\perp \phi(t,x).
\end{aligned}
\right. 
\end{align}
The limit in the first line of~\eqref{e.def.b} is valid in the sense of $L^\infty([0,1]\times[0,1]^d)$ due to the fact that $|\streamr{k}{m}|$ is bounded by $a_m \ep_m^2 = \ep_m^\beta$, which is summable over~$m\in\N$. That~$\phi$ is regular enough that the second line is valid is less clear at this stage. 

\subsection{Regularity of the stream functions and associated flows}

How regular should we expect the stream function $\phi$ to be? As we will discuss below, the regularity of~$\phi$ is complicated by the composition with the inverse flows in~\eqref{e.psi.recursion}. It turns out that~$X_{m-1}^{-1}(t,\cdot,l_k \tau_m^{\prime\prime})$ is close to the identity map on the support of the cutoff function $\hat{\zeta}_{m,l_k}$, as we will show below.
If we imagine that the inverse flows can be replaced by the identity map in~\eqref{e.psi.recursion},  then we may guess that the spatial regularity of~$\phi_m - \phi_{m-1}$ is similar to a periodic  function with period $\ep_m$ and amplitude~$a_m\ep_m^2 = \ep_m^\beta$. Thus, the  $C^{1,\beta'}$ seminorm of~$\phi_m - \phi_{m-1}$ should be of order~$\ep_m^{\beta-\beta'-1}$, for every $\beta' < \beta-1$. Summing over the scales, this leads us to guess that $\phi_m$ is uniformly bounded in $C^{1,\beta'}$ for every $\beta' < \beta-1$, and thus the limit~$\phi$ should belong to $C^{1,\beta-}$. 
This argument also suggests that $\b$ should belong to $C^{0,\beta-}$.

\smallskip

This guess is correct, although the proof is more subtle than the back-of-the-envelope computation above may lead one to believe. 
Indeed, let us suppose that after the~$m$th step of the construction we have $\phi_m(t,\cdot) \in C^k$, uniformly in~$t$ (with some estimate depending on~$m$ and~$k$). If we try to propagate this bound forward to $\phi_{m+1}$, what we find is that 
\begin{align*}
\phi_m(t,\cdot) \in C^k
& 
\implies 
\b_m(t,\cdot) \in C^{k-1}
&&
\qquad \mbox{(by~\eqref{e.phim.bm}),}
\\ & 
\implies 
x\mapsto \flow_m^{-1}(t,x,s) \ \mbox{ is $C^{k-1}$ }
&&
\qquad \mbox{(regularity of transport equation),}
\\ & 
\implies 
\phi_{m+1}\in C^{k-1} 
&&
\qquad \mbox{(by the first line of~\eqref{e.psi.recursion}).}
\end{align*}
We have lost one derivative!
This suggests that obtaining the desired uniform bounds on $\phi_m$ requires propagating  bounds on all spatial derivatives of $\phi_m$, using the analyticity of~$\streamr{k}{m}$ (and its small size for large~$m$) to close the argument. This is the idea of the argument in the proof of Proposition~\ref{p.SAMS.regularity}, below. 
For this to succeed, every implication in the display above must be carefully quantified. Since the last implication uses the chain rule and must be iterated many times, we need a version of the Fa\'a di Bruno formula (Proposition~\ref{prop:multi:Faa}) and the estimates for derivatives of the composition of two smooth functions that it implies (Proposition~\ref{prop:compose:analytic}). 
The second implication involves the regularity of the inverse flow $X^{-1}_{m-1}$, which solves the transport equation. We need estimates on all the derivatives of $X^{-1}_{m-1}$ which are explicit in their dependence on the order of differentiation. Such estimates are classical, but also difficult to find in the literature in the explicit form we require here; for the reader's convenience, in Appendix~\ref{sec:chain:rule} we present complete statements and proofs of what we need here.  

\smallskip
We recall that by \eqref{e.taubounds} and \eqref{e.zeta.prime.ml.fitting}, if $t \in \supp \hat{\zeta}_{m,l_k}$, then $|t - l_k \tau_m^{\prime\prime}| \leq \frac 12 \tau_m^{\prime\prime} \leq 2^{-25} \ep_{m}^{2-\beta+2\delta} \leq 2^{-25} \ep_{m}^{2-\beta}$. As such each flow $X_{m}(t+s,x,s)$ and backwards flow $X_{m}^{-1}(t+s,x,s)$ needs to be studied for times $t$ which satisfy $|t| \leq 2^{-25} \ep_{m}^{2-\beta}$. This motivates assumption~\eqref{e.Xm.regbounds} below. Also, recall that~$\snorm{\cdot}_{n,R}$ is defined in~\eqref{e.barf}.

\begin{proposition}
\label{p.SAMS.regularity}
For every $m\in\N$,
\begin{align}
\label{e.phi.m.m-1.bounds}
\sup_t \snorm{  (\phi_{m} - \phi_{m-1})(\cdot,t) }_{n, 2^7 \ep_m^{-1}}
\leq 10 \ep_m^{\beta}\,, 
\qquad \forall n\in \N_0
\,,
\end{align}
and, for every $s,t\in\R$ with $|t|\leq 2^{-25} a_m^{-1} = 2^{-25} \ep_m^{2- \beta}$,
\begin{align}
\label{e.Xm.regbounds}
\snorm{ \nabla X^{-1}_{m}(t+s,\cdot,s) - \Itwo}_{n,2^{11} \ep_m^{-1}}
\leq
2^{23}  |t| a_m = 2^{23} |t| \ep_m^{\beta-2}
\,, \quad
\forall n\in\N_0
\,.
\end{align}
Consequently, for any $\beta' \in (0,\beta-1]$ we have 
\begin{equation}
\norm{\phi_m-\phi_{m-1}}_{L^\infty(\R;C^{1,\beta'}(\R^2))} 
\leq 2^{12} \ep_m^{\beta-1-\beta'}\,, 
\label{e.C1beta.phi.m.m-1}
\end{equation}
and there exists a constant $C>0$ which only depends on $\beta$, such that 
\begin{equation}
\norm{ \phi_m-\phi_{m-1}}_{C^{1,\beta'}(\R;L^\infty(\R\times\R^2))} 
\leq C \ep_m^{\beta- 1 - \beta'} 
\,.
\label{e.dt.Cbeta.phi.m.m-1}
\end{equation}
\end{proposition}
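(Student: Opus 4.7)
The plan is to prove~\eqref{e.phi.m.m-1.bounds} and~\eqref{e.Xm.regbounds} by simultaneous strong induction on $m\in\N$, with~\eqref{e.C1beta.phi.m.m-1} and~\eqref{e.dt.Cbeta.phi.m.m-1} following from~\eqref{e.phi.m.m-1.bounds} by standard interpolation at the end. The base case $m=0$ is trivial since $\phi_0\equiv 0$ and $X_0(t,x,s)=x$. For the inductive step, assuming both estimates have been established at every index $j\leq m-1$, I would first sum the telescoping identity $\phi_{m-1}=\sum_{j=1}^{m-1}(\phi_j-\phi_{j-1})$ and use the monotonicity of $\snorm{\cdot}_{n,R}$ in $R$ together with the super-geometric decay~\eqref{e.supergeo} to dominate $\snorm{\phi_{m-1}(t,\cdot)}_{n,2^{11}\ep_{m-1}^{-1}}$ by (a constant times) its contribution at the finest active scale $j=m-1$; differentiating once then yields analogous bounds on $\snorm{\b_{m-1}(t,\cdot)}_{n,2^{11}\ep_{m-1}^{-1}}$ of order $a_{m-1}$.

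Next, I would obtain the flow estimate~\eqref{e.Xm.regbounds} at scale $m-1$ from the ODE $\partial_t Z=-(\Itwo+Z)\nabla\b_{m-1}(t+s,\cdot)$ satisfied by $Z(t):=\nabla X_{m-1}^{-1}(t+s,\cdot,s)-\Itwo$ with $Z(0)=0$. On the interval $|t|\leq 2^{-25}a_{m-1}^{-1}$ a Gronwall-type argument using the Lipschitz bound $\|\nabla\b_{m-1}\|_\infty\lesssim a_{m-1}$ from the previous step gives $\|Z\|_\infty\leq C|t|a_{m-1}$, and the higher-order seminorm bounds are obtained by differentiating the ODE in $x$, applying the Fa\`a di Bruno formula (Proposition~\ref{prop:multi:Faa}) together with the analytic composition estimate (Proposition~\ref{prop:compose:analytic}) from Appendix~\ref{sec:chain:rule} to control the right-hand side, and iterating. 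The explicit combinatorial constants produced by the appendix propositions should be absorbed into the factor $2^{23}$. Since $a_{m-1}\leq a_m$, this delivers the stated form of~\eqref{e.Xm.regbounds} at scale $m-1$.

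With~\eqref{e.Xm.regbounds} in hand at scale $m-1$, I would prove~\eqref{e.phi.m.m-1.bounds} at scale $m$ by working term-by-term in the representation~\eqref{e.psi.recursion}. On the support of $\hat{\zeta}_{m,l_k}\zeta_{m,k}$ one has $|t-l_k\tau_m''|\leq\tau_m''/2\lesssim\ep_{m-1}^{2-\beta+2\delta}$ by~\eqref{e.taubounds}, so the just-established~\eqref{e.Xm.regbounds} gives $\snorm{\nabla X_{m-1}^{-1}(t,\cdot,l_k\tau_m'')-\Itwo}_{n,2^{11}\ep_{m-1}^{-1}}\leq \tfrac12$ uniformly in $n$. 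Writing $X_{m-1}^{-1}(t,\cdot,l_k\tau_m'')=\mathrm{id}+h$ with $h$ of small Lipschitz norm, the analytic composition estimate applied to $\psi_{m,k}\circ(\mathrm{id}+h)$---using the base bound $\snorm{\psi_{m,k}}_{2\pi\ep_m^{-1}}\leq 5a_m\ep_m^2$ from~\eqref{e.psimk.snorm} and the comfortable room between the radii $2\pi\ep_m^{-1}$ and $2^7\ep_m^{-1}$ to accommodate the near-identity perturbation---should yield $\snorm{\psi_{m,k}\circ X_{m-1}^{-1}(t,\cdot,l_k\tau_m'')}_{n,2^7\ep_m^{-1}}\leq 6\ep_m^\beta$ at every order $n$. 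Summing over the at most two values of $k$ active at each $t$ (by the support property in~\eqref{e.zeta.mk}) and multiplying by the cutoffs (bounded by $1$) then gives~\eqref{e.phi.m.m-1.bounds} with constant $10$.

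Finally, I would derive~\eqref{e.C1beta.phi.m.m-1} by interpolating the $n=1,2$ cases of~\eqref{e.phi.m.m-1.bounds} via $[\nabla f]_{C^{0,\beta'}}\lesssim\|\nabla f\|_\infty^{1-\beta'}\|\nabla^2 f\|_\infty^{\beta'}$, and~\eqref{e.dt.Cbeta.phi.m.m-1} by differentiating~\eqref{e.psi.recursion} in $t$ and bounding the two resulting contributions: the cutoff derivatives, controlled by $\tau_m^{-1}\ep_m^\beta\lesssim\ep_m^{\beta-1}$ using $\tau_m\gtrsim\ep_{m-1}^{2-\beta}\gtrsim\ep_m$ (which follows from $q>1>2-\beta$), and the contribution from $\partial_t X_{m-1}^{-1}=-(\nabla X_{m-1}^{-1})\b_{m-1}$, bounded by $\|\b_{m-1}\|_\infty\ep_m^{\beta-1}\lesssim\ep_m^{\beta-1}$; a second $t$-derivative is controlled analogously, and interpolation concludes. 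The main obstacle, which will require careful bookkeeping in the third step, is to arrange the composition estimate so that it tracks both the analytic radius and the perturbation magnitude with absolute constants that close the induction with the specified numerical factors ($10$ and $2^{23}$) independently of $m$.
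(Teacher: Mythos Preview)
Your overall strategy is the right one and is close to what the paper does, but there are two genuine issues with the inductive loop you describe.

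First, your induction does not close on~\eqref{e.Xm.regbounds}. You assume both~\eqref{e.phi.m.m-1.bounds} and~\eqref{e.Xm.regbounds} for all $j\le m-1$, then re-derive~\eqref{e.Xm.regbounds} at scale $m-1$ (which was already in the hypothesis) and prove~\eqref{e.phi.m.m-1.bounds} at scale $m$---but you never establish~\eqref{e.Xm.regbounds} at scale $m$. The fix is either to add that step (bound $\b_m$ from the just-proved increment plus the telescoped $\phi_{m-1}$, then run the flow argument once more), or, more cleanly, to induct only on~\eqref{e.phi.m.m-1.bounds} and treat~\eqref{e.Xm.regbounds} as a lemma proved at the needed scale each time. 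The paper takes a variant of the second route: it inducts on a single analyticity bound for the \emph{full} stream function $\phi_{m-1}$, encoded via auxiliary sequences $R_m,M_m$ satisfying an explicit recursion; both the increment bound and the inverse-flow bound are then derived at each step, and the recursion is solved at the end to recover the stated constants.

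Second, the equation you write for $Z=\nabla X_{m-1}^{-1}-\Itwo$ is not an ODE in $t$: differentiating the transport equation for $X_{m-1}^{-1}$ gives $(\partial_t+\b_{m-1}\!\cdot\!\nabla)Z=-(\nabla\b_{m-1})(\Itwo+Z)$, which still carries the advection term. Along characteristics this does become an ODE, so your $L^\infty$ Gronwall for the $n=0$ case is salvageable, but the higher-order seminorm bounds cannot be obtained by ``differentiating the ODE in $x$''---each derivative produces new transport terms that must be handled. This is exactly what the paper's Lemma~\ref{l.transport.reg:shift} does: it applies directly to $Y=X_{m-1}^{-1}(\cdot,\cdot,s)-x$ as a solution of the transport equation with $\f=-\g=\b_{m-1}$, and outputs the full family of seminorm bounds with explicit constants in one stroke. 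You should invoke that lemma rather than the ODE formulation.
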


\begin{proof}[Proof of Proposition~\ref{p.SAMS.regularity}]
Let~$\{ M_m \}_{m\in\N} \subseteq (0,\infty)$ and~$\{ R_m \}_{m\in\N}\subseteq (0,\infty)$ be two sequences to be defined explicitly below (see \eqref{e.recurrence}). 
Suppose that, for some~$m\in\N$, we have 
\begin{equation}
\label{e.tauprime.indyass}
\tau^{\prime\prime}_m
\leq
(32 M_{m-1})^{-1}
\end{equation}
and 
\begin{equation}
\label{e.indyhyp}
\sup_{t \in \R}
\snorm{\phi_{m-1}}_{n,R_{m-1}} \leq M_{m-1} R_{m-1}^{-2} \frac{(n+2)^2}{(n+1)^3}
\,, 
\qquad \forall n\in \N \,, n\geq 2
\,.
\end{equation}
The assumption~\eqref{e.indyhyp} implies, for every $n\in\N$,
\begin{align}
\sup_{t\in \R} \snorm{\b_{m-1}}_{n,R_{m-1}}
&\leq 
\frac{(n+1)^2}{n! R_{m-1}^n} 
\sup_{t\in \R}  \norm{\nabla^{n+1} \phi_{m-1}}_{L^\infty(\R^2)}
\notag \\ 
&\leq 
\frac{(n+1)^2}{n! R_{m-1}^n} 
\sup_{t \in \R} \snorm{\phi_{m-1}}_{n+1,R_{m-1}} \frac{(n+1)! R_{m-1}^{n+1}}{(n+2)^2}
\notag \\  
&\leq  
M_{m-1} R_{m-1}^{-1}
\label{e.breg.indypath}
\,.
\end{align}
According to the definition \eqref{e.flow.m.def}, the backwards flow $X_{m-1}^{-1}$ solves $(\partial_t + \b_{m-1} \cdot \nabla) X_{m-1}^{-1} = 0$, and so with $Y = X_{m-1}^{-1}(\cdot,x,\cdot) - x$, we are in the setting of Lemma~\ref{l.transport.reg:shift}, with $\f =  -\g = \b_{m-1} $. The previously established estimate \eqref{e.breg.indypath} shows that assumptions \eqref{e.bear.salmon.1} hold with $C_\f = C_\g =  M_{m-1} R_{m-1}^{-1}$, and $R_\f=R_\g = R_{m-1}$. Here we emphasize that the assumption  \eqref{e.bear.salmon.1} is only required to hold for derivative indices $n$ with $n\geq 1$. According to \eqref{e.bear.salmon.2}, we thus define~$\bar{R}_{m-1}(t) := R_{m-1} ( 1 + 8 |t| M_{m-1})$.  By Lemma~\ref{l.transport.reg:shift}, for every $s,t\in \R$ such that $|t|\leq (8 M_{m-1})^{-1}$, and for every $n \in \N$,  
\begin{equation}
\label{e.flowinv2.pre}
\snorm{ X^{-1}_{m-1}(t+s,\cdot,s) - x }_{n,\bar{R}_{m-1}(t)}
\leq
16 |t| M_{m-1} R_{m-1}^{-1}
\,.
\end{equation}
Observe that if $|t| \leq (64 M_{m-1})^{-1}$, then $\bar{R}_{m-1}(t) \leq \frac 98 R_{m-1}$ and~\eqref{e.flowinv2.pre} implies that for all $n\geq 1$ 
\begin{align}
\label{e.flowinv2}
\snorm{ X^{-1}_{m-1}(t+s,\cdot,s) }_{n,\frac 98 R_{m-1}}
&
\leq
\snorm{ X^{-1}_{m-1}(t+s,\cdot,s) - x }_{n,\bar{R}_{m-1}(t)}
+
\snorm{ x }_{n,\frac 98 R_{m-1}}
\notag \\ &
\leq
16 |t| M_{m-1} R_{m-1}^{-1} + 4 \cdot \tfrac 89 R_{m-1}^{-1} 
\leq
5 \cdot \tfrac 89 R_{m-1}^{-1}
\,.
\end{align}
We now have all the necessary ingredients for estimating the second term in~\eqref{e.psi.recursion} using Proposition~\ref{prop:compose:analytic}. With the help of  \eqref{e.psimk.snorm}, \eqref{e.tauprime.indyass}, \eqref{e.flowinv2},~and~\eqref{e.barf.3} we obtain, for every $t \in\R$ and every $n\in \NN$ with $n\geq 2$ that 
\begin{align}
\snorm{( \phi_m-\phi_{m-1})(t,\cdot) }_{n,\frac 98 R_{m-1}+\frac{20 \pi}{\ep_m}}
& 
\leq
2\sup_{k\in\Z}
\sup_{t \in \supp \hat{\zeta}_{m,l_k} \zeta_{m,k}}
\snorm{\psi_{m,k} \circ X_{m-1}^{-1}(t,\cdot,l_k\tau_m^{\prime\prime}) }_{n,\frac 98 R_{m-1}+\frac{20 \pi}{\ep_m}}
\notag \\ & 
\leq
10 a_m\ep_m^2 
\label{e.phimdiff}
\,.
\end{align}
Here we have used that if $t \in \supp \hat{\zeta}_{m,l_k}$, then by \eqref{e.zeta.prime.ml.fitting} we have that  $|t - l_k \tau^{\prime\prime}_m| \leq \frac 12 \tau^{\prime\prime}_m$, and thus~\eqref{e.tauprime.indyass} implies $|t - l_k \tau^{\prime\prime}_m| \leq (64 M_{m-1})^{-1}$.  
Therefore, by the induction assumption~\eqref{e.indyhyp}, the bound \eqref{e.phimdiff}, and the monotonicity of $\snorm{\cdot}_{n,R}$ with respect to $R$, for all $n\geq 2$,  $t\in\R$, and $R_m \geq \frac 98 R_{m-1}+\frac{20 \pi}{\ep_m} > R_{m-1}$, we have 
\begin{align*}
\snorm{\phi_m(t,\cdot)}_{n, R_m}
&
\leq
\snorm{\phi_{m-1}(t,\cdot)}_{n,R_m}
+
\snorm{(\phi_m-\phi_{m-1})(t,\cdot)}_{n,R_m}
\\ & 
\leq 
M_{m-1}  R_{m-1}^{-2} \frac{(n+2)^2}{(n+1)^3} \left(\frac{R_{m-1}}{R_m}\right)^{\!\!n}
+
10a_m\ep_m^2   \left(\frac{\frac 98 R_{m-1}+\frac{20 \pi}{\ep_m}}{R_m}\right)^{\!\!n}
\\ & 
\leq M_m R_m^{-2} \frac{(n+2)^2}{(n+1)^3}
\left(
\frac{M_{m-1}}{M_m} \left(\frac{R_{m-1}}{R_m}\right)^{\!\! n-2}
+
\frac{10a_m\ep_m^2 R_m^2  (n+1)}{M_m}  \left(\frac{\frac 98 R_{m-1}+\frac{20 \pi}{\ep_m}}{R_m}\right)^{\!\! n}
\right)
\,.
\end{align*}
Thus, if we make the choice $R_m \geq 2 (\frac 98 R_{m-1}+\frac{20 \pi}{\ep_m})$, since $(n+1) 2^{-n+2} \leq 3$ for $n\geq 2$, we   arrive at
\begin{align*}
\snorm{\phi_m(t,\cdot)}_{n, R_m}
&
\leq M_m R_m^{-2} \frac{(n+2)^2}{(n+1)^3}
\left(
\frac{M_{m-1}}{M_m}  
+
\frac{30a_m\ep_m^2 (\frac 98 R_{m-1}+\frac{20 \pi}{\ep_m})^2  }{M_m}   \right)
\,.
\end{align*}
With an eye on the induction hypothesis~\eqref{e.indyhyp} with $m$ in place of $m-1$, this motivates the recursion  
\begin{align}
\label{e.recurrence}
\left\{
\begin{aligned}
&R_m:= \tfrac 94 R_{m-1} + 2^7 \ep_m^{-1}  \\
& M_m :=  M_{m-1} + 2^7 a_m \eps_m^2 R_{m-1}^2 + 2^{18} a_m 
\end{aligned}
\right. ,
\end{align}
for all $m\geq 1$.
Here we have used that $40 \pi \leq 2^7$, $60 \cdot (\frac 98)^2 \leq 2^7$, and $60 \cdot (20 \pi)^2 \leq 2^{18}$.
We now take the recurrence~\eqref{e.recurrence} to be the definition of the sequences $\{ M_m\}$ and $\{ R_m \}$, starting from~$M_0:=1$ and $R_0:=1$.

\smallskip

We next analyze the recurrence relation~\eqref{e.recurrence}. Observe that 
\begin{equation*}
\ep_{m+1}R_m =  \Bigl( \frac{9 \ep_{m+1}}{4\ep_m} \Bigr) \ep_m R_{m-1} 
+ 2^7  \Bigl( \frac{\ep_{m+1}}{\ep_m} \Bigr). 
\end{equation*}
Recall from~\eqref{e.minsep} and the fact that $\Lambda \geq 2^7$ that  
\begin{equation}
\label{e.epkcondition}
\frac{\ep_{m+1}}{\ep_m} \leq  \frac1{2^7}, 
\quad \forall m\in\N
\,. 
\end{equation}
We deduce by induction that, for every $m\in\N$ with $m\geq 1$,
\begin{equation}
\label{e.epk1Rk}
\ep_{m+1} R_{m} \leq \tfrac 54.
\end{equation}
Inserting the bound \eqref{e.epk1Rk} back into the first line of~\eqref{e.recurrence} yields 
\begin{equation}
\label{e.Rmbound}
R_m \leq (3 + 2^7) \ep_{m}^{-1}\,. 
\end{equation}
Inserting the bound \eqref{e.epk1Rk} into second line in \eqref{e.recurrence}, we obtain that 
\begin{equation*}
M_{m} \leq M_{m-1} +   (2^8 + 2^{18}) a_m  \,,
\end{equation*} 
and thus by appealing to \eqref{e.minsep}, $\beta< 4/3$, $\Lambda \geq 2^7$,  and~\eqref{e.am.def}, for every $m\in\N$ we obtain the bound
\begin{align}
\label{e.Mmbound}
M_m 
\leq (2^8 + 2^{18})\sum_{j=0}^m a_j  
\leq  (2^8 + 2^{18}) a_m  \sum_{j\geq 0} \Lambda^{j(\beta-2)} 
\leq  \frac{2^8 + 2^{18}}{1 - \Lambda^{\beta-2}}  a_m 
\leq  \frac{2^8 + 2^{18}}{1 - 2^{-\nicefrac{14}{3}}}  a_m 
\leq  2^{19} a_m .
\end{align}
In view of~\eqref{e.taubounds}, using \eqref{e.Rmbound} and \eqref{e.Mmbound} we find that 
\begin{align*}
\tau^{\prime\prime}_m
\leq 
2^{-24} a_{m-1}^{-1}  \ep_{m-1}^{\delta}
\leq 
2^{-24} a_{m-1}^{-1}  
\leq
2^{-5}  M_{m-1}^{-1}
\,. 
\end{align*}
That is, the hypothesis~\eqref{e.tauprime.indyass} is in fact valid for every $m\in\N$ and is therefore superfluous. 

\smallskip

By induction, we may conclude now that~\eqref{e.indyhyp} holds for every $m\in\N$. Moreover, we have shown that~\eqref{e.flowinv2} and~\eqref{e.phimdiff} are valid for every $m\in\N$, for $n\geq 1$ and respectively $n\geq 2$.
Substituting~\eqref{e.Rmbound},~\eqref{e.Mmbound}, and the bound $2^7\ep_m^{-1} \leq R_m \leq 2^8\ep_m^{-1} $ into the these bounds yields,
for~$m\in\N$ and all $n\in \N$ with $n\geq 2$, that
\begin{align}
\label{e.phimbounds.subbed}
\sup_{t\in\R} \snorm{\phi_{m}(t,\cdot)}_{n,2^8 \ep_{m}^{-1}}
&\leq
2^{5} a_{m} \ep_{m}^{2}   \frac{(n+2)^2}{(n+1)^3} \,, 
\\
\label{e.phimdiff.buff}
\sup_{t\in \R} \snorm{(\phi_m-\phi_{m-1})(t,\cdot) }_{n,2^{7} \ep_m^{-1}}
& 
\leq
10 a_m\ep_m^2 = 10 \ep_m^\beta
\,,
\end{align}
and for every~$s,t\in \R$ with $|t|\leq (2^{25}a_{m})^{-1}$, and all $n\in \N$, that
\begin{equation}
\label{e.flowinv2.subbed}
\snorm{ X^{-1}_{m}(t+s,\cdot,s) - x }_{n, 2^{9}\ep_{m}^{-1}}
\leq
2^{16}  |t|  a_{m} \ep_{m}
\,.
\end{equation}
The bound~\eqref{e.phimdiff.buff} implies~\eqref{e.phi.m.m-1.bounds} for $n \in \NN$ with $n\geq 2$, while the estimate~\eqref{e.flowinv2.subbed} yields~\eqref{e.Xm.regbounds} for every $n\in\N_0$, upon noting that 
\begin{equation*}
\snorm{ \nabla X^{-1}_{m}(t+s,\cdot,s) - \Itwo}_{n,2^{11} \ep_m^{-1}}
\leq 
\frac{ (n+1)^3 2^{9 - 2n} \ep_m^{-1}}{(n+2)^2} \snorm{ X^{-1}_{m}(t+s,\cdot,s) - \cdot}_{n+1,2^{9} \ep_m^{-1}} 
\leq
2^{23} |t| a_m \,.
\end{equation*}

\smallskip

In order to get~\eqref{e.phi.m.m-1.bounds} for~$n=0$, we use the definition of $\phi_m$ in \eqref{e.psi.recursion} and obtain
\begin{align*}
\norm{\phi_m-\phi_{m-1}}_{L^\infty(\R\times\R^2)} \leq a_m \ep_m^2 = \ep_m^\beta
\,.
\end{align*}
For the~$n=1$ bound, we interpolate between the above estimate and~\eqref{e.phimdiff.buff} with $n=2$, to obtain
\begin{equation}
\norm{ \nabla(\phi_m-\phi_{m-1})}_{L^\infty(\R\times\R^2)} 
\leq \bigl( \ep_m^\beta \bigr)^{\nicefrac 12} 
\bigl( 10  a_m \ep_m^2 \tfrac 29 \cdot ( 2^7 \ep_m^{-1})^2 \bigr)^{\nicefrac 12}
\leq   2^8 \ep_m^{\beta-1}
\,,
\label{e.nabphimm1bound}
\end{equation}
and thus
\begin{equation*}
\sup_{t\in \R} \snorm{(\phi_m-\phi_{m-1})(t,\cdot) }_{1,2^{7} \ep_m^{-1}}
\leq 8 \ep_m^{\beta} 
\leq 10 \ep_m^{\beta}  \,.
\end{equation*}
Thus we have also proved~\eqref{e.phi.m.m-1.bounds} for $n\in \{0,1\}$, and hence in view of \eqref{e.phimdiff.buff} for every $n \in \N_0$. 

\smallskip
For future purposes, we note at this stage that upon telescoping the bound~\eqref{e.phi.m.m-1.bounds} for $n\in \{0,1\}$, similarly to \eqref{e.Mmbound} we obtain 
\begin{align}
\sup_{t\in\R} \snorm{\phi_{m}(t,\cdot)}_{n,2^8 \ep_{m}^{-1}}
&\leq \sum_{j=0}^m  \sup_{t\in\R} \snorm{(\phi_{j} - \phi_{j-1})(t,\cdot)}_{n,2^7 \ep_{j}^{-1}} \Biggl( \frac{\ep_j^{-1}}{2 \ep_m^{-1}}\Biggr)^n 
\notag\\
&\leq 10 \cdot 2^{-n} \ep_m^n \sum_{j=0}^m   \ep_j^{\beta-n}
\leq 
10 \cdot 2^{-n} \ep_m^n \sum_{j=0}^m   \Lambda^{-j(\beta-n)}
\leq
\begin{cases}
11, &n= 0, \\
\frac{3 }{\beta-1} \ep_m , &n=1, 
\end{cases}
\label{e.phimbounds.subbed.2}
\end{align}
for $n\in \{0,1\}$. Here we have used that $\beta>1$ and $\Lambda\geq 2^7$. The above estimate with $n=1$ then immediately implies 
\begin{equation}
\norm{\b_{m}}_{L^\infty(\R \times \R^2)}  \leq 2^7 \ep_m^{-1} \sup_t \snorm{\phi_m}_{1,2^8 \ep_m^{-1}} \leq \nicefrac{2^{9}}{(\beta-1)} \,,
\label{e.bm.uniform}
\end{equation}
which shows that the sequence of vector fields $\{ \b_m \}_{m\geq 0}$ is uniformly bounded in space-time, uniformly in $m$. 

\smallskip
In order to conclude the proof, we need to still consider the bounds \eqref{e.C1beta.phi.m.m-1} and \eqref{e.dt.Cbeta.phi.m.m-1}. The first one, namely \eqref{e.C1beta.phi.m.m-1}, follows by interpolating the bounds in \eqref{e.phi.m.m-1.bounds} when $n=1$ and $n=2$. For the H\"older regularity of the time derivative, we  differentiate the expression~\eqref{e.psi.recursion} in time to obtain
\begin{align}
\label{e.psi.recursion.partial.t}
\partial_t 
\bigl( \phi_m - \phi_{m-1} \bigr) 
&
= 
\sum_{k\in\Z} 
\partial_ t \bigl(\hat{\zeta}_{m,l_k} \zeta_{m,k}\bigr) 
\stream_{m,k} \circ \flow_{m-1,l_k}^{-1} 
\notag \\ & \quad 
-
\sum_{k\in\Z} 
\hat{\zeta}_{m,l_k} \zeta_{m,k}
\nabla \stream_{m,k} \circ \flow_{m-1,l_k}^{-1}
\cdot
\b_{m-1} \cdot\nabla \flow_{m-1,l_k}^{-1} 
\,.
\end{align}
Using~\eqref{e.def.streamr},~\eqref{e.zeta.mk},~\eqref{e.zeta.prime.ml.bounds},~\eqref{e.bm.uniform}, and~\eqref{e.nabphimm1bound}, we obtain
\begin{align}
&\norm{\partial_t ( \phi_m - \phi_{m-1} ) }_{L^\infty(\R\times\R^2)}
\notag\\
& 
\qquad \leq
2 \norm{ \partial_t \bigl( \hat{\zeta}_{m,l_k} \zeta_{m,k} \bigr) }_{L^\infty(\R)} 
\sup_{k\in\Z} \norm{ \stream_{m,k} }_{L^\infty(\R^2)}  
\notag\\
&\qquad \qquad
+
2 \norm{\b_{m-1}}_{L^\infty(\R\times\R^2)} \sup_{k\in\Z}  \norm{\nabla \stream_{m,k}}_{L^\infty(\R^2)}
\sup_{t \in \supp \hat{\zeta}_{m,l_k} \zeta_{m,k}} \norm{\nabla X_{m-1,l_k}^{-1}}_{L^\infty(\R^2)}
\notag \\ & 
\qquad \leq
 C a_m \ep_m^2  \tau_{m}^{-1}
+
C a_m\ep_m  
\notag \\ & 
\qquad 
\leq
C 
\bigl(\ep_{m-1}^{q - (2-\beta+4\delta)}
+
1\bigr) 
\ep_m^{\beta-1} 
\leq
C  \ep_m^{\beta-1} 
\,.
\label{e.phim.diff.dt}
\end{align}
The exponent  of $\ep_{m-1}$ in the  last line was computed using~\eqref{e.beta.def.0},~\eqref{e.delta}, and~\eqref{e.supergeo}. Here we have also used that $\beta>1$ and $q>1$. By applying $\nabla^\perp$ to \eqref{e.psi.recursion.partial.t}, similarly to \eqref{e.phim.diff.dt} we deduce
\begin{align*}
&\norm{\partial_t ( \b_m - \b_{m-1} ) }_{L^\infty(\R\times\R^2)}
\notag\\
& 
\qquad \leq
2 \norm{ \partial_t \bigl( \hat{\zeta}_{m,l_k} \zeta_{m,k} \bigr) }_{L^\infty(\R)} 
\sup_{k\in\Z} \norm{\nabla \stream_{m,k} }_{L^\infty(\R^2)}  
\sup_{t \in \supp \hat{\zeta}_{m,l_k}\zeta_{m,k}} \norm{\nabla X_{m-1,l_k}^{-1}}_{L^\infty(\R^2)}
\notag\\
&\qquad \qquad
+
2 \norm{\b_{m-1}}_{L^\infty(\R\times\R^2)} \sup_{k\in\Z}  \norm{\nabla^2 \stream_{m,k}}_{L^\infty(\R^2)}
\sup_{t \in \supp \hat{\zeta}_{m,l_k}\zeta_{m,k}} \norm{\nabla X_{m-1,l_k}^{-1}}_{L^\infty(\R^2)}^2
\notag\\
&\qquad \qquad
+
2 \norm{\b_{m-1}}_{L^\infty(\R\times\R^2)} \sup_{k\in\Z}  \norm{\nabla  \stream_{m,k}}_{L^\infty(\R^2)}
\sup_{t \in \supp \hat{\zeta}_{m,l_k}\zeta_{m,k}} \norm{\nabla^2 X_{m-1,l_k}^{-1}}_{L^\infty(\R^2)}
\notag \\
&\qquad \qquad
+
2 \norm{\nabla \b_{m-1}}_{L^\infty(\R\times\R^2)} \sup_{k\in\Z}  \norm{\nabla  \stream_{m,k}}_{L^\infty(\R^2)}
\sup_{t \in \supp \hat{\zeta}_{m,l_k}\zeta_{m,k}} \norm{\nabla X_{m-1,l_k}^{-1}}_{L^\infty(\R^2)} 
\notag\\
&\qquad \leq C a_m \ep_m \tau_m^{-1} + C a_m + C a_m \ep_m a_{m-1}
\notag\\
&\qquad \leq  C a_m 
= C \ep_m^{\beta-2} \,.
\end{align*}
The upshot of the above estimate is that, after telescoping, we arrive at 
\begin{equation}
\norm{\partial_t  \b_m   }_{L^\infty(\R\times\R^2)} \leq C \sum_{j=0}^m \ep_j^{\beta-2} \leq C \ep_m^{\beta-2} \,.
\label{e.dt.bm}
\end{equation}
Next, we apply one more time derivative to \eqref{e.psi.recursion.partial.t} to obtain
\begin{align}
\label{e.psi.recursion.partial.tt}
&\partial_t^2 
\bigl( \phi_m - \phi_{m-1} \bigr) (t,x)\notag\\
&= 
\sum_{k\in\Z} 
\partial_t^2\bigl(\hat{\zeta}_{m,l_k} \zeta_{m,k}\bigr)
\stream_{m,k} \circ \flow_{m-1,l_k}^{-1} 
\notag \\ & \quad 
-2
\sum_{k\in\Z} 
\partial_t \bigl(\hat{\zeta}_{m,l_k} \zeta_{m,k}\bigr)
\nabla \stream_{m,k} \circ \flow_{m-1,l_k}^{-1}
\cdot
\b_{m-1} \cdot\nabla \flow_{m-1,l_k}^{-1} 
\notag \\ &\quad 
+ 
\sum_{k\in\Z} 
\hat{\zeta}_{m,l_k} \zeta_{m,k}
\nabla^2 \stream_{m,k} \circ \flow_{m-1,l_k}^{-1}
\colon
\bigl(
\b_{m-1} \cdot\nabla \flow_{m-1,l_k}^{-1} 
\bigr)
\otimes
\bigl(
\b_{m-1} \cdot\nabla \flow_{m-1,l_k}^{-1} 
\bigr)
\notag \\ &\quad 
- \sum_{k\in\Z} 
\hat{\zeta}_{m,l_k} \zeta_{m,k}
\nabla \stream_{m,k} \circ \flow_{m-1,l_k}^{-1}
\cdot
\Bigl(\partial_t \b_{m-1} \cdot\nabla \flow_{m-1,l_k}^{-1}
- \b_{m-1} \cdot \nabla \bigl(\b_{m-1} \cdot\nabla \flow_{m-1,l_k}^{-1}\bigr)
\Bigr)
\,.
\end{align}
Similarly to \eqref{e.phim.diff.dt}, and appealing in addition to the estimate \eqref{e.dt.bm}, we obtain
\begin{align}
\norm{\partial_t^2 ( \phi_m - \phi_{m-1} ) }_{L^\infty(\R\times\R^2)}
&\leq C a_m \ep_m^2 \tau_m^{-2} + C a_m \ep_m \tau_m^{-1} + C a_m 
+ C a_m \ep_m \bigl( \ep_{m-1}^{\beta-2} + a_{m-1}\bigr)
\notag\\
&\leq C a_m
= 
C \ep_{m}^{\beta-2} \,.
\label{e.phim.diff.dt2}
\end{align}
The claimed estimate \eqref{e.dt.Cbeta.phi.m.m-1} now follows from \eqref{e.phim.diff.dt} and \eqref{e.phim.diff.dt2} by interpolation, concluding the proof of the Proposition.
\end{proof}

\begin{corollary}
\label{c.phim}
There exists a~$C \in [1, \infty)$ which only depends on $\beta$, such that
for every $m\in\N$,
\begin{equation}
\label{e.phi.m.bounds}
\sup_{t\in \R} \snorm{\phi_{m}(\cdot,t)}_{n, C \ep_m^{-1}}
\leq C \ep_m^{\beta}
 \,, 
\qquad \forall n\in \N\,, n\geq 2
\,,
\end{equation}
and
\begin{equation}
\label{e.C1beta.phi.m}
\sup_{t\in \R} \snorm{\phi_{m}(\cdot,t)}_{n, C \ep_m^{-1}}
\leq C \ep_m^{n}
 \,, 
\qquad \forall n\in \{0,1\}
\,.
\end{equation}
For every $\beta' \in (0, \beta-1)$, the stream function~$\phi$ belongs to~$C^0(\R;C^{1,\beta'}(\R^2)) \cap C^{1,\beta'}(\R;L^\infty(\R^2))$, and in particular, the vector field~$\b$ belongs to~$C^0(\R;C^{0,\beta'}(\R^2)) \cap C^{0,\beta'}(\R;L^\infty(\R^2)) $. 
\end{corollary}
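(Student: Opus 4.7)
\textbf{Proof plan for Corollary~\ref{c.phim}.} The corollary is obtained by carefully telescoping the per-scale estimates of Proposition~\ref{p.SAMS.regularity} against $\phi_m = \sum_{j=1}^m (\phi_j - \phi_{j-1})$ (valid since $\phi_0 = 0$), and exploiting the super-geometric decay $\ep_j \geq \Lambda^{m-j}\ep_m$ implied by~\eqref{e.minsep} to sum the resulting series. The key mechanical identity is the monotonicity formula $\snorm{f}_{n,R'} = (R/R')^n \snorm{f}_{n,R}$ whenever $R'\geq R$.

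For~\eqref{e.phi.m.bounds} and~\eqref{e.C1beta.phi.m}, I will take $C=2^7$ and for each $j\leq m$ convert $\snorm{\phi_j-\phi_{j-1}}_{n,2^7\ep_m^{-1}}$ into $(\ep_m/\ep_j)^n\snorm{\phi_j-\phi_{j-1}}_{n,2^7\ep_j^{-1}}$, which by~\eqref{e.phi.m.m-1.bounds} is bounded by $10(\ep_m/\ep_j)^n\ep_j^{\beta}$. Summing produces $\snorm{\phi_m}_{n,2^7\ep_m^{-1}}\leq 10\,\ep_m^n\sum_{j=1}^m \ep_j^{\beta-n}$. For $n\geq 2$ the exponent $\beta-n$ is negative (recall $\beta<\tfrac43$), so $\ep_j^{\beta-n}$ grows with $j$; using $\ep_j\geq\Lambda^{m-j}\ep_m$ and $\Lambda\geq 2^7$, the series is geometric and comparable to its last term $\ep_m^{\beta-n}$, yielding $\snorm{\phi_m}_{n,2^7\ep_m^{-1}}\leq C\ep_m^\beta$. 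For $n\in\{0,1\}$ the exponent $\beta-n\geq\beta-1>0$ and the series is dominated by the first term, yielding $\snorm{\phi_m}_{n,2^7\ep_m^{-1}}\leq C\ep_m^n$; this case is in fact already contained in~\eqref{e.phimbounds.subbed.2} of the Proposition.

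For the regularity of $\phi$ itself I will show $\{\phi_m\}$ is Cauchy in the target spaces. The bound~\eqref{e.C1beta.phi.m.m-1} gives $\|\phi_m-\phi_{m-1}\|_{L^\infty_t C^{1,\beta'}_x}\leq 2^{12}\ep_m^{\beta-1-\beta'}$; since $\beta'<\beta-1$ the exponent is positive and $\{\ep_m\}$ decays super-geometrically, so the series $\sum_m\ep_m^{\beta-1-\beta'}$ converges. Hence $\phi_m\to\phi$ in $C^0(\R;C^{1,\beta'}(\R^2))$, and applying $\nabla^\perp$ yields $\b\in C^0(\R;C^{0,\beta'}(\R^2))$. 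The time regularity of $\phi$, namely $\phi\in C^{1,\beta'}(\R;L^\infty(\R^2))$, is obtained the same way from~\eqref{e.dt.Cbeta.phi.m.m-1}.

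The only non-routine point---and the closest thing to an obstacle---is the time regularity of $\b$, because the bound~\eqref{e.dt.Cbeta.phi.m.m-1} controls the time regularity of $\phi$ but not of $\nabla\phi$. Here I will argue by interpolation on the $\b_m$ level: from~\eqref{e.nabphimm1bound} one has $\|\b_m-\b_{m-1}\|_{L^\infty_{t,x}}\leq 2^8\ep_m^{\beta-1}$, and the individual-scale bound $\|\partial_t(\b_m-\b_{m-1})\|_{L^\infty_{t,x}}\leq C\ep_m^{\beta-2}$ (which is precisely the per-step increment underlying the telescoped estimate~\eqref{e.dt.bm}, and which follows by applying $\nabla^\perp$ to~\eqref{e.psi.recursion.partial.t} and estimating each factor as in the proof of~\eqref{e.phim.diff.dt}) interpolates against the previous bound to give
\begin{equation*}
\|\b_m-\b_{m-1}\|_{C^{0,\beta'}_t L^\infty_x}
\leq C\,\ep_m^{(\beta-1)(1-\beta')}\,\ep_m^{(\beta-2)\beta'}
= C\,\ep_m^{\beta-1-\beta'},
\end{equation*}
again summable for $\beta'<\beta-1$. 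This shows $\b_m\to\b$ in $C^{0,\beta'}(\R;L^\infty(\R^2))$, completing the proof.
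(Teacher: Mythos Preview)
Your proposal is correct and follows essentially the same approach as the paper. The only minor differences are that for~\eqref{e.phi.m.bounds} with $n\geq 2$ the paper simply quotes the intermediate bound~\eqref{e.phimbounds.subbed} (obtained directly from the induction on~$M_m$) rather than re-telescoping, and for the time regularity of~$\b$ the paper just writes ``follows from that of~$\phi$ by interpolation'' without spelling out the increment-level argument you give---your version makes that step explicit and is a correct reading of what is intended.
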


\begin{proof}[Proof of Corollary~\ref{c.phim}]
The bounds for the derivatives of $\phi_m$ of order $n$ with $n\geq 2$, claimed in \eqref{e.phi.m.bounds}, were already established in 
\eqref{e.phimbounds.subbed}. The estimate \eqref{e.C1beta.phi.m} was proven earlier in \eqref{e.phimbounds.subbed.2}. The claimed regularity of $\phi = \lim_{m\to \infty} \phi_m$ follows by telescoping sum $\phi_m = \sum_{j=1}^m  (\phi_j - \phi_{j-1}  )$, appealing to \eqref{e.C1beta.phi.m.m-1} and \eqref{e.dt.Cbeta.phi.m.m-1}, and using the fact that   by~\eqref{e.minsep} we have
\begin{equation*}
\sum_{n=1}^m \ep_m^p \leq 
C|p|^{-1} 
\cdot
\left\{
\begin{aligned}
& 1 & \ \text{if} \ p > 0, \\
& \ep_{m}^p & \ \text{if} \ p < 0. 
\end{aligned}
\right.
\end{equation*}
The regularity of $\b = \nabla^\perp \phi = \lim_{m\to \infty} \b_m$ follows from that of $\phi$ by interpolation.  
\end{proof}

By combining the estimates established in Proposition~\ref{p.SAMS.regularity} with the results of Proposition~\ref{p.ODE.flow}, we obtain the following useful results.
\begin{corollary}
\label{c.flowreg}
For every $s,t \in\R$ with $|t| \leq 2^{-25} a_m^{-1}$, we have that 
\begin{align}
\label{e.Xm.bound.1}
\norm{ \nabla X_{m}(t+s,\cdot,s) - \Itwo }_{L^\infty(\R^2)}
\leq
2^{23} |t| a_{m} 
\leq \tfrac 14
\,.
\end{align}
Moreover, for all $s,t \in\R$ with $|t| \leq 2^{-25} a_m^{-1}$, we have
\begin{align}
\snorm{ \nabla X_m(t+s,X_m^{-1}(t+s,\cdot,s),s) - \Itwo }_{n, 2^{10} \ep_m^{-1}} 
\leq 40 \,,
\qquad \forall  n \in \N_0
\,,
\label{e.Xm.bound.2}
\end{align}
and
\begin{equation}
\snorm{ \nabla X_m(t+s,\cdot,s)}_{n, 2^{14} \ep_m^{-1}} 
\leq 12\,,  
\qquad \forall  n \in \N 
\,.
\label{e.Xm.bound.3}
\end{equation}
In particular, \eqref{e.Xm.bound.1} and \eqref{e.Xm.bound.3} imply that 
\begin{align}
\sup_{|t| \leq 2^{-25} a_m^{-1}}  \norm{\nabla^n X_m(t+s,\cdot,s)}_{L^\infty(\R^2)} 
\leq 2 n! (2^{13} \ep_m^{-1})^{n-1}\,,  
\qquad \forall  n \in \N 
\,.
\label{e.Xm.bound.4}
\end{align}
\end{corollary}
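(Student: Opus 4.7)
The proof is a direct corollary of Proposition~\ref{p.SAMS.regularity} combined with the abstract ODE flow estimates given by Proposition~\ref{p.ODE.flow} in the appendix. The key starting point is the pointwise identity
\begin{equation*}
\nabla X_m(t+s, X_m^{-1}(t+s, x, s), s)
=
\bigl[\nabla X_m^{-1}(t+s, x, s)\bigr]^{-1},
\end{equation*}
obtained by differentiating $X_m(t+s, X_m^{-1}(t+s, x, s), s) = x$ in $x$. Under the hypothesis $|t| \leq 2^{-25} a_m^{-1}$, the estimate~\eqref{e.Xm.regbounds} yields $\snorm{\nabla X_m^{-1} - \Itwo}_{n, 2^{11}\ep_m^{-1}} \leq 2^{23} |t| a_m \leq 2^{-2}$, which is small enough to invert $\nabla X_m^{-1}$ both pointwise (for~\eqref{e.Xm.bound.1}) and in the analytic seminorm (for~\eqref{e.Xm.bound.2}).

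For~\eqref{e.Xm.bound.1}, I would apply the Neumann series to $M := \nabla X_m^{-1}$ in $L^\infty$, giving $\norm{M^{-1} - \Itwo}_{L^\infty} \leq \tfrac{4}{3} \cdot 2^{23} |t| a_m$ from the $n=0$ case of~\eqref{e.Xm.regbounds}. Precomposing with $X_m$ (to evaluate at $x$ rather than at $X_m^{-1}(x)$) preserves $L^\infty$ norms, and adjusting the constants slightly yields the displayed bound, which in turn is at most $\tfrac 14$ by the time restriction.

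For~\eqref{e.Xm.bound.2}, I would apply the analytic inversion estimate that follows from Proposition~\ref{prop:compose:analytic}: because the matrix-valued function $\nabla X_m^{-1}$ has small analytic seminorm at radius $2^{11}\ep_m^{-1}$, the Neumann series $\sum_{k\geq 0}(\Itwo - \nabla X_m^{-1})^k$ can be summed term-by-term in the $\snorm{\cdot}_{n,R}$ norm after passing to the slightly smaller radius $2^{10}\ep_m^{-1}$ (to absorb the inevitable loss in the analytic product formula). This is exactly the quantity $\nabla X_m \circ X_m^{-1}$, and the explicit constant $40$ is obtained by tracking the geometric series with a conservative choice of ratio of radii.

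For~\eqref{e.Xm.bound.3}, I would post-compose the bound~\eqref{e.Xm.bound.2} with $X_m$ itself: since $X_m - \mathrm{id}$ is controlled by~\eqref{e.Xm.bound.1} at the pointwise level, and its higher derivatives are controlled by composing this with the estimates on $X_m^{-1}$, Proposition~\ref{prop:compose:analytic} applied to $\nabla X_m = (\nabla X_m \circ X_m^{-1}) \circ X_m$ gives the result at the further enlarged radius $2^{14}\ep_m^{-1}$, with the concrete constant $12$. Finally,~\eqref{e.Xm.bound.4} is a direct translation of~\eqref{e.Xm.bound.3} via the definition~\eqref{e.barf} of $\snorm{\cdot}_{n,R}$, together with the trivial bound $\norm{\nabla X_m}_{L^\infty} \leq \tfrac{5}{4}$ from~\eqref{e.Xm.bound.1} to cover the $n=1$ case. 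The main technical point throughout is the coordinated choice of the radii $2^{11}\ep_m^{-1}$, $2^{10}\ep_m^{-1}$, $2^{14}\ep_m^{-1}$ so that inversion and composition compose cleanly with $m$-independent absolute constants; once the appendix is in hand, the rest is bookkeeping.
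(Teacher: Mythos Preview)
Your route differs from the paper's, and for~\eqref{e.Xm.bound.3} it has a real gap.

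The paper does not start from~\eqref{e.Xm.regbounds}. Instead it first derives an analytic bound on the vector field itself,
\[
\sup_{t}\snorm{\b_m(t,\cdot)}_{n,2^8\ep_m^{-1}}\leq 2^{14}a_m\ep_m,
\]
from the $\phi_m$ estimates, and then feeds this into the abstract flow propositions with $\f=\b_m$, $C_\f=2^{14}a_m\ep_m$, $R_\f=2^8\ep_m^{-1}$. Then~\eqref{e.Xm.bound.1} comes from~\eqref{eq:grad:psi},~\eqref{e.Xm.bound.2} from~\eqref{e.ODE.flow.estimate}, and~\eqref{e.Xm.bound.3} from~\eqref{e.ODE.flow.estimate:new} (Proposition~\ref{p.ODE.flow.2}). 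Your plan for~\eqref{e.Xm.bound.1}--\eqref{e.Xm.bound.2} via matrix inversion of $\nabla X_m^{-1}$ is close in spirit to what happens \emph{inside} Proposition~\ref{p.ODE.flow} (there via the cofactor formula, since $\det\nabla X_m^{-1}=1$), so that part is morally fine, though your remark about ``passing to a smaller radius'' is backwards: in~\eqref{e.barf} the seminorm \emph{decreases} as $R$ grows, so shrinking the radius hurts rather than helps.

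The gap is~\eqref{e.Xm.bound.3}. Writing $\nabla X_m=(\nabla X_m\circ X_m^{-1})\circ X_m$ and invoking Proposition~\ref{prop:compose:analytic} requires analytic bounds on the inner function $X_m$, which is precisely what you are trying to prove. If you attempt this as an induction on the order $n$, each step multiplies the composition radius by a factor $1+dC_gR_h$, and this growth compounds, so the induction does not close with $m$-independent constants. The paper avoids this by proving~\eqref{e.Xm.bound.3} through a completely separate argument (Proposition~\ref{p.ODE.flow.2}): one differentiates the ODE $\partial_t X=\f(t,X)$ directly, applies Fa\`a di Bruno to $\f\circ X$, and closes an induction on $n$ using a carefully chosen time-dependent radius $R(t)=8dR_\f(1+B|t|)$ together with the resummation Lemma~\ref{lem:contract:Faa:1}. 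That mechanism is what produces the uniform constant~$12$; it cannot be recovered from~\eqref{e.Xm.bound.2} alone.
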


\begin{proof}[Proof of Corollary~\ref{c.flowreg}]
From \eqref{e.phimbounds.subbed}, we deduce that for any $n\in \NN$, 
\begin{align}
\label{e.bm.Dn.new}
\sup_{t\in \R} \; \snorm{ \b_m(t,\cdot) }_{n,2^8 \ep_m^{-1}} 
&\leq \sup_{t\in \R} \frac{2 (n+1)^2}{ n!(2^8 \ep_m^{-1})^{n}}  \norm{\nabla^{n+1} \phi_{m}(t,\cdot)}_{L^\infty(\R^2)} 
\notag\\
&\leq \sup_{t\in \R} \; \snorm{\phi_m(t,\cdot)}_{n+1,2^8 \ep_m^{-1}} \frac{2 (n+1)^2}{ n!(2^8 \ep_m^{-1})^{n}} \frac{(n+1)! (2^8 \ep_m^{-1})^{n+1}}{(n+2)^2}
\notag\\
&\leq 2^{5} a_{m} \ep_{m}^{2}   \frac{(n+2)^2}{(n+1)^3} 
\frac{2 (n+1)^3(2^8 \ep_m^{-1}) }{(n+2)^2}  
\leq 2^{14} a_m \ep_m \,.
\end{align} 
The definition \eqref{e.flow.m.def} suggests that we apply Proposition~\ref{p.ODE.flow} with $\mathbf{f} = \b_m$, $C_{\mathbf{f}} = 2^{14} a_m \ep_m$, and $R_{\mathbf{f}} = 2^8 \ep_m^{-1}$.
The bound \eqref{eq:grad:psi} then directly implies \eqref{e.Xm.bound.1}  since $|t| \leq 2^{-25} a_m^{-1} = (8 C_{\mathbf{f}}R_{\mathbf{f}})^{-1}$.  Similarly, the bound \eqref{e.Xm.bound.2} for $n\geq 1$ follows from \eqref{e.ODE.flow.estimate} with $d=2$, since $R_\f (1+ 8 |t| C_\f R_\f)^2 \leq 4 R_\f = 2^{10} \ep_m^{-1}$.  
The bound \eqref{e.Xm.bound.3} is a direct consequence of \eqref{e.ODE.flow.estimate:new} for $d=2$, since $16R_\f(1+ 16 C_{\mathbf{f}}R_{\mathbf{f}}|t|) \leq 48 R_\f \leq 2^{14} \ep_{m}^{-1}$. Lastly, the estimate \eqref{e.Xm.bound.4} follows from \eqref{e.Xm.bound.1} (for $n=1$) and \eqref{e.Xm.bound.3} (for $n\geq 2$), upon recalling definition \eqref{e.barf}.
\end{proof}

\subsection{Material derivative estimates}
The bound \eqref{e.bm.Dn.new} implies that for all $n \in \N$ we have 
\begin{equation}
\norm{\nabla^n \b_{m}}_{L^\infty(\RR\times \RR^2)} 
\leq 2^{14} a_m \ep_m  \frac{n! (2^8 \ep_m^{-1})^{n}}{(n+1)^2} 
\leq 2^{22} \ep_m^{\beta-2}  (n-1)! (2^8  \ep_m^{-1})^{n-1} 
\,.
\label{e.bm.Dn}
\end{equation}
In order to estimate the time correctors in our two-scale ansatz in Section~\ref{s.multiscale}, it turns out that we also need to have estimates available for 
\begin{equation*}
\norm{\nabla^n \DD_{t,m}^\ell \b_m}_{L^\infty(\RR\times \RR^2)} 
+
\norm{\nabla^{n-1} \DD_{t,m}^\ell \nabla \b_m}_{L^\infty(\RR\times \RR^2)} 
\qquad \forall n,\ell \in \N_0 \mbox{ such that }  1 \leq n + \ell \leq N_*.
\end{equation*} 
Here and throughout the rest of the paper we use the notation $\DD_{t,m}$ for the material derivative along the vector field $\b_m$, which is the scalar differential operator defined by
\begin{equation}
\label{e.D.t.m.def}
\DD_{t,m} = \partial_t + \b_{m} \cdot \nabla 
\,.
\end{equation}
Note that as opposed to \eqref{e.bm.Dn}, in which the index of the space derivatives is allowed to be arbitrarily large ($n\in \N$), in \eqref{e.material.goal} we only are concerned with a total derivative index $n+\ell \leq N_*$ which is finite (in particular, bounded independently of $m$). As such, the implicit constants in these estimates are allowed to depend on $n$ and $\ell$, because this just means that they depend on $N_*$, and so they depend on our choice of $\beta$ (via~\eqref{e.N}). The advantage of this relaxation is that we do not need to keep track of factorial terms (e.g.~$n!, \ell!, (n+\ell)!$), or on powers of constants (e.g.~$C^n, C^\ell$). In particular, as opposed to the previous section, where we had to carefully apply the auxiliary lemmas from Appendix~\ref{prop:multi:Faa}, here we can apply   standard consequences of the Leibniz and chain rules, such as
\begin{align}
\label{e.Moser}
\norm{\nabla^n (f \, g)}_{L^\infty}
&\leq C \norm{\nabla^n f}_{L^\infty} \norm{g}_{L^\infty} + C \norm{f}_{L^\infty} \norm{\nabla^n g}_{L^\infty} 
\,,
\\
\label{e.chain}
\norm{\nabla^n (f\circ g)}_{L^\infty} 
&\leq C \norm{\nabla f}_{L^\infty} \norm{\nabla g}_{C^{n-1}} + C\norm{\nabla f}_{C^{n-1}} \norm{\nabla g}_{L^\infty}^{n}
\,,
\end{align}
for $1 \leq n\leq N_*$, where $C$ only depends on $n$ (hence on $N_*$, hence on $\beta$).  
 
\smallskip
 
The main result of this section is:
\begin{proposition}
\label{p.material.goal}
Assume that $n,\ell \in \NN_0$ are such that $1 \leq n+\ell \leq N_*$. Then, we have that 
\begin{align}
\label{e.material.goal} 
\norm{\nabla^n \DD_{t,m}^\ell \b_m}_{L^\infty(\RR\times \RR^2)} 
&\leq C \ep_m^{\beta-1} \bigl(\ep_m^{\beta-2}\bigr)^\ell \bigl(\ep_m^{-1}\bigr)^{n}
\\
\label{e.monofractal.vomit} 
\norm{\nabla^{n-1} \DD_{t,m}^\ell \nabla \b_m}_{L^\infty(\RR\times \RR^2)} 
&\leq C \ep_m^{\beta-2} \bigl(\ep_m^{\beta-2}\bigr)^\ell \bigl(\ep_m^{-1}\bigr)^{n-1}
\end{align}
where $C$ only depends on $\beta$.
\end{proposition}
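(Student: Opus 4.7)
The plan is to proceed by induction on $\ell \geq 0$. The base case $\ell = 0$ (with $n \geq 1$) is exactly \eqref{e.bm.Dn} rewritten using $a_m = \ep_m^{\beta-2}$. The inductive step rests on a structural feature of the construction: with $\Delta_j\b := \b_j - \b_{j-1} = \nabla^\perp(\phi_j-\phi_{j-1})$, so that $\b_m = \sum_{j=1}^m \Delta_j\b$, each scale-$j$ increment is built from the shears $\psi_{j,k}$ composed with the inverse flow $X_{j-1,l}^{-1}$ of $\b_{j-1}$, and is therefore transported by $\b_{j-1}$ up to the action of the time cutoffs. Since $\DD_{t,j-1}X_{j-1,l}^{-1}(\cdot,\cdot,s)\equiv 0$ and $\psi_{j,k}$ is $t$-independent, \eqref{e.psi.recursion} immediately yields
\begin{equation*}
\DD_{t,j-1}(\phi_j - \phi_{j-1}) = \sum_{k\in\Z}\partial_t(\hat\zeta_{j,l_k}\zeta_{j,k})\,(\psi_{j,k}\circ X_{j-1,l_k}^{-1}).
\end{equation*}
Passing to $\Delta_j\b$ via $\nabla^\perp$ (whose commutator with $\DD_{t,j-1}$ is of order $\|\nabla\b_{j-1}\|_{L^\infty}\leq Ca_{j-1}$) and combining with the cutoff estimates \eqref{e.zeta.mk}--\eqref{e.zeta.prime.ml.bounds}, the flow regularity from Corollary~\ref{c.flowreg}, and the product/chain rules \eqref{e.Moser}--\eqref{e.chain}, one obtains for every $0 \leq n \leq N_*-1$,
\begin{equation*}
\|\nabla^n \DD_{t,j-1}\Delta_j\b\|_{L^\infty(\R\times\R^2)} \leq C\bigl(\tau_j^{-1}+a_{j-1}\bigr)\ep_j^{\beta-1-n}.
\end{equation*}

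The crucial quantitative input is $\tau_j^{-1}+a_{j-1}\leq Ca_j$, which via \eqref{e.taubounds} and \eqref{e.supergeo} reduces to the algebraic inequality $4\delta<(q-1)(2-\beta)$; direct substitution of \eqref{e.delta} and \eqref{e.beta.def.0} gives
\begin{equation*}
(q-1)(2-\beta)-4\delta \;=\; \frac{(q-1)(4q^2+q-1)}{(q+1)(4q-1)} \;>\; 0.
\end{equation*}
Consequently $\|\nabla^n\DD_{t,j-1}\Delta_j\b\|_{L^\infty}\leq Ca_j\,\ep_j^{\beta-1-n}$, and iterating this identity (each further $\DD_{t,j-1}$ differentiates time cutoffs and at worst pulls out one further commutator factor of size $a_{j-1}\leq a_j$) yields $\|\nabla^n\DD_{t,j-1}^\ell \Delta_j\b\|_{L^\infty}\leq C a_j^\ell\, \ep_j^{\beta-1-n}$ for all $n+\ell\leq N_*$.

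To pass from $\DD_{t,j-1}$ to $\DD_{t,m}$, write $\DD_{t,m}=\DD_{t,j-1}+(\b_m-\b_{j-1})\cdot\nabla$ and use $\|\b_m-\b_{j-1}\|_{L^\infty}\leq C\ep_j^{\beta-1}$ together with $\|\nabla^a(\b_m-\b_{j-1})\|_{L^\infty}\leq C\ep_m^{\beta-1-a}$ for $a\geq 1$, both obtained by telescoping \eqref{e.phi.m.m-1.bounds} over $i\geq j$ via \eqref{e.minsep}. Expanding $\DD_{t,m}^\ell$ as a sum of $2^\ell$ orderings of $\DD_{t,j-1}$ and $(\b_m-\b_{j-1})\cdot\nabla$, and applying $\nabla^n$ via \eqref{e.Moser}--\eqref{e.chain}, produces at most $C(\ell,n)$ multilinear terms whose factors are of the form $\nabla^{a}\DD_{t,j-1}^{b}\Delta_j\b$ and $\nabla^{a'}(\b_m-\b_{j-1})$ with $a+b\leq n+\ell$ and $a'\leq n+\ell$. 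A routine exponent count, together with the super-geometric decay \eqref{e.minsep} (which gives $\sum_{j\leq m}\ep_j^p\leq C_p\,\ep_m^p$ for $p<0$, applicable here since the effective exponent $\beta-1-\ell(2-\beta)-n$ is strictly negative whenever $n+\ell\geq 1$), shows the sum over $1\leq j\leq m$ localizes at $j=m$ and produces exactly \eqref{e.material.goal}. The bound \eqref{e.monofractal.vomit} follows by the identical argument applied to $\nabla\b_m=\sum_j\nabla\Delta_j\b$; the mismatch between $\nabla^{n-1}\DD_{t,m}^\ell\nabla$ and $\nabla^n\DD_{t,m}^\ell$ is accounted for by commutators of the same order.

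The main obstacle is the combinatorial bookkeeping of these multilinear terms in the noncommutative expansion of $\DD_{t,m}^\ell$ and $\nabla^n$, requiring each factor to be correctly paired with its $\ep_j$- or $\ep_m$-scaling to see that the total exponent is $\beta-1-\ell(2-\beta)-n$. The algebraic inequality $4\delta<(q-1)(2-\beta)$ encoded in the choice \eqref{e.delta} of $\delta$ is precisely what allows the a priori large cutoff-derivative contributions $\tau_j^{-1}$ (which would otherwise exceed $a_j$ and ruin the estimate) to be absorbed into the $a_j^\ell$ factor.
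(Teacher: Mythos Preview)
Your overall strategy—decompose $\b_m=\sum_{j\leq m}\Delta_j\b$, bound $\DD_{t,j-1}^\ell\Delta_j\b$ via the transport structure at scale $j$, then correct from $\DD_{t,j-1}$ to $\DD_{t,m}$ and sum over $j$—is a legitimate alternative to the paper's induction on $m$, and you have correctly identified the key algebraic input $\tau_j^{-1}+a_{j-1}\leq Ca_j$ (equivalently $4\delta<(q-1)(2-\beta)$), which is exactly the inequality the paper uses in \eqref{e.going.slightly.mad}.

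There is, however, a genuine gap in the expansion step. For $\ell\geq 2$, with $W=\b_m-\b_{j-1}$, the operator $\DD_{t,j-1}$ does not commute past $W\cdot\nabla$: expanding $\DD_{t,j-1}(W\cdot\nabla f)$ by Leibniz produces in particular the term $(\DD_{t,j-1}W)\cdot\nabla f$, so the noncommutative expansion of $\DD_{t,m}^\ell\Delta_j\b$ contains factors $\nabla^{a'}\DD_{t,j-1}^cW$ with $c\geq 1$, which are not among the two types you list. Controlling $\DD_{t,j-1}^c(\b_m-\b_{j-1})$ unwinds to bounds on $\DD_{t,m'}^{c'}\b_{m'}$ for $m'\leq m$ and $c'<\ell$, i.e., precisely the double induction on $\ell$ and $m$ that the paper performs by telescoping $\DD_{t,m}^\ell\b_m-\DD_{t,m-1}^\ell\b_{m-1}$ (see \eqref{e.Dt.vm.0}, \eqref{e.Dt.vm.2}). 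The same issue hides in your direct iteration claim $\|\nabla^n\DD_{t,j-1}^\ell\Delta_j\b\|\leq Ca_j^\ell\ep_j^{\beta-1-n}$: each commutator with $\nabla^\perp$ introduces a factor $\nabla\b_{j-1}$ on which subsequent $\DD_{t,j-1}$'s act, again requiring the result at level $j-1$. Your approach can be completed along these lines, but not without reinstating this recursive structure; the paper's induction on $m$ is simply a cleaner way to organize it.
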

\begin{proof}[Proof of Proposition~\ref{p.material.goal}]
We first establish~\eqref{e.material.goal}. 
The bound \eqref{e.material.goal} for $\ell =0$ was already established in \eqref{e.bm.Dn}. We next consider the case $\ell=1$, which is the first interesting case; the proof of this case contains all the main ideas, but without the messy details about commutators.  

\smallskip

We prove \eqref{e.material.goal} for $\ell =1$ by induction on $m$. When $m=0$, then $\b_0=0$ by \eqref{e.phi0def}, so there is nothing to prove. Inductively, let $m\geq 1$ and assume that \eqref{e.material.goal} for $\ell =1$ holds with $m$ replaced by $m' \leq m-1$. Note that the $m$ dependence appears both through the function whose derivatives we study, namely $\b_m$, but also through the differential operator $\DD_{t,m}$ defined in~\eqref{e.D.t.m.def}. This nonlinear dependence on $m$ makes it convenient to introduce the notation $\vv_m$ to denote the ``fast part'' of the vector field $\b_m$, namely
\begin{equation}
\vv_m 
= \b_m - \b_{m-1} 
=  \nabla^\perp \sum_{k\in \ZZ} 
\hat{\zeta}_{m,l_k} \zeta_{m,k} 
\stream_{m,k} \circ X_{m-1,l_k}^{-1}
\,.
\label{eq.vvm.def}
\end{equation}
With this notation~\eqref{e.D.t.m.def} becomes
\begin{equation*}
\DD_{t,m} = \DD_{t,m-1} + \vv_m \cdot \nabla \,,
\end{equation*}
and the bound \eqref{e.phi.m.m-1.bounds} may be recast as
\begin{equation}
\norm{\nabla^n \vv_{m}}_{L^\infty(\RR\times \RR^2)} 
\leq C \ep_m^{\beta-1}  (n-1)! (C \ep_m^{-1})^{n} 
\label{e.vm.Dn}
\end{equation}
for all $n\in \N_0$. The difference with \eqref{e.bm.Dn} is that \eqref{e.vm.Dn} includes the case $n=0$.

\smallskip

Next,  we note that 
\begin{equation}
\DD_{t,m} \b_m - \DD_{t,m-1} \b_{m-1} 
=  
\DD_{t,m-1} \vv_m 
+ (\vv_m \cdot \nabla)\b_{m-1} +(\vv_m \cdot \nabla)\vv_m
\label{e.Dt.vm.0}
\,.
\end{equation}
The reason for the above decomposition lies in the fact that the term $\DD_{t,m-1} \vv_m$ contains an important cancellation, namely $\DD_{t,m-1} X_{m-1,l_k}^{-1} = 0$,  and as such
\begin{align}
\DD_{t,m-1} \vv_m 
&= [\DD_{t,m-1} , \nabla^\perp]  \sum_{k\in \ZZ} \hat{\zeta}_{m,l_k} \zeta_{m,k} \stream_{m,k} \circ X_{m-1,l_k}^{-1} 
+ \nabla^\perp \DD_{t,m-1} \sum_{k\in \ZZ} \hat{\zeta}_{m,l_k} \zeta_{m,k} \stream_{m,k} \circ X_{m-1,k}^{-1}
\notag\\
&= - \nabla^\perp \b_{m-1} \cdot \nabla  \sum_{k\in \ZZ} \hat{\zeta}_{m,l_k} \zeta_{m,k} \stream_{m,k} \circ X_{m-1,l_k}^{-1} 
+ \nabla^\perp \sum_{k\in \ZZ} \partial_t \bigl(\hat{\zeta}_{m,l_k} \zeta_{m,k}\bigr) \stream_{m,k} \circ X_{m-1,l_k}^{-1}
\notag\\
&= - \nabla^\perp \b_{m-1}^\perp \cdot \vv_m
+ \nabla^\perp \sum_{k\in \ZZ} \partial_t \bigl(\hat{\zeta}_{m,l_k} \zeta_{m,k}\bigr) \stream_{m,k} \circ X_{m-1,l_k}^{-1}
\,.
\label{e.Dt.vm.1}
\end{align}
Identity \eqref{e.Dt.vm.1} makes formal the intuition that the ``cost'' of $\DD_{t,m-1}$ acting on $\vv_m$ is equal to the maximum between $\norm{\nabla \b_{m-1}}_{L^\infty_{t,x}}$ and $\norm{ \partial_t  (\hat{\zeta}_{m,l_k} \zeta_{m,k} )}_{L^\infty_t}$. Indeed, from \eqref{e.taubounds}, \eqref{e.zeta.mk}, \eqref{e.zeta.prime.ml.bounds}, \eqref{e.bm.Dn} (with $n=1$), and \eqref{e.vm.Dn} (with $n=0$) we deduce from \eqref{e.Dt.vm.1} that 
\begin{align*}
\norm{\DD_{t,m-1} \vv_m}_{L^\infty(\RR \times \RR^2)} 
\leq C \bigl(  \ep_{m-1}^{\beta-2}   + \tau_m^{-1} \bigr) \ep_m^{\beta-1}
\leq C \tau_m^{-1} \ep_m^{\beta-1}
\,.
\end{align*}
In fact, by appealing to \eqref{e.psimk.snorm}, \eqref{e.Xm.regbounds}, \eqref{e.bm.Dn}, \eqref{e.Moser}, \eqref{e.chain}, and~\eqref{e.vm.Dn}, we deduce from~\eqref{e.Dt.vm.1} that for $0 \leq n \leq N_* - 1$, 
\begin{align}
\norm{\nabla^n \DD_{t,m-1} \vv_m}_{L^\infty(\RR \times \RR^2)} 
\leq C \tau_m^{-1} \ep_m^{\beta-1} \bigl( \ep_{m-1}^{-n} + \ep_m^{-n} + \ep_{m}^{-1} \ep_{m-1}^{-n+1} + \ep_m^{-n} \bigr)
\leq C \tau_m^{-1} \ep_m^{\beta-1} \ep_m^{-n}
\,.
\label{e.material.goal.temp.1}
\end{align}
This handles the estimates for the first term on the right side of \eqref{e.Dt.vm.0}. 
For the second term on the right side of \eqref{e.Dt.vm.0}, by \eqref{e.bm.Dn}, \eqref{e.vm.Dn}, and the Leibniz rule, we deduce that 
\begin{align}
\norm{\nabla^n \bigl( \vv_m \cdot \nabla \b_{m-1}\bigr)  }_{L^\infty(\RR \times \RR^2)} 
&\leq  C \ep_m^{\beta-1} \ep_{m-1}^{\beta-2} \bigl( \ep_m^{-n} + \ep_{m-1}^{-n} \bigr)
\leq  C \ep_m^{\beta-1} \tau_m^{-1} \ep_m^{-n}
\label{e.material.goal.temp.2}
\,.
\end{align}
Also, by \eqref{e.vm.Dn} and the Leibniz rule we obtain
\begin{align}
\norm{\nabla^n \bigl( \vv_m \cdot \nabla \vv_m \bigr)  }_{L^\infty(\RR \times \RR^2)}
&\leq  C \ep_m^{\beta-1}  \ep_m^{\beta-2} \ep_m^{-n}
\,.
\label{e.material.goal.temp.3}
\end{align}
Comparing the bounds in \eqref{e.material.goal.temp.1}--\eqref{e.material.goal.temp.3}, and noting that \eqref{e.taubounds} and~\eqref{e.delta} give
\begin{equation}
\label{e.going.slightly.mad}
\tau_m^{-1} 
\leq C \ep_{m-1}^{\beta-2-4\delta} 
\leq C \ep_{m-1}^{q(\beta-2)} 
\leq C \ep_m^{\beta-2}  
\end{equation}
we deduce from \eqref{e.Dt.vm.0} that for all $0 \leq n \leq N_*-1$, we have that 
\begin{equation*}
\norm{\nabla^n \bigl( \DD_{t,m} \b_m - \DD_{t,m-1} \b_{m-1} \bigr)}_{L^\infty(\RR\times\RR^2)}
\leq C \ep_m^{\beta-1} \ep_m^{\beta-2}  \ep_m^{-n}
\,.
\end{equation*}
Lastly, using that $(\beta-2) + (\beta-1) - n \leq 2\beta-3 \leq \nicefrac 83 -3 = -\nicefrac 13 < 0$ and $\ep_m \leq \ep_{m-1}$, we may use that by induction the estimate~\eqref{e.material.goal} holds at level $m-1$, to deduce  
\begin{equation}
\label{e.material.goal.1}
\norm{\nabla^n  \DD_{t,m} \b_m }_{L^\infty(\RR\times\RR^2)}
\leq 
C\ep_m^{\beta-2}  \ep_m^{\beta-1}   \ep_m^{-n}
+
C \ep_{m-1}^{\beta-2} \ep_{m-1}^{\beta-1}  \ep_{m-1}^{-n}
\leq C \ep_m^{2\beta-3-n}
\,,
\end{equation}
where the constant $C$ is independent of $m$, but may depend on $n \leq N_*-1$. This establishes \eqref{e.material.goal} at level $m$, when $\ell=1$. By induction on $m$, we have proven the bound~\eqref{e.material.goal} for $\ell =1$ and $m\geq 0$. 

\smallskip

The proof of~\eqref{e.material.goal}  for $\ell\geq 2$ proceeds in a similar manner, but it requires a number of commutator estimates, because the operators $\{ \DD_{t,m}, \vv_m \cdot \nabla, \nabla, \partial_t \}$, do not commute. As before, when $m=0$ there is nothing to prove because $\b_0 = 0$. Inductively, let $m\geq 1$ and assume that \eqref{e.material.goal} for $\ell =2$ holds with $m$ replaced by $m' \leq m-1$. By differentiating \eqref{e.Dt.vm.0} with respect to $\DD_{t,m} = \DD_{t,m-1} + \vv_m \cdot \nabla$, we obtain
\begin{align}
\label{e.Dt.vm.2}
&\DD_{t,m}^2 \b_m 
- \DD_{t,m-1}^2 \b_{m-1}
- \DD_{t,m-1}^2 \vv_m
\notag\\
&\qquad 
= (\vv_m \cdot \nabla) \DD_{t,m-1}\b_{m-1} 
+ (\vv_m \cdot \nabla) \DD_{t,m-1} \vv_m 
+ (\DD_{t,m-1} \vv_m \cdot\nabla) \b_m 
\notag\\
&\qquad \qquad 
+ (\vv_m \cdot \nabla) \DD_{t,m} \b_m 
- \bigl((\vv_m \cdot \nabla) \b_{m-1} \cdot \nabla\bigr) \b_m
\,.
\end{align}
All terms on the right side of \eqref{e.Dt.vm.2} contain at most one material derivative, and therefore are already bounded in light of  \eqref{e.vm.Dn}, \eqref{e.material.goal.temp.1}, and \eqref{e.material.goal} with $\ell \in \{0,1\}$. By also appealing to~\eqref{e.Moser} and~\eqref{e.going.slightly.mad}, we obtain for all $n\leq N_* - 2$, 
\begin{align}
\norm{\nabla^n \bigl(\mbox{RHS of  }\eqref{e.Dt.vm.2}\bigr)}_{L^\infty(\RR\times\RR^2)}
&
\leq 
C \ep_m^{\beta-1} \ep_{m-1}^{2(\beta-2)} (\ep_{m}^{-n} + \ep_{m-1}^{-n})
+
C \ep_m^{\beta-1} \tau_m^{-1} \ep_{m}^{\beta-2} \ep_m^{-n}
\notag\\
&\qquad  
+
C \ep_m^{\beta-1} \ep_{m}^{2(\beta-2)}  \ep_{m}^{-n}  
+ 
C \ep_m^{\beta-1}\ep_{m-1}^{\beta-2} \ep_m^{\beta-2}(\ep_{m}^{-n} + \ep_{m-1}^{-n})
\notag\\
&\leq C \ep_m^{\beta-1} \bigl( \ep_m^{\beta-2} \bigr)^2 \ep_m^{-n} \,.
\label{e.material.goal.temp.4}
\end{align}
In order to estimate the contribution from $\DD_{t,m-1}^2 \vv_m$, we apply $\DD_{t,m-1}$ to \eqref{e.Dt.vm.1}, and deduce that 
\begin{align*}
\DD_{t,m-1}^2 \vv_m 
&= - \nabla^\perp \b_{m-1}^\perp \cdot \DD_{t,m-1} \vv_m -  \nabla^\perp \DD_{t,m-1} \b_{m-1}^\perp \cdot  \vv_m + \nabla^\perp \b_{m-1} \cdot \nabla \b_{m-1}^\perp \cdot   \vv_m
\notag\\
& - \nabla^\perp \b_{m-1}^\perp \cdot  \nabla^\perp \sum_{k\in \ZZ} \partial_t \bigl(\hat{\zeta}_{m,l_k} \zeta_{m,k}\bigr) \stream_{m,k} \circ X_{m-1,l_k}^{-1} 
 + \nabla^\perp \sum_{k\in \ZZ} \partial_t^2 \bigl(\hat{\zeta}_{m,l_k} \zeta_{m,k}\bigr) \stream_{m,k} \circ X_{m-1,l_k}^{-1} 
\,.
\end{align*}
Thus, by appealing to~\eqref{e.psimk.snorm}, \eqref{e.zeta.mk}, \eqref{e.zeta.prime.ml.bounds}, \eqref{e.Xm.regbounds},   \eqref{e.bm.Dn}, \eqref{e.Moser}, \eqref{e.material.goal} with $\ell =1$, \eqref{e.vm.Dn}, \eqref{e.material.goal.temp.1}, and~\eqref{e.going.slightly.mad}, analogously to~\eqref{e.material.goal.temp.4} we may deduce that for $n\leq N_*-2$, 
\begin{equation}
\norm{\nabla^n \DD_{t,m-1}^2 \vv_m}_{L^\infty(\RR\times\RR^2)} 
\leq C \ep_m^{\beta-1} \bigl( \ep_m^{\beta-2} \bigr)^2 \ep_m^{-n}  \,.
\label{e.material.goal.temp.5}
\end{equation}
Combining \eqref{e.Dt.vm.2}, \eqref{e.material.goal.temp.4}, \eqref{e.material.goal.temp.5}, 
and the inductive assumption that \eqref{e.material.goal} holds for $\ell =2$ at level $m-1$, 
we deduce that for $n\leq N_*-2$, 
\begin{align*}
\norm{\nabla^n \DD_{t,m}^2 \b_m}_{L^\infty(\RR\times\RR^2)}
&\leq \norm{\nabla^n \DD_{t,m-1}^2 \b_{m-1}}_{L^\infty(\RR\times\RR^2)} 
+ C \ep_m^{\beta-1} \bigl( \ep_m^{\beta-2} \bigr)^2 \ep_m^{-n} 
\notag\\
&\leq C \ep_{m-1}^{\beta-1} \bigl(\ep_{m-2}^{\beta-2}\bigr)^2  \ep_{m-1}^{-n} 
+ C \ep_m^{\beta-1} \bigl( \ep_m^{\beta-2} \bigr)^2 \ep_m^{-n} 
\notag\\
&\leq 
C \ep_m^{\beta-1} \bigl( \ep_m^{\beta-2} \bigr)^2 \ep_m^{-n} \,.
\end{align*}
By induction on $m$, this concludes the proof of \eqref{e.material.goal} for $\ell=2$.

\smallskip

Estimate \eqref{e.material.goal} in the case $3 \leq \ell \leq N_*$  and $0\leq n \leq N_*-\ell$ may be proven in the same way as for $\ell \in \{1,2\}$, save for the bookkeeping, which becomes tedious. Inductively on $\ell$, in analogy to \eqref{e.Dt.vm.0} and  \eqref{e.Dt.vm.2} we may show that the expression $\DD_{t,m}^\ell \b_m - \DD_{t,m-1}^{\ell} \b_{m-1} - \DD_{t,m-1}^\ell \vv_m$ is given by a sum of terms which contain at most $\ell-1$ material derivatives, and are hence bounded by induction. The precise accounting of all terms requires  estimates for high-order commutators with material derivatives (e.g.~$ [\nabla^n, \DD_{t,m-1}^{\ell}  ]$), and for powers of sums of non-commuting operators (e.g.~$(\DD_{t,m-1} + \vv_m \cdot \nabla)^\ell$). Such estimates are given in~\cite[Appendices~A.6 and~A.7]{BMNV}. Using~\cite[Appendices~A.6 and~A.7]{BMNV}, we may show that every additional material derivative $\DD_{t,m}$ landing on $\b_m$ ``costs'' a factor of at most $\ep_m^{\beta-2}$, while every additional space derivatives ``costs'' a factor of at most $\ep_m^{-1}$. We omit these details.

\smallskip

We now turn to the proof of~\eqref{e.monofractal.vomit}. This bound follows from \eqref{e.material.goal} if we are able to estimate the commutator $\big[\DD_{t,m}^{\ell},\nabla\bigr] \b_m$. When $\ell=1$, this commutator   equals $\bigl[ \DD_{t,m}, \nabla\bigr] = -\nabla \b_m \cdot \nabla$, and hence 
\begin{equation*}
\nabla^{n-1} \DD_{t,m} \nabla \b_m =  \nabla^{n} \DD_{t,m} \b_m - \nabla^{n-1} \left( \nabla \b_m \cdot \nabla \b_m\right)
\,.
\end{equation*}
Upon appealing to \eqref{e.bm.Dn}, \eqref{e.Moser}, and \eqref{e.material.goal} we obtain
\begin{equation*}
\norm{\nabla^{n-1} \DD_{t,m} \nabla \b_m}_{L^\infty(\RR\times\RR^2)}  
\leq 
C \ep_m^{\beta-1} \bigl(\ep_m^{\beta-2}\bigr)  \bigl(\ep_m^{-1}\bigr)^{n} 
+
C \bigl(\ep_m^{\beta-2}\bigr)^2 \bigl(\ep_m^{-1}\bigr)^{n-1} 
= 
C \bigl(\ep_m^{\beta-2}\bigr)^2 \bigl(\ep_m^{-1}\bigr)^{n-1} 
\,.
\end{equation*}
This establishes~\eqref{e.monofractal.vomit} when $\ell =1$, for $n \leq N_*-1$.

\smallskip

In order to prove~\eqref{e.monofractal.vomit} for $\ell\geq 2$, we assume by induction that \eqref{e.monofractal.vomit} holds for $\ell' \leq \ell-1$. At this stage, we recall from~\cite[Lemma A.12]{BMNV} that the commutator between high powers of the material derivative operator and a space-gradient is given by 
\begin{equation}
 \big[\DD_{t,m}^{\ell},\nabla\bigr]   
  = \sum_{\ell'=1}^\ell 
 \binom{\ell}{\ell'} 
  \bigl({\rm ad} \DD_{t,m}\bigr)^{\ell'}(\nabla) \DD_{t,m}^{\ell-\ell'}  
 \label{e.vomit.cascade.1}
 \,,
\end{equation}
where $\bigl({\rm ad} \DD_{t,m}\bigr)^0(\nabla) = \nabla$, and  recursively we   define $\bigl({\rm ad} \DD_{t,m}\bigr)^{\ell^\prime}(\nabla) = \bigl[\DD_{t,m}, \bigl({\rm ad} \DD_{t,m}\bigr)^{\ell^\prime-1}(\nabla)\bigr]$.
In turn, using~\cite[Lemma A.13]{BMNV}, we have that 
\begin{equation}
\bigl({\rm ad} \DD_{t,m}\bigr)^\ell(\nabla)  
= \sum_{j=1}^\ell \sum_{\aa  \in \NN_0^j \colon |\aa | = \ell - j} c_{\ell,j,\aa } \prod_{i=1}^j (\DD_{t,m}^{\aa _i} \nabla \b_m) \cdot \nabla   
\,, 
\label{e.vomit.cascade.2}
\end{equation}
where the product $\prod_{i=1}^j$ is the product of matrices, and the coefficients $c_{\ell,j,\aa }$ only depend on $\ell, j, \aa $. 
From \eqref{e.vomit.cascade.1} and \eqref{e.vomit.cascade.2} we thus obtain
\begin{align*}
\norm{\nabla^{n-1} \DD_{t,m}^\ell \nabla \b_m}_{L^\infty_{t,x}} 
&\leq  \norm{\nabla^{n} \DD_{t,m}^\ell  \b_m}_{L^\infty_{t,x}} 
+  \norm{\nabla^{n-1} \bigl[ \DD_{t,m}^\ell  ,\nabla \bigr] \b_m}_{L^\infty_{t,x}} 
\notag\\
&\leq \norm{\nabla^{n} \DD_{t,m}^\ell  \b_m}_{L^\infty_{t,x}} 
+ C \sum_{n'=0}^{n-1} \sum_{\ell'=1}^{\ell}
\norm{\nabla^{n'} \bigl({\rm ad} \DD_{t,m}\bigr)^{\ell'}(\nabla) \nabla^{n-1-n'} \DD_{t,m}^{\ell-\ell'} \b_m}_{L^\infty_{t,x}} 
\,.
\end{align*}
Since $\ell-\ell' \leq \ell-1$, and upon noting that the $\aa $ in \eqref{e.vomit.cascade.2} satisfies $|\aa | = \ell - j  \leq \ell - 1$, we deduce that from the inductive assumption \eqref{e.monofractal.vomit} and the bound \eqref{e.material.goal} that
\begin{align*}
\norm{\nabla^{n-1} \DD_{t,m}^\ell \nabla \b_m}_{L^\infty_{t,x}} 
&\leq C \ep_m^{\beta-1} (\ep_m^{\beta-2})^\ell (\ep_m^{-1})^{n} \notag\\
&\quad 
+C  \sum_{n'=0}^{n-1} \sum_{\ell'=1}^{\ell} \sum_{j=1}^{\ell'} \sum_{\aa  \in \NN_0^j \colon |\aa | = \ell'-j} 
\norm{\nabla^{n'} \prod_{i=1}^j (\DD_{t,m}^{\aa _i} \nabla \b_m)}_{L^\infty_{t,x}}
\norm{\nabla^{n-n'} \DD_{t,m}^{\ell-\ell'} \b_m}_{L^\infty_{t,x}} \notag\\
&\leq C \ep_m^{\beta-1} (\ep_m^{\beta-2})^\ell (\ep_m^{-1})^{n} \notag\\
&\quad 
+ C \sum_{n'=0}^{n-1} \sum_{\ell'=1}^{\ell} \sum_{j=1}^{\ell'} \sum_{\aa  \in \NN_0^j \colon |\aa | = \ell'-j} 
(\ep_m^{\beta-2})^{|\aa|+j} (\ep_m^{-1})^{n'}
\ep_m^{\beta-1} (\ep_m^{\beta-2})^{\ell-\ell'} (\ep_m^{-1})^{n-n'}
\notag\\
&\leq C \ep_m^{\beta-1} (\ep_m^{\beta-2})^\ell (\ep_m^{-1})^{n}
\,.
\end{align*}
A close inspection of the above chain of inequalities reveals that the total number of space, plus the total number of material derivatives, never exceeds $N_*$ as desired.
This establishes the inductive step for \eqref{e.monofractal.vomit},  concluding the proof.
\end{proof}

An immediate consequence of Proposition~\ref{p.material.goal} is an estimate for $\nabla^n \partial_t^\ell$   applied to $\nabla X_{m,l}^{-1} \circ X_{m,l}$. 
\begin{corollary}
\label{c.material.goal}
Assume that $n,\ell \in \NN_0$ are such that $n+\ell \leq N_*$. Then, for all $t,s\in\R$ with $|t| \leq 2^{-25} a_m^{-1}$,  we have that
\begin{align}
\norm{\nabla^n \partial_t^\ell\bigl( \nabla X_{m}^{-1}(t+s,X_{m}(t+s,\cdot,s),s) \bigr)}_{L^\infty(\RR^2)}
&\leq C \ep_m^{-n} \bigl(\ep_m^{\beta-2}\bigr)^\ell
\,,
\label{eq:space:time:X:bounds}
\end{align}
where the constant $C \geq 1$ depends only on $\beta$, through $N_*$.
\end{corollary}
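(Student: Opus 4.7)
The plan is to convert the composition $\nabla X_m^{-1} \circ X_m$ into the inverse of the forward Jacobian, and then use the transport ODE for this Jacobian together with Proposition~\ref{p.material.goal} and Corollary~\ref{c.flowreg}. Differentiating in $x$ the defining identity $X_m^{-1}(t+s, X_m(t+s, x, s), s) = x$ yields
\[
\bigl((\nabla X_m^{-1})(t+s, y, s)\bigr)\big|_{y = X_m(t+s,x,s)} = \bigl( \nabla X_m(t+s, x, s) \bigr)^{-1}.
\]
Setting $F(t,x) := \nabla X_m(t+s, x, s)$, the task reduces to estimating $\nabla^n \partial_t^\ell F^{-1}$ in $L^\infty$, where $\partial_t$ is taken at fixed Lagrangian label $x$. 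From~\eqref{e.Xm.bound.1} one has $\norm{F - \Itwo}_{L^\infty} \leq \tfrac14$ on the allowed time window, so the Neumann series $F^{-1} = \sum_{k \geq 0}(\Itwo - F)^k$ converges uniformly with $\norm{F^{-1}}_{L^\infty} \leq 2$, and spatial derivatives of $F^{-1}$ are controlled by those of $F$ via term-by-term differentiation and Faà di Bruno.

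Next, differentiating the flow equation $\partial_t X_m = \b_m(t+s, X_m)$ in $x$ yields the matrix ODE $\partial_t F = A F$ with $A(t,x) := (\nabla \b_m)(t+s, X_m(t+s,x,s))$, so $F^{-1}$ satisfies $\partial_t F^{-1} = -F^{-1} A$. Iterating, $\partial_t^\ell F^{-1}$ becomes a finite sum of products of the shape $F^{-1} A_{\ell_1} F^{-1} A_{\ell_2} \cdots F^{-1}$, where $A_{\ell'} := \partial_t^{\ell'} A$ and the $\ell'$'s plus the number of $A$-factors sum to $\ell$. The crucial observation is that, since $x$ is a Lagrangian label, time differentiation at fixed $x$ coincides with the material derivative $\DD_{t,m}$ along $\b_m$ acting on the Eulerian function, so
\[
A_{\ell'}(t,x) = \bigl(\DD_{t,m}^{\ell'} \nabla \b_m\bigr)(t+s, X_m(t+s,x,s)).
\]

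Finally, applying $\nabla^n$ to each summand and expanding via the Leibniz~\eqref{e.Moser} and chain~\eqref{e.chain} rules reduces the problem to $L^\infty$ bounds on spatial derivatives of $F$, of $F^{-1}$, of $X_m$, and of $\DD_{t,m}^{\ell'} \nabla \b_m$. These are supplied respectively by~\eqref{e.Xm.bound.3}--\eqref{e.Xm.bound.4}, by the Neumann series for $F^{-1}$, by~\eqref{e.Xm.bound.4} again, and by~\eqref{e.monofractal.vomit}. Collecting the scales, each spatial derivative contributes a factor $\ep_m^{-1}$ and each time (equivalently, material) derivative a factor $\ep_m^{\beta-2}$, yielding the claimed bound $C \ep_m^{-n}(\ep_m^{\beta-2})^\ell$.

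The main obstacle I anticipate is the bookkeeping in the second step, where $\partial_t^\ell F^{-1}$ expands into a combinatorial sum of products, and $\nabla^n$ then brings in further chain-rule factors via $X_m$. Fortunately $n+\ell \leq N_*$ is an \emph{absolute} bound depending only on $\beta$, so one need not track factorial or exponential constants in $n$ and $\ell$; the crude $L^\infty$ versions of~\eqref{e.Moser}--\eqref{e.chain}, together with the scales supplied by Proposition~\ref{p.material.goal} and Corollary~\ref{c.flowreg}, suffice to close the estimate.
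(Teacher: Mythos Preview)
Your approach is correct and takes a genuinely different route from the paper. You work on the Lagrangian side: you pass to the forward Jacobian $F = \nabla X_m$, use the matrix ODE $\partial_t F^{-1} = -F^{-1}A$ with $A = (\nabla\b_m)\circ X_m$, and iterate. The paper instead stays on the Eulerian side: it uses $\DD_{t,m} X_m^{-1} = 0$ to write $\partial_t^\ell(\nabla X_m^{-1}\circ X_m) = (\DD_{t,m}^\ell \nabla X_m^{-1})\circ X_m = ([\DD_{t,m}^\ell,\nabla]X_m^{-1})\circ X_m$, and then expands the commutator via the iterated-adjoint formula $(\mathrm{ad}\,\DD_{t,m})^\ell(\nabla) = \sum_{j,\aa} c_{\ell,j,\aa}\prod_i(\DD_{t,m}^{\alpha_i}\nabla\b_m)\cdot\nabla$ (equations~\eqref{e.vomit.cascade.1}--\eqref{e.vomit.cascade.2}). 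Both routes feed on the same key input~\eqref{e.monofractal.vomit}, and both reduce to the identical scale count (each $\nabla$ costs $\ep_m^{-1}$, each $\partial_t$ costs $\ep_m^{\beta-2}$). Your argument is arguably more elementary since it avoids the abstract commutator machinery; the paper's version has the advantage that those commutator identities are reusable (they are invoked again immediately afterward for Corollary~\ref{c.material.DX.Xinv}).

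One small imprecision: iterating $\partial_t F^{-1} = -F^{-1}A$ produces terms of the form $\pm F^{-1}A_{j_1}A_{j_2}\cdots A_{j_r}$ with a \emph{single} $F^{-1}$ on the left (since $\partial_t(-F^{-1}A) = F^{-1}AA - F^{-1}\partial_t A$, etc.), not the interleaved $F^{-1}A_{\ell_1}F^{-1}A_{\ell_2}\cdots F^{-1}$ that you wrote. This is harmless for the scaling---your condition $r + \sum j_i = \ell$ is correct, and each $A_{j_i} = (\DD_{t,m}^{j_i}\nabla\b_m)\circ X_m$ indeed contributes $(\ep_m^{\beta-2})^{j_i+1}$ by~\eqref{e.monofractal.vomit}---but the algebra should be stated accurately.
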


\begin{proof}[Proof of Corollary~\ref{c.material.goal}]
When $\ell=0$ and $n=0$, the bound \eqref{eq:space:time:X:bounds} follows from \eqref{e.Xm.regbounds}, while for $\ell =0$ and $1\leq n \leq N_*$ we additionally appeal to
\eqref{e.Xm.bound.1}, \eqref{e.Xm.bound.4}, and \eqref{e.chain} to deduce
\begin{align*}
\norm{\nabla^n \bigl( \nabla X_{m}^{-1} \circ X_{m} \bigr)}_{L^\infty(\RR^2)}
&\leq 
C \norm{\nabla^2 X_m^{-1}}_{L^\infty} \norm{\nabla X_m}_{C^{n-1}} 
+ 
C \norm{\nabla^2 X_{m}^{-1}}_{C^{n-1}} \norm{\nabla X_m}_{L^\infty}^n
\\
&\leq 
C \ep_m^{-1} (\ep_{m}^{-1})^{n-1} 
+ 
C (\ep_{m}^{-1})^{n} 
= C \ep_m^{-n}
\,.
\end{align*}
As such, it only remains to prove \eqref{eq:space:time:X:bounds} for  $\ell\geq 1$ and $n\leq N_*-\ell$.

\smallskip

Using that $\DD_{t,m} X_m^{-1} = 0$, for $1 \leq \ell \leq N_*$ we have
\begin{align}
\partial_t^\ell \bigl( \nabla X_{m}^{-1} \circ X_{m}\bigr) 
= \left( \DD_{t,m}^\ell \nabla X_{m}^{-1}\right)\circ X_{m}  
=  \left( \bigl[\DD_{t,m}^{\ell},\nabla\bigr]  X_{m}^{-1}\right)  \circ X_{m}   
\label{e.vomit.cascade}
\,.
\end{align}
The formula for the commutator present in \eqref{e.vomit.cascade} was recorded earlier in~\eqref{e.vomit.cascade.1}. By again using that $\DD_{t,m} X_m^{-1} = 0$, we deduce from \eqref{e.vomit.cascade.1} that
\begin{align}
\label{e.vomit.cascade.1a}
 \big[\DD_{t,m}^{\ell},\nabla\bigr]  X_{m}^{-1} 
  = \sum_{\ell'=1}^\ell
  \binom{\ell}{\ell'}  
  \bigl({\rm ad} \DD_{t,m}\bigr)^{\ell'}(\nabla) \DD_{t,m}^{\ell-\ell'} X_m^{-1} 
  = \bigl({\rm ad} \DD_{t,m}\bigr)^\ell(\nabla) X_m^{-1}
  \,,
\end{align}
where the first order differential operator $\bigl({\rm ad} \DD_{t,m}\bigr)^\ell(\nabla)$ is given by~\eqref{e.vomit.cascade.2}. 
With the available bound \eqref{e.monofractal.vomit}, and the identity~\eqref{e.vomit.cascade.2},  we bound the left side of \eqref{e.vomit.cascade.1a} as
\begin{align}
\label{e.salmon.bear.fat}
&\norm{\nabla^n \bigl[ \DD_{t,m}^\ell, \nabla\bigr] X_m^{-1}}_{L^\infty_{t,x}}\notag\\
&\qquad = \norm{\nabla^n \bigl({\rm ad} \DD_{t,m}\bigr)^\ell(\nabla) X_m^{-1}}_{L^\infty_{t,x}} \notag\\
&\qquad \leq C \sum_{n'=0}^{n}  \sum_{j=1}^\ell \sum_{\aa  \in \NN_0^j \colon |\aa | = \ell - j} \sum_{\bb  \in \NN_0^j \colon |\bb | = n'}  \prod_{i=1}^j 
\norm{\nabla^{\bb_i} \DD_{t,m}^{\aa _i} \nabla \b_m}_{L^\infty_{t,x}}  \norm{\nabla^{n-n'+1}  X_m^{-1}}_{L^\infty_{t,x}}
\notag\\
&\qquad \leq C \sum_{n'=0}^{n} \sum_{j=1}^\ell \sum_{\aa  \in \NN_0^j \colon |\aa | = \ell - j}\sum_{\bb  \in \NN_0^j \colon |\bb | = n'}  \prod_{i=1}^j 
(\ep_m^{\beta-2})^{\aa_i+1} (\ep_m^{-1})^{\bb_i}
(\ep_m^{-1})^{n-n'}
\notag\\
&\qquad \leq C  (\ep_m^{\beta-2})^\ell (\ep_m^{-1})^{n}
\,.
\end{align}
In order to conclude the proof of \eqref{eq:space:time:X:bounds}, we combine the above estimate with the identity \eqref{e.vomit.cascade}, and~\eqref{e.chain}, to obtain
\begin{align*}
&\norm{\nabla^n \partial_t^\ell (\nabla X_m^{-1} \circ X_m)}_{L^\infty_{t,x}} \notag\\
&\qquad \leq C \norm{\nabla  \bigl[ \DD_{t,m}^\ell, \nabla\bigr] X_m^{-1}}_{L^\infty_{t,x}} \norm{\nabla X_m}_{L^\infty_t C^{n-1}_x} + C \norm{\nabla  \bigl[ \DD_{t,m}^\ell, \nabla\bigr] X_m^{-1}}_{L^\infty_t C^{n-1}_x} \norm{\nabla X_m}_{L^\infty_{t,x}}^n \notag\\
&\qquad \leq C  (\ep_m^{\beta-2})^\ell   (\ep_m^{-1})^{n }  \,. 
\end{align*}
In the above estimate, the time-support was not written out explicitly, but it was implicitly assumed to be such that $|t|\leq 2^{-25} a_m^{-1}$, so that we could appeal to the bounds \eqref{e.Xm.regbounds} and \eqref{e.Xm.bound.4}. This concludes the proof of the corollary.
\end{proof}

A second consequence of Proposition~\ref{p.material.goal} is the following estimate on~$\nabla^n \DD_{t,m}^\ell \bigl( \nabla X_{m,l} \circ X_{m,l}^{-1}\bigr)$. This  is necessary to estimate the spatial and material derivatives of the matrix~$\s_m$ in Section~\ref{s.Tm.reg}. 

\begin{corollary}
\label{c.material.DX.Xinv} 
Assume that $n,\ell \in \NN_0$ are such that $n+\ell \leq N_*$. Then, for all $t,s\in\R$ with $|t| \leq 2^{-25} a_m^{-1}$,  we have that
\begin{equation}
\norm{\nabla^n \DD_{t,m}^\ell\bigl( \nabla X_{m}(t+s,X_{m}^{-1}(t+s,\cdot,s),s) \bigr)}_{L^\infty(\RR^2)}
\leq C \ep_m^{-n} \bigl(\ep_m^{\beta-2}\bigr)^\ell
\,,
\label{e.spacetime.X.bnds}
\end{equation}
where the constant $C\geq 1$ depends only on $\beta$, through $N_*$.
\end{corollary}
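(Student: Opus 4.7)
The plan is to reduce the desired estimate to bounds already established in Proposition~\ref{p.material.goal} (especially \eqref{e.monofractal.vomit}) and Proposition~\ref{p.SAMS.regularity}, by exploiting the key identity $\DD_{t,m} X_m^{-1} = 0$. Differentiating $X_m(t+s, X_m^{-1}(t+s,x,s), s) = x$ in $x$ gives the pointwise matrix identity
\[
F(t,x) \;:=\; \nabla X_m\bigl(t+s, X_m^{-1}(t+s,x,s), s\bigr) \;=\; \bigl(\nabla X_m^{-1}(t+s,x,s)\bigr)^{-1}.
\]
For the base case $\ell = 0$, the estimate $\snorm{\nabla X_m^{-1} - \Itwo}_{0, 2^{11}\ep_m^{-1}} \leq 1/4$ from \eqref{e.Xm.regbounds} shows that $F$ is well-defined and bounded, and an induction on $n$ using the identity $\nabla^n F = -\sum_{j<n} \binom{n}{j} \nabla^j F \cdot \nabla^{n-j}(\nabla X_m^{-1}) \cdot F$ (obtained from $F \cdot \nabla X_m^{-1} = \Itwo$), combined with the spatial bounds in \eqref{e.Xm.regbounds}, yields $\norm{\nabla^n F}_{L^\infty(\R^2)} \leq C \ep_m^{-n}$ for every $0 \leq n \leq N_*$.

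For $\ell \geq 1$, we derive a simple evolution equation for $F$. Since $\DD_{t,m} X_m^{-1} = 0$ and the commutator $[\partial_j, \DD_{t,m}] = (\partial_j \b_m) \cdot \nabla$ (already used in the proof of Proposition~\ref{p.material.goal}) gives $\DD_{t,m}(\nabla X_m^{-1}) = -(\nabla X_m^{-1}) \cdot \nabla \b_m$, applying $\DD_{t,m}$ to $F \cdot \nabla X_m^{-1} = \Itwo$ yields the identity
\[
\DD_{t,m} F \;=\; \nabla \b_m(t,x) \cdot F.
\]

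Iterating this identity, a straightforward induction on $\ell$ shows that $\DD_{t,m}^\ell F$ is a finite linear combination (the number of terms depending only on $\ell \leq N_*$, hence only on $\beta$) of expressions of the form
\[
c \, \bigl(\DD_{t,m}^{j_1}\nabla \b_m\bigr)\bigl(\DD_{t,m}^{j_2}\nabla \b_m\bigr)\cdots \bigl(\DD_{t,m}^{j_k}\nabla \b_m\bigr) \cdot F,
\]
with $k \geq 1$, $j_i \in \N_0$, and $\sum_{i=1}^k (j_i+1) = \ell$. Applying $\nabla^n$ to each such term via~\eqref{e.Moser}, distributing $n$ derivatives among the $k+1$ matrix factors as $n = n' + \sum_i n_i''$, using \eqref{e.monofractal.vomit} to bound $\norm{\nabla^{n_i''}\DD_{t,m}^{j_i}\nabla \b_m}_{L^\infty} \leq C (\ep_m^{\beta-2})^{j_i+1}\ep_m^{-n_i''}$, and the already-proved $\ell=0$ case to bound $\norm{\nabla^{n'} F}_{L^\infty} \leq C \ep_m^{-n'}$, each term is controlled by $C (\ep_m^{\beta-2})^\ell \ep_m^{-n}$. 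Summing over the finitely many contributions produces~\eqref{e.spacetime.X.bnds}. Note that the total ``derivative budget'' $n' + \sum_i n_i'' + \sum_i (j_i+1) = n + \ell \leq N_*$ never exceeds $N_*$, which is precisely the range in which \eqref{e.monofractal.vomit} and the $\ell=0$ bound above are valid.

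The only real obstacle is the combinatorial bookkeeping in the induction step; this is entirely analogous to (and strictly simpler than) the iteration performed in the proof of \eqref{e.monofractal.vomit}, and may if desired be made fully rigorous using the commutator and non-commutative power identities recorded in~\cite[Appendices~A.6 and~A.7]{BMNV}.
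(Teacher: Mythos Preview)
Your argument is correct and takes a genuinely different route from the paper.

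The paper exploits that in two dimensions, incompressibility gives $\det(\nabla X_m^{-1})=1$ and hence $\nabla X_m\circ X_m^{-1}=(\nabla X_m^{-1})^{-1}=(\nabla^\perp(X_m^{-1})^\perp)^T$: each entry of $F$ is literally an entry of $\nabla X_m^{-1}$ (up to sign and index permutation). Consequently, $\nabla^n\DD_{t,m}^\ell F$ is controlled entrywise by $\nabla^n\DD_{t,m}^\ell\nabla X_m^{-1}=\nabla^n[\DD_{t,m}^\ell,\nabla]X_m^{-1}$, and the paper simply quotes the bound~\eqref{e.salmon.bear.fat} already proved in Corollary~\ref{c.material.goal}. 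This is very short but relies on the two-dimensional cofactor identity.

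Your approach instead derives the matrix evolution equation $\DD_{t,m}F=(\nabla\b_m)\,F$ (the precise left/right order depends on index conventions but is immaterial for the estimates), iterates it to express $\DD_{t,m}^\ell F$ as a polynomial in the $\DD_{t,m}^{j}\nabla\b_m$ acting on $F$, and then invokes~\eqref{e.monofractal.vomit} together with the $\ell=0$ case. This is dimension-independent and entirely self-contained; the price is some Leibniz bookkeeping, which you correctly note stays within the $N_*$ budget. For the $\ell=0$ case you could also simply cite~\eqref{e.Xm.bound.2} rather than redo the inverse-matrix induction, but your argument there is fine as well.
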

\begin{proof}[Proof of Corollary~\ref{c.material.DX.Xinv}]
When $\ell=0$, the desired bounds were already obtained in \eqref{e.Xm.bound.2}. For $\ell\geq 1$, the proof of \eqref{e.spacetime.X.bnds} starts with the  observation  that  
$
\nabla X_{m} \circ X_{m}^{-1} = ( \nabla X_{m}^{-1} )^{-1}
$
as $2\times 2$ matrices. Since the flows $\b_m$ that define $X_{m}$ are incompressible, we have that  ${\rm det}(\nabla X_{m}^{-1}) = 1$, and so $\nabla X_{m} \circ X_{m}^{-1}$ equals the transpose of the cofactor matrix associated to $\nabla X_{m}^{-1}$. In turn, since we are in two space dimensions, this cofactor matrix equals to $\nabla^\perp (X_{m}^{-1})^\perp$. This leads to the identity
\begin{equation*}
\nabla X_{m} \circ X_{m}^{-1} = ( \nabla X_{m}^{-1} )^{-1}  = \bigl( \nabla^\perp (X_{m}^{-1})^\perp \bigr)^T
\,.
\end{equation*}
The purpose of the above identity is to show that if we have estimates for all the entries of the matrix $\nabla^n \DD_{t,m}^\ell \bigl( \nabla X_{m}^{-1} \bigr)$, then we automatically obtain estimates for 
 the matrix $\nabla^n \DD_{t,m}^\ell \bigl( \nabla X_{m} \circ X_{m}^{-1}\bigr)$.

\smallskip

To conclude, we note that since $\DD_{t,m}^\ell X_m^{-1} = 0$, we have that $\nabla^n \DD_{t,m}^\ell \nabla X_m^{-1} = \nabla^n \bigl[ \DD_{t,m}^\ell, \nabla \bigr] X_m^{-1}$, and precisely this term was previously estimated in \eqref{e.salmon.bear.fat}. In turn, this estimate recovers \eqref{e.spacetime.X.bnds}, as desired.
\end{proof}

We conclude this section by noting that by construction, the vector field $\b$ defined in \eqref{e.def.b} is ``nearly a solution'' of the incompressible Euler equations, as quantified by the following result.

\begin{proposition}
\label{p.STME.Euler} 
The vector field $\b$ constructed in \eqref{e.def.b} solves
\begin{equation}
\partial_t \b + \div ( \b \otimes \b) + \nabla p = \div \Reynolds \,, 
\qquad 
\div \b = 0 \,,
\label{e.almost.Euler}
\end{equation}
for a suitable pressure scalar $p \in C(\RR; C^{0,\beta'}(\RR^2))$ and a traceless  stress tensor $\Reynolds \in C(\RR; C^{0,\beta'}(\RR^2))$, for any $\beta' \in (0,1)$ satisfying
\begin{equation}
\beta' 
< 
2 (\beta - 1) +  \frac{2\delta}{q}  
=
2 (\beta - 1) + 
\frac{(\beta-1)(4-3 \beta)^2}{2 \beta (5 \beta - 4)}
\,.
\label{e.beta'.def}
\end{equation}
Moreover, there exists a constant $C\geq 1$ which only depends on $\beta$ and $\beta'$ as in~\eqref{e.beta'.def}, such that 
\begin{equation}
\norm{p}_{L^\infty(\R; C^{0,\beta'}(\R^2))} + \norm{\Reynolds}_{L^\infty(\R; C^{0,\beta'}(\R^2))} \leq C \Lambda^{-\frac{q}{q-1} ( 2(\beta-1) - \beta')} 
+ C \Lambda^{-\frac{2q}{q-1} (2(\beta-1) + \frac{2\delta}{q} - \beta') } \,.
\label{e.almost.Euler.quant}
\end{equation}
In particular, for $\beta,\beta'$ fixed, the right side of \eqref{e.almost.Euler.quant} can be made arbitrarily small by letting $\Lambda$ be sufficiently large.
\end{proposition}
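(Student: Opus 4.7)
The plan is to build $(p,\Reynolds)$ by summing scale-by-scale contributions from the telescoping decomposition $\b_m-\b_{m-1}=\vv_m$, exploiting the self-advection identity $\DD_{t,m-1}(\psi_{m,k}\circ X_{m-1,l_k}^{-1})\equiv 0$ built into~\eqref{e.psi.recursion} to uncover cancellations that reduce the Reynolds stress below the naive $\OO(\ep_m^{\beta-1})$ size of $\vv_m$. A direct computation using that $\b_{m-1}$ and $\vv_m$ are divergence-free gives, for each $m\geq 1$,
$\partial_t\b_m+\div(\b_m\otimes\b_m)-\bigl[\partial_t\b_{m-1}+\div(\b_{m-1}\otimes\b_{m-1})\bigr] = \DD_{t,m-1}\vv_m+(\vv_m\cdot\nabla)\b_{m-1}+(\vv_m\cdot\nabla)\vv_m$;
since $\b_0=0$, summing this identity in $m$ recovers $\partial_t\b+\div(\b\otimes\b)$. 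I intend to write each of the three terms above in the form $-\nabla p_m^{(j)}+\div \Reynolds_m^{(j)}$ with quantitative $C^{0,\beta'}$ estimates, and then set $p=\sum_{m,j}p_m^{(j)}$, $\Reynolds=\sum_{m,j}\Reynolds_m^{(j)}$, the convergence of these series being a consequence of the super-geometric decay in~\eqref{e.minsep}.

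For $\DD_{t,m-1}\vv_m$, I would combine the commutator identity $[\DD_{t,m-1},\sigma\nabla]=-\sigma(\nabla\b_{m-1})^T\nabla$ with the relation $\nabla(\phi_m-\phi_{m-1})=-\sigma\vv_m$ and the algebraic identity $\sigma A^T\sigma=A$ for any traceless $2\times 2$ matrix $A$, to obtain $\DD_{t,m-1}\vv_m=(\vv_m\cdot\nabla)\b_{m-1}+\sum_k\partial_t(\hat\zeta_{m,l_k}\zeta_{m,k})\nabla^\perp(\psi_{m,k}\circ X_{m-1,l_k}^{-1})$. The time-cutoff piece is $\nabla^\perp$ of a quantity oscillating at spatial scale $\ep_m$, so an anti-divergence adapted to the shear structure yields a stress of $L^\infty$ size $\ep_m\cdot\tau_m^{-1}\cdot\ep_m^{\beta-1}$, which by~\eqref{e.taubounds} and $\ep_{m-1}\simeq\ep_m^{1/q}$ is a positive power of $\ep_m$. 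The cross-scale term $(\vv_m\cdot\nabla)\b_{m-1}=\div(\b_{m-1}\otimes\vv_m)$ contributes $\b_{m-1}\otimes\vv_m$ directly to $\Reynolds$, and Corollary~\ref{c.phim} supplies the Hölder estimate $\|\b_{m-1}\otimes\vv_m\|_{C^{0,\beta'}}\lesssim \ep_m^{\beta-1-\beta'}$, which after summing and evaluation at $m=1$ (using $\ep_1\simeq \Lambda^{-q/(q-1)}$) contributes the first exponent in~\eqref{e.almost.Euler.quant}. For the self-term $(\vv_m\cdot\nabla)\vv_m=\div(\vv_m\otimes\vv_m)$, the decisive observation is that each base shear $\nabla^\perp\psi_{0,k}$ solves the stationary incompressible Euler equation exactly, so the nonzero contribution comes only from (i) the Lagrangian correction $\|\nabla X_{m-1,l_k}^{-1}-\Itwo\|_{L^\infty}\lesssim \ep_{m-1}^{2\delta}$ produced by composition with the inverse flow (by~\eqref{e.Xm.regbounds} together with~\eqref{e.taubounds}), and (ii) the non-constant part of the spatial mean of $\nabla^\perp\psi_{m,k}\otimes \nabla^\perp\psi_{m,k}$, which over the alternating horizontal/vertical shear schedule from~\eqref{e.def.streamr.0} time-averages over a period to a scalar multiple of $\Itwo$ (absorbed into $p$), leaving only an $\ep_m$-oscillatory remainder. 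Inverting the divergence on the remainder yields a self-term stress of size $\ep_m\cdot\ep_m^{2(\beta-1)}\cdot\ep_{m-1}^{2\delta}$, which is exactly the second exponent in~\eqref{e.almost.Euler.quant}.

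The main obstacle will be the quantitative bookkeeping of the anti-divergence on the composition-with-flow structure: since $\vv_m$ is only approximately $\ep_m$-periodic after pull-back by $X_{m-1,l_k}^{-1}$, a direct inverse divergence in Eulerian variables would destroy the gain of one power of $\ep_m$. I would sidestep this by inverting the divergence in the Lagrangian coordinates $y=X_{m-1,l_k}^{-1}(t,x)$, in which $\psi_{m,k}(y)$ is exactly $\ep_m$-periodic and zero-mean, and only then pushing the resulting stress back to Eulerian variables; the push-back contributes precisely the $\ep_{m-1}^{2\delta}$ correction, controlled via Corollaries~\ref{c.flowreg} and~\ref{c.material.DX.Xinv}. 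Once the per-scale $C^{0,\beta'}$ estimates are in hand, summability in $m$ is immediate from~\eqref{e.minsep}, and the pressure $p$ is reconstructed post hoc by Helmholtz--Leray via $\Delta p=-\div\div(\b\otimes\b-\Reynolds)$, inheriting the same $C^{0,\beta'}$ bound by Schauder theory on the torus.
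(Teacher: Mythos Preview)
Your overall strategy matches the paper's own sketch: telescope using the identities~\eqref{e.Dt.vm.0}--\eqref{e.Dt.vm.1} and exploit that $\vv_m\cdot\nabla\vv_m$ vanishes to leading order for shear flows. Your derivation of $\DD_{t,m-1}\vv_m=(\vv_m\cdot\nabla)\b_{m-1}+\text{(time-cutoff piece)}$ via the traceless-matrix identity $\sigma A^T\sigma=A$ is correct and is exactly the content of~\eqref{e.Dt.vm.1}.

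There is, however, a concrete quantitative gap in your treatment of the cross-scale term. You write $(\vv_m\cdot\nabla)\b_{m-1}=\div(\b_{m-1}\otimes\vv_m)$ and estimate $\|\b_{m-1}\otimes\vv_m\|_{C^{0,\beta'}}\lesssim\ep_m^{\beta-1-\beta'}$, then assert that this produces the first exponent in~\eqref{e.almost.Euler.quant}. But summing $\ep_m^{\beta-1-\beta'}$ and evaluating at $m=1$ gives $\Lambda^{-\frac{q}{q-1}(\beta-1-\beta')}$, not $\Lambda^{-\frac{q}{q-1}(2(\beta-1)-\beta')}$; you are missing exactly one factor of $\beta-1$ in the exponent. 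More seriously, your sum only converges for $\beta'<\beta-1$, which is strictly smaller than the range $\beta'<2(\beta-1)+\tfrac{2\delta}{q}$ in~\eqref{e.beta'.def}, so for $\beta'\in[\beta-1,\,2(\beta-1)+\tfrac{2\delta}{q})$ your construction of $\Reynolds$ fails to converge.

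The fix is the same stream-function anti-divergence you already use elsewhere: since $\vv_m=\nabla^\perp\tilde\psi_m$ with $\tilde\psi_m=\phi_m-\phi_{m-1}$, the skew-symmetry of $\sigma$ yields the exact identity $(\vv_m\cdot\nabla)\b_{m-1}=-\div\bigl(\tilde\psi_m\,\sigma\,\nabla\b_{m-1}\bigr)$, with no remainder. This stress has $C^{0,\beta'}$ norm $\lesssim\ep_m^{\beta-\beta'}\ep_{m-1}^{\beta-2}$, which for every $m\geq 1$ is strictly smaller than $\ep_m^{2(\beta-1)-\beta'}$ (their ratio is $(\ep_m/\ep_{m-1})^{2-\beta}\ll 1$), so the cross-scale contribution is in fact subordinate to the first exponent and the series converges throughout the range~\eqref{e.beta'.def}. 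Your exponent bookkeeping for the self-term also does not line up with~\eqref{e.almost.Euler.quant} as written (note, for instance, that at $m=1$ one has $X_0=\mathrm{Id}$ and $\b_0=0$, so $(\vv_1\cdot\nabla)\vv_1\equiv 0$ exactly and there is no $\ep_0^{2\delta}$ gain available), so you should rework those estimates once the cross-scale term is corrected.
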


We note that the parameter $\beta'$ in \eqref{e.beta'.def} is allowed to be strictly larger than $2(\beta-1)$. As such the regularity of the pressure in~\eqref{e.almost.Euler} is strictly better than the regularity of the pressure for a generic $C^0_t C^{0,\beta-1}_x$ weak solution  of the Euler equations, which is $C^0_t C^{0,2(\beta-1)}_x$. Proposition~\ref{p.STME.Euler} follows from a fairly straightforward computation by telescoping~\eqref{e.Dt.vm.0} and~\eqref{e.Dt.vm.1} and the fact that the term~$\vv_m \cdot \nabla \vv_m$ vanishes to leading order due to the shear flows used in the construction.
We do not give the details here, since it is not needed in our analysis, but the interested reader can find the proof commented out in the latex source file (downloadable on the arxiv) below this sentence.

\section{Correctors and renormalized diffusivities}
\label{s.renorm}

In this section, we introduce the sequence of correctors 
and renormalized diffusivities for each scale~$\ep_m$. 

\subsection{The correctors: definitions and estimates}
\label{ss.correctors}
We will introduce a corrector~$\Chi_m^\kappa$ which mediates between scales~$\ep_m$ and~$\ep_{m-1}$. The job of $\Chi_m^\kappa$ is to ``correct'' a solution of the~$\ep_{m-1}$-scale equation
\begin{equation}
\label{e.meq}
\partial_t \theta_{m-1} - \kappa_{m-1} \Delta \theta_{m-1}  + \b_{m-1} \cdot \nabla \theta_{m-1} = 0,
\end{equation}
by adding the wiggles with wavelengths of order~$\ep_m$ we would expect to see in the solution of the~$\ep_m$-scale equation.

\smallskip

As we have seen in the construction of the vector field, the difference between~$\b_m$ and~$\b_{m-1}$ is the inclusion of shear flows oscillating at the length scale~$\ep_m$, in the Lagrangian coordinates corresponding to~$\b_{m-1}$. 
These shear flows alternate between horizontal and vertical shears (with ``quiet'' periods in between) on the time scale~$\tau_m$ which, as we will show, is much longer than the time scale on which in takes for the shear flows to homogenize. These oscillations in space and in time will create oscillations in the solutions~$\theta_m$ which are not present in~$\theta_{m-1}$. We need to introduce correctors which capture, at leading order, these oscillations. Roughly speaking, the correctors which capture the spatial oscillations at scale~$\ep_m$ in Lagrangian coordinates will be denoted by~$\tilde{\Chi}_m$. The time oscillations due to the horizontal and vertical alternation of the shear flows will be corrected by a function denoted by~$\widetilde{H}_m$. 

\smallskip

Since the scale~$\ep_m$ on which the shears oscillate is much smaller than the active scales of the flows~$X_{m-1}$, we should expect the correctors~$\tilde{\Chi}_m$ to be obtained---at least at leading order---from the the composition of the correctors for the (time-independent) simple shear flow with the appropriate Lagrangian flow~$X_{m-1}$. 
As such, we first discuss the derivation of the correctors corresponding to the time-independent shear flows. We denote these by~$\Chi_{m,k}$, and they turn out to be given by a an explicit, well-known and simple formula.

\subsection{Correctors and effective diffusivity for smoothly alternating shear flows}

All of the notation from the previous section is adopted here; in particular we recall that the stream function $\psi_{m,k}$ is defined for each~$k\in\Z$ and~$m\in\N$ in~\eqref{e.def.streamr.0}--\eqref{e.def.streamr} and the time cutoff functions~$\zeta_{m,k}$ and~$\hat{\zeta}_{m,l}$ are defined in~\eqref{e.zetatimecutoff}--\eqref{e.zeta.mk.def} and~\eqref{e.zeta.prime.ml.fitting}--\eqref{e.zeta.prime.ml.bounds}, respectively, and we recall from~\eqref{e.cutoff.overlaps} that the supports of these overlap only when $l = l_k$, with~$l_k$ defined in~\eqref{e.lk.def}.  
We also define the incompressible vector fields~$\mathbf{u}_{m,k}$ by 
\begin{equation}
\label{e.ukm.explicit}
\mathbf{u}_{m,k}(x) := \nabla^\perp \psi_{m,k}(x)
=
\left\{
\begin{aligned}
&2\pi a_m \ep_m \cos\left( \tfrac{2\pi x_1}{\ep_m} \right) \e_2 
& \mbox{if} & \ k\in 4\Z+1, \\
& \!- \!  2\pi a_m \ep_m  \cos\left( \tfrac{2\pi x_2}{\ep_m} \right) \e_1 
& \mbox{if} & \ k\in 4\Z+3, \\
& 0 
& \mbox{if} & \ k\in 2\Z
\,,
\end{aligned}
\right.
\end{equation}
and we set 
\begin{equation}
\label{e.psi.m}
\psi_m(t,x) 
:= 
\sum_{k \in2\Z+1} 
\hat{\zeta}_{m,l_k} (t) \zeta_{m,k}(t) \psi_{m,k}(x) 
\,.
\end{equation}
and
\begin{equation}
\label{e.um.formula}
\mathbf{u}_m(t,x) := \nabla^\perp \psi_{m} (t,x)
=
\sum_{k\in2\Z+1} \hat{\zeta}_{m,l_k} (t)\zeta_{m,k}(t) \mathbf{u}_{m,k}(x) 
\,.
\end{equation}
We let~$\nabla \mathbf{u}_m(t,x)$ denote the~$2$-by-$2$ matrix with entries~$\partial_{x_i} (e_j\cdot \mathbf{u}_m)(t,x)$; it is given by the formula
\begin{equation}
\label{e.um.gradient.formula}
\nabla \mathbf{u}_{m,k}(x)
=
\left\{
\begin{aligned}
& \!- \! 4\pi^2 a_m  \sin\left( \tfrac{2\pi x_1}{\ep_m} \right) \e_1\otimes \e_2 
& \mbox{if} & \ k\in 4\Z+1, \\
& 4\pi^2 a_m \sin\left( \tfrac{2\pi x_2}{\ep_m} \right) \e_2\otimes \e_1 
& \mbox{if} & \ k\in 4\Z+3, \\
& 0 
& \mbox{if} & \ k\in 2\Z
\,.
\end{aligned}
\right.
\end{equation}

We also introduce a special time $t^*_{m,k} := (-\tfrac23 + k)\tau_m$ and, for each~$\kappa>0$ and~$\e \in\R^2$, 
define~$\chi^{\kappa}_{m,k,\e}$ to be the solution of 
\begin{equation}
\label{e.parabcorr.k}
\left\{
\begin{aligned}
& \partial_t \chi^{\kappa}_{m,k,\e} 
- \kappa \Delta \chi^{\kappa}_{m,k,\e}
+\hat{\zeta}_{m,l_k} \zeta_{m,k} \mathbf{u}_{m,k} \cdot \bigl( \e + \nabla \chi^{\kappa}_{m,k,\e} \bigr) = 0 
& \mbox{in} & \ \bigl( -\infty , \infty\bigr) \times  \R^2\,, \\
& \chi^{\kappa}_{m,k,\e} = 0 
& \mbox{in} & \ \bigl( -\infty , t^*_{m,k} \bigr) \times \R^2 \,. \\
\end{aligned}
\right. 
\end{equation}
We observe that~\eqref{e.parabcorr.k} does have a unique solution by first imposing a zero initial condition at time~$t^*_{m,k}$ and then noticing that, thanks to the presence of the cutoff function~$\zeta_{m,k}$, we may extend the solution to earlier times by setting it equal to zero.  
Note that~$\Chi^\kappa_{m,k} \equiv 0$ for $k\in 2\Z$ by~\eqref{e.ukm.explicit}. 

\smallskip

We will use vector notation for these correctors by writing
\begin{equation*}
\Chi_{m,k}^\kappa := 
\begin{pmatrix}
\corr_{m,k,\e_1}^\kappa   \\ 
\corr_{m,k,\e_2}^\kappa 
\end{pmatrix}^\intercal,
\qquad m\in\N.
\end{equation*}
Then~$\nabla \Chi_{m,k}^\kappa$ denotes the~$2\times2$ matrix
\begin{equation}
\label{e.gradcorrmatrix}
\nabla \Chi_{m,k}^\kappa =
\begin{pmatrix}
\partial_{x_1} \corr_{m,k,\e_1}^\kappa 
& \partial_{x_1} \corr_{m,k,\e_2}^\kappa \\ 
\partial_{x_2} \corr_{m,k,\e_1}^\kappa 
& \partial_{x_2} \corr_{m,k,\e_2}^\kappa 
\end{pmatrix}.
\end{equation}
Thanks to the one-dimensional nature of the shear flows~$\mathbf{u}_{m,k}$, we can give a simple explicit formula for~$\Chi^\kappa_{m,k}$.
Indeed, a direct computation yields
\begin{equation}
\label{e.Chimk.formula}
\Chi^{\kappa}_{m,k} (t,x) 
=
\mathbf{u}_{m,k}(x) 
\int_{-\infty}^t 
\hat{\zeta}_{m,l_k}(s)
\zeta_{m,k}(s)
\exp\left( \tfrac{4\pi^2\kappa}{\ep_m^2} (s-t) \right) \,ds
\end{equation}
and thus 
\begin{equation}
\label{e.Chimk.gradient.formula}
\nabla \Chi^{\kappa}_{m,k} (t,x) 
=
\nabla\mathbf{u}_{m,k}(x) 
\int_{-\infty}^t 
\hat{\zeta}_{m,l_k}(s)
\zeta_{m,k}(s)
\exp\left( \tfrac{4\pi^2\kappa}{\ep_m^2} (s-t) \right) \,ds
\,.
\end{equation}
Since $\hat{\zeta}_{m,l_k} \zeta_{m,k} \leq 1$, we have 
\begin{equation*}
\int_{-\infty}^t 
\hat{\zeta}_{m,l_k}(s)
\zeta_{m,k}(s)
\exp\left( \tfrac{4\pi^2\kappa}{\ep_m^2} (s-t) \right) \,ds
\leq
\int_{-\infty}^t 
\exp\left( \tfrac{4\pi^2\kappa}{\ep_m^2} (s-t) \right) \,ds
= 
\frac{\ep_m^2}{4\pi^2\kappa}\,.
\end{equation*}
Therefore, 
\begin{equation}
\label{e.corrbounds.Chi}
\left\| \Chi^\kappa_{m,k} \right\|_{L^\infty(\R\times\TT^2)}
+ \ep_m
\left\| \nabla \Chi^\kappa_{m,k} \right\|_{L^\infty(\R \times\TT^2)}
\leq 
\frac{C a_m \ep_m^3}{\kappa}.
\end{equation}
Since~$\hat{\zeta}_{m,l_k} \leq 1$ and~$\zeta_{m,k}$ vanishes on $[(k+\frac23)\tau_m,\infty)$, we have, for every~$t \geq (k+\frac34)\tau_m$, 
\begin{equation*}
\int_{-\infty}^t 
\hat{\zeta}_{m,l_k}(s)
\zeta_{m,k}(s)
\exp\left( \tfrac{4\pi^2\kappa}{\ep_m^2} (s-t) \right) \,ds
\leq
\int_{-\infty}^{t-\frac1{12}\tau_m} 
\exp\left( \tfrac{4\pi^2\kappa}{\ep_m^2} (s-t) \right) \,ds
= 
\frac{\ep_m^2}{4\pi^2\kappa} \exp\left( - \frac{\pi^2\kappa\tau_m}{3\ep_m^2} \right) 
\,.
\end{equation*}
Therefore, after time $(k+\frac34)\tau_m$ the corrector~$\Chi^\kappa_{m,k}$ becomes exponentially small: we have 
\begin{align}
\label{e.corrbounds.Chi.decay}
\sup_{t\in \bigl( (k+\frac34)\tau_m,\infty\bigr)} \left( 
\left\| \Chi^\kappa_{m,k}(t,\cdot) \right\|_{L^\infty(\TT^2)}
+
\ep_m \left\| \nabla \Chi^\kappa_{m,k}(t,\cdot) \right\|_{L^\infty(\TT^2)}
\right)
&
\leq
\frac{C a_m \ep_m^3}{\kappa}
\exp\left( - \frac{\pi^2\kappa\tau_m}{3 \ep_m^2}  \right)
\notag \\ & 
\leq 
C\ep_m
\exp\left( - \frac{\pi^2\kappa\tau_m}{4 \ep_m^2}  \right)\,,
\end{align}
where we rather crudely used~$a_m\tau_m \leq 1$ in the last line. We will typically encounter the situation in which~$\frac{\kappa \tau_m}{\ep_m^2} \gg 1$. Indeed, it will be a negative power of~$\ep_m$ in practice---see~\eqref{e.exprat.bound} below---and therefore the exponential factor on the right side of~\eqref{e.corrbounds.Chi.decay} is very small.

\smallskip

We next define 
\begin{equation}
\label{e.Chim}
\Chi^\kappa_{m} 
: =
\sum_{k\in \Z}
\xi_{m,k}
\Chi^\kappa_{m,k} 
=
\sum_{k\in2\Z+1} 
\xi_{m,k}
\Chi^\kappa_{m,k},
\end{equation}
Recall that the cutoff function~$\xi_{m,k}$ is defined in~\eqref{e.xi.mk.def} and is locally constant except for times outside the time interval~$[(k-\frac34)\tau_m,(k+\frac34)\tau_m]$, in other words, when the function~$\Chi^\kappa_{m,k}$ is very small by~\eqref{e.corrbounds.Chi.decay}. 

\smallskip

We are able to conclude by~\eqref{e.corrbounds.Chi.decay},~\eqref{e.goodtransition} and superposition that the components of~$\Chi_m^\kappa$ in~\eqref{e.Chim}
are ``almost'' solutions of 
\begin{align}
\label{e.parabcorr}
\left\{
\begin{aligned}
& \partial_t \chi^{\kappa}_{m,\e} 
- \kappa \Delta \chi^{\kappa}_{m,\e}
+\mathbf{u}_{m} \cdot \bigl( \e + \nabla \chi^{\kappa}_{m,\e} \bigr) = 0 
& \mbox{in} & \ \R \times \R^2, \\
& 
\chi^\kappa_{m,\e}(t,x) \quad
\text{is \ $\Z\times\Z^2$--periodic,}
\\
& 
\langle \chi^\kappa_{m,\e}(t,\cdot) \rangle = 0, \quad \forall t\in\R. 
\end{aligned}
\right. 
\end{align}
We recognize~\eqref{e.parabcorr} as the periodic, space-time corrector arising in parabolic homogenization. 
Actually, the equation in the first line of~\eqref{e.parabcorr} is valid only up to an exponentially small error. Our reason for defining~$\Chi_m^\kappa$ slightly differently, not in terms  of~\eqref{e.parabcorr} but rather as the sum~\eqref{e.Chim}, is because the exact formula~\eqref{e.Chimk.formula} is more convenient to work with and the difference between these two is negligible. We still refer to $\Chi_m^\kappa$ as a ``corrector.''

\smallskip

\begin{remark}[A special orthogonality property]
\label{r.special.orthogonality}
An important property inherited from the shear flow structure, which will come to our rescue in the Section~\ref{s.cascade}, is the following pointwise orthogonality property: for every pair of multiindices~$\aa,\bb\in \NN_0^d$,
\begin{equation}
\label{e.SO}
\partial^\aa \mathbf{u}_{m,k} 
\cdot \nabla \partial^\bb \chi^\kappa_{m,k,\e} = 0 \quad \mbox{in} \ \R \times\TT^2. 
\end{equation}
Indeed, if~$k\in 4\Z+1$ (respectively,~$k\in 4\Z+3$), then we see from~\eqref{e.Chimk.formula} that the function~$\partial^\bb \chi^\kappa_{m,k,\e}(t,\cdot)$ depends only on~$x_1$ (resp.,~$x_2$) and therefore its gradient is proportional to~$\e_1$ (resp.,~$\e_2$), while from~\eqref{e.ukm.explicit} we see that $\partial^\aa \mathbf{u}_{m,k}$ is proportional to~$\e_2$ (resp.,~$\e_1$). 
\end{remark}

\subsection{The renormalized diffusivities: recurrence, averaging and estimates}

We introduce the following objects:
\begin{itemize}

\item We denote the spatially-averaged flux of the correctors by
\begin{equation}
\label{e.am.kappa.def}
\J^\kappa_{m} (t) 
:=
\Bigl \langle 
\bigl( \kappa \Itwo + \psi_m(t,\cdot)\sigma\bigr) 
\bigl( \Itwo + \nabla \Chi^\kappa_{m}(t,\cdot) \bigr)
\Bigr \rangle 
\,.
\end{equation}

\item The homogenized matrix is the average of the flux in both space and time, defined by
\begin{equation}
\label{e.Kbarm.def}
\Khom^\kappa_{m} := 
\int_0^1 \J^\kappa_{m}(t)\,dt 
=
\bigl\langle \!\!\bigl\langle
\bigl( \kappa \Itwo + \psi_m\sigma\bigr) 
\bigl( \Itwo + \nabla \Chi^\kappa_{m}\bigr)
\bigr\rangle \!\!\bigr\rangle
\,.
\end{equation}
\end{itemize}
Since~$\Chi_m$ and~$\psi_m$ are invariant under a simultaneous $2\tau_m''\Z$--translation in time and a permutation of the~$\e_1$ and~$\e_2$ axes, it follows that~$\Khom^\kappa_m$ is a scalar matrix. We will therefore abuse notation by allowing~$\Khom^\kappa_m$ to denote both a matrix and the positive scalar constant~$a$ such that~$\Khom^\kappa_m = a\Itwo$, since it will always be clear from the context which is intended. 

\smallskip

Using~\eqref{e.Chimk.formula}, we can find an explicit formula for~$\J^\kappa_m (t)$ and~$\Khom_m^\kappa$, which will be helpful in our computations. Observe first, using the properties of the cutoff functions, the skew-symmetry of~$\sigma$ and the fact that~$\langle \nabla \Chi_{m,k}^\kappa \rangle = 0$, that we may write~\eqref{e.am.kappa.def} as
\begin{equation}
\label{e.Jm.kappa.def.2}
\J^\kappa_{m}(t)
-
\kappa \Itwo 
=
\bigl\langle 
\psi_{m}\sigma \nabla \Chi^{\kappa}_{m} (t,\cdot)
\bigr\rangle
=
\biggl \langle \,
\sum_{k \in2\Z+1}
\hat{\zeta}_{m,l_k}(t)
\zeta_{m,k}(t)
\psi_{m,k}\sigma \nabla \Chi_{m,k} (t,\cdot)
\biggr \rangle
\,.
\end{equation}
Using~\eqref{e.def.streamr},~\eqref{e.Chimk.formula} and $\langle \sin^2 \rangle=\frac12$, we compute, for every $k\in\Z$ and $t\in [(k-\frac12) \tau_m, (k+\frac12) \tau_m]$,
\begin{align}
\label{e.Jm.explicit.mofo}
\lefteqn{
\J^\kappa_{m}(t) - \kappa \Itwo
}  & 
\notag \\ &
= 
2\pi^2 a_m^2 \ep_m^2 
\hat{\zeta}_{m,l_k} (t)
\zeta_{m,k}(t)
\int_{-\infty}^t 
\hat{\zeta}_{m,l_k}(s)
\zeta_{m,k}(s)
\exp\left( \tfrac{4\pi^2\kappa}{\ep_m^2} (s-t) \right) \,ds
\cdot 
\left\{
\begin{aligned}
& \e_2\otimes \e_2
& \mbox{if} & \ k\in 4\Z+1, \\
& \e_1\otimes \e_1
& \mbox{if} & \ k\in 4\Z+3, \\
& 0 
& \mbox{if} & \ k\in 2\Z. \\
\end{aligned}
\right.
\end{align}
In particular, for a universal constant~$C<\infty$,
\begin{equation}
\label{e.akappam.size}
\bigl| \J^\kappa_{m}(t) \bigr|
\leq
\biggl( \kappa +
\frac{Ca_m^2\ep_m^4}{\kappa} \biggr)\,.
\end{equation}
Observe that $\J^\kappa_{m}$ is a~$\tau^{\prime\prime}_m$-periodic function of time. 
We will show next that, up to a very small error, $\J^\kappa_m$ can be written as a sum of products of~$\tau_m$--periodic functions and~$\tau_m^{\prime\prime}$--periodic functions. 
Define 
\begin{equation}
\label{e.Jhat}
\hat{\J}^\kappa_{m}(t)
:=
\kappa \Itwo+
\sum_{n=0}^{N_*-1}
L_{m,n}^\kappa(t) \, \mathbf{j}_{m,n}^\kappa(t)
\end{equation}
where we define, for every~$n\in\{ 0,\ldots,N_*\}$,
\begin{equation}
\label{e.jkmn}
\mathbf{j}_{m,n}^\kappa(t)
:=
\frac{2\pi^2 a_m^2 \ep_m^2}{n!}
\biggl( \frac{\ep_m^2}{4\pi^2\kappa} \biggr)^{\!n}  
\sum_{k\in2\Z+1}
\zeta_{m,k}(t) 
\partial_t^n \zeta_{m,k}(t) 
\bigl( \indc_{ \{ k\in 4\Z+1\}} \e_2\otimes \e_2 + \indc_{ \{ k\in 4\Z+3\}} \e_1\otimes \e_1 \bigr)
\end{equation}
and
\begin{equation}
\label{e.Lmn.def}
L_{m,n}^\kappa (t):=
\sum_{l\in\Z} 
\hat{\zeta}_{m,l} (t)
\int_{-\infty}^t 
\hat{\zeta}_{m,l}(s)
\biggl( \frac{4\pi^2 \kappa (s-t)}{\ep_m^2} \biggr)^{\!n}  
\exp\Bigl( \tfrac{4\pi^2\kappa}{\ep_m^2} (s-t) \Bigr) \,ds
\,.
\end{equation}
Observe that $\mathbf{j}_{m,n}^\kappa$ is indeed $\tau_m$--periodic, and~$L_{m,n}^\kappa$ is~$\tau_m^{\prime\prime}$--periodic. 

\smallskip

The functions~$\mathbf{j}_{m,n}^\kappa$ defined in~\eqref{e.jkmn} satisfy the bounds
\begin{equation}
\label{e.jmn.bound}
\bigl \| \mathbf{j}_{m,n}^\kappa \bigr\|_{L^\infty(\R)}
\leq 
Ca_m^2 \ep_m^2
\biggl( \frac{\ep_m^2}{\kappa \tau_m } \biggr)^{\!\!n}  \,,
\qquad 
\forall n\in\{0,\ldots, N_*\}\,.
\end{equation}
Here the constant~$C$ depends only on~$\beta$ through~$C_{\eqref{e.zetatimecutoff}}$.
Likewise, the functions~$L_{m,n}^\kappa$ satisfy, for every~$n,\ell \in\{0,\ldots,N_*\}$,
\begin{align*}
\| \partial^\ell_t L_{m,n}^\kappa \|_{L^\infty(\R)}
&
\leq 
\sup_{l \in\Z}
\sup_{s\in \R}
\bigl\| \partial_t^\ell 
\bigl( 
\hat{\zeta}_{m,l} 
\hat{\zeta}_{m,l}(\cdot-s )
\bigr) 
\bigr
\|_{L^\infty(\R)} 
\int_{0}^\infty
\biggl( \frac{4\pi^2 \kappa}{\ep_m^2} s \biggr)^{\!n}  
\exp\biggl( - \frac{4\pi^2\kappa}{\ep_m^2} s \biggr) \,ds 
\notag \\ & 
\leq
n! C^n \biggl(\frac{\ep_m^2}{\kappa}
\biggr)
(\tau_m')^{-\ell}
\,,
\end{align*}
for a constant~$C<\infty$ which depends only on~$C_{\eqref{e.zeta.prime.ml.bounds}}$ and thus only on~$\beta$.
Since~$n\leq \N_*$ and~$N_*$ depends only on~$\beta$, we deduce that, for some~$C(\beta)<\infty$,  
\begin{equation}
\label{e.Lmn.bound}
\| \partial^\ell_t L_{m,n}^\kappa \|_{L^\infty(\R)}
\leq
C \biggl(\frac{\ep_m^2}{\kappa}
\biggr)
(\tau_m')^{-\ell}
\,, \qquad \forall \ell \in\{ 0,\ldots, N_*\}\,.
\end{equation}

By Taylor's formula and~\eqref{e.zeta.mk}, for every $s,t\in\R$ with $s\leq t$ and $N \leq N_*$, 
\begin{align*}
\lefteqn{
\biggl| 
\zeta_{m,k} (s) - 
\sum_{n=0}^{N_*-1}
\frac{(s-t)^n}{n!}
\partial_t^n \zeta_{m,k} (t) 
\biggr|
\exp\Bigl( \tfrac{4\pi^2\kappa}{\ep_m^2} (s-t) \Bigr) \,ds
} \qquad & 
\\ &
\leq
\frac{1}{N!}
\bigl\| \partial^{N}_t \zeta_{m,k} \bigr\|_{L^\infty(\R)}
|s-t|^{N} 
\exp\Bigl( \tfrac{4\pi^2\kappa}{\ep_m^2} (s-t) \Bigr) \,ds
\leq
C \biggl(
\frac{\ep_m^2}{\kappa \tau_m}
\biggr)^{\!N}
\exp\Bigl( \tfrac{2\pi^2\kappa}{\ep_m^2} (s-t) \Bigr) \,ds
\,.
\end{align*}
By the previous inequality and the triangle inequality, we obtain
\begin{equation}
\label{e.JJhat}
\bigl| 
\J^\kappa_{m}(t) - \hat{\J}^\kappa_{m}(t)
\bigr|
\leq
\frac{C a_m^2\ep_m^4}{\kappa} \biggl(
\frac{\ep_m^2}{\kappa \tau_m}
\biggr)^{\!\!N_*}
\,.
\end{equation}
Since~$N_*$ is a very large constant, the estimate~\eqref{e.JJhat} says that~$\J^\kappa_{m}$ is indeed well-approximated by the function~$\hat{\J}^\kappa_{m}$, provided that~$\ep_m^2 \ll {\kappa \tau_m}$.

\smallskip

We define a $\tau_m^{\prime\prime}$--periodic function~$\K^\kappa_m$ by averaging out the~$\tau_m$--periodic oscillations from~$\hat{\J}^\kappa_{m}$:
\begin{equation}
\label{e.K}
\K^\kappa_m (t)
:=
\kappa \Itwo+
\sum_{n=0}^{N_*-1}
\bigl\langle \!\!\bigl\langle  \mathbf{j}_{m,n}^\kappa\bigr\rangle \!\!\bigr\rangle
L_{m,n}^\kappa(t) \,.
\end{equation}
Observe that, by~\eqref{e.jmn.bound} and~\eqref{e.Lmn.bound}, if~$\kappa$ satisfies
\begin{equation}
\label{e.condition}
\ep_m^2
\leq 
\frac12
{\kappa \tau_m}
\,,
\end{equation}
then we have that 
\begin{equation}
\label{e.bfK.dervs}
\| \partial_t^\ell \K^\kappa_m \|_{L^\infty(\R)}
\leq 
C
(\tau_m')^{-\ell}
\biggl( \kappa +
\frac{a_m^2\ep_m^4}{\kappa} \biggr)
\,, \qquad \forall \ell \in\N\,.
\end{equation}
By~\eqref{e.JJhat} and the ergodic theorem for periodic functions (Lemma~\ref{l.averages}), we have, under the extra condition~\eqref{e.condition}, 
\begin{align}
\label{e.Kbar.almost.average}
\bigl| 
\bigl\langle \!\!\bigl\langle \K^\kappa_m 
\bigr\rangle \!\!\bigr\rangle
- \Khom^\kappa_m
\bigr| 
\leq
\frac{C a_m^2\ep_m^4}{\kappa} \biggl(
\frac{\ep_m^2}{\kappa \tau_m}
\biggr)^{\!\!N_*}\,,
\end{align}
where we recall from~\eqref{e.Kbarm.def} that~$\Khom^\kappa_m =
\bigl\langle \!\!\bigl\langle
\J^\kappa_m 
\bigr\rangle \!\!\bigr\rangle$. 
Since~$N_*$ is very large, this says that~$\bigl\langle \!\!\bigl\langle \K^\kappa_m \bigr\rangle \!\!\bigr\rangle$ and~$\Khom^\kappa_m$ are very close, provided that~$\ep_m^2 \ll \kappa \tau_m$.

\smallskip

For future reference, we introduce some higher-order time correctors for~$\mathbf{j}_{m,n}^{\kappa}$: for every~$m\in\N$, $n\in \{0,\ldots, N_*\}$, we let~$\{ \mathbf{q}_{m,n,r}^\kappa \}_{r\in\N_0}$ be the sequence of~$\tau_m$--periodic functions on~$\R$ characterized by
\begin{equation}
\label{e.q.mnr.def}
\left\{
\begin{aligned}
&
\mathbf{q}_{m,n,0}^\kappa
:=
\mathbf{j}_{m,n}^\kappa - 
\bigl\langle \!\!\bigl\langle
\mathbf{j}_{m,n}^\kappa
\bigr\rangle \!\!\bigr\rangle
\,,
\\ & 
\bigl\langle \!\!\bigl\langle \mathbf{q}_{m,n,r}^\kappa \bigr\rangle \!\!\bigr\rangle = 0\,, \quad \forall r \in\N\,,
\\ & 
\partial_t \mathbf{q}_{m,n,r+1}^\kappa
= 
- \mathbf{q}_{m,n,r}^\kappa \,, \quad \forall r \in\N\,.
\end{aligned}
\right.
\end{equation}
These satisfy the bounds:
\begin{equation}
\label{e.qmnr.bounds}
\bigl\| \mathbf{q}_{m,n,r}^\kappa \bigr\|_{L^\infty(\R)} 
\leq 
Ca_m^2 \ep_m^2
\biggl( \frac{\ep_m^2}{\kappa \tau_m } \biggr)^{\!n} \frac{(C\tau_m)^r}{r!}
\,,
\qquad 
\forall n\in\{0,\ldots, N_*\}\,, \ r \in\N_0\,.
\end{equation}

For reasons which will become apparent in \eqref{e.ergodic.break.up} below, we need to compute the difference between the averaged flux $\mathbf{J}_m^\kappa(t)$ (which is given explicitly in~\eqref{e.Jm.explicit.mofo}) and the spatially-averaged energy 
\begin{equation}
\label{e.Emkappa.def}
\mathbf{E}_m^\kappa(t)
:=  \sum_{k,k' \in 2 \Z+1} \xi_{m,k}(t) \xi_{m,k'}(t)
\, \Bigl\langle \!
\kappa
(\Itwo + \nabla  \Chi _{m,k}^\kappa)^\intercal
(\Itwo + \nabla   \Chi _{m,k'}^\kappa)
\Bigr\rangle
\,.
\end{equation}
This is the purpose of the next lemma. 

\begin{lemma}
\label{l.flux.to.energy}
Assume that~$\kappa$ satisfies~\eqref{e.condition}. 
Then exists a constant~$C(\beta)<\infty$ such that
\begin{equation}
\label{e.flux.to.energy}
\bigl| 
\J_m^\kappa(t)
-
\mathbf{E}_m^\kappa(t)
\bigr|
\leq
\frac{C a_m^2\ep_m^4}{\kappa} \biggl(
\frac{\ep_m^2}{\kappa \tau_m}
\biggr)
\,.
\end{equation}
\end{lemma}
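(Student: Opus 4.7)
The plan is to rewrite both sides as sums over the index $k$ of single-mode quantities involving only $\chi_{m,k,\e_i}^\kappa$, and then to reduce their difference to a residual that vanishes up to a single integration by parts in the explicit integral~\eqref{e.Chimk.formula} for $h_k(t) := \int_{-\infty}^t \hat{\zeta}_{m,l_k}(s)\zeta_{m,k}(s)\exp\bigl(\tfrac{4\pi^2\kappa}{\ep_m^2}(s-t)\bigr)\,ds$. Throughout, the condition~\eqref{e.condition} is used to convert exponentially small factors into small powers of $\ep_m^2/(\kappa\tau_m)$.

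First, starting from~\eqref{e.Jm.kappa.def.2}, I would integrate by parts in space (using $\sigma\nabla f = \nabla^\perp f$) to obtain $\bigl(\J_m^\kappa(t)-\kappa\Itwo\bigr)_{ij} = -\sum_{k\in 2\Z+1}\hat{\zeta}_{m,l_k}\zeta_{m,k}\langle\chi_{m,k,\e_j}^\kappa(\mathbf{u}_{m,k})_i\rangle$. Multiplying the corrector equation~\eqref{e.parabcorr.k} for $\chi_{m,k,\e_i}^\kappa$ by $\chi_{m,k,\e_j}^\kappa$, integrating over $\TT^2$, and using Remark~\ref{r.special.orthogonality} to kill the $\mathbf{u}_{m,k}\cdot\nabla\chi_{m,k,\e_i}^\kappa$ term yields the energy identity
\begin{equation*}
\hat{\zeta}_{m,l_k}\zeta_{m,k}\langle\chi_{m,k,\e_j}^\kappa(\mathbf{u}_{m,k})_i\rangle = -\langle\chi_{m,k,\e_j}^\kappa\partial_t\chi_{m,k,\e_i}^\kappa\rangle - \kappa\langle\nabla\chi_{m,k,\e_j}^\kappa\cdot\nabla\chi_{m,k,\e_i}^\kappa\rangle\,.
\end{equation*}
Next, expanding $\mathbf{E}_m^\kappa$ and using $\sum_{k\in 2\Z+1}\xi_{m,k}\equiv 1$ together with $\langle\nabla\chi_{m,k,\e_i}^\kappa\rangle=0$ reduces it to $\kappa\Itwo + \kappa\sum_{k,k'}\xi_{m,k}\xi_{m,k'}\langle\nabla\chi_{m,k,\e_i}^\kappa\cdot\nabla\chi_{m,k',\e_j}^\kappa\rangle$. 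The supports of $\xi_{m,k}$ and $\xi_{m,k'}$ overlap only for $|k-k'|\leq 2$, and whenever $k\neq k'$ the two odd indices have opposite parity in $4\Z$; for such pairs the gradients lie in orthogonal coordinate axes (one in $\R\e_1$, the other in $\R\e_2$), or one of them vanishes identically. Hence only $k=k'$ survives, and combining the two identities gives
\begin{equation*}
\bigl(\J_m^\kappa - \mathbf{E}_m^\kappa\bigr)_{ij} = \sum_k\langle\chi_{m,k,\e_j}^\kappa\partial_t\chi_{m,k,\e_i}^\kappa\rangle + \kappa\sum_k(1-\xi_{m,k}^2)\langle\nabla\chi_{m,k,\e_j}^\kappa\cdot\nabla\chi_{m,k,\e_i}^\kappa\rangle\,.
\end{equation*}

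Both sums are diagonal in $k$, and by the disjoint supports of the $\zeta_{m,k}$'s only one $k$ is active at each $t$. The second sum is supported where $|t-k\tau_m|>\tfrac34\tau_m$; there the estimate~\eqref{e.corrbounds.Chi.decay} yields $\kappa|\nabla\chi_{m,k}^\kappa|^2\leq C\tfrac{a_m^2\ep_m^4}{\kappa}\exp\bigl(-\tfrac{2\pi^2\kappa\tau_m}{3\ep_m^2}\bigr)$, and since $y\mapsto y e^{-cy}$ is bounded on $[0,\infty)$ this is dominated by the required $C\tfrac{a_m^2\ep_m^4}{\kappa}\cdot\tfrac{\ep_m^2}{\kappa\tau_m}$. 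For the first sum, using $\chi_{m,k,\e_i}^\kappa = (\mathbf{u}_{m,k})_i h_k(t)$ reduces each term to $\langle(\mathbf{u}_{m,k})_i(\mathbf{u}_{m,k})_j\rangle\, h_k\dot h_k$ of size $\leq Ca_m^2\ep_m^2|h_k\dot h_k|$; combining $\dot h_k = \hat{\zeta}_{m,l_k}\zeta_{m,k} - \tfrac{4\pi^2\kappa}{\ep_m^2}h_k$ with a single integration by parts in the integral defining $h_k$ gives
\begin{equation*}
\hat{\zeta}_{m,l_k}\zeta_{m,k}(t) - \tfrac{4\pi^2\kappa}{\ep_m^2}h_k(t) = \int_{-\infty}^t\partial_s\bigl(\hat{\zeta}_{m,l_k}\zeta_{m,k}\bigr)(s)\exp\bigl(\tfrac{4\pi^2\kappa}{\ep_m^2}(s-t)\bigr)\,ds\,,
\end{equation*}
which is bounded in absolute value by $C\ep_m^2/(\kappa\tau_m)$ using~\eqref{e.zeta.mk}. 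Together with $|h_k|\leq C\ep_m^2/\kappa$, this gives $|h_k\dot h_k|\leq C\ep_m^4/(\kappa^2\tau_m)$ and hence exactly the claimed bound. The main obstacle I foresee is the parity/support bookkeeping for the cross-term cancellation above; once that is pinned down, the remainder is a direct computation using the identities already proved.
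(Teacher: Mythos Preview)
Your proof is correct and takes a genuinely different route from the paper's. The paper proceeds by direct explicit computation: it first reduces $\mathbf{E}_m^\kappa$ to the diagonal sum~\eqref{e.Emkappa.formula} using the same orthogonality observation~\eqref{e.alt.orth} you use, then inserts the explicit formulas~\eqref{e.Chimk.gradient.formula} and~\eqref{e.um.gradient.squared.bracket.formula} to write $\mathbf{E}_m^\kappa - \kappa\Itwo$ as $\kappa\cdot 8\pi^4 a_m^2\sum_k\xi_{m,k}^2 h_k^2$ times the appropriate rank-one matrix, and compares this to the explicit formula~\eqref{e.Jm.explicit.mofo} for $\J_m^\kappa - \kappa\Itwo$ via the estimate~\eqref{e.integral.in.time}, which is exactly your integration-by-parts identity for $\dot h_k$.

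Your argument is more structural: instead of computing both sides separately, you test the corrector equation against $\chi_{m,k,\e_j}$ to obtain the energy identity and arrive at the decomposition
\[
\bigl(\J_m^\kappa - \mathbf{E}_m^\kappa\bigr)_{ij}
= \sum_k\bigl\langle\chi_{m,k,\e_j}\partial_t\chi_{m,k,\e_i}\bigr\rangle
+ \kappa\sum_k(1-\xi_{m,k}^2)\bigl\langle\nabla\chi_{m,k,\e_j}\cdot\nabla\chi_{m,k,\e_i}\bigr\rangle\,,
\]
which makes transparent \emph{why} the flux and the energy nearly coincide: the discrepancy is precisely the time-derivative term (the usual parabolic correction to the elliptic flux--energy identity) plus a cutoff-mismatch term supported where $\Chi_{m,k}$ is already exponentially small. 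Both approaches ultimately rest on the same integral estimate for $\hat\zeta_{m,l_k}\zeta_{m,k}-\tfrac{4\pi^2\kappa}{\ep_m^2}h_k$. One small remark: your claim that ``only one $k$ is active at each $t$'' is a slight overstatement for the $\sum_k h_k\dot h_k$ term, since all earlier $k$ contribute exponentially decaying tails; but under~\eqref{e.condition} these form a geometric series in $k$, so the sum is controlled by the dominant term and your bound goes through unchanged.
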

\begin{proof}
Observe that 
\begin{equation*}
\mathbf{E}_m^\kappa(t)
=  \sum_{k,k' \in 2 \Z+1} \xi_{m,k}(t) \xi_{m,k'}(t)
\kappa\Bigl( 
\Itwo
+
\, \bigl\langle
(\nabla  \Chi _{m,k}^\kappa)^\intercal
\nabla   \Chi _{m,k'}^\kappa
\bigr\rangle
\Bigr)
\,.
\end{equation*}
The supports of~$\xi_{m,k}$ and~$\xi_{m,k'}$ have nonempty intersection only if~$k,k'\in2 \Z+1$ satisfy~$|k-k'| \leq 2$. On the other hand, we see from the formulas~\eqref{e.Chimk.gradient.formula} and~\eqref{e.um.gradient.formula} that 
\begin{equation}
\label{e.alt.orth}
|k-k'| = 2 
\quad \implies \quad
(\nabla  \Chi _{m,k}^\kappa)^\intercal (t,x) 
\nabla   \Chi _{m,k'}^\kappa (t,x')  = 0
\,, \quad
\forall x,x' \in \TT^2\,, \, t \in\R\,.
\end{equation}
Therefore, the only pairs~$k,k'$ contributing to the sum satisfy~$k=k'$. We deduce that
\begin{equation}
\label{e.Emkappa.formula}
\mathbf{E}_m^\kappa(t)
=  \sum_{k \in 2 \Z+1} \xi_{m,k}(t)^2
\kappa\Bigl( 
\Itwo
+
\, \bigl\langle
(\nabla  \Chi _{m,k}^\kappa)^\intercal
\nabla   \Chi _{m,k}^\kappa
\bigr\rangle
\Bigr)
\,.
\end{equation}
We next compute
\begin{equation*}
 \bigl\langle
(\nabla  \Chi _{m,k}^\kappa)^\intercal
\nabla   \Chi _{m,k}^\kappa
\bigr\rangle
=
\bigl\langle (\nabla \mathbf{u}_{m,k} )^t
\nabla \mathbf{u}_{m,k}
\bigr\rangle
\biggl( 
\int_{-\infty}^t 
\hat{\zeta}_{m,l_k}(s)
\zeta_{m,k}(s)
\exp\left( \tfrac{4\pi^2\kappa}{\ep_m^2} (s-t) \right) \,ds
\biggr)^{\!2}
\,.
\end{equation*}
We see from~\eqref{e.um.gradient.formula} that 
\begin{equation}
\label{e.um.gradient.squared.formula}
\bigl( (\nabla \mathbf{u}_{m,k})^t \nabla \mathbf{u}_{m,k} \bigr)
(x)
=
\left\{
\begin{aligned}
& 16\pi^4 a_m^2  \sin^2\left( \tfrac{2\pi x_1}{\ep_m} \right) \e_2\otimes \e_2 
& \mbox{if} & \ k\in 4\Z+1, \\
& 16\pi^4 a_m^2 \sin^2\left( \tfrac{2\pi x_2}{\ep_m} \right) \e_1\otimes \e_1 
& \mbox{if} & \ k\in 4\Z+3, \\
& 0 
& \mbox{if} & \ k\in 2\Z
\,.
\end{aligned}
\right.
\end{equation}
and thus 
\begin{equation}
\label{e.um.gradient.squared.bracket.formula}
\bigl\langle 
(\nabla \mathbf{u}_{m,k})^t \nabla \mathbf{u}_{m,k} 
\bigr\rangle 
=
8\pi^4 a_m^2 \cdot
\left\{
\begin{aligned}
&  \e_2\otimes \e_2 
& \mbox{if} & \ k\in 4\Z+1, \\
& \e_1\otimes \e_1 
& \mbox{if} & \ k\in 4\Z+3, \\
& 0 
& \mbox{if} & \ k\in 2\Z
\,.
\end{aligned}
\right.
\end{equation}
Using~\eqref{e.zeta.mk} and~\eqref{e.zeta.prime.ml.fitting}, we see that  
\begin{equation}
\label{e.integral.in.time}
\biggl|
\hat{\zeta}_{m,l_k}(t)
\zeta_{m,k}(t)
-
\frac{4\pi^2\kappa }{\ep_m^2}
\int_{-\infty}^t 
\hat{\zeta}_{m,l_k}(s)
\zeta_{m,k}(s)
\exp\left( \tfrac{4\pi^2\kappa}{\ep_m^2} (s-t) \right) \,ds
\biggr|
\leq 
C
\left( \frac{C\ep_m^2}{\kappa\tau_m} \right)
\,.
\end{equation}
Combining the above and comparing to the formula for~$\J_m^\kappa$ in~\eqref{e.Jm.explicit.mofo} yields~\eqref{e.flux.to.energy}.
\end{proof}

\begin{lemma}
\label{l.amt.ergodic}
There exists $C(\beta)\in [1,\infty)$ such that, for every~$\kappa>0$ and $m\in\N$,
\begin{equation}
\label{e.enhance.onestep}
\left| \Khom_m^\kappa - \left( \kappa + \frac{9a_m^2\ep_m^4}{80\kappa} \right)
\Itwo 
\right| 
\leq 
\frac{a_m^2\ep_m^4}{\kappa}
\left( \frac{C\ep_m^2}{\kappa\tau_m} + C\ep_{m-1}^{\delta} \right)
\,.
\end{equation}
\end{lemma}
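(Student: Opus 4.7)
The plan is as follows. First, observe that the claim is trivial when $\ep_m^2 > \tfrac{1}{2}\kappa\tau_m$: in that regime, the right-hand side of~\eqref{e.enhance.onestep} dominates $\tfrac{a_m^2\ep_m^4}{\kappa}$, which already bounds both $|\Khom_m^\kappa|$ and the leading correction via~\eqref{e.akappam.size}. Hence I may assume~\eqref{e.condition} holds. Under this assumption, combining~\eqref{e.JJhat} and~\eqref{e.Kbar.almost.average} reduces the problem to computing
\[
\langle\!\langle \K^\kappa_m\rangle\!\rangle
= \kappa\Itwo + \sum_{n=0}^{N_*-1} \langle\!\langle \mathbf{j}_{m,n}^\kappa\rangle\!\rangle\,\langle\!\langle L_{m,n}^\kappa\rangle\!\rangle\,,
\]
since the $(\ep_m^2/\kappa\tau_m)^{N_*}$ errors produced by those two lemmas are far smaller than the error allowed in~\eqref{e.enhance.onestep}.

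Next, I will isolate the leading $n=0$ contribution and absorb the rest into the error. For odd $n$, integration by parts gives $\int \zeta_{m,k}\,\partial_t^n\zeta_{m,k}\,dt = 0$ (differentiating $\tfrac12\zeta_{m,k}^2$ and using compact support), so $\langle\!\langle \mathbf{j}_{m,n}^\kappa\rangle\!\rangle = 0$. For even $n\geq 2$, the bounds~\eqref{e.jmn.bound} and~\eqref{e.Lmn.bound} give
\[
\bigl| \langle\!\langle \mathbf{j}_{m,n}^\kappa\rangle\!\rangle\,\langle\!\langle L_{m,n}^\kappa\rangle\!\rangle \bigr|
\leq C\,\tfrac{a_m^2\ep_m^4}{\kappa}\Bigl(\tfrac{\ep_m^2}{\kappa\tau_m}\Bigr)^{n}\,,
\]
whose total contribution is dominated by the $n=2$ term and is thus absorbed into the $\tfrac{Ca_m^2\ep_m^4}{\kappa}\cdot\tfrac{\ep_m^2}{\kappa\tau_m}$ piece of the error.

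The remaining work is to compute $\langle\!\langle \mathbf{j}_{m,0}^\kappa\rangle\!\rangle\,\langle\!\langle L_{m,0}^\kappa\rangle\!\rangle$ explicitly. Using~\eqref{e.jkmn} with $n=0$, the fact that $\{\zeta_{m,k}\}$ are translates of $\zeta$ spaced by $\tau_m$, the normalization $\int \zeta^2 = \tfrac{9}{10}$ from~\eqref{e.weirdo}, and the counting argument that $k\in 4\Z+1$ (resp.~$4\Z+3$) occur once every $4\tau_m$ in time, I obtain
\[
\langle\!\langle \mathbf{j}_{m,0}^\kappa\rangle\!\rangle
= 2\pi^2 a_m^2 \ep_m^2 \cdot \tfrac{9}{40}(\e_1\otimes\e_1 + \e_2\otimes\e_2)
= \tfrac{9\pi^2 a_m^2 \ep_m^2}{20}\Itwo.
\]
For $\langle\!\langle L_{m,0}^\kappa\rangle\!\rangle$, by $\tau_m''$-periodicity and the change of variable $u=t-s$,
\[
\langle\!\langle L_{m,0}^\kappa\rangle\!\rangle
= \tfrac{1}{\tau_m''}\int_0^\infty e^{-4\pi^2\kappa u/\ep_m^2}\,F(u)\,du\,,
\qquad F(u) := \int_\R \hat{\zeta}_{m,0}(t)\,\hat{\zeta}_{m,0}(t-u)\,dt\,.
\]
Since $\ep_m^2/\kappa \ll \tau_m$ by~\eqref{e.condition} while $F$ has support in $[0,\tau_m'']$, the exponential is effectively supported on $u\ll \tau_m'$, and I replace $F(u)$ by $F(0)$ with a Cauchy--Schwarz error controlled by $\|\partial_t\hat{\zeta}_{m,0}\|_{L^2}\lesssim (\tau_m')^{-1/2}$ from~\eqref{e.zeta.prime.ml.bounds}. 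The support bounds~\eqref{e.zeta.prime.ml.fitting} give $F(0) = \tau_m''(1 + O(\tau_m'/\tau_m''))$, so
\[
\langle\!\langle L_{m,0}^\kappa\rangle\!\rangle = \tfrac{\ep_m^2}{4\pi^2\kappa}\bigl(1 + O(\tau_m'/\tau_m'')\bigr)\,,
\]
and multiplying yields the leading $\tfrac{9 a_m^2\ep_m^4}{80\kappa}\Itwo$ together with an error of size $\tfrac{C a_m^2 \ep_m^4}{\kappa}\cdot \tau_m'/\tau_m''$. By~\eqref{e.taum.def}--\eqref{e.taum.primeprime.def} we have $\tau_m'/\tau_m'' \lesssim \ep_{m-1}^\delta$, delivering precisely the second error term.

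The main technical obstacle is the boundary analysis for $\langle\!\langle L_{m,0}^\kappa\rangle\!\rangle$: because $\hat{\zeta}_{m,0}$ is a smooth bump rather than an indicator, both the transition regions of width $\tau_m'$ and the tail of the exponential kernel at scale $\ep_m^2/\kappa$ must be handled carefully to confirm that the dominant error is exactly $\tau_m'/\tau_m'' \lesssim \ep_{m-1}^\delta$ rather than something worse like $\tau_m'/\tau_m$, which would violate the claimed bound.
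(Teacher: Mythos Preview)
Your proposal is correct, and the overall strategy is sound: reduce to the regime~\eqref{e.condition}, pass to $\langle\!\langle \K_m^\kappa\rangle\!\rangle$ via~\eqref{e.Kbar.almost.average}, kill all $n\geq 1$ terms, and evaluate the $n=0$ term explicitly using~\eqref{e.weirdo} and the support properties of $\hat{\zeta}_{m,0}$. The constants match, and your bound $|F(u)-F(0)|\leq Cu$ (via $\|\partial_t\hat{\zeta}_{m,0}\|_{L^1}\leq C$) is actually cleaner than the Cauchy--Schwarz version you sketched; it produces an additional error of order $\ep_m^4/(\kappa^2\tau_m'')\leq (\ep_m^2/\kappa)\cdot(\ep_m^2/(\kappa\tau_m))$, which sits inside the first error term, so the ``main technical obstacle'' you flagged does not in fact arise.

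The paper takes a more direct route: it never passes through the decomposition $\hat{\J}_m^\kappa = \kappa\Itwo + \sum_n L_{m,n}^\kappa\,\mathbf{j}_{m,n}^\kappa$ at all. Instead, starting from the explicit formula~\eqref{e.Jm.explicit.mofo}, it writes $\Khom_m^\kappa - \kappa\Itwo$ as the double integral
\[
\frac{\pi^2 a_m^2\ep_m^2}{\tau_m''}\sum_{k\in 2\Z+1}\int\!\!\int \hat{\zeta}_{m,l_k}(t)\hat{\zeta}_{m,l_k}(s)\,\zeta_{m,k}(t)\zeta_{m,k}(s)\,e^{4\pi^2\kappa(s-t)/\ep_m^2}\,ds\,dt\,\Itwo
\]
and makes just two moves: replace $\zeta_{m,k}(s)$ by $\zeta_{m,k}(t)$ (Lipschitz error $\sim \ep_m^2/(\kappa\tau_m)$), then observe that the resulting integral equals $\tfrac{\ep_m^2}{4\pi^2\kappa}\cdot\tfrac12\cdot\tfrac{9}{10}$ up to an error of size $\tau_m'/\tau_m''\lesssim \ep_{m-1}^\delta$ from the $\hat{\zeta}_{m,l_k}$ cutoffs. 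Your approach factors the same computation through the $N_*$-term Taylor machinery already built for~\eqref{e.JJhat}--\eqref{e.K}, which is systematically cleaner (the odd-$n$ vanishing is a nice observation) but strictly more elaborate than needed here; the paper's argument is a first-order Taylor expansion done by hand.
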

\begin{proof}
Starting from~\eqref{e.Jm.explicit.mofo}, we find that
\begin{equation*}
\Khom^\kappa_{m} - \kappa\Itwo
= 
\frac{\pi^2 a_m^2 \ep_m^2 }{\tau^{\prime\prime}_m}
\sum_{k\in 2\Z+1}
\int_{-\frac12 \tau^{\prime\prime}_m}^{\frac12 \tau^{\prime\prime}_m}
\hat{\zeta}_{m,l_k} (t)
\zeta_{m,k}(t)
\int_{-\infty}^t 
\hat{\zeta}_{m,l_k}(s)
\zeta_{m,k}(s)
\exp\left( \tfrac{4\pi^2\kappa}{\ep_m^2} (s-t) \right) \,ds \,dt \,
\Itwo
\,.
\end{equation*}
It therefore  suffices to show that 
\begin{multline}
\label{e.twentypi}
\biggl|
\frac{9\ep_m^2}{80\pi^2\kappa} 
-
\sum_{k\in 2\Z+1}
\fint_{-\frac12 \tau^{\prime\prime}_m}^{\frac12 \tau^{\prime\prime}_m}
\int_{-\infty}^t 
\hat{\zeta}_{m,l_k} (t)
\hat{\zeta}_{m,l_k}(s)
\zeta_{m,k}(t)
\zeta_{m,k}(s)
\exp\bigl( \tfrac{4\pi^2\kappa}{\ep_m^2} (s-t) \bigr) \,ds \,dt
\biggr|
\\
\leq 
\frac{\ep_m^2}{\kappa}
\left( \frac{\ep_m^2}{\kappa\tau_m} + C\ep_{m-1}^\delta \right)
\,.
\end{multline}
Using~\eqref{e.zetatimecutoff}, we have that
\begin{align*}
\int_{-\infty}^t 
\left| \zeta_{m,k}(t) - \zeta_{m,k}(s) ) \right| 
\exp\left( \tfrac{4\pi^2\kappa}{\ep_m^2} (s-t) \right) \,ds
&
\leq 
\left\| \partial_t \zeta_{m,k} \right\|_{L^\infty(\R)}
\int_{-\infty}^t 
(t-s) 
\exp\left( \tfrac{4\pi^2\kappa}{\ep_m^2} (s-t) \right) \,ds
\notag \\ & 
\leq
C\tau_m^{-1}
\int_{-\infty}^t 
(t-s) 
\exp\left( \tfrac{4\pi^2\kappa}{\ep_m^2} (s-t) \right) \,ds
=
\frac{C\ep_m^4}{\kappa^2 \tau_m}
\,.
\end{align*}
On the other hand, 
\begin{multline*}
\biggl| 
\sum_{k\in 2\Z+1}
\fint_{-\frac12 \tau^{\prime\prime}_m}^{\frac12 \tau^{\prime\prime}_m}
\hat{\zeta}_{m,l_k} (t)
\zeta_{m,k}(t)^2
\int_{-\infty}^t 
\hat{\zeta}_{m,l_k} (s)
\exp\bigl( \tfrac{4\pi^2\kappa}{\ep_m^2} (s-t) \bigr) \,ds \,dt
-
\frac{\ep_m^2}{4\pi^2\kappa} \cdot \frac12
\underbrace{ 
\fint_{-\frac12\tau_m}^{\frac12\tau_m}
\zeta_{m,0}^2 (t)\,dt
}_{=\frac 9{10} \ \text{by~\eqref{e.weirdo}}}
\biggr|
\\
\leq
\frac{C \ep_m^2\tau^\prime_m}{\kappa\tau^{\prime\prime}_m}
\leq 
\frac{C\ep_m^2\ep_{m-1}^\delta}{\kappa}
\,.
\end{multline*}
The triangle inequality and the previous two displays yield~\eqref{e.twentypi}. The proof is now complete.  
\end{proof}

As previously mentioned, we will apply~\eqref{e.corrbounds.Chi.decay} and~\eqref{e.enhance.onestep} when the factor~$\ep_m^2/(\kappa\tau_m)$ on the right side of~\eqref{e.enhance.onestep} is very small, typically a small positive power of~$\ep_m$: see~\eqref{e.exprat.bound} below. Therefore, loosely sense, we have that 
\begin{equation}
\label{e.ahom.shorthand}
\Khom_m^\kappa \approx
\kappa + \frac{9a_m^2\ep_m^4}{80\kappa}\,.
\end{equation}
We now define a sequence~$\{ \kappa_m \}$ of \emph{renormalized diffusivities}, starting from a given ``molecular'' diffusivity~$\kappa$, by the recursion
\begin{equation}
\label{e.kappa.sequence}
\left\{
\begin{aligned}
& \kappa_{m-1} = \Khom_m^{\kappa_m}
\qquad m\in \{1,\ldots,M\} \,, 
\\ & 
\kappa_M = \kappa \,. 
\end{aligned}
\right.
\end{equation}
The idea is that~$\kappa_{m-1}$ represents an effective (or ``eddy'') diffusivity observed at scale $\ep_m$, from the cumulative effects of the diffusion term~$\kappa\Delta$ and all the oscillations in the vector field~$\b$ with wavelengths smaller than~$\ep_{m-1}$. We imagine that we have homogenized all scales below that of~$\ep_{m-1}$ and witnessed an enhancement of diffusivity which results in an effective diffusivity of~$\kappa_{m-1}$. We choose the initial scale~$M$ in such a way that~$\ep_M$ is the critical scale at which the vector field~$\b$ and the diffusion interact in such a way that the recursion~\eqref{e.kappa.sequence} stays under control. 

\smallskip

The next lemma states that, for certain particular values of the molecular diffusivity~$\kappa$, we can control the entire sequence of renormalized diffusivities. 
We denote by~$\mathcal{K}$ the set of \emph{permissible diffusivities}, defined by
\begin{equation}
\label{e.permissible.K}
\mathcal{K}:= 
\bigcup_{m=1}^\infty
\Bigl[ \tfrac 12 \ep_m^{ \frac{2\beta}{\q+1}}, 2 \ep_m^{ \frac{2\beta}{\q+1}} \Bigr]
\,.
\end{equation} 
Recall that~$q$ is introduced in above in~\eqref{e.q.def.0} and is also related to~$\beta$ by the formula~\eqref{e.beta.def.0}. The definitions of the exponents~$\delta$ and~$\gamma$ appearing in the lemma statement below are also given above, in~\eqref{e.delta} and~\eqref{e.gamma}, respectively.

The reason that we can only control sequences~$\{ \kappa_m\}$ satisfying the satisfying recurrence~\eqref{e.kappa.sequence} with initial values~$\kappa$ belonging to this set~$\mathcal{K}$ has to do with the instability of the recurrence formula in~\eqref{e.ahom.shorthand}--\eqref{e.kappa.sequence}. This does not appear to be a technical artifact of our proof, but rather a property of the recursion itself. 

\begin{lemma}[Control of the renormalized diffusivities]
\label{l.recurse}
Suppose~$\kappa \in \mathcal{K}$ and let~$M\in \N$ be such that 
\begin{equation}
\label{e.permitted}
\frac12 \ep_M^{ \frac{2\beta}{\q+1}} \leq \kappa \leq 2 \ep_M^{ \frac{2\beta}{\q+1}}.
\end{equation}
Define a finite sequence~$\kappa_M, \kappa_{M-1},\ldots,\kappa_0$ by the recurrence~\eqref{e.kappa.sequence}, starting from~$\kappa_M:=\kappa$.
Then there exist universal constants $0<c<C<\infty$ such that,
for every $m\in\{ 0,\ldots,M-1\}$,
\begin{equation}
\label{e.kappam.bound}
c a_m \ep_m^{2+\gamma}
\leq \kappa_m 
\leq C a_m \ep_m^{2+\gamma}
\end{equation}
and
\begin{equation}
\label{e.exprat.bound}
c \ep_{m-1}^{2\delta}
\leq
\frac{\ep_m^2}{\kappa_m \tau_m} 
\leq C \ep_{m-1}^{2\delta} 
\,.
\end{equation}
\end{lemma}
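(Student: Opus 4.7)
The plan is to run a descending induction on $m$, from $m = M$ down to $m = 0$. Using Lemma~\ref{l.amt.ergodic} together with the identity $a_m^2 \ep_m^4 = \ep_m^{2\beta}$, I rewrite the defining recurrence~\eqref{e.kappa.sequence} in the form
\begin{equation*}
\kappa_{m-1} \;=\; \kappa_m \;+\; \frac{9\ep_m^{2\beta}}{80\,\kappa_m}\bigl(1 + \mathrm{err}_m\bigr), \qquad |\mathrm{err}_m| \leq C\,\frac{\ep_m^2}{\kappa_m\tau_m} + C\,\ep_{m-1}^\delta.
\end{equation*}
The two leading terms on the right compete, and the targeted scaling $\kappa_m \asymp a_m \ep_m^{2+\gamma} = \ep_m^{\beta+\gamma}$ is the unique one that makes the iteration self-consistent, thanks to the key algebraic identity $q(\beta-\gamma) = \beta+\gamma$ (immediate from the definition $\gamma = (q-1)\beta/(q+1)$) combined with $\ep_m \asymp \ep_{m-1}^q$ from~\eqref{e.supergeo}.

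For the base case $m = M-1$, the hypothesis~\eqref{e.permitted} gives $\kappa_M \asymp \ep_M^{2\beta/(q+1)} = \ep_M^{\beta-\gamma}$. In this regime the ``identity part'' $\kappa_M$ dominates the ``enhancement part'' $\tfrac{9}{80}\ep_M^{2\beta}/\kappa_M \asymp \ep_M^{\beta+\gamma}$ by a factor of $\ep_M^{2\gamma} \ll 1$, so $\kappa_{M-1} \asymp \kappa_M \asymp \ep_M^{\beta-\gamma} \asymp \ep_{M-1}^{\beta+\gamma} = a_{M-1}\ep_{M-1}^{2+\gamma}$. For the inductive step from $m$ to $m-1$ with $m \leq M-1$, the inductive hypothesis $\kappa_m \asymp \ep_m^{\beta+\gamma}$ \emph{reverses} the roles: the enhancement part is of order $\ep_m^{\beta-\gamma}$ and now dominates $\kappa_m$ by a factor $\ep_m^{2\gamma}$, so $\kappa_{m-1} \approx \tfrac{9}{80}\ep_m^{2\beta}/\kappa_m \asymp \ep_{m-1}^{\beta+\gamma}$, which is~\eqref{e.kappam.bound} at level $m-1$.

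The uniformity of the constants $c,C$ in~\eqref{e.kappam.bound} — which I view as the main obstacle, since the number of iterations $M$ can be arbitrarily large as $\kappa \to 0$ — is controlled by the \emph{reciprocal} structure of the dynamics. Writing $\kappa_m = c_m\, a_m \ep_m^{2+\gamma}$, the recurrence becomes $c_{m-1} \approx \frac{9}{80\,c_m}(1 + \mathrm{err}_m)$, so perturbations of $c_m$ are \emph{damped} after one step rather than amplified. The map $c \mapsto \frac{9}{80\,c}$ has the attracting fixed point $c_* = \sqrt{9/80}$, and I expect to verify that a neighbourhood of $c_*$ is invariant for the perturbed map, provided $\mathrm{err}_m$ is small. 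Both error contributions $\ep_{m-1}^\delta$ and $\ep_m^2/(\kappa_m \tau_m)$ are negative powers of $\Lambda$ thanks to~\eqref{e.minsep}, so choosing $\Lambda$ sufficiently large — as already allowed by Remark~\ref{r.constants} — makes the multiplicative error uniformly small in $m$, and the induction closes with constants depending only on $\beta$.

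Finally, the ratio bound~\eqref{e.exprat.bound} follows from~\eqref{e.kappam.bound} by direct substitution: using $\tau_m \asymp \ep_{m-1}^{2-\beta+4\delta}$ from~\eqref{e.taubounds} and $\ep_m \asymp \ep_{m-1}^q$, one obtains
\begin{equation*}
\frac{\ep_m^2}{\kappa_m \tau_m} \asymp \ep_{m-1}^{\,2q - q(\beta+\gamma) - (2-\beta+4\delta)},
\end{equation*}
and the explicit formulas~\eqref{e.gamma} for $\gamma$ and~\eqref{e.delta} for $\delta$ collapse this exponent to a positive multiple of $\delta$, yielding the claim.
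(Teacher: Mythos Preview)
Your strategy is essentially that of the paper: normalize by writing $c_m = \kappa_m/(a_m\ep_m^{2+\gamma})$ and show $c_m$ stays bounded using the identity $q(\beta-\gamma)=\beta+\gamma$, and you correctly handle the base step $m=M-1$. However, your mechanism for uniformity of the constants contains an error. The map $c\mapsto \tfrac{9}{80c}$ does \emph{not} have an attracting fixed point: at $c_*=\sqrt{9/80}$ the derivative is $-9/(80c_*^2)=-1$, so the fixed point is neutrally stable and the map is an involution. Your claim that ``perturbations of $c_m$ are damped after one step'' is therefore false --- they are reflected, not contracted. This matters because it is precisely the device you invoke to prevent the constants from drifting over $M$ iterations. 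The repair is that the unperturbed involution preserves every interval $[c_*/K,\,c_*K]$, and since the multiplicative errors $\mathrm{err}_m$ are summable in $m$ (being dominated by powers of $\ep_{m-1}$), one can propagate a bound on $\max\{c_m,1/c_m\}$ with total drift controlled by $\sum_m \mathrm{err}_m < \infty$. This is exactly what the paper does with its quantity~$s_m$.

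The paper organizes the argument slightly differently: it first proves~\eqref{e.kappam.bound} for the \emph{idealized} sequence $\kappa'_m$ satisfying the exact recurrence $\kappa'_{m-1}=\kappa'_m+\tfrac{9}{80}a_m^2\ep_m^4/\kappa'_m$ (no error term), and only afterwards shows $\kappa_m/\kappa'_m\to 1$ by comparing recursions. This two-step structure cleanly separates the dynamical analysis (neutral involution, bounded orbit) from the error control coming from Lemma~\ref{l.amt.ergodic}, whereas your direct induction intertwines the two; both work, but the paper's decoupling makes the role of the error estimate~\eqref{e.exprat.bound} --- which itself depends on~\eqref{e.kappam.bound} --- more transparent.
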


\begin{proof}
We proceed by first establishing~\eqref{e.kappam.bound} for the a different sequence~$\{ \kappa^\prime_m \}$, defined by 
\begin{align}
\label{e.kappa.prime.sequence}
\left\{
\begin{aligned}
& \kappa^\prime_{m-1} = \kappa^\prime_m + \frac{9a_m^2\ep_m^4}{80\kappa^\prime_m}\,,
\qquad m\in \{1,\ldots,M\}, 
\\ & 
\kappa^\prime_M = \kappa. 
\end{aligned}
\right.
\end{align}
According to our (imprecise) shorthand~\eqref{e.ahom.shorthand}, we have reasons to expect that~$\kappa^\prime_m$ is close to~$\kappa_m$. Once we have proved~\eqref{e.kappam.bound}, we will argue that the two sequences are indeed close enough that we may obtain essentially the same estimate for~$\kappa_m$. 
Recall that the parameter~$\gamma$ defined in~\eqref{e.gamma} satisfies, in view of~\eqref{e.am.def} and~\eqref{e.q.def}, 
\begin{equation}
\label{e.gamma.range}
a_m\ep_{m}^{2+\gamma} =
\ep_m^{\beta+\gamma}\,.
\end{equation}

\smallskip

\emph{Step 1.} 
We prove that there exist universal constants $0< c \leq C<\infty$ such that 
\begin{equation}
\label{e.kappam.prime.bound}
c a_m \ep_m^{2+\gamma}
\leq 
\kappa^\prime_m 
\leq 
C a_m \ep_m^{2+\gamma}, 
\qquad \forall m \in \{0,\ldots,M-1\}. 
\end{equation}
Denote 
\begin{equation}
\label{e.s.m.def}
s_m:=   \frac{\kappa^\prime_m\sqrt{80/9}}{a_m\ep_m^{2+\gamma}}. 
\end{equation}
We may rewrite the recurrence in~\eqref{e.kappa.prime.sequence} in terms of~$s_m$ as 
\begin{equation}
\label{e.sm.diff.recurrence}
s_{m-1} 
= 
\left( \frac{\ep_m}{\ep_{m-1}} \right)^{\!\beta}
\ep_m^{-\gamma} \ep_{m-1}^{-\gamma}
\cdot
s_m\left( \ep_m^{2\gamma} + \frac{1}{s_m^{2}} \right) \,.
\end{equation}
Notice that the exponent~$\gamma$ has been chosen so that it satisfies~$\q(\beta-\gamma) = \beta+\gamma$. 
Hence by~\eqref{e.supergeo} we have 
\begin{equation}
\label{e.bound.some.stuff}
\biggl|
\left( \frac{\ep_m}{\ep_{m-1}} \right)^{\!\beta} \cdot \ep_m^{-\gamma}\ep_{m-1}^{-\gamma} - 1 \biggr| 
=
\biggl|
\left( \frac{\ep_m}{\ep_{m-1}^\q} \right)^{\beta} - 1 \biggr| 
\leq 
\frac{C\ep_{m-1}}{\beta} \leq C \ep_{m-1}\,. 
\end{equation}
We therefore obtain from~\eqref{e.sm.diff.recurrence} that
\begin{equation}
\label{e.smsimpl}
\biggl| s_{m-1} - s_m\biggl( \ep_m^{2\gamma} + \frac{1}{s_m^{2}} \biggr) \biggr| 
\leq
C \ep_{m-1} s_m\left( \ep_m^{2\gamma} + \frac{1}{s_m^{2}} \right)\,.
\end{equation}
In view of~\eqref{e.gamma.range}, the condition~\eqref{e.permitted} can be written as 
\begin{equation*}
\frac12\ep_M^{-2\gamma} 
\leq s_M 
\leq 2 \ep_M^{-2\gamma}.
\end{equation*}
We deduce from this and~\eqref{e.smsimpl} that
\begin{equation*}
s_{M-1} 
\leq 
2 + 2\ep_{M}^{2\gamma} + C\ep_{M-1}
\leq
4
\end{equation*}
and
\begin{equation*}
s_{M-1} \geq \frac12 - C\ep_{M-1} \geq \frac14. 
\end{equation*}
Similarly, it is easy to check that 
\begin{equation*}
\max \biggl \{ s_{m-1} , \frac1{s_{m-1}} \biggr\}
\leq 
\max \biggl \{ s_{m} , \frac1{s_{m}} \biggr\}
\Bigl( 1 + C\ep_{m-1}^{2\q\theta \wedge 1} \Bigr)
\,.
\end{equation*}
An iteration of the latter inequality therefore yields
\begin{equation}
\label{e.sm.bounded.yes}
\max \biggl \{ s_{m-1} , \frac1{s_{m-1}} \biggr\}
\leq
4 \prod_{j=m}^{M} \Bigl( 1 + C \ep_{j-1}^{2\gamma \q\wedge 1} \Bigr)
\leq 
4 \Bigl( 1 + C \ep_{m-1}^{2\gamma\q\wedge 1}  \Bigr) 
\leq 
C
\,.  
\end{equation}
The proof of~\eqref{e.kappam.prime.bound} is now complete. 

\smallskip

\emph{Step 2.} We show that~\eqref{e.kappam.prime.bound} implies~\eqref{e.kappam.bound}. 
By~\eqref{e.am.def},~\eqref{e.taum.def}
and~\eqref{e.kappam.prime.bound},
we observe that 
\begin{equation*}
\frac{\ep_m^2}{\kappa^\prime_m \tau_m} 
\simeq 
\ep_m^{-\gamma} a_m^{-1} \tau_m^{-1} 
\simeq 
\ep_{m-1}^{(2-\beta)(q-1) - q\gamma -2\delta} 
\,.
\end{equation*}
By the definitions of the exponents in~\eqref{e.delta} and~\eqref{e.gamma},
we have that 
\begin{equation*}
(2-\beta)(q-1) - q\gamma
=
(q-1) \biggl( 2 - \frac{2q+1}{q+1} \beta \biggr)  = 4\delta. 
\end{equation*}
Hence
\begin{equation}
\label{e.exprat.prime.bound}
c \ep_{m-1}^{2\delta}
\leq
\frac{\ep_m^2}{\kappa^\prime_m \tau_m} 
\leq C \ep_{m-1}^{2\delta} 
\,.
\end{equation}
Arguing by induction, suppose that for some~$n\in \{1,\ldots,M-1\}$, we have 
\begin{equation}
\label{e.rat.prime.no.prime}
\frac12 
\leq
\frac{\kappa^\prime_m}{\kappa_m} 
\leq 
2, \quad \forall m \in\{ n,\ldots,M-1\}\,.
\end{equation}
Then, using also~\eqref{e.kappam.prime.bound}, we deduce that, for every~$m \in\{ n,\ldots,M-1\}$, 
\begin{equation*}
\bigl( 1 - C\ep_m^{2\gamma} \bigr) 
\biggl( \frac{\kappa_m}{\kappa^\prime_m} \biggr)^2
\leq
\frac{1+ \frac{9a_m^2\ep_m^4}{80(\kappa_m^\prime)^2} } 
{1+ \frac{9a_m^2\ep_m^4}{80\kappa_m^2} }
\leq 
\bigl( 1 + C\ep_m^{2\gamma} \bigr) 
\biggl( \frac{\kappa_m}{\kappa^\prime_m} \biggr)^2
\,.
\end{equation*}
Next we use~\eqref{e.enhance.onestep},~\eqref{e.exprat.prime.bound} and~\eqref{e.rat.prime.no.prime} to get
\begin{equation}
\label{e.kappa.n.n-1}
\bigl(1 - C \ep_{n-1}^{2\delta} \bigr)
\biggl( 1+ \frac{9a_n^2\ep_n^4}{80\kappa_n^2} \biggr)
\leq
\frac{\kappa_{n-1}}{\kappa_n}
\leq
\bigl(1 + C \ep_{n-1}^{2\delta} \bigr)
\biggl( 1+ \frac{9a_n^2\ep_n^4}{80\kappa_n^2} \biggr)
\end{equation}
Putting these together and using the exact recursion formula for $\kappa_{n-1}^\prime$, we get that 
\begin{equation*}
\max\biggl\{ 
\frac{\kappa_{n-1}}{\kappa^\prime_{n-1}} 
\,,
\frac{\kappa^\prime_{n-1}}{\kappa_{n-1}}
\biggr\} 
\leq
\bigl(1 + C \ep_{n-1}^{2(\delta\wedge \gamma)} \bigr)
\frac{\kappa_{n}}{\kappa^\prime_{n}}
\,.
\end{equation*}
Iterating this and using $\kappa_M = \kappa^\prime_N$, we find that
\begin{equation}
\label{e.ratrat.prime}
\max\biggl\{ 
\frac{\kappa_{n-1}}{\kappa^\prime_{n-1}} 
\,,
\frac{\kappa^\prime_{n-1}}{\kappa_{n-1}}
\biggr\} 
\leq
\prod_{j=n}^{N} \Bigl( 1 + C \ep_{j-1}^{2(\delta\wedge \gamma)} \Bigr)
\leq 
\Bigl( 1 + C \ep_{n-1}^{2(\delta\wedge \gamma)}  \Bigr) 
\,.
\end{equation}
This allows us to remove the condition~\eqref{e.rat.prime.no.prime} and replace it with $C\ep_{n-1}^{2(\delta\wedge \gamma)} \leq 1$; that is, for some $n_0(\data) \in\N$, we have that the inequality~\eqref{e.ratrat.prime} holds for every~$n\geq n_0$.
However, for $m\leq n_0$, we have $c \leq \min\{ \kappa_m, \kappa^\prime_m\} \leq \max\{ \kappa_m, \kappa^\prime_m\} \leq C$, and so we have shown that 
\begin{equation}
\label{e.kappa.kappa.prime}
\max_{m\in\{0,\ldots,M-1\}}
\max\biggl\{ 
\frac{\kappa_{m}}{\kappa^\prime_{m}} 
\,,
\frac{\kappa^\prime_{m}}{\kappa_{m}}
\biggr\} 
\leq 
C
\,.
\end{equation}
In view of~\eqref{e.kappam.prime.bound} and~\eqref{e.exprat.prime.bound}, the proof of the lemma is now complete. 
\end{proof}

In most of the rest of the paper, we assume that the molecular diffusivity constant~$\kappa$ belongs to the set~$\mathcal{K}$ of permissible diffusivities defined in~\eqref{e.permissible.K}, so that the bounds of Lemma~\ref{l.recurse} are valid. Incidentally, the reason we are only able to obtain anomalous diffusion along a subsequence of~$\kappa$'s in Theorem~\ref{t.anomalous.diffusion} is due to the restriction in Lemma~\ref{l.recurse}.

\section{The multiscale ansatz}
\label{s.multiscale}

Now that we have constructed the vector field~$\b$ and defined the renormalized diffusivities, we are ready to begin the proof of anomalous diffusion. 
This will require some delicate asymptotic expansions which will take us the next several sections to develop. 

\smallskip

Throughout, we fix a molecular diffusivity~$\kappa \in \mathcal{K}$, with the set~$\mathcal{K}$ defined in~\eqref{e.permissible.K}. We let~$M$ denote the positive integer satisfying~\eqref{e.permitted}, and we let the finite sequence~$\kappa_M, \kappa_{M-1},\ldots,\kappa_0$ be defined by~\eqref{e.kappa.sequence}. We also select an initial datum, which is~$\Z^2$--periodic function $\theta_0 \in C^\infty(\TT^2)$ with zero mean,  
\begin{align}
\label{e.theta0.meanzero}
\langle \theta_0 \rangle = \int_{\TT^2} \theta_0(x)\,dx = 0,
\end{align}
and which satisfies the quantitative analyticity condition (recall the notation in~\eqref{e.nabla.n})
\begin{align}
\label{e.theta0.anal}
\left\| \nabla^n \theta_0 \right\|_{L^2(\TT^2)}
\leq
\|\theta_0\|_{L^2(\TT^2)} 
 n! R_{\theta_0}^{-n}
\,, \quad \forall n\in\N. 
\end{align}
For each $m\in\{ 0,\ldots,M\}$, we let~$\theta_m$ denote the solution of the initial-value problem 
\begin{align}
\label{e.theta.m}
\left\{
\begin{aligned}
& \partial_t \theta_m - \kappa_m \Delta \theta_m + \b_m \cdot \nabla \theta_m 
= 0 
& \mbox{in} & \ (0,\infty) \times \R^2\,, 
\\ & 
\theta_m = \theta_0 
& \mbox{on} & \ \{ 0 \} \times \R^2\,.
\end{aligned}
\right.
\end{align}
Recall that~$\b_m$ is defined in~\eqref{e.psi.recursion}.
In other words,~$\theta_m$ is the solution of the modified equation in which the stream function~$\phi$ has been replaced by~$\phi_m$, essentially removing the oscillations of~$\phi$ with wavelengths smaller than~$\ep_m$. 
Since~$\phi_m$ is smooth, the equation can be written in terms of the vector field~$\b_m$, as above, but it is often more convenient to write it in terms of the stream function~$\phi_m$ as
\begin{equation}
\label{e.thetam.divform}
\partial_t \theta_m - 
\nabla \cdot \bigl( \kappa_{m} \Itwo + \phi_m \sigma \bigr) \nabla \theta_m 
= 0 
\quad 
\mbox{in} \ (0,\infty) \times \R^2\,.
\end{equation}
It is clear that~$\theta_m \in C^\infty((0,\infty) \times \R^2)$ and, for each time~$t\in(0,\infty)$, the function~$\theta_m(t,\cdot)$ has zero mean and is~$\Z^2$--periodic. 
Note that,~\eqref{e.theta.m} in the case~$m=0$, extends the domain of the given function~$\theta_0$ from $\TT^2$, which we identify with~$\{ 0\} \times\TT^2$, to~$[0,\infty) \times \TT^2$. 

\smallskip

In order to prove Theorem~\ref{t.anomalous.diffusion}, we will propagate lower bounds on the energy dissipation of the solutions of~\eqref{e.theta.m} from~$m-1$ to~$m$. In fact, the key step is show that, for every~$m\in\{ 0,\ldots,M\}$ with~$\ep_{m-1} \leq R_{\theta_0}$, we have
\begin{equation}
\label{e.m.stepdown}
\Biggl| \,
\frac{\kappa_{m} \big\| \nabla {\theta}_m \big\|_{L^2((0,1)\times\TT^2)}^2}
{\kappa_{m-1} \left\| \nabla \theta_{m-1} \right\|_{L^2((0,1)\times\TT^2)}^2}
-1 \,
\Biggr| 
\leq C\ep_{m-1}^\delta
\,.
\end{equation}
This estimate is proved in Proposition~\ref{p.indystepdown}, below. 
From~\eqref{e.m.stepdown}, 
it is a simple matter to obtain the lower bound on the energy dissipation in Theorem~\ref{t.anomalous.diffusion}, as we will see.

\smallskip

The proof of~\eqref{e.m.stepdown} is based on the informal idea that the equation for~$\theta_m$ should homogenize to the equation for~$\theta_{m-1}$. 
To see why we should expect this, 
write the equation for~$\theta_m$ as 
\begin{equation}
\label{e.rewrite.theta.m}
\bigl( \partial_t + \b_{m-1} \cdot \nabla \bigr) \theta_m 
- 
\nabla \cdot \bigl( \kappa_m \Itwo + \tilde{\psi}_m \sigma \bigr) \nabla \theta_m
= 0 
\quad \mbox{in} \ (0,\infty) \times \R^2\,,
\end{equation}
where we define
\begin{align*}
\tilde{\psi}_m (t,x) 
:= 
\phi_{m}(t,x) - \phi_{m-1}(t,x)
=
\sum_{k \in2\Z+1} 
\hat{\zeta}_{m,l_k} (t) \zeta_{m,k}(t) \psi_{m,k}\bigl(X_{m-1,l_k}^{-1}(t,x)\bigr) 
\,.
\end{align*}
We view the vector field~$\b_{m-1}$ in the transport term in the left of~\eqref{e.rewrite.theta.m} as \emph{slow}, as well as the corresponding flows~$X_{m-1,l}$ and inverse flows~$X_{m-1,l}^{-1}$. In contrast, we consider the coefficient matrix~\smash{$\kappa_m\Itwo + \tilde{\psi}_m\sigma$} in the second-order part of the operator to be \emph{fast}. 
Moreover, if we change variables to Lagrangian coordinates with respect to the~``slow'' flows, then the transport operator~$\partial_t + \b_{m-1} \cdot \nabla$ becomes simply~$\partial_t$ and diffusion operator~$\nabla \cdot \bigl( \kappa_m \Itwo + \tilde{\psi}_m \sigma \bigr) \nabla$ becomes~$\nabla \cdot \bigl( \kappa_m \Itwo + {\psi}_m \sigma \bigr) \nabla$, which is a shear flow which switches between the horizontal and vertical directions. 
Given the discussion in the previous section, we expect the fast diffusive operator to homogenize to~$\kappa_{m-1}\Delta$, which leaves us with the equation for~$\theta_{m-1}$ in the original coordinates. 

\smallskip

When we speak here of ``homogenization'' we do not intend for the reader to understand this too literally: no limit is taken, rather the equations will be shown to be close in quantitative sense which is small relative to a power of~$\ep_{m-1}$. 

To make this idea precise, we introduce an multiscale ansatz for~$\theta_{m}$, denoted by~$\tilde\theta_{m}$, which is built from~$\theta_{m-1}$ and the correctors defined in the previous section.
Our strategy is very simple: we will plug~$\tilde{\theta}_{m}$ into the equation for $\theta_m$ and estimate the error. We will show that it is small enough to conclude that~$\tilde{\theta}_{m}$ is close to~$\theta_m$. Since we built~$\tilde{\theta}_{m}$ from~$\theta_{m-1}$, we will be able to relate~$\theta_m$ to~$\theta_{m-1}$ and, in particular, obtain~\eqref{e.m.stepdown}. 

\smallskip

The definition of~$\tilde\theta_{m}$ is motivated by the usual two-scale ansatz used in classical homogenization, in which one attaches the periodic correctors to the (usually smooth) solution of the macroscopic equation, as explained in the discussion above~\eqref{e.ansatz.guess.0}.
However, it is necessarily more complicated than the naive guess~\eqref{e.ansatz.guess.0}, for several reasons. 

\smallskip

First of all, there are actually three different ``fast'' scales in the equation for~$\theta_m$:
\begin{enumerate}
\item[(i)] the smallest spatial scale~$\ep_m$, which is the length scale of the shear flows; 

\item[(ii)] the time scale~$\tau_m$, on which the shear flows switch directions;

\item[(iii)] the time scale~$\tau_m''$, on which the Lagrangian flows~$X_{m-1}$ must refresh. 

\end{enumerate}
We should think of these three fast scales as being well-separated, with the spatial scale~$\ep_m$ being the smallest/fastest. This means that each of these three scales must be separately homogenized! 
Since homogenization estimates require smoothness of the macroscopic data, we must be very careful to maintain sufficient regularity estimates when we homogenize the time scales. 
This is the reason we spend so much effort proving estimates on~$T_{m-1}$ and~$\widetilde{H}_m$ later in this section. 

\smallskip

A second complication is due to the need to compose with the Lagrangian flows. As we have seen informally above, when we homogenize the spatial oscillations (the shear flows), we need to work in Lagrangian coordinates. Rather than actually switching our coordinate system, our definition of~$\tilde{\theta}_m$ will involve compositions with the inverse flows~$X_{m-1}^{-1}$. Unfortunately, the distortion caused by these flows cannot be ignored, and we must therefore introduce corrections in the equation for~$\theta_{m-1}$. This is the reason for the appearance of the matrix~$\mathbf{s}_{m-1}$ defined in~\eqref{e.sm}, and its role in the definition of~$T_{m-1}$. 

\smallskip

We select~$m\in\{ 1,\ldots,M\}$ which is fixed throughout the rest of this section. We also employ the following two notational conventions, which are in force throughout most of the rest of the paper:

\begin{itemize}

\item 
We use the correctors $\Chi_m^\kappa$ and matrices~$\J^\kappa_m$, $\K_m^\kappa$ and $\Khom_m^\kappa$ introduced in the previous section with~$\kappa=\kappa_{m}$ (and never any other choice of the parameter~$\kappa$). In order to lighten the notation, we drop the display of the dependence on~$\kappa_{m}$ from the superscripts, writing for example $\Chi_m$ and $\J_m$ instead of~$\Chi_m^{\kappa_m}$ and $\J_m^{\kappa_m}$. Recall that $\Khom_m^{\kappa_m} = \kappa_{m-1}$ by~\eqref{e.kappa.sequence}.

\item 

We employ the convention that all function compositions are assumed to occur in the spatial variable only. 
In other words, as all function compositions involve the flows~$X_{m-1,l}$ and their inverses~$X_{m-1,l}^{-1}$ (see~\eqref{e.flowabbrev}), instead of writing~$( F(t,\cdot) \circ X_{m-1,l}(t,\cdot))(x)$ we will just write~$F \circ X_{m-1,l}$.
\end{itemize}

In the next subsection, we will introduce the objects~$\mathbf{s}_{m-1}$,~$T_{m-1}$ and~$\widetilde{H}_m$ that are needed in the definition of~$\tilde{\theta}_{m}$, which is then given in Section~\ref{ss.ansatz}. In Sections~\ref{s.Tm.reg} and~\ref{ss.Hm.estimates}, we will prove important regularity estimates on~$T_{m-1}$ and~$\widetilde{H}_m$ which are needed in the following section. 
We will not see why our ansatz is a good one until the analysis in Section~\ref{s.cascade}, which is where we plug it into the equation for~$\theta_{m}$ and compute the error. 
The definitions here are motivated by the computations in Section~\ref{s.cascade}, and so we ask for the reader's patience if they seem a bit mysterious at first glance.

\subsection{Ingredients for the multiscale ansatz}
\label{ss.ingredients}

We proceed by introducing equations with gradually more and more scales, starting from the largest scales, until we arrive at a guess for what~$\theta_m$ should look like.

\smallskip

We first introduce an equation with time oscillations on scale~$\tau_m^{\prime\prime}$, which are due to the reseting of the flows. 
We define the matrix
\begin{equation}
\label{e.sm}
\mathbf{s}_{m-1} 
:=
\K_m
\sum_{l\in\Z}  
\hat{\xi}_{m,l }
\bigl( \nabla X_{m-1,l} \circ X_{m-1,l}^{-1} - \Itwo\bigr)
\,.
\end{equation}
At this point, we want to modify the equation for~$\theta_{m-1}$ by introducing a diffusion coefficient which oscillates on the time scale~$\tau_m''$. 
We will call the resulting solution~$T_{m-1}$. 

\smallskip

The rough idea is to define~$T_{m-1}\in C^\infty([0,1)\times \TT^2)$ to be the solution of the initial-value problem
\begin{align}
\label{e.Tm}
\left\{
\begin{aligned}
& \partial_t T_{m-1} 
- \nabla \cdot 
\bigl( \K_m + \mathbf{s}_{m-1} \bigr) \nabla T_{m-1}
+ \b_{m-1} \cdot \nabla T_{m-1}
= 0 
& \mbox{in} & \ (0,\infty) \times \R^2, 
\\ & 
T_{m-1} = \theta_0 
& \mbox{on} & \ \{ 0 \} \times \R^2
\,.
\end{aligned}
\right.
\end{align}
It is not difficult to see why we should expect this equation to homogenize to the one for~$\theta_{m-1}$: the principal part of the diffusion matrix in~\eqref{e.Tm} is~$\K_m$, which has periodic oscillations in time only, with period~$\tau_m^{\prime\prime}$ and a mean which very close to~$\overline{\K}_m = \kappa_{m-1}$. The matrix~$\mathbf{s}_{m-1}$ is lower-order compared to~$\K_m$, due to~\eqref{e.Xm.bound.1}. The reason for including it in the equation of~$T_{m-1}$ has to do with the need to anticipate some errors arising in the analysis of the smaller (spatial) scales, due to the change to Lagrangian coordinates. 
Note that we may also write the equation~\eqref{e.Tm} as 
\begin{equation}
\label{e.Tm.divform}
\partial_t T_{m-1} 
- \nabla \cdot \bigl( 
\K_m
+ \mathbf{s}_{m-1} 
+ \phi_{m-1} \sigma \bigr) \nabla T_{m-1} 
=0
\quad \mbox{in} \ (0,\infty) \times \R^2
\,.
\end{equation}

\smallskip

We do \emph{not} actually define~$T_{m-1}$ to be the solution of~\eqref{e.Tm}. 
We will instead define it as an approximate solution of~\eqref{e.Tm} through an iteration procedure. The advantage of this is that it allows us to prove better regularity estimates for~$T_{m-1}$. Indeed, the best lower bound on the matrix~$\K_m$ is~$\kappa_m\Itwo$, which is much less than~$\kappa_{m-1}\Itwo$, even though~$\K_m$ is larger than the latter on a proportion of times at least~$1-C\ep_m^{\delta}$. Nevertheless, if this is used in the energy estimates we will get very pessimistic regularity bounds on~$T_{m-1}$ compared to those we have for~$\theta_{m-1}$ in Lemma~\ref{l.theta.m-1.reg.upgrade} below. 
To get better bounds, we work with an approximate solution of~\eqref{e.Tm} which is constructed as follows.

\smallskip

We will choose a large number, which represents the number of iteration steps in our definition of~$T_{m-1}$, and for convenience we may take the large integer $N_*$ defined in~\eqref{e.N}. We initialize the iteration by setting
\begin{equation}
\label{e.T.m-1.0}
T_{m-1}^{(0)}:= \theta_{m-1} \,.
\end{equation}
For every~$1 \leq i \leq N_*$, we recursively define $T_{m-1}^{(i)}$ to be the solution of the initial-value problem
\begin{align}
\label{e.Tm-1.i}
\left\{
\begin{aligned}
& \partial_t T_{m-1}^{(i)} 
- \kappa_{m-1} \Delta T_{m-1}^{(i)}
+ \b_{m-1} \cdot \nabla T_{m-1}^{(i)}
= \nabla \cdot \bigl( \K_m - \kappa_{m-1} \Itwo + \mathbf{s}_{m-1} \bigr) \nabla T_{m-1}^{(i-1)}
& \mbox{in} & \ (0,\infty) \times \R^2, 
\\ & 
T_{m-1}^{(i)} = \theta_0 
& \mbox{on} & \ \{ 0 \} \times \R^2
\,.
\end{aligned}
\right.
\end{align}
It is clear that~$T_{m-1}^{(i)} \in C^\infty([0,\infty) \times \R^2)$. 
Finally, we define 
\begin{equation}
\label{e.T.m-1.Nstar}
T_{m-1}:= T_{m-1}^{( N_* )}
\,.
\end{equation}
By construction,~$T_{m-1}$ satisfies
\begin{align}
\label{e.Tm.true}
\left\{
\begin{aligned}
& \partial_t T_{m-1} 
- \nabla \cdot 
\bigl( \K_m + \mathbf{s}_{m-1} \bigr) \nabla T_{m-1}
+ \b_{m-1} \cdot \nabla T_{m-1}
= \nabla \cdot  \mathbf{e}_{m-1} 
& \mbox{in} & \ (0,\infty) \times \R^2, 
\\ & 
T_{m-1} = \theta_0 
& \mbox{on} & \ \{ 0 \} \times \R^2
\,,
\end{aligned}
\right.
\end{align}
where the error $\mathbf{e}_{m-1}$ is given by 
\begin{equation}
\label{e.E.m-1.def}
\mathbf{e}_{m-1}:= 
\bigl( \K_m - \kappa_{m-1} \Itwo + \mathbf{s}_{m-1} \bigr) \nabla\bigl( T_{m-1}^{( N_* -1)}-  T_{m-1}^{( N_* )}\bigr)
\,.
\end{equation}
Comparing \eqref{e.Tm.true} with \eqref{e.Tm}, we see that the ``true'' $T_{m-1}$ makes an error $\nabla \cdot \mathbf{e}_{m-1}$ in solving the advection-diffusion equation. We will however show that error~$\mathbf{e}_{m-1}$ will be very small, in fact it can be made ``very small'' since $N_*$ is very large (see~\eqref{e.N}).

\smallskip

Below in Section~\ref{s.Tm.reg} we will show that the difference~$T_{m-1}-\theta_{m-1}$ is small (see Lemma~\ref{l.Tm.minus.thetam} for the precise statement). This amounts to homogenizing the temporal oscillations due to the switching of the flows, which have period~$\tau_m''$.

\smallskip

We would next like to write down an equation like~\eqref{e.Tm.divform}, but with~$\J_m$ in place of~$\K_m$. That is, we want to include the temporal oscillations due to switching between horizontal and vertical shear flows. Recall that the function $\J_m$ is essentially the sum of products of $\tau_m$--periodic and $\tau_m^{\prime\prime}$--periodic functions of time only. When the faster time scale is averaged out of~$\J_m$, one obtains~$\K_m$, up to very small errors: see~\eqref{e.Jhat}~\eqref{e.JJhat}, and~\eqref{e.K}, above. 
We could write this equation perhaps as
\begin{align}
\label{e.Tm.tilde}
\left\{
\begin{aligned}
& \partial_t \tilde{T}_{m-1} 
- \nabla \cdot 
\bigl( \J_m + \mathbf{r}_{m-1} \bigr) \nabla \tilde{T}_{m-1}
+ \b_{m-1} \cdot \nabla \tilde{T}_{m-1}
= 0 
& \mbox{in} & \ (0,\infty) \times \R^2, 
\\ & 
\tilde{T}_{m-1} = \theta_0 
& \mbox{on} & \ \{ 0 \} \times \R^2
\,,
\end{aligned}
\right.
\end{align}
where~$\mathbf{r}_{m-1}$ is defined as in~\eqref{e.sm}, with~$\J_m$ in place of~$\K_m$.
We will show the difference~$\tilde{T}_{m-1} - T_{m-1}$ is, up to errors we are able to neglect, given by an expansion which we now introduce. 

\smallskip

We introduce a function~$\widetilde{H}_m$ which is intended to represent, to leading order, the difference between~$\tilde{T}_{m-1} - T_{m-1}$. 
The first idea is to take it to solve the transport-type equation\footnote{See~\eqref{e.Hm.def} for the actual definition of $\widetilde{H}_m$, and~\eqref{e.real.eq.Hm}--\eqref{e.dm} for the equation it solves.}
\begin{equation}
\label{e.Hm}
\left\{
\begin{aligned}
& \bigl( \partial_t + \b_{m-1} \cdot \nabla \bigr) \widetilde{H}_m 
= 
\widetilde{G}_m
& \text{in} & \ (0,\infty) \times\R^2\,,
\\  &
\widetilde{H}_m  = 0 
& \text{on} & \ \{ 0 \} \times\R^2\,,
\end{aligned}
\right.
\end{equation}
where\footnote{
Note that, in the second line of~\eqref{e.Gm}, we used our notational convention (introduced in~\eqref{e.gradcorrmatrix}) of writing vector-valued functions as row vectors and gradients of scalars as column vectors. Hence
\begin{equation}
\label{e.chainrule.Tm1}
\nabla \bigl( T_{m-1} \circ X_{m-1,k} \bigr) \circ X_{m-1,k}^{-1}
=
\bigl(\nabla X_{m-1,k}  \circ X_{m-1,k}^{-1} \bigr) \nabla T_{m-1}
\,.
\end{equation}
}
\begin{align}
\label{e.Gm}
\widetilde{G}_m 
& 
:=
\bigl( \J_m  - 
\K_{m}
\bigr)
: 
\sum_{l\in \Z}  
\hat{\xi}_{m,l} 
\nabla \bigl( 
\nabla \bigl( T_{m-1} \circ X_{m-1,l} \bigr) \circ X_{m-1,l}^{-1}
\bigr) 
\notag \\ & \;
= 
\nabla \cdot 
(\J_m-\K_m)
\sum_{l\in\Z}  
\hat{\xi}_{m,l }
\bigl( 
\nabla X_{m-1,l} \circ X_{m-1,l}^{-1}
\bigr) \nabla T_{m-1}
\,.
\end{align}
We get this by subtracting the equations for~$T_{m-1}$ and~$\widetilde{T}_{m-1}$ and then ignoring the diffusion term~$ \nabla \cdot 
\bigl( \J_m + \mathbf{r}_{m-1} \bigr) (\nabla T_{m-1}-\nabla \widetilde{T}_{m-1})$. 
We will not however define~$\widetilde{H}_m$ to be the solution of~\eqref{e.Hm}, because we are unable to prove sufficient regularity estimates for it. 
Instead we will define~$\widetilde{H}_m$ explicitly in terms of known ingredients which approximate the solution of~\eqref{e.Hm}, but for which we can prove better regularity estimates. 

\smallskip 

In order to define~$\widetilde{H}_m$, we first define a function~$\widetilde{H}_{m,r}$ as follows:
\begin{equation}
\label{e.H.mr.def}
\widetilde{H}_{m,r}(t,x)
:=
\nabla \cdot
\sum_{n=0}^{N_*-1}
\mathbf{A}_{m,n,r}(t,x)
\mathbf{q}_{m,n,r+1}^{\kappa_m} (t)
\end{equation}
where~$\mathbf{q}_{m,n,r} = \mathbf{q}_{m,k,r}^{\kappa_m}$ as defined in~\eqref{e.q.mnr.def} and the tensors $\mathbf{A}_{m,n,r} = (\mathbf{A}_{m,n,r}^{ijk})_{i,j,k=1}^2$ are the $3$-tensors recursively defined as follows:
\begin{equation}
\label{e.A.mnr.def}
\left\{
\begin{aligned}
&\mathbf{A}_{m,n,0}^{ijk}
:= 
- \delta_{ij} L_{m,n}^{\kappa_m}(t) 
\sum_{l \in \Z} \hat{\xi}_{m,l}(t)
\bigl(\partial_k X_{m-1,l}^p \circ X_{m-1,l}^{-1}\bigr) 
\partial_p T_{m-1}
\,, \\  
& \mathbf{A}_{m,n,r+1}^{ijk} 
:= (\partial_t + \b_{m-1}\cdot \nabla) \mathbf{A}_{m,n,r}^{ijk}   +
\nabla_\ell \b_{m-1}^i \mathbf{A}_{m,n,r}^{\ell j k}  
\,.
\end{aligned}
\right.
\end{equation}
In \eqref{e.H.mr.def} we make the following convention regarding the contraction of indices: if $\mathbf{q} = (\mathbf{q}^{jk})_{k,j=1}^2$ is a $2$-tensor, and $\mathbf{A}= (\mathbf{A}^{ijk})_{j,k,i=1}^2$ is a $3$-tensor, then $\nabla \cdot (\mathbf{A} \mathbf{q}) = \partial_i  (\mathbf{A}^{ijk} \mathbf{q}^{jk} )$.

\smallskip

The definition in~\eqref{e.A.mnr.def} above was made in view of the fact that
\begin{equation*}
\bigl[ \bigl(\partial_t + \b_{m-1} \cdot \nabla \bigr) , \nabla \bigr] 
=
\nabla \b_{m-1} \cdot \nabla 
\end{equation*}
so that 
\begin{align*}
\bigl(\partial_t + \b_{m-1} \cdot \nabla \bigr)
\widetilde{H}_{m,r}
&
=
\bigl(\partial_t + \b_{m-1} \cdot \nabla \bigr)
\nabla \cdot
\sum_{n=0}^{N_*-1}
\mathbf{A}_{m,n,r}(t,x)
\mathbf{q}_{m,n,r+1}^{\kappa_m} (t)
\notag \\ & 
=
\nabla \cdot 
\sum_{n=0}^{N_*-1}
\mathbf{A}_{m,n,r+1}
\mathbf{q}_{m,n,r+1}^{\kappa_m} 
-
\nabla \cdot 
\sum_{n=0}^{N_*-1}
\mathbf{A}_{m,n,r}
\mathbf{q}_{m,n,r}^{\kappa_m}
\end{align*}
and therefore, by defining
\begin{equation}
\label{e.Hm.def}
\widetilde{H}_{m}
=
\sum_{r=0}^{\nicefrac{N_*}{2}}
\widetilde{H}_{m,r}
\end{equation}
we obtain, by telescoping the resulting sum, that 
\begin{equation*}
\bigl(\partial_t + \b_{m-1} \cdot \nabla \bigr)
\widetilde{H}_{m}
=
-
\nabla \cdot \!
\sum_{n=0}^{N_*-1}
\mathbf{A}_{m,n,0}
\mathbf{q}_{m,n,0}^{\kappa_m} 
+
\nabla \cdot \!
\sum_{n=0}^{N_*-1}
\mathbf{A}_{m,n,\nicefrac{N_*}{2}}
\mathbf{q}_{m,n,\nicefrac{N_*}{2}}^{\kappa_m} 
\,.
\end{equation*}
We rewrite the first term as
\begin{align*}
\lefteqn{
-
\nabla \cdot \!
\sum_{n=0}^{N_*-1}
\mathbf{A}_{m,n,0}(t,x)
\mathbf{q}_{m,n,0}^{\kappa_m} (t)
}
\qquad & 
\notag \\ & 
=
\partial_{i} \sum_{n=0}^{N_*-1}
\Bigl( \mathbf{j}_{m,n}^\kappa - \bigl\langle \!\!\bigl\langle
\mathbf{j}_{m,n}^\kappa \bigr\rangle \!\!\bigr\rangle
\Bigr)_{jk}
\biggl( 
\delta_{ij} L_{m,n}^{\kappa_m}(t) 
\sum_{l \in \Z} \hat{\xi}_{m,l}(t)
\partial_k (T_{m-1} \circ X_{m-1,l}) \circ X_{m-1,l}^{-1} 
\biggr)
\notag \\ & 
=
\nabla \cdot 
(\hat{\J}_m-\K_m)
\sum_{l\in\Z}  
\hat{\xi}_{m,l }
\bigl( 
\nabla X_{m-1,l} \circ X_{m-1,l}^{-1}
\bigr) \nabla T_{m-1}
\notag \\ & 
=
\widetilde{G}_m 
+
\nabla \cdot 
(\hat{\J}_m-\J_m)
\sum_{l\in\Z}  
\hat{\xi}_{m,l }
\bigl( 
\nabla X_{m-1,l} \circ X_{m-1,l}^{-1}
\bigr) \nabla T_{m-1}
\,.
\end{align*}
We therefore obtain that 
\begin{equation}
\label{e.real.eq.Hm}
\bigl(\partial_t + \b_{m-1} \cdot \nabla \bigr)
\widetilde{H}_{m}
=
\widetilde{G}_m 
+
\nabla \cdot \mathbf{d}_{m} 
\end{equation}
where we define~$\mathbf{d}_{m}$ by
\begin{equation}
\label{e.dm}
\mathbf{d}_{m}
:=
(\hat{\J}_m-\J_m)
\sum_{l\in\Z}  
\hat{\xi}_{m,l }
\bigl( 
\nabla X_{m-1,l} \circ X_{m-1,l}^{-1}
\bigr) \nabla T_{m-1}
+ \!
\sum_{n=0}^{N_*-1}
\mathbf{A}_{m,n,\nicefrac{N_*}{2}}
\mathbf{q}_{m,n,\nicefrac{N_*}{2}}^{\kappa_m} 
\,.
\end{equation}
The~$\mathbf{d}_m$ error will be very small, proportional to~$\ep_{m-1}^{\delta N_*}$ (which will be much less than~$\ep_{m-1}^{1000}$). This is because of the closeness of~$\hat{\J}$ to~$\J$ in~\eqref{e.JJhat}, and the fact that~$\mathbf{A}_{m,n,N_*}$ is similarly small, as we will show in Section~\ref{ss.Hm.estimates}. 

\smallskip

As we will show in Lemma~\ref{p.Hm.bounds} below,~$\widetilde{H}_m$ and its gradient are relatively small. In fact, its gradient is small enough that the error made by plugging it into the diffusion part of the operator is small and can be neglected. This says implicitly that the equation~\eqref{e.Tm.tilde} homogenizes to~\eqref{e.Tm}, which takes care of the temporal oscillations on scale~$\tau_m$.

\subsection{{Definition of the multiscale ansatz~\texorpdfstring{$\widetilde{\theta}_m$}{tilde theta m}}}
\label{ss.ansatz}

Without further ado, we can now present the two-scale ansatz~$\tilde{\theta}_m$, which is defined by 
\begin{align}
\label{e.ansatz}
\tilde{\theta}_m
&
: =
T_{m-1} 
+
\sum_{k,l\in\Z}
\hat{\xi}_{m,l}
\xi_{m,k}
\tilde{\Chi}_{m,k}
\bigl(
\nabla \bigl( T_{m-1} \circ X_{m-1,l} \bigr) \circ X_{m-1,l}^{-1} \bigr)
+
\widetilde{H}_{m}
\\ & \; 
\label{e.ansatz.line2}
=
T_{m-1} 
+
\sum_{k\in 2\Z+1}
\xi_{m,k}
\tilde{\Chi}_{m,k}
\bigl(
\nabla \bigl( T_{m-1} \circ X_{m-1,l_k} \bigr) \circ X_{m-1,l_k}^{-1} \bigr)
+
\widetilde{H}_{m}
\,,
\end{align}
where~$\tilde{\Chi}_{m,k}$ is the ``twisted corrector'' defined by
\begin{equation}
\label{e.Chim.def}
\tilde{\Chi}_{m,k} := \Chi_{m,k} \circ X_{m-1,k}^{-1}  \,.
\end{equation}
Recall that~$l_k$ is defined in~\eqref{e.lk.def} and satisfies~\eqref{e.cutoff.overlaps}. 
The second line~\eqref{e.ansatz.line2} in the display above is valid due to~\eqref{e.hatxi.partition} the fact that~$\Chi_{m,k}$ vanishes if~$k$ is even and
\begin{equation}
\label{e.dancing.partitions}
\hat{\xi}_{m,l} \xi_{m,k} \Chi_{m,k}
=
\xi_{m,k} \Chi_{m,k} \indc_{\{l = l_k\}}
\,.
\end{equation}
In heavier notation, without our simplifying conventions outlined above, we could write~$\tilde{\theta}_m$ as
\begin{align*}
\tilde{\theta}_{m}  (t,x) 
&
= 
T_{m-1} (t,x) 
+ \!\!
\sum_{k\in 2\Z+1} \!\!
\xi_{m,k}(t)
\tilde{\Chi}_{m,k}(t,x)
\nabla \left( T_{m-1}(t,X_{m-1,l_k}(t,\cdot))\right) \bigl(X_{m-1,l_k}^{-1}(t,x)\bigr) 
+
\widetilde{H}_m  (t,x)
\,.
\end{align*}
In order that the tensor contractions are clear, we also mention that the second term of~\eqref{e.ansatz} can be written in coordinates as
\begin{align*}
\sum_{k\in2\Z+1}
\sum_{i=1}^2
(\tilde{\Chi}_{m,k})_i
\xi_{m,k} \,
\partial_{x_i} \left( 
T_{m-1} \circ X_{m-1,l_k}
\right) 
\circ X_{m-1,l_k}^{-1} 
\,.
\end{align*}
Since, as explained above, we should consider $T_{m-1} + \widetilde{H}_m \approx \tilde{T}_{m-1}$, which solves~\eqref{e.Tm.tilde}, the definition of our  ansatz~\eqref{e.ansatz} can be compared to 
\begin{align*}
\tilde{T}_{m-1} 
+ \sum_{k\in 2\Z+1}
\xi_{m,k}
\tilde{\Chi}_{m,k}
\bigl(
\nabla \bigl( \tilde{T}_{m-1} \circ X_{m-1,l_k} \bigr) \circ X_{m-1,l_k}^{-1} \bigr)
\,.
\end{align*}
The latter is similar to a two-scale expansion in classical homogenization, with the role of the macroscopic function being played by~$\tilde{T}_{m-1}$ and the correctors by~$\Chi_{m,k}$. The compositions with the flows implicitly mean that the expansion is with respect to Lagrangian variables. In other words, on each time interval of size $\tau_m^{\prime\prime}$, we have composed with the appropriate flow~$X_{m-1,l_k}$, written the two-scale expansion in these variables, and then composed with the inverse flow. 
Implicit is the assumption that, since the inverse flows~$X_{m-1,l}^{-1}$ are ``slow,'' the correctors should be close to the correctors for the stationary shear flows composed with the inverse flows. 

\smallskip

We will test the validity of our ansatz~\eqref{e.ansatz} by plugging it into the left side of the equation for~$\theta_m$, and estimating the error. This is the focus of Section~\ref{s.cascade}. To prepare for this analysis, we need to obtain good regularity estimates on the ``macroscopic'' ingredients in the expansion: in particular, the function~$T_{m-1}$. Indeed, as we know from classical homogenization theory, the homogenization error depends on the regularity of the macroscopic solution.

\subsection{{Regularity estimates for~\texorpdfstring{$T_{m-1}$}{T (m-1)}}}
\label{s.Tm.reg}

Our first order of business is to show that the equation~\eqref{e.Tm.true} can be considered as a small perturbation of~\eqref{e.theta.m}  with~$m$ replaced by~$m-1$. This intuition is formalized 
by noting that according to~\eqref{e.T.m-1.0}--\eqref{e.T.m-1.Nstar} we have 
\begin{equation}
\label{e.T.minus.theta.m-1}
T_{m-1} - \theta_{m-1} 
= T_{m-1}^{({N_*})} - T_{m-1}^{(0)} 
= \sum_{i=1}^{ {N_*} }   
\underbrace{T_{m-1}^{(i)} - T_{m-1}^{(i-1)}}_{=: V_{m-1}^{(i)}}
\,,
\end{equation}
where the equation for the increment~$V_{m-1}^{(i)}$ can be found by subtracting~\eqref{e.Tm-1.i} with~$i$ and~$i-1$. We obtain, for each~$1\leq i \leq {N_*}$, 
\begin{align}
\label{e.Vm-1.i}
\left\{
\begin{aligned}
& \partial_t V_{m-1}^{(i)} 
- \kappa_{m-1} \Delta V_{m-1}^{(i)}
+ \b_{m-1} \cdot \nabla V_{m-1}^{(i)}
= \nabla \cdot \bigl( \K_m - \kappa_{m-1} \Itwo + \mathbf{s}_{m-1} \bigr) \nabla V_{m-1}^{(i-1)}
& \mbox{in} & \ (0,\infty) \times \R^2, 
\\ & 
V_{m-1}^{(i)} = 0 
& \mbox{on} & \ \{ 0 \} \times \R^2
\,,
\end{aligned}
\right.
\end{align}
where for convenience we denote~$T_{m-1}^{(-1)} := 0$, so that $V_{m-1}^{(0)} = \theta_{m-1}$.

 \smallskip
In light of \eqref{e.T.minus.theta.m-1}--\eqref{e.Vm-1.i} it is apparent that we must obtain good estimates for $\nabla \theta_{m-1}$ and $\nabla V_{m-1}^{(i)}$, which we achieve in Lemmas~\ref{l.theta.m-1.reg.upgrade} and~\ref{l.V.m-1.reg.upgrade} below.

\begin{lemma}[{Estimates on~$\theta_{m-1}$}]
\label{l.theta.m-1.reg.upgrade}
There exists~$1\leq C<\infty$ such that, 
for every~$n\in\N_0$, 
\begin{equation}
\label{e.theta.m-1.reg.upgrade}
\bigl\| \nabla^n \theta_{m-1} \bigr\|_{L^\infty([0,1];L^2(\TT^2))}
+
\kappa_{m-1}^{\nicefrac 12} 
\bigl\| \nabla^{n+1} \theta_{m-1} \bigr\|_{L^2((0,1)\times\TT^2)}
\leq  
\|\theta_0\|_{L^2(\TT^2)}
n! \bigl(C \ep_{m-1}^{-1 - \nicefrac \gamma 2} \vee 2 R_{\theta_0}^{-1}\bigr)^n
\,.
\end{equation}
\end{lemma}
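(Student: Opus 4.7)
The proof will proceed by strong induction on $n$, closing simultaneously the $L^\infty_t L^2_x$ bound on $\nabla^n\theta_{m-1}$ and the $L^2_{t,x}$ bound on $\kappa_{m-1}^{1/2}\nabla^{n+1}\theta_{m-1}$. For the base case $n=0$, I will test \eqref{e.theta.m} (with $m$ replaced by $m-1$) against $\theta_{m-1}$ and exploit the incompressibility of $\b_{m-1}$ to eliminate the transport term, yielding the standard energy identity $\frac12\partial_t\|\theta_{m-1}\|_{L^2}^2 + \kappa_{m-1}\|\nabla\theta_{m-1}\|_{L^2}^2 = 0$; integrating in time gives both inequalities with constant $\|\theta_0\|_{L^2} = \|\theta_0\|_{L^2}\cdot 0! \cdot (\cdot)^0$.

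For the inductive step, working with the divergence form \eqref{e.thetam.divform} is essential. Applying $\partial^\alpha$ with $|\alpha|=n\geq 1$ and setting $u^\alpha := \partial^\alpha\theta_{m-1}$, I obtain
\begin{equation*}
\partial_t u^\alpha - \nabla\cdot\bigl[(\kappa_{m-1}\Itwo + \phi_{m-1}\sigma)\nabla u^\alpha\bigr] = \sum_{\substack{1\leq |\beta|\leq n \\ \beta\leq\alpha}} \binom{\alpha}{\beta}\nabla\cdot\bigl(\partial^\beta\phi_{m-1}\sigma\nabla u^{\alpha-\beta}\bigr).
\end{equation*}
Testing with $u^\alpha$, the principal diffusion yields $\kappa_{m-1}\|\nabla u^\alpha\|_{L^2}^2$, while the $\phi_{m-1}\sigma$ piece of the bilinear form vanishes \emph{pointwise} since $(\sigma\nabla u^\alpha)\cdot\nabla u^\alpha \equiv 0$ by the skew-symmetry of $\sigma$. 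This crucial cancellation is the reason the divergence form is used rather than the convective form. The remaining commutators are then estimated by Cauchy--Schwarz and Young's inequality, absorbing a small multiple of $\kappa_{m-1}\|\nabla u^\alpha\|_{L^2}^2$ into the dissipation; this leaves terms of the form $(\text{comb.~factor})\,\|\partial^\beta\phi_{m-1}\|_{L^\infty}^2\kappa_{m-1}^{-1}\|\nabla u^{\alpha-\beta}\|_{L^2}^2$.

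The bookkeeping then relies on two input estimates. First, Corollary~\ref{c.phim} provides $\|\nabla\phi_{m-1}\|_{L^\infty}\lesssim C$ uniformly in $m$, and $\|\nabla^k\phi_{m-1}\|_{L^\infty}\lesssim C^k k!\,\ep_{m-1}^{\beta-k}$ for $k\geq 2$. Second, Lemma~\ref{l.recurse} gives $\kappa_{m-1}\simeq \ep_{m-1}^{\beta+\gamma}$, so that $R^2 = \ep_{m-1}^{2+\gamma}$ exceeds $\kappa_{m-1}$ by the factor $\ep_{m-1}^{2-\beta}$, a positive power since $\beta<2$. The inductive hypothesis, applied to $\|\nabla u^{\alpha-\beta}\|_{L^2_{t,x}}^2 = \kappa_{m-1}^{-1}B_{n-|\beta|}^2$, combined with the combinatorial identities $\binom{n}{k}^2((n-k)!)^2 = (n!)^2/(k!)^2$, reduces the induction step to verifying that $\sum_{k\geq 1} (k!)^{-2}\|\nabla^k\phi_{m-1}\|_{L^\infty}^2 R^{2k}\kappa_{m-1}^{-2}$ is bounded by a fixed constant less than one (say $1/4$), so that the induction closes with a factor of $2$ of room per derivative. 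For $k\geq 2$ each term equals $(C\ep_{m-1}^{2-\beta})^k \cdot O(1)$ times $\ep_{m-1}^{-2\gamma + \gamma k}$-type factors, which is summable and small provided $\Lambda$ is large.

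\textbf{Main obstacle.} The $k=1$ commutator is the delicate one: although $\|\nabla\phi_{m-1}\|_{L^\infty}\leq C$ is uniformly bounded (not comparable to the disastrous Lipschitz constant $\|\nabla\b_{m-1}\|_{L^\infty}\simeq a_{m-1}$), the quantity $\kappa_{m-1}^{-2}R^2 \simeq \ep_{m-1}^{2-2\beta-\gamma}$ is \emph{not} small, so the naive Young's absorption does not close the induction. To handle this, I will integrate by parts once more, using that $u^\alpha = \partial_j u^{\alpha-e_j}$ when $\beta = e_j$, to transfer the extra derivative onto $\phi_{m-1}$ (producing $\nabla^2\phi_{m-1}$) while pairing against a quadratic expression in $\nabla u^{\alpha-e_j}$. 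The identity $\sigma_{kl}(\partial_k v)(\partial_l v)\equiv 0$ then eliminates the would-be leading contribution, and the surviving terms can be estimated using $\|\nabla^2\phi_{m-1}\|_{L^\infty}\lesssim\ep_{m-1}^{\beta-2}$ together with a careful symmetrization over the multi-indices $\alpha$ with $|\alpha|=n$ (rather than term-by-term). Closing this step is the hardest part of the argument; once it is done, the remaining $k\geq 2$ commutators are handled routinely, the combinatorics of $\binom{n}{k}$ and the derivative bounds combine to give a geometric factor summable to $O(1)$, and the induction is complete.
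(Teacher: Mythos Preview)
Your overall strategy matches the paper's: test the divergence-form equation \eqref{e.thetam.divform} with $\partial^\alpha\theta_{m-1}$, use the skewness of $\sigma$ to kill the principal transport contribution, split the commutator sum according to the number of derivatives landing on $\phi_{m-1}$, and handle the single-derivative case via one further integration by parts. The paper then packages the resulting inequalities into a recursion for a rescaled quantity $D_n$ and iterates.

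There is, however, a small but real error in your $k=1$ mechanism. After the first integration by parts the offending term is $\int \nabla u^\alpha \cdot (\partial_j\phi_{m-1})\,\sigma\nabla u^{\alpha-e_j}$. If you integrate by parts in the $\partial_j$ direction (as your phrasing ``using that $u^\alpha=\partial_j u^{\alpha-e_j}$'' suggests) you obtain only a tautology, and the identity $\sigma_{kl}(\partial_k v)(\partial_l v)=0$ is not what does the work. What the paper does instead is integrate the \emph{outer} gradient back off $\nabla u^\alpha$: the piece where this derivative lands on $\sigma\nabla u^{\alpha-e_j}$ vanishes because $\nabla\cdot(\sigma\nabla v)=\sigma_{kl}\partial_k\partial_l v=0$ (a skew matrix contracted with the symmetric Hessian), leaving
\[
-\int \partial^\alpha\theta_{m-1}\,\bigl(\nabla\partial_j\phi_{m-1}\bigr)\cdot\sigma\nabla\partial^{\alpha-e_j}\theta_{m-1},
\]
which is bounded by $C\ep_{m-1}^{\beta-2}\|\partial^\alpha\theta_{m-1}\|_{L^2_{t,x}}\|\nabla\partial^{\alpha-e_j}\theta_{m-1}\|_{L^2_{t,x}}$. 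With this correction your induction closes as in the paper. Note also that your own computation of the $k\geq 2$ contributions gives $C^{2k}\ep_{m-1}^{(k-2)\gamma}$, so the $k=2$ term is $O(1)$ rather than small; hence the sum cannot be made $\leq 1/4$ with a fixed constant, and the constant $C_0$ in the scale $C_0\,\ep_{m-1}^{-1-\gamma/2}$ must be taken large enough to absorb it---this is precisely the role of the parameter $A$ in the paper's proof.
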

\begin{proof}
We fix a multi-index~$\aa$ of order $|\aa|=n \geq 0$ and apply $\partial^{\aa}$ to both sides of~\eqref{e.thetam.divform} (with $m$ replaced by $m-1$) to obtain:
\begin{align}
\label{e.eqtheta.m-1}
\partial_t \partial^{\aa}\theta_{m-1} 
- \nabla \cdot \bigl( 
\kappa_{m-1} \Itwo 
+ \phi_{m-1} \sigma \bigr) \nabla \partial^{\aa}\theta_{m-1} 
=
\nabla \cdot 
\sum_{\bb<\aa} \binom{\aa}{\bb} 
\partial^{\aa-\bb}   \phi_{m-1} \sigma  \nabla \partial^{\bb} \theta_{m-1}
\end{align}
in $(0,\infty) \times \TT^2$. 
Testing~\eqref{e.eqtheta.m-1} with~$\partial^{\aa}  \theta_{m-1}$ and using that $\sigma$ is skew-symmetric, we get
\begin{align}
\label{e.test.partialaaTm}
&
\sup_{t \in [0,1]}
\bigl\| 
\partial^\aa \theta_{m-1} (t,\cdot) 
\bigr\|_{L^2(\TT^2)}^2
-
\bigl\| 
\partial^\aa \theta_0
\bigr\|_{L^2(\TT^2)}^2
+
2 \kappa_{m-1} \int_0^1 \int_{\TT^2}
\bigl| \nabla \partial^\aa \theta_{m-1} \bigr|^2
\notag \\ & \qquad 
\leq
2 \sum_{\bb<\aa} \binom{\aa}{\bb} 
\left| 
\int_0^1
\int_{\TT^2}
\nabla \partial^\aa \theta_{m-1} \cdot 
\partial^{\aa-\bb}  \phi_{m-1} \sigma  \nabla \partial^{\bb} \theta_{m-1}
\right|
\,.
\end{align}
When $n=0$, the right side of \eqref{e.eqtheta.m-1}, and hence \eqref{e.test.partialaaTm}, vanishes identically. As such, we are only left to consider the case $n\geq 1$.

To upper bound the right side of~\eqref{e.test.partialaaTm}, we split the sum into two parts:  the terms involving~$\phi_{m-1}$ with~$| \aa - \bb | \geq 2$ and the terms involving~$\phi_{m-1}$ with~$| \aa - \bb | =1$. 
For the first group of terms, we use~\eqref{e.phi.m.bounds} and obtain
\begin{align}
\label{e.grouptwo}
\lefteqn{
\sum_{\bb<\aa,|\aa-\bb|\geq 2} \binom{\aa}{\bb} 
\biggl|
\int_0^1
\int_{\TT^2}
\nabla \partial^\aa \theta_{m-1} \cdot 
\partial^{\aa-\bb}\phi_{m-1} \sigma \nabla \partial^{\bb} \theta_{m-1}
\biggr|
} \qquad & 
\notag \\ & 
\leq
\kappa_{m-1}  \bigl\| \nabla \partial^{\aa} \theta_{m-1} \bigr\|_{L^2((0,1)\times\TT^2)}^2
\notag \\ & \qquad 
+
\frac{C\ep_{m-1}^{2\beta}}{\kappa_{m-1}}
\sum_{\bb<\aa,|\aa-\bb|\geq 2} \!
\frac{|\aa|!^2}{|\bb|!^2}
(C \ep_{m-1}^{-1})^{2|\aa-\bb|}
\bigl\| \nabla \partial^{\bb} \theta_{m-1} \bigr\|_{L^2((0,1)\times\TT^2)}^2 \,.
\end{align}
For the second group, in which $|\aa-\bb|=1$, we perform an integration by parts and use the skew-symmetry of~$\sigma$ and~\eqref{e.phi.m.bounds} with~$n=2$ to see that  
\begin{align*}
\biggl|
\int_0^1
\int_{\TT^2}
\nabla \partial^\aa \theta_{m-1} \cdot 
\partial^{\aa-\bb} \phi_{m-1} \sigma  \nabla \partial^{\bb} \theta_{m-1}
\biggr|
&
=
\biggl|
\int_0^1
\int_{\TT^2}
\partial^\aa \theta_{m-1} \cdot 
\nabla(\partial^{\aa-\bb} \phi_{m-1}) \cdot \sigma  \nabla \partial^{\bb} \theta_{m-1}
\biggr|
\\ & 
\leq 
C\ep_{m-1}^{\beta-2}
\bigl\| \partial^{\aa} \theta_{m-1} \bigr\|_{L^2((0,1)\times\TT^2)}
\bigl\| \nabla \partial^{\bb} \theta_{m-1} \bigr\|_{L^2((0,1)\times\TT^2)} 
\,.
\end{align*}
Therefore, if $|\aa-\bb|=1$, then 
\begin{align}
\label{e.groupthree}
\lefteqn{
\sum_{\bb<\aa,|\aa-\bb|= 1} \binom{\aa}{\bb} 
\biggl|
\int_0^1
\int_{\TT^2}
\nabla \partial^\aa \theta_{m-1} \cdot 
\partial^{\aa-\bb}\phi_{m-1} \sigma \nabla \partial^{\bb} \theta_{m-1}
\biggr|
} \qquad  & 
\notag \\ & 
\leq
C|\aa|  \ep_{m-1}^{\beta-2}
\bigl\| \partial^{\aa} \theta_{m-1} \bigr\|_{L^2((0,1)\times\TT^2)}
\max_{\bb<\aa,|\aa-\bb|=1}
\bigl\| \nabla \partial^{\bb} \theta_{m-1} \bigr\|_{L^2((0,1)\times\TT^2)} 
\,.
\end{align}
We next insert the estimates~\eqref{e.grouptwo} and~\eqref{e.groupthree} into the right side of~\eqref{e.test.partialaaTm}, and appeal to \eqref{e.theta0.anal} to bound the initial data term.  After dividing by $|\aa|!^2$, we obtain for $n=|\aa| \geq 1 $ that 
\begin{align}
\label{e.groupthree.follow}
\lefteqn{
\frac{\kappa_{m-1} \bigl\|
\nabla \partial^\aa \theta_{m-1} 
\bigr\|_{L^2((0,1)\times\TT^2)}^2}
{|\aa|!^2}
+
\frac{\sup_{t \in [0,1]}\bigl\| 
\partial^\aa \theta_{m-1} (t,\cdot) 
\bigr\|_{L^2(\TT^2)}^2}{|\aa|!^2}
-
\frac{\|\theta_0\|_{L^2(\TT^2)}^2}{R_{\theta_0}^{2n} } 
} \qquad & 
\notag \\ & 
\leq
\frac{C\ep_{m-1}^{2\beta}}{\kappa_{m-1}^2}
\sum_{\bb<\aa,|\aa-\bb|\geq 2} \!
 (C \ep_{m-1}^{-1})^{2|\aa-\bb|} 
\frac{\kappa_{m-1}\bigl\| \nabla \partial^{\bb} \theta_{m-1} \bigr\|_{L^2((0,1)\times\TT^2)}^2}{|\bb|!^2}
\notag \\ & \qquad 
+ \frac{C \ep_{m-1}^{\beta-2}}{|\aa| \kappa_{m-1}}
\frac{\kappa_{m-1}^{\nicefrac{1}{2}}\bigl\| \partial^{\aa} \theta_{m-1} \bigr\|_{L^2((0,1)\times\TT^2)}}{(|\aa|-1)!}
\max_{\bb<\aa,|\aa-\bb|=1}
\frac{\kappa_{m-1}^{\nicefrac{1}{2}} \bigl\| \nabla \partial^{\bb} \theta_{m-1} \bigr\|_{L^2((0,1)\times\TT^2)}}{|\bb|!}\,.
\end{align}
Fixing a constant~$A\geq 1$, to be selected below (just above~\eqref{e.Dn.recursion}), and defining
\begin{equation}
\label{e.Dn.def}
D_n 
:= 
\frac{1}{\|\theta_0\|_{L^2(\TT^2)}}
\left( \frac{\ep_{m-1}}{A} \right)^n 
\max_{|\aa|=n} \left(
\frac{\kappa_{m-1}^{\nicefrac 12} \bigl\| \nabla \partial^{\aa} \theta_{m-1} \bigr\|_{L^2((0,1)\times\TT^2)}}{|\aa|!}
+ 
\frac{\sup_{t \in [0,1]}\bigl\| 
\partial^\aa \theta_{m-1} (t,\cdot) 
\bigr\|_{L^2(\TT^2)}}{|\aa|!}
\right)
\,,\end{equation}
we then take the maximum of \eqref{e.groupthree.follow} over all multi-indices~$\aa$ with $|\aa|=n$ and rearranging the resulting expression, using also elementary bounds for multinomial coefficients and factorials, to obtain
\begin{align*}
D_n^2
-   
\left( \frac{\ep_{m-1}}{A R_{\theta_0}} \right)^{2n} 
\leq
\frac{C \ep_{m-1}^{2\beta} }{\kappa_{m-1}^2}
\sum_{k=0}^{n-2} 
\biggl( \frac{C}{A} \biggr)^{2(n-k)}  
D_k^2
+
\frac{C\ep_{m-1}^{\beta}}{n \kappa_{m-1} A^2} 
D_{n-1}^2 \,.
\end{align*}
Using the upper and lower bounds for $\kappa_{m-1}$ from~\eqref{e.kappam.bound}, for $n\geq 1$ we obtain from the above estimate that
\begin{align*}
D_n^2
\leq 
\left( \frac{\ep_{m-1}}{A R_{\theta_0}} \right)^{2n} 
+
\frac{C }{ \ep_{m-1}^{2\gamma}}
\sum_{k=0}^{n-2} 
\biggl( \frac{C}{A} \biggr)^{2(n-k)}  
D_k^2
+
\frac{C}{\ep_{m-1}^{\gamma} A^2} 
D_{n-1}^2 \,.
\end{align*}
Note that when $n=0$ only the first term on the right side of the above estimate is present. 
If we choose $A$ by 
\begin{equation*}
A:= \max
\Bigl\{
1,4 C^{\nicefrac 32},(4C)^{\nicefrac 12}, 2 \ep_{m-1}^{1+\nicefrac{\gamma}{2}} R_{\theta_0}^{-1}
\Bigr\}\,,
\end{equation*}
then we obtain
\begin{equation}
\label{e.Dn.recursion}
D_n
\leq
2^{-n} \bigl(\ep_{m-1}^{-\nicefrac \gamma2} \bigr)^n
+
2^{-2}{\bf 1}_{n\geq 2}
\ep_{m-1}^{-\gamma} 
\max\{ D_0,\ldots,D_{n-2} \}
+ 
2^{-2}{\bf 1}_{n\geq 1}
\ep_{m-1}^{-\nicefrac \gamma2} 
D_{n-1}
\,.
\end{equation}
Iterating this inequality, we discover that, for every $n\in\N_0$, 
\begin{equation*}
D_n \leq \bigl(\ep_{m-1}^{-\nicefrac \gamma 2} \bigr)^n 
\,.
\end{equation*}
Recalling \eqref{e.Dn.def} and the notation~\eqref{e.nabla.n}, the above estimate implies 
\begin{equation*}
 \sup_{t \in [0,1]}\bigl\| 
\nabla^n \theta_{m-1} (t,\cdot) 
\bigr\|_{L^2(\TT^2)}
+
\kappa_{m-1}^{\nicefrac 12} \bigl\| \nabla^{n+1} \theta_{m-1} \bigr\|_{L^2((0,1)\times\TT^2)}
\leq  \|\theta_0\|_{L^2(\TT^2)}
n! \bigl(A \ep_{m-1}^{-1 - \nicefrac \gamma 2} \bigr)^n
\,.
\end{equation*}
In view of our choice of $A$, the above estimate gives~\eqref{e.theta.m-1.reg.upgrade}. 
\end{proof}

Next, we aim to obtain similar regularity estimates for~$\{V_{m-1}^{(i)} \}_{i=1}^{{N_*}}$. Since the equation \eqref{e.Vm-1.i}  contains the matrix~$\mathbf{s}_{m-1}$, we first need to obtain suitable estimates for this function. We show~$\mathbf{s}_{m-1}$ is small relative to~$\kappa_{m-1}$, and the scale of its spatial oscillations are large compared to~$\ep_m$. 
Indeed, by~\eqref{e.Xm.bound.1} and~\eqref{e.taum.primeprime.def}, we have that the term defined in \eqref{e.sm} satisfies 
\begin{align}
\label{e.smbound}
\left\| \mathbf{s}_{m-1} \right\|_{L^\infty(\R\times\TT^2)}
\leq
\kappa_{m-1} 
\sup_{l\in \Z} 
\bigl\| \hat{\xi}_{m,l} 
(\nabla X_{m-1,l} - \Itwo )
\bigr\|_{L^\infty(\R \times\TT^2)} 
&
\leq
C\kappa_{m-1} \ep_{m-1}^{2\delta}
\notag \\ & 
= 
C a_{m-1} \ep_{m-1}^{2+\gamma+2\delta}
=
C  \ep_{m-1}^{\beta+\gamma+2\delta}\,.
\end{align}
More generally, we have the following bound on the higher-order spatial derivatives of~$\mathbf{s}_{m-1}$:
\begin{align}
\label{e.smbound.dervs}
\snorm{ \mathbf{s}_{m-1} }_{C\ep_{m-1}^{-1}}
\leq
C \kappa_{m-1}
=
C  \ep_{m-1}^{\beta+\gamma}
\,.
\end{align}
The estimate~\eqref{e.smbound.dervs} is a consequence of~\eqref{e.Xm.bound.2} and Proposition~\ref{prop:compose:analytic}.

\smallskip

Comparing~\eqref{e.smbound.dervs} to~\eqref{e.phi.m.bounds} and~\eqref{e.C1beta.phi.m}, we see that~$\mathbf{s}_{m-1}$ is smaller than~$\phi_m$ by a factor of~$\ep^{\gamma+2\delta}$ while having the same analyticity radius, and is smaller in size than~$\kappa_{m-1} \simeq a_m\ep_m^{2+\gamma}$ by a factor of~$\ep_{m-1}^{2\delta}$. Our next goal is to use this fact to show that the bounds obeyed by~$V_{m-1}^{(i)}$ are better than those satisfied by~$\theta_{m-1}$, by a factor of at least~$\ep_{m-1}^{2\delta}$.

\begin{lemma}[{Estimates on~$V_{m-1}^{(i)}$}]
\label{l.V.m-1.reg.upgrade}
There exists a constant~$C_0<\infty$ such that, 
if~$\ep_{m-1}$ is small enough that
\begin{equation}
\label{l.V.m-1.reg.upgrade.ass}
C_0^3 \ep_{m-1}^{2\delta} \bigl(1 \vee \ep_{m-1}^{2+\gamma}  R_{\theta_0}^{-2}\bigr) \leq 1
\,,
\end{equation}
then, for every~$n,i\in\N_0$, we have the estimate
\begin{multline}
\label{e.Vm-1.reg.upgrade}
\frac{1}{(n+2i)!} 
\bigl(C_0 \ep_{m-1}^{-1 - \nicefrac \gamma 2} \vee C_0 R_{\theta_0}^{-1}\bigr)^{-n}
\left( 
\bigl\| \nabla^n V_{m-1}^{(i)}
\bigr\|_{L^\infty([0,1];L^2(\TT^2))}
+
\kappa_{m-1}^{\nicefrac 12} 
\bigl\| \nabla^{n+1} V_{m-1}^{(i)}
\bigr\|_{L^2((0,1)\times\TT^2)}
\right)
 \\ 
\leq
\mathsf{A}_{m-1,i} \|\theta_0\|_{L^2(\TT^2)}
\,,
\end{multline}
where we have defined the amplitude coefficients appearing in \eqref{e.Vm-1.reg.upgrade} by 
\begin{align}
\label{e.A.m-1.i.def}
\mathsf{A}_{m-1,i}
:= {\bf 1}_{\{i=0\}} 
+ {\bf 1}_{\{i \in \{1,2\}\}} \Bigl(C_0^3 \ep_{m-1}^{2\delta} \bigl(1 \vee \ep_{m-1}^{2+\gamma}  R_{\theta_0}^{-2}\bigr)\Bigr) 
+ {\bf 1}_{\{i\geq 3\}} \Bigl(C_0^3 \ep_{m-1}^{2\delta} \bigl(1 \vee \ep_{m-1}^{2+\gamma}  R_{\theta_0}^{-2}\bigr) \Bigr)^{\frac{i}{2}} 
\,.
\end{align}
\end{lemma}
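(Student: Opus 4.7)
I would proceed by induction on $i \in \NN_0$. The base case $i=0$ follows immediately from Lemma~\ref{l.theta.m-1.reg.upgrade} together with $\mathsf{A}_{m-1,0} = 1$, provided $C_0$ is taken larger than the constant appearing in that lemma. For the inductive step, assume \eqref{e.Vm-1.reg.upgrade} holds for $V_{m-1}^{(j)}$ with all $j \leq i-1$. Rewriting \eqref{e.Vm-1.i} in divergence form,
\begin{equation*}
\partial_t V_{m-1}^{(i)} - \nabla \cdot (\kappa_{m-1}\Itwo + \phi_{m-1}\sigma)\nabla V_{m-1}^{(i)} = \nabla \cdot (\K_m - \kappa_{m-1}\Itwo + \mathbf{s}_{m-1})\nabla V_{m-1}^{(i-1)}\,,
\end{equation*}
differentiating by $\partial^\aa$ with $|\aa|=n$, and testing with $\partial^\aa V_{m-1}^{(i)}$, I obtain an energy identity of exactly the same structure as \eqref{e.test.partialaaTm}, but with two additional families of forcing terms on the right-hand side coming from $\K_m - \kappa_{m-1}\Itwo$ and $\mathbf{s}_{m-1}$ acting on $\nabla V_{m-1}^{(i-1)}$, plus the usual Leibniz commutators.

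The commutator terms involving derivatives of $\phi_{m-1}$ are handled exactly as in the proof of Lemma~\ref{l.theta.m-1.reg.upgrade}, but using the inductive hypothesis on $V_{m-1}^{(i)}$ at lower derivative orders $|\bb|<n$ (which is legitimate because the recursion obtained can be solved level by level in $n$ for each fixed $i$). For the new forcing terms, note that $\K_m(t) - \kappa_{m-1}\Itwo$ is a function of time only, so only $\mathbf{s}_{m-1}$ contributes when at least one spatial derivative falls on the coefficient; these contributions are estimated via \eqref{e.smbound.dervs} and Proposition~\ref{prop:compose:analytic}, yielding a multiplicative gain of $\ep_{m-1}^{2\delta}$ thanks to \eqref{e.smbound}. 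The $\mathbf{s}_{m-1}$ diagonal contribution (with $\bb=\aa$) is controlled by Cauchy--Schwarz, absorption of $\tfrac12\kappa_{m-1}\|\nabla\partial^\aa V^{(i)}\|_{L^2_{t,x}}^2$ into the left side, and $\|\mathbf{s}_{m-1}\|_{L^\infty}^2/\kappa_{m-1} \leq C\kappa_{m-1}\ep_{m-1}^{4\delta}$, multiplying the estimate on $V_{m-1}^{(i-1)}$ by the desired factor $\ep_{m-1}^{2\delta}$.

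The genuinely delicate piece is the diagonal term carrying $\K_m - \kappa_{m-1}\Itwo$. Since $\|\K_m - \kappa_{m-1}\Itwo\|_{L^\infty_t}$ is itself of order $\kappa_{m-1}$, a naive bound would produce no gain. Instead I exploit that, by \eqref{e.Kbar.almost.average}, the time-average of $\K_m - \kappa_{m-1}\Itwo$ is smaller than any prescribed power of $\ep_{m-1}$. Using the time-correctors $\mathbf{q}_{m,n,r}^{\kappa_m}$ from \eqref{e.q.mnr.def} (applied at the $L_{m,n}^{\kappa_m}$ level to absorb the $\tau_m^{\prime\prime}$-periodic oscillations), I build a bounded antiderivative $\mathbf{P}_m \in C^\infty(\R;\R^{2\times 2})$ satisfying $\partial_t \mathbf{P}_m = \K_m - \kappa_{m-1}\Itwo$ up to an exponentially small remainder, with $\|\mathbf{P}_m\|_{L^\infty} \leq C\kappa_{m-1}\tau_m^{\prime\prime} \leq C\kappa_{m-1}\ep_{m-1}^{2-\beta+2\delta}$. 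Integrating by parts in time and using the equations solved by $V_{m-1}^{(i)}$ and $V_{m-1}^{(i-1)}$ to eliminate the resulting $\partial_t\nabla V^{(i)}$ and $\partial_t \nabla V^{(i-1)}$ factors, the diagonal term is bounded by $\|\mathbf{P}_m\|_{L^\infty}$ times a combination of $\|\nabla^2 V_{m-1}^{(j)}\|_{L^2_{t,x}}$ and $\|\nabla V_{m-1}^{(j)}\|_{L^2_{t,x}}$ for $j \in \{i, i-1\}$, all of which are controlled by the induction hypothesis. The boundary terms at $t=0$ vanish and those at $t=1$ are absorbed using the $L^\infty_t L^2_x$ part of~\eqref{e.Vm-1.reg.upgrade}.

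Introducing the analogue $D_n^{(i)}$ of \eqref{e.Dn.def} and collecting the above estimates yields a recursion of the form $D_n^{(i)} \leq C \ep_{m-1}^{2\delta}\bigl(1 \vee \ep_{m-1}^{2+\gamma}R_{\theta_0}^{-2}\bigr) \cdot \max_{k \leq n, j \leq i-1} D_k^{(j)} + (\text{lower-order in }n)$, whose iteration in $i$ produces~$\mathsf{A}_{m-1,i}$ after choosing $C_0$ sufficiently large. The peculiar form of $\mathsf{A}_{m-1,i}$ is the shadow of the bookkeeping: the factor $\ep_{m-1}^{2+\gamma}R_{\theta_0}^{-2}$ appears only on the first application because it arises from trading $\|V_{m-1}^{(0)}\|_{L^2}$ against $\kappa_{m-1}^{1/2}\|\nabla^{n+1}V_{m-1}^{(0)}\|_{L^2_{t,x}}$ through $R_{\theta_0}$; subsequently the initial datum is no longer seen and the recursion gains a uniform power $\ep_{m-1}^{\delta}$ per step (hence the $i/2$ exponent, since Cauchy--Schwarz between the $L^\infty_tL^2_x$ and $\kappa_{m-1}^{1/2}L^2_{t,x}$ norms each time only captures half the full $\ep_{m-1}^{2\delta}$ gain), while for $i \in \{1,2\}$ the recursion has not yet stabilized and only the single-step gain is realized.

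\textbf{Main obstacle.} The crux is the diagonal $\K_m - \kappa_{m-1}\Itwo$ term: without exploiting the near-vanishing of its time average through a carefully constructed antiderivative, the estimate gives no improvement and the induction collapses. Executing the integration by parts in time without losing derivatives — and verifying that the exponentially small remainder from \eqref{e.Kbar.almost.average} dominates all neglected contributions given our choice \eqref{e.N} of $N_*$ — is where most of the technical care must be invested.
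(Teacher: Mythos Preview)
Your proposal follows essentially the same route as the paper: induction on $i$, an energy estimate producing commutator terms from $\phi_{m-1}$ and forcing terms from $\mathbf{s}_{m-1}$ and $\K_m-\kappa_{m-1}\Itwo$, and the key observation that the latter must be handled by writing $\K_m-\langle\hspace{-2.5pt}\langle\K_m\rangle\hspace{-2.5pt}\rangle=\partial_t\mathbf{Q}_m$ with $\|\mathbf{Q}_m\|_{L^\infty}\lesssim\kappa_{m-1}\tau_m''$ and integrating by parts in time, then using the equations to trade $\DD_{t,m-1}\nabla\partial^\aa V^{(j)}$ for spatial derivatives. A few corrections to your sketch, which will matter when you actually carry it out:

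\emph{(a)} The antiderivative construction is simpler than you suggest: since $\K_m-\langle\hspace{-2.5pt}\langle\K_m\rangle\hspace{-2.5pt}\rangle$ is mean-zero and $\tau_m''$--periodic, its primitive is bounded directly; no appeal to $\mathbf{q}_{m,n,r}$ is needed, and you can arrange $\mathbf{Q}_m(0)=\mathbf{Q}_m(1)=0$ so that no boundary terms appear at all. \emph{(b)} After integration by parts, using the equation for $V_{m-1}^{(i-1)}$ brings in $V_{m-1}^{(i-2)}$ (not only $j\in\{i,i-1\}$), and using the equation for $V_{m-1}^{(i)}$ produces terms with $\nabla^{n+3}V_{m-1}^{(i-1)}$ --- this two-derivative loss per step in $i$ is why $(n+2i)!$ replaces $n!$ in the statement. \emph{(c)} Your explanation of the form of $\mathsf{A}_{m-1,i}$ is off: the factor $(1\vee\ep_{m-1}^{2+\gamma}R_{\theta_0}^{-2})$ does \emph{not} disappear after the first step --- it enters at every level through the effective derivative cost $\mathcal{F}:=C_0(1\vee\ep_{m-1}^{1+\gamma/2}R_{\theta_0}^{-1})$, and the gain per step is $\ep_{m-1}^{2\delta}\mathcal{F}^2$; the exponent $i/2$ arises simply because the energy inequality is quadratic, so the recursion reads $\mathsf{A}_i^2\lesssim\ep_{m-1}^{2\delta}\mathcal{F}^2\,\mathsf{A}_{i-1}^2$. \emph{(d)} The case $i=1$ needs separate attention (the paper performs a second integration by parts in \eqref{e.Err.44.rewrite.again}), because the generic bound on the forcing with $V^{(i-1)}=\theta_{m-1}$ is not yet small enough to give the full $\ep_{m-1}^{4\delta}\mathcal{F}^4$ gain claimed by $\mathsf{A}_{m-1,1}$.
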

\begin{proof}
The proof of Lemma~\ref{l.V.m-1.reg.upgrade} is recursive in $i\geq 0$, and closely follows the proof of Lemma~\ref{l.theta.m-1.reg.upgrade}.  Since $V_{m-1}^{(0)} = \theta_{m-1}$, the bound \eqref{l.theta.m-1.reg.upgrade} establishes the inductive step, namely \eqref{e.Vm-1.reg.upgrade} for $i=0$, as long as we ensure $C_0 \geq C_{\eqref{e.theta.m-1.reg.upgrade}}$. 

Next, assume that \eqref{e.Vm-1.reg.upgrade} with $i$ replaced by $i-1$. 
Comparing the $V_{m-1}^{(i)}$ evolution~\eqref{e.Vm-1.i} and the $\theta_{m-1}$ evolution in~\eqref{e.thetam.divform} (with $m$ replaced by $m-1$), we see that the only difference is due to the forcing term $\nabla \cdot \bigl( \K_m - \kappa_{m-1} \Itwo + \mathbf{s}_{m-1} \bigr) \nabla V_{m-1}^{(i-1)}$, and the fact that the initial data for $V_{m-1}^{(i)}$ vanishes identically. As such, since $\K_m$ is only a function of time, \eqref{e.test.partialaaTm} becomes
\begin{align}
\label{e.test.partialaaVm-1}
&
\sup_{t \in [0,1]}
\bigl\| 
\partial^\aa V_{m-1}^{(i)} (t,\cdot) 
\bigr\|_{L^2(\TT^2)}^2
+
2 \kappa_{m-1} \int_0^1 \int_{\TT^2}
\bigl| \nabla \partial^\aa V_{m-1}^{(i)} \bigr|^2
\notag \\ & \qquad 
\leq
2 \sum_{\bb<\aa} \binom{\aa}{\bb} 
\left| 
\int_0^1
\int_{\TT^2}
\nabla \partial^\aa V_{m-1}^{(i)} \cdot 
\partial^{\aa-\bb}  \phi_{m-1} \sigma  \nabla \partial^{\bb} V_{m-1}^{(i)}
\right|
\notag \\ & \qquad \quad 
+ 
2 \sum_{\bb\leq \aa} \binom{\aa}{\bb} 
\left| 
\int_0^1
\int_{\TT^2}
\nabla \partial^\aa V_{m-1}^{(i)} \cdot 
\partial^{\aa-\bb}  \mathbf{s}_{m-1}  \nabla \partial^{\bb} V_{m-1}^{(i-1)}
\right|
\notag \\ & \qquad \quad 
+ 
2 \left| 
\int_0^1
\int_{\TT^2}
\nabla \partial^\aa V_{m-1}^{(i)} \cdot 
\bigl( \K_m - \kappa_{m-1} \Itwo \bigr)  \nabla \partial^{\aa} V_{m-1}^{(i-1)}
\right|
\notag \\ & \qquad
=: \mathsf{Err}_1 +  \mathsf{Err}_2 +  \mathsf{Err}_3
\,.
\end{align}
The first term on the right side of \eqref{e.test.partialaaVm-1}, $\mathsf{Err}_1$, is estimated in exactly the same fashion as \eqref{e.grouptwo} and \eqref{e.groupthree}, resulting in the estimate 
\begin{align}
\label{e.Err.1.bnd}
\mathsf{Err}_1 
&\leq 
\frac{\kappa_{m-1}}{4}  \bigl\| \nabla \partial^{\aa} V_{m-1}^{(i)} \bigr\|_{L^2((0,1)\times\TT^2)}^2
\notag \\ & \qquad 
+
\frac{C\ep_{m-1}^{2\beta}}{\kappa_{m-1}}
\sum_{\bb<\aa,|\aa-\bb|\geq 2} \!
\frac{|\aa|!^2}{|\bb|!^2}
(C \ep_{m-1}^{-1})^{2|\aa-\bb|}
\bigl\| \nabla \partial^{\bb} V_{m-1}^{(i)} \bigr\|_{L^2((0,1)\times\TT^2)}^2 
\notag\\ &\qquad 
+
C  |\aa| \ep_{m-1}^{\beta-2}
\bigl\| \partial^{\aa} V_{m-1}^{(i)}  \bigr\|_{L^2((0,1)\times\TT^2)}
\max_{\bb<\aa,|\aa-\bb|=1}
\bigl\| \nabla \partial^{\bb} V_{m-1}^{(i)}  \bigr\|_{L^2((0,1)\times\TT^2)} 
\,.
\end{align}
In order to estimate the second term on the right side of \eqref{e.test.partialaaVm-1}, $\mathsf{Err}_2$, we appeal to the $\mathbf{s}_{m-1}$ bounds \eqref{e.smbound}--\eqref{e.smbound.dervs} and to the inductive estimate for $V_{m-1}^{(i-1)}$ provided by \eqref{e.Vm-1.reg.upgrade}. We arrive at 
\begin{align}
\label{e.Err.2.bnd}
\mathsf{Err}_2
&\leq 
\frac{\kappa_{m-1}}{4}  \bigl\| \nabla \partial^{\aa} V_{m-1}^{(i)} \bigr\|_{L^2((0,1)\times\TT^2)}^2
\notag\\
&\qquad 
+
C \mathsf{A}_{m-1,i-1}^2
 \|\theta_0\|_{L^2(\TT^2)}^2
 \sum_{\bb<\aa} \!
\frac{|\aa|!^2(|\bb|+2i-2)!^2}{|\bb|!^2}
\bigl(C \ep_{m-1}^{-1}\bigr)^{2|\aa-\bb|}
\bigl(C_0  \ep_{m-1}^{-1 - \nicefrac \gamma 2} \vee C_0  R_{\theta_0}^{-1}\bigr)^{2 |\bb|}
\notag\\
&\qquad 
+
C \ep_{m-1}^{4\delta} \mathsf{A}_{m-1,i-1}^2
 \|\theta_0\|_{L^2(\TT^2)}^2
 (|\aa|+2i-2)!^2 
\bigl(C_0  \ep_{m-1}^{-1 - \nicefrac \gamma 2} \vee C_0  R_{\theta_0}^{-1}\bigr)^{2 |\aa|}
\,.
\end{align}
Bounding the last term on the right side of \eqref{e.test.partialaaVm-1}, $\mathsf{Err}_3$, requires more care.
First, recalling~\eqref{e.kappa.sequence} we note that 
\begin{equation}
\label{e.Km.decompose}
\K_m - \kappa_{m-1} \Itwo 
= \K_m - \Khom_m
= \bigl( \K_m - 
\bigl\langle \!\!\bigl\langle
\K_m 
\bigr\rangle \!\!\bigr\rangle
\bigr) + \bigl(\bigl\langle \!\!\bigl\langle
\K_m 
\bigr\rangle \!\!\bigr\rangle - \Khom_m\bigr)\,.
\end{equation}
By appealing to~\eqref{e.Kbar.almost.average}, \eqref{e.enhance.onestep}, and \eqref{e.exprat.bound}, the term $|\langle \hspace{-2.5pt} \langle
\K_m 
\rangle \hspace{-2.5pt}\rangle - \Khom_m|$ may be made arbitrarily small, which may be combined with \eqref{e.Vm-1.reg.upgrade} at level $i-1$ to deduce the bound
\begin{align}
\label{e.Err.3.bnd}
\mathsf{Err}_3
&
\leq
2 \left| 
\int_0^1
\int_{\TT^2}
\nabla \partial^\aa V_{m-1}^{(i)} \cdot 
\bigl( \K_m - 
\langle \hspace{-2.5pt}\langle
\K_m 
\rangle \hspace{-2.5pt} \rangle
\bigr)  \nabla \partial^{\aa} V_{m-1}^{(i-1)}
\right| 
\notag\\
&\quad 
+ 
\frac{C a_m^2\ep_m^4}{\kappa_{m}} \biggl(
\frac{\ep_m^2}{\kappa_{m} \tau_m}
\biggr)^{\!\!N_*}
\bigl\| \nabla \partial^{\aa} V_{m-1}^{(i)} \bigr\|_{L^2((0,1)\times\TT^2)}
\bigl\| \nabla \partial^{\aa} V_{m-1}^{(i-1)} \bigr\|_{L^2((0,1)\times\TT^2)}
\notag\\
&\leq
2 \left| 
\int_0^1
\int_{\TT^2}
\nabla \partial^\aa V_{m-1}^{(i)} \cdot 
\bigl( \K_m -\langle \hspace{-2.5pt}\langle
\K_m 
\rangle \hspace{-2.5pt} \rangle \bigr)  \nabla \partial^{\aa} V_{m-1}^{(i-1)}
\right| 
\notag\\
&\quad 
+ 
\frac{\kappa_{m-1}}{4}  \bigl\| \nabla \partial^{\aa} V_{m-1}^{(i)} \bigr\|_{L^2((0,1)\times\TT^2)}^2
\notag\\
&\quad 
+ C 
\bigl(C \ep_{m-1}^{2\delta} \bigr)^{\! 2N_*}
\mathsf{A}_{m-1,i-1}^2
\|\theta_0\|_{L^2(\TT^2)}^2
(|\aa|+2i-2)!^2 
\bigl(C_0 \ep_{m-1}^{-1 - \nicefrac \gamma 2} \vee C_0 R_{\theta_0}^{-1}\bigr)^{2|\aa|}
\,.
\end{align}
It thus remains to estimate the first term on the right side of \eqref{e.Err.3.bnd}. For this purpose, we recall from \eqref{e.K} that 
\begin{equation*}
\K_m (t) - \langle \hspace{-2.5pt}\langle
\K_m 
\rangle \hspace{-2.5pt} \rangle
= \sum_{n=0}^{N_*-1}  \langle \hspace{-2.5pt} \langle
\mathbf{j}_{m,n}
\rangle \hspace{-2.5pt} \rangle
\bigl( L_{m,n}(t)  - 
\langle \hspace{-2.5pt}\langle L_{m,n} \rangle \hspace{-2.5pt} \rangle
  \bigr)
\end{equation*}
is a zero-mean symmetric matrix, which is $\tau_m^{\prime\prime}$-periodic in time. As such, we may write 
\begin{equation}
\label{e.Q.m.def}
\K_m (t) - \langle \hspace{-2.5pt}\langle
\K_m 
\rangle \hspace{-2.5pt} \rangle 
= \partial_t {\mathbf{Q}}_m(t) 
= \DD_{t,m-1} {\mathbf{Q}}_m(t) 
\,,
\end{equation}
where
\begin{equation}
\label{e.Q.m.bound}
\| \mathbf{Q}_m \|_{L^\infty([0,1])}
\leq 
C \kappa_{m-1} \tau_m^{\prime\prime}
\,,
\qquad 
\mbox{and}
\qquad
\mathbf{Q}_m(0) =   \mathbf{Q}_m(1) = 0
 \,.
\end{equation}
Using the above two displays, we may integrate by parts (since $\nabla \cdot \b_{m-1}=0$, the $L^2$-adjoint of the operator  $\b_{m-1} \cdot \nabla$ is the operator $-\b_{m-1}\cdot \nabla$) and deduce that 
\begin{align}
\label{e.Err.4.def}
&\int_0^1
\int_{\TT^2}
\nabla \partial^\aa V_{m-1}^{(i)} \cdot 
\bigl( \K_m - \langle \hspace{-2.5pt}\langle
\K_m 
\rangle \hspace{-2.5pt} \rangle \bigr)  \nabla \partial^{\aa} V_{m-1}^{(i-1)} 
\notag\\
&= 
- \int_0^1
\int_{\TT^2}
\nabla \partial^\aa V_{m-1}^{(i)} \cdot 
{\mathbf{Q}}_m \DD_{t,m-1} \nabla \partial^{\aa} V_{m-1}^{(i-1)} 
-
\int_0^1
\int_{\TT^2}
\DD_{t,m-1} \nabla \partial^\aa V_{m-1}^{(i)} \cdot 
{\mathbf{Q}}_m \nabla \partial^{\aa} V_{m-1}^{(i-1)} 
\notag\\
&= 
\int_0^1
\int_{\TT^2}
\nabla \partial^\aa V_{m-1}^{(i)} \cdot 
{\mathbf{Q}}_m \nabla \b_{m-1} \cdot \nabla \partial^{\aa} V_{m-1}^{(i-1)} 
+
\int_0^1
\int_{\TT^2}
\nabla \b_{m-1}\cdot \nabla \partial^\aa V_{m-1}^{(i)} \cdot 
{\mathbf{Q}}_m \nabla \partial^{\aa} V_{m-1}^{(i-1)} 
\notag\\
&\qquad  
- \int_0^1
\int_{\TT^2}
\nabla \partial^\aa V_{m-1}^{(i)} \cdot 
{\mathbf{Q}}_m \nabla \DD_{t,m-1}  \partial^{\aa} V_{m-1}^{(i-1)} 
-
\int_0^1
\int_{\TT^2}
\nabla \DD_{t,m-1}  \partial^\aa V_{m-1}^{(i)} \cdot 
{\mathbf{Q}}_m \nabla \partial^{\aa} V_{m-1}^{(i-1)} 
\notag\\
&=: \mathsf{Err}_{4,1} + \mathsf{Err}_{4,2} + \mathsf{Err}_{4,3} + \mathsf{Err}_{4,4}
\,.
\end{align}
Using the $\nabla \b_{m-1}$ estimate in \eqref{e.bm.Dn}, the ${\mathbf{Q}_m}$ bound in \eqref{e.Q.m.bound}, the estimate $\tau_m^{\prime\prime} \ep_{m-1}^{\beta-2}  \leq C \ep_{m-1}^{2\delta}$, we obtain $|\nabla \b_{m-1}| \, |\mathbf{Q}_m| \leq C \kappa_{m-1} \ep_{m-1}^{2\delta}$, and so, by also using the inductive estimate for  $\nabla \partial^\aa V_{m-1}^{(i-1)}$ in \eqref{l.V.m-1.reg.upgrade}, we obtain
\begin{align}
\label{e.Err.4a.bnd}
\bigl| \mathsf{Err}_{4,1} \bigr| + \bigl| \mathsf{Err}_{4,2} \bigr|
&\leq 
\frac{\kappa_{m-1}}{24}  \bigl\| \nabla \partial^{\aa} V_{m-1}^{(i)} \bigr\|_{L^2((0,1)\times\TT^2)}^2
\notag\\
&\qquad 
+ C \ep_{m-1}^{4\delta}
\mathsf{A}_{m-1,i-1}^2
\|\theta_0\|_{L^2(\TT^2)}^2
(|\aa|+2i-2)!^2 
\bigl(C_0 \ep_{m-1}^{-1 - \nicefrac \gamma 2} \vee C_0 R_{\theta_0}^{-1}\bigr)^{2|\aa|}
\,.
\end{align}
Next, we bound the $\mathsf{Err}_{4,3}$ and $\mathsf{Err}_{4,4}$ terms appearing on the right side of \eqref{e.Err.4.def}. For this purpose, we note that \eqref{e.Vm-1.i} gives 
\begin{align}
\label{e.Vm-1.i.material}
\nabla \DD_{t,m-1} \partial^{\aa} V_{m-1}^{(i)}
&= 
\kappa_{m-1} \Delta \nabla \partial^{\aa}  V_{m-1}^{(i)}
- \nabla \sum_{\bb < \aa} \binom{\aa}{\bb} 
\partial^{\aa-\bb} \b_{m-1} \cdot \nabla \partial^{\bb} V_{m-1}^{(i)}
\notag\\
&\qquad 
+
{\bf 1}_{i\geq 1} 
\nabla  \nabla \cdot \bigl( \K_m - \kappa_{m-1} \Itwo \bigr) \nabla \partial^\aa V_{m-1}^{(i-1)}
\notag\\
&\qquad 
+
{\bf 1}_{i\geq 1} 
\nabla \nabla \cdot \sum_{\bb \leq \aa} \binom{\aa}{\bb}
\partial^{\aa-\bb} \mathbf{s}_{m-1} \nabla \partial^{\bb} V_{m-1}^{(i-1)} 
\,.
\end{align}
Using \eqref{e.Vm-1.i.material} we first bound the more difficult term, $\mathsf{Err}_4$. The additional complication arises from the fact that the first line on the right side of \eqref{e.Vm-1.i.material} contains terms with $2$, and respectively $1$, additional derivatives on top of $\nabla \partial^{\aa}$, and this apparently prevents us from closing our estimates; this is however not an issue, as these derivatives may be integrated by parts onto the $T_{m-1}^{(i-1)}$ term, for which we have already estimated all the space derivatives (including those of order $n+3$). To be precise, \eqref{e.Vm-1.i.material} allows us to rewrite 
\begin{align}
\label{e.Err.44.rewrite}
\mathsf{Err}_{4,4}
&=
- \kappa_{m-1}
\int_0^1
\int_{\TT^2}
\nabla  \partial^\aa V_{m-1}^{(i)} \cdot 
{\mathbf{Q}}_m \Delta \nabla \partial^{\aa} V_{m-1}^{(i-1)} 
\notag\\
&\qquad 
- \sum_{\bb < \aa} \binom{\aa}{\bb} 
\int_0^1
\int_{\TT^2}
\partial^{\aa-\bb} \b_{m-1} \cdot \nabla \partial^{\bb} V_{m-1}^{(i)} \cdot 
{\mathbf{Q}}_m \nabla \nabla \partial^{\aa} V_{m-1}^{(i-1)} 
\notag\\
&\qquad 
+ {\bf 1}_{i\geq 1} 
\int_0^1
\int_{\TT^2}
 \Bigl( \nabla \cdot \bigl( \K_m - \kappa_{m-1} \Itwo \bigr) \nabla \partial^\aa V_{m-1}^{(i-1)}\Bigr)
\Bigl(\nabla \cdot {\mathbf{Q}}_m  \nabla \partial^{\aa} V_{m-1}^{(i-1)} \Bigr)
\notag\\
&\qquad 
- {\bf 1}_{i\geq 1} 
\sum_{\bb \leq \aa} \binom{\aa}{\bb}
\int_0^1
\int_{\TT^2}
\nabla \nabla \cdot \bigl( \partial^{\aa-\bb} \mathbf{s}_{m-1} \nabla \partial^{\bb} V_{m-1}^{(i-1)} \bigr)
\cdot  {\mathbf{Q}}_m \nabla \partial^{\aa} V_{m-1}^{(i-1)} 
\,.
\end{align}
Note that the ``gain'' we expect for $i=1$ (see \eqref{e.A.m-1.i.def}) is larger than for $i\geq 2$, and because of that, special care must be devoted to the third term on the right side of \eqref{e.Err.44.rewrite}, when $i=1$. Using \eqref{e.Km.decompose} and \eqref{e.Q.m.def}, and recalling that $V_{m-1}^{(0)} = \theta_{m-1}$, we rewrite
\begin{align}
\label{e.Err.44.rewrite.again}
& 
{\bf 1}_{i=1} 
\int_0^1
\int_{\TT^2}
 \Bigl( \nabla \cdot \bigl( \K_m - \kappa_{m-1} \Itwo \bigr) \nabla \partial^\aa V_{m-1}^{(i-1)}\Bigr)
\Bigl( \nabla \cdot {\mathbf{Q}}_m  \nabla \partial^{\aa} V_{m-1}^{(i-1)} \Bigr)
\notag\\&\qquad
=  
\int_0^1
\int_{\TT^2}
\Bigl( \nabla \cdot \bigl(\langle \hspace{-2.5pt}\langle
\K_m 
\rangle \hspace{-2.5pt} \rangle - \Khom_m\bigr) \nabla \partial^\aa \theta_{m-1}\Bigr)
\Bigl( \nabla \cdot {\mathbf{Q}}_m  \nabla \partial^{\aa} \theta_{m-1}\Bigr)
\notag\\&\qquad\qquad
+ 
\int_0^1
\int_{\TT^2}
\Bigl( \DD_{t,m-1} {\mathbf{Q}}_m \colon \nabla^2 \partial^\aa \theta_{m-1} \Bigr)
\Bigl({\mathbf{Q}}_m  \colon \nabla^2  \partial^{\aa} \theta_{m-1} \Bigr)
\notag\\&\qquad
= 
\int_0^1
\int_{\TT^2}
\Bigl( \nabla \cdot \bigl(\langle \hspace{-2.5pt}\langle
\K_m 
\rangle \hspace{-2.5pt} \rangle - \Khom_m\bigr) \nabla \partial^\aa \theta_{m-1}\Bigr)
\Bigl( \nabla \cdot {\mathbf{Q}}_m  \nabla \partial^{\aa} \theta_{m-1}\Bigr)
\notag\\&\qquad\qquad
-
\int_0^1
\int_{\TT^2}
\Bigl(  {\mathbf{Q}}_m \colon \DD_{t,m-1} \nabla^2 \partial^\aa \theta_{m-1} \Bigr)
\Bigl({\mathbf{Q}}_m  \colon \nabla^2  \partial^{\aa} \theta_{m-1} \Bigr)
\,.
\end{align}
The last term in the above expression may then be rewritten upon noting that $\DD_{t,m-1} \theta_{m-1} = \kappa_{m-1} \Delta \theta_{m-1}$, and therefore
\begin{align*}
\lefteqn{ 
\DD_{t,m-1} 
\partial^{\aa+e_r+e_p}
\theta_{m-1}
} \qquad & 
\notag \\ & 
= 
\kappa_{m-1} \Delta \partial^{\aa+e_r+e_p} \theta_{m-1}
+ \!\!
\sum_{\bb < \aa + e_r + e_p} \binom{\aa + e_r + e_p}{\bb}
\partial^{\aa+e_r+e_p - \bb} \b_{m-1} \cdot \nabla \partial^{\bb}\theta_{m-1}
\,.
\end{align*}
Combining the above identity with \eqref{e.Err.44.rewrite.again} and \eqref{e.Err.44.rewrite}, and appealing to the $\b_{m-1}$ estimate in \eqref{e.bm.Dn}, the ${\mathbf{Q}_m}$ bound in \eqref{e.Q.m.bound}, the $\mathbf{s}_{m-1}$ estimate in \eqref{e.smbound.dervs}, the inductive bound \eqref{l.V.m-1.reg.upgrade} at level $i-1$, and to~\eqref{e.Kbar.almost.average} and \eqref{e.exprat.bound} for $i=1$,
we deduce
\begin{align}
\label{e.Err.4b.bnd}
\bigl| \mathsf{Err}_{4,4} \bigr|
&\leq 
\frac{\kappa_{m-1}}{24}  \bigl\| \nabla \partial^{\aa} V_{m-1}^{(i)} \bigr\|_{L^2((0,1)\times\TT^2)}^2
\notag\\
&\quad 
+ C (C  \kappa_{m-1} \tau_{m}^{\prime\prime})^2
\mathsf{A}_{m-1,i-1}^2
\|\theta_0\|_{L^2(\TT^2)}^2
(|\aa|+2i)!^2 
\bigl(C_0  \ep_{m-1}^{-1 - \nicefrac \gamma 2} \vee C_0  R_{\theta_0}^{-1}\bigr)^{2|\aa|+4}
\notag\\
&\quad 
+ C \sum_{\bb < \aa} \frac{|\aa|!  (|\aa|+2i-1)!}{|\bb|! |\aa-\bb|}  
(C \ep_{m-1}^{-1})^{|\aa-\bb|}
\kappa_{m-1}^{\nicefrac 12}
\bigl\| \nabla \partial^{\bb} V_{m-1}^{(i)} \bigr\|_{L^2((0,1)\times\TT^2)}
\notag\\
&\qquad \qquad \times 
\ep_{m-1}^{\beta-1} \tau_{m}^{\prime \prime} 
\|\theta_0\|_{L^2(\TT^2)}  
\mathsf{A}_{m-1,i-1}
\bigl(C_0 \ep_{m-1}^{-1 - \nicefrac \gamma 2} \vee C_0 R_{\theta_0}^{-1}\bigr)^{|\aa|+1}
\notag\\
&\quad 
+ 
C  {\bf 1}_{i\geq 2} 
\kappa_{m-1} \tau_{m}^{\prime\prime} 
\mathsf{A}_{m-1,i-1}^2
\|\theta_0\|_{L^2(\TT^2)}^2
(|\aa|+2i-1)!^2 \bigl(C_0 \ep_{m-1}^{-1 - \nicefrac \gamma 2} \vee C_0 R_{\theta_0}^{-1}\bigr)^{2|\aa|+2}
\notag\\
&\quad 
+ 
{\bf 1}_{i = 1} 
C  \kappa_{m-1} \tau_{m}^{\prime\prime} 
\bigl(C \ep_{m-1}^{2\delta}\bigr)^{N_*}
\|\theta_0\|_{L^2(\TT^2)}^2
(|\aa|+1)!^2 \bigl(C_0 \ep_{m-1}^{-1 - \nicefrac \gamma 2} \vee C_0 R_{\theta_0}^{-1}\bigr)^{2|\aa|+2}
\notag\\
&\quad 
+ 
{\bf 1}_{i = 1} 
\bigl(C  \kappa_{m-1} \tau_{m}^{\prime\prime} \bigr)^2
\|\theta_0\|_{L^2(\TT^2)}^2
(|\aa|+1)! (|\aa|+3)! \bigl(C_0 \ep_{m-1}^{-1 - \nicefrac \gamma 2} \vee C_0 R_{\theta_0}^{-1}\bigr)^{2|\aa|+4}
\notag\\
&\quad 
+ 
{\bf 1}_{i = 1} 
\bigl(C  \kappa_{m-1} \tau_{m}^{\prime\prime} \bigr)^2
\kappa_{m-1}^{-1}
\|\theta_0\|_{L^2(\TT^2)}^2 
(|\aa|+1)!  \bigl(C_0 \ep_{m-1}^{-1 - \nicefrac \gamma 2} \vee C_0 R_{\theta_0}^{-1}\bigr)^{|\aa|+1}
\notag\\
&\qquad \qquad \times
\sum_{|\bb| \leq |\aa|+1}
\frac{(|\aa|+2)!}{|\aa-\bb|+2}
\bigl(C \ep_{m-1}^{\beta-2}\bigr)
\bigl(C \ep_{m-1}^{-1}\bigr)^{|\aa-\bb|+1}
\bigl(C_0 \ep_{m-1}^{-1-\nicefrac{\gamma}{2}} \vee C_0 R_{\theta_0}^{-1}\bigr)^{|\bb|}
\notag\\
&\quad 
+ 
C {\bf 1}_{i\geq 1} 
\sum_{\bb < \aa} \frac{|\aa|! (|\aa|+2i)!   (|\bb|+2i-2)!}{|\bb|! (|\aa-\bb|+1)^2}
(C \ep_{m-1}^{-1})^{|\aa-\bb|}
\notag\\
&\qquad \qquad \times 
\bigl(\kappa_{m-1} \tau_{m}^{\prime\prime}  \bigr)
\|\theta_0\|_{L^2(\TT^2)}^2
\mathsf{A}_{m-1,i-1}^2
\bigl(C_0 \ep_{m-1}^{-1 - \nicefrac \gamma 2} \vee C_0 R_{\theta_0}^{-1}\bigr)^{|\aa|+|\bb|+2}
\notag\\
&\quad 
+ 
C {\bf 1}_{i\geq 1} 
 (|\aa|+2i)!   (|\aa|+2i-2)!
\notag\\
&\qquad \qquad \times 
\bigl( \kappa_{m-1} \tau_{m}^{\prime\prime} \ep_{m-1}^{2\delta}  \bigr)
\|\theta_0\|_{L^2(\TT^2)}^2
\mathsf{A}_{m-1,i-1}^2
\bigl(C_0 \ep_{m-1}^{-1 - \nicefrac \gamma 2} \vee C_0 R_{\theta_0}^{-1}\bigr)^{2|\aa|+2}
\,.
\end{align}
Returning to \eqref{e.Err.4.def}, we are left to consider the term $\mathsf{Err}_{4,3}$. The difference between this term and $\mathsf{Err}_{4,4}$ is that $\DD_{t,m-1}$ acts on $V_{m-1}^{(i-1)}$ instead of $V_{m-1}^{(i)}$, and as such we need to appeal to the identity \eqref{e.Vm-1.i.material} with $i$ replaced by $i-1$. We do not however need to integrate by parts terms with a derivative count larger than $|\aa|+1$ because they occur only on $V_{m-1}^{(i-1)}$ and $V_{m-1}^{(i-2)}$. As such, similarly to \eqref{e.Err.44.rewrite} we may rewrite 
\begin{align*}
\mathsf{Err}_{4,3}
&=
- \kappa_{m-1}
\int_0^1
\int_{\TT^2}
\nabla  \partial^\aa V_{m-1}^{(i)} \cdot 
{\mathbf{Q}}_m \Delta \nabla \partial^{\aa} V_{m-1}^{(i-1)} 
\notag\\
&\qquad 
+ \sum_{\bb < \aa} \binom{\aa}{\bb} 
\int_0^1
\int_{\TT^2}
\nabla \partial^{\aa} V_{m-1}^{(i)}   \cdot 
{\mathbf{Q}}_m \nabla\bigl(\partial^{\aa-\bb} \b_{m-1} \cdot \nabla \partial^{\bb} V_{m-1}^{(i-1)}  \bigr)
\notag\\
&\qquad 
-  {\bf 1}_{i\geq 2} 
\int_0^1
\int_{\TT^2}
\nabla \partial^{\aa} V_{m-1}^{(i)}
\cdot {\mathbf{Q}}_m   
\nabla \nabla \cdot \bigl( \K_m - \kappa_{m-1} \Itwo \bigr) \nabla \partial^\aa V_{m-1}^{(i-2)} 
\notag\\
&\qquad 
- {\bf 1}_{i\geq 2} 
\sum_{\bb \leq \aa} \binom{\aa}{\bb}
\int_0^1
\int_{\TT^2}
\nabla \partial^{\aa} V_{m-1}^{(i)} 
\cdot  {\mathbf{Q}}_m 
\nabla \nabla \cdot \bigl( \partial^{\aa-\bb} \mathbf{s}_{m-1} \nabla \partial^{\bb} V_{m-1}^{(i-2)} \bigr)
\,,
\end{align*}
and similarly to \eqref{e.Err.4b.bnd} we may bound
\begin{align}
\label{e.Err.4c.bnd}
\bigl| \mathsf{Err}_{4,3} \bigr|
&\leq 
\frac{\kappa_{m-1}}{24}  \bigl\| \nabla \partial^{\aa} V_{m-1}^{(i)} \bigr\|_{L^2((0,1)\times\TT^2)}^2
\notag\\
&\quad 
+ C (C  \kappa_{m-1} \tau_{m}^{\prime\prime})^2
\mathsf{A}_{m-1,i-1}^2 \|\theta_0\|_{L^2(\TT^2)}^2
(|\aa|+2i)!^2 \bigl(C_0  \ep_{m-1}^{-1 - \nicefrac \gamma 2} \vee C_0 R_{\theta_0}^{-1}\bigr)^{2|\aa|+4}
\notag\\
&\quad 
+ C \sum_{\bb < \aa} \frac{|\aa|!^2 (|\bb|+2i-1)!^2}{|\bb|!^2}  
(C \ep_{m-1}^{-1})^{2|\aa-\bb|}
\notag\\
&\qquad \qquad \times 
\bigl(\ep_{m-1}^{\beta-1} \tau_{m}^{\prime \prime}\bigr)^2 
\|\theta_0\|_{L^2(\TT^2)}^2
\mathsf{A}_{m-1,i-1}^2
\bigl(C_0 \ep_{m-1}^{-1 - \nicefrac \gamma 2} \vee C_0 R_{\theta_0}^{-1}\bigr)^{2|\bb|+2}
\notag\\
&\quad 
+
C  {\bf 1}_{i\geq 2} 
(\kappa_{m-1} \tau_{m}^{\prime\prime})^2
\mathsf{A}_{m-1,i-2}^2
\|\theta_0\|_{L^2(\TT^2)}^2
(|\aa|+2i-2)!^2 \bigl(C_0  \ep_{m-1}^{-1 - \nicefrac \gamma 2} \vee C_0  R_{\theta_0}^{-1}\bigr)^{2|\aa|+4}
\notag\\
&\quad 
+ 
C {\bf 1}_{i\geq 2} 
\sum_{\bb < \aa} \frac{|\aa|!^2 (|\bb|+2i-2)!^2}{|\bb|!^2}
(C \ep_{m-1}^{-1})^{2|\aa-\bb|}
 \notag\\
 &\qquad \qquad \times 
 \bigl(\kappa_{m-1} \tau_{m}^{\prime\prime}\bigr)^2
 \|\theta_0\|_{L^2(\TT^2)}^2
\mathsf{A}_{m-1,i-2}^2
 \bigl(C_0 \ep_{m-1}^{-1 - \nicefrac \gamma 2} \vee C_0  R_{\theta_0}^{-1}\bigr)^{2|\bb|+4}
 \notag\\
&\quad 
+ C {\bf 1}_{i\geq 2} 
 (|\aa|+2i-2)!^2  
 \notag\\
 &\qquad \qquad \times 
\bigl(\kappa_{m-1} \tau_{m}^{\prime\prime} \ep_{m-1}^{2\delta}\bigr)^2
 \|\theta_0\|_{L^2(\TT^2)}^2
\mathsf{A}_{m-1,i-2}^2
 \bigl(C_0 \ep_{m-1}^{-1 - \nicefrac \gamma 2} \vee C_0  R_{\theta_0}^{-1}\bigr)^{2|\aa|+4}
\,.
\end{align}
Next, in analogy to \eqref{e.Dn.def} we define 
\begin{align}
\label{e.Dn.i.def}
D_n^{(i)}
&:= 
\frac{\bigl(C_0  \ep_{m-1}^{-1 - \nicefrac \gamma 2} \vee C_0 R_{\theta_0}^{-1}\bigr)^{-n}}{\mathsf{A}_{m-1,i} \|\theta_0\|_{L^2(\TT^2)}}
\notag\\
&\qquad \qquad \times
\max_{|\aa|=n} \left(
\frac{\kappa_{m-1}^{\nicefrac 12} \bigl\| \nabla \partial^{\aa} V_{m-1}^{(i)} \bigr\|_{L^2((0,1)\times\TT^2)}}{(|\aa|+2i)!}
+ 
\frac{\sup_{t \in [0,1]}\bigl\| 
\partial^\aa V_{m-1}^{(i)} (t,\cdot) 
\bigr\|_{L^2(\TT^2)}}{(|\aa|+2i)!}
\right)
\,,
\end{align}
so that proving \eqref{e.Vm-1.reg.upgrade} amounts to showing that $D_n^{(i)} \leq 1$.
To achieve this bound, we combine~\eqref{e.test.partialaaVm-1}, \eqref{e.Err.1.bnd}, \eqref{e.Err.2.bnd}, \eqref{e.Err.3.bnd}, \eqref{e.Err.4a.bnd}, \eqref{e.Err.4a.bnd}, \eqref{e.Err.4c.bnd}, and absorb the appropriate term on the left side of the inequality. 
By also using the parameter inequalities \eqref{e.taubounds}, \eqref{e.kappam.bound}, the fact that $\gamma \geq 4\delta$ (a consequence of \eqref{e.delta}, \eqref{e.gamma}, and $q>1$), and upon denoting\footnote{The inequality $\mathcal{G} \leq \nicefrac 12$ follows from $\ep_{m-1}\leq 1$ and $C_0 \geq 2 C$.}
\begin{align*}
\mathcal{F} 
&:= 
\ep_{m-1}^{1+\nicefrac{\gamma}{2}}  \bigl(C_0  \ep_{m-1}^{-1-\nicefrac{\gamma}{2}} \vee C_0  R_{\theta_0}^{-1}\bigr) 
= C_0 
\bigl(1 \vee \ep_{m-1}^{1+\nicefrac{\gamma}{2}}  R_{\theta_0}^{-1}\bigr) 
\geq C_0 \geq 1
\,,
\\
\mathcal{G} 
&:= 
(C \ep_{m-1}^{-1}) 
\bigl(C_0  \ep_{m-1}^{-1 - \nicefrac \gamma 2} \vee C_0 R_{\theta_0}^{-1}\bigr)^{-1}
=
\bigl(C C_0^{-1}\bigr)
 \bigl(\ep_{m-1}^{\nicefrac \gamma 2} \wedge \ep_{m-1}^{-1} R_{\theta_0}\bigr) 
 \leq \nicefrac 12
 \,,
\end{align*}
for all $n\geq 0$ and $i\geq 1$ we arrive at
\begin{align}
\bigl(D_{n}^{(i)}\bigr)^2  
&\leq
\frac{C}{\mathcal{F}^2}
\bigl(D_{n-1}^{(i)}\bigr)^2  
+
\frac{C}{\mathcal{F}^4}
\sum_{k=0}^{n-2}
2^{-2(n-2-k)}
\bigl(D_{k}^{(i)}\bigr)^2  
+ 
\frac{C \ep_{m-1}^{2\delta}  \mathsf{A}_{m-1,i-1}}{\mathsf{A}_{m-1,i}} 
\sum_{k=0}^{n-1} 
2^{-(n-1-k)}
D_k^{(i)}
\notag \\ & \qquad 
+
\frac{C \mathsf{A}_{m-1,i-1}^2}{\mathsf{A}_{m-1,i}^2}
\Bigl( 
\bigl(C \ep_{m-1}^{2\delta} \bigr)^{\! 2N_*} 
+  \ep_{m-1}^{4\delta}   \mathcal{F}^4 
\Bigr)
+ 
{\bf 1}_{i = 1} 
\left(
\frac{C \ep_{m-1}^{2\delta} \bigl(C \ep_{m-1}^{2\delta}\bigr)^{N_*}}{ \mathsf{A}_{m-1,1}^2}
\mathcal{F}^{2}
+ 
\frac{ C  \ep_{m-1}^{4\delta}}{ \mathsf{A}_{m-1,1}^2}
\mathcal{F}^{4}
\right)
\notag\\ &\qquad 
+ {\bf 1}_{i\geq 2} 
\left(
\frac{C \ep_{m-1}^{4\delta} \mathsf{A}_{m-1,i-2}^2}{ \mathsf{A}_{m-1,i}^2}
\mathcal{F}^{4}
+ 
\frac{C \ep_{m-1}^{2\delta} \mathsf{A}_{m-1,i-1}^2}{\mathsf{A}_{m-1,i}^2}
\mathcal{F}^{2}
\right)
\,.
\label{e.big.ass.display}
\end{align}
We note that upon taking $N_*$ sufficiently large as in~\eqref{e.N}, and using \eqref{l.V.m-1.reg.upgrade.ass}, we may ensure that 
\begin{equation}
\bigl(C \ep_{m-1}^{2\delta} \bigr)^{\! N_*}  \leq \ep_{m-1}^{2\delta},
 \label{e.big.ass.display.app}
\end{equation}
so that in \eqref{e.big.ass.display} we may bound $\bigl(C \ep_{m-1}^{2\delta} \bigr)^{\! 2N_*} \leq \ep_{m-1}^{4\delta}   \mathcal{F}^4 $, and $\ep_{m-1}^{2\delta} \bigl(C \ep_{m-1}^{2\delta}\bigr)^{N_*} \mathcal{F}^{2} 
\leq \ep_{m-1}^{4\delta}  \mathcal{F}^{4}$.

In order to initiate the induction in $n\geq 0$, we first consider estimate \eqref{e.big.ass.display} for $n=0$. Recalling the definition~\eqref{e.A.m-1.i.def}, and the bound $\ep_{m-1}^{2\delta} \mathcal{F}^2 \leq C_0^{-1}$ (which is equivalent to \eqref{l.V.m-1.reg.upgrade.ass}), \eqref{e.big.ass.display} becomes
\begin{align}
\label{e.D0i}
\bigl(D_{0}^{(i)}\bigr)^2  
&\leq  
{\bf 1}_{i = 1} 
\frac{C \ep_{m-1}^{4\delta} \mathcal{F}^{4}}{\mathsf{A}_{m-1,1}^2}
+ {\bf 1}_{i\geq 2} 
\biggl(
\frac{C \mathsf{A}_{m-1,i-1}^2}{\mathsf{A}_{m-1,i}^2}
\Bigl( 
\ep_{m-1}^{4\delta} \mathcal{F}^4
+ \ep_{m-1}^{2\delta} \mathcal{F}^2
\Bigr)
+ 
\frac{C \mathsf{A}_{m-1,i-2}^2}{ \mathsf{A}_{m-1,i}^2}
\ep_{m-1}^{4\delta}  \mathcal{F}^{4}
\biggr)
\notag\\
&\leq  
{\bf 1}_{i \in\{ 1,2\}} 
\frac{C}{C_0^2}
+ {\bf 1}_{i = 2} 
C\ep_{m-1}^{2\delta} \mathcal{F}^2 \bigl(1+ \ep_{m-1}^{2\delta} \mathcal{F}^2\bigr)
+ {\bf 1}_{i\geq 3} 
\frac{C \bigl(1+\ep_{m-1}^{2\delta} \mathcal{F}^2 \bigr)}{C_0}
\leq  
\frac{2 C}{C_0} 
\,.
\end{align}
As long as $C_0$ is taken to be sufficiently large, this established the bound 
$D_{0}^{(i)} \leq 1$. We now inductively assume $D_k^{(i)} \leq 1$ for all $k \in \{0,1,\ldots,n-1\}$, and aim to establish that $D_n^{(i)} \leq 1$; in turn this would conclude the proof of \eqref{e.Vm-1.reg.upgrade}. To do so, we return to \eqref{e.big.ass.display}, use \eqref{e.big.ass.display.app}, the definition~\eqref{e.A.m-1.i.def},  and the inductive bound $D_k^{(i)} \leq 1$ for $k\leq n-1$, to conclude 
\begin{align*}
\bigl(D_{n}^{(i)}\bigr)^2  
&\leq
\frac{C}{\mathcal{F}^2}
+
\frac{2C}{\mathcal{F}^4} 
+ 
\frac{2C \ep_{m-1}^{2\delta}  \mathsf{A}_{m-1,i-1}}{\mathsf{A}_{m-1,i}} 
\notag \\ & \qquad 
+
{\bf 1}_{i = 1} 
\frac{ C  \ep_{m-1}^{4\delta}}{ \mathsf{A}_{m-1,1}^2}
\mathcal{F}^{4}
+ {\bf 1}_{i\geq 2} 
\biggl(
\frac{C \mathsf{A}_{m-1,i-1}^2}{\mathsf{A}_{m-1,i}^2}
\bigl( \ep_{m-1}^{4\delta}  \mathcal{F}^{4} + \ep_{m-1}^{2\delta}  \mathcal{F}^{2}\bigr)
+
\frac{C \mathsf{A}_{m-1,i-2}^2}{ \mathsf{A}_{m-1,i}^2}
\ep_{m-1}^{4\delta}  \mathcal{F}^{4}
\biggr)
\,.
\end{align*}
The second line of the above estimate precisely matches the upper bound in \eqref{e.D0i}, which was shown to be $\leq 2 C C_0^{-1}$ under the standing assumptions. Using that $\mathcal{F} \geq C_0 \geq 1$, and recalling the definition~\eqref{e.Vm-1.reg.upgrade}, we may bound also the first line of the above estimate, and finally deduce
\begin{align*}
\bigl(D_{n}^{(i)}\bigr)^2  
&\leq 
\frac{5C}{C_0^2}
+ 
{\bf 1}_{i=1}
\frac{2C}{C_0^3} 
+ 
{\bf 1}_{i=2}
2C \ep_{m-1}^{2\delta}
+
{\bf 1}_{i\geq 3}
\frac{2C \ep_{m-1}^{\delta}}{C_0^{\nicefrac 32}} 
\leq 
\frac{5C}{C_0^2}
+ 
{\bf 1}_{i \geq 1}
\frac{4C}{C_0^3} 
\,.
\end{align*}
Upon choosing $C_0$ sufficiently large with respect to $C$, we establish the bound necessary for the inductive step $D_{n}^{(i)}\leq 1$, and thus conclude the proof of the Lemma.
\end{proof}

Direct consequences of the bounds in Lemma~\ref{l.V.m-1.reg.upgrade} and of the definition \eqref{e.T.minus.theta.m-1} are the following regularity estimates for $T_{m-1}$. 
\begin{lemma}[{Estimates on~$T_{m-1}$}]
\label{l.Tm.reg.upgrade}
Let $C :=4 C_0^3$, where $C_0\geq 1$ is the  universal constant from Lemma~\ref{l.V.m-1.reg.upgrade}. If $\ep_{m-1}$ is small enough to ensure  
\begin{equation}
\label{e.ep.m-1.small}
\ep_{m-1}^{1+ \nicefrac{\gamma}{2}} \leq R_{\theta_0} 
\qquad \mbox{and} \qquad 
\ep_{m-1}^{2\delta} \leq C^{-1}
\,,
\end{equation}
then, for every $n\in\N_0$ we have
\begin{align}
\label{e.Tm.reg.upgrade}
&
\bigl\|  \nabla^n T_{m-1} \bigr\|_{L^\infty([0,1];L^2(\TT^2))}
+
\kappa_{m-1}^{\nicefrac 12} 
\bigl\| \nabla^{n+1} T_{m-1} \bigr\|_{L^2((0,1)\times\TT^2)}
\leq 
C_{{N_*}} \|\theta_0\|_{L^2(\TT^2)}
n! \bigl(C \ep_{m-1}^{-1 - \nicefrac \gamma 2} \bigr)^n
\,,
\end{align}
where we have defined $C_{{N_*}} := 2^{2{N_*}} (2{N_*})!$.
\end{lemma}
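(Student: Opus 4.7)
The plan is to read off this lemma as an immediate combination of Lemmas~\ref{l.theta.m-1.reg.upgrade} and~\ref{l.V.m-1.reg.upgrade}, via the telescoping decomposition~\eqref{e.T.minus.theta.m-1}. Writing $V_{m-1}^{(0)} := \theta_{m-1}$ for notational convenience, so that $\mathsf{A}_{m-1,0}=1$ and~\eqref{e.Vm-1.reg.upgrade} at $i=0$ reproduces~\eqref{e.theta.m-1.reg.upgrade}, we have
\begin{equation*}
T_{m-1} = \sum_{i=0}^{N_*} V_{m-1}^{(i)}\,,
\end{equation*}
and so by the triangle inequality,
\begin{equation*}
\bigl\| \nabla^n T_{m-1} \bigr\|_{L^\infty_t L^2_x} + \kappa_{m-1}^{\nicefrac 12}\bigl\| \nabla^{n+1} T_{m-1} \bigr\|_{L^2_{t,x}}
\leq \sum_{i=0}^{N_*} \Bigl( \bigl\| \nabla^n V_{m-1}^{(i)} \bigr\|_{L^\infty_t L^2_x} + \kappa_{m-1}^{\nicefrac 12}\bigl\| \nabla^{n+1} V_{m-1}^{(i)} \bigr\|_{L^2_{t,x}} \Bigr)\,.
\end{equation*}

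The first step will be to verify that the smallness hypothesis \eqref{l.V.m-1.reg.upgrade.ass} of Lemma~\ref{l.V.m-1.reg.upgrade} is implied by our assumption~\eqref{e.ep.m-1.small}, for the stated choice $C=4C_0^3$. Indeed, the first condition in~\eqref{e.ep.m-1.small} gives $R_{\theta_0}^{-1} \leq \ep_{m-1}^{-1-\nicefrac\gamma2}$, hence $\ep_{m-1}^{2+\gamma} R_{\theta_0}^{-2} \leq 1$, so that $1 \vee \ep_{m-1}^{2+\gamma} R_{\theta_0}^{-2} = 1$. Combining this with $\ep_{m-1}^{2\delta} \leq C^{-1} = (4C_0^3)^{-1}$, we obtain $C_0^3 \ep_{m-1}^{2\delta}(1 \vee \ep_{m-1}^{2+\gamma}R_{\theta_0}^{-2}) \leq \nicefrac14 \leq 1$, which is precisely~\eqref{l.V.m-1.reg.upgrade.ass}. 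As a bonus, the definition~\eqref{e.A.m-1.i.def} of $\mathsf{A}_{m-1,i}$ together with the inequality just established yields $\mathsf{A}_{m-1,i} \leq 1$ for every $i \in\{0,\ldots,N_*\}$, and in the same vein, again using~\eqref{e.ep.m-1.small}, we have $C_0 \ep_{m-1}^{-1-\nicefrac\gamma2} \vee C_0 R_{\theta_0}^{-1} = C_0 \ep_{m-1}^{-1-\nicefrac\gamma2}$.

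With these simplifications in place, Lemma~\ref{l.V.m-1.reg.upgrade} yields, for every $i\in\{0,\ldots,N_*\}$ and $n\in\N_0$,
\begin{equation*}
\bigl\| \nabla^n V_{m-1}^{(i)} \bigr\|_{L^\infty_t L^2_x} + \kappa_{m-1}^{\nicefrac 12}\bigl\| \nabla^{n+1} V_{m-1}^{(i)} \bigr\|_{L^2_{t,x}}
\leq \|\theta_0\|_{L^2(\TT^2)} (n+2i)! \bigl(C_0\ep_{m-1}^{-1-\nicefrac\gamma2}\bigr)^n\,.
\end{equation*}
The only remaining work is bookkeeping. Using the standard binomial bound $\binom{n+2i}{2i} \leq 2^{n+2i}$ we write $(n+2i)! \leq 2^{n+2i}\, n!\,(2i)! \leq 2^{n+2N_*}\, n!\,(2N_*)!$, and summing over $i=0,\ldots,N_*$ gives a factor of $N_*+1$. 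Absorbing $N_*+1 \leq 2^{N_*}$ and the factor $2^n$ into an enlargement of the exponential base from $C_0$ to $C = 4 C_0^3$ (which is permissible since $C_0\geq 1$), and enlarging the prefactor to $C_{N_*} = 2^{2N_*}(2N_*)!$, yields~\eqref{e.Tm.reg.upgrade}.

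There is really no obstacle in this proof beyond routine constant-chasing; all the analytic content has already been absorbed by the two preceding lemmas. The only minor point to watch is the verification of~\eqref{l.V.m-1.reg.upgrade.ass}, which is the reason for the two-fold form of hypothesis~\eqref{e.ep.m-1.small}.
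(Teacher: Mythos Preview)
Your proof is correct and follows essentially the same approach as the paper: verify that~\eqref{e.ep.m-1.small} implies~\eqref{l.V.m-1.reg.upgrade.ass} (with $\mathsf{A}_{m-1,i}\leq 1$ and $C_0 R_{\theta_0}^{-1}\leq C_0\ep_{m-1}^{-1-\nicefrac\gamma2}$ as by-products), write $T_{m-1}=\sum_{i=0}^{N_*}V_{m-1}^{(i)}$, apply~\eqref{e.Vm-1.reg.upgrade} term-by-term, and close with the binomial bound $(n+2i)!\leq 2^{n+2i}n!(2i)!$. The bookkeeping differs only cosmetically from the paper's.
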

\begin{proof}
Assumption \eqref{e.ep.m-1.small} gives that 
\begin{equation}
\label{e.T.m-1.temp.b}
C_0^3 \ep_{m-1}^{2\delta} ( 1 \vee \ep_{m-1}^{2+\gamma} R_{\theta_0}^{-2}) 
= C_0^3 \ep_{m-1}^{2\delta} \leq C_0^3   C^{-1} = \nicefrac 14
\,.
\end{equation} 
Thus assumption~\eqref{l.V.m-1.reg.upgrade.ass} holds, and we are allowed to apply Lemma~\ref{l.V.m-1.reg.upgrade}. 
For compactness of notation, denote the left side of \eqref{e.Tm.reg.upgrade} as 
\begin{equation*}
\mathsf{F}_n := 
\max_{|\aa|=n} \left( 
\bigl\|  \partial^\aa T_{m-1} 
\bigr\|_{L^\infty([0,1];L^2(\TT^2))}
+
\kappa_{m-1}^{\nicefrac 12} 
\bigl\| \nabla \partial^{\aa} T_{m-1} 
\bigr\|_{L^2((0,1)\times\TT^2)}
\right)
\,.
\end{equation*}
From \eqref{e.T.minus.theta.m-1} and the identification $\theta_{m-1} = V_{m-1}^{(0)}$, we have that 
\begin{equation*}
T_{m-1} =  \sum_{i=0}^{{N_*}} V_{m-1}^{(i)}
\,.
\end{equation*}
Hence, the bound \eqref{e.Vm-1.reg.upgrade}, the definition \eqref{e.A.m-1.i.def}, and assumption~\eqref{l.V.m-1.reg.upgrade.ass} imply that
\begin{align}
\label{e.T.m-1.temp.a}
\frac{\mathsf{F}_n}{n! \bigl(C_0 \ep_{m-1}^{-1 - \nicefrac \gamma 2} \vee C_0 R_{\theta_0}^{-1}\bigr)^{n}} 
&\leq
\|\theta_0\|_{L^2(\TT^2)}
\sum_{i=0}^{{N_*}}
\mathsf{A}_{m-1,i}  \frac{(n+2i)!}{n!}
\notag\\
&\leq 
(2{N_*})! \|\theta_0\|_{L^2(\TT^2)}
\sum_{i=0}^{{N_*}}
   \binom{n+2i}{n} \frac{(2i)!}{(2{N_*})!}
\notag\\
&\leq
2^{n+2{N_*}} (2{N_*})!  \|\theta_0\|_{L^2(\TT^2)}  
\,.
\end{align}
With assumption~\eqref{e.ep.m-1.small} and the definition $C=4 C_0^3$, the proof is completed. 
\end{proof}

In the above proof we have merely used that for all $i\geq 0$ the amplitude coefficients $\mathsf{A}_{m-1,i}$ appearing in~\eqref{e.A.m-1.i.def} satisfy the bound $\mathsf{A}_{m-1,i} \leq 1$. Now, we use the precise structure of these coefficients to deduce two further consequences.

\begin{lemma}[{$T_{m-1}$ and $\theta_{m-1}$ are close and $T_{m-1}$ almost solves \eqref{e.Tm.divform}}]
\label{l.Tm.minus.thetam}
Under the assumptions of Lemma~\ref{l.Tm.reg.upgrade}, we have that
\begin{multline}
\label{e.Tm.thetam}
\left\| 
T_{m-1} -  \theta_{m-1}
\right\|_{L^\infty((0,1); L^2( \TT^2))} 
+
\kappa_{m-1}^{\nicefrac12}
\left\| 
\nabla T_{m-1} - \nabla \theta_{m-1}
\right\|_{L^2((0,1)\times \TT^2)} 
\\
\leq
(2{N_*})! \bigl(C_{\eqref{e.Tm.reg.upgrade}} \ep_{m-1}^{2\delta}\bigr) \|\theta_0\|_{L^2(\TT^2)} 
\,,
\end{multline}
and the error term appearing on the right side of \eqref{e.Tm.true} satisfies
\begin{align}
\label{e.Em-1.thetam}
\left\|\nabla^{n} \mathbf{e}_{m-1} \right\|_{L^2((0,1)\times \TT^2)}
\leq
C C_{{N_*}} \kappa_{m-1}^{\nicefrac 12}
n! 
\bigl(C_{\eqref{e.Tm.reg.upgrade}} \ep_{m-1}^{-1-\nicefrac{\gamma}{2}}\bigr)^{n}
\bigl( C_{\eqref{e.Tm.reg.upgrade}} \ep_{m-1}^{2\delta} \bigr)^{\nicefrac{{N_*}}{2}} 
\|\theta_0\|_{L^2(\TT^2)}
\,,
\end{align}
for $n \in \NN_0$  and a universal constant $C\geq 1$.
\end{lemma}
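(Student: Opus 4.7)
The plan is to view $T_{m-1}$ as a small perturbation of $\theta_{m-1}$ by exploiting the telescoping identity inherent in the iterative construction~\eqref{e.T.m-1.0}--\eqref{e.T.m-1.Nstar}, and then to use the quantitative increment estimates already available from Lemma~\ref{l.V.m-1.reg.upgrade}. Specifically, since $V_{m-1}^{(0)} = \theta_{m-1}$ and $T_{m-1} = T_{m-1}^{(N_*)}$, we have the telescoping identity
\begin{equation*}
T_{m-1} - \theta_{m-1} = \sum_{i=1}^{N_*} V_{m-1}^{(i)}\,,
\end{equation*}
and, from~\eqref{e.E.m-1.def} and $V_{m-1}^{(N_*)} = T_{m-1}^{(N_*)} - T_{m-1}^{(N_*-1)}$,
\begin{equation*}
\mathbf{e}_{m-1} = - \bigl( \K_m - \kappa_{m-1}\Itwo + \mathbf{s}_{m-1} \bigr) \nabla V_{m-1}^{(N_*)}\,.
\end{equation*}

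For~\eqref{e.Tm.thetam}, I would apply~\eqref{e.Vm-1.reg.upgrade} with $n=0$ to each summand and add. Under assumption~\eqref{e.ep.m-1.small}, the quantity $\ep_{m-1}^{2+\gamma} R_{\theta_0}^{-2}$ is bounded by $1$ and $C_0^3 \ep_{m-1}^{2\delta} \leq \nicefrac 14$, so the amplitudes~$\mathsf{A}_{m-1,i}$ from~\eqref{e.A.m-1.i.def} satisfy $\mathsf{A}_{m-1,i} \leq C_0^3 \ep_{m-1}^{2\delta}$ for every $i\geq 1$ (with exponentially decreasing refinement for $i\geq 3$). Summing the $(2i)!$ factors against this smallness and noting that the largest factorial $(2N_*)!$ dominates yields \eqref{e.Tm.thetam} with the claimed constant.

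For~\eqref{e.Em-1.thetam}, the key step is a Leibniz expansion on the product defining $\mathbf{e}_{m-1}$. Since $\K_m$ depends only on $t$, spatial derivatives fall only on $\mathbf{s}_{m-1}$ and $\nabla V_{m-1}^{(N_*)}$. I would bound the matrix factor in $L^\infty$ using the trivial estimate $\|\K_m - \kappa_{m-1}\Itwo\|_{L^\infty} \leq C \kappa_{m-1}$ (from~\eqref{e.akappam.size} and the definition~\eqref{e.kappa.sequence}) together with the analyticity bound~\eqref{e.smbound.dervs}, which together give
\begin{equation*}
\bigl\| \nabla^j \bigl( \K_m - \kappa_{m-1}\Itwo + \mathbf{s}_{m-1} \bigr) \bigr\|_{L^\infty}
\leq C \kappa_{m-1} \, j! \, (C \ep_{m-1}^{-1})^j
\qquad \forall j \in \N_0\,,
\end{equation*}
and estimate the derivatives of $\nabla V_{m-1}^{(N_*)}$ in $L^2((0,1)\times\TT^2)$ via~\eqref{e.Vm-1.reg.upgrade} applied at level $i = N_*$, which produces a factor $(n-j+2N_*)!\,(C_0 \ep_{m-1}^{-1-\nicefrac\gamma 2})^{n-j} \mathsf{A}_{m-1,N_*} \kappa_{m-1}^{-\nicefrac 12} \|\theta_0\|_{L^2}$.

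The last step is the bookkeeping: combine these two bounds via Leibniz and carry out the sum. Using the elementary inequality $(n-j+2N_*)!/(n-j)! \leq (n+2N_*)!/n!$ and the fact that under~\eqref{e.ep.m-1.small} we have $C \ep_{m-1}^{-1} \leq C_0 \ep_{m-1}^{-1-\nicefrac\gamma 2}$, the geometric-type sum $\sum_{j=0}^n (C\ep_{m-1}^{-1})^j (C_0\ep_{m-1}^{-1-\nicefrac\gamma 2})^{n-j}$ is controlled by $(n+1)(C_0\ep_{m-1}^{-1-\nicefrac\gamma 2})^n$. One further absorbs the factor $(n+2N_*)!(n+1)$ into $n! \cdot 2^{2N_*}(2N_*)! \cdot 4^n = C_{N_*} \cdot 4^n\, n!$, and finally uses that $N_* \geq 3$, so by~\eqref{e.A.m-1.i.def} one has $\mathsf{A}_{m-1,N_*} \leq (C_0^3 \ep_{m-1}^{2\delta})^{N_*/2}$. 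Upon enlarging the base constant to $C_{\eqref{e.Tm.reg.upgrade}} = 4 C_0^3$ to absorb the factors of $4$, this delivers precisely~\eqref{e.Em-1.thetam}.

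The main (small) obstacle is purely combinatorial: the Leibniz sum couples the analyticity scale $\ep_{m-1}^{-1}$ of $\mathbf{s}_{m-1}$ with the strictly larger analyticity scale $\ep_{m-1}^{-1-\gamma/2}$ of $V_{m-1}^{(N_*)}$, and one must verify that the $(n-j+2N_*)!$ factorials do not spoil the $n!$-analyticity structure. This is handled cleanly by the two elementary inequalities mentioned above.
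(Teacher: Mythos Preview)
Your proposal is correct and follows essentially the same approach as the paper: the telescoping identity $T_{m-1}-\theta_{m-1}=\sum_{i=1}^{N_*}V_{m-1}^{(i)}$ together with the $n=0$ case of~\eqref{e.Vm-1.reg.upgrade} gives~\eqref{e.Tm.thetam}, and the Leibniz expansion of $\mathbf{e}_{m-1}=-(\K_m-\kappa_{m-1}\Itwo+\mathbf{s}_{m-1})\nabla V_{m-1}^{(N_*)}$ combined with~\eqref{e.smbound.dervs} and~\eqref{e.Vm-1.reg.upgrade} at level $i=N_*$ gives~\eqref{e.Em-1.thetam}. Your combinatorial bookkeeping (the inequalities $(n-j+2N_*)!/(n-j)!\le(n+2N_*)!/n!$ and $(n+2N_*)!\le 2^{n+2N_*}n!(2N_*)!$) is exactly what the paper uses, just phrased slightly differently.
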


\begin{proof}
In order to prove \eqref{e.Tm.thetam}, we recall from \eqref{e.T.minus.theta.m-1} that $T_{m-1}-\theta_{m-1} =\sum_{i=1}^{{N_*}} V_{m-1}^{(i)}$. Therefore, similarly to \eqref{e.T.m-1.temp.a} with $n=0$, we deduce from the bound \eqref{e.Vm-1.reg.upgrade}, the definition \eqref{e.A.m-1.i.def}, and assumption~\eqref{e.ep.m-1.small} (which implies~\eqref{l.V.m-1.reg.upgrade.ass} and also~\eqref{e.T.m-1.temp.b}), that
\begin{align*}
&
\left\| 
T_{m-1} -  \theta_{m-1}
\right\|_{L^\infty((0,1); L^2( \TT^2))} 
+
\kappa_{m-1}^{\nicefrac12}
\left\| 
\nabla T_{m-1} - \nabla \theta_{m-1}
\right\|_{L^2((0,1)\times \TT^2)} 
\notag\\
&\qquad
\leq
\|\theta_0\|_{L^2(\TT^2)}
\sum_{i=1}^{{N_*}}
(2i)! \mathsf{A}_{m-1,i}  
\notag\\
&\qquad
\leq 
C_0^3 \ep_{m-1}^{2\delta} \|\theta_0\|_{L^2(\TT^2)}   
\sum_{i=1}^{{N_*}} (2i)! \bigl(C_0^3 \ep_{m-1}^{2\delta}\bigr)^{\frac{(i-2)_+}{2}}
\notag\\
&\qquad
\leq 
C_{\eqref{e.Tm.reg.upgrade}} \ep_{m-1}^{2\delta} \|\theta_0\|_{L^2(\TT^2)}   
\sum_{i=1}^{{N_*}} (2i)! 2^{-(i-2)_+ - 2}
\notag\\
&\qquad
\leq
(2{N_*})! C_{\eqref{e.Tm.reg.upgrade}}  \ep_{m-1}^{2\delta} \|\theta_0\|_{L^2(\TT^2)}   
\,. 
\end{align*}
This gives \eqref{e.Tm.thetam}.

In view of the definition of $\mathbf{e}_{m-1}$ in \eqref{e.E.m-1.def}, proving \eqref{e.Em-1.thetam} for $n\in \{0,1,2\}$ amounts to combining the $V_{m-1}^{({N_*})}$ estimate from \eqref{e.Vm-1.reg.upgrade}, the $\mathbf{s}_{m-1}$ bound in \eqref{e.smbound.dervs}, and the estimate  $|  \K_m - \kappa_{m-1} | \leq C \kappa_{m-1}$ (which follows from~\eqref{e.bfK.dervs}):
\begin{align*}
&
\left\|\partial^{\aa} \mathbf{e}_{m-1} \right\|_{L^2((0,1)\times \TT^2)}
\notag\\ 
&\quad 
\leq 
C \kappa_{m-1} 
\bigl\| \partial^\aa \nabla V_{m-1}^{({N_*})} \bigr\|_{L^2((0,1)\times \TT^2)}
+
C \kappa_{m-1} 
\sum_{\bb \leq \aa} \frac{|\aa|!}{|\bb|!} 
\bigl(C \ep_{m-1}^{-1} \bigr)^{|\aa-\bb|}
\bigl\| \partial^\bb \nabla V_{m-1}^{({N_*})} \bigr\|_{L^2((0,1)\times \TT^2)}
\notag\\
&\quad 
\leq 
C \kappa_{m-1}^{\nicefrac 12} 
(2 {N_*}+|\aa|)! 
\mathsf{A}_{m-1,{N_*}} 
\|\theta_0\|_{L^2(\TT^2)}
\bigl(C_0 \ep_{m-1}^{-1-\nicefrac{\gamma}{2}}\bigr)^{|\aa|}
\notag\\
&\quad \leq 
C \kappa_{m-1}^{\nicefrac 12} 
(2 {N_*}+|\aa|)! 
\bigl(C_{\eqref{e.Tm.reg.upgrade}} \ep_{m-1}^{2\delta} \bigr)^{\nicefrac{{N_*}}{2}}
\|\theta_0\|_{L^2(\TT^2)}
\bigl(C_0 \ep_{m-1}^{-1-\nicefrac{\gamma}{2}}\bigr)^{|\aa|}
\,.
\end{align*}
Since $(2 {N_*}+|\aa|)! \leq 2^{|\aa|+2{N_*}} (2{N_*})! |\aa|! = C_{{N_*}} 2^{|\aa|} |\aa|!$ and $2 C_0 \leq C_{\eqref{e.Tm.reg.upgrade}}$, this concludes the proof of \eqref{e.Em-1.thetam} and thus of the lemma. 
\end{proof}

The estimates on all space derivatives of $\nabla T_{m-1}$ obtained in Lemma~\ref{l.Tm.reg.upgrade} imply, when combined with the available bounds on $\b_{m-1}$ a control on mixed space-and-material derivatives of $\nabla T_{m-1}$.

\begin{lemma}
\label{l.T.m-1.material.mixed}
Under the assumptions of Lemma~\ref{l.Tm.reg.upgrade}, for all $n,\ell \in \NN_0$ with $\ell \geq 1$ and $n+ 2 \ell \leq  N_*$, we have 
\begin{align} 
\norm{\nabla^n \DD_{t,m-1}^\ell \nabla T_{m-1}}_{L^2([0,1] \times \TT^2)}
&\leq 
C \ep_{m-1}^{3\delta}  \kappa_{m-1}^{-\nicefrac12} 
\|\theta_0\|_{L^2(\TT^2)}  \ep_{m-1}^{-(1 + \nicefrac \gamma 2 ) n}   \bigl(\tau_m^\prime \bigr)^{-\ell} 
\label{e.barf.cascade}
\end{align}
for a sufficiently large constant $C = C(N_*) \geq 1$.
\end{lemma}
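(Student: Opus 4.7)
The plan is to proceed by induction on $\ell$, and at each stage to use the equation~\eqref{e.Tm.true} satisfied by $T_{m-1}$ to trade one material derivative for two spatial derivatives. Rewriting~\eqref{e.Tm.true} as
\begin{equation*}
\DD_{t,m-1} T_{m-1} = \K_m \colon \nabla^2 T_{m-1} + \nabla \cdot (\mathbf{s}_{m-1} \nabla T_{m-1}) + \nabla \cdot \mathbf{e}_{m-1}\,,
\end{equation*}
we see that each $\DD_{t,m-1}$ can be eliminated at the cost of either (i) two extra spatial derivatives with coefficient $\K_m \sim \kappa_{m-1}$, (ii) a time derivative falling on $\K_m$ (costing a factor $(\tau_m')^{-1}$ by~\eqref{e.bfK.dervs}), (iii) a material derivative falling on $\mathbf{s}_{m-1}$ (controlled using Corollary~\ref{c.material.DX.Xinv} together with~\eqref{e.smbound}--\eqref{e.smbound.dervs}), or (iv) an error contribution from $\mathbf{e}_{m-1}$ which by~\eqref{e.Em-1.thetam} is $\ep_{m-1}^{\delta N_*/2}$-small and can be absorbed in any regime. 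Commuting spatial derivatives with $\DD_{t,m-1}$ produces the usual schematic $[\nabla^n, \DD_{t,m-1}^\ell]$ expansion from~\cite[Appendices A.6--A.7]{BMNV}, as already used in \eqref{e.vomit.cascade.1}--\eqref{e.vomit.cascade.2}, whose coefficients can be estimated using Proposition~\ref{p.material.goal} applied to $\b_{m-1}$.

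For the base case $\ell = 1$, using $[\DD_{t,m-1},\nabla]f = -\nabla\b_{m-1}\cdot \nabla f$ I would write
\begin{equation*}
\DD_{t,m-1}\nabla T_{m-1} = \nabla\bigl(\K_m \colon \nabla^2 T_{m-1}\bigr) + \nabla\nabla\cdot(\mathbf{s}_{m-1}\nabla T_{m-1}) + \nabla\nabla\cdot\mathbf{e}_{m-1} - (\nabla\b_{m-1})^\intercal \nabla T_{m-1}\,,
\end{equation*}
and then differentiate $n$ more times. For the leading term the Lemma~\ref{l.Tm.reg.upgrade} bound yields
\begin{equation*}
\norm{\nabla^n \bigl(\K_m \colon \nabla^2 T_{m-1}\bigr)}_{L^2} \leq C \kappa_{m-1} \cdot \kappa_{m-1}^{-\nicefrac12}\|\theta_0\|_{L^2} (n+1)!\, \bigl(C\ep_{m-1}^{-1-\nicefrac\gamma 2}\bigr)^{n+1}\,,
\end{equation*}
and using $\kappa_{m-1} \simeq \ep_{m-1}^{\beta+\gamma}$ together with $(\tau_m')^{-1} \simeq \ep_{m-1}^{\beta-2-3\delta}$ from~\eqref{e.taubounds} and~\eqref{e.taum.prime.def}, this is precisely of the required form $C\ep_{m-1}^{3\delta}\kappa_{m-1}^{-\nicefrac12}\|\theta_0\|_{L^2}\ep_{m-1}^{-(1+\nicefrac\gamma 2)n}(\tau_m')^{-1}$. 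The $\mathbf{s}_{m-1}$ contribution is smaller by $\ep_{m-1}^{2\delta}$ thanks to~\eqref{e.smbound}, the $\nabla\b_{m-1}$ commutator contributes $\ep_{m-1}^{\beta-2}$ which matches the target, and the $\mathbf{e}_{m-1}$ term is negligible by choice of $N_*$.

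For the inductive step, assuming the estimate holds for all $\ell' \leq \ell - 1$ (and all admissible $n$), I would expand $\DD_{t,m-1}^\ell \nabla T_{m-1}$ by applying $\DD_{t,m-1}^{\ell-1}$ to the identity above, using the Leibniz rule for the time derivatives of $\K_m$ and $\mathbf{s}_{m-1}$, the commutator formula~\eqref{e.vomit.cascade.1}--\eqref{e.vomit.cascade.2} to move spatial derivatives past $\DD_{t,m-1}$, and finally the inductive hypothesis on the remaining terms containing fewer material derivatives on $\nabla T_{m-1}$. The bookkeeping—controlling the total number of spatial plus material derivatives so as to stay within $n + 2\ell \leq N_*$, and checking that each additional $\DD_{t,m-1}$ indeed costs a factor $(\tau_m')^{-1}$—is the main obstacle, but it is analogous to the accounting performed in the proof of Proposition~\ref{p.material.goal} and Corollary~\ref{c.material.DX.Xinv}. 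The key quantitative identities are that one material derivative on $\K_m$ or $\mathbf{s}_{m-1}$ produces a factor $\kappa_{m-1}(\tau_m')^{-1}$, one material derivative on $T_{m-1}$ via the PDE produces $\kappa_{m-1}\nabla^2$ (which contributes $\ep_{m-1}^{\beta-2}$ after canceling $\kappa_{m-1}$ against two spatial derivatives), and the commutator with $\b_{m-1}$ contributes $\ep_{m-1}^{\beta-2}$ or better by Proposition~\ref{p.material.goal}. Since $(\tau_m')^{-1} \simeq \ep_{m-1}^{\beta - 2 - 3\delta}$ is the largest among these, each of these mechanisms is bounded by $(\tau_m')^{-1}$ up to $\ep_{m-1}^{3\delta}$, so the product of $\ell$ such factors gives $(\tau_m')^{-\ell}$ with at least one additional $\ep_{m-1}^{3\delta}$ to spare, yielding the claimed bound.
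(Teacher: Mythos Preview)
Your proposal is correct and follows essentially the same approach as the paper: induction on $\ell$, trading each material derivative for two spatial derivatives via the equation for $T_{m-1}$, controlling material derivatives of $\mathbf{s}_{m-1}$ through Corollary~\ref{c.material.DX.Xinv} (this is the paper's intermediate estimate~\eqref{e.sm.material}), and handling commutators $[\nabla^n,\DD_{t,m-1}^\ell]$ with the formulas~\eqref{e.vomit.cascade.1}--\eqref{e.vomit.cascade.2}. The only organizational difference is that the paper carries out the induction simultaneously for every member $T_{m-1}^{(i)}$ of the iterative sequence (see the claim~\eqref{e.barf.cascade.0} and the equation~\eqref{eq:Tm-1:i:evo}), whereas you work directly with $T_{m-1}$ through~\eqref{e.Tm.true}; since $|\K_m|\lesssim \kappa_{m-1}$ by~\eqref{e.bfK.dervs}, this makes no material difference and your key parameter computation $\kappa_{m-1}\ep_{m-1}^{-(2+\gamma)} \simeq \ep_{m-1}^{\beta-2} \simeq \ep_{m-1}^{3\delta}(\tau_m')^{-1}$ is exactly the one the paper uses.
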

 
\begin{proof} 
In \eqref{e.barf.cascade} we only consider $\ell \geq 1$  because for $\ell = 0$ a different bound is already available in \eqref{e.Tm.reg.upgrade}. 
In order to prove \eqref{e.barf.cascade}, we recall from~\eqref{e.T.m-1.0}--\eqref{e.T.m-1.Nstar} that $T_{m-1} = T_{m-1}^{{N_*}}$, where 
the functions $\{ \nabla T_{m-1}^{(i)} \}_{i=0}^{{N_*}}$ solve
\begin{align}
\label{eq:Tm-1:i:evo}
\DD_{t,m-1} \nabla T_{m-1}^{(i)} 
&= 
\kappa_{m-1} \Delta \nabla T_{m-1}^{(i)} 
-
\nabla \b_{m-1} \cdot \nabla T_{m-1}^{(i)}
\notag\\
&\qquad 
+
{\bf 1}_{\{i\geq 1\}}
\nabla \nabla \cdot \bigl( \K_m - \kappa_{m-1} \Itwo + \mathbf{s}_{m-1} \bigr) \nabla T_{m-1}^{(i-1)}
\,.
\end{align}
We claim that for all $0\leq i \leq {N_*}$, $n \in \NN_0, \ell \in \NN$ with $n+2\ell \leq N_*$ we have the bound
\begin{align}
\kappa_{m-1}^{ \nicefrac12}  
\max_{|\aa|=n}  
\norm{\partial^\aa \DD_{t,m-1}^\ell \nabla T_{m-1}^{(i)}}_{L^2([0,1] \times \TT^2)}
&\leq 
C  \ep_{m-1}^{3\delta}  
\|\theta_0\|_{L^2(\TT^2)} \bigl(C \ep_{m-1}^{-1 - \nicefrac \gamma 2} \bigr)^{n}   \bigl(\tau_m^\prime \bigr)^{-\ell} 
\,.
\label{e.barf.cascade.0}
\end{align}
Specializing \eqref{e.barf.cascade.0} to the case $i= {N_*}$ gives \eqref{e.barf.cascade}, upon noting that the factor of $C^n\leq C^{N_*}$ may be absorbed in the constant $C_{\eqref{e.barf.cascade}}$.

We first prove \eqref{e.barf.cascade.0} for $\ell=1$, as this contains the main idea. The generalization to $\ell \geq 2$ is a matter of accounting, and the upper bound obtained is allowed to have a large amplitude, by a factor of $\ep_{m-1}^{-3\delta}$.
Since $T_{m-1}^{(i)} = \sum_{i'=0}^{i} V_{m-1}^{(i')}$, by using that $A_{m-1,i'} \leq 1$ for all $i'\geq 0$, we deduce similarly to \eqref{e.Tm.reg.upgrade} that for all $0 \leq i \leq {N_*}$ and $n\in \mathbb{N}_0$, we have
\begin{align}
\label{e.Tm.i.reg.upgrade}
&
\kappa_{m-1}^{\nicefrac 12}
\max_{|\aa|=n}  
 \bigl\|  \partial^{\aa} \nabla T_{m-1}^{(i)} \bigr\|_{L^2((0,1)\times\TT^2)}
\leq 
C \|\theta_0\|_{L^2(\TT^2)}
n! \bigl(C \ep_{m-1}^{-1 - \nicefrac \gamma 2} \bigr)^n
\,.
\end{align}
From \eqref{e.bm.Dn}, \eqref{e.bfK.dervs}, \eqref{e.enhance.onestep},  \eqref{e.smbound.dervs}, \eqref{e.ep.m-1.small}, \eqref{e.Tm.i.reg.upgrade},  and the Leibniz rule, it follows that for all $0\leq i \leq {N_*}$, we have
\begin{align*}
&
\|\theta_0\|_{L^2(\TT^2)}^{-1} 
\kappa_{m-1}^{\nicefrac 12} 
\max_{|\aa|=n}   
\norm{\partial^\aa \DD_{t,m-1} \nabla T_{m-1}^{(i)}}_{L^2([0,1] \times \TT^2)}
\notag\\ &\qquad 
\leq C \kappa_{m-1} 
\bigl(C \ep_{m-1}^{-1 - \nicefrac \gamma 2} \bigr)^{n+2}
(n+2)!  
+
\bigl(C \ep_{m-1}^{\beta-2}\bigr) n!
\sum_{k=0}^{n}
 \bigl(C \ep_{m-1}^{-1}\bigr)^{n-k}
 \bigl(C \ep_{m-1}^{-1 - \nicefrac \gamma 2} \bigr)^{k}
\notag\\ &\qquad \qquad
+  
C \kappa_{m-1}  
\bigl(C  \ep_{m-1}^{-1 - \nicefrac \gamma 2} \bigr)^2 (n+2)!
\sum_{k=0}^{n+2}
\bigl(C \ep_{m-1}^{-1}\bigr)^{n-k+2}
\bigl(C  \ep_{m-1}^{-1 - \nicefrac \gamma 2} \bigr)^{k-2}
\notag\\ &\qquad 
\leq C  (n+2)!  \ep_{m-1}^{\beta-2}  \bigl(C \ep_{m-1}^{-1 - \nicefrac \gamma 2} \bigr)^{n}
\,.
\end{align*}
Since $\ep_{m-1}^{\beta- 2} \leq  \ep_{m-1}^{3\delta} (\tau_m^\prime)^{-1}$ (see~\eqref{e.taum.def}--\eqref{e.taum.prime.def}), the above estimate gives the proof of \eqref{e.barf.cascade.0} for $\ell = 1$.

From~\eqref{eq:Tm-1:i:evo} it is clear that proving~\eqref{e.barf.cascade.0} for $\ell \geq 2$, requires a bound for the space-and-material derivatives of $\mathbf{s}_{m-1}$ (which we recall was defined in \eqref{e.sm}). For this purpose, for all $n,\ell \in \NN_0$ with $n+\ell \leq N_*$ we claim that
\begin{align}
\norm{\nabla^n \DD_{t,m-1}^\ell  \mathbf{s}_{m-1}}_{L^\infty(\R\times \R^2)}
\leq 
C \kappa_{m-1}  \ep_{m-1}^{-n} (\tau_m^\prime)^{-\ell}
\,.
\label{e.sm.material}
\end{align}
As usual in such terms, we do not keep track of factorials because the constant $C$ in \eqref{e.sm.material} depends (only) on $N_*$. 
When $\ell=0$, the bound \eqref{e.sm.material} follows from \eqref{e.smbound.dervs}. For $\ell\geq 1$, we use the Leibniz rule, \eqref{e.taum.prime.def}, \eqref{e.taum.primeprime.def}, \eqref{e.taubounds}, \eqref{e.zeta.prime.ml.fitting}, \eqref{e.zeta.prime.ml.bounds},  \eqref{e.Xm.bound.1}, \eqref{e.spacetime.X.bnds},  and \eqref{e.bfK.dervs},
\begin{align*}
\norm{\nabla^n \DD_{t,m-1}^\ell  \mathbf{s}_{m-1}}_{L^\infty(\R\times \R^2)}
 &\leq C  \sum_{\ell'=0}^\ell \sum_{\ell''=0}^{\ell'} \sum_{l\in\Z}
 \norm{\partial_t^{\ell-\ell' } \hat{\xi}_{m,l}}_{L^\infty(\R)} 
 \norm{\partial_t^{\ell'-\ell''} \K_m}_{L^\infty(\R)}
 \notag\\
 &\qquad \qquad \times
 \norm{\nabla^n \DD_{t,m-1}^{\ell''} \bigl( \nabla X_{m-1,l} \circ X_{m-1,l}^{-1} - \Itwo\bigr)}_{L^\infty(\supp \hat{\xi}_{m,l} \times \R^2)}
 \notag\\
 &\leq  C \kappa_{m-1} \sum_{\ell''=0}^\ell 
 (\tau_m^\prime)^{\ell''-\ell} 
 \ep_{m-1}^{-n} (\ep_{m-1}^{\beta-2})^{\ell''}
 \notag\\
  &\leq C \kappa_{m-1}  \ep_{m-1}^{-n} (\tau_m^\prime)^{-\ell}
  \,.
\end{align*}
In the last inequality we have used that $\tau_m^\prime \ep_{m-1}^{\beta-2} \leq C \ep_{m-1}^{3\delta} \leq 1$. This concludes the proof of~\eqref{e.sm.material}.

With~\eqref{e.sm.material} in hand, we return to proving~\eqref{e.barf.cascade.0} for $\ell \geq 2$.  In view of \eqref{eq:Tm-1:i:evo}, in order to estimate higher order material derivatives of $\nabla T_{m-1}^{(i)}$, we need to understand the commutator between $\DD_{t,m}^{\ell-1}$ and $\nabla^2$. In this direction, from~\cite[Lemma A.12]{BMNV} we recall that 
\begin{align}
 \big[\DD_{t,m-1}^{\ell-1},\nabla^2\bigr]  f
  = \sum_{\ell'=1}^{\ell-1}
  \sum_{\ell'' = 0}^{\ell'}
  c_{\ell, \ell',\ell''}
  \bigl({\rm ad} \DD_{t,m-1}\bigr)^{\ell''}(\nabla)   \bigl({\rm ad} \DD_{t,m-1}\bigr)^{\ell'-\ell''}(\nabla) \DD_{t,m-1}^{\ell-1-\ell'} f
 \label{e.barf.cascade.1}
\end{align}
where $  c_{\ell, \ell',\ell''}>0$ are explicitly computable combinatorial coefficients, and we recall from \eqref{e.vomit.cascade.2} that $\bigl({\rm ad} \DD_{t,m-1}\bigr)^{r}(\nabla)$ is a first order differential operator for any $r\geq 0$. With \eqref{e.barf.cascade.1},  we return to \eqref{eq:Tm-1:i:evo} and obtain (ignoring the precise contraction of tensors) that 
\begin{align}
\DD_{t,m-1}^\ell \nabla T_{m-1}^{(i)}
&= 
\kappa_{m-1}
\sum_{\ell'=0}^{\ell-1}    
\sum_{\ell'' = 0}^{\ell'}
c_{\ell, \ell',\ell''}
\bigl({\rm ad} \DD_{t,m-1}\bigr)^{\ell''}(\nabla)   
\bigl({\rm ad} \DD_{t,m-1}\bigr)^{\ell'-\ell''}(\nabla) 
\DD_{t,m-1}^{\ell-1-\ell'} \nabla T_{m-1}^{(i)}
\notag\\
&\qquad 
- 
\sum_{\ell'=0}^{\ell-1} 
\binom{\ell-1}{\ell'}
\DD_{t,m-1}^{\ell'}  \nabla \b_{m-1} \cdot  
\DD_{t,m-1}^{\ell-1-\ell'} \nabla T_{m-1}^{(i)}
\notag\\
&\qquad 
+ {\bf 1}_{\{i\geq 1\}} \sum_{\ell'=0}^{\ell-1}    
\sum_{\ell'' = 0}^{\ell'}
c_{\ell, \ell',\ell''}
\bigl({\rm ad} \DD_{t,m-1}\bigr)^{\ell''}(\nabla)   
\bigl({\rm ad} \DD_{t,m-1}\bigr)^{\ell'-\ell''}(\nabla) 
\notag\\
&\qquad \qquad \qquad \qquad \qquad \times
\DD_{t,m-1}^{\ell-1-\ell'} 
\bigl( \K_m - \kappa_{m-1} \Itwo + \mathbf{s}_{m-1} \bigr) \nabla T_{m-1}^{(i-1)}
 \,.
\label{e.barf.cascade.3}
\end{align}
We note that the number of $\DD_{t,m-1}$ material derivatives acting on $\nabla T_{m-1}^{(i)}$ and $\nabla T_{m-1}^{(i-1)}$ on the right side of \eqref{e.barf.cascade.3} is at most $\ell-1$, whereas on the left side of \eqref{e.barf.cascade.3} we have $\ell$-many $\DD_{t,m-1}$ material derivatives acting on acting on $\nabla T_{m-1}^{(i)}$. As such the bound \eqref{e.barf.cascade.0} is established inductively in $\ell \geq 1$, with the base step $\ell=1$ being already proven.

In order to bound the terms on the right side of \eqref{e.barf.cascade.3}, we note that \eqref{e.monofractal.vomit} implies that
\begin{align}
&\norm{\nabla^n \bigl({\rm ad} \DD_{t,m-1}\bigr)^{r}(\nabla) f}_{L^p(\RR\times \RR^2)}
\notag\\
&\qquad \leq C \bigl( \ep_{m-1}^{\beta-2}\bigr)^r \norm{\nabla^{n+1} f}_{L^p(\RR\times \RR^2)}
+ C \bigl( \ep_{m-1}^{-1}\bigr)^n \bigl( \ep_{m-1}^{\beta-2}\bigr)^r \norm{\nabla f}_{L^p(\RR\times \RR^2)}
\label{e.barf.cascade.2}
\end{align}
holds for all $n+r \leq N_*$ and $f\in W^{n+1,p}$. By combining \eqref{e.barf.cascade.2} and \eqref{e.barf.cascade.3}, for $ n+2\ell \leq N_*$ with $\ell \geq 2$, we obtain that 
\begin{align*}
&\norm{\nabla^n \DD_{t,m-1}^\ell \nabla T_{m-1}^{(i)}}_{L^2([0,1]\times \R^2)}
\notag\\
&\leq  C \kappa_{m-1} 
\sum_{\ell'=0}^{\ell-1}  
\ep_{m-1}^{(\beta-2)\ell'} 
\Bigl(
\norm{\nabla^{n+2} \DD_{t,m-1}^{\ell-1-\ell'}  \nabla T_{m-1}^{(i)}}_{L^2((0,1)\times\TT^2)}
+ 
\ep_{m-1}^{-(n+1)} 
\norm{\nabla  \DD_{t,m-1}^{\ell-1-\ell'}  \nabla T_{m-1}^{(i)}}_{L^2((0,1)\times\TT^2)}
\Bigr)
\notag\\
&\quad 
+ C \sum_{n'=0}^{n} \sum_{\ell'=0}^{\ell-1} 
\norm{\nabla^{n'} \DD_{t,m-1}^{\ell'}  \nabla \b_{m-1} }_{L^\infty((0,1)\times\TT^2)}
\norm{\nabla^{n-n'} \DD_{t,m-1}^{\ell-1-\ell'} \nabla T_{m-1}^{(i)}}_{L^2((0,1)\times\TT^2)}
\notag\\
&\quad 
+ C {\bf 1}_{\{i\geq 1\}}
\sum_{\ell'=0}^{\ell-1} \sum_{\ell''=0}^{\ell-1-\ell'}
\ep_{m-1}^{(\beta-2)\ell'} 
\notag\\
&\qquad \qquad \qquad
\times \biggl(
\norm{\nabla^{n+2} \Bigl( \DD_{t,m-1}^{\ell-1-\ell' - \ell''}  \bigl( \K_m - \kappa_{m-1} \Itwo + \mathbf{s}_{m-1} \bigr) \DD_{t,m-1}^{\ell''}  \nabla T_{m-1}^{(i-1)}\Bigr)}_{L^2((0,1)\times\TT^2)}
\notag\\
&\qquad \qquad \qquad \qquad  
+ 
\ep_{m-1}^{-(n+1)} 
\norm{\nabla \Bigl(  \DD_{t,m-1}^{\ell-1-\ell'-\ell''}  \bigl( \K_m - \kappa_{m-1} \Itwo + \mathbf{s}_{m-1} \bigr) \DD_{t,m-1}^{\ell''}  \nabla T_{m-1}^{(i-1)}\Bigr)}_{L^2((0,1)\times\TT^2)}
\biggr)
\,.
\end{align*}
Recalling the $\nabla \b_{m-1}$ bound in~\eqref{e.monofractal.vomit}, the $\mathbf{s}_{m-1}$ estimate~\eqref{e.sm.material},  the $\K_m$ bound in~\eqref{e.bfK.dervs},   the $\nabla T_{m-1}^{(i)}$ bound with no material derivatives~\eqref{e.Tm.i.reg.upgrade}, the inductive bound \eqref{e.barf.cascade.0} for $\DD_{t,m-1}^{\ell'}\nabla T_{m-1}^{(i)}$ with $1\leq \ell' \leq \ell-1$, and the parameter inequality $\ep_{m-1}^{\beta - 2}\tau_{m}^{\prime} = \ep_{m-1}^{3\delta} \ll 1$, we obtain from the above estimate that 
\begin{align}
&\|\theta_0\|_{L^2(\TT^2)}^{-1}  \kappa_{m-1}^{\nicefrac 12}  
\norm{\nabla^n \DD_{t,m-1}^\ell \nabla T_{m-1}^{(i)}}_{L^2([0,1]\times \R^2)}
\notag\\
&\qquad \leq  
C \kappa_{m-1}    
\sum_{\ell'=0}^{\ell-1}  
\ep_{m-1}^{(\beta-2)\ell'} 
\Bigl( 
(\ep_{m-1}^{-1-\nicefrac{\gamma}{2}})^{n+2} (\tau_m^\prime)^{-(\ell-1-\ell')}
+ 
\ep_{m-1}^{-(n+1)} 
(\ep_{m-1}^{-1-\nicefrac{\gamma}{2}})  (\tau_m^\prime)^{-(\ell-1-\ell')}
\Bigr)
\notag\\
&\qquad \qquad 
+ C  \ep_{m-1}^{\beta-2}
\sum_{n'=0}^{n} \sum_{\ell'=0}^{\ell-1} 
(\ep_{m-1}^{\beta-2})^{\ell'} 
(\ep_{m-1}^{-1})^{n'}
(\ep_{m-1}^{-1-\nicefrac{\gamma}{2}})^{n-n'}
(\tau_m^\prime)^{-(\ell-1-\ell')}
\notag\\
&\qquad \qquad 
+ C {\bf 1}_{\{i\geq 1\}} \kappa_{m-1} 
\sum_{\ell'=0}^{\ell-1} \sum_{\ell''=1}^{\ell-1-\ell'} \sum_{n'=0}^{n+2}
\ep_{m-1}^{(\beta-2)\ell'} 
(\ep_{m-1}^{-1})^{n+2-n'} (\tau_m^\prime)^{-(\ell-1-\ell' - \ell'')}
(\ep_{m-1}^{-1-\nicefrac{\gamma}{2}})^{n'} (\tau_m^\prime)^{-\ell''}
\notag\\
&\qquad \leq  
C \Bigl( \kappa_{m-1} \ep_{m-1}^{-2-\gamma} + \ep_{m-1}^{\beta-2}  \Bigr)
(\ep_{m-1}^{-1-\nicefrac{\gamma}{2}})^{n}
\sum_{\ell'=0}^{\ell-1}  
\ep_{m-1}^{(\beta-2)\ell'}  (\tau_m^\prime)^{-(\ell-1-\ell')}
\notag\\
&\qquad \leq 
C  \ep_{m-1}^{\beta-2} 
(\ep_{m-1}^{-1-\nicefrac{\gamma}{2}})^{n}(\tau_m^\prime)^{-(\ell-1)}
\notag\\
&\qquad = 
C  \ep_{m-1}^{3\delta} 
(\ep_{m-1}^{-1-\nicefrac{\gamma}{2}})^{n}(\tau_m^\prime)^{-\ell}
\label{e.barf.cascade.4}
\end{align}
for all  $n+2\ell \leq N_*$. Note that the bound on the term  $\nabla^{n+2} \DD_{t,m-1}^{\ell-1-\ell'} \mathbf{s}_{m-1}$, cf.~\eqref{e.sm.material}, requires that $n+2 + \ell-1-\ell' \leq N_*$; this condition holds because $n+2+\ell-1-\ell' \leq n+1+\ell \leq n+2\ell$ for $\ell\geq 1$. 
By induction on $\ell$, this concludes the proof of \eqref{e.barf.cascade.0}, and thus of the Lemma.
\end{proof}

\subsection{{Estimates for~\texorpdfstring{$\widetilde{H}_m$}{tilde H m}}}
\label{ss.Hm.estimates}

Before estimating the function $\widetilde{H}_m$ defined in \eqref{e.Hm.def}, we need to obtain estimates for space-and-material derivatives of the tensors $\mathbf{A}_{m,n,r}$ defined in \eqref{e.A.mnr.def}. 
\begin{proposition}
\label{p.Amnr.bounds}
Under the assumptions of Lemma~\ref{l.Tm.reg.upgrade}, there exists a constant $C\geq 1$ such that for all $0 \leq r \leq \nicefrac{N_*}{2}$, $m\in \N$, and $0\leq n \leq N_*-1$ we have 
\begin{equation}
\label{e.Amnr.bounds}
\bigl\|
\nabla^k \DD_{t,m-1}^\ell \mathbf{A}_{m,n,r}
\bigr\|_{L^2([0,1]\times \TT^2)}
\leq   C  \|\theta_0\|_{L^2(\TT^2)}
\bigl(\ep_m^2 \kappa_m^{-1}\bigr)
\kappa_{m-1}^{-\nicefrac 12}
\ep_{m-1}^{-(1+\nicefrac\gamma2) k } \bigl(\tau_m^\prime \bigr)^{-\ell-r}
\end{equation}
for all $k + 2\ell \leq N_* - 2r$.
\end{proposition}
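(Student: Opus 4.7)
The plan is to induct on the recursion index $r\in\{0,1,\ldots,\nicefrac{N_*}{2}\}$, with the base case $r=0$ handled by a direct Leibniz expansion of the defining formula for $\mathbf{A}_{m,n,0}^{ijk}$ together with all the component estimates already at our disposal.

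\textbf{Base case.} For $r=0$, the tensor $\mathbf{A}_{m,n,0}^{ijk}$ is a product of four factors: the purely temporal function $L_{m,n}^{\kappa_m}(t)$, the cutoff $\hat{\xi}_{m,l}(t)$, the matrix $\nabla X_{m-1,l}\circ X_{m-1,l}^{-1}$, and $\nabla T_{m-1}$ (summed over $l$, but at each $t$ at most one term in the sum is nonzero by the support property of $\hat{\xi}_{m,l}$). I apply $\nabla^k \DD_{t,m-1}^\ell$, distribute via the Leibniz rule, and bound each resulting product in $L^\infty\cdot L^\infty\cdot L^\infty\cdot L^2$. The four ingredients are estimated by:
\begin{itemize}
\item $\|\partial_t^{\ell'} L_{m,n}^{\kappa_m}\|_{L^\infty}\leq C(\ep_m^2/\kappa_m)(\tau_m')^{-\ell'}$ from \eqref{e.Lmn.bound};
\item $\|\partial_t^{\ell'}\hat{\xi}_{m,l}\|_{L^\infty}\leq C(\tau_m')^{-\ell'}$ from \eqref{e.zeta.prime.ml.bounds};
\item $\|\nabla^{k'}\DD_{t,m-1}^{\ell'}(\nabla X_{m-1,l}\circ X_{m-1,l}^{-1})\|_{L^\infty}\leq C\ep_{m-1}^{-k'}(\ep_{m-1}^{\beta-2})^{\ell'}$ from Corollary~\ref{c.material.DX.Xinv};
\item $\|\nabla^{k'}\DD_{t,m-1}^{\ell'}\nabla T_{m-1}\|_{L^2}$ via Lemma~\ref{l.Tm.reg.upgrade} (when $\ell'=0$) and Lemma~\ref{l.T.m-1.material.mixed} (when $\ell'\geq 1$).
\end{itemize}
Using the parameter inequality $\ep_{m-1}^{\beta-2}\leq C\ep_{m-1}^{3\delta}(\tau_m')^{-1}\leq (\tau_m')^{-1}$ (which follows from \eqref{e.taum.def}--\eqref{e.taubounds} as in the paragraph after \eqref{e.sm.material}), I convert all the $\ep_{m-1}^{\beta-2}$ factors into $(\tau_m')^{-1}$'s, collect powers, and arrive at \eqref{e.Amnr.bounds} with $r=0$. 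The budget $k+2\ell\leq N_*$ is used in two places: to legitimately invoke Corollary~\ref{c.material.DX.Xinv} (requiring $k'+\ell'\leq N_*$) and to invoke Lemma~\ref{l.T.m-1.material.mixed} (requiring $k'+2\ell'\leq N_*$).

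\textbf{Inductive step.} Assume \eqref{e.Amnr.bounds} is established at level $r$ for all admissible $(k,\ell)$. From the recursion in \eqref{e.A.mnr.def},
\begin{equation*}
\nabla^k \DD_{t,m-1}^\ell \mathbf{A}_{m,n,r+1}
= \nabla^k \DD_{t,m-1}^{\ell+1} \mathbf{A}_{m,n,r}
+ \nabla^k \DD_{t,m-1}^\ell\bigl(\nabla\b_{m-1}\cdot \mathbf{A}_{m,n,r}\bigr).
\end{equation*}
For the first term, the assumption $k+2\ell\leq N_*-2(r+1)$ translates to $k+2(\ell+1)\leq N_*-2r$, so the induction hypothesis applies and yields the bound with $(\tau_m')^{-\ell-1-r}=(\tau_m')^{-\ell-(r+1)}$. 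For the second term, Leibniz expansion in both $\nabla$ and $\DD_{t,m-1}$ produces pieces of the form $(\nabla^{k_1}\DD_{t,m-1}^{\ell_1}\nabla\b_{m-1})\cdot(\nabla^{k_2}\DD_{t,m-1}^{\ell_2}\mathbf{A}_{m,n,r})$ with $k_1+k_2=k$, $\ell_1+\ell_2=\ell$. The first factor is bounded pointwise by \eqref{e.monofractal.vomit} (which is valid since $k_1+\ell_1\leq k+\ell\leq N_*$) by $C\ep_{m-1}^{\beta-2}\ep_{m-1}^{-k_1}(\ep_{m-1}^{\beta-2})^{\ell_1}$, and the second factor is controlled in $L^2$ by the induction hypothesis at level $r$ since $k_2+2\ell_2\leq k+2\ell\leq N_*-2(r+1)\leq N_*-2r$. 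Converting each $\ep_{m-1}^{\beta-2}$ into $(\tau_m')^{-1}$ at the cost of an $\ep_{m-1}^{3\delta}$ factor (which is harmlessly absorbed into $C$), and using $\ep_m\leq \ep_{m-1}$ to match the $\ep_{m-1}^{-(1+\gamma/2)k}$ factor, gives precisely the desired $(\tau_m')^{-\ell-(r+1)}$.

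The main obstacle to turn this into a complete proof will be the bookkeeping of derivative indices across the Leibniz expansions and the verification that the derivative budgets required by the input bounds (Corollary~\ref{c.material.DX.Xinv}, Lemma~\ref{l.T.m-1.material.mixed}, Proposition~\ref{p.material.goal}) are never exceeded under the assumption $k+2\ell\leq N_*-2r$; this is the reason for the $-2r$ rather than $-r$ loss and why we cap $r$ at $\nicefrac{N_*}{2}$. The reason $2r$ is the right loss (as opposed to just $r$) is that each application of the recursion not only adds one material derivative (via $\DD_{t,m-1}$) but also multiplies by $\nabla\b_{m-1}$, and controlling the latter via \eqref{e.monofractal.vomit} consumes one unit from the same space-and-material derivative budget; together the two contributions account for a net loss of two units per iteration. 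Since the constants $C$ are allowed to depend on $N_*$ (hence on $\beta$), the accumulated combinatorial factors and factorials at each Leibniz step are harmless.
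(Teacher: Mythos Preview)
Your proof is correct and follows essentially the same approach as the paper's: induction on $r$, with the base case handled by a Leibniz expansion using \eqref{e.Lmn.bound}, \eqref{e.zeta.prime.ml.bounds}, \eqref{e.spacetime.X.bnds}, \eqref{e.Tm.reg.upgrade}, and \eqref{e.barf.cascade}, and the inductive step using the recursion \eqref{e.A.mnr.def} together with \eqref{e.material.goal}/\eqref{e.monofractal.vomit}. One minor remark: your closing explanation of why the loss per step is $2$ rather than $1$ is slightly off---the recursion has \emph{two separate} terms, $\DD_{t,m-1}\mathbf{A}_{m,n,r}$ and $\nabla\b_{m-1}\cdot\mathbf{A}_{m,n,r}$, and it is the first one alone (via $\ell\mapsto\ell+1$ in a budget that weights $\ell$ by $2$) that forces the $2$-unit loss; the second term does not consume any budget on the $\mathbf{A}$-factor, as your own verification $k_2+2\ell_2\leq k+2\ell\leq N_*-2r$ shows.
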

\begin{proof}
We appeal to the space-and-material bounds for $\nabla \b_{m-1}$ available from \eqref{e.material.goal}, the space-and-material estimates for $\nabla X_{m-1,\ell} \circ X_{m-1,\ell}^{-1} $ in \eqref{e.spacetime.X.bnds}, the space-and-material bounds for $\nabla T_{m-1}$ in \eqref{e.Tm.reg.upgrade} and \eqref{e.barf.cascade},   the time derivative bounds for $L_{m,n}^{\kappa_m}$ in \eqref{e.Lmn.bound}, and the time derivative bounds for $\hat{\xi}_{m,l}$ in \eqref{e.zeta.prime.ml.bounds}. Using these bounds, the product rule, and the definition of $\mathbf{A}_{m,n,0}$ in \eqref{e.A.mnr.def}, we deduce
\begin{equation}
\bigl\|
\nabla^k \DD_{t,m-1}^\ell \mathbf{A}_{m,n,0}
\bigr\|_{L^2([0,1]\times \TT^2)}
\leq C  \|\theta_0\|_{L^2(\TT^2)}
\bigl(\ep_m^2 \kappa_m^{-1}\bigr)
\kappa_{m-1}^{-\nicefrac 12}
\ep_{m-1}^{-(1+\nicefrac\gamma2) k } \bigl(\tau_m^\prime \bigr)^{-\ell}
\end{equation}
for all $k + 2\ell \leq N_*$. Here we have used implicitly the bounds $\ep_{m-1}^{\beta-2} \leq C (\tau_m^\prime)^{-1}$ and $\kappa_m \ep_m^{-2} \leq C (\tau_m^\prime)^{-1}$. 
Inductively in $r$, it is then direct to establish
\begin{equation}
\label{e.Amnr.bounds.induction}
\bigl\|
\nabla^k \DD_{t,m-1}^\ell \mathbf{A}_{m,n,r}
\bigr\|_{L^2([0,1]\times \TT^2)}
\leq C  \|\theta_0\|_{L^2(\TT^2)}
\bigl(\ep_m^2 \kappa_m^{-1}\bigr)
\kappa_{m-1}^{-\nicefrac 12}
\ep_{m-1}^{-(1+\nicefrac\gamma2) k } \bigl(\tau_m^\prime \bigr)^{-\ell-r}
\end{equation}
but only for $k$ and $\ell$ that satisfy $k + 2(\ell+r) \leq N_*$. To see this, note that the recursion relation in \eqref{e.A.mnr.def} gives $\mathbf{A}_{m,n,r+1} = \DD_{t,m-1} \mathbf{A}_{m,n,r} + \nabla \b_{m-1} \mathbf{A}_{m,n,r}$, with suitable contraction. If only the first term in this relation would be present, then \eqref{e.Amnr.bounds.induction} would simply follow by induction. The second term in this relation requires that we use the Leibniz rule to decompose $\nabla^k \DD_{t,m-1}^\ell (\nabla \b_{m-1} \mathbf{A}_{m,n,r} ) = \sum_{k'=0}^k \sum_{\ell'=0}^{\ell} 
\binom{k}{k'} \binom{\ell}{\ell'} \nabla^{k-k'} \DD_{t,m-1}^{\ell-\ell'} \nabla \b_{m-1} \, \nabla^{k'} \DD_{t,m-1}^{\ell'} \mathbf{A}_{m,n,r}$. The desired bound at level $r+1$ is then a consequence of \eqref{e.material.goal} and \eqref{e.Amnr.bounds.induction} at level $r$.
\end{proof}

\begin{proposition}
\label{p.Hm.bounds}
Under the assumptions of Lemma~\ref{l.Tm.reg.upgrade},  there exists a constant $C >0$, which only depends only on~$N_*$, such that 
\begin{align}
\label{e.Hm.Linfty}
\norm{\widetilde{H}_m(t,\cdot)}_{L^{\infty}_tL^2_x([0,1]\times \TT^2)} 
& \leq C \ep_{m-1}^{\delta} \|\theta_0\|_{L^2(\TT^2)} 
\,.
\end{align}
and 
\begin{align}
\label{e.Hm.gradient.L2}
\norm{\nabla \widetilde{H}_m(t,\cdot)}_{L^{2}_tL^2_x([0,1]\times \TT^2)} 
& \leq  
C  \ep_{m-1}^{4\delta}
\kappa_{m-1}^{-\nicefrac 12}
\|\theta_0\|_{L^2(\TT^2)} 
\,.
\end{align}
\end{proposition}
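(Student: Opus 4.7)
The plan is to estimate each summand $\widetilde{H}_{m,r}$ in~\eqref{e.Hm.def} separately, by pairing Proposition~\ref{p.Amnr.bounds} for the tensors $\mathbf{A}_{m,n,r}$ with the $L^\infty_t$ bound~\eqref{e.qmnr.bounds} for the time-periodic correctors $\mathbf{q}_{m,n,r+1}^{\kappa_m}$, and then summing the resulting estimates over $n\in\{0,\ldots,N_*-1\}$ and $r\in\{0,\ldots,\nicefrac{N_*}{2}\}$. Since $\mathbf{q}_{m,n,r+1}^{\kappa_m}$ depends only on time, the definition~\eqref{e.H.mr.def} yields the pointwise identity
\begin{equation*}
\nabla^k \widetilde{H}_{m,r}(t,x)
=
\sum_{n=0}^{N_*-1}
\nabla^k \nabla \cdot \mathbf{A}_{m,n,r}(t,x)\,\mathbf{q}_{m,n,r+1}^{\kappa_m}(t)\,,
\end{equation*}
so for $k\in\{0,1\}$ the $L^2_{t,x}$ norm of $\nabla^k \widetilde{H}_{m,r}$ is controlled by the product of~\eqref{e.Amnr.bounds} (with $k+1$ space derivatives and zero material derivatives) and~\eqref{e.qmnr.bounds}.

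Carrying out this multiplication and using the parameter identities $a_m^2\ep_m^4/\kappa_m=\ep_m^{\beta-\gamma}$ and $\kappa_{m-1}^{-\nicefrac12}=\ep_{m-1}^{-(\beta+\gamma)/2}$ (from~\eqref{e.am.def} and~\eqref{e.kappam.bound}), the super-geometric identity $\ep_m^{\beta-\gamma}\leq \ep_{m-1}^{q(\beta-\gamma)}=\ep_{m-1}^{\beta+\gamma}$ (where the special choice of~$\gamma$ in~\eqref{e.gamma} is exploited), the time-scale bound $\tau_m\leq C\ep_{m-1}^{2-\beta+4\delta}$ from~\eqref{e.taubounds}, and the small ratio $\tau_m/\tau_m^\prime\leq C\ep_{m-1}^\delta$ coming from \eqref{e.taum.def}--\eqref{e.taum.prime.def}, one arrives at
\begin{equation*}
\bigl\|\nabla^k \widetilde{H}_{m,r}\bigr\|_{L^2((0,1)\times\TT^2)}
\leq
\frac{C^{r+1}}{(r+1)!}\,\|\theta_0\|_{L^2(\TT^2)}\,\ep_{m-1}^{r\delta}
\cdot
\begin{cases}
\ep_{m-1}^{1-\nicefrac{\beta}{2}+4\delta}, & k=0,\\[2pt]
\kappa_{m-1}^{-\nicefrac 12}\,\ep_{m-1}^{4\delta}, & k=1,
\end{cases}
\end{equation*}
after the geometric series in $n$ has been summed; this is permissible because the factor $(\ep_m^2/(\kappa_m\tau_m))^n$ present in~\eqref{e.qmnr.bounds} is bounded by $(C\ep_{m-1}^{2\delta})^n$ thanks to~\eqref{e.exprat.bound}. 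Summing next in $r$ yields~\eqref{e.Hm.gradient.L2} directly, together with an $L^2_{t,x}$ bound on $\widetilde{H}_m$ itself which is in fact far stronger than the $L^\infty_t L^2_x$ bound asserted in~\eqref{e.Hm.Linfty}.

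To upgrade the $L^2_{t,x}$ estimate on $\widetilde{H}_m$ to $L^\infty_t L^2_x$, the plan is to invoke the one-dimensional Sobolev embedding
\begin{equation*}
\|\widetilde{H}_m\|_{L^\infty_t L^2_x((0,1)\times\TT^2)}^2
\leq
\|\widetilde{H}_m\|_{L^2_{t,x}}^2
+
2\|\widetilde{H}_m\|_{L^2_{t,x}}\bigl\|\partial_t \widetilde{H}_m\bigr\|_{L^2_{t,x}}\,,
\end{equation*}
which reduces matters to an $L^2_{t,x}$ estimate for $\partial_t \widetilde{H}_m$. When $\partial_t$ lands on $\mathbf{q}_{m,n,r+1}^{\kappa_m}$, the relation $\partial_t\mathbf{q}_{m,n,r+1}^{\kappa_m}=-\mathbf{q}_{m,n,r}^{\kappa_m}$ from~\eqref{e.q.mnr.def} produces a term already controlled by the previous argument. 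When $\partial_t$ lands on $\mathbf{A}_{m,n,r}$, one writes $\partial_t=\DD_{t,m-1}-\b_{m-1}\cdot\nabla$ and exploits the recursion~\eqref{e.A.mnr.def} to rewrite $\DD_{t,m-1}\mathbf{A}_{m,n,r}=\mathbf{A}_{m,n,r+1}-(\nabla\b_{m-1})\mathbf{A}_{m,n,r}$, applying Proposition~\ref{p.Amnr.bounds} once more together with the $\nabla\b_{m-1}$ bound from~\eqref{e.material.goal}. In every case the extra cost is at most a factor of $(\tau_m^\prime)^{-1}\leq C\ep_{m-1}^{-(2-\beta+3\delta)}$, and a direct computation then yields $\|\widetilde{H}_m\|_{L^\infty_t L^2_x}^2\leq C\ep_{m-1}^{5\delta}\|\theta_0\|_{L^2(\TT^2)}^2$, which is far stronger than~\eqref{e.Hm.Linfty}.

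The only delicate issue is organizational: Proposition~\ref{p.Amnr.bounds} requires $k+2\ell\leq N_*-2r$, so the top index $r=\nicefrac{N_*}{2}$ is borderline when estimating $\nabla \widetilde{H}_{m,r}$ (and slightly outside the admissible range when estimating $\partial_t\widetilde{H}_m$, which needs $(k,\ell)=(1,1)$). Since $N_*$ is chosen very large in~\eqref{e.N}, however, these borderline remainders are $\ep_{m-1}^{\delta N_*/2}$--small — of the same type as those packaged into the error $\mathbf{d}_m$ in~\eqref{e.dm} — and the geometric convergence of the $r$-sum, driven by the combined factor $(\tau_m/\tau_m^\prime)^r\cdot 1/(r+1)!$, absorbs them with ample room to spare.
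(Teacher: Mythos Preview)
Your argument for the gradient bound~\eqref{e.Hm.gradient.L2} is correct and essentially identical to the paper's Step~2.

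For the $L^\infty_t L^2_x$ bound~\eqref{e.Hm.Linfty}, your Sobolev-embedding approach has a gap. You write $\partial_t = \DD_{t,m-1} - \b_{m-1}\cdot\nabla$ and claim that ``in every case the extra cost is at most a factor of~$(\tau_m')^{-1}$,'' but this fails for the transport piece $\b_{m-1}\cdot\nabla\mathbf{A}_{m,n,r}$: since $\|\b_{m-1}\|_{L^\infty}$ is merely~$O(1)$ by~\eqref{e.bm.uniform}, the extra spatial gradient on~$\mathbf{A}_{m,n,r}$ costs~$\ep_{m-1}^{-(1+\nicefrac{\gamma}{2})}$ via Proposition~\ref{p.Amnr.bounds}, and a direct check using~\eqref{e.beta.def.0},~\eqref{e.delta},~\eqref{e.gamma} shows that $1+\nicefrac{\gamma}{2} > 2-\beta+3\delta$ for every~$\beta\in(1,\nicefrac43)$. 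Carrying this through, the Sobolev product $\|\widetilde{H}_m\|_{L^2_{t,x}}\|\partial_t\widetilde{H}_m\|_{L^2_{t,x}}$ is only bounded by~$C\ep_{m-1}^{1-\beta-\nicefrac{\gamma}{2}+8\delta}\|\theta_0\|_{L^2}^2$, whose exponent is strictly negative, so the argument does not close.

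The fix is immediate once you notice that $\b_{m-1}$ is divergence-free, so that $\frac{d}{dt}\|\widetilde{H}_m(t,\cdot)\|_{L^2_x}^2 = 2\int_{\TT^2}\widetilde{H}_m\,\DD_{t,m-1}\widetilde{H}_m\,dx$ and your Sobolev inequality holds with $\DD_{t,m-1}\widetilde{H}_m$ in place of $\partial_t\widetilde{H}_m$; the offending transport term then drops out entirely. The paper exploits exactly this cancellation but packages it differently: it composes with the Lagrangian flow $X_{m-1,l}$ so that the time derivative of $\|\widetilde{H}_m\circ X_{m-1,l}(t,\cdot)\|_{L^2_x}^2$ produces~$\DD_{t,m-1}$ directly, and it further uses that $\widetilde{H}_m$ vanishes at the times $(l\pm\nicefrac12)\tau_m^{\prime\prime}$ (a consequence of $L_{m,n}^{\kappa_m}$ vanishing there together with the recursion~\eqref{e.A.mnr.def}) to apply the fundamental theorem of calculus on intervals of length~$\tau_m^{\prime\prime}$ rather than on~$[0,1]$, gaining a factor of~$(\tau_m^{\prime\prime})^{\nicefrac12}$. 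With your Sobolev approach corrected to use~$\DD_{t,m-1}$, this localization is in fact unnecessary, and you would obtain $\|\widetilde{H}_m\|_{L^\infty_tL^2_x}\leq C\ep_{m-1}^{2\delta}\|\theta_0\|_{L^2}$ --- slightly stronger than~\eqref{e.Hm.Linfty}.
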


\begin{proof}
Recall that
\begin{equation}
\label{e.Hm.twiddle.again}
\widetilde{H}_m(t,x)
=
\nabla \cdot
\sum_{r=0}^{\nicefrac{N_*}{2}}
\sum_{n=0}^{N_*-1}
\mathbf{A}_{m,n,r}(t,x)
\mathbf{q}_{m,n,r+1} (t)
\,.
\end{equation}

\emph{Step 1.}
The uniform-in-time estimate~\eqref{e.Hm.Linfty}. 
First, we observed that for all $l \in \Z$, and $t \in [(l-\nicefrac 12) \tau_m^{\prime\prime},(l+\nicefrac 12) \tau_m^{\prime\prime}]$, since the flow $X_{m-1,l}(t,\cdot)$ is volume preserving, we have that $\| \widetilde{H}_{m}(t,\cdot) \|_{L^2_x} = \| \widetilde{H}_{m} \circ X_{m-1,l}(t,\cdot) \|_{L^2_x}$. Second, we note that by the construction of $\hat{\zeta}_{m,l}$, this function vanishes identically in a $2\tau_m^\prime$-neighborhood of $(l\pm \nicefrac 12) \tau_m^{\prime\prime}$ (see~\eqref{e.zeta.prime.ml.fitting}), and thus by the definition of $L_{m,n}^{\kappa_m}$  in \eqref{e.Lmn.def}, we have   
\begin{equation}
\bigl(\partial_t^\ell L_{m,n}^{\kappa_m} \bigr)( (l \pm \nicefrac 12) \tau_m^{\prime\prime}) =  0
\label{e.In.ell.no.bdry.terms}
\end{equation}
for all $\ell \in \N$. In turn,~\eqref{e.In.ell.no.bdry.terms} and the recursive definition of the $\mathbf{A}_{m,n,r}$ tensors in \eqref{e.A.mnr.def} gives that $\mathbf{A}_{m,n,r} ( (l \pm \nicefrac 12) \tau_m^{\prime\prime},\cdot) =  0$ for all $0\leq r \leq \nicefrac{N_*}{2}$, and thus
\begin{equation*}
\widetilde{H}_m( (l \pm \nicefrac 12) \tau_m^{\prime\prime},\cdot) =  0
\,,
\end{equation*}
for all $l \in \Z$. Combining these two observations with the fundamental theorem of calculus in time, we deduce that for all $t \in [(l-\nicefrac 12) \tau_m^{\prime\prime},(l+\nicefrac 12) \tau_m^{\prime\prime}]$
\begin{align}
\| \widetilde{H}_{m}(t,\cdot) \|_{L^2_x}^2
&= \int_{(l-\nicefrac 12)\tau_m^{\prime\prime}}^t \frac{d}{dt'} \| \widetilde{H}_{m} (t',X_{m-1,l}(t',\cdot) \|_{L^2_x}^2 d t'
\notag\\
&= 2 \int_{(l-\nicefrac 12)\tau_m^{\prime\prime}}^t \int_{\TT^2}  \widetilde{H}_{m} (t',X_{m-1,l}(t',x) (\DD_{t,m-1} \widetilde{H}_{m})(t',X_{m-1,l}(t',x)  dx dt'
\notag\\
&\leq 
2 (\tau_m^{\prime\prime})^{\nicefrac 12} 
\norm{\DD_{t,m-1} \widetilde{H}_{m}}_{L^2([0,1]\times \TT^2)}
\sup_{t\in [(l-\nicefrac 12)\tau_m^{\prime\prime},(l+\nicefrac 12)\tau_m^{\prime\prime}]} \| \widetilde{H}_{m}(t,\cdot) \|_{L^2_x} 
\,.
\label{e.tilde.Hm.temp.1}
\end{align}
Next, using~\eqref{e.Hm.twiddle.again},~\eqref{e.qmnr.bounds} and~\eqref{e.Amnr.bounds}, we derive
\begin{align*}
\lefteqn{
\bigl\| 
\DD_{t,m-1} \widetilde{H}_m
\bigr\|_{L^{2}([0,1]\times \TT^2)} 
} \qquad & 
\notag \\ & 
\leq 
\sum_{r=0}^{\nicefrac{N_*}{2}}
\sum_{n=0}^{N_*-1}
\biggl(
\bigl\|\nabla \DD_{t,m-1}  \mathbf{A}_{m,n,r}\bigr\|_{L^{2}([0,1]\times \TT^2)} 
\bigl\| \mathbf{q}_{m,n,r+1} \bigr\|_{L^{\infty}([0,1])}
\notag\\
&\qquad \qquad \qquad 
+
\bigl\|  \nabla \b_{m-1}\bigr\|_{L^{\infty}([0,1]\times \TT^2)} 
\bigl\|  \nabla \mathbf{A}_{m,n,r}\bigr\|_{L^{2}([0,1]\times \TT^2)} 
\bigl\| \mathbf{q}_{m,n,r+1} \bigr\|_{L^{\infty}([0,1])}
\notag\\
&\qquad \qquad \qquad 
+
\bigl\| \nabla  \mathbf{A}_{m,n,r}\bigr\|_{L^{2}([0,1]\times \TT^2)} 
\bigl\| \partial_t \mathbf{q}_{m,n,r+1} \bigr\|_{L^{\infty}([0,1])}
\biggr)
\notag \\ & 
\leq 
C \|\theta_0\|_{L^2(\TT^2)} \!
\sum_{r=0}^{\nicefrac{N_*}{2}}
\sum_{n=0}^{N_*-1}
\biggl(
\biggl\{ 
\bigl(\ep_m^2 \kappa_m^{-1}\bigr)
\kappa_{m-1}^{-\nicefrac 12}
\ep_{m-1}^{-(1+\nicefrac\gamma2)} \bigl(\tau_m^\prime \bigr)^{-r-1}
\biggr\}
\biggl\{ 
\frac{a_m^2 \ep_m^2}{n!}
\biggl( \frac{C\ep_m^2}{\kappa_m \tau_m } \biggr)^n \frac{(C\tau_m)^{r+1}}{(r+1)!}
\biggr\} 
\notag\\
&\qquad \qquad \qquad\qquad\qquad \quad +
\biggl\{ 
\bigl(\ep_m^2 \kappa_m^{-1}\bigr)
\kappa_{m-1}^{-\nicefrac 12}
\ep_{m-1}^{-(1+\nicefrac\gamma2)} \bigl(\tau_m^\prime \bigr)^{-r}
\biggr\}
\biggl\{ 
\frac{a_m^2 \ep_m^2}{n!}
\biggl( \frac{C\ep_m^2}{\kappa_m \tau_m } \biggr)^n \frac{(C\tau_m)^{r}}{r!}
\biggr\} \biggr)
\,.
\end{align*}
We next use that
\begin{equation}
\label{e.tilde.Hm.temp.2}
\sum_{r=0}^\infty
\biggl( \frac{C\tau_m}{\tau_m'} \biggr)^{\!\!r} 
\leq
\sum_{r=0}^\infty
\bigl( C \ep_{m-1}^{\delta} \bigr)^{r} 
\leq 2\,,
\end{equation}
and 
\begin{equation}
\label{e.tilde.Hm.temp.3}
\sum_{n=0}^\infty
\biggl( \frac{C\ep_m^2}{\kappa_m \tau_m } \biggr)^{\!\!n} 
\leq
\sum_{n=0}^\infty
\bigl( C \ep_{m-1}^{2\delta} \bigr)^{n}
\leq 2\,.
\end{equation}
Combining the three displays above, we arrive at 
\begin{equation}
\bigl\| 
\DD_{t,m-1} \widetilde{H}_m
\bigr\|_{L^{2}([0,1]\times \TT^2)} 
\leq 
C \|\theta_0\|_{L^2(\TT^2)} 
\bigl( a_m^2 \ep_m^4 \kappa_m^{-1} \bigr)
\kappa_{m-1}^{-\nicefrac 12}
\ep_{m-1}^{-(1+\nicefrac\gamma2)}  
\,.
\label{e.tilde.Hm.temp.4}
\end{equation}
Returning to \eqref{e.tilde.Hm.temp.1}, we take the supremum in time over $t \in [(l-\nicefrac 12) \tau_m^{\prime\prime},(l+\nicefrac 12) \tau_m^{\prime\prime}]$, absorb the suitable term in the left side, and then taking a supremum over $l \in \Z$, we deduce
\begin{align*}
\bigl\|
\widetilde{H}_m
\bigr\|_{L^\infty([0,1];L^2(\TT^2))}
&\leq C \|\theta_0\|_{L^2(\TT^2)}
\bigl( a_m^2 \ep_m^4 \kappa_m^{-1} \bigr)
(\tau_m^{\prime\prime})^{\nicefrac 12} 
\kappa_{m-1}^{-\nicefrac 12}
\ep_{m-1}^{-(1+\nicefrac\gamma2)} 
\\
&\leq C \|\theta_0\|_{L^2(\TT^2)}
\kappa_{m-1}
(a_{m-1}^{-1} \ep_{m-1}^{2\delta})^{\nicefrac 12} 
\kappa_{m-1}^{-\nicefrac 12}
\ep_{m-1}^{-(1+\nicefrac\gamma2)}  
\\
&\leq C \|\theta_0\|_{L^2(\TT^2)}
\ep_{m-1}^{\delta}  
\,.
\end{align*}
This concludes the proof of \eqref{e.Hm.Linfty}.

\smallskip

\emph{Step 2.} The gradient estimate~\eqref{e.Hm.gradient.L2}. 
Using~\eqref{e.Hm.twiddle.again},~\eqref{e.qmnr.bounds} and~\eqref{e.Amnr.bounds}, we obtain
\begin{align}
\label{e.nabla.Hm.plug}
\lefteqn{
\bigl\| 
\nabla \widetilde{H}_m
\bigr\|_{L^{2}([0,1]\times \TT^2)} 
}  \ \ \ & 
\notag \\ & 
\leq 
\sum_{r=0}^{\nicefrac{N_*}{2}}
\sum_{n=0}^{N_*-1}
\bigl\| \nabla^2 \mathbf{A}_{m,n,r}\bigr\|_{L^{2}([0,1]\times \TT^2)} 
\bigl\| \mathbf{q}_{m,n,r+1} \bigr\|_{L^{\infty}([0,1])}
\notag \\ & 
\leq 
C \|\theta_0\|_{L^2(\TT^2)} \!
\sum_{r=0}^{\nicefrac{N_*}{2}}
\sum_{n=0}^{N_*-1}
\biggl\{ 
\bigl(\ep_m^2 \kappa_m^{-1}\bigr)
\kappa_{m-1}^{-\nicefrac 12}
\ep_{m-1}^{-2(1+\nicefrac\gamma2)} \bigl(\tau_m^\prime \bigr)^{-r}
\biggr\}
\biggl\{ 
\frac{a_m^2 \ep_m^2}{n!}
\biggl( \frac{C\ep_m^2}{\kappa_m \tau_m } \biggr)^n \frac{(C\tau_m)^{r+1}}{(r+1)!}
\biggr\} 
\,.
\end{align}
Inserting the bounds \eqref{e.tilde.Hm.temp.2} and \eqref{e.tilde.Hm.temp.3} into~\eqref{e.nabla.Hm.plug}, we obtain that
\begin{align*}
\bigl\| 
\nabla \widetilde{H}_m
\bigr\|_{L^{2}([0,1]\times \TT^2)} 
&
\leq
C \|\theta_0\|_{L^2(\TT^2)}
\Bigl\{ 
\bigl(\ep_m^2 \kappa_m^{-1}\bigr)
\kappa_{m-1}^{-\nicefrac 12}
\ep_{m-1}^{-2(1+\nicefrac\gamma2)}
\Bigr\}
\Big\{
a_m^2 \ep_m^2 \tau_m
\Bigr\}
\notag \\ & 
=
C \|\theta_0\|_{L^2(\TT^2)}
\frac{a_m^2 \ep_m^4 }{\kappa_m}
\tau_m
\ep_{m-1}^{-(2+\gamma)}
\kappa_{m-1}^{-\nicefrac 12}
\notag \\ & 
=
C \|\theta_0\|_{L^2(\TT^2)}
a_{m-1} 
\tau_m 
\kappa_{m-1}^{-\nicefrac 12}
=
C \|\theta_0\|_{L^2(\TT^2)}
\ep_{m-1}^{4\delta}
\kappa_{m-1}^{-\nicefrac 12}
\,,
\end{align*}
which proves~\eqref{e.Hm.gradient.L2}.
\end{proof}

\begin{proposition}
\label{p.dm.bounds}
Under the assumptions of Lemma~\ref{l.Tm.reg.upgrade},  there exists a constant $C >0$, which only depends only on~$N_*$, such that 
\begin{equation}
\label{e.dm.bounds}
\bigl\|
\mathbf{d}_m
\bigr\|_{L^2([0,1]\times \TT^2)}
\leq  
C \kappa_{m-1}^{\nicefrac 12}
\bigl(C \ep_{m-1}^{\delta} \bigr)^{\nicefrac{N_*}{2}} \|\theta_0\|_{L^2(\TT^2)}
\,.
\end{equation}
\end{proposition}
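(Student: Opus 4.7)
The plan is to estimate the two terms on the right side of the definition~\eqref{e.dm} separately, using the gain of many powers of~$\ep_{m-1}^\delta$ available in each.

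For the first term in~\eqref{e.dm}, I would combine the pointwise closeness bound~\eqref{e.JJhat}, namely
\begin{equation*}
\| \hat{\J}_m - \J_m \|_{L^\infty([0,1])}
\leq \frac{C a_m^2 \ep_m^4}{\kappa_m}\biggl(\frac{\ep_m^2}{\kappa_m \tau_m}\biggr)^{\!N_*},
\end{equation*}
with the uniform bound $\sup_{l\in\Z} \| \nabla X_{m-1,l} \circ X_{m-1,l}^{-1} \|_{L^\infty(\supp \hat{\xi}_{m,l}\times\R^2)} \leq C$ from Corollary~\ref{c.flowreg} (cf.~\eqref{e.Xm.bound.2} with $n=0$), the fact that $\sum_l \hat{\xi}_{m,l} \equiv 1$ (see~\eqref{e.hatxi.partition}), and the $L^2$ gradient estimate $\kappa_{m-1}^{\nicefrac12}\|\nabla T_{m-1}\|_{L^2([0,1]\times\TT^2)} \leq C \|\theta_0\|_{L^2}$ from Lemma~\ref{l.Tm.reg.upgrade}. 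This yields an upper bound of the form
\begin{equation*}
C \|\theta_0\|_{L^2(\TT^2)} \kappa_{m-1}^{-\nicefrac12} \cdot \frac{a_m^2\ep_m^4}{\kappa_m} \biggl(\frac{\ep_m^2}{\kappa_m \tau_m}\biggr)^{\!N_*}.
\end{equation*}
Then, using the identity $a_m^2\ep_m^4/\kappa_m \simeq \kappa_{m-1}$ (which follows from~\eqref{e.kappam.bound} and $\Khom_m^{\kappa_m}=\kappa_{m-1}$; alternatively from the recursion~\eqref{e.kappa.sequence} together with~\eqref{e.enhance.onestep}) and the scale-separation inequality $\ep_m^2/(\kappa_m\tau_m)\leq C\ep_{m-1}^{2\delta}$ from~\eqref{e.exprat.bound}, this reduces to $C\kappa_{m-1}^{\nicefrac12}\|\theta_0\|_{L^2}(C\ep_{m-1}^{2\delta})^{N_*}$, which is much smaller than the target.

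For the second term in~\eqref{e.dm}, the sum over $0\leq n \leq N_*-1$, I would take $L^2$ norms term by term, using the $\mathbf{A}_{m,n,N_*/2}$ bound from Proposition~\ref{p.Amnr.bounds} with $k=\ell=0$ and $r=N_*/2$,
\begin{equation*}
\|\mathbf{A}_{m,n,N_*/2}\|_{L^2([0,1]\times\TT^2)}
\leq C\|\theta_0\|_{L^2} (\ep_m^2\kappa_m^{-1})\kappa_{m-1}^{-\nicefrac12}(\tau_m^\prime)^{-N_*/2},
\end{equation*}
together with the time-corrector bound~\eqref{e.qmnr.bounds} for $r=N_*/2$:
\begin{equation*}
\|\mathbf{q}_{m,n,N_*/2}^{\kappa_m}\|_{L^\infty([0,1])}
\leq C a_m^2\ep_m^2 \biggl(\frac{\ep_m^2}{\kappa_m\tau_m}\biggr)^{\!n} \frac{(C\tau_m)^{N_*/2}}{(N_*/2)!}.
\end{equation*}
Multiplying these two bounds produces the key factor $(\tau_m/\tau_m^\prime)^{N_*/2}$, which is controlled by $(C\ep_{m-1}^\delta)^{N_*/2}$ thanks to~\eqref{e.taum.def}--\eqref{e.taum.prime.def}. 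Summing the resulting geometric series in $n$ via~\eqref{e.tilde.Hm.temp.3} and once more using $a_m^2\ep_m^4/\kappa_m \simeq \kappa_{m-1}$, all remaining factors collapse to $C\kappa_{m-1}^{\nicefrac12}\|\theta_0\|_{L^2}(C\ep_{m-1}^\delta)^{N_*/2}$.

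Combining these two contributions and absorbing the first (which is even smaller, carrying $(C\ep_{m-1}^{2\delta})^{N_*}$ rather than $(C\ep_{m-1}^\delta)^{N_*/2}$) into the second yields~\eqref{e.dm.bounds}. I do not expect a serious obstacle here, since all needed ingredients are already proved; the only care is bookkeeping the powers of $\kappa_m$, $a_m$, $\ep_m$ and showing that $a_m^2\ep_m^4/\kappa_m$ is comparable to $\kappa_{m-1}$, which is the single algebraic identity driving the cancellation.
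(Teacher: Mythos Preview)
Your proposal is correct and follows essentially the same approach as the paper's proof: split $\mathbf{d}_m$ into its two defining terms, control the first via~\eqref{e.JJhat}, the flow bound~\eqref{e.Xm.bound.2}, and~\eqref{e.Tm.reg.upgrade}, and control the second via~\eqref{e.Amnr.bounds} and~\eqref{e.qmnr.bounds}, summing in $n$ with~\eqref{e.tilde.Hm.temp.3}. The key algebraic simplification $a_m^2\ep_m^4/\kappa_m \simeq \kappa_{m-1}$ and the ratio $(\tau_m/\tau_m')^{N_*/2}\leq (C\ep_{m-1}^\delta)^{N_*/2}$ are exactly the ingredients the paper uses.
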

\begin{proof}
Recalling the definition of $\mathbf{d}_m$ in \eqref{e.dm} we have 
\begin{align*}
\bigl\|
\mathbf{d}_{m}
\bigr\|_{L^2([0,1]\times \TT^2)}
&\leq
2 \bigl\| 
\hat{\J}_m-\J_m 
\bigr \|_{L^\infty([0,1])}
\sup_{l\in\Z}  
\bigl\| \hat{\xi}_{m,l }
\bigl( 
\nabla X_{m-1,l} \circ X_{m-1,l}^{-1}
\bigr)
\bigr\|_{L^\infty([0,1] \times \TT^2)}
\bigl\|
\nabla T_{m-1}
\bigr\|_{L^2([0,1]\times \TT^2)}
\notag\\
&\qquad 
+ 
\sum_{n=0}^{N_*-1}
\bigl\|
\mathbf{A}_{m,n,\nicefrac{N_*}{2}}
\bigr\|_{L^2([0,1]\times \TT^2)}
\bigl\|
\mathbf{q}_{m,n,\nicefrac{N_*}{2}}^{\kappa_m}  
\bigr\|_{L^\infty([0,1]\times \TT^2)}
\,.
\end{align*}
Appealing to the closeness of~$\hat{\J}$ to~$\J$ in~\eqref{e.JJhat}, the $\mathbf{A}$ bound in~\eqref{e.Amnr.bounds}, the $\mathbf{q}$ bound in~\eqref{e.qmnr.bounds}, the $\nabla T_{m-1}$ estimate in~\eqref{e.Tm.reg.upgrade}, the summability in $n$ from~\eqref{e.tilde.Hm.temp.3}, and the flow bound~\eqref{e.spacetime.X.bnds}, we deduce
\begin{align*}
\bigl\|
\mathbf{d}_{m}
\bigr\|_{L^2([0,1]\times \TT^2)}
&\leq
\frac{C a_m^2\ep_m^4}{\kappa_m} \biggl(
\frac{\ep_m^2}{\kappa_m \tau_m}
\biggr)^{\!\!N_*}
\kappa_{m-1}^{-\nicefrac 12} \|\theta_0\|_{L^2(\TT^2)}
\notag\\
&\qquad 
+ C \|\theta_0\|_{L^2(\TT^2)}
\sum_{n=0}^{N_*-1}
\bigl(\ep_m^2 \kappa_m^{-1}\bigr)
\kappa_{m-1}^{-\nicefrac 12}
\bigl(\tau_m^\prime \bigr)^{-\nicefrac{N_*}{2}} 
\frac{a_m^2 \ep_m^2}{n!}
\biggl( \frac{C\ep_m^2}{\kappa_m \tau_m } \biggr)^n   \tau_m^{\nicefrac{N_*}{2}} 
\notag\\
&\leq
C \|\theta_0\|_{L^2(\TT^2)} 
\kappa_{m-1}^{\nicefrac 12}
\bigl(
C \ep_{m-1}^{2\delta}
\bigr)^{\!N_*}
+ 
C \|\theta_0\|_{L^2(\TT^2)}
\kappa_{m-1}^{\nicefrac 12}
\bigl(C \ep_{m-1}^{\delta} \bigr)^{\nicefrac{N_*}{2}} 
\,.
\end{align*}
Since the second of the above two terms is larger, this gives \eqref{e.dm.bounds}. 
\end{proof}

\section{Homogenization cascade up the inertial-convection subrange}
\label{s.cascade}

In this section, we give the proof of Theorem~\ref{t.anomalous.diffusion}. 
We begin by plugging the ansatz into the advection-diffusion operator and computing the error. This is the purpose of the next subsection.
In Section~\ref{ss.estimate.error} we estimate the error term, which is then used in Section~\ref{ss.one.renormalization.step} to complete the main induction step, summarized in Proposition~\ref{p.indystepdown}. The proof of the theorem appears finally in Section~\ref{ss.proof}. 

\subsection{Computing the error in the multiscale ansatz}

In this subsection, we compute an explicit expression for the error obtained when we insert the two-scale ansatz~$\tilde{\theta}_m$ it into the left side of~\eqref{e.Tm}. That is, we compute~$(\partial_t - \kappa_m \Delta + \b_m \cdot \nabla ) \tilde{\theta}_m$. The main result is the Big Display on Page~\pageref{e.monster}.

\smallskip

Throughout, we use the abbreviated notations 
\begin{align*}
\left\{
\begin{aligned}
& \tilde{\psi}_{m,k}(t,x) :=
\bigl(\psi_{m,k} \circ X_{m-1,k}^{-1}\bigr) (t,x) , 
\\ 
& \tilde{\psi}_m(t,x) : = \textstyle{\sum_{k\in\Z} }
\hat{\zeta}_{m,l_k}
\zeta_{m,k}(t) \tilde{\psi}_{m,k}(t,x).
\end{aligned}
\right.
\end{align*}
As in Section~\ref{s.multiscale}, we use the notational convention that function compositions are with respect to the space variables only. 
Observe that the recurrence in~\eqref{e.psi.recursion} may be written as
\begin{align}
\label{e.phimincr}
\phi_m(t,x) - \phi_{m-1}(t,x)
=  
\tilde{\psi}_m(t,x),
\qquad \forall m\in\N \cap[1,\infty).
\end{align}
We proceed by splitting the operator:
\begin{align}
\label{e.operator.splitting}
(\partial_t - \kappa_m \Delta + \b_m \cdot \nabla ) \tilde{\theta}_m
& 
=
\bigl(\partial_t + \b_{m-1} \cdot \nabla \bigr) \tilde{\theta}_m
+
\bigl( - \kappa_m \Delta + (\b_m - \b_{m-1} ) \cdot \nabla \bigr)\tilde{\theta}_m
\notag \\ & 
=
\underbrace{
\bigl(\partial_t + \b_{m-1} \cdot \nabla \bigr) \tilde{\theta}_m
}_{\text{the transport term}}
-
\underbrace{
\nabla \cdot 
\bigl( \kappa_m \Itwo +  \tilde{\psi}_m \sigma \bigr) \nabla \tilde{\theta}_m
}_{\text{the diffusion term}}
\,.
\end{align}
We will compute the transport term and the diffusion term separately. 

\subsubsection{Computation of the transport term}

It should come as no surprise that we will use Lagrangian coordinates to compute the transport term. 
We make use of the following two identities:
\begin{equation}
\label{e.tbm1.one}
\bigl( \partial_t + \b_{m-1} \cdot\nabla \bigr)
\tilde{\Chi}_{m,k}
=
\partial_t\Chi_{m,k} \circ X_{m-1,l_k}^{-1}
\,,
\end{equation}
and
\begin{align}
\label{e.tbm1.two}
\lefteqn{
\bigl( \partial_t + \b_{m-1} \cdot\nabla \bigr)
\bigl( 
\nabla \left( T_{m-1} \circ X_{m-1,l} \right) \circ X_{m-1,l}^{-1} \bigr)
} \qquad & 
\notag \\ & 
=
\bigl( \nabla X_{m-1,l}\circ X_{m-1,l}^{-1} \bigr)
\nabla 
\nabla \cdot \bigl(
\bigl(  \K_m  + \mathbf{s}_{m-1} \bigr) \nabla T_{m-1} +\mathbf{e}_{m-1}\bigr)
\,.
\end{align}
To prove these, recall that if $Z(t,x)$ is a flow for $\b_{m-1}$, that is, a solution of the ODE
\begin{equation*}
\partial_t Z = \b_{m-1}(t,Z)
\,,
\end{equation*}
then the inverse flow $Z^{-1}$ satisfies the transport equation (cf.~\eqref{e.PDE.backwards})
\begin{equation*}
\bigl( \partial_t + \b_{m-1} \cdot \nabla \bigr) Z^{-1} = 0
\,.
\end{equation*}
Moreover, if~$Z^{-1}$ is smooth, then any function of~$Z^{-1}$ also satisfies the same transport equation. 
Applying this to~$Z=X_{m-1,k}$, while keeping in mind the convention that function compositions in our formulas are with respect to the spatial variable only, we find that, for any function~$F(t,x)$ of both~$t$ and~$x$, we have
\begin{equation}
\label{e.transport.calculus.1}
\bigl( \partial_t + \b_{m-1} \cdot \nabla \bigr) \bigl( F\circ Z^{-1} \bigr) 
=
\partial_t F \circ Z^{-1}
\,,
\end{equation}
and
\begin{equation}
\label{e.transport.calculus.2}
\bigl( \bigl(\partial_t + \b_{m-1}\cdot\nabla \bigr) F \bigr) \circ Z
=
\partial_t (F\circ Z)
\,.
\end{equation}
The first claimed identity~\eqref{e.tbm1.one} is then immediate from~\eqref{e.transport.calculus.1}.
To obtain~\eqref{e.tbm1.two}, we use~\eqref{e.transport.calculus.1},~\eqref{e.transport.calculus.2} and the equation~\eqref{e.Tm.true} for $T_{m-1}$ as follows:
\begin{align*}
\lefteqn{
\bigl( \partial_t + \b_{m-1} \cdot\nabla \bigr)
\bigl( 
\nabla (  T_{m-1} \circ X_{m-1,l} ) \circ X_{m-1,l}^{-1} \bigr)
} \qquad & 
\\ &
= \bigl( \partial_t \nabla (  T_{m-1} \circ X_{m-1,l} ) \bigr) 
\circ X_{m-1,l}^{-1} 
\\ & 
=
\bigl( \nabla \partial_t (  T_{m-1} \circ X_{m-1,l} ) \bigr) \circ X_{m-1,l}^{-1} 
\\ & 
=
\bigl( 
\nabla \bigl(  \bigl( \partial_t  T_{m-1} + \b_{m-1} \cdot \nabla  T_{m-1}\bigr) \circ X_{m-1,l} \bigr) 
\bigr)
\circ X_{m-1,l}^{-1}
\\ & 
= 
\bigl( \nabla X_{m-1,l}\circ X_{m-1,l}^{-1} \bigr)
\nabla
\bigl[ 
\nabla \cdot 
\bigl( \bigl( \K_m + \mathbf{s}_{m-1} \bigr) \nabla T_{m-1} + \mathbf{e}_{m-1} \bigr)
\bigr]
\,.
\end{align*}
We are ready to apply $\bigl( \partial_t + \b_{m-1} \cdot \nabla \bigr)$ to both sides of~\eqref{e.ansatz.line2}. Using the product rule, the equation for~$\widetilde{H}_m$ in~\eqref{e.real.eq.Hm}, the definition of~$\tilde{G}_m$ in~\eqref{e.Gm}, and the above identities~\eqref{e.tbm1.one} and~\eqref{e.tbm1.two}, we obtain
\begin{align}
\label{e.timecomp.0}
\bigl( \partial_t + \b_{m-1} \cdot \nabla \bigr) \tilde{\theta}_m
&
=
\bigl( \partial_t + \b_{m-1} \cdot \nabla \bigr) T_{m-1} 
+
\sum_{k\in 2\Z+1}\!\!
(\partial_t\xi_{m,k} \tilde{\Chi}_{m,k})\cdot
\nabla \bigl(  T_{m-1} \circ X_{m-1,l_k} \bigr) \circ X_{m-1,l_k}^{-1} 
\notag \\ & \quad 
+
\sum_{k\in 2\Z+1}\!\!
\xi_{m,k} 
\bigl( \partial_t \Chi_{m,k} \circ X_{m-1,l_k}^{-1} \bigr) \cdot
\nabla \bigl(  T_{m-1} \circ X_{m-1,l_k} \bigr) \circ X_{m-1,l_k}^{-1} 
\notag \\ & \quad 
+
\sum_{k\in 2\Z+1}\!\!
\xi_{m,k} 
\tilde{\Chi}_{m,k}\cdot
\bigl( \nabla X_{m-1,l_k}\circ X_{m-1,l_k}^{-1} \bigr) 
\nabla \nabla \cdot 
\bigl( \bigl( \K_m + \mathbf{s}_{m-1} \bigr) \nabla T_{m-1} + \mathbf{e}_{m-1} \bigr)
\notag \\ & \quad 
+
\nabla \cdot 
\sum_{l\in \Z}
\hat{\xi}_{m,l}
\bigl( \J_m  - \K_m \bigr) 
\nabla
\bigl( T_{m-1} \circ X_{m-1,l} \bigr) \circ X_{m-1,l}^{-1}
+\nabla \cdot \mathbf{d}_m
\,.
\end{align}
In view of the definition of~$\mathbf{s}_{m-1}$ in~\eqref{e.sm}, we can 
write the equation for~$T_{m-1}$ as 
\begin{equation*}
\bigl( \partial_t + \b_{m-1} \cdot \nabla \bigr) T_{m-1} 
=
\nabla \cdot 
\biggl( 
\sum_{l\in \Z}
\hat{\xi}_{m,l}
\K_m 
\nabla
\bigl( T_{m-1} \circ X_{m-1,l} \bigr) \circ X_{m-1,l}^{-1}
+ \mathbf{e}_{m-1}
\biggr)
\,.
\end{equation*}
Using this, we can cancel the first term on the right of~\eqref{e.timecomp.0} with part of the first term on the last line (the expression involving~$\mathbf{K}_m$). Note that we are using here the fact that~$\mathbf{K}_m$ is a scalar matrix, and it therefore commutes with~$\bigl( \nabla X_{m-1,l_k}\circ X_{m-1,l_k}^{-1} \bigr)$. 
We therefore obtain
\begin{align}
\label{e.timecomp}
\bigl( \partial_t + \b_{m-1} \cdot \nabla \bigr) \tilde{\theta}_m
&
=
\sum_{k\in 2\Z+1}\!\!
(\partial_t\xi_{m,k} \tilde{\Chi}_{m,k})\cdot
\nabla \bigl(  T_{m-1} \circ X_{m-1,l_k} \bigr) \circ X_{m-1,l_k}^{-1} 
\notag \\ & \quad 
+
\sum_{k\in 2\Z+1}\!\!
\xi_{m,k} 
\bigl( \partial_t \Chi_{m,k} \circ X_{m-1,l_k}^{-1} \bigr) \cdot
\nabla \bigl(  T_{m-1} \circ X_{m-1,l_k} \bigr) \circ X_{m-1,l_k}^{-1} 
\notag \\ & \quad 
+
\sum_{k\in 2\Z+1}\!\!
\xi_{m,k} \tilde{\Chi}_{m,k}\cdot
\bigl( \nabla X_{m-1,l_k}\circ X_{m-1,l_k}^{-1} \bigr) 
\nabla \nabla \cdot 
\bigl( \bigl( \K_m + \mathbf{s}_{m-1} \bigr) \nabla T_{m-1}+ \mathbf{e}_{m-1}\bigr)
\notag \\ & \quad 
+
\J_m : 
\sum_{l\in \Z}
\hat{\xi}_{m,l} 
\nabla \bigl( \nabla
\bigl( T_{m-1}\circ X_{m-1,l} \bigr) \circ X_{m-1,l}^{-1} \bigr)
+\nabla \cdot \bigl(\mathbf{d}_m + \mathbf{e}_{m-1} \bigr)
\, .
\end{align}
Below we will insert the identity~\eqref{e.timecomp} for the transport term back into the right side of~\eqref{e.operator.splitting}. The first, third and fifth terms on the right side of~\eqref{e.timecomp} are ``acceptable errors,'' that is, we will eventually show that they are negligible for our purposes. The second and fourth terms will cancel some expressions arising in our computation of the diffusion term, which we pursue next. 

\subsubsection{Computation of the diffusion term}

We write the diffusive term in divergence form as
\begin{equation}
\label{e.diffusionterm.2}
\bigl( - \kappa_{m} \Delta + (\b_m - \b_{m-1}) \cdot \nabla\bigr) \tilde{\theta}_m
=
-\nabla \cdot \bigl( \bigl( \kappa_{m} \Itwo + \tilde{\psi}_m \sigma \bigr) \nabla \tilde{\theta}_m \bigr).
\end{equation}
Returning to the formula~\eqref{e.ansatz.line2} to compute the gradient of~$\tilde{\theta}_m$, we find
\begin{align}
\label{e.grad.tildetheta}
\nabla \tilde{\theta}_m
&
=
\sum_{k\in2\Z+1} \!\!
\xi_{m,k}
\bigl( \Itwo + \nabla \tilde{\Chi}_{m,k} \bigr)
\sum_{l\in\Z}
\hat{\xi}_{m,l} 
\nabla \bigl( T_{m-1} \circ X_{m-1,l} \bigr) \circ X^{-1}_{m-1,l}
\notag \\ & \qquad 
+
\sum_{l\in\Z} 
\hat{\xi}_{m,l}
\bigl( 
\Itwo - \nabla X_{m-1,l} \circ X_{m-1,l}^{-1} \bigr) \nabla T_{m-1}
\notag \\ & \qquad
+ \!\!
\sum_{k\in 2\Z+1} \!\!
\xi_{m,k} \tilde{\Chi}_{m,k}
\nabla \bigl( 
\nabla \bigl(  T_{m-1} \circ X_{m-1,l_k} \bigr) \circ X_{m-1,l_k}^{-1} 
\bigr)
+\nabla \widetilde{H}_m
\end{align}
Inserting this into the operator on the right side of~\eqref{e.diffusionterm.2}, we obtain
\begin{align}
\label{e.initial.plugin}
\lefteqn{ 
- \nabla \cdot 
\bigl( 
\bigl( \kappa_m \Itwo + \tilde{\psi}_m \sigma \bigr) \nabla \tilde{\theta}_m
\bigr) 
} \qquad & 
\notag \\ & 
=
- \nabla \cdot  \!\!
\sum_{k\in 2\Z+1} \!\!
\xi_{m,k}
\bigl( \kappa_m \Itwo + \tilde{\psi}_m \sigma \bigr) 
\bigl( \Itwo + \nabla \tilde{\Chi}_{m,k} \bigr)
\sum_{l\in\Z} 
\hat{\xi}_{m,l} 
\nabla \bigl(  T_{m-1} \circ X_{m-1,l} \bigr) \circ X_{m-1,l}^{-1} 
\notag \\ & \qquad 
- \nabla \cdot 
\sum_{l\in\Z} 
\hat{\xi}_{m,l}
\bigl( \kappa_m \Itwo + \tilde{\psi}_m \sigma \bigr) 
\bigl( 
\Itwo - \nabla X_{m-1,l} \circ X_{m-1,l}^{-1} \bigr) \nabla T_{m-1}
\notag \\ & \qquad
- \nabla \cdot \!\!
\sum_{k\in 2\Z+1} \!\!
\xi_{m,k} 
\bigl( \kappa_m \Itwo + \tilde{\psi}_m \sigma \bigr) 
\tilde{\Chi}_{m,k}
\nabla 
\bigl( \nabla \bigl(  T_{m-1} \circ X_{m-1,l_k} \bigr) \circ X_{m-1,l_k}^{-1}\bigr) 
\notag \\ & \qquad
-\nabla \cdot 
\bigl( \kappa_m \Itwo + \tilde{\psi}_m \sigma \bigr) 
\nabla \widetilde{H}_m
\,.
\end{align}
The second, third and fourth terms on the right side of~\eqref{e.initial.plugin} are acceptable errors: we will show in the next subsection that they are negligible for our purposes. 

\smallskip

Let's look more closely at what is inside the divergence in the first term on the right side of~\eqref{e.initial.plugin}.
Using
the properties~\eqref{e.zeta.mk} and~\eqref{e.cutoff.xi} of the cutoff functions and the fact that both~$\tilde\Chi_{m,k}^\kappa$ and~$\tilde{\psi}_{m,k}$ vanish when $k$ is even, we see that 
\begin{align*}
\xi_{m,k} \zeta_{m,k'} \tilde{\psi}_{m,k'}
=
\zeta_{m,k} \tilde{\psi}_{m,k} \indc_{\{ k = k' \}}
\quad 
\mbox{and}
\quad
\xi_{m,k} \zeta_{m,k'} \nabla \tilde{\Chi}_{m,k'}
=
\zeta_{m,k} \nabla \tilde{\Chi}_{m,k} \indc_{\{ k = k' \}}
\,.
\end{align*}
We therefore obtain
\begin{align}
\label{e.fourlinesofcutoff}
\lefteqn{
\sum_{k\in 2\Z+1} \!\!
\xi_{m,k}
\bigl( \kappa_m \Itwo + \tilde{\psi}_m \sigma \bigr) 
\bigl( \Itwo + \nabla \tilde{\Chi}_{m,k} \bigr)
\sum_{l\in\Z} \hat{\xi}_{m,l}
\nabla \bigl(  T_{m-1} \circ X_{m-1,l} \bigr) \circ X_{m-1,l}^{-1} 
} \quad & 
\notag \\ & 
=
\sum_{k\in 2\Z+1} \!\!
\xi_{m,k}
\Bigl( \kappa_m \Itwo + \sum_{k' \in\Z} \hat{\zeta}_{m,l_{k'}}
 \zeta_{m,k'}\tilde{\psi}_{m,k'} \sigma\Bigr)
\bigl( \Itwo + \nabla \tilde{\Chi}_{m,k} \bigr)
\sum_{l\in\Z} \hat{\xi}_{m,l}
\nabla \bigl(  T_{m-1} \circ X_{m-1,l} \bigr) \circ X_{m-1,l}^{-1} 
\notag \\ & 
=
\sum_{k\in 2\Z+1} \!\!
\xi_{m,k}
\bigl( \kappa_m \Itwo +  \hat{\zeta}_{m,l_k}
 \zeta_{m,k}\tilde{\psi}_{m,k} \sigma\bigr)
\bigl( \Itwo + \nabla \tilde{\Chi}_{m,k} \bigr)
\sum_{l\in\Z} \hat{\xi}_{m,l}
\nabla \bigl(  T_{m-1} \circ X_{m-1,l} \bigr) \circ X_{m-1,l}^{-1} 
\,.
\end{align}
Before we compute the divergence of this expression, we need to use a special property of the shear flow structure, which is that 
\begin{align}
\label{e.ortho.tilde}
\nabla \cdot \bigl( \tilde{\psi}_{m,k} \sigma \nabla \tilde{\Chi}_{m,k} \bigr)
&
=
\nabla \tilde{\psi}_{m,k}\cdot \sigma  \nabla \tilde{\Chi}_{m,k}
=0
\,.
\end{align}
Indeed, from~\eqref{e.def.streamr.0}--\eqref{e.def.streamr},~\eqref{e.ukm.explicit} and~\eqref{e.Chimk.formula}, we see that ${\psi}_{m,k}$ and~$\Chi_{m,k}$ depend only on one coordinate~$x_i$ for some $i\in\{1,2\}$, which is the same for both functions and depends only on~$k$. Using coordinates with the summation convention, we therefore compute
\begin{align*}
\nabla \tilde{\psi}_{m,k} \cdot\sigma  \nabla \tilde{\Chi}_{m,k}
&
=
\partial_{x_i} \tilde{\psi}_{m,k} \sigma_{ij} \partial_{x_j}  \tilde{\Chi}_{m,k}
\\ & 
=
(\partial_{x_l} \psi_{m,k} \circ X^{-1}_{m-1})
\partial_{x_i} (X^{-1}_{m-1})_l  
\sigma_{ij} 
(\partial_{x_{l'}} \Chi_{m,k} \circ X^{-1}_{m-1} )
\partial_{x_j} (X^{-1}_{m-1})_{l'}  
\\ & 
=
\underbrace{
\sigma_{ij} 
\partial_{x_i} (X^{-1}_{m-1})_l  
\partial_{x_j} (X^{-1}_{m-1})_l  
}_{=0}
(\partial_{x_l} \psi \circ X^{-1}_{m-1})
(\partial_{x_l} \Chi_{m,k} \circ X^{-1}_{m-1} )
= 0
\,.
\end{align*}
(See also Remark~\ref{r.special.orthogonality}.)
Using~\eqref{e.ortho.tilde}, we may write
\begin{align*}
\lefteqn{
\nabla \cdot \bigl( 
\bigl( \kappa_m \Itwo + \hat{\zeta}_{m,l_k}
\zeta_{m,k}\tilde{\psi}_{m,k} \sigma\bigr)
\bigl( \Itwo +  \nabla \tilde{\Chi}_{m,k} \bigr)
\bigr) 
} \qquad & 
\\ &
=
\nabla \cdot \bigl( 
\hat{\zeta}_{m,l_k}
\zeta_{m,k}\tilde{\psi}_{m,k} \sigma
+ \kappa_m \nabla \tilde{\Chi}_{m,k} \bigr)
\\ & 
=
\nabla \cdot \Bigl( 
\hat{\zeta}_{m,l_k}
\zeta_{m,k}\tilde{\psi}_{m,k} \sigma
+ \kappa_m \nabla {\Chi}_{m,k} \circ X_{m-1,l_k}^{-1} 
+
\kappa_m 
\bigl( \nabla X_{m-1,l_k}^{-1}- \Itwo \bigr)
\nabla {\Chi}_{m,k} \circ X_{m-1,l_k}^{-1}
\Bigr)
\,.
\end{align*}
To proceed, we next use the fact that, for any smooth vector field~$\g$ and smooth, measure-preserving map $M:\R^2\to\R^2$, we have 
\begin{equation}
\label{e.pushforward}
\nabla \cdot ( (\nabla M^{-1}\g)\circ M) = (\nabla \cdot \g)\circ M
\,.
\end{equation}
In view of~\eqref{e.pushforward}, applied with~$\g = \hat{\zeta}_{m,l_k}
\zeta_{m,k}{\psi}_{m,k} \sigma
+ \kappa_m \nabla {\Chi}_{m,k}$ and~$M= X_{m-1,l_k}^{-1}$,
we find that 
\begin{align}
\label{e.push.it.forward}
\lefteqn{
\nabla \cdot \bigl( 
\hat{\zeta}_{m,l_k}
\zeta_{m,k}\tilde{\psi}_{m,k} \sigma
+ \kappa_m \nabla {\Chi}_{m,k} \circ X_{m-1,l_k}^{-1} \bigr)
} \quad & 
\notag \\ & 
=
\bigl( 
\nabla \cdot \bigl( \hat{\zeta}_{m,l_k}
\zeta_{m,k}{\psi}_{m,k} \sigma
+ \kappa_m \nabla {\Chi}_{m,k} \bigr) 
\bigr) \circ X_{m-1,l_k}^{-1}
\notag \\ & \qquad
+
\nabla \cdot
\bigl( 
\bigl( \Itwo-
\nabla X_{m-1,l_k} \circ X_{m-1,l_k}^{-1} 
\bigr)
\bigl( 
\hat{\zeta}_{m,l_k} \zeta_{m,k}{\psi}_{m,k} \sigma
+ \kappa_m \nabla {\Chi}_{m,k} 
\bigr) \circ X_{m-1,l_k}^{-1} 
\bigr)
\,.
\end{align}
Now comes the crucial point at which the corrector equation~\eqref{e.parabcorr} for~$\Chi_{m,k} = {\Chi}_{m,k}^{\kappa_m}$  is used. 
By~\eqref{e.parabcorr} and the fact that, similar to~\eqref{e.ortho.tilde}, 
\begin{align*}
\nabla \cdot (  \psi_{m,k} \sigma \nabla\Chi_{m,k}^{\kappa_m}) = \nabla \psi_{m,k} \cdot \nabla^\perp \Chi_{m,k}^{\kappa_m} = 0
\,, 
\end{align*}
we can rewrite the first term of the right side of~\eqref{e.push.it.forward} as
\begin{align*}
\bigl( 
\nabla \cdot \bigl( \hat{\zeta}_{m,l_k} \zeta_{m,k}{\psi}_{m,k} \sigma
+ \kappa_m \nabla {\Chi}_{m,k} \bigr) 
\bigr) \circ X_{m-1,l_k}^{-1}
=
\partial_t  {\Chi}_{m,k} \circ X_{m-1,l_k}^{-1}
\,.
\end{align*}
Combining the previous displays, we obtain 
\begin{align}
\label{e.divtwist}
\lefteqn{
\nabla \cdot \bigl( 
\bigl( \kappa_m \Itwo + \hat{\zeta}_{m,l_k} \zeta_{m,k}\tilde{\psi}_{m,k} \sigma\bigr)
\bigl( \Itwo +  \nabla \tilde{\Chi}_{m,k} \bigr)
\bigr) 
} \qquad  & 
\notag \\ &
=
\partial_t  {\Chi}_{m,k} \circ X_{m-1,l_k}^{-1}
+\nabla \cdot \bigl( 
\kappa_m 
\bigl( \nabla X_{m-1,l_k}^{-1}- \Itwo \bigr)
\nabla {\Chi}_{m,k} \circ X_{m-1,l_k}^{-1}
\bigr)
\notag \\ & \qquad
+\nabla \cdot
\bigl( 
\bigl( \Itwo-
\nabla X_{m-1,l_k} \circ X_{m-1,l_k}^{-1} 
\bigr)
\bigl( 
\hat{\zeta}_{m,l_k} \zeta_{m,k}{\psi}_{m,k} \sigma
+ \kappa_m \nabla {\Chi}_{m,k} 
\bigr) \circ X_{m-1,l_k}^{-1} 
\bigr)
\,.
\end{align}
Finally, using~\eqref{e.divtwist}, we can compute the divergence of the last line of~\eqref{e.fourlinesofcutoff}, which is also equal to negative of the first line of~\eqref{e.initial.plugin}:
\begin{align}
\label{e.diffterm.firstline}
\lefteqn{
\nabla\cdot \!\!
\sum_{k\in 2\Z+1} \!\!
\xi_{m,k}
\bigl( \kappa_m \Itwo + \hat{\zeta}_{m,l_k}
 \zeta_{m,k}\tilde{\psi}_{m,k} \sigma\bigr)
\bigl( \Itwo + \nabla \tilde{\Chi}_{m,k} \bigr)
\nabla \bigl(  T_{m-1} \circ X_{m-1,k} \bigr) \circ X_{m-1,k}^{-1} 
} \quad & 
\notag \\ & 
=
\sum_{k\in 2\Z+1}
\xi_{m,k}
\partial_t \Chi_{m,k} \circ X_{m-1,k}^{-1}
\cdot 
\nabla \bigl(  T_{m-1} \circ X_{m-1,l_k} \bigr) \circ X_{m-1,l_k}^{-1} 
\notag \\ & \quad
+
\sum_{k\in 2\Z+1}
\xi_{m,k}
\bigl( \nabla \cdot \bigl( 
\kappa_m 
\bigl( \nabla X_{m-1,k}^{-1}- \Itwo \bigr)
\nabla {\Chi}_{m,k} \circ X_{m-1,k}^{-1}
\bigr)
\bigr)
\cdot 
\nabla \bigl(  T_{m-1} \circ X_{m-1,l_k} \bigr) \circ X_{m-1,l_k}^{-1} 
\notag \\ & \quad
+
\sum_{k\in 2\Z+1}
\xi_{m,k}
\nabla \cdot
\bigl( 
\bigl( \Itwo-
\nabla X_{m-1,l_k} \circ X_{m-1,l_k}^{-1} 
\bigr)
\bigl( 
\hat{\zeta}_{m,l_k} \zeta_{m,k}{\psi}_{m,k} \sigma
+ \kappa_m \nabla {\Chi}_{m,k} 
\bigr) \circ X_{m-1,l_k}^{-1} 
\bigr)
\notag \\ & \quad \qquad \qquad 
\cdot 
\nabla \bigl(  T_{m-1} \circ X_{m-1,l_k} \bigr) \circ X_{m-1,l_k}^{-1} 
\notag \\ & \quad
+
\sum_{k\in 2\Z+1}
\xi_{m,k}
\bigl( \kappa_m \Itwo + \hat{\zeta}_{m,l_k} \zeta_{m,k}\tilde{\psi}_{m,k} \sigma\bigr)
\bigl( \Itwo + \nabla \tilde{\Chi}_{m,k} \bigr)
\!:\!
\sum_{l\in\Z} \hat{\xi}_{m,l} 
\nabla \bigl( \nabla \bigl(  T_{m-1} \circ X_{m-1,l} \bigr) \circ X_{m-1,l}^{-1} \bigr)
\,.
\end{align}
This expression will be substituted for the first line of~\eqref{e.initial.plugin}, in view of~\eqref{e.fourlinesofcutoff}, which will then be substituted for the second term of~\eqref{e.operator.splitting}. The second and third terms on the right side of~\eqref{e.diffterm.firstline} are acceptable errors which are estimated in the next subsection. The first term on the right side of~\eqref{e.diffterm.firstline} will cancel the second term on the on the right side of~\eqref{e.timecomp} when we combine~\eqref{e.timecomp} and~\eqref{e.initial.plugin}. The last term will be combined with the term involving~$\J_m$ on the last line of~\eqref{e.timecomp}, which centers its mean and renders the resulting expression an acceptable error.

\subsubsection{Formula for the error of the ansatz}

We now combine~\eqref{e.timecomp} and~\eqref{e.initial.plugin},~\eqref{e.fourlinesofcutoff} and~\eqref{e.diffterm.firstline} to obtain an explicit expression for $(\partial_t - \kappa_m \Delta + \b_m \cdot \nabla ) \tilde{\theta}_m$. 
As we merge~\eqref{e.timecomp} and~\eqref{e.initial.plugin}, 
we recall that the second line of~\eqref{e.timecomp} cancels the first line of~\eqref{e.diffterm.firstline}, and that the last line of~\eqref{e.timecomp} can be nicely combined with the last line of~\eqref{e.diffterm.firstline}. 

\smallskip

The result is the following equation satisfied by the ansatz~$\tilde{\theta}_m$:
\label{e.monster}
\begin{align}
& \bigl(
\partial_t - \kappa_m \Delta + \b_m \cdot \nabla 
\bigr) \tilde{\theta}_m
\notag \\ & \ 
\label{e.monster.cutoff1}
=
\sum_{k\in 2\Z+1}\!\!
\bigl(
\partial_t\xi_{m,k} 
\tilde{\Chi}_{m,k} \bigr)
\cdot 
\nabla \bigl(  T_{m-1} \circ X_{m-1,l_k} \bigr) \circ X_{m-1,l_k}^{-1} 
\\ & \quad 
\label{e.monster.twistie1}
+
\sum_{k\in 2\Z+1}\!\!
\xi_{m,k} 
\tilde{\Chi}_{m,k}
\cdot 
\bigl( \nabla X_{m-1,l_k}\circ X_{m-1,l_k}^{-1} \bigr) 
\nabla \bigl(  \nabla \cdot 
\bigl( \K_m + \mathbf{s}_{m-1} \bigr) \nabla T_{m-1} 
\bigr)
\\ & \quad 
\label{e.monster.twistie3}
- \nabla \cdot
\sum_{l\in \Z}
\hat{\xi}_{m,l}
\bigl( \kappa_m \Itwo + \tilde{\psi}_m \sigma \bigr) 
\bigl( 
\nabla  T_{m-1} - \nabla \bigl(  T_{m-1} \circ X_{m-1,l} \bigr) \circ X_{m-1,l}^{-1} 
\bigr)
\\ & \quad
\label{e.monster.twistie4}
-
\sum_{k\in 2\Z+1}\!\!
\xi_{m,k}
\nabla \cdot \Bigl( 
\kappa_m 
\bigl( \nabla X_{m-1,l_k}^{-1}- \Itwo \bigr)
\nabla {\Chi}_{m,k} \circ X_{m-1,l_k}^{-1}
\Bigr)
\nabla \bigl(  T_{m-1} \circ X_{m-1,l_k} \bigr) \circ X_{m-1,l_k}^{-1} 
\\ & \quad 
\label{e.monster.twistie5}
-
\sum_{k\in 2\Z+1}\!\!
\xi_{m,k}
\nabla \cdot
\Bigl( 
\bigl( \Itwo-
\nabla X_{m-1,l_k} \circ X_{m-1,l_k}^{-1} 
\bigr)
\bigl( 
\hat{\zeta}_{m,l_k}
\zeta_{m,k}\tilde{\psi}_{m,k} \sigma
+ \kappa_m \nabla {\Chi}_{m,k} 
\circ X_{m-1,l_k}^{-1} \bigr)
\Bigr)
\notag \\ & \quad \qquad\qquad \times
\nabla \bigl(  T_{m-1} \circ X_{m-1,l_k} \bigr) \circ X_{m-1,l_k}^{-1} 
\\ & \quad
\label{e.monster.normie1}
- \nabla \cdot \!\!
\sum_{k\in 2\Z+1}\!\!
\xi_{m,k} 
\bigl( \kappa_m \Itwo + \tilde{\psi}_m \sigma \bigr) 
\tilde{\Chi}_{m,k}
\nabla 
\bigl( \nabla \bigl(  T_{m-1} \circ X_{m-1,l_k} \bigr) \circ X_{m-1,l_k}^{-1}\bigr) 
\\ & \quad
\label{e.monster.normie2}
-\nabla \cdot 
\bigl( \kappa_m \Itwo + \tilde{\psi}_m \sigma \bigr) 
\nabla \widetilde{H}_{m} 
\\ & 
\label{e.monster.tiny}
\quad
+ \nabla \cdot
\bigl( \mathbf{d}_m + \mathbf{e}_{m-1} \bigr)
+
\sum_{k\in 2\Z+1}\!\!
\xi_{m,k} 
\tilde{\Chi}_{m,k}
\bigl( \nabla X_{m-1,l_k}\circ X_{m-1,l_k}^{-1} \bigr) 
\cdot 
\nabla \bigl(  \nabla \cdot  \mathbf{e}_{m-1}\bigr)
\\ & \quad
\label{e.monster.normie3}
- \!\!\!
\sum_{k\in 2\Z+1} \!\!\!
\xi_{m,k}
\Bigl( \J_m {-} \bigl( \kappa_m \Itwo {+} \hat{\zeta}_{m,l_k}
 \zeta_{m,k}\tilde{\psi}_{m,k} \sigma\bigr)
\bigl( \Itwo + \nabla \tilde{\Chi}_{m,k} \bigr) 
\Bigr)
\!:\!
\sum_{l\in\Z} \hat{\xi}_{m,l} 
\nabla \bigl( \nabla \bigl(  T_{m-1} {\circ} X_{m-1,l} \bigr) {\circ} X_{m-1,l}^{-1} \bigr)
\,.
\end{align}
The term~\eqref{e.monster.cutoff1} is due to the time cutoff~$\xi_{m,k}$ and is extremely small by~\eqref{e.corrbounds.Chi.decay}, as we will see. 
The terms in~\eqref{e.monster.twistie1}--\eqref{e.monster.twistie5} are errors caused by the ``twisting'' introduced into our ansatz by the composing~$T_{m-1}$ with the flows~$X_{m-1,l}$ and inverse flows~$X_{m-1,l}^{-1}$; these will be controlled using the bounds on the flows and inverse flows we proved in Section~\ref{s.construction}: see~\eqref{e.Xm.regbounds} and~\eqref{e.Xm.bound.1}. 
The terms~\eqref{e.monster.normie1},~\eqref{e.monster.normie2} and~\eqref{e.monster.normie3} are ``routine homogenization errors'' which are expected: these are small because the correctors are small, as we find in error estimates in classical homogenization theory. 
Finally, the very tiny error terms in~\eqref{e.monster.tiny} is reflection of the way we constructed~$T_{m-1}$ and~$\widetilde{H}_m$, which actually solve the equations~\eqref{e.Tm.true} and~\eqref{e.real.eq.Hm}, respectively, which are slight perturbations of the equations~\eqref{e.Tm} and~\eqref{e.Hm} we initially wanted them to solve. 

\smallskip

In the next subsection, we will show that the~$L^2_t\dot{H}^{-1}_x$ norm of each of the numerous error terms on the right side of the expression above is at most~$O(\kappa_m^{\nicefrac12}\ep_m^\delta)$. This will subsequently permit us to deduce that the~$L^\infty_tL^2_x$ difference of the ansatz~$\tilde{\theta}_m$ and the true solution~$\theta_m$ of~\eqref{e.theta.m} is small, and thus to compare the decay in time of the~$L^2_x$ norm of~$\theta_m$ with that of~$\theta_{m-1}$, up to a suitable error.

\subsection{Estimates of the nine error terms}
\label{ss.estimate.error}

In this subsection, we prove the existence of a constant~$C(\beta)<\infty$ such that, if the parameter~$\Lambda$ satisfies
\begin{equation}
\label{e.Lambda.restriction}
\Lambda\geq C
\end{equation}
then, for every~$m\in\N$ satisfying  
\begin{equation}
\label{e.m.restriction}
m \geq m_{\theta_0}:=
\inf \Bigl \{ n\in \N \,:\, n\geq 2\,, \ 
\ep_{n-1}^{1+ \nicefrac{\gamma}{2}} \leq R_{\theta_0} \Bigr \}\,,
\end{equation}
we have the estimate
\begin{align}
\label{e.bigbound}
 \kappa_m^{-\nicefrac12} \bigl\| 
\bigl(\partial_t - \kappa_m \Delta + \b_m \cdot \nabla \bigr) \tilde{\theta}_m
\bigr\|_{L^2((0,1);\dot{H}^{-1}(\TT^2))}
\leq 
C\ep_{m-1}^{\delta} 
\left\| \theta_{0} \right\|_{L^2( \TT^2)}
\,.
\end{align}
The proof of~\eqref{e.bigbound} amounts to showing that each of the error terms in~\eqref{e.monster.cutoff1}--\eqref{e.monster.normie3} can be estimated by the right side of~\eqref{e.bigbound}.

\begin{remark}
\label{r.Lambda.m.conds}
The role of the restrictions~$\Lambda\geq C$ and~$m\leq m_{\theta_0}$ is to ensure that the conditions in~\eqref{e.ep.m-1.small} are valid and, therefore, all of the estimates on~$T_{m-1}$ and~$\widetilde{H}_m$ proved in Section~\ref{s.multiscale} are in force. 
\end{remark}

\subsubsection{Application of ergodic lemma for controlling nondivergence form terms}

For each of the four nondivergence terms in~\eqref{e.monster.twistie1},~\eqref{e.monster.twistie4},~\eqref{e.monster.twistie5} and~\eqref{e.monster.normie3}, we are faced with estimating the~$L^2_t \dot{H}^{-1}_x$ norm of a function of the form~$f (g\circ Z^{-1})$, which is the product of a ``fast'' function~$g\circ Z^{-1}$, the composition of an~$\ep_m$--periodic, mean-zero function~$g$ with a smooth inverse flow~$Z^{-1}$ which has analyticity radius $\ep_{m-1}$, and a ``slow'' function~$f$ which is~$1$--periodic, smooth and has analyticity radius of order~$\ep_{m-1}$. 
We expect that a weak norm of~$f (g\circ Z^{-1})$ would inherit smallness from the relatively fast oscillations of~$g$, which modulate the slower signal~$f$, up to an error which depends on the scale separation between~$\ep_m$ and~$\ep_{m-1}$. Thanks to the periodicity of~$g$, this error turns out to be exponentially small in the ratio~$\ep_{m-1} / \ep_m$, as we show in Appendix~\ref{a.ergodic}. 
Roughly, what we have is that 
\begin{align}
\label{e.roughly.exponential}
\lefteqn{
\bigl\| f (g\circ Z^{-1}) \bigr\|_{L^2((0,1);\dot{H}^{-1}(\TT^2))}
} \qquad & 
\notag \\ & 
\leq
C \ep_{m} 
\bigl\| f \bigr\|_{L^2((0,1)\times\TT^2)} 
\bigl\| g \bigr\|_{L^2((0,1)\times\TT^2)}
+
C \bigl\| g \bigr\|_{L^2((0,1)\times\TT^2)}
\exp\biggl( 
- \frac{\ep_{m-1}}{C\ep_m}
\bigg)
\,.
\end{align}
The exact statement can be found in Remark~\ref{r.Hminusone.flow}. 

\smallskip

The first term in~\eqref{e.roughly.exponential} represents the scaling of the~$\dot{H}^{-1}(\TT^2)$ norm compared to the~$L^2(\TT^2)$ norm for a mean-zero,~$\ep_{m}$--periodic function. For instance, in the case~$f\equiv1$ and~$Z=\Id$ and~$g$ is constant in time, we  have~$\| g \|_{\dot{H}^{-1}(\TT^2)} \simeq \ep_{m} \bigl\| g \bigr\|_{L^2((0,1)\times\TT^2)}$. So essentially what~\eqref{e.roughly.exponential} says in fact is that the modulation by~$f$ and~$Z$ does not alter this estimate, up to an error which is exponentially small in the scale separation. The constants~$C$ in the second term on the right of~\eqref{e.roughly.exponential} depend on the appropriate analyticity norms of~$f$ and~$Z$.

\smallskip

Here we check the applicability of this estimate to the terms~\eqref{e.monster.twistie1},~\eqref{e.monster.twistie4},~\eqref{e.monster.twistie5} and~\eqref{e.monster.normie3}. In all cases, the role of the measure-preserving mapping~$Z$ is played by~$X_{k-1,m}$ and it needs only to be estimated for times in the support of the cutoff function~$\xi_{m,k}$. 
The required analyticity bounds are then a consequence of Corollary~\ref{c.flowreg}, in particular the estimate~\eqref{e.Xm.bound.4}, which implies, in view of~\eqref{e.taum.def} and the definition of~$\xi_{m,k}$, 
\begin{equation}
\label{e.flow.for.ergodic}
\sup_{t \in \supp \xi_{m,k} } \max_{|\aa|=n} \norm{ \partial^\aa X_{m-1,k} (t,\cdot)}_{L^\infty(\R^2)} 
\leq C n! (C \ep_{m-1}^{-1})^{n-1}\,,  \qquad \forall n\in\N\,.
\end{equation}
The choices of~$f$ and~$g$ we need to make are different in each estimate, but in every situation we show that our choice of~$f$ satisfies, for some constant~$C<\infty$, 
\begin{equation}
\label{e.checking.f.ergodic}
\sup_{t \in \supp \xi_{m,k} } \max_{|\aa|=n} \norm{ \partial^\aa f (t,\cdot)}_{L^\infty(\R^2)} 
\leq C n! (C \ep_{m-1}^{-1})^{n+1}\,,  \qquad \forall n\in\N\,.
\end{equation}
We then may apply Remark~\ref{r.Hminusone.flow} with $R=r=C_X=C\ep_{m-1}^{-1}$ and $N=\ep_{m}^{-1}$. The result~\eqref{e.Hminusone.ergodic} then yields
\begin{align}
\label{e.what.ergodic.gives}
\lefteqn{
\bigl\| f (g\circ Z^{-1}) \bigr\|_{L^2((0,1);\dot{H}^{-1}(\TT^2))}
} \qquad & 
\notag \\ & 
\leq
C \ep_{m} 
\bigl\| f \bigr\|_{L^2((0,1)\times\TT^2)} 
\bigl\| g \bigr\|_{L^2((0,1)\times\TT^2)}
+
C \bigl\| g \bigr\|_{L^2((0,1)\times\TT^2)} \ep_{m-1}^{-1} 
\exp\biggl( 
- \frac{\ep_{m-1}}{C\ep_m}
\bigg)
\notag \\ & 
\leq 
C \ep_{m} 
\bigl\| f \bigr\|_{L^2((0,1)\times\TT^2)} 
\bigl\| g \bigr\|_{L^2((0,1)\times\TT^2)}
+
C \ep_{m-1}^{500} \bigl\| g \bigr\|_{L^2((0,1)\times\TT^2)} 
\,.
\end{align}
In the last line we used that, by~\eqref{e.supergeo}, 
\begin{align*}
\ep_{m-1}^{-1} 
\exp\biggl( - \frac{\ep_{m-1}}{C\ep_m} \bigg)
\leq 
\ep_{m-1}^{-1} 
\exp\bigl( - c \ep_{m-1}^{-(q-1)} \bigr)
\leq 
C \ep_{m-1}^{500}\,.
\end{align*}

\smallskip

For~\eqref{e.monster.twistie1}, we use the choices
\begin{equation}
\label{e.fg.choice.1}
f = 
\xi_{m,k}
\bigl( \nabla X_{m-1,l_k}\circ X_{m-1,l_k}^{-1} \bigr) 
\nabla \nabla \cdot 
\bigl( \bigl( \K_m + \mathbf{s}_{m-1} \bigr) \nabla T_{m-1}  \bigr)
\qquad \mbox{and} \qquad
g= {\Chi}_{m,k}
\,.
\end{equation}
The desired bound for~$f$ in~\eqref{e.checking.f.ergodic} is a consequence of~\eqref{e.Xm.bound.2}, the bounds for~$\mathbf{s}_{m-1}$ in~\eqref{e.smbound.dervs}, the estimate for $\kappa_{m-1}$ in~\eqref{e.kappam.bound}, Lemma~\ref{l.Tm.reg.upgrade} and the product estimate of Lemma~\ref{l.product}.  

\smallskip

The term~\eqref{e.monster.twistie4} is not in divergence form, nor is it entirely nondivergence form. Therefore, we split it into the sum of two terms, one in nondivergence form and the other in divergence form: 
\begin{align}
\label{e.monster.twistie4.split}
\text{\eqref{e.monster.twistie4}}
&
=
\!\! \sum_{k\in 2\Z+1}\!\!\xi_{m,k}
\kappa_m 
\bigl( \nabla X_{m-1,l_k}^{-1}- \Itwo \bigr)
\nabla {\Chi}_{m,k} \circ X_{m-1,l_k}^{-1}
\bigr)
\nabla \bigl( \nabla \bigl(  T_{m-1} {\circ} X_{m-1,l_k} \bigr) {\circ} X_{m-1,l_k}^{-1} \bigr)
\notag \\ & \quad 
-
\nabla \cdot \! \!\!
\sum_{k\in 2\Z+1}\!\!\xi_{m,k}
\kappa_m 
\bigl( \nabla X_{m-1,l_k}^{-1} {-}\, \Itwo \bigr)
\bigl( \nabla {\Chi}_{m,l_k} {\circ} X_{m-1,l_k}^{-1} \bigr) 
\nabla \bigl(  T_{m-1} {\circ} X_{m-1,l_k} \bigr) {\circ} X_{m-1,l_k}^{-1} 
\,.
\end{align}
Only the first term in nondivergence form requires the use of the ergodic lemma. We apply it with the choices
\begin{equation}
\label{e.fg.choice.2}
f = 
\xi_{m,k}
\kappa_m 
\bigl( \nabla X_{m-1,k}^{-1}- \Itwo \bigr) 
\nabla \bigl( 
\nabla \bigl(  T_{m-1} \circ X_{m-1,k} \bigr) \circ X_{m-1,k}^{-1} \bigr) 
\qquad  \mbox{and} \qquad
g  = \nabla {\Chi}_{m,k}\,.
\end{equation}  
Recall that 
\begin{equation}
\label{e.expandTmXXinv}
\nabla \bigl(  T_{m-1} \circ X_{m-1,k} \bigr) \circ X_{m-1,k}^{-1} 
=
\bigl( \nabla X_{m-1,k} \circ X_{m-1,k}^{-1} \bigr) \nabla T_{m-1}
\,.
\end{equation}
Therefore the desired bound for~$f$ in~\eqref{e.checking.f.ergodic} is a consequence of~\eqref{e.Xm.regbounds},~\eqref{e.Xm.bound.2} and the product estimate of Lemma~\ref{l.product}.  

\smallskip

We next consider the term~\eqref{e.monster.twistie5} which, as for~\eqref{e.monster.twistie4}, must be split since it is not completely in nondivergence form. In order to write our expressions more compactly, we denote
\begin{equation}
\label{e.Ymk}
Y_{m-1,l}:= \Itwo - \nabla X_{m-1,l} \circ X_{m-1,l}^{-1} 
\end{equation}
and we write~\eqref{e.monster.twistie5} as 
\begin{align}
\label{e.monster.twistie5.split}
\text{\eqref{e.monster.twistie5}}
&
=
\sum_{k\in 2\Z+1}\!\!
\xi_{m,k}
Y_{m-1,l_k}
\bigl( 
\zeta_{m,k}\tilde{\psi}_{m,k} \sigma
+ \kappa_m \nabla {\Chi}_{m,k} 
{\circ} X_{m-1,l_k}^{-1} \bigr)
\nabla \bigl(
\nabla \bigl(  T_{m-1} {\circ} X_{m-1,l_k} \bigr) {\circ} X_{m-1,l_k}^{-1} 
\bigr)
\notag \\ & \qquad
-
\nabla \cdot \!\! \!\!
\sum_{k\in 2\Z+1}\!\!
\xi_{m,k}
Y_{m-1,k}
\bigl( 
\zeta_{m,k}\tilde{\psi}_{m,k} \sigma
+ \kappa_m \nabla {\Chi}_{m,k} 
{\circ} X_{m-1,l_k}^{-1} \bigr)
\nabla \bigl(  T_{m-1} {\circ} X_{m-1,l_k} \bigr) {\circ} X_{m-1,l_k}^{-1} 
\,.
\end{align}
Only the first term on the right of~\eqref{e.monster.twistie5.split} will require the use of the ergodic lemma, and we will apply it with the choice
\begin{equation}
\label{e.fg.choice.3}
\left\{ 
\begin{aligned}
& f = 
\xi_{m,k}
\bigl( \Itwo - \nabla X_{m-1,l_k} \circ X_{m-1,l_k}^{-1} \bigr) \nabla \bigl(
\nabla \bigl(  T_{m-1} \circ X_{m-1,l_k} \bigr) \circ X_{m-1,l_k}^{-1} 
\bigr), \quad \mbox{and} 
\\ & 
g  = \zeta_{m,k}{\psi}_{m,k} \sigma
+ \kappa_m \nabla {\Chi}_{m,k} 
\,.
\end{aligned}
\right.
\end{equation}  
In view of~\eqref{e.expandTmXXinv}, the desired analyticity estimate~\eqref{e.checking.f.ergodic} for~$f$ is a consequence of estimate~\eqref{e.Xm.bound.2} of Corollary~\ref{c.flowreg}, Lemma~\ref{l.Tm.reg.upgrade} and the product estimate of Lemma~\ref{l.product}.

\smallskip

Finally, for~\eqref{e.monster.normie3}, we apply the ergodic lemma (with the time variable frozen) with
\begin{equation}
\label{e.fg.choice.4} 
\left\{
\begin{aligned}
& f = 
\xi_{m,k}
\hat{\xi}_{m,l}
\nabla \bigl( 
\nabla \bigl(  T_{m-1} \circ X_{m-1,l} \bigr) \circ X_{m-1,l}^{-1} \bigr) \,, 
\\ &  
g  = 
\J_m - 
\bigl( \kappa_m \Itwo + \hat{\zeta}_{m,l_k} \zeta_{m,k}{\psi}_{m,k} \sigma\bigr)
\bigl( \Itwo + \nabla {\Chi}_{m,k} \bigr) \,.
\end{aligned}
\right.
\end{equation} 
In view of~\eqref{e.expandTmXXinv}, the desired analyticity estimate~\eqref{e.checking.f.ergodic} for~$f$ is a consequence of estimate~\eqref{e.Xm.bound.2} of Corollary~\ref{c.flowreg}, Lemma~\ref{l.Tm.reg.upgrade} and the product estimate of Lemma~\ref{l.product}.

\subsubsection{A reference list of basic estimates}

The following identities and estimates, proved above and collected here for the convenience of the reader, will be used repeatedly in the estimates of the terms on the right side of the Big Display on Page~\pageref{e.monster}:
\begin{gather}
\label{e.emq}
\ep_m \simeq \ep_{m-1}^q\,,
\\
\label{e.amem}
a_m = \ep_{m}^{\beta-2}\,,
\\
\label{e.tam}
\tau_m \simeq a_{m-1}^{-1} \ep_{m-1}^{2\delta}\,,
\\
\label{e.kappam}
\kappa_m 
\simeq 
a_m\ep_m^{2+\gamma} 
= \ep_m^{\beta+\gamma} 
= \ep_m^{\frac{2q}{q+1}\beta}\,,
\\
\label{e.expqunt}
\frac{\ep_m^2}{\kappa_m \tau_m} 
\lesssim \ep_{m-1}^{2\delta}\,,
\\
\label{e.corrm}
\left\| \Chi^\kappa_{m,k} \right\|_{L^\infty(\R\times\TT^2)}
+ \ep_m
\left\| \nabla \Chi^\kappa_{m,k} \right\|_{L^\infty(\R \times\TT^2)}
\lesssim
\frac{a_m \ep_m^3}{\kappa_m}
\simeq
\ep_m^{1-\gamma}\,,
\\
\label{e.psikmb}
\big\| \tilde{\psi}_m \big\|_{L^\infty(\R\times\TT^2)}
\simeq  
\| {\psi}_{m,k} \|_{L^\infty(\R\times\TT^2)}
\simeq 
a_m\ep_m^2 
= 
\ep_m^{\beta}\,,
\\
\label{e.hcorrm}
\norm{\nabla \widetilde{H}_m(t,\cdot)}_{L^{2}([0,1]\times \TT^2)} 
\lesssim
\ep_{m-1}^{4\delta}
\kappa_{m-1}^{-\nicefrac 12}
\|\theta_0\|_{L^2(\TT^2)} \,,
\\
\label{e.nabm2}
\left\| \nabla^{n+1} T_{m-1} \right\|_{L^2((0,1)\times \TT^d)}
\lesssim
n! \bigl(C\ep_{m-1}^{-(1+\nicefrac\gamma2)} \bigr)^{n} 
\kappa_{m-1}^{-\nicefrac12}
\left\| \theta_{0} \right\|_{L^2(\TT^2)}\,, \quad \forall n\in\N\,.
\end{gather}
In each of the above inequalities, the symbols~$\lesssim$ and~$\simeq$ are to be interpreted as asserting inequalties and two-sided inequalities, respectively, with implicit prefactor constants which depend only on the parameter~$\beta$. 
For~\eqref{e.emq},~\eqref{e.amem} and~\eqref{e.tam}, see~\eqref{e.supergeo},~\eqref{e.am.def} and~\eqref{e.taum.def}, respectively;
for~\eqref{e.kappam} 
see~\eqref{e.kappam.bound}; 
for~\eqref{e.expqunt}, see~\eqref{e.exprat.bound};
for~\eqref{e.corrm}, see~\eqref{e.corrbounds.Chi}; 
for~\eqref{e.hcorrm}, see~\eqref{e.Hm.gradient.L2};
for~\eqref{e.psikmb}, see~\eqref{e.def.streamr}; finally, 
for~\eqref{e.nabm2}, see Lemma~\ref{l.Tm.reg.upgrade}.  

\smallskip

We present the estimates for the terms on the right side of the Big Display on Page~\pageref{e.monster}, in consecutive order. 

\subsubsection*{The estimate of~\eqref{e.monster.cutoff1}} 
The smallness of this term is a reflection of the fact that the spatial oscillations in the shear flow homogenize on time scales of order~$\ep_m^2/\kappa_m$, which is much less than~$\tau_m$ by a factor of~$\ep_{m-1}^{2\delta}$. 
While it is in nondivergence form, we can bound it brutally using an exponential factor so we do not need to use the ergodic lemma (Remark~\ref{r.Hminusone.flow}) in the appendix; an $L^2$ estimate is more than sufficient. The claim is that 
\begin{align}
\label{e.monster.est.3}
&
\biggl\| 
\sum_{k\in 2\Z+1}
( \partial_t\xi_{m,k} \tilde{\Chi}_{m,k} )\cdot 
\nabla \bigl(  T_{m-1}\circ X_{m-1,l_k} \bigr) \circ X_{m-1,l_k}^{-1} 
\biggr\|_{L^2((0,1)\times \TT^2)}
\leq
C\kappa_m^{\nicefrac12} \ep_{m-1}^{500}
\left\| \theta_{0} \right\|_{L^2( \TT^2)}
\,.
\end{align}
Observe that~\eqref{e.corrbounds.Chi.decay},~\eqref{e.cutoff.xi} and~\eqref{e.expqunt} imply that~$\tilde{\Chi}_{m,k}$ and~$\h_m$ is very small on the support of~$\partial_t \xi_{m,k}$. 
We can therefore estimate the left side of~\eqref{e.monster.est.3} by
\begin{align*}
\sup_{k\in2\N} 
\left\| \partial_t \xi_{m,k} \tilde{\Chi}_{m,k}
\right\|_{L^\infty(\R\times\TT^d)}
\left\| \nabla T_{m-1} \right\|_{L^2((0,1)\times\TT^d))}
&\leq
C 
\exp\left( -c\ep_{m-1}^{-2\delta} \right)
\left\| \nabla \theta_{m-1} \right\|_{L^2((0,1)\times\TT^d))}
\\ &
\leq 
C\kappa_{m}^{\nicefrac12}
\underbrace{
\left(
\kappa_{m}^{-\nicefrac12} \kappa_{m-1}^{-\nicefrac12} 
\exp\left( -c\ep_{m-1}^{-2\delta} \right)
\right)
}_{\,\leq\, C\ep_{m-1}^{500}}
\left\| \theta_{0} \right\|_{L^2( \TT^2)}
\,.
\end{align*}
This is~\eqref{e.monster.est.3}.
\qed

\subsubsection*{The estimates of~\eqref{e.monster.twistie1}} 
We claim that
\begin{multline}
\label{e.monster.est.5}
\biggl\| 
\sum_{k\in 2\Z+1}
\xi_{m,k} 
\tilde{\Chi}_{m,k}
\cdot
\bigl( \nabla X_{m-1,l_k}\circ X_{m-1,l_k}^{-1} \bigr) 
\nabla \nabla \cdot 
\bigl( \bigl( \kappa_{m-1} \Itwo + \mathbf{s}_{m-1} \bigr) \nabla T_{m-1}\bigr)
\biggr\|_{L^2((0,1);\dot{H}^{-1}(\TT^2))}
\\
\leq 
C \kappa_{m}^{\nicefrac12} 
C\ep_{m-1}^{8\delta}
\left\| \theta_{0} \right\|_{L^2( \TT^2)}
\,.
\end{multline}
We will use the ergodic lemma of Remark~\ref{r.Hminusone.flow}, as explained above. We apply~\eqref{e.what.ergodic.gives} with~$f$ and~$g$ chosen as in~\eqref{e.fg.choice.1} to find that the left side of~\eqref{e.monster.est.5} is bounded from above by the sum of
\begin{align*}
C \ep_{m-1}^{500}
\bigl\| g \bigr\|_{L^2((0,1)\times\TT^2)}
=
C \ep_{m-1}^{500} \bigl\| {\Chi}_{m,k} \bigr\|_{L^2((0,1)\times\TT^2)} 
\leq 
C \ep_{m-1}^{500}  \ep_m^{1-\gamma}
\leq
C \ep_{m-1}^{500},
\end{align*} 
and
\begin{align*}
\lefteqn{
C \ep_{m} 
\bigl\| f \bigr\|_{L^2((0,1)\times\TT^2)} 
\bigl\| g \bigr\|_{L^2((0,1)\times\TT^2)}
} \qquad & 
\notag \\ & 
\leq
C \ep_m
\left\| \Chi_{m,k} \right\|_{L^\infty} 
\sup_{k\in\N} \left\| \xi_{m,k} \nabla X_{m-1,l_k} \right\|_{L^\infty(\R\times\TT^2)} 
\left\|\nabla \nabla \cdot 
\bigl( \bigl(\K_m + \mathbf{s}_{m-1} \bigr) \nabla T_{m-1}\bigr)
\right\|_{L^2((0,1)\times \TT^2)}
\notag \\ & 
\leq 
C \ep_m
\cdot
\ep_{m}^{1-\gamma}
\cdot C \cdot 
\kappa_{m-1}
\ep_{m-1}^{-2-\gamma} 
\kappa_{m-1}^{-\nicefrac12} 
\left\| \theta_{0} \right\|_{L^2((0,1)\times\TT^2)}
\notag \\ & 
=
C \kappa_{m}^{\nicefrac12} 
\underbrace{
\biggl( \frac{\kappa_{m-1}}{\kappa_m} \biggr)^{\!\nicefrac12} 
\biggl( \frac{\ep_m}{\ep_{m-1}} \biggr)^{\!2}
\ep_m^{-\gamma}\ep_{m-1}^{-\gamma}
}_{\,\leq \, C\ep_{m-1}^{8\delta }}
\left\| \theta_{0} \right\|_{L^2( \TT^2)}
\leq 
C \kappa_{m}^{\nicefrac12} 
C\ep_{m-1}^{8\delta}
\left\| \theta_{0} \right\|_{L^2( \TT^2)}
\,.
\end{align*}
In the previous display, we used the size and regularity estimates for~$\s_{m-1}$ in~\eqref{e.smbound} and~\eqref{e.smbound.dervs} as well as~\eqref{e.emq},~\eqref{e.kappam},~\eqref{e.corrm},~\eqref{e.Xm.bound.1} and~\eqref{e.nabm2} and the fact that
\begin{equation*}
\underbrace{\left( \frac{\kappa_{m-1}}{\kappa_m} \right)^{\!\nicefrac12} }_{\,\leq\, C \ep_{m-1}^{-\gamma\beta}\!}\!
\underbrace{
\biggl( \frac{\ep_m}{\ep_{m-1}} \biggr)^{\!2}}_{\,\leq\, C\ep_{m-1}^{2(q-1)}}
\!\!\underbrace{ \phantom{\bigg(}\!\!
\ep_m^{-\gamma}\ep_{m-1}^{-\gamma}
}_{\,\leq\, C \ep_{m-1}^{-(q-1)\beta}}
\leq
C \ep_{m-1}^{8\delta} 
\end{equation*}
since,  by~\eqref{e.delta} and~\eqref{e.gamma},
\begin{equation*}
2(q-1) - \gamma\beta - (q-1) \beta 
=
(q-1) \left( 2 - \frac{\beta}{q+1}\left( 2q + 1 \right) \right) 
=
2(q-1) \left( 1 - \frac{2q+1}{2q+2} \beta \right) 
=8\delta
\,.
\end{equation*}
This completes the proof of~\eqref{e.monster.est.5}. 
\qed

\subsubsection*{The estimate of~\eqref{e.monster.twistie3}} 
We will show that 
\begin{multline}
\label{e.monster.est.7}
\biggl\| 
\sum_{k\in 2Z+1}
\xi_{m,k}
\bigl( \kappa_m \Itwo + \tilde{\psi}_m \sigma \bigr) 
\bigl( 
\nabla  T_{m-1}- \nabla \bigl(  T_{m-1}\circ X_{m-1,l_k} \bigr) \circ X_{m-1,l_k}^{-1} 
\bigr)
\biggr\|_{L^2((0,1)\times \TT^2)}
\\
\leq 
C \kappa_m^{\nicefrac12} \ep_{m-1}^{2\delta} 
\left\| \theta_{0} \right\|_{L^2( \TT^2)}
\,.
\end{multline}
Using the formula 
\begin{align*}
\nabla  T_{m-1}- \nabla \bigl(  T_{m-1}\circ X_{m-1,l_k} \bigr) \circ X_{m-1,l_k}^{-1} 
=
\bigl( \Itwo - \nabla X_{m-1,l_k}\circ X_{m-1,l_k}^{-1} \bigr) \nabla T_{m-1}
\end{align*}
and the estimates~\eqref{e.psikmb},~\eqref{e.Xm.bound.2} and~\eqref{e.Tm.divform} with $n=0$, we can bound the left side of~\eqref{e.monster.est.7} from above by
\begin{multline*}
\big\| \kappa_m \Itwo + \tilde{\psi}_m\sigma \big\|_{L^\infty(\R\times\TT^2)}
\sup_{k\in\N}
\big\| \xi_{m,k}\bigl(\Itwo - \nabla X_{m-1,l_k}\bigr) \big\|_{L^\infty(\R\times\TT^2)}
\left\| \nabla T_{m-1} \right\|_{L^2((0,1)\times\TT^2)}
\\
\leq
\underbrace{C \bigl( \kappa_m + a_m\ep_m^2\bigr)}_{\, \leq \, C \kappa_m^{\nicefrac12}\kappa_{m-1}^{\nicefrac12} 
} 
\ep_{m-1}^{2\delta} 
\left\| \nabla T_{m-1} \right\|_{L^2((0,1)\times\TT^2)}
\leq
C\kappa_m^{\nicefrac12} \ep_{m-1}^{2\delta} 
\left\| \theta_{0} \right\|_{L^2( \TT^2)}
\,.
\end{multline*}
This yields~\eqref{e.monster.est.7}. 
\qed

\subsubsection*{The estimate of~\eqref{e.monster.twistie4}} 
We use the identity~\eqref{e.monster.twistie4.split} to split~\eqref{e.monster.twistie4} into a divergence-form part and a nondivergence form part. 
The claimed estimates for these are as follows: 
\begin{multline}
\label{e.monster.est.8a}
\biggl\|
\sum_{k\in 2\Z+1}
\xi_{m,k} 
\kappa_m 
\bigl( \nabla X_{m-1,l_k}^{-1}- \Itwo \bigr)
\nabla {\Chi}_{m,k} \circ X_{m-1,l_k}^{-1}
\nabla \bigl(  T_{m-1}\circ X_{m-1,l_k} \bigr) \circ X_{m-1,l_k}^{-1} 
\biggr\|_{L^2((0,1)\times \TT^2)} 
\\
\leq
C\kappa_m^{\nicefrac12} \ep_{m-1}^{2\delta} 
\left\| \theta_{0} \right\|_{L^2( \TT^2)}
\end{multline}
and
\begin{multline}
\label{e.monster.est.8b}
\biggl\|
\sum_{k\in 2\Z+1}
\xi_{m,k} 
\kappa_m 
\bigl( \nabla X_{m-1,l_k}^{-1}- \Itwo \bigr)
\nabla {\Chi}_{m,k} \circ X_{m-1,l_k}^{-1}
\nabla \bigl( \nabla \bigl(  T_{m-1}\circ X_{m-1,l_k} \bigr) \circ X_{m-1,l_k}^{-1} \bigr)
\biggr\|_{L^2((0,1);\dot{H}^{-1}(\TT^2))} 
\\
\leq
C\kappa_m^{\nicefrac12} \ep_{m-1}^{2\delta} 
\left\| \theta_{0} \right\|_{L^2( \TT^2)}
\,.
\end{multline}
For the $L^2$ estimate~\eqref{e.monster.est.8a}, we use~\eqref{e.Xm.regbounds},~\eqref{e.corrm} and~\eqref{e.nabm2} to bound the left side from above by
\begin{align}
\label{e.L2.monster8a}
& 
\sup_{k\in\N}
\left\| \xi_{m,k} \bigl( \nabla X_{m-1,l_k}^{-1} - \Itwo \bigr) \right\|_{L^\infty(\R\times\TT^2)}
\kappa_m
\left\| \nabla \Chi_{m,k} \right\|_{L^\infty(\R\times\TT^2)}
\bigl\| \nabla \bigl(  T_{m-1}\circ X_{m-1,l_k} \bigr) \circ X_{m-1,l_k}^{-1}  \bigr\|_{L^2((0,1)\times\TT^2)}
\notag \\ & \quad
\leq 
C\ep_{m-1}^{2\delta} 
\kappa_m \ep_m^{-\gamma} 
\left\| \nabla T_{m-1} \right\|_{L^2((0,1)\times\TT^2)}
\notag \\ & \quad
= 
C\kappa_{m}^{\nicefrac12} \ep_{m-1}^{2\delta}
\underbrace{
\left(
\frac{\kappa_m}{\kappa_{m-1}}
\right)^{\nicefrac12} 
\ep_m^{-\gamma} 
}_{\,\leq\, C \, \text{by\,\eqref{e.kappam}\,\&\,\eqref{e.gamma}} }
\left\| \theta_{0} \right\|_{L^2( \TT^2)}
\leq
C\kappa_m^{\nicefrac12} \ep_{m-1}^{2\delta} 
\left\| \theta_{0} \right\|_{L^2( \TT^2)}
\,.
\end{align}
This completes the proof of~\eqref{e.monster.est.8a}. 

\smallskip

For~\eqref{e.monster.est.8b}, we use the ergodic lemma of Remark~\ref{r.Hminusone.flow}, as explained above. We apply~\eqref{e.what.ergodic.gives} with~$f$ and~$g$ chosen as in~\eqref{e.fg.choice.2} to find that the left side of~\eqref{e.monster.est.8b} is bounded from above by the sum of
\begin{equation*}
C \ep_{m-1}^{500} \bigl\| \nabla {\Chi}_{m,k} \bigr\|_{L^2((0,1)\times\TT^2)} 
\leq 
C \ep_{m-1}^{500}  \ep_m^{-\gamma}
\leq
C \ep_{m-1}^{499},
\end{equation*} 
and
\begin{align*}
&
C \ep_{m} 
\bigl\| \xi_{m,k}
\kappa_m 
\bigl( \nabla X_{m-1,l_k}^{-1}- \Itwo \bigr) 
\nabla \bigl( 
\nabla \bigl(  T_{m-1} \circ X_{m-1,l_k} \bigr) \circ X_{m-1,l_k}^{-1} \bigr)  \bigr\|_{L^2((0,1)\times\TT^2)} 
\bigl\| \nabla {\Chi}_{m,k} \bigr\|_{L^2((0,1)\times\TT^2)}
\\ & \qquad
\leq
C\ep_m\kappa_m
\sup_{k\in\N}
\bigl\| \xi_{m,k} \bigl( \nabla X_{m-1,l_k}^{-1} - \Itwo \bigr) \bigr\|_{L^\infty(\R\times\TT^2)}
\bigl\| \nabla \Chi_{m,k} \bigr\|_{L^\infty(\R\times\TT^2)}
\notag \\ & \qquad\qquad  
\times
\bigl\| \nabla \bigl( \nabla \bigl(  T_{m-1}\circ X_{m-1,l_k} \bigr) \circ X_{m-1,l_k}^{-1} \bigr) \bigr\|_{L^2((0,1)\times\TT^2)}\,.
\end{align*}
The latter term is almost the same as in~\eqref{e.L2.monster8a}, the only differences being that we have one extra derivative on the $T_m$ expression, which costs exactly $\ep_{m-1}^{-(1+\nicefrac \gamma2)}$, and we also have an extra factor of~$\ep_m = \ep_{m-1}^{q}$. This more than compensates, since, in view of~\eqref{e.gamma},
\begin{equation}
\label{e.kill.half.gamma}
q - 1 > (q-1) \frac{\beta}{2} > \gamma > \frac 12 \gamma \,.
\end{equation}
This proof of~\eqref{e.monster.est.8b} is therefore complete. 
\qed

\subsubsection*{The estimate of~\eqref{e.monster.twistie5}}  
We will prove that 
\begin{equation}
\label{e.monster.est.9}
\bigl\| \mbox{\eqref{e.monster.twistie5}} \bigr\|_{L^2((0,1);\dot{H}^{-1}(\TT^2))}
\leq
C\kappa_m^{\nicefrac12} \ep_{m-1}^{2\delta} 
\left\| \theta_{0} \right\|_{L^2( \TT^2)}
\,.
\end{equation}
We use the identity~\eqref{e.monster.twistie5.split} to split~\eqref{e.monster.twistie5} into divergence and nondivergence form terms. 
The estimate for the first term on the right side of~\eqref{e.monster.twistie5.split} is obtained with the help of the ergodic lemma in Appendix~\ref{a.ergodic}, as explained below~\eqref{e.monster.twistie5.split}. We apply~\eqref{e.what.ergodic.gives} with the choice of~$f$ and~$g$ given by~\eqref{e.fg.choice.3}. 
Up to the exponentially small error, the estimate is reduced therefore to 
\begin{align*}
&
C \ep_{m} 
\bigl\| f \bigr\|_{L^2((0,1)\times\TT^2)} 
\bigl\| g \bigr\|_{L^2((0,1)\times\TT^2)}
\\ & \qquad 
\leq
C\ep_m
\sup_{k\in\N}
\left\| \xi_{m,k} \bigl( \nabla X_{m-1,l_k}  - \Itwo \bigr) \right\|_{L^\infty(\R\times\TT^2)}
\bigl\|  \nabla \bigl(
\nabla \bigl(  T_{m-1} \circ X_{m-1,l_k} \bigr) \circ X_{m-1,l_k}^{-1} 
\bigr) \bigr\|_{L^2((0,1)\times\TT^2)}
\\ & \qquad \qquad 
\times
\left\| \psi_{m}+ \kappa_m |\nabla \Chi_{m,k}| \right\|_{L^\infty(\R\times\TT^2)}
\notag \\ & \qquad
\leq
C \kappa_m \ep_m^{1-\gamma}  \ep_{m-1}^{2\delta} \ep_{m-1}^{-(1+\nicefrac \gamma2)} 
\kappa_{m-1}^{-\nicefrac12}
\left\| \theta_{0} \right\|_{L^2( \TT^2)}
\leq 
C\kappa_m^{\nicefrac12} \ep_{m-1}^{q-1-\nicefrac \gamma2+2\delta} 
\left\| \theta_{0} \right\|_{L^2( \TT^2)}
\,.
\end{align*}
As above in~\eqref{e.kill.half.gamma}, we have that~$q-1-\nicefrac \gamma2>0$, therefore the right side of the previous display is bounded from above by the right side of~\eqref{e.monster.est.9}.

\smallskip

The second term on the right side of~\eqref{e.monster.twistie5.split} is in divergence form, so we just need to estimate the $L^2$ norm of what is under the divergence. The claim is that 
\begin{multline}
\label{e.monster.est.9a}
\biggl\|
\sum_{k\in 2\Z+1}\!\!
\xi_{m,k}
Y_{m-1,k}
\bigl( 
\hat{\zeta}_{m,l_k} \zeta_{m,k}\tilde{\psi}_{m,k} \sigma
+ \kappa_m \nabla {\Chi}_{m,k} 
\circ X_{m-1,l_k}^{-1} \bigr)
\nabla \bigl(  T_{m-1} \circ X_{m-1,l_k} \bigr) \circ X_{m-1,l_k}^{-1} 
\biggr\|_{L^2((0,1)\times \TT^2)} 
\\
\leq
C\kappa_m^{\nicefrac12} \ep_{m-1}^{2\delta} 
\left\| \theta_{0} \right\|_{L^2( \TT^2)}
\,.
\end{multline}
The proof of~\eqref{e.monster.est.9a} is almost the same as~\eqref{e.monster.est.8a}. Compared to the latter, we use the bound 
\begin{align*}
\bigl\| \tilde{\psi}_{m,k} \bigr\|_{L^\infty}
+ \kappa_m \big\|  \nabla {\Chi}_{m,k} \bigr\|_{L^\infty} 
\leq
C \kappa_m \ep_m^{-\gamma}
\end{align*}
instead of just the bound for~$\kappa_m \|  \nabla {\Chi}_{m,k} \|_{L^\infty}$, and we substitute the bound 
\begin{align*}
\left\| \xi_{m,k} Y_{m-1,k} \right\|_{L^\infty(\R\times\TT^2)}
\leq C\ep_{m-1}^{2\delta},
\end{align*}
in place of the bound for~$\| \xi_{m,k} \bigl( \nabla X_{m-1,l_k}^{-1} - \Itwo \bigr) \|_{L^\infty(\R\times\TT^2)}$, which is essentially the same. 
Recall that~$Y_{m-1,k}$ is defined in~\eqref{e.Ymk} and the above bound is a consequence of~\eqref{e.Xm.bound.1} and~\eqref{e.tam}. 
The proof of~\eqref{e.monster.est.9a} and hence of~\eqref{e.monster.est.9} is now complete. 
\qed

\subsubsection*{The estimates of~\eqref{e.monster.normie1}}
The claimed estimate is
\begin{multline}
\label{e.monster.est.10}
\biggl\|
\sum_{k\in 2\Z+1}
\xi_{m,k} 
\bigl( \kappa_m \Itwo + \tilde{\psi}_m \sigma \bigr) 
\tilde{\Chi}_{m,k}
\nabla 
\bigl( \nabla \bigl(  T_{m-1}\circ X_{m-1,l_k} \bigr) \circ X_{m-1,l_k}^{-1}\bigr) 
\biggr\|_{L^2((0,1)\times\TT^2)}
\\
\leq 
C\kappa_{m}^{\nicefrac12} 
\ep_{m-1}^{4\delta} 
\left\| \theta_{0} \right\|_{L^2( \TT^2)}
\,.
\end{multline}
Using~\eqref{e.corrm},~\eqref{e.psikmb} and~\eqref{e.nabm2}, we bound the left hand side of~\eqref{e.monster.est.10} by
\begin{align*}
& \left\| \kappa_m\Itwo+\psi_m\sigma \right\|_{L^\infty} 
\left\| \Chi_{m,k} \right\|_{L^\infty} 
\sup_{k\in\N}
\left\| \xi_{m,k} \nabla 
\bigl( \nabla \bigl(  T_{m-1}\circ X_{m-1,l_k} \bigr) \circ X_{m-1,l_k}^{-1}\bigr) 
\right\|_{L^2((0,1)\times\TT^2)}
\notag \\ & \qquad \qquad 
\leq
C \kappa_m \left(1 + \frac{a_m\ep_m^2}{\kappa_m} \right) 
\ep_m^{1-\gamma}
\ep_{m-1}^{-(1+\nicefrac \gamma2)}
\kappa_{m-1}^{-\nicefrac12}  
\left\| \theta_{0} \right\|_{L^2( \TT^2)}
\notag \\ & \qquad \qquad 
=
C\kappa_{m}^{\nicefrac12} 
\underbrace{
\kappa_{m}^{\nicefrac12} 
\left(1 + \frac{a_m\ep_m^2}{\kappa_m} \right)
}_{\,\leq\, C\kappa_{m-1}^{\nicefrac12} }
\underbrace{
\left( \frac{\ep_m}{\ep_{m-1}} \right) 
\ep_{m}^{-\gamma} \ep_{m-1}^{-\nicefrac\gamma2}
}_{\,\leq\, C \ep_{m-1}^{4\delta}}
\kappa_{m-1}^{-\nicefrac12}  
\left\| \theta_{0} \right\|_{L^2( \TT^2)}
\leq
C\kappa_{m}^{\nicefrac12} 
\ep_{m-1}^{4\delta} 
\left\| \theta_{0} \right\|_{L^2( \TT^2)}
\,.
\end{align*}
In the above display, we used that 
\begin{equation}
\label{e.put.pipe.smoke.it}
\left( \frac{\ep_m}{\ep_{m-1}} \right) 
\ep_{m}^{-\gamma} \ep_{m-1}^{-\nicefrac\gamma2}
\leq
C\ep_{m-1}^{q-1-q\gamma-\nicefrac \gamma2}
=
C\ep_{m-1}^{4\delta}
\end{equation}
since, by~\eqref{e.delta} and~\eqref{e.gamma},
\begin{equation*}
q-1-q\gamma - \frac12 \gamma 
=
(q-1) \left( 1 - \frac{\beta}{q+1}\left( q + \frac12 \right) \right) 
=
(q-1) \left( 1 - \frac{2q+1}{2q+2} \beta \right) 
=4\delta
\,.
\end{equation*}
The proof of~\eqref{e.monster.est.10} is complete. 
\qed

\subsubsection*{The estimate of~\eqref{e.monster.normie2}}
The claimed estimate is
\begin{equation}
\label{e.monster.est.11}
\bigl\| 
\bigl( \kappa_m \Itwo + \tilde{\psi}_m \sigma \bigr) \nabla \widetilde{H}_{m} 
\bigr\|_{L^2((0,1)\times\TT^2)}
\leq
C\kappa_{m}^{\nicefrac12} 
\ep_{m-1}^{\delta} 
\left\| \theta_{0} \right\|_{L^2( \TT^2)}
\,.
\end{equation}
This follows easily from~\eqref{e.hcorrm},~\eqref{e.psikmb} and~\eqref{e.nabm2}. Indeed, we have that 
\begin{align*}
\bigl\| 
\bigl( \kappa_m \Itwo {+} \tilde{\psi}_m \sigma \bigr) \nabla \widetilde{H}_{m} 
\bigr\|_{L^2((0,1)\times\TT^2)}
& \leq
\left\| \kappa_m\Itwo{+}\psi_m\sigma \right\|_{L^\infty} 
\bigl\| \nabla \widetilde{H}_{m} \bigr\|_{L^2((0,1)\times \TT^2)} 
\notag \\ & 
\leq 
\underbrace{
C \kappa_m \left(1 {+} \frac{a_m\ep_m^2}{\kappa_m} \right) 
}_{\,\leq\, C \kappa_m^{\nicefrac12}\kappa_{m-1}^{\nicefrac12}}
\ep_{m-1}^{4\delta} 
\kappa_{m-1}^{-\nicefrac12}
\left\| \theta_{0} \right\|_{L^2( \TT^2)}
\leq
C\kappa_{m}^{\nicefrac12} 
\ep_{m-1}^{4\delta} 
\left\| \theta_{0} \right\|_{L^2( \TT^2)}
\,.
\end{align*}
This completes the proof of~\eqref{e.monster.est.11}. 
\qed

\subsubsection*{The estimate of~\eqref{e.monster.tiny}}
The claimed estimate is
\begin{equation}
\label{e.monster.est.tiny}
\bigl\| \mbox{\eqref{e.monster.tiny}} \bigr\|_{L^2((0,1);\dot{H}^{-1}(\TT^2))}
\leq
C\kappa_m^{\nicefrac12} \ep_{m-1}^{2\delta} 
\left\| \theta_{0} \right\|_{L^2( \TT^2)}
\,.
\end{equation}
We first show the following bound for~$\mathbf{d}_m$:
\begin{equation}
\label{e.monster.est.11.a.d}
\bigl\|
\mathbf{d}_m
\bigr\|_{L^2([0,1]\times \TT^2)}
\leq  
\kappa_m^{\nicefrac12} \ep_{m-1}^{2\delta} 
\|\theta_0\|_{L^2(\TT^2)}
\,.
\end{equation}
This bound follows from \eqref{e.dm.bounds}, upon taking $N_*$ to be sufficiently large to ensure that 
\begin{equation*}
\kappa_{m-1}^{\nicefrac 12}
\bigl(C \ep_{m-1}^{\delta} \bigr)^{\nicefrac{N_*}{2}}  
\leq 
\kappa_{m}^{\nicefrac 12} \ep_{m-1}^{2\delta}
\,.
\end{equation*}
In turn, this estimate follows from 
$C \ep_{m-1}^{\nicefrac{\delta}{2}}\leq 1$
and the definition of $N_*$ in~\eqref{e.N}, which implies 
$N_*  \geq 8 + 4(q-1)(\beta+\gamma) \delta^{-1} = 8 +  \frac{128 q^2}{q-1}$. The last equality follows from \eqref{e.beta.def.0}, \eqref{e.delta}, and \eqref{e.gamma}.

\smallskip

We next prove that
\begin{equation}
\label{e.monster.est.11.a.e}
\bigl\|
\mathbf{e}_{m-1}
\bigr\|_{L^2([0,1]\times \TT^2)}
\leq  
C\kappa_m^{\nicefrac12} \ep_{m-1}^{2\delta} 
\|\theta_0\|_{L^2(\TT^2)}
\,.
\end{equation}
The inequality~\eqref{e.Em-1.thetam} with~$n=0$ says that 
\begin{equation}
\label{e.Em-1.thetam.n=0}
\left\| \mathbf{e}_{m-1} \right\|_{L^2((0,1)\times \TT^2)}
\leq
C C_{{N_*}} \kappa_{m-1}^{\nicefrac 12}
\bigl( C \ep_{m-1}^{2\delta} \bigr)^{\nicefrac{{N_*}}{2}} 
\|\theta_0\|_{L^2(\TT^2)}
\,.
\end{equation}
If~${N_*}$ is sufficiently large to ensure that 
\begin{equation*}
\kappa_{m-1}^{\nicefrac 12}
\bigl(C \ep_{m-1}^{2\delta} \bigr)^{\nicefrac{{N_*}}{2}}  
\leq 
\kappa_{m}^{\nicefrac 12} \ep_{m-1}^{2\delta}\,,
\end{equation*}
then \eqref{e.Em-1.thetam.n=0} directly implies \eqref{e.monster.est.11.a.e}.
Similar to the previous paragraph, the above estimate follows from the definition of~$N_*$ in~\eqref{e.N}, which implies that~${N_*}\geq 8+128q^2(q-1)$. 
Note that since~${N_*}$ is chosen as in~\eqref{e.N}, the constant~$C_{{N_*}}$ also depends only on~$\beta$, and this gives us~\eqref{e.monster.est.11.a.e}. 

\smallskip

The claimed estimate for the third and final term in~\eqref{e.monster.tiny} is
\begin{align}
\label{e.tiny.est.thirdterm}
\biggl\| \sum_{k\in 2\Z+1}\!\!
\xi_{m,k} 
\tilde{\Chi}_{m,k}
\bigl( \nabla X_{m-1,l_k}\circ X_{m-1,l_k}^{-1} \bigr) 
\cdot 
\nabla \bigl(  \nabla \cdot  \mathbf{e}_{m-1}\bigr)
\biggr\|_{L^2((0,1)\times \TT^2)}
\leq
C\ep_{m-1}^{2\delta} \kappa_m^{\nicefrac12} 
\|\theta_0\|_{L^2(\TT^2)}
\,.
\end{align}
We proceed similarly as above. Using~\eqref{e.Xm.bound.1},~\eqref{e.corrm} and~\eqref{e.Em-1.thetam} with~$n=2$, We have that 
\begin{align*}
\lefteqn{
\biggl\| \sum_{k\in 2\Z+1}\!\!
\xi_{m,k} 
\tilde{\Chi}_{m,k}
\bigl( \nabla X_{m-1,l_k}\circ X_{m-1,l_k}^{-1} \bigr) 
\cdot 
\nabla \bigl(  \nabla \cdot  \mathbf{e}_{m-1}\bigr)
\biggr\|_{L^2((0,1)\times \TT^2)}
} 
\qquad &
\notag \\ & 
\leq 
\sup_{k\in\N}
\bigl\| \tilde{\Chi}_{m,k} \bigr\|_{L^\infty(\R\times\TT^2)}
\bigl\| \xi_{m,k} \bigl(\nabla X_{m-1,l_k}  \circ X_{m-1,l_k}^{-1}\bigr) 
\bigr\|_{L^\infty(\R\times\TT^2)}
\bigl\|
\nabla^2 \mathbf{e}_{m-1}
\bigr\|_{L^2((0,1)\times \TT^2)}
\notag \\ & 
\leq 
C\ep_m^{1-\gamma} \cdot 
C C_{{N_*}} \kappa_{m-1}^{\nicefrac 12}
\bigl(C \ep_{m-1}^{-2-\gamma}\bigr)
\bigl( C \ep_{m-1}^{\delta} \bigr)^{{N_*}} 
\|\theta_0\|_{L^2(\TT^2)}
\notag \\ & 
\leq 
CC_{{N_*}} 
\biggl(
\ep_{m-1}^{-1-(q-1)(\beta-1) -2q\gamma}
\bigl( C \ep_{m-1}^{\delta} \bigr)^{{N_*}}  
\biggr)
\kappa_m^{\nicefrac12} 
\|\theta_0\|_{L^2(\TT^2)}
\,.
\end{align*}
Arguing as above, we need to ensure that~${N_*}$ was chosen so large that the term in parentheses is bounded by a constant times~$\ep_{m-1}^{2\delta}$. It suffices to take 
\begin{equation*}
N_* \geq 2 + \delta^{-1} \bigl( 1 + (q-1)(\beta-1) -2 q \gamma\bigr)\,,
\end{equation*}
which is clearly satisfied by the $N_*$ defined in~\eqref{e.N}. This completes the proof of~\eqref{e.tiny.est.thirdterm} and thus of~\eqref{e.monster.est.tiny}. 
\qed

\subsubsection*{The estimates of~\eqref{e.monster.normie3}}
We will show that 
\begin{align}
\label{e.monster.est.12}
&
\biggl\|
\sum_{k\in 2\Z+1} \!\!\!
\xi_{m,k}
\Bigl( \J_m -  
\bigl( \kappa_m \Itwo {+} \hat{\zeta}_{m,l_k} \zeta_{m,k}\tilde{\psi}_{m,k} \sigma\bigr)
\bigl( \Itwo {+} \nabla \tilde{\Chi}_{m,k} \bigr) \Bigr)
\nabla \bigl( \nabla \bigl(  T_{m-1} {\circ} X_{m-1,l_k} \bigr) {\circ} X_{m-1,l_k}^{-1} \bigr)
\biggr\|_{L^2_t \dot{H}^{-1}_x}
\notag \\ & \qquad \qquad
\leq
C\kappa_{m}^{\nicefrac12} \ep_{m-1}^{4\delta} 
\left\|  \theta_{0} \right\|_{L^2(\TT^2)} 
\,.
\end{align}
This term is in nondivergence form, so we need to apply the ergodic lemma in Appendix~\ref{a.ergodic}, as explained above. We apply~\eqref{e.what.ergodic.gives} with the choice of~$f$ and~$g$ given by~\eqref{e.fg.choice.4}. 
Discarding the exponentially small error term, this reduces the estimate~\eqref{e.monster.est.12} to an estimate for 
\begin{align*}
&
C \ep_{m} 
\bigl\| f \bigr\|_{L^2((0,1)\times\TT^2)} 
\bigl\| g \bigr\|_{L^2((0,1)\times\TT^2)}
\\ & \qquad 
\leq
C\ep_m
\sup_{k\in\N}
\bigl\|  \xi_{m,k}
\nabla \bigl( 
\nabla \bigl(  T_{m-1} \circ X_{m-1,l_k} \bigr) \circ X_{m-1,l_k}^{-1} \bigr) 
\bigr\|_{L^2((0,1)\times\TT^2)}
\\ & \qquad \qquad \times
\left\|  \J_m - 
\bigl( \kappa_m \Itwo + \hat{\zeta}_{m,l_k}\zeta_{m,k}{\psi}_{m,k} \sigma\bigr)
\bigl( \Itwo + \nabla {\Chi}_{m,k} \bigr) \right\|_{L^\infty(\R\times\TT^2)}
\notag \\ & \qquad
\leq
C \ep_m \cdot C\ep_{m-1}^{-(1+\nicefrac \gamma2)}
\kappa_{m-1}^{-\nicefrac12}
\left\| \theta_{0} \right\|_{L^2(\TT^2)}
\cdot 
C \kappa_m \ep_m^{-2\gamma}
\notag \\ & \qquad 
\leq
C\kappa_{m}^{\nicefrac12} 
\underbrace{ \!\!\!\!\phantom{\biggl(} \ep_{m}^{1-\gamma} \ep_{m-1}^{-(1+\nicefrac \gamma2)} }_{\,\leq \, C\ep_{m-1}^{4\delta}}   \underbrace{ \ep_m^{-\gamma} \biggl( \frac{\kappa_m}{\kappa_{m-1}} \biggr)^{\!\!\nicefrac12} }_{\,\leq\,  C}
\left\| \theta_{0} \right\|_{L^2(\TT^2)}
\leq
C\kappa_{m}^{\nicefrac12} \ep_{m-1}^{4\delta} 
\left\| \theta_{0} \right\|_{L^2( \TT^2)}
\,.
\end{align*}
Here we used~\eqref{e.kappam},~\eqref{e.corrm},~\eqref{e.psikmb} and~\eqref{e.put.pipe.smoke.it} again. 
This completes the proof of~\eqref{e.monster.est.12}.
\qed

\medskip

We have now estimated every one of the terms on the right side of the Big Display on Page~\pageref{e.monster} and shown that they are each bounded by the right side of~\eqref{e.bigbound}. The proof of~\eqref{e.bigbound} is therefore complete.

\subsection{Energy cascade down the scales} 
\label{ss.one.renormalization.step}

In this section, we complete the proof of Theorem~\ref{t.anomalous.diffusion}. The main step remaining is to use the estimates from the previous section to obtain lower bounds on the energy dissipation of~$\theta_m$ in terms of that of~$\theta_{m-1}$, 
thereby formalizing the expectation that energy is pushed by advection into smaller scales, down the inertial-convection subrange, until one finally reaches a scale small enough that molecular diffusivity dominates.

\smallskip

Recall that we have fixed a small parameter~$\kappa \in \mathcal{K}$, with the set~$\mathcal{K}$ defined in~\eqref{e.permissible.K}, and chosen~$M \in \N$ satisfying~\eqref{e.permitted}; the finite sequence~$\kappa_M, \kappa_{M-1},\ldots,\kappa_0$ is defined by~\eqref{e.kappa.sequence}.

\begin{proposition}[Main induction step]
\label{p.indystepdown}
There exist constants~$C(\beta)<\infty$, such that, if the minimal scale separation parameter~$\Lambda$ satisfies $\Lambda \geq C$, then the following statement is valid.
For every~$R_{\theta_0}>0$ and~$\theta_0\in C^\infty(\TT^2)$ with~$\langle \theta_0 \rangle = 0$ which satisfies the quantitative analyticity condition 
\begin{align}
\label{e.theta0.anal.encore}
\max_{|\aa|=n}
\left\| \partial^\aa \theta_0 \right\|_{L^2(\TT^2)}
\leq
\|\theta_0\|_{L^2(\TT^2)} 
\frac{n!}{R_{\theta_0}^{n}}
\,, \quad \forall n\in\N \,,
\end{align}
if we define 
\begin{equation}
\label{e.mtheta0.def}
m_{\theta_0}:=
\min \Bigl \{ m\in \N \,:\, m\geq 2\,, \ 
\ep_{m-1}^{1+ \nicefrac{\gamma}{2}} \leq R_{\theta_0} \Bigr \}\,,
\end{equation}
then, for every $m\in\{m_{\theta_0},\ldots,M\}$, 
we have the estimates
\begin{equation}
\label{e.homogenization.m}
\big\| \theta_m - \theta_{m-1}\big\|_{L^\infty((0,1);L^2(\TT^2))}+
\kappa_m^{\nicefrac12} 
\big\| \nabla \theta_m - \nabla \tilde{\theta}_{m} \big\|_{L^2((0,1)\times \TT^2)}
\leq 
C \ep_{m-1}^\delta 
\left\| \theta_{0} \right\|_{L^2( \TT^2)}
\end{equation}
and
\begin{equation}
\label{e.tildethetam.energy.diss}
\Biggl| \,
\frac{\kappa_{m} \big\| \nabla {\theta}_m \big\|_{L^2((0,1)\times\TT^2)}^2}
{\kappa_{m-1} \left\| \nabla \theta_{m-1} \right\|_{L^2((0,1)\times\TT^2)}^2}
-1 \,
\Biggr| 
\leq C\ep_{m-1}^\delta.
\end{equation}
\end{proposition}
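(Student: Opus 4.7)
\emph{Step 1 (control of the defect).} I set $w_m := \theta_m - \tilde\theta_m$. Subtracting \eqref{e.theta.m} from the Big Display for $(\partial_t - \kappa_m\Delta + \b_m\cdot\nabla)\tilde\theta_m$, the function $w_m$ satisfies the same advection-diffusion equation as $\theta_m$, with forcing $\mathcal{F}_m := -(\partial_t - \kappa_m\Delta + \b_m\cdot\nabla)\tilde\theta_m$ and initial error $\theta_0 - \tilde\theta_m(0,\cdot)$. The estimate \eqref{e.bigbound} writes $\mathcal{F}_m = \nabla\cdot F_m$ with $\|F_m\|_{L^2((0,1)\times\TT^2)}\leq C\ep_{m-1}^\delta\kappa_m^{1/2}\|\theta_0\|_{L^2}$, while the initial error has $L^2$-norm bounded by $C\ep_{m-1}^\delta\|\theta_0\|_{L^2}$ (using $T_{m-1}(0)=\theta_0$, the $L^\infty_tL^2_x$ bound \eqref{e.Hm.Linfty} applied at $t=0$, and the exponential smallness of the only correctors $\Chi_{m,k}(0)$ whose support in $k$ meets that of $\xi_{m,k}(0)$, provided by \eqref{e.corrbounds.Chi.decay}). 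The standard parabolic energy estimate (test with $w_m$, use incompressibility of $\b_m$ to eliminate the transport term, and absorb $\int F_m\cdot\nabla w_m$ via Cauchy-Schwarz into the diffusion) then yields
$$\|w_m\|_{L^\infty_tL^2_x}^2 + \kappa_m\|\nabla w_m\|_{L^2((0,1)\times\TT^2)}^2 \leq C\ep_{m-1}^{2\delta}\|\theta_0\|_{L^2}^2 \,.$$

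\emph{Step 2 (closeness of $\tilde\theta_m$ to $\theta_{m-1}$).} I estimate the decomposition from \eqref{e.ansatz} termwise in $L^\infty_tL^2_x$. Lemma~\ref{l.Tm.minus.thetam} controls $T_{m-1}-\theta_{m-1}$ by $C\ep_{m-1}^{2\delta}\|\theta_0\|_{L^2}$; Proposition~\ref{p.Hm.bounds} controls $\widetilde H_m$ by $C\ep_{m-1}^\delta\|\theta_0\|_{L^2}$; and the corrector-times-gradient middle term is bounded by $\|\tilde\Chi_{m,k}\|_{L^\infty}\cdot\|\nabla T_{m-1}\|_{L^\infty_tL^2_x}\leq C\ep_m^{1-\gamma}\cdot C\ep_{m-1}^{-1-\gamma/2}\|\theta_0\|_{L^2}$ via \eqref{e.corrm} and Lemma~\ref{l.Tm.reg.upgrade}; the exponent $q(1-\gamma) - 1 - \gamma/2$ equals $4\delta$ by the direct identity following from \eqref{e.delta} and \eqref{e.gamma}. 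Combining with Step~1 via the triangle inequality gives the $L^\infty_tL^2_x$ part of \eqref{e.homogenization.m}, and the $\kappa_m^{1/2}\|\nabla w_m\|_{L^2}$ bound from Step~1 is exactly the second half of \eqref{e.homogenization.m}.

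\emph{Step 3 (dissipation ratio).} Applying \eqref{e.energy.balance} at levels $m$ and $m-1$ and subtracting, the polarization identity gives
$$2\bigl(\kappa_m\|\nabla\theta_m\|_{L^2}^2 - \kappa_{m-1}\|\nabla\theta_{m-1}\|_{L^2}^2\bigr) = \langle\theta_{m-1}(1,\cdot) - \theta_m(1,\cdot),\,\theta_m(1,\cdot) + \theta_{m-1}(1,\cdot)\rangle_{L^2}\,,$$
so Step~2 together with $\|\theta_j(1,\cdot)\|_{L^2}\leq\|\theta_0\|_{L^2}$ immediately produces the additive bound $|E_m - E_{m-1}|\leq C\ep_{m-1}^\delta\|\theta_0\|_{L^2}^2$. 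To upgrade this to the multiplicative form \eqref{e.tildethetam.energy.diss}, I compute $\kappa_m\|\nabla\tilde\theta_m\|_{L^2}^2$ directly from \eqref{e.grad.tildetheta}. Each of the three subleading lines there contributes relative error at most $\ep_{m-1}^\delta$ via the same type of analysis already carried out in Section~\ref{ss.estimate.error} (twist distortion, corrector-times-second-derivative, and $\nabla\widetilde H_m$), while the leading squared term, after the shear orthogonality \eqref{e.SO} annihilates the $k\neq k'$ cross-terms, a change of variable through the measure-preserving flow $X_{m-1,l_k}$, and the flux-energy identity of Lemma~\ref{l.flux.to.energy}, reduces to $\int_0^1 \J_m^{\kappa_m}(t):\int_{\TT^2}\nabla T_{m-1}\otimes\nabla T_{m-1}\,dx\,dt$, up to a spatial-oscillation remainder handled by the ergodic tools of Appendix~\ref{a.ergodic}. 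Writing $\J_m - \Khom_m = \partial_t\mathbf Q_m$ with $\|\mathbf Q_m\|_{L^\infty}\leq C\kappa_{m-1}\tau_m''$ as in \eqref{e.Q.m.def} and integrating by parts in time against the slowly varying $\int|\nabla T_{m-1}(t,\cdot)|^2\,dx$ (whose time derivative is bounded via the $T_{m-1}$ equation and Lemma~\ref{l.Tm.reg.upgrade}), this fluctuation integral is shown to carry a further $O(\ep_{m-1}^\delta)$ relative error. One concludes $\kappa_m\|\nabla\tilde\theta_m\|_{L^2}^2 = \kappa_{m-1}\|\nabla T_{m-1}\|_{L^2}^2\bigl(1+O(\ep_{m-1}^\delta)\bigr)$; the Cauchy-Schwarz transfer from $\|\nabla\tilde\theta_m\|$ to $\|\nabla\theta_m\|$ via the Step~1 bound on $\kappa_m^{1/2}\|\nabla w_m\|$, together with the analogous transfer from $\|\nabla T_{m-1}\|$ to $\|\nabla\theta_{m-1}\|$ from Lemma~\ref{l.Tm.minus.thetam}, then closes \eqref{e.tildethetam.energy.diss}.

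\emph{Main obstacle.} The delicate point is the last direct computation of $\kappa_m\|\nabla\tilde\theta_m\|_{L^2}^2$: one must show that each of the three subleading contributions to \eqref{e.grad.tildetheta}, once squared and integrated, produces an error that is small \emph{relative to} $\kappa_{m-1}\|\nabla\theta_{m-1}\|_{L^2}^2$ rather than merely to $\|\theta_0\|_{L^2}^2$, as otherwise only the additive bound above survives. The required bookkeeping parallels in spirit, though not in execution, the estimate of the nine error terms in Section~\ref{ss.estimate.error}, and relies on the full analytic-scale regularity of $T_{m-1}$ developed in Section~\ref{s.Tm.reg}, the pointwise flux-energy identity of Lemma~\ref{l.flux.to.energy}, and the ergodic separation of the fast correctors from the slow macroscopic gradient provided in Appendix~\ref{a.ergodic}.
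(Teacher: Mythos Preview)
Your proposal is essentially the paper's proof. All three steps match: the energy estimate for $w_m=\theta_m-\tilde\theta_m$ driven by \eqref{e.bigbound}; the termwise $L^\infty_tL^2_x$ control of $\tilde\theta_m-\theta_{m-1}$ via Lemma~\ref{l.Tm.minus.thetam}, \eqref{e.Hm.Linfty}, and the corrector bound \eqref{e.corrm}; and the direct computation of $\kappa_m\|\nabla\tilde\theta_m\|^2$ through the decomposition of \eqref{e.grad.tildetheta}, the flux--energy Lemma~\ref{l.flux.to.energy}, spatial ergodic averaging (Lemma~\ref{l.flow.averages}), and integration by parts in time against an antiderivative of $\J_m-\Khom_m$.

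Two small differences are worth noting. First, the paper asserts $\tilde\theta_m(0,\cdot)=\theta_0$ exactly, whereas you (more carefully) bound the initial discrepancy; in fact only $\Chi_{m,-1}(0,\cdot)$ and $\widetilde H_m(0,\cdot)$ can be nonzero, and both are negligible. Second, for the corrector--gradient term in Step~2 your direct use of $\|\nabla T_{m-1}\|_{L^\infty_tL^2_x}\leq C\ep_{m-1}^{-1-\gamma/2}\|\theta_0\|$ from Lemma~\ref{l.Tm.reg.upgrade} with $n=1$ is cleaner than the paper's route through the fundamental theorem of calculus in time (see \eqref{e.tildethetam.to.Tm.A.temp}); you get $\ep_{m-1}^{4\delta}$ where the paper obtains only $\ep_{m-1}^{\delta}$.

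Regarding your ``main obstacle'': you are right that the chain of estimates---both yours and the paper's---yields only the \emph{additive} bound $\bigl|\kappa_m\|\nabla\theta_m\|^2-\kappa_{m-1}\|\nabla\theta_{m-1}\|^2\bigr|\leq C\ep_{m-1}^\delta\|\theta_0\|^2$, not the multiplicative form \eqref{e.tildethetam.energy.diss} as literally written. The paper glosses over this when it writes ``in order to prove~\eqref{e.tildethetam.energy.diss} we have left to show \eqref{e.left.to.show}'', since \eqref{e.left.to.show} is additive. Passing to the ratio would require an a priori lower bound on $\kappa_{m-1}\|\nabla\theta_{m-1}\|^2$ in terms of $\|\theta_0\|^2$, which is not available uniformly in $m$. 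So your diagnosis is accurate, and the same issue is present in the paper's argument; this does not affect the overall strategy, only the precise formulation of the inductive step.
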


The proof of Proposition~\ref{p.indystepdown} is based on~\eqref{e.bigbound}, which implies the bound on~$\nabla \theta_{m} - \nabla \tilde{\theta}_m$ in~\eqref{e.homogenization.m}. We then use this estimate to obtain the rest of the estimates in the proposition by doing the computations for $\tilde{\theta}_{m}$ and then switching back to~$\theta_m$ with the triangle inequality. 

\smallskip

We will assume that the constant~$C$ is large enough that the conditions~\eqref{e.Lambda.restriction} and~\eqref{e.m.restriction} are valid. Therefore the estimate~\eqref{e.bigbound} proved in the previous subsection is also valid. 

\subsubsection*{The proof of the estimate of $\theta_m - \tilde{\theta}_{m}$ in $L^2_tH^1_x$}
We prove the estimate
\begin{equation}
\label{e.twoscale.m}
\big\| \theta_m - \tilde{\theta}_m\big\|_{L^\infty((0,1);L^2(\TT^2))}
+
\kappa_m^{\nicefrac12} 
\big\| \nabla \theta_m - \nabla \tilde{\theta}_{m} \big\|_{L^2((0,1)\times \TT^2)}
\leq
C \ep_{m-1}^\delta 
\left\| \theta_{0} \right\|_{L^2( \TT^2)}
\,.
\end{equation}
Since $\theta_m = \tilde{\theta}_m$ at~$t=0$, the energy estimate and~\eqref{e.bigbound} yield
\begin{align*}
&
\big\| \theta_m - \tilde{\theta}_m\big\|_{L^\infty((0,1);L^2(\TT^2))}^2
+
\kappa_{m}
\big\| \nabla \theta_m - \nabla \tilde{\theta}_m\big\|_{L^2((0,1)\times\TT^2)}^2
& 
\notag \\ & \qquad 
\leq
\frac{C}{\kappa_{m}}
\big\| (\partial_t - \kappa_m\Delta + \b_m\cdot\nabla) \tilde{\theta}_m \big\|_{L^2((0,1); \dot{H}^{-1}(\TT^2))}^2
\leq 
C \ep_{m-1}^{2 \delta} 
\left\| \theta_{0} \right\|_{L^2( \TT^2)}
\,.
\end{align*}
This is~\eqref{e.twoscale.m}. 
\qed

\subsubsection*{Estimate of the energy dissipation of~${\theta}_{m}$ in terms of~$\theta_{m-1}$}
We next prove that~\eqref{e.tildethetam.energy.diss} holds. 
In view of~\eqref{e.homogenization.m}, it is essentially equivalent to prove~\eqref{e.tildethetam.energy.diss} with~$\tilde{\theta}_m$ in place of~$\theta_m$. We therefore return to the identity~\eqref{e.grad.tildetheta} and rearrange it in the form
\begin{align}
\label{e.grad.tildetheta.again}
\nabla \tilde{\theta}_m 
&
=
\sum_{k\in 2\Z+1}
\xi_{m,k}
\bigl( \Itwo +  \nabla  \Chi_{m,k}  \bigr) \circ X_{m-1,l_k}^{-1}
\nabla T_{m-1}
\notag \\ & \qquad 
+
\sum_{k\in 2\Z+1}
\xi_{m,k}
\bigl(  \nabla X_{m-1,l_k}^{-1} - \Itwo  \bigr)(\nabla  \Chi_{m,k}) \circ X_{m-1,l_k}^{-1}
\nabla T_{m-1}
\notag \\ & \qquad 
+
\sum_{k\in 2\Z+1}
\xi_{m,k}
\nabla \tilde{\Chi}_{m,k}
\bigl( \nabla X_{m-1,l_k} \circ X_{m-1,l_k}^{-1}   - \Itwo \bigr) 
\nabla T_{m-1}
\notag \\ & \qquad
+
\sum_{k\in 2\Z+1}
\xi_{m,k} \tilde{\Chi}_{m,k}
\nabla \bigl( 
\nabla \bigl(  T_{m-1} \circ X_{m-1,l_k} \bigr) \circ X_{m-1,l_k}^{-1} 
\bigr)
+ 
\nabla \widetilde{H}_m
\,.
\end{align}
The first term on the right side of~\eqref{e.grad.tildetheta.again} makes the leading order contribution. 
We proceed by estimating the second through fifth terms on the right side of~\eqref{e.grad.tildetheta.again}, showing that they are suitably small. 
For the second of these terms we use~\eqref{e.Xm.bound.1} and~\eqref{e.tam} in place of~\eqref{e.Tm.thetam}:
\begin{align*}
& 
\kappa_m^{\nicefrac12} 
\left\| 
\sum_{k\in 2\Z+1}
\xi_{m,k}
\bigl(  \nabla X_{m-1,l_k}^{-1} - \Itwo  \bigr)
(\nabla  \Chi_{m,k}) \circ X_{m-1,l_k}^{-1}
\nabla T_{m-1}
\right\|_{L^2((0,1)\times\TT^2)} 
\notag \\ & \qquad 
\leq
C\kappa_m^{\nicefrac12} 
\sup_{k\in\N}
\left\| \nabla X_{m-1,l_k}^{-1} - \Itwo \right\|_{L^\infty(\supp \xi_{m,k} \times \TT^2)}
\left\| \xi_{m,k}\nabla  \Chi_{m,k} \right\|_{L^\infty(\R\times\TT^2)}
\left\| \nabla T_{m-1} \right\|_{L^2((0,1)\times \TT^2)}
\notag \\ & \qquad 
\leq 
C\ep_{m-1}^{2\delta} \underbrace{
\left( \frac{\kappa_m}{\kappa_{m-1}} \right)^{\nicefrac12} 
\ep_m^{-\gamma}
}_{\,\leq\, C \text{\,by~\eqref{e.kappam}\,\&\,\eqref{e.corrm}}}
\!\!
\left\| \theta_{0} \right\|_{L^2( \TT^2)} 
\leq 
C \ep_{m-1}^{2\delta} 
\left\| \theta_{0} \right\|_{L^2( \TT^2)}
\,.
\end{align*}
For the third of these terms we use~\eqref{e.Xm.bound.1} and~\eqref{e.tam} in place of~\eqref{e.Tm.thetam}:
\begin{align*}
& 
\kappa_m^{\nicefrac12} 
\left\| 
\sum_{k\in 2\Z+1}
\xi_{m,k}
\nabla \tilde{\Chi}_{m,k}
\bigl( \nabla X_{m-1,l_k} \circ X_{m-1,l_k}^{-1} - \Itwo \bigr) 
\nabla T_{m-1}
\right\|_{L^2((0,1)\times\TT^2)} 
\notag \\ & \qquad 
\leq
C\kappa_m^{\nicefrac12} 
\sup_{k\in\N}
\left\| \xi_{m,k}\nabla \tilde{\Chi}_{m,k} \right\|_{L^\infty(\R\times\TT^2)}
\left\| \nabla X_{m-1,l_k} - \Itwo \right\|_{L^\infty(\supp \xi_{m,k} \times \TT^2)}
\left\| \nabla T_{m-1} \right\|_{L^2((0,1)\times \TT^2)}
\notag \\ & \qquad 
\leq 
C\underbrace{
\left( \frac{\kappa_m}{\kappa_{m-1}} \right)^{\nicefrac12} 
\ep_m^{-\gamma}
}_{\,\leq\, C \text{\,by~\eqref{e.kappam}\,\&\,\eqref{e.corrm}}}
\!\!
\ep_{m-1}^{2\delta} 
\left\| \theta_{0} \right\|_{L^2( \TT^2)}
\leq 
C \ep_{m-1}^{2\delta} 
\left\| \theta_{0} \right\|_{L^2( \TT^2)}
\,.
\end{align*}
For the first term on the last line of~\eqref{e.grad.tildetheta.again}, from \eqref{e.Xm.bound.4}, \eqref{e.corrm}, and \eqref{e.nabm2}, we have 
\begin{align*}
& 
\kappa_m^{\nicefrac12} 
\biggl\| 
\sum_{k\in 2\Z+1}
\xi_{m,k} \tilde{\Chi}_{m,k}
\nabla \bigl( 
\nabla \bigl(  T_{m-1} \circ X_{m-1,l_k} \bigr) \circ X_{m-1,l_k}^{-1} 
\bigr)
\biggr\|_{L^2((0,1)\times\TT^2)} 
\notag \\ & \qquad 
\leq
C\kappa_m^{\nicefrac12} 
\sup_{k\in\N}
\left\| \xi_{m,k} \tilde{\Chi}_{m,k} \right\|_{L^\infty(\supp \xi_{m,k} \times \TT^2)}
\left\| \nabla^2 T_{m-1} \right\|_{L^2((0,1)\times\TT^2)}
\notag\\ &\qquad \qquad 
+C\kappa_m^{\nicefrac12} 
\sup_{k\in\N}
\left\| \xi_{m,k} \tilde{\Chi}_{m,k} \nabla^2 X_{m-1,l_k}  \right\|_{L^\infty(\supp \xi_{m,k} \times \TT^2)}  \left\| \nabla T_{m-1} \right\|_{L^2((0,1)\times\TT^2)}
\notag \\ & \qquad 
\leq
C\underbrace{
\left( \frac{\kappa_m}{\kappa_{m-1}} \right)^{\nicefrac12} 
\ep_m^{-\gamma}
}_{\,\leq\, C \text{\,by~\eqref{e.kappam}}}
\underbrace{
\left( \frac{\ep_m}{\ep_{m-1}} \right)
\ep_{m-1}^{-\nicefrac \gamma2}
}_{\,\leq\, C\ep_{m-1}^{2(q-1)/3}}
\left\| \theta_{0} \right\|_{L^2(\TT^2)}
\leq 
C \ep_{m-1}^{\frac23(q-1)} 
\left\| \theta_{0} \right\|_{L^2(\TT^2)}
\,.
\end{align*}
Finally, in view of the estimate for $\nabla \widetilde{H}_m$ in~\eqref{e.Hm.gradient.L2}, we have
\begin{equation}
\kappa_m^{\nicefrac 12} \norm{\nabla \widetilde{H}_m(t,\cdot)}_{L^{2}([0,1]\times \TT^2)} 
\leq  
C\ep_{m-1}^{\delta+\gamma}   
\left\| \theta_{0} \right\|_{L^2(\TT^2)}
\,.
\end{equation}
By~\eqref{e.grad.tildetheta.again}, the triangle inequality and the previous five displays, we therefore  obtain that 
\begin{equation}
\label{e.leadingord}
\kappa_m^{\nicefrac12} 
\biggl\| 
\nabla \tilde{\theta}_m 
-
\sum_{k\in 2\Z+1}
\xi_{m,k}
\bigl( \Itwo + \nabla  \Chi_{m,k} \bigr)\circ X_{m-1,l_k}^{-1}
\nabla T_{m-1}
\biggr\|_{L^2((0,1)\times\TT^2)} 
\leq 
C \ep_{m-1}^{2\delta} 
\left\| \theta_{0} \right\|_{L^2(\TT^2)}
\,.
\end{equation}
In view of~\eqref{e.Tm.thetam},~\eqref{e.homogenization.m} and~\eqref{e.leadingord}, in order to prove~\eqref{e.tildethetam.energy.diss} we have left to show
\begin{multline}
\label{e.left.to.show}
\Biggl| 
\kappa_m 
\biggl\| 
\sum_{k\in 2\Z+1}
\xi_{m,k}
\bigl( \Itwo + \nabla \Chi_{m,k} \bigr)\circ X_{m-1,l_k}^{-1}
\nabla T_{m-1}
\biggr\|_{L^2((0,1)\times\TT^2)}^2
-
\kappa_{m-1} 
\left\| 
\nabla T_{m-1}
\right\|_{L^2((0,1)\times\TT^2)}^2
\Biggr| 
\\
\leq
C \ep_{m-1}^{2\delta} 
\left\| \theta_{0} \right\|_{L^2(\TT^2)}^2
\,.
\end{multline}
The proof of~\eqref{e.left.to.show} is based on an application of an ergodic lemma in the appendix (Lemma~\ref{l.flow.averages}). 
Before we apply the lemma, we 
define the matrix
\begin{equation*}
\mathbf{F}(t,x):= 
\sum_{k\in 2\Z+1}
\xi_{m,k}
\bigl( \Itwo + \nabla  \Chi_{m,k} \bigr)\circ X_{m-1,l_k}^{-1}
\,.
\end{equation*}
We can therefore break up the left side of~\eqref{e.left.to.show} as follows:
\begin{align}
\label{e.ergodic.break.up}
\lefteqn{
\Biggl| 
\kappa_m 
\biggl\| 
\sum_{k\in 2\Z+1}
\xi_{m,k}
\bigl( \Itwo + \nabla  \Chi_{m,k} \bigr) \circ X_{m-1,l_k}^{-1}
\nabla T_{m-1}
\biggr\|_{L^2((0,1)\times\TT^2)}^2
-
\kappa_{m-1} 
\left\| 
\nabla T_{m-1}
\right\|_{L^2((0,1)\times\TT^2)}^2\,
\Biggr| 
} \qquad & 
\notag \\ & 
\leq
\kappa_m
\biggl| 
\int_0^1 
\bigl\| 
\mathbf{F} (t,\cdot)
\nabla T_{m-1}(t,\cdot)
\bigr\|_{L^2(\TT^2)}^2
\,dt
-
\int_0^1 \!
\int_{\TT^2} 
\nabla T_{m-1}(t,x)\cdot 
\bigl\langle
(\mathbf{F}^t \mathbf{F})(t,\cdot)
\bigr\rangle
\nabla T_{m-1}(t,x)
\,dx
\,dt\,
\biggr|
\notag \\ & \qquad 
+
\int_0^1 
\Bigl| 
\kappa_m \bigl\langle
(\mathbf{F}^t \mathbf{F})(t,\cdot)
\bigr\rangle
-
\J_m(t)
\Bigr|
\bigl\|
\nabla T_{m-1}(t,\cdot)
\bigr\|_{L^2(\TT^2)}^2
\,dt
\notag \\ & \qquad 
+
\biggl| 
\int_0^1 
\Bigl\langle
\nabla T_{m-1}(t,\cdot)\cdot 
\J_m(t) 
\nabla T_{m-1}(t,\cdot)
\Bigr\rangle
\,dt
-
\int_{0}^1 
\Bigl\langle
\nabla T_{m-1}(t,\cdot)\cdot 
\langle \hspace{-2.5pt} \langle
 \J_m \rangle \hspace{-2.5pt} \rangle
\nabla T_{m-1}(t,\cdot)
\Bigr\rangle
\,
\biggr|
\notag \\ & \qquad 
+
\biggl|
\int_{0}^1 
\Bigl\langle
\nabla T_{m-1}(t,\cdot)\cdot 
\langle \hspace{-2.5pt} \langle
 \J_m \rangle \hspace{-2.5pt} \rangle
\nabla T_{m-1}(t,\cdot)
\Bigr\rangle
-
\kappa_{m-1} 
\left\| 
\nabla T_{m-1}
\right\|_{L^2((0,1)\times\TT^2)}^2 \,
\biggr|
\end{align}
The first and third terms on the right side of~\eqref{e.ergodic.break.up} are estimated using an ergodic lemma (Lemma~\ref{l.flow.averages} in the appendix); for the first term, we apply the ergodic lemma in space only, and for the third term we apply the ergodic lemma in time only. The second term on the right side of~\eqref{e.ergodic.break.up} is a consequence of Lemma~\ref{l.flux.to.energy}. The fourth term is actually equal to zero, because~$\kappa_m \Itwo$ is equal to~$\langle \J_m \rangle$ by definition: see~\eqref{e.Kbarm.def} and~\eqref{e.kappa.sequence}.

\smallskip

We next bound the second term on the right side of~\eqref{e.ergodic.break.up}.
We observe that
\begin{align*}
\mathbf{F}^t \mathbf{F} 
=
\Itwo +
\!\!\sum_{k\in 2\Z+1}\!\!
\xi_{m,k} 
\bigl( 
\bigl(  \nabla  \Chi_{m,k} \bigr)^t
{+}
\nabla  \Chi_{m,k} \bigr)
\circ X_{m-1,l_k}^{-1}
+
\!\!\sum_{k\in 2\Z+1}\!\!
\xi_{m,k}^2 
\bigl( 
\bigl(  \nabla  \Chi_{m,k} \bigr)^t
\nabla  \Chi_{m,k} 
\bigr)
\circ X_{m-1,l_k}^{-1}
\,.
\end{align*}
Indeed, this follows from the fact that the supports of~$\xi_{m,k}$ and~$\xi_{m,k'}$ have nonempty intersection only if~$k,k'\in2 \Z+1$ satisfy~$|k-k'| \leq 2$ and, by~\eqref{e.alt.orth}, we have  
\begin{equation}
|k-k'| = 2 
\quad \implies \quad
\bigl(\nabla  \Chi _{m,k}
\circ X_{m-1,l_k}^{-1}\bigr)^\intercal 
\bigl(
\nabla   \Chi _{m,k'}
\circ
X_{m-1,l_{k'}}^{-1}\bigr)
= 0
\,.
\end{equation}
Taking the average in space and multiplying by~$\kappa_m$ yields, in view of~\eqref{e.Emkappa.formula},
\begin{align}
\label{e.bracket.FF}
\kappa_m
\bigl\langle \mathbf{F}^t \mathbf{F}  \bigr\rangle (t)
&
=
\kappa_m\Itwo +
\kappa_m\sum_{k\in 2\Z+1}
\xi_{m,k}^2 (t) 
\, \Bigl\langle 
\bigl( 
\bigl(  \nabla  \Chi_{m,k} \bigr)^t
\nabla  \Chi_{m,k} 
\bigr)
\circ X_{m-1,l_k}^{-1}
\Bigr\rangle (t)
\notag \\ &
=
\kappa_m\Itwo +
\kappa_m\sum_{k\in 2\Z+1}
\xi_{m,k}^2 (t) 
\, \bigl\langle 
\bigl(  \nabla  \Chi_{m,k} \bigr)^t
\nabla  \Chi_{m,k} 
\bigr\rangle (t)
=
\mathbf{E}_m(t)
\,.
\end{align}
Therefore, thanks to Lemma~\ref{l.flux.to.energy}, we obtain
\begin{align}
\label{e.ergodic.break.up.second}
\lefteqn{
\int_0^1 
\bigl| 
\kappa_m \bigl\langle
(\mathbf{F}^t \mathbf{F})(t,\cdot)
\bigr\rangle
-
\J_m(t)
\bigr|
\bigl\|
\nabla T_{m-1}(t,\cdot)
\bigr\|_{L^2(\TT^2)}^2
\,dt
} \quad &
\notag \\ &
=
\int_0^1 
\bigl| 
\mathbf{E}_m (t,\cdot)
-
\J_m(t)
\bigr|
\bigl\|
\nabla T_{m-1}(t,\cdot)
\bigr\|_{L^2(\TT^2)}^2
\,dt
\notag \\ &
\leq 
\kappa_{m-1} 
\biggl(
\frac{\ep_m^2}{\kappa_m \tau_m}
\biggr)
\bigl\|
\nabla T_{m-1}
\bigr\|_{L^2((0,1) \times \TT^2)}^2
\leq 
\ep_{m-1}^{2\delta}
\left\| \theta_{0} \right\|_{L^2(\TT^2)}^2
\,,
\end{align}
where we used~\eqref{e.expqunt} and~\eqref{e.nabm2} to get the last inequality. 

\smallskip

We next estimate the first term on the right side of~\eqref{e.ergodic.break.up}. The claimed estimate is
\begin{multline}
\label{e.ergodic.space.kill}
\kappa_m
\biggl| 
\int_0^1 
\bigl\| 
\mathbf{F} (t,\cdot)
\nabla T_{m-1}(t,\cdot)
\bigr\|_{L^2(\TT^2)}^2
\,dt
-
\int_0^1 \!
\int_{\TT^2} 
\nabla T_{m-1}(t,x)\cdot 
\bigl\langle
(\mathbf{F}^t \mathbf{F})(t,\cdot)
\bigr\rangle
\nabla T_{m-1}(t,x)
\,dx
\,dt\,
\biggr|
\\
\leq 
\ep_{m-1}^{500}
\left\| \theta_{0} \right\|_{L^2(\TT^2)}^2
\,.
\end{multline}
For this we apply an ergodic lemma from Appendix~\ref{a.ergodic}, namely Lemma~\ref{l.flow.averages} with the time variable frozen. 
We therefore fix~$t\in (0,1)$,~$k\in 2\Z+1$ and~$i,j\in\{1,2\}$ and we use the following choices for the~$X$,~$f$ and~$g$ appearing in that lemma:
\begin{equation*}
\left\{
\begin{aligned}
& 
X = X_{m-1,l_k}
\,,
\\ & 
f = \partial_{x_i} T_{m-1}(t,\cdot) \partial_{x_j} T_{m-1}(t,\cdot)
\,,
\\ & 
g = 
\Bigl( 
\Itwo +
\xi_{m,k} 
\bigl( 
\bigl(  \nabla  \Chi_{m,k} \bigr)^t
{+}
\nabla  \Chi_{m,k} \bigr)
+
\xi_{m,k}^2 
\bigl( 
\bigl(  \nabla  \Chi_{m,k} \bigr)^t
\nabla  \Chi_{m,k} 
\bigr)
\Bigr)_{ij}
\,.
\end{aligned}
\right.
\end{equation*}
and with the following choices of the constants appearing the hypotheses of that lemma:
\begin{equation*}
\left\{
\begin{aligned}
& C_X = C\ep_{m-1} \,, 
\\ & 
R = C \ep_{m-1}^{-1} \,,
\\ & 
C_f = C_{\theta_0} R_{\theta_0}^{-1}  \kappa_{m-1}^{-\nicefrac12}
\\ & 
r = R_{\theta_0} ^{-1} \ep_{m-1}^{1+\nicefrac\gamma2}
\\ & 
N = \ep_{m}^{-1}\,.
\end{aligned}
\right.
\end{equation*}
Let's check the hypotheses of Lemma~\ref{l.flow.averages} in this situation. We first note that~\eqref{e.ass.X.anal} is valid since~\eqref{e.Xm.regbounds}. 
The hypothesis~\eqref{e.ass.f.anal.flows} is valid due to~\eqref{e.Tm.reg.upgrade}. The periodicity assumption for~$g$ is clear from the construction of the correctors~$\Chi_{m,k}$, as these are~$\ep_m$--periodic. Finally, the condition~\eqref{e.N.ergodic.constraint} is valid since
\begin{align*}
\frac{N r}{R (r +  d C_X)} 
& =
\frac{\ep_{m-1}^{2+\nicefrac\gamma2}}{C\ep_m R_{\theta_0}(R_{\theta_0} ^{-1} \ep_{m-1}^{1+\nicefrac\gamma2} +\ep_{m-1}) } 
\geq 
\frac{\ep_{m-1}^{1+\nicefrac\gamma2}}{C R_{\theta_0} \ep_m} 
=
\frac{\ep_{m-1}^{ -(q-1) ( 1 - \frac{\beta}{2(q+1)}  )}}{C R_{\theta_0}} 
\geq 
\frac{\ep_{m-1}^{ - 2(q-1)/3 }}{C R_{\theta_0}} 
\,.
\end{align*}
Clearly the expression on the right side is at least~$1$, provided that~$\Lambda$ is chosen sufficiently large. 
Note that, after summing over~$k\in 2\Z+1$, the function~$g\circ X^{-1}$ is equal to the~$ij$th component of~$\mathbf{F}^t \mathbf{F}(t,\cdot)$. 
Therefore, an application of the lemma, namely~\eqref{e.new.expdecay}, gives us the bound
\begin{multline*}
\kappa_m
\biggl| 
\bigl\| 
\mathbf{F} (t,\cdot)
\nabla T_{m-1}(t,\cdot)
\bigr\|_{L^2(\TT^2)}^2
-
\int_{\TT^2} 
\nabla T_{m-1}(t,x)\cdot 
\bigl\langle
(\mathbf{F}^t \mathbf{F})(t,\cdot)
\bigr\rangle
\nabla T_{m-1}(t,x)
\,dx
\biggr|
\\
\leq 
C \kappa_{m-1}^{\nicefrac12}
\exp \biggl( 
-
\frac{\ep_{m-1}^{ - 2(q-1)/3 }}{C R_{\theta_0}} 
\biggr) 
\left\| \theta_{0} \right\|_{L^2(\TT^2)}^2
\leq \ep_{m-1}^{500}\left\| \theta_{0} \right\|_{L^2(\TT^2)}^2
\,.
\end{multline*}
Integrating over~$t$ yields~\eqref{e.ergodic.space.kill}. 

\smallskip
Returning to \eqref{e.ergodic.break.up}, we consider the third term on the right side. Our claim is that 
\begin{multline}
\label{e.ergodic.break.up.last}
\biggl| 
\int_0^1 
\Bigl\langle
\nabla T_{m-1}(t,\cdot)\cdot 
\J_m(t) 
\nabla T_{m-1}(t,\cdot)
\Bigr\rangle
\,dt
-
\int_{0}^1 
\Bigl\langle
\nabla T_{m-1}(t,\cdot)\cdot 
\langle \hspace{-2.5pt} \langle
 \J_m \rangle \hspace{-2.5pt} \rangle
\nabla T_{m-1}(t,\cdot)
\Bigr\rangle
\,
\biggr|
\\ 
\leq
C\ep_{m-1}^{2\delta}
\left\| \theta_{0} \right\|_{L^2( \TT^2)}^2
\,.
\end{multline}
To see this, we first recall that the bound for $\J_m - \hat{\J}_m$ in  \eqref{e.JJhat}, together with \eqref{e.exprat.bound}, implies that the left side of \eqref{e.ergodic.break.up.last} is bounded from above as 
\begin{align}
\label{e.TJT.break}
&
\biggl| 
\int_0^1 
\Bigl\langle
\nabla T_{m-1}(t,\cdot)\cdot 
\J_m(t) 
\nabla T_{m-1}(t,\cdot)
\Bigr\rangle
\,dt
-
\int_{0}^1 
\Bigl\langle
\nabla T_{m-1}(t,\cdot)\cdot 
\langle \hspace{-2.5pt} \langle
 \J_m \rangle \hspace{-2.5pt} \rangle
\nabla T_{m-1}(t,\cdot)
\Bigr\rangle
\,
\biggr|
\notag\\
& 
\leq \frac{C a_m^2\ep_m^4}{\kappa_m} \biggl(
\frac{\ep_m^2}{\kappa_m \tau_m}
\biggr)^{\!\!N_*}
\bigl\|
\nabla T_{m-1}
\bigr\|_{L^2([0,1]\times \TT^2)}^2
+ 
\biggl| 
\int_0^1 \!
\bigl( 
\hat{\J}_m(t) 
-
\langle \hspace{-2.5pt} \langle
\hat{\J}_m \rangle \hspace{-2.5pt} \rangle
\bigr)
\colon
\Bigl\langle \!
\nabla T_{m-1}(t,\cdot) 
\otimes  
\nabla T_{m-1}(t,\cdot)\!
\Bigr\rangle
dt
\biggr|
\notag\\ &  
\leq 
C \bigl( C \ep_{m-1}^{2\delta} \bigr)^{\!N_*}
\kappa_{m-1}
\bigl\|
\nabla T_{m-1}
\bigr\|_{L^2([0,1]\times \TT^2)}^2
+ 
\biggl| 
\int_0^1 \!
\bigl( 
\hat{\J}_m(t) 
-
\langle \hspace{-2.5pt} \langle
\hat{\J}_m \rangle \hspace{-2.5pt} \rangle
\bigr)
\colon
\Bigl\langle \!
\nabla T_{m-1}(t,\cdot) 
\otimes  
\nabla T_{m-1}(t,\cdot) \!
\Bigr\rangle
\,dt
\,
\biggr|
\notag\\ &  
\leq 
C \bigl( C \ep_{m-1}^{2\delta} \bigr)^{\!N_*}
\left\| \theta_{0} \right\|_{L^2(\TT^2)}^2
+ 
\biggl| 
\int_0^1 
\bigl( 
\hat{\J}_m(t) 
-
\langle \hspace{-2.5pt} \langle
\hat{\J}_m \rangle \hspace{-2.5pt} \rangle
\bigr)
\colon
\Bigl\langle
\nabla T_{m-1}(t,\cdot) 
\otimes  
\nabla T_{m-1}(t,\cdot)
\Bigr\rangle
\,dt
\,
\biggr|
\,.
\end{align}
Next, we recall from \eqref{e.Jhat} that $\hat{\J}_m(t) - \langle \hspace{-2.5pt} \langle
\hat{\J}_m \rangle \hspace{-2.5pt} \rangle$ is a zero--mean $\tau_{m}^{\prime\prime}$--periodic function of time (recall~\eqref{e.ratios.taum}), and so we may write \begin{equation*}
\hat{\J}_m(t) 
-
\langle \hspace{-2.5pt} \langle
\hat{\J}_m \rangle \hspace{-2.5pt} \rangle
=
\sum_{n=0}^{N_* -1}
\bigl(
L_{m,n}(t) \, \mathbf{j}_{m,n}(t)
-
\langle \hspace{-2.5pt} \langle
L_{m,n} \, \mathbf{j}_{m,n}  \rangle \hspace{-2.5pt} \rangle
\bigr)
= 
\partial_t \hat{\mathbf{Q}}_m(t)
\end{equation*}
where
\begin{equation*}
\| \hat{\mathbf{Q}}_m \|_{L^\infty([0,1])}
\leq 
C \kappa_{m-1} \tau_m^{\prime\prime}
\,,
\qquad 
\mbox{and}
\qquad
\hat{\mathbf{Q}}_m(0) =  \hat{\mathbf{Q}}_m(1) = 0
 \,.
\end{equation*}
In light of the above two displays, we integrate by parts in time, use the identity 
\begin{align*}
&\int_0^1 \hat{\mathbf{Q}}_m(t) \partial_t \, \Bigl\langle \!
\nabla T_{m-1}(t,\cdot) 
\otimes  
\nabla T_{m-1}(t,\cdot) \!
\Bigr\rangle
\,dt
\notag\\
&= 
\int_0^1 \hat{\mathbf{Q}}_m(t)
\int_{\TT^2} \Bigl( \DD_{t,m-1} \nabla T_{m-1} \otimes \nabla T_{m-1}  
+
 \nabla T_{m-1}  \otimes \DD_{t,m-1}   \nabla T_{m-1}\Bigr) (t,x)  dx dt
\,,
\end{align*}
which follows since $\nabla \cdot \b_{m-1}=0$,
and appeal to the estimates~\eqref{e.Tm.reg.upgrade} and \eqref{e.barf.cascade} to deduce
\begin{align*}
&
\biggl| 
\int_0^1 
\bigl( 
\hat{\J}_m(t) 
-
\langle \hspace{-2.5pt} \langle
\hat{\J}_m \rangle \hspace{-2.5pt} \rangle
\bigr)
\colon
\Bigl\langle
\nabla T_{m-1}(t,\cdot) 
\otimes  
\nabla T_{m-1}(t,\cdot)
\Bigr\rangle
\,dt
\,
\biggr|
\notag\\
&\qquad 
\leq  
C \| \hat{\mathbf{Q}}_m \|_{L^\infty([0,1])}
\|\nabla T_{m-1} \|_{L^2([0,1]\times \TT^2)}
\|\DD_{t,m-1}\nabla T_{m-1} \|_{L^2([0,1]\times \TT^2)}
\notag\\
&\qquad 
\leq  
C \kappa_{m-1} \tau_m^{\prime\prime}
\cdot 
\kappa_{m-1}^{-\nicefrac 12}
\| \theta_0\|_{L^2(\TT^2)}
\cdot
C \ep_{m-1}^{3\delta}
\bigl(\tau_m^{\prime}\bigr)^{-1}
\kappa_{m-1}^{-\nicefrac 12}
\| \theta_0\|_{L^2(\TT^2)}
\leq 
C \ep_{m-1}^{2\delta}
\left\| \theta_{0} \right\|_{L^2(\TT^2)}^2
\,.
\end{align*}
Combining this display with~\eqref{e.TJT.break} and using that
\begin{equation*}
C \bigl( C \ep_{m-1}^{2\delta} \bigr)^{\!N_*}
\kappa_{m-1}
\bigl\|
\nabla T_{m-1}
\bigr\|_{L^2([0,1]\times \TT^2)}^2 \leq\ep_{m-1}^{500} \left\| \theta_{0} \right\|_{L^2( \TT^2)}^2
\end{equation*}
for $N_*$ sufficiently large, 
we obtain~\eqref{e.ergodic.break.up.last}.

\smallskip

Finally, we collect the bounds we have obtained for the terms on the right side of \eqref{e.ergodic.break.up}, namely \eqref{e.ergodic.break.up.second}, \eqref{e.ergodic.space.kill}, \eqref{e.ergodic.break.up.last}, and obtain that 
\begin{multline*}
\biggl| 
\kappa_m 
\biggl\| 
\sum_{k\in 2\Z+1}
\xi_{m,k}
\bigl( \Itwo + \nabla  \Chi_{m,k} \bigr) \circ X_{m-1,l_k}^{-1}
\nabla T_{m-1}
\biggr\|_{L^2((0,1)\times\TT^2)}^2
-
\kappa_{m-1} 
\left\| 
\nabla T_{m-1}
\right\|_{L^2((0,1)\times\TT^2)}^2\,
\biggr| 
\\
\leq
C\ep_{m-1}^{2\delta}
\left\| \theta_{0} \right\|_{L^2( \TT^2)}^2
\,.
\end{multline*}
This concludes the proof of \eqref{e.left.to.show}.
\qed

\subsubsection*{The estimate of $\tilde{\theta}_{m} - T_{m-1}$ in $L^\infty_tL^2_x$}
We next prove the estimate
\begin{align}
\label{e.tildethetam.to.Tm}
\big\| \tilde{\theta}_{m} - T_{m-1} \big\|_{L^\infty((0,1);L^2(\TT^2))}
\leq
C\ep_{m-1}^{\delta}
\left\| \theta_{0} \right\|_{L^2(\TT^2)}
\,.
\end{align}
We proceed by bounding the last two terms on the right side of~\eqref{e.ansatz}.
First, 
using~\eqref{e.cutoff.xi},~\eqref{e.corrm}, the fact that $X_{m-1,l_k}^{-1}$ is volume preserving, and that $|\nabla X_{m-1,l_k}| \leq 2$ on $\supp(\xi_{m,k})$, we obtain
\begin{align}
\label{e.tildethetam.to.Tm.A.temptemp}
\lefteqn{
\left\| 
\sum_{k\in 2\Z+1}
\xi_{m,k}
\tilde{\Chi}_{m,k}
\bigl(
\nabla \bigl( T_{m-1} \circ X_{m-1,l_k} \bigr) \circ X_{m-1,l_k}^{-1} \bigr)
\right\|_{L^\infty((0,1);L^2(\TT^2))}
} \qquad & 
\notag \\ & 
\leq 
\sup_{k\in\N}
\left\| \tilde{\Chi}_{m,k} \right\|_{L^\infty(\R\times\TT^2)}
\left\| 
\sum_{k\in 2\Z+1}
\xi_{m,k}
\bigl(
\nabla \bigl( T_{m-1} \circ X_{m-1,l_k} \bigr) \circ X_{m-1,l_k}^{-1} \bigr)
\right\|_{L^\infty((0,1);L^2(\TT^2))}
\notag \\ & 
\leq 
C\ep_m^{1-\gamma} 
\sup_{k\in\N} \left\| 
\xi_{m,k}
( \nabla T_{m-1}) \circ X_{m-1,l_k}  
\right\|_{L^\infty((0,1);L^2(\TT^2))}
\,.
\end{align}
The right side of the above display requires that we bound $\|(\nabla T_{m-1})\circ X_{m-1,l_k})(t,\cdot)\|_{L^2(\TT^2)}$ in $L^\infty(0,1)$, instead of $L^2(0,1)$ (a bound that would have been available from~\eqref{e.nabm2}). Instead, we appeal to the fundamental theorem of calculus in time which in light of $\xi_{m,k}((k-\frac 54)\tau_m) = 0$, gives
\begin{align*}
&\sup_{t\in\RR}  \left\| 
\xi_{m,k}(t)
(\nabla  T_{m-1}) \circ X_{m-1,l_k}(t,\cdot)
\right\|_{L^2(\TT^2)}
\notag\\
&\leq 
\int_{(k-\frac 54)\tau_m}^{(k+\frac 54)\tau_m}
\Bigl( \|\partial_t \xi_{m,k}\|_{L^\infty(\RR)} \left\| 
\nabla   T_{m-1}  (t,\cdot)
\right\|_{L^2(\TT^2)}  
+
\|\xi_{m,k}\|_{L^\infty(\RR)} \left\| 
\DD_{t,m-1} \nabla  T_{m-1}(t,\cdot)
\right\|_{L^2(\TT^2)}  
\Bigr) dt
\,.
\end{align*}
By further appealing to~\eqref{e.Tm.reg.upgrade} with~$n=0$ and to~\eqref{e.barf.cascade} with $n=0$ and~$\ell=1$, and using also~\eqref{e.xi.mk.def}, we deduce from the previous display that\begin{align}
\lefteqn{ 
\sup_{t\in\RR}  \left\| 
\xi_{m,k}(t)
(\nabla  T_{m-1}) \circ X_{m-1,l_k}(t,\cdot)
\right\|_{L^2(\TT^2)}
} \qquad &
\notag \\ &
\leq 
C \tau_m^{-\nicefrac12}
\left\| 
\nabla   T_{m-1}
\right\|_{L^2(0,1;L^2(\TT^2))}
+ C \tau_m^{\nicefrac 12}
\left\| 
\DD_{t,m-1}\nabla   T_{m-1}  
\right\|_{L^2(0,1;L^2(\TT^2))}
\notag\\
&\leq C
\Bigl(
\tau_m^{-\nicefrac12}
+
\tau_m^{\nicefrac 12}
(\tau_m^\prime)^{-1}
\Bigr)
\kappa_{m-1}^{-\nicefrac 12}
\left\| \theta_{0} \right\|_{L^2(\TT^2)}
\leq C
\tau_m^{-\nicefrac 12}
\kappa_{m-1}^{-\nicefrac 12}
\left\| \theta_{0} \right\|_{L^2( \TT^2)}
\,.
\label{e.tildethetam.to.Tm.A.temp}
\end{align}
By combining~\eqref{e.tildethetam.to.Tm.A.temptemp} and~\eqref{e.tildethetam.to.Tm.A.temp}, we deduce
\begin{multline}
\label{e.tildethetam.to.Tm.A}
\biggl\| 
\sum_{k\in 2\Z+1}
\xi_{m,k}
\tilde{\Chi}_{m,k}
\bigl(
\nabla \bigl( T_{m-1} \circ X_{m-1,l_k} \bigr) \circ X_{m-1,l_k}^{-1} \bigr)
\biggr\|_{L^\infty((0,1);L^2(\TT^2))}
\\
\leq C 
\underbrace{
\ep_m^{1-\gamma} \tau_m^{-\nicefrac 12}\kappa_{m-1}^{-\nicefrac 12}
}_{\leq C \ep_{m-1}^\delta}
\leq C  \ep_{m-1}^\delta
\left\| \theta_{0} \right\|_{L^2( \TT^2)}
\,.
\end{multline}
In the last inequality we used that 
\begin{equation*}
\ep_m^{1-\gamma} \tau_{m}^{-\nicefrac12} \kappa_{m-1}^{-\nicefrac 12}
=
\ep_{m-1}^{q (1-\gamma) - 1 + \nicefrac{\beta}{2} - 2\delta} \ep_{m-1}^{- \frac{q\beta}{q+1}}
=
\ep_{m-1}^{q (1-\gamma) - 1 + \nicefrac{\beta}{2} - \frac{q\beta}{q+1} - 3\delta}
\ep_{m-1}^{\delta}
\end{equation*}
and, since~$\beta \in (1,\nicefrac43)$,
\begin{align*}
q (1-\gamma) - 1 + \frac{\beta}{2}- \frac{q\beta}{q+1} - 3\delta
&=
(q-1) 
+
\beta \biggl( \frac 12 - \frac{q^2}{q+1}\biggr)
-
\frac{3(q-1)^2}{4(q+1)(4q-1)}
= \frac{(4-3\beta)^2}{80 \beta - 64}
\geq 0\,.
\end{align*}
Finally, by the estimate for $H_m$ in~\eqref{e.Hm.Linfty}, we have
\begin{equation}
\norm{\widetilde{H}_m(t,\cdot)}_{L^{\infty}_tL^2_x([0,1]\times \TT^2)} 
\leq  
C\ep_{m-1}^{\delta}
\left\| \theta_{0} \right\|_{L^2(\TT^2)}
\,.
\end{equation}
The previous two displays,~\eqref{e.ansatz} and the triangle inequality yield~\eqref{e.tildethetam.to.Tm}. 
\qed

\smallskip

The proof of Proposition~\ref{p.indystepdown} is now complete, as the estimate for the first term on the left side of~\eqref{e.homogenization.m} follows from~\eqref{e.Tm.thetam},~\eqref{e.twoscale.m} and~\eqref{e.tildethetam.to.Tm}.

\begin{remark}[Uniform continuity in time of the scalar variance]
\label{r.LeBron}
By a simple interpolation argument, the estimate for the first term in~\eqref{e.homogenization.m} can be improved to obtain some positive regularity in the time variable. Indeed, we have that there exists an exponent~$\mu(\beta)>0$ and a constant~$C(\beta) <\infty$ such that 
\begin{equation}
\label{e.continuous.indeed}
\big\| \theta_m - \theta_{m-1}\big\|_{C^{0,\mu}([0,1];L^2(\TT^2))}
\leq 
C \ep_{m-1}^{\nicefrac \delta2} 
\left\| \theta_{0} \right\|_{H^1( \TT^2)}
\,.
\end{equation}
To prove~\eqref{e.continuous.indeed}, we first interpolate~$C^{0,\mu}$ between~$L^\infty$ and~$C^{0,1}$ and apply~\eqref{e.homogenization.m} to get
\begin{align}
\label{e.Carm.interp}
\big\| \theta_m - \theta_{m-1}\big\|_{C^{0,\mu}([t_0,1];L^2(\TT^2))}
&
\leq
\big\| \theta_m - \theta_{m-1}\big\|_{L^\infty([0,1];L^2(\TT^2))}^{1-\mu}
\big\| \partial_t ( \theta_m - \theta_{m-1} ) \big\|_{L^\infty([t_0,1];L^2(\TT^2))}^{\mu}
\notag \\ &
\leq 
C \ep_{m-1}^{(1-\mu) \delta} 
\left\| \theta_{0} \right\|_{L^2( \TT^2)}^{1-\mu}
\big\| \partial_t ( \theta_m - \theta_{m-1} ) \big\|_{L^\infty([t_0,1];L^2(\TT^2))}^{\mu}\,.
\end{align}
The second factor may be estimated very crudely using standard parabolic Schauder estimates. These imply that, any equation of the form~\eqref{e.passive.scalar} with~$\b(t,\cdot)$ divergence-free and satisfying~\eqref{e.b.reg}, 
the solution~$\theta^\kappa$ of~\eqref{e.passive.scalar} satisfies 
\begin{equation}
\label{e.Schauder}
\| \theta^\kappa \|_{C^{1+\nicefrac\alpha2}_t C^{2+\alpha}_x((t_0,1)\times\TT^d)}
\leq 
C (t_0^2 \wedge \kappa)^{-\frac{d+2}{2\alpha}}
\left\| \theta_{0} \right\|_{L^2( \TT^2)}
\,.
\end{equation}
To get this, cover the torus~$\TT^d$ with balls of radius~$(t_0^2 \wedge \kappa)^{-\nicefrac1\alpha}$ and apply the interior Schauder estimates (after rescaling) in the corresponding parabolic cylinders. Giving up volume factors then leads to~\eqref{e.Schauder}. (The power of~$\kappa$ which appears is not important, we just need that it is some negative power of~$\kappa$.) Applying this to~$\theta_m$ and~$\theta_{m-1}$ separately and then inserting the bound into~\eqref{e.Carm.interp}, we obtain 
\begin{equation}
\label{e.continuous.indeed.t0}
\big\| \theta_m - \theta_{m-1}\big\|_{C^{0,\mu}([t_0,1];L^2(\TT^2))}
\leq
C \ep_{m-1}^{(1-\mu) \delta} 
(t_0^2 \wedge \ep_{m}^{\beta+\gamma})^{-\frac{\mu(d+2)}{2\alpha}} 
\left\| \theta_{0} \right\|_{L^2( \TT^2)}
\,.
\end{equation}
For short times, we use that for any equation of the form~\eqref{e.passive.scalar} with~$\kappa>0$ and~$\b(t,\cdot)$ divergence-free and satisfying~\eqref{e.b.reg}, 
the solution~$\theta^\kappa$ of~\eqref{e.passive.scalar} satisfies 
\begin{equation}
\label{e.Duhamel.shorttime}
\big\llbracket \theta^\kappa \big\rrbracket_{C^{0,\nicefrac12}([0,\kappa^{2/\alpha}];L^2(\TT^d))}
\leq 
C \kappa^{-1-\frac{d+2}{2\alpha}}
\left\| \theta_{0} \right\|_{H^1( \TT^d)}
\,.
\end{equation}
To get this, cover the torus~$\TT^d$ with balls of radius~$\kappa^{\nicefrac1\alpha}$ and 
We therefore get, using an interpolation of~$C^{0,\mu}$ between~$L^\infty$ and~$C^{0,\nicefrac12}$ and~\eqref{e.homogenization.m},
\begin{align*}
\lefteqn{
\big\| \theta_m - \theta_{m-1}\big\|_{C^{0,\mu}([0,\ep_m^{2(\beta+\gamma)/(\beta-1)}];L^2(\TT^2))}
} \qquad & 
\\ & 
\leq
\big\| \theta_m - \theta_{m-1}\big\|_{L^\infty([0,\ep_m^{2(\beta+\gamma)/(\beta-1)}];L^2(\TT^2))}^{1-2\mu}
\big\llbracket
\theta_m - \theta_{m-1}  \big\rrbracket_{C^{0,\nicefrac12}([0,\ep_m^{2(\beta+\gamma)/(\beta-1)}];L^2(\TT^2))}^{2\mu}
\notag \\ &
\leq 
C \ep_{m-1}^{(1-2\mu) \delta - 2q\mu ( 1+\frac{4}{2(\beta-1)}) } 
\left\| \theta_{0} \right\|_{H^1( \TT^2)}
\,.
\end{align*}
Combining this with the above, we get 
\begin{align*}
\lefteqn{
\big\| \theta_m - \theta_{m-1}\big\|_{C^{0,\mu}([0,1];L^2(\TT^2))}
} 
\quad &
\\ & 
\leq 
C
\Bigl(
\ep_{m-1}^{(1-\mu) \delta} 
(\ep_m^{4(\beta+\gamma)/(\beta-1)} \wedge \ep_{m}^{\beta+\gamma})^{-\frac{4\mu}{2\alpha}} 
+
\ep_{m-1}^{(1-2\mu) \delta - 2q\mu ( 1+\frac{4}{2(\beta-1)}) } 
\Bigr)  
\left\| \theta_{0} \right\|_{H^1( \TT^2)}
\,.
\end{align*}
For~$\mu$ small enough, depending only on~$\delta$, we obtain~\eqref{e.continuous.indeed}. 
\end{remark}

\subsection{The proof of Theorem~\ref{t.anomalous.diffusion}}
\label{ss.proof}

We turn to the proof of Theorem~\ref{t.anomalous.diffusion}.

\smallskip

We first prove the theorem in the case that the initial datum $\theta_0$ satisfies the analyticity condition~\eqref{e.theta0.anal}. It will be convenient to define $m_* \in \NN$ by
\begin{equation}
\label{e.m*.def}
m_*:=
m_{\theta_0} - 1 =
\min \Bigl \{ m\in \N \,:\, m\geq 2\,, \ 
\ep_{m-1}^{1+ \nicefrac{\gamma}{2}} \leq R_{\theta_0} \Bigr \} - 1 \,.
\end{equation}
We start with the observation that, due  to~\eqref{e.kappam.bound} and the definition of~$m_{*}$ above, there are constants~$c_*>0$ and~$C_*<\infty$ which depend only on~$\beta$, such that
\begin{equation}
\label{e.kappa*.def}
c_*
R_{\theta_0}^{\frac{2q(\beta+\gamma)}{2+\gamma}}
\leq 
c_*\ep_{m_{*}-1}^{q(\beta+\gamma)} 
\leq 
c_* \ep_{m_{*}}^{\beta+\gamma}  \leq \kappa_{m_{*}} \leq C_*\ep_{m_{*}}^{\beta+\gamma}
\leq 
C_*
R_{\theta_0}^{\frac{2(\beta+\gamma)}{2+\gamma}}
\,.
\end{equation}
Recall that~$\theta_{m_{*}}$ satisfies the advection-diffusion equation with diffusivity~$\kappa_{m_{*}}$ and vector field~$\b_{m_*}$:
\begin{equation*}
\partial_t \theta_{m_{*}}  -\kappa_{m_{*}} \Delta \theta_{m_{*}}
+\b_{m_{*}} \cdot \nabla \theta_{m_{*}}
 = 0 \quad \mbox{in} \ (0,1)\times \TT^2\,.
\end{equation*}
The standard~$L^2$ energy estimate and the Poincar\'e inequality are applicable. Since~$\langle \theta_0 \rangle =0$, we find that
\begin{equation*}
\frac{d}{dt} \norm{\theta_{m_*}(t,\cdot)}_{L^2(\TT^2)}^2 = - 2 \kappa_{m_{*}} \norm{\nabla \theta_{m_*}(t,\cdot)}_{L^2(\TT^2)}^2
\leq - 8\kappa_{m_{*}} \pi^2 \norm{\theta_{m_*}(t,\cdot)}_{L^2(\TT^2)}^2 \,.
\end{equation*}
Therefore, we obtain
\begin{align*}
\int_0^1 \kappa_{m_{*}} \norm{\nabla \theta_{m_*}(s,\cdot)}_{L^2(\TT^2)}^2 ds
&= \frac12 \Bigl( \norm{\theta_0}_{L^2(\TT^2)}^2 - \norm{\theta_{m_*}(1,\cdot)}_{L^2(\TT^2)}^2 \Bigr)
\notag\\
&\geq \norm{\theta_0}_{L^2(\TT^2)}^2 \frac{1 - e^{-8\kappa_{m_{*}} \pi^2 }}{2}  
\geq 
 2 \pi^2\kappa_{m_{*}} \norm{\theta_0}_{L^2(\TT^2)}^2 
\,.
\end{align*}
The last inequality is valid only if~$8\kappa_{m_{*}} \pi^2\leq 1$, but this can be assumed to be valid in view of~\eqref{e.m*.def} and \eqref{e.kappa*.def}, by taking~$\Lambda$ larger if necessary. Therefore, we obtain
\begin{equation}
\label{e.theta.m*.anomaly}
2 \pi^2c_*
R_{\theta_0}^{\frac{2q(\beta+\gamma)}{2+\gamma}}
\norm{\theta_0}_{L^2(\TT^2)}^2 
\leq 
\kappa_{m_{*}} \int_0^1  \norm{\nabla \theta_{m_*}(s,\cdot)}_{L^2(\TT^2)}^2 ds
\,.
\end{equation}

By~\eqref{e.tildethetam.energy.diss} of Proposition~\ref{p.indystepdown}, for every $m\in\{m_{*}+1,\ldots,M\}$, it holds that
\begin{align}
\label{e.crunch}
\Bigl ( 1 - C\ep_{m-1}^\delta\Bigr )
\kappa_{m-1} \left\| \nabla \theta_{m-1} \right\|_{L^2((0,1)\times\TT^2)}^2
&
\leq
\kappa_m \left\| \nabla \theta_{m} \right\|_{L^2((0,1)\times\TT^2)}^2
\notag \\ &
\leq
\Bigl ( 1 + C\ep_{m-1}^\delta\Bigr )
\kappa_{m-1} \left\| \nabla \theta_{m-1} \right\|_{L^2((0,1)\times\TT^2)}^2
\,.
\end{align}
We take the parameter~$\Lambda$ to be large enough that, with~$C$ as in the previous display, we have that 
\begin{equation}
\label{e.Lambda.choice}
C \Lambda^{-\delta} 
\leq \frac{1}{100}\,.
\end{equation}
This ensures that
\begin{equation*}
1 - C\ep_{m-1}^\delta \geq 1 - C\ep_{1}^\delta \geq \frac{99}{100}\,.
\end{equation*}
By induction, in view of~\eqref{e.minsep}, it follows from~\eqref{e.crunch} that 
\begin{align*}
\min_{m\in\{ m_*, \ldots,M\}}
\kappa_m \left\| \nabla \theta_{m} \right\|_{L^2((0,1)\times\TT^2)}^2
\geq
\biggl (
\prod_{m=m_*+1}^M
\Bigl ( 1 - C\ep_{m-1}^\delta \Bigr ) 
\biggr )
\kappa_{m_*} \left\| \nabla \theta_{m_*} \right\|_{L^2((0,1)\times \TT^2)}^2
\,,
\end{align*}
where $M = M(\kappa)$ is is an integer satisfying~\eqref{e.permitted}.
Observe that, using the elementary inequality 
$-2x \leq \log (1-x)$ valid for all $x\in (0,\nicefrac12]$, 
we get
\begin{align*}
\log 
\prod_{m=m_*+1}^M
\Bigl ( 1 - C\ep_{m-1}^\delta\Bigr )
=
\sum_{m=m_*+1}^M
\log 
\Bigl ( 1 - C\ep_{m-1}^\delta\Bigr )
\geq 
-2 C 
\sum_{m=m_*+1}^M
\ep_{m-1}^\delta
\geq 
-C 
\ep_{m_*}^\delta
\,.
\end{align*}
Note that we used~\eqref{e.Lambda.choice} to get the last inequality in the above display. 
We therefore obtain 
\begin{equation*}
\prod_{m=m_*+1}^M
\Bigl ( 1 - C\ep_{m-1}^\delta\Bigr )
\geq 
\exp \bigl( 
-C 
\ep_{m_*}^\delta
\bigr)
\geq 
\frac {3}{4}
\,,
\end{equation*}
and hence
\begin{equation*}
\min_{m\in\{ m_*, \ldots,M\}}
\kappa_m \left\| \nabla \theta_{m} \right\|_{L^2((0,1)\times\TT^2)}^2
\geq
\frac34
\kappa_{m_*} \left\| \nabla \theta_{m_*} \right\|_{L^2((0,1)\times \TT^2)}^2
\,.
\end{equation*}
By a very similar argument, using the upper bound of~\eqref{e.crunch} rather than the lower bound, we also obtain that
\begin{equation*}
\max_{m\in\{ m_*, \ldots,M\}}
\kappa_m \left\| \nabla \theta_{m} \right\|_{L^2((0,1)\times\TT^2)}^2
\leq
\frac43
\kappa_{m_*} \left\| \nabla \theta_{m_*} \right\|_{L^2((0,1)\times \TT^2)}^2\,.
\end{equation*}
In particular, since $\kappa = \kappa_M$, we have 
\begin{align}
\label{e.thetaN.anom}
\frac34
\kappa_{m_*} \left\| \nabla \theta_{m_*} \right\|_{L^2((0,1)\times \TT^2)}^2
\leq 
\kappa \left\| \nabla \theta_{M} \right\|_{L^2((0,1)\times\TT^2)}^2
\leq 
\frac43
\kappa_{m_*} \left\| \nabla \theta_{m_*} \right\|_{L^2((0,1)\times \TT^2)}^2
\,. 
\end{align}

\smallskip
We can write an equation for the difference~$\theta-\theta_M$ as
\begin{equation*}
\partial_t (\theta-\theta_M)
-\kappa \Delta (\theta-\theta_M)
+\b\cdot\nabla (\theta-\theta_M)
=
\nabla \cdot \bigl ( (\phi - \phi_M) \nabla\theta_M\bigr)
\quad \text{in} \ (0,1) \times \TT^d\,.
\end{equation*}
Recall that, by~\eqref{e.phi.m.m-1.bounds},
\begin{equation*}
\| \phi - \phi_M\|_{L^{\infty}(\R\times\TT^d)}
\leq C \ep_{M+1}^{\beta}
=
C \ep_{M}^{\beta q}
\,.
\end{equation*}
We may therefore compare $\theta_M$ to $\theta$ using the above displays,~\eqref{e.permitted},~\eqref{e.thetaN.anom} and an energy estimate: 
\begin{align}
&
\left\| \theta - \theta_M \right\|_{L^\infty((0,1);L^2 (\TT^2))}^2
+
\kappa \left\| \nabla \theta - \nabla \theta_M \right\|_{L^2((0,1)\times \TT^2)}^2
\notag \\ & \quad
\leq 
\frac{C}{\kappa} 
\left\| \phi - \phi_M \right\|_{L^\infty((0,1)\times\TT^2)}^2
\left\| \nabla \theta_M \right\|_{L^2((0,1)\times\TT^2)}^2
\notag \\ & \quad
\leq 
\frac{C}{\kappa_M^2}
\left\| \phi - \phi_M \right\|_{L^\infty((0,1)\times\TT^2)}^2
\kappa_{m_*} \left\| \nabla \theta_{m_*} \right\|_{L^2((0,1)\times \TT^2)}^2
\notag \\ & \quad
\leq
C \ep_{M}^{-\frac{4\beta}{q+1}} \ep_{M}^{2\beta q}
\cdot
\Bigl (
\kappa_{m_*} \left\| \nabla \theta_{m_*} \right\|_{L^2((0,1)\times \TT^2)}^2\Bigr )
\notag \\ & \quad
=
C \ep_M^{2 \beta ( q - \frac{2}{q+1}) }
\Bigl (
\kappa_{m_*} \left\| \nabla \theta_{m_*} \right\|_{L^2((0,1)\times \TT^2)}^2\Bigr )
\label{e.theta.thetaM.string}
\\ & \quad
\leq C\Lambda^{-2M\beta ( q - \frac{2}{q+1})}\Bigl (
\kappa_{m_*} \left\| \nabla \theta_{m_*} \right\|_{L^2((0,1)\times \TT^2)}^2\Bigr )
\notag 
\,.
\end{align}
Since~$q>1$, the exponent of $\Lambda$ in the last term on the right side is negative. Enlarging~$\Lambda$, if necessary, and using that $M\geq 1$, we obtain
\begin{equation*}
\left\| \theta - \theta_M \right\|_{L^\infty((0,1);L^2 (\TT^2))}^2
+
\kappa \left\| \nabla \theta - \nabla \theta_M \right\|_{L^2((0,1)\times \TT^2)}^2
\leq 
\frac14\Bigl (
\kappa_{m_*} \left\| \nabla \theta_{m_*} \right\|_{L^2((0,1)\times \TT^2)}^2\Bigr )
\,.
\end{equation*}
Combining the previous display with~\eqref{e.thetaN.anom} and~\eqref{e.theta.m*.anomaly} therefore yields
\begin{align}
\label{e.theta.yes.anom}
\kappa^{\nicefrac12} \left\| \nabla \theta \right\|_{L^2((0,1)\times\TT^2)}
&
\geq 
\kappa^{\nicefrac12} \bigl ( \left\| \nabla \theta_M \right\|_{L^2((0,1)\times\TT^2)}
-
\left\| \nabla \theta - \nabla \theta_M  \right\|_{L^2((0,1)\times\TT^2)}\bigr )
\notag \\ &
\geq
\frac1{\sqrt{2}}
\kappa_{m_*}^{\nicefrac12} \left\| \nabla \theta_{m_*} \right\|_{L^2((0,1)\times \TT^2)}
\notag \\ &
\geq
\pi \sqrt{c_*}
R_{\theta_0}^{\frac{q(\beta+\gamma)}{2+\gamma}}
\norm{\theta_0}_{L^2(\TT^2)} 
\,.
\end{align}
This completes the proof of Theorem~\ref{t.anomalous.diffusion} in the case that the analyticity condition~\eqref{e.theta0.anal} is satisfied.

\smallskip

We now turn to the argument for general~$\theta_0 \in H^1(\TT^2)$ of zero mean. We introduce the length scale~$L_{\theta_0}$ implicitly appearing in~\eqref{e.varrho.def}: 
\begin{equation}
\label{e.L.theta0}
L_{\theta_0} := 
\frac{\| \theta_0  \|_{L^2(\TT^2)}}{\| \theta_0  \|_{H^{1}(\TT^2)} }
\end{equation}
and we mollify~$\theta_0$ (viewed as a periodic function on~$\R^2$) with the standard heat kernel~$\Phi$ (with diffusion coefficient of unit size) at time~$\alpha^2 L_{\theta_0}^2$:
\begin{equation}
\tilde{\theta}_0:= \theta_0 \ast \Phi(\alpha^2 L_{\theta_0}^2,\cdot)\,.
\label{e.analytic.approximation}
\end{equation}
Here~$\alpha>0$ is a small parameter we will choose below. It is clear that~$\tilde{\theta}_0$ is a periodic and mean-zero function. It is furthermore analytic, since~$\Phi(\alpha^2 L_{\theta_0}^2,\cdot)$ is, and satisfies, for a universal constant~$C<\infty$, 
\begin{equation*}
\max_{|\aa|=n}
\bigl\| \partial^\aa \tilde{\theta}_0 \bigr\|_{L^2(\TT^2)}
\leq
\| \theta_0 \|_{L^2(\TT^2)}
n!
\biggl( \frac{C}{\alpha L_{\theta_0}}\biggr)^{\!n}
\,, \quad \forall n\in\N. 
\end{equation*}
That is, the analyticity condition~\eqref{e.theta0.anal} is valid for~$R_{\tilde{\theta}_0} = c\alpha L_{\theta_0}$. 
If we let~$\tilde{\theta}$ be the solution of~\eqref{e.passive.scalar} with~$\tilde{\theta}_0$ in place of~$\theta_0$, then~$\theta-\tilde{\theta}$ is also a solution of the same equation with initial data~$\theta_0 - \tilde{\theta}_0$, and therefore the incompressibility of $\b$ implies as in~\eqref{e.energy.balance} that 
\begin{equation}
\label{e.energy.balance.2}
\bigl\| (\theta - \tilde{\theta})(t,\cdot) \bigr\|_{L^2(\TT^2)}^2 
+
2\kappa
\bigl\| \nabla (\theta - \tilde{\theta})\bigr\|_{L^2((0,t)\times \TT^2)}^2
= 
\bigl\| \theta_0 - \tilde{\theta}_0\bigr\|_{L^2(\TT^2)}^2 
\,, \quad \forall t>0\,.
\end{equation}
In view of~\eqref{e.theta.yes.anom} (applied to $\tilde{\theta}$ replacing $\theta$ and $\tilde{\theta}_0$ replacing $\theta_0$) and  the triangle inequality, it suffices to show that we can choose~$\alpha$ in such a way that
\begin{equation}
\label{e.wts}
\bigl\| \theta_0 - \tilde{\theta}_0\bigr\|_{L^2(\TT^2)} \leq \frac12 \pi \sqrt{c_*}
R_{\tilde{\theta}_0}^{\frac{q(\beta+\gamma)}{2+\gamma}}
\bigl\| \theta_0\bigr\|_{L^2(\TT^2)}
\,.
\end{equation}
We next compute
\begin{equation*}
\bigl\| \tilde{\theta}_0 \bigr\|_{L^2(\TT^2)}^2
=
\bigl\| \theta_0 \bigr\|_{L^2(\TT^2)}^2
-
2\int_0^{\alpha^2 L_{\theta_0}^2} 
\int_{\TT^2}
\bigl| (\theta_0 \ast \nabla \Phi( s,\cdot) )(x)\bigr|^2\,dx\,dt
\,.
\end{equation*}
Using that 
\begin{equation*}
\int_0^{\alpha^2 L_{\theta_0}^2} 
\int_{\TT^2}
\bigl| (\theta_0 \ast \nabla \Phi( s,\cdot) )(x)\bigr|^2\,dx\,dt
\leq
\alpha^2 L_{\theta_0}^2
\bigl\| \nabla \theta_0 \bigr\|_{L^2(\TT^2)}^2
\leq 
C \alpha^2 \| \theta_0 \|_{L^2(\TT^2)}^2
\,,
\end{equation*}
we therefore obtain
\begin{equation}
\label{e.them.mollifiers}
(1- C \alpha)\bigl\| \theta_0 \bigr\|_{L^2(\TT^2)}
\leq
\bigl\| \tilde{\theta}_0 \bigr\|_{L^2(\TT^2)}
\leq
\bigl\| \theta_0 \bigr\|_{L^2(\TT^2)}
\,.
\end{equation}
Since~$R_{\tilde{\theta}_0} = c\alpha L_{\theta_0}$, the inequality~\eqref{e.wts} will be valid provided that~$\alpha$ satisfies  
\begin{equation*}
\alpha \leq c L_{\theta_0}^{\frac{q(\beta+\gamma)}{2+\gamma-q(\beta+\gamma)}}
\,.
\end{equation*}
With this choice of~$\alpha$, we observe that \eqref{e.theta.yes.anom},~\eqref{e.energy.balance.2}, and~\eqref{e.wts} imply 
\begin{equation*}
\kappa^{\nicefrac12} \left\| \nabla \theta \right\|_{L^2((0,1)\times\TT^2)}
\geq
\frac12 \pi \sqrt{c_*}
R_{\theta_0}^{\frac{q(\beta+\gamma)}{2+\gamma}}
\norm{\theta_0}_{L^2(\TT^2)} 
= 
c
L_{\theta_0}^{\frac{q(\beta+\gamma)}{2+\gamma-q(\beta+\gamma)}}
\norm{\theta_0}_{L^2(\TT^2)} 
\,.
\end{equation*}
This completes the proof of the theorem.

\smallskip

As a final remark, we note that the exponent of~$L_{\theta_0}$, namely~$\frac{q(\beta+\gamma)}{2+\gamma-q(\beta+\gamma)}$, can be taken arbitrarily close to~$\frac{\beta}{2-\beta}$, by taking~$q$ closer to~$1$ than in~\eqref{e.q.def.0}, at the cost of all constants depending additionally on~$q$. With~$\alpha = \beta-1$, this matches what was promised in~\eqref{e.varrho.def}.
\qed

\begin{remark}[Uniform continuity in time of the scalar variance]
\label{r.LeBron.2}
By using~\eqref{e.continuous.indeed} and repeating the argument of Remark~\ref{r.LeBron} to get a similar bound for the difference~$\theta - \theta_M$, using in  particular~\eqref{e.Schauder},~\eqref{e.Duhamel.shorttime} and~\eqref{e.theta.thetaM.string}, we obtain 
\begin{align}
\label{e.continuous.indeed.indeed}
\lefteqn{
\big\| \theta \big\|_{C^{0,\mu}([0,1];L^2(\TT^2))}
} \quad & \notag \\
&
\leq 
\big\| \theta_{m_*} \big\|_{C^{0,\mu}([0,1];L^2(\TT^2))}
+
\big\| \theta - \theta_M \big\|_{C^{0,\mu}([0,1];L^2(\TT^2))}
+
\sum_{m=m_*}^M
\big\| \theta_{m} - \theta_{m-1} \big\|_{C^{0,\mu}([0,1];L^2(\TT^2))}
\notag \\ & 
\leq 
C_{\theta_0}
\,,
\end{align}
where~$C_{\theta_0}$ depends only on~$\left\| \theta_{0} \right\|_{H^1( \TT^2)}$ and~$\beta$.
\end{remark}

\subsection{Lack of selection in the vanishing diffusivity limit}
\label{ss.proof.selection}
 
The goal of this section is to prove the following statement concerning the lack of selection principle for vanishing diffusivity limits of the advection-diffusion equation to the transport equation. Our result applies to a large class of $\dot{H}^2(\TT^2)$ initial conditions, but not to all of them.
\begin{proposition}
\label{p.no.selection.principle}
Fix $\beta \in [\nicefrac{68}{67},\nicefrac{4}{3})$.
There exists a constant $C_* = C_*(\beta)\geq 1$ such that the following holds.
For every parameters $A \in (0,1]$ and $B>1$, assume that $\Lambda$ is taken sufficiently large with respect to $\beta,A,B$ to ensure that (see condition~\eqref{e.take.Lambda.large.A.B} below): 
\begin{equation*}
C_* \Lambda^{-\delta} \leq \min\left\{ A , B^{- \frac{2}{2+\gamma}(2 -\beta +\frac{2\beta}{q+1}) } \right\}
\, .
\end{equation*}
Fix the sequence $\{\ep_m\}_{m\geq 0}$ according to \eqref{e.epm.choice}.
Choose an initial datum $\theta_0\in \dot{H}^2(\TT^2)$. Define the length scale $L_{\theta_0} := \frac{\|\theta_0\|_{L^2(\TT^2)}}{\|\nabla \theta_{0}  \|_{L^2( \TT^2)}}$ and  let $m_* \geq 1$ be the unique integer such that 
$
\ep_{m_*} \leq C_* L_{\theta_0}^{ \frac{2}{2+\gamma - 2 q \delta}} < \ep_{m_*-1}
$.
Assuming that $\theta_0$ satisfies   (see conditions~\eqref{e.theta.0.condition.1} and~\eqref{e.theta.0.condition.2} below): 
\begin{equation*}
\frac{\|\nabla\theta_0\|_{L^2(\TT^2)}^4}{\|\theta_{0}  \|_{L^2( \TT^2)}^2 \|\Delta \theta_0\|_{L^2(\TT^2)}^2} 
 \geq A\,,
 \qquad \mbox{and}\qquad
 C_* L_{\theta_0}^{ \frac{2}{2+\gamma - 2 q \delta}} \leq B \ep_{m_*}
 \,,
\end{equation*}
there exist two sequences of diffusivities $\{ \kappa^{(1)}_m \}_{m\in\N}$ and~$\{ \kappa^{(2)}_m \}_{m\in\N}$, both converging to $0$ as $m\to \infty$, such that the corresponding solutions $\{\theta^{\kappa_m^{(1)}}\}_m$ and $\{ \theta^{\kappa_m^{(2)}}\}_m$ of the advection-diffusion equation with initial data $\theta_0$ and drift $\b$, converge as $m\to \infty$  in $C^{0,\mu}((0,1);L^2(\TT^2))$ (for some $\mu>0$) to two {\em distinct} weak solutions of the associated transport equation. 
\end{proposition} 
 
In order to prove Proposition~\ref{p.no.selection.principle}, we are going to find two sequences~$\{ \kappa^{(1)}_m \}_{m\in\N}$ and~$\{ \kappa^{(2)}_m \}_{m\in\N}$ such that:
\begin{itemize}
\item $\kappa^{(1)}_m$ and $\kappa^{(2)}_m$ belong to the~$m$th interval in~\eqref{e.permissible.K}, namely~$[ \tfrac 12 \ep_m^{ \frac{2\beta}{\q+1}}, 2 \ep_m^{ \frac{2\beta}{\q+1}}]$; 

\item The choice is made such that the ratio of~$\kappa^{(1)}_m$ to $\kappa^{(2)}_m$ is not close to one in the sense that it lies in~$[\frac12,2] \,\setminus\, [ \frac 34, \frac  43]$.

\item Each of these infinite sequences satisfies the required recurrence formula for the effective diffusivities, namely~\eqref{e.kappa.sequence.again} below. 

\end{itemize} 

The existence of this pair of sequences is a consequence of Lemma~\ref{l.kappa.m.flippity.floppity}, below. 
We then turn our attention to the two corresponding solutions of the advection-diffusion equation~\eqref{e.theta.m} with~$m=m_*$, with~$m_*$ defined as above in~\eqref{e.m*.def}, which we denote by~$\theta_{m_*}^{(1)}$ and~$\theta_{m_*}^{(2)}$. We show in Lemma~\ref{l.solutions.no.same} that these two solutions have to be different, at least for short times~$t$, as they experience a different amount of diffusion (since~$\kappa_{m_*}^{(1)} \neq \kappa_{m_*}^{(2)}$). Finally, we appeal to the bound~\eqref{e.homogenization.m} proved above to obtain a positive lower bound on the size of~$\| \theta_{m}^{(1)}- \theta_{m}^{(2)} \|_{L^2}$ for all~$m\geq m_*$. The estimate~\eqref{e.homogenization.m} also gives us the convergence in~$L^\infty_tL^2_x$ of~$\theta_{m}^{(i)}$ to a solution of the transport equation, which is evidently different for~$i=1$ and~$i=2$.

\smallskip

We begin with the following lemma, which revisits the analysis of the recurrence~\eqref{e.kappa.sequence} given in Lemma~\ref{l.recurse} and extracts some extra information. 

\begin{lemma}
\label{l.kappa.m.flippity.floppity}

There exists~$C(\beta)<\infty$ and~$\rho(\beta)>0$ such that, if~$\Lambda \geq C$, then, for each~$\tau \in [\frac12,2]$, there exists an infinite sequence~$\{ \kappa_m\}_{m\in\N}$ satisfying
\begin{equation}
\label{e.kappa.sequence.again}
\kappa_{m-1} = \Khom_m^{\kappa_m}\,, \quad \forall m\geq 1\,, 
\end{equation}
such that 
\begin{equation}
\label{e.kappa.flip.flop.infinite}
\biggl| 
\frac{ \kappa_m  \cdot\tfrac43\sqrt{5} } {\ep_m^{\frac{2\beta}{q+1}} }
-
\tau^{(-1)^{m}}
\biggr| 
\leq C \ep_{m}^{\rho}
\leq
\frac{1}{100}\,, \quad \forall m\in\N\,.
\end{equation}
\end{lemma}
\begin{proof}
We first suppose that~$M\in\N$ and~$\{ \kappa_0, \kappa_1,\ldots,\kappa_M \} \subseteq (0,\infty)$ is a sequence of positive numbers satisfying, for every~$m\in \{1,\ldots,M\}$ the recurrence relation~\eqref{e.kappa.sequence.again}. We also assume that 
\begin{equation}
\label{e.permitted.again}
\tau := 
\frac{ \kappa_M \cdot \tfrac43\sqrt{5}} {\ep_M^{\frac{2\beta}{q+1}} }
\in \Bigl[ \frac12 ,2 \Bigr]\,.
\end{equation}
Using the recurrence relation~\eqref{e.smsimpl} for the effective diffusivities, expressed in terms of the quantity~$s_m$ defined in the proof of Lemma~\ref{l.recurse} by
\begin{equation*}
s_m:= \frac{\kappa^\prime_m\cdot\tfrac43\sqrt{5}}{a_m\ep_m^{2+\gamma}}
\,,
\end{equation*}
and~$\kappa^\prime_m$ is defined in~\eqref{e.kappa.prime.sequence}.
Recall that~$a_m$ and the exponents~$\beta$,~$\gamma$ and~$q$ are defined in such a way that~$a_m\ep_{m}^{2+\gamma} =
\ep_m^{\beta+\gamma} = \ep_n^{\frac{2\beta}{q+1}}$. 
The recurrence relation for~$s_m$ appeared above in~\eqref{e.sm.diff.recurrence}, and in view of the identity in~\eqref{e.bound.some.stuff} it can be written as 
\begin{equation}
\label{e.sm.diff.recurrence.again}
s_{m-1} 
=
\frac{1}{s_m} 
+
\underbrace{
\ep_m^{2\gamma}s_m
}_{
\leq C \ep_m^{2\gamma}
}
+
\underbrace{ 
\Bigl( 
\Bigl( \frac{\ep_m}{\ep_{m-1}^\q} \Bigr)^{\beta} - 1
\Bigr) 
}_{
\leq C \ep_m^{\nicefrac 1q} 
}
\underbrace{ 
s_m
\Bigl( \ep_m^{2\gamma} + \frac{1}{s_m^{2}} \Bigr) 
}_{
\leq C 
}
\,,
\end{equation}
where the bound for the three terms on the right side come from~\eqref{e.sm.bounded.yes} and~\eqref{e.supergeo}. We therefore obtain, for every~$m\in\{ 1,\ldots,M\}$, 
\begin{equation*}
\Bigl| 
s_{m-1} - \frac{1}{s_m} 
\Bigr| 
\leq 
C \ep_m^{2\gamma \wedge \frac{1}q} 
\,.
\end{equation*}
Iterating this inequality yields, for every~$n\in \{0,\ldots,M\}$,  
\begin{equation}
\label{e.s.m.see.saw}
\Bigl| 
s_{M-n}^{(-1)^n} - s_M
\Bigr|
\leq 
C \ep_{M-n+1}^{2\gamma\wedge \frac1q}
\,.
\end{equation}
If we enlarge~$\Lambda$ so that~$C_{\eqref{e.s.m.see.saw}} \ep_{1}^{2\gamma\wedge \frac1q} \leq \frac{1}{500}$, then we obtain,
\begin{equation*}
\max_{n \in \{0,\ldots,M\}} 
\Bigl| 
s_{M-n}^{(-1)^n} - s_M
\Bigr|
\leq 
C \ep_{M-n+1}^{2\gamma\wedge \frac1q}
\leq
\frac{1}{500}\,.
\end{equation*}
By enlarging~$\Lambda$, if necessary, we may use~\eqref{e.ratrat.prime} to obtain, 
for every~$n\in\{ 0,\ldots,M-1\}$,
\begin{equation*}
\max_{m\in \{n,\ldots,M-1\}}
\max\biggl\{ 
\frac{\kappa_{m}}{\kappa^\prime_{m}} -1
\,,
\frac{\kappa^\prime_{m}}{\kappa_{m}}-1
\biggr\} 
\leq 
C \ep_{n+1}^{\delta \wedge \gamma}
\leq
\frac{1}{500}
\,.
\end{equation*}
Combining these yields, some positive exponent~$\rho (\beta)>0$, 
\begin{equation}
\label{e.kappa.flip.flop}
\biggl| 
\frac{ \kappa_n  \cdot\tfrac43\sqrt{5} } {\ep_n^{\frac{2\beta}{q+1}} }
-
\tau^{(-1)^{M-n}}
\biggr| 
\leq C \ep_{n}^{\rho}
\leq
\frac{1}{100}\,.
\end{equation}
Finally, the existence of an infinite sequence satisfying~\eqref{e.kappa.flip.flop.infinite} is obtained by recursively defining, for each~$M \in\N$, a finite sequence~$\{ \kappa_{m}^{(M)}\}_{0\leq m \leq M}$ by 
\begin{equation}
\label{e.kappa.sequence.yet.again}
\left\{
\begin{aligned}
& 
\kappa_{m-1}^{(M)} = \Khom_m^{\kappa_m^{(M)}}\,,
\qquad m\in \{1,\ldots,M\}\,,
\\ & 
\kappa_M^{(M)} := \tau^{(-1)^M}(\tfrac43\sqrt{5})^{-1}   
\ep_M^{\frac{2\beta}{q+1}}
\,.
\end{aligned}
\right.
\end{equation}
Sending~$M\to \infty$, we find that~$\kappa^{(M)}_m$ converges, for each fixed~$m$, yielding the desired infinite sequence. 
\end{proof}

We next argue that the solutions of two  advection-diffusion equations with the same drift but different diffusivities must be different, at least for short times. In the following lemma,~$\mathbf{u}$ plays the role of~$\mathbf{b}_{m_*}$ and~$\kappa^{(i)}$ plays the role of~$\kappa^{(i)}_{m_*}$.

\begin{lemma} 
\label{l.solutions.no.same}
Suppose that~$0 < \kappa^{(1)} \leq \frac 45 \kappa^{(2)}$ and~$\mathbf{u}:\TT^2 \to \R^2$ is a smooth, incompressible vector field. Let~$\theta_0 \in H^2(\TT^2)$ and, for each~$i\in\{1,2\}$, let~$\theta^{(i)}$ be the solution of 
\begin{equation}
\label{e.pair.of.solutions}
\left\{
\begin{aligned}
& \partial_t \theta^{(i)} - \kappa^{(i)} \Delta \theta^{(i)} + \mathbf{u}  \cdot \nabla \theta^{(i)} 
= 0 
& \mbox{in} & \ (0,\infty) \times \R^2\,, 
\\ & 
\theta^{(i)} = \theta_0 
& \mbox{on} & \ \{ 0 \} \times \R^2\,.
\end{aligned}
\right.
\end{equation}
Then, for every time~$t$ satisfying
\begin{equation*}
0\leq t \leq \frac{1}{50} \min \biggl\{ \frac{1}{\|\nabla \mathbf{u}\|_{L^\infty([0,1]\times \TT^2)}}, \frac{\|\nabla\theta_0\|_{L^2(\TT^2)}^2}{\kappa^{(1)} \|\Delta \theta_0\|_{L^2(\TT^2)}^2} \biggr \}
\,,
\end{equation*}
we have the estimate
\begin{equation}
\label{e.different.L2s}
\|\theta^{(1)} (t,\cdot)\|_{L^2(\TT^2)}^2 
-
\|\theta^{(2)} (t,\cdot)\|_{L^2(\TT^2)}^2 
\geq 
\frac{11}{40} \kappa^{(1)}  t 
\|\nabla \theta_0\|_{L^2(\TT^2)}^2
\,.
\end{equation}
\end{lemma} 
\begin{proof}
With $\mathbf{u}$ smooth and incompressible, consider, for~$\kappa>0$, the drift-diffusion equation 
\begin{equation}
\partial_t \theta - \kappa \Delta \theta+ \mathbf{u} \cdot \nabla \theta = 0, \qquad 
\theta|_{t=0} = \theta_0,
\label{e.drift.and.diffusion.new}
\end{equation}
where $\theta_0 \in H^2(\TT^2)$ has zero mean. Denote $L:= \|\nabla \mathbf{u}\|_{L^\infty([0,1]\times \TT^2)}$. Standard energy estimates (in each of the space~$L^2$,~$H^1$, and~$H^2$) yields, for~$t\geq 0$, 
\begin{align*}
&\Bigl| 
\|\theta_0\|_{L^2(\TT^2)}^2 
-
\|\theta(t,\cdot)\|_{L^2(\TT^2)}^2
- 
2 \kappa t \|\nabla \theta_0\|_{L^2(\TT^2)}^2 
\Bigr| 
\notag\\
&\quad 
\leq 
2 \kappa t \|\nabla \theta_0\|_{L^2(\TT^2)}^2 (t L)^2 \exp(2 t L)
+
2 \kappa t \|\nabla \theta_0\|_{L^2(\TT^2)}^2  (t L) \exp(2 t L)
+
2 \kappa^2 t^2 \|\Delta \theta_0\|_{L^2(\TT^2)}^2  \exp(2 t L)
\,.
\end{align*}
As a consequence, we obtain
\begin{equation}
\label{e.taylor.at.time.zero}
\left|
\frac{  
\|\theta_0\|_{L^2(\TT^2)}^2 
-
\|\theta(t,\cdot)\|_{L^2}^2
 }
{
2 \kappa t \|\nabla \theta_0\|_{L^2}^2 
}
- 1 
\right|
\leq \biggl( tL + (t L)^2 + \kappa t  \frac{\|\Delta \theta_0\|_{L^2(\TT^2)}^2}{\|\nabla\theta_0\|_{L^2(\TT^2)}^2} \biggr)  \exp(2 t L).
\end{equation}
Therefore, for $t$ satisfying 
\begin{equation}
t \leq \frac{1}{50} \min \biggl\{ \frac{1}{L}, \frac{\|\nabla\theta_0\|_{L^2(\TT^2)}^2}{\kappa \|\Delta \theta_0\|_{L^2(\TT^2)}^2} \biggr \}
\,,
\label{e.t.small.inteval.*}
\end{equation}
we obtain
\begin{equation}
\frac{19}{10} \kappa t \|\nabla \theta_0\|_{L^2(\TT^2)}^2
\leq 
\|\theta_0\|_{L^2(\TT^2)}^2
-
\|\theta(t,\cdot)\|_{L^2(\TT^2)}^2 
\leq \frac{21}{10} \kappa t \|\nabla \theta_0\|_{L^2(\TT^2)}^2
\,.
\end{equation}
Now consider the solutions~$\theta^{(1)}$ and $\theta^{(2)}$  of~\eqref{e.pair.of.solutions} with diffusivity parameters~$\kappa^{(1)},\kappa^{(2)}>0$ satisfying~$\kappa^{(2)} \geq \frac 54 \kappa^{(1)}$. By~\eqref{e.taylor.at.time.zero} and the triangle inequality, for all~$t\geq 0$ satisfying~\eqref{e.t.small.inteval.*} with~$\kappa= \kappa^{(1)}$, we obtain 
\begin{equation*}
\|\theta^{(1)} (t,\cdot)\|_{L^2(\TT^2)}^2 
-
\|\theta^{(2)} (t,\cdot)\|_{L^2(\TT^2)}^2 
\geq 
\Bigl( \frac{19}{10} \kappa^{(2)} - \frac{21}{10} \kappa^{(1)} \Bigr) t \|\nabla \theta_0\|_{L^2}^2\,.
\end{equation*}
Since~$\kappa^{(2)} \geq \frac54 \kappa^{(1)}$, we obtain~\eqref{e.different.L2s}. 
The proof is complete. 
\end{proof}

We next argue that the previous two lemmas imply Proposition~\ref{p.no.selection.principle}. 

 \begin{proof}[Proof of  Proposition~\ref{p.no.selection.principle}] 
We apply Lemma~\ref{l.kappa.m.flippity.floppity} twice, with~$\tau = \tau_1:= \nicefrac12$ and~$\tau =\tau_2 := \nicefrac 34$, to find two sequences~$\{ \kappa_m^{(1)} \}_{m\in\N}$ and~$\{ \kappa_m^{(2)} \}_{m\in\N}$ satisfying the recurrence~\eqref{e.kappa.sequence.again} and, for each~$i\in\{1,2\}$, 
\begin{equation}
\label{e.kappa.flip.flop.infinite.i}
\biggl| 
\frac{ \kappa_n^{(i)}  \cdot\tfrac43\sqrt{5} } {\ep_n^{\frac{2\beta}{q+1}} }
-
\tau_i^{(-1)^{n}}
\biggr| 
\leq C \ep_{n}^{\rho}
\leq
\frac{1}{100}\,, \quad \forall n\in\N\,.
\end{equation}
This implies in particular that, for every~$n\in\N$, 
\begin{equation}
\label{e.kappas.gap}
\frac 54 \leq 
\max\biggl\{ \frac{ \kappa_n^{(1)}}{ \kappa_n^{(2)}} , \frac{ \kappa_n^{(2)}}{ \kappa_n^{(1)}} \biggr\} 
\leq 2
\,.
\end{equation}
According to the two sequences~$\{ \kappa_m^{(1)} \}_{m\in\N}$ and~$\{ \kappa_m^{(2)} \}_{m\in\N}$ constructed above, we denote by $\theta_m^{(1)}$ and~$\theta_m^{(2)}$ the solutions of the initial-value problems
\begin{equation}
\label{e.theta.m.yet.again}
\left\{
\begin{aligned}
& \partial_t \theta_m^{(i)} - \kappa_m^{(i)} \Delta \theta_m^{(i)} + \b_m \cdot \nabla \theta_m^{(i)} 
= 0 
& \mbox{in} & \ (0,\infty) \times \R^2\,, 
\\ & 
\theta_m^{(i)} = \theta_0 
& \mbox{on} & \ \{ 0 \} \times \R^2\,,
\end{aligned}
\right.
\end{equation}
where $\b_m$ is as in the rest of the paper (see~\eqref{e.psi.recursion}).

Our next goal is to show that if $\theta_0 \in H^2(\TT^2)$ has zero mean and satisfies certain conditions (see~\eqref{e.theta.0.condition.1} and~\eqref{e.theta.0.condition.2} below), then there exists a time $t\in (0,1]$ such that $ | \|\theta_m^{(1)} (t,\cdot)\|_{L^2(\TT^2)}^2 - \|\theta_m^{(2)} (t,\cdot)\|_{L^2(\TT^2)}^2  |$ is bounded from below by a positive quantity, uniformly in $m$ (for $m$ sufficiently large). In order to achieve this uniform in $m$ lower bound, as in the proof of Theorem~\ref{t.anomalous.diffusion} let us temporarily assume that $\theta_0$ is an analytic function, satisfying the quantitative estimate~\eqref{e.theta0.anal} for some $R_{\theta_0}>0$. (At the end of the argument, we approximate the given $H^2$ data with an analytic one, akin to~\eqref{e.analytic.approximation}.) For $\theta_0$ satisfying~\eqref{e.theta0.anal}, let $m_* = m_*(R_{\theta_0},\Lambda,\beta) \in \mathbb{N}$ be defined according to~\eqref{e.m*.def}; we also record the bound~\eqref{e.kappa*.def} which relates $\kappa_{m_*}$ and $\ep_{m_*}$ to $R_{\theta_0}$.

Applying~\eqref{e.different.L2s} to the problem~\eqref{e.theta.m.yet.again} with~$m= m_*\in\N$ (as defined in~\eqref{e.m*.def}), and appealing to the $\b_{m_*}$ estimate in~\eqref{e.bm.Dn}, to the bound for $\kappa_{m_*}^{(1)}$ implied by~\eqref{e.kappa.flip.flop.infinite.i} and~\eqref{e.kappas.gap}, we obtain for every $t\in (0,1]$ satisfying
\begin{equation}
\label{e.t.condition.selection}
0< t \leq \frac{1}{300} \min \left\{ 2^{-22} \ep_{m_*}^{2- \beta} \,,\,  \frac{\|\nabla\theta_0\|_{L^2(\TT^2)}^2}{\|\Delta \theta_0\|_{L^2(\TT^2)}^2}\ep_{m_*}^{-\frac{2\beta}{q+1}} \right \}\,,
\end{equation}
the estimate
\begin{equation*}
\Bigl| \|\theta_{m_*}^{(1)} (t,\cdot)\|_{L^2(\TT^2)}^2 
-
\|\theta_{m_*}^{(2)} (t,\cdot)\|_{L^2(\TT^2)}^2 \Bigr| 
\geq 
\frac{11}{80} \kappa_{m_*}^{(1)} t 
\|\nabla \theta_0\|_{L^2(\TT^2)}^2
\,.
\end{equation*}
In light of the energy identity for \eqref{e.theta.m.yet.again}, the above estimate yields a bound without squares
\begin{equation}
\label{e.different.L2s.no.squares}
\Bigl| 
\|\theta_{m_*}^{(1)} (t,\cdot)\|_{L^2(\TT^2)}
-
\|\theta_{m_*}^{(2)} (t,\cdot)\|_{L^2(\TT^2)}
\Bigr| 
\geq 
\frac{11}{160} \frac{ \kappa_{m_*}^{(1)} t 
\|\nabla \theta_0\|_{L^2(\TT^2)}^2}{\|\theta_0 \|_{L^2(\TT^2)}}
\,.
\end{equation}
In order to obtain an estimate which holds for all~$m \geq m_*$, we appeal to~\eqref{e.homogenization.m} (applicable since \eqref{e.m*.def} was designed to match \eqref{e.mtheta0.def}) and the triangle inequality, to arrive at:
\begin{align}
\label{e.different.L2s.no.squares.all.m}
\lefteqn{ 
\bigl| 
\|\theta_m^{(1)} (t,\cdot)\|_{L^2(\TT^2)}
-
\|\theta_m^{(2)} (t,\cdot)\|_{L^2(\TT^2)}
\bigr| 
} \qquad & 
\notag \\ & 
\geq 
\bigl| 
\|\theta_{m_*}^{(1)} (t,\cdot)\|_{L^2(\TT^2)}
-
\|\theta_{m_*}^{(2)} (t,\cdot)\|_{L^2(\TT^2)}
\bigr| 
-
2 \sup_{i\in\{1,2\}} \sup_{m\geq m_*} 
\|\theta_m^{(i)} - \theta_{m_*}^{(i)}\|_{L^\infty((0,1);L^2(\TT^2))}
\notag \\ & 
\geq 
\frac{11}{160} \frac{\kappa_{m_*}^{(1)} t 
\|\nabla \theta_0\|_{L^2(\TT^2)}^2}{\|\theta_0 \|_{L^2(\TT^2)}}
-
C  \ep_{m_*}^\delta 
\left\| \theta_{0} \right\|_{L^2( \TT^2)}
\,.
\end{align}
The constant $C_{\eqref{e.different.L2s.no.squares.all.m}}$ may be computed from $C_{\eqref{e.homogenization.m}}$, and the $C = C(\beta)$ which appears in the bound $\sum_{m\geq m_*} \ep_m^\delta \leq C \ep_{m_*}^{\delta}$. 
We emphasize that~\eqref{e.different.L2s.no.squares.all.m} is valid for every~$t$ satisfying~\eqref{e.t.condition.selection}. We need the right side of~\eqref{e.different.L2s.no.squares.all.m} to be positive, which together with the bound for $\kappa_{m_*}^{(1)}$ implied by~\eqref{e.kappa.flip.flop.infinite.i} and~\eqref{e.kappas.gap},  translates into the requirement that $t$ satisfies
\begin{equation}
\label{e.t.condition.selection.2}
t \geq  10^3 C_{\eqref{e.different.L2s.no.squares.all.m}} \ep_{m_*}^\delta  
\ep_{m_*}^{-\frac{2\beta}{q+1}}
\frac{\left\| \theta_{0} \right\|_{L^2( \TT^2)}^2}{ \|\nabla \theta_0\|_{L^2(\TT^2)}^2 } 
\,.
\end{equation}
Combining the above estimates, we arrive at the conclusion that if the initial datum $\theta_0$ and the parameter $\Lambda$ are chosen such that the inequality
\begin{equation}
\label{e.t.condition.selection.new}
10^3 C_{\eqref{e.different.L2s.no.squares.all.m}} 
\ep_{m_*}^\delta 
\ep_{m_*}^{-\frac{2\beta}{q+1}}
\frac{\left\| \theta_{0} \right\|_{L^2( \TT^2)}^2}{ \|\nabla \theta_0\|_{L^2(\TT^2)}^2 } 
\leq
\frac{1}{400} \min \left\{ 2^{-22} \ep_{m_*}^{2- \beta} \,,\,  \frac{\|\nabla\theta_0\|_{L^2(\TT^2)}^2}{\|\Delta \theta_0\|_{L^2(\TT^2)}^2}\ep_{m_*}^{-\frac{2\beta}{q+1}} \right \}
\,
\end{equation}
holds,
then there exists an interval of times $t\in[0,1]$ which satisfies both~\eqref{e.t.condition.selection} and~\eqref{e.t.condition.selection.2}, and moreover, for all such times $t$ by~\eqref{e.different.L2s.no.squares.all.m} we have the bound 
\begin{align}
\label{e.different.L2s.no.squares.all.m.new}
\bigl| 
\|\theta_m^{(1)} (t,\cdot)\|_{L^2(\TT^2)}
-
\|\theta_m^{(2)} (t,\cdot)\|_{L^2(\TT^2)}
\bigr| 
&\geq \frac{1}{16} \frac{\kappa_{m_*}^{(1)} t 
\|\nabla \theta_0\|_{L^2(\TT^2)}^2}{\|\theta_0 \|_{L^2(\TT^2)}} 
\notag\\
&\geq 
C_{\eqref{e.different.L2s.no.squares.all.m}}  \ep_{m_*}^\delta  
\|\theta_0 \|_{L^2(\TT^2)}
\notag\\
&\geq 
C_{\eqref{e.different.L2s.no.squares.all.m}} 
R_{\theta_0}^{\frac{2q \delta}{2+\gamma}}
\|\theta_0 \|_{L^2(\TT^2)}
\,,
\qquad \forall m\geq m_*.
\end{align}
In the second inequality we have appealed to~\eqref{e.t.condition.selection.2}, while in the third inequality we have used~\eqref{e.kappa*.def}, with $R_{\theta_0}$ being the analyticity radius  of the initial data $\theta_0$.

It thus remains to augment the condition~\eqref{e.t.condition.selection.new} and the inequality~\eqref{e.different.L2s.no.squares.all.m.new} with an approximation argument, which replaces an $H^2(\TT^2)$ function of zero mean with an analytic one. 
For a given $\theta_0 \in H^2(\TT^2)$, we proceed as in the proof of Theorem~\ref{t.anomalous.diffusion}. We define the length scale $L_{\theta_0}$ according to~\eqref{e.L.theta0}, and we mollify~$\theta_0$ with the standard heat kernel (with diffusion coefficient of unit size) at time~$\alpha^2 L_{\theta_0}^2$, with $\alpha>0$ to be determined later in the proof; the resulting analytic function $\tilde{\theta}_0$ defined in~\eqref{e.analytic.approximation} satisfies the analyticity condition~\eqref{e.theta0.anal} for~$R_{\tilde{\theta}_0} = c\alpha L_{\theta_0}$, for a universal constant $c  \in (0,1]$. For $m\in\mathbb{N}$ and $i\in\{1,2\}$, denote by $\theta_m^{(i)}$ the solution of \eqref{e.theta.m.yet.again} with initial $\theta_0$, and by $\tilde{\theta}_m^{(i)}$ the solution of \eqref{e.theta.m.yet.again} with initial $\tilde{\theta}_0$. Using linearity, the energy balance~\eqref{e.energy.balance.2}, the triangle inequality, and~\eqref{e.different.L2s.no.squares.all.m.new} we thus deduce that there exits $t \in [0,1]$ such that 
\begin{align}
\label{e.different.L2s.no.squares.all.m.mew}
\bigl| 
\|\theta_m^{(1)} (t,\cdot)\|_{L^2(\TT^2)}
-
\|\theta_m^{(2)} (t,\cdot)\|_{L^2(\TT^2)}
\bigr| 
&\geq 
\bigl| 
\|\tilde{\theta}_m^{(1)} (t,\cdot)\|_{L^2(\TT^2)}
-
\|\tilde{\theta}_m^{(2)} (t,\cdot)\|_{L^2(\TT^2)}
\bigr| 
-
2 \|\theta_0-\tilde{\theta}_0\|_{L^2(\TT^2)}
\notag\\
&\geq 
C_{\eqref{e.different.L2s.no.squares.all.m}} 
R_{\tilde{\theta}_0}^{\frac{2q \delta}{2+\gamma}}
\|\tilde{\theta}_0 \|_{L^2(\TT^2)}
-
2 \|\theta_0-\tilde{\theta}_0\|_{L^2(\TT^2)}
\,,
\end{align}
for all $m\geq m_*$, where $m_*$ is defined according to~\eqref{e.m*.def}, with $R_{\theta_0}$ replaced by $R_{\tilde{\theta}_0}$. Naturally, the existence of the time $t$ in \eqref{e.different.L2s.no.squares.all.m.mew} requires that \eqref{e.t.condition.selection.new} holds, with $\theta_0$ replaced by $\tilde{\theta}_0$. Since $\theta_0$ and its mollified version $\tilde{\theta}_0$ are close in $L^2(\TT^2)$, see estimate~\eqref{e.them.mollifiers}, and recalling that $R_{\tilde{\theta}_0} =  \alpha L_{\theta_0}/C$, we deduce from \eqref{e.different.L2s.no.squares.all.m.mew} that  for some universal constant $C\geq 1$, we have
\begin{equation}
\label{e.different.L2s.no.squares.all.m.mew.mew}
\bigl| 
\|\theta_m^{(1)} (t,\cdot)\|_{L^2(\TT^2)}
-
\|\theta_m^{(2)} (t,\cdot)\|_{L^2(\TT^2)}
\bigr| 
\geq 
\frac 12 C_{\eqref{e.different.L2s.no.squares.all.m}} 
\bigl(C^{-1} \alpha L_{\theta_0} \bigr)^{\frac{2q \delta}{2+\gamma}}
\| \theta_0 \|_{L^2(\TT^2)}
-
C \alpha \|\theta_0\|_{L^2(\TT^2)}
\,,
\end{equation}
for all $m\geq m_*$. Optimizing the above estimate with respect to $\alpha$, we arrive at 
\begin{equation}
\label{e.new.f-ing.alpha}
\alpha = C^{-1} L_{\theta_0}^{\frac{2q\delta}{2+\gamma - 2 q \delta}}
\end{equation}
where $C = C(\beta)\ge 1$ is a computable constant. Here we used that the parameters $q,\gamma,\delta$ are merely functions of $\beta$ (see~\eqref{e.q.def.0}--\eqref{e.gamma}), and they satisfy  $2+\gamma - 2 q \delta > 0 $ for all $\beta \in (1,4/3)$. Having chosen the parameter $\alpha$, we finally deduce from \eqref{e.different.L2s.no.squares.all.m.mew.mew} that for some $C=C(\beta)>0$, there exists $t \in [0,1]$ such that 
\begin{equation}
\label{e.different.L2s.no.squares.all.m.mew.mew.mew}
\bigl| 
\|\theta_m^{(1)} (t,\cdot)\|_{L^2(\TT^2)}
-
\|\theta_m^{(2)} (t,\cdot)\|_{L^2(\TT^2)}
\bigr| 
\geq 
C^{-1} L_{\theta_0}^{\frac{2q\delta}{2+\gamma - 2 q \delta}} \|\theta_0\|_{L^2(\TT^2)}
\,,
\end{equation}
for all $m\geq m_*$. Importantly, the above estimate is valid assuming that \eqref{e.t.condition.selection.new} holds with $\theta_0$ replaced by $\tilde{\theta}_0$.

In order to conclude, we identify for which $\theta_0 \in H^2(\TT^2)$  the condition 
\eqref{e.t.condition.selection.new} --- with $\theta_0$ replaced by $\tilde{\theta}_0$ --- 
is non-vacuous. Recall, by definition, that $\tilde{\theta}_0$ is obtained from $\theta_0$ by running the heat equation (at unit diffusivity) forward in time, up to the small time $\alpha^2 L_{\theta_0}^2$. Since $L_0 \leq (2\pi)^{-1}$ (recall \eqref{e.L.theta0}) and since the constant $C$ appearing in~\eqref{e.new.f-ing.alpha} may be increased by a factor of $10$ if needed, we thus have that $2^{-1} \|\theta_0\|_{\dot{H}^k(\TT^2)} \leq \|\tilde{\theta}_0\|_{\dot{H}^k(\TT^2)} \leq  \|\theta_0\|_{\dot{H}^k(\TT^2)}$, for $k\in \{0,1,2\}$. Therefore, if we are willing to give up a factor of $16$, the norms $\|\tilde{\theta}_0\|_{\dot{H}^k(\TT^2)}$ appearing in \eqref{e.t.condition.selection.new} for $k\in\{0,1,2\}$ may be replaced by the corresponding $\|{\theta}_0\|_{\dot{H}^k(\TT^2)}$ norms. With this in mind, we now summarize  condition \eqref{e.t.condition.selection.new} as requiring two bounds:  
\begin{align}
\label{e.crapola.1}
\bigl( C^\prime 
\ep_{m_*}^{\delta} \bigr)
\ep_{m_*}^{\beta-2  -\frac{2\beta}{q+1}}
&\leq
 \frac{ \|\nabla \theta_0\|_{L^2(\TT^2)}^2 } { \|\theta_{0}\|_{L^2( \TT^2)}^2}
\,,
\\
\label{e.crapola.2}
C^\prime
\ep_{m_*}^\delta 
&\leq
\frac{\|\nabla\theta_0\|_{L^2(\TT^2)}^4}{\|\theta_{0}  \|_{L^2( \TT^2)}^2 \|\Delta \theta_0\|_{L^2(\TT^2)}^2} 
\,,
\end{align}
for some sufficiently large $C^\prime = C^\prime(\beta)\geq 1$. 
In the above two inequalities, we recall from~\eqref{e.m*.def}, from the definition~$R_{\tilde{\theta}_0} =  \alpha L_{\theta_0}/C$, and from the choice of $\alpha$ in~\eqref{e.new.f-ing.alpha}, that $m_*\geq 1$ is defined as the integer which satisfies 
\begin{equation}
\ep_{m_*}  \leq R_{\tilde{\theta}_0}^{\frac{2}{2+\gamma}} < \ep_{m_* -1 }  \,,
\qquad 
\mbox{where}
\qquad
R_{\tilde{\theta}_0}  = (C^{\prime\prime})^{-1} L_{\theta_0}^{ \frac{2 + \gamma}{2+\gamma - 2 q \delta}}
\label{e.what.the.f.is.m*}
\end{equation}
and $C^{\prime\prime} = C^{\prime\prime}(\beta)\geq 1$ is sufficiently large.

At this stage, as in the statement of the Proposition, fix two parameters $A \in (0,1]$ and  $B>1$. Then, choose $\Lambda$ large enough, solely in terms of $A$, $B$, and $\beta$, to ensure that 
\begin{equation}
\label{e.take.Lambda.large.A.B}
C^\prime 
\Lambda^{-\delta} 
\leq A
\,,
\qquad
\bigl( C^\prime \Lambda^{-\delta} \bigr)
(B C^{\prime\prime})^{\frac{2}{2+\gamma}(2 -\beta +\frac{2\beta}{q+1})}
\leq 1
\,,
\end{equation}
where $C^\prime = C^\prime_{\eqref{e.crapola.2}}(\beta)\geq $ and $C^{\prime\prime} = C^{\prime\prime}_{\eqref{e.what.the.f.is.m*}}(\beta) \geq 1$ were already fixed. We recognize \eqref{e.take.Lambda.large.A.B} as the condition on $\Lambda$ present in Proposition~\ref{p.no.selection.principle}.

Then, since $m_*\geq 1$, we deduce from~\eqref{e.take.Lambda.large.A.B} that $C^\prime 
\ep_{m_*}^\delta  
\leq 
C^\prime 
\ep_{1}^\delta
\leq
C^\prime 
\Lambda^{-\delta} 
\leq A$. Therefore, if the initial datum $\theta_0$ satisfies 
\begin{equation}
\label{e.theta.0.condition.1}
\frac{\|\nabla\theta_0\|_{L^2(\TT^2)}^4}{\|\theta_{0}  \|_{L^2( \TT^2)}^2 \|\Delta \theta_0\|_{L^2(\TT^2)}^2} 
 \geq A
 \,, 
\end{equation}
then~\eqref{e.crapola.2} is satisfied. In order to obey condition~\eqref{e.crapola.1}, we additionally assume that $\theta_0$ satisfies 
\begin{equation}
\label{e.theta.0.condition.2}
\bigr(C^{\prime\prime}\bigl)^{\frac{2+\gamma - 2 q \delta}{2+\gamma}} \ep_{m_*}^{\frac{2+\gamma - 2 q \delta}{2}} 
\leq  \frac{\|\theta_0\|_{L^2(\TT^2)}}{\|\nabla \theta_0\|_{L^2(\TT^2)}} 
\leq \bigl(B C^{\prime\prime} \bigr)^{\frac{2+\gamma - 2 q \delta}{2+\gamma}} \ep_{m_*}^{\frac{2+\gamma - 2 q \delta}{2}} \,,
\end{equation}
where $C^{\prime\prime} = C^{\prime\prime}_{\eqref{e.what.the.f.is.m*}}(\beta) \geq 1$, and we recall that $L_{\theta_0} = \|\theta_0\|_{L^2(\TT^2)} \|\nabla \theta_0\|_{L^2(\TT^2)}^{-1}$. We remark that the lower bound in \eqref{e.theta.0.condition.2} is exactly a re-statement of the lower bound in~\eqref{e.what.the.f.is.m*}, so this lower bound does not constitute an assumption (this is just part of the definition of $m_*$); instead, it is the upper bound in \eqref{e.theta.0.condition.2} which is the assumption, and it states that this upper bound matches the lower bound up to some a-priori fixed constant (the upper bound in~\eqref{e.what.the.f.is.m*} does not guarantee this fact). With this extra assumption on $\theta_0$ imposed, we note that \eqref{e.take.Lambda.large.A.B} and \eqref{e.theta.0.condition.2} imply
\begin{align*}
\bigl( C^\prime \ep_{m_*}^{\delta} \bigr)
\ep_{m_*}^{\beta-2  -\frac{2\beta}{q+1}}
&\leq
\bigl( C^\prime \Lambda^{-\delta} \bigr)
(B C^{\prime\prime})^{\frac{2}{2+\gamma}(2 -\beta +\frac{2\beta}{q+1})} 
L_{\theta_0}^{-(1 + \frac{2q\delta}{2+\gamma - 2 q \delta})\frac{2}{2+\gamma}(2 -\beta +\frac{2\beta}{q+1})}
\notag\\
&\leq
L_{\theta_0}^{-(1 + \frac{2q\delta}{2+\gamma - 2 q \delta})\frac{2}{2+\gamma}(2 -\beta +\frac{2\beta}{q+1})}
\leq
L_{\theta_0}^{-2}
\,.
\end{align*}
The last inequality holds since $L_{\theta_0} \leq 1/(2\pi) \leq 1$, and upon unpacking the definitions of $q,\gamma,\delta$, as functions of $\beta$ (see~\eqref{e.q.def.0}--\eqref{e.gamma}), we observe that $(1 + \frac{2q\delta}{2+\gamma - 2 q \delta})\frac{2}{2+\gamma}(2 -\beta +\frac{2\beta}{q+1}) \leq 2$, whenever $\nicefrac{68}{67} \leq \beta< \nicefrac{4}{3}$. Since the right side of \eqref{e.crapola.2} is by definition equal to $L_{\theta_0}^{-2}$, the above estimate shows that condition~\eqref{e.crapola.2} is indeed satisfied. We recognize \eqref{e.theta.0.condition.1}--\eqref{e.theta.0.condition.2} as being precisely the conditions on $\theta_0$ present in the statement of Proposition~\ref{p.no.selection.principle}.

To summarize, we have proven that for $\Lambda$ sufficiently large to obey~\eqref{e.take.Lambda.large.A.B}, and all initial data $\theta_0\in \dot{H}^2(\TT)$ which satisfies \eqref{e.theta.0.condition.1}--\eqref{e.theta.0.condition.2}, there exists an integer $m_*\geq 1$ and a nonempty interval of times $t\in [0,1]$, such that the solutions $\{\theta_m^{(1)}\}_{m\geq m_*}$ and $\{\theta^{(2)}\}_{m\geq m_*}$ constructed in \eqref{e.theta.m.yet.again} satisfy \eqref{e.different.L2s.no.squares.all.m.mew.mew.mew}, i.e. 
\begin{equation*}
\bigl | \|\theta_m^{(1)} (t,\cdot)\|_{L^2(\TT^2)} - \|\theta_m^{(2)} (t,\cdot)\|_{L^2(\TT^2)} \bigr| 
\geq 
C(\beta,L_{\theta_0}) \|\theta_0\|_{L^2(\TT^2)} > 0 
\, .
\end{equation*}
Recall however that the functions $\theta_m^{(i)}$ defined in~\eqref{e.theta.m.yet.again}  are not the same as the functions $\theta^{\kappa_m^{(i)}}$, which are the solutions of the drift-diffusion equation with diffusivity parameter $\kappa_m^{(i)}$ and full vector field  $\b$ (instead of the truncated one $\b_m$). To estimate this difference we apply~\eqref{e.theta.thetaM.string}. In the context of the present argument, this estimate is to be applied with $\theta \mapsto \theta^{\kappa_m^{(i)}}$ and $\theta_M \mapsto \theta_m^{(i)}$, and $M \mapsto m$. We deduce that for all $m\geq m_*$, 
\begin{equation}
\|  \theta^{\kappa_m^{(i)}} - \theta_m^{(i)} \|_{L^\infty((0,1); L^2(\TT^2))}
\leq 
C \ep_m^{2\beta(q-\frac{2}{q+1})} \|\theta_0\|_{L^2(\TT^2)}
\,,
\label{e.closeness.estimate.1}
\end{equation}
for some $C = C (\beta)>0$. 
Since the exponent $2\beta(q-\frac{2}{q+1})$ is positive, for $m$ sufficiently large depending on $\beta$ and $\theta_0$, we can ensure that $C(\beta) \ep_m^{2\beta(q-\frac{2}{q+1})} \leq \frac 12 C(\beta,L_{\theta_0})$, and thus for some $t\in [0,1]$ and all $m$ sufficiently large,
\begin{equation}
\Bigl| \|\theta^{\kappa_m^{(1)}} (t,\cdot)\|_{L^2(\TT^2)} - \|\theta^{\kappa_m^{(2)}}  (t,\cdot)\|_{L^2(\TT^2)} \Bigr| 
\geq 
\frac 12 C(\beta,L_{\theta_0}) \|\theta_0\|_{L^2(\TT^2)} > 0 
\label{e.closeness.estimate.2}
\, .
\end{equation}
Moreover, estimate~\eqref{e.continuous.indeed} implies that each of the two sequences  $\{\theta_m^{(1)}\}_{m\geq m_*}$ and $\{\theta^{(2)}\}_{m\geq m_*}$ are Cauchy in $C^{0,\mu}([0,1];L^2(\TT^2))$ for some $\mu>0$, since the $\ep_m$'s decay super-geometrically as $m\to \infty$. By the triangle inequality and \eqref{e.closeness.estimate.1}, the sequences
$\{\theta^{\kappa_m^{(1)}}\}_{m\geq m_*}$ and $\{\theta^{\kappa_m^{(2)}}\}_{m\geq m_*}$ are Cauchy in $L^\infty([0,1];L^2(\TT^2))$; in fact, in light of \eqref{e.continuous.indeed.indeed} they are Cauchy sequences in $C^{0,\nicefrac{\mu}{2}}([0,1];L^2(\TT^2))$.
Clearly, these two sequences have different limits due to~\eqref{e.closeness.estimate.2}. Moreover, these limit points are weak solutions of the transport equation with velocity field $\b$, and belong to $C^{0,\nicefrac{\mu}{2}}([0,1];L^2(\TT^2))$.
This concludes the proof of Proposition~\ref{p.no.selection.principle}.
\end{proof}

\appendix

\section{Macroscopic mean drift destroys enhancement}
\label{a.nosweep}

The goal is to formalize the idea, mentioned in the introduction, that a slowly-varying background flow with large amplitude will destroy the enhancement generated by a mean-zero, time-independent microscopic flow with smaller amplitude. The arguments in this appendix are not used anywhere in the paper; we include these for informational purposes only.

\smallskip

To simplify the discussion, we consider only two scales and assume that the macroscopic background flow is constant. We therefore assume that~$\b:\TT^d \to \R^d$ is a periodic, incompressible, mean-zero, time-independent vector field and $v \in\R^d$ is a vector representing the constant background flow. 
We let~$\mathbf{m}$ be the stream matrix for~$\b$ and define for $0 < \eps \ll 1$
\begin{equation*}
\a^\eps:= I_d + \mathbf{m}(\tfrac{\cdot}{\eps}). 
\end{equation*}
We may then write 
\begin{equation*}
-\Delta + (\tfrac{1}{\eps}\b(\tfrac{x}{\eps})+\tfrac{1}{\eps}v) \cdot \nabla
=
-\nabla \cdot \a^\eps(x) \nabla + \tfrac{1}{\eps} v \cdot \nabla 
\,.
\end{equation*}
We wish to estimate the effective diffusivity matrix, denoted by~$\ahom_v$, which we obtain by homogenizing the operator
\begin{equation}
\label{e.operatorweareinterestedin}
\partial_t - \nabla \cdot \a^\eps \nabla + \tfrac{1}{\eps}  v \cdot \nabla \,.
\end{equation}
We can rewrite this operator, absorbing the constant vector~$v$ into the diffusion matrix, by changing variables in space-time: if~$\theta_\eps(t,x)$ is a solution of 
\begin{equation*}
\partial_t \theta_\eps
-\nabla \cdot \a^\eps(x) \nabla \theta_\eps + \tfrac{1}{\eps}  v \cdot \nabla \theta_\eps
= 0,
\end{equation*}
then defining $T_\eps(t,x):= \theta_\eps(t,x-\frac{1}{\eps}  tv)$, we find that 
\begin{equation*}
\partial_tT_\eps
-\nabla \cdot \a^\eps_v \nabla T_\eps
=0,
\qquad 
\mbox{where}
\qquad
\a^\eps_v(t,x):=\a^\eps(x-\tfrac{1}{\eps} tv) 
= I_d + \mathbf{m}(\tfrac{x}{\eps} - v \tfrac{t}{\eps^2})
=: \a_v(\tfrac{t}{\eps^2},\tfrac{x}{\eps}) 
\,.
\end{equation*}
We can therefore pose our problem as follows: we are interested in computing the effective diffusivity matrix for the parabolic operator~
$\partial_t - \nabla \cdot \a^\eps_v(t,x)\nabla$, where $\a^\eps_v(t,x)=\a_v(\frac{t}{\eps^2},\frac{x}{\eps})$ is periodic in~$x$ and \emph{quasiperiodic} in the time variable~$t$. By classical homogenization theory, this problem homogenizes to $\partial_t - \nabla\cdot \ahom_v \nabla$ for an effective matrix~$\ahom_v$ which depends on~$v$; we can invert the change of variables and see that~$\ahom_v$ is also the effective diffusivity of the original operator we were interested in~\eqref{e.operatorweareinterestedin}.

Let us assume that $v\in \R^d$ is a good Diophantine direction,\footnote{For example, let $v = (1,\sqrt{2})$ for $d=2$. Then, for any $k \in \ZZ^2_*$, $\left| \frac{k}{|k|} \cdot \frac{v}{|v|} \right| = \frac{|k_1 + k_2 \sqrt{2}|}{\sqrt{3} |k|} \geq \frac{1}{3\sqrt{3} |k|^2}$.} i.e., that there exist $A \in (0,1)$ and $\kappa>0$ with
\begin{equation*}
\left| \tfrac{k}{|k|} \cdot \tfrac{v}{|v|} \right|
\geq 
A |k|^{-\kappa}
\,,
\qquad 
\forall k \in \ZZ^d_* = \ZZ^d \setminus\{0\}
\,.
\end{equation*}
Note that the set of directions satisfying this condition has full Lebesgue measure. 
We can write the equation for the correctors~$\chi_e$ associated to the homogenization problem as 
\begin{equation*}
\partial_{t} \chi_e - \nabla \cdot \a_v \nabla \chi_e 
=
\b_v \cdot e 
\quad \mbox{in} \ \R \times \Rd\,,
\qquad 
\langle \chi_e\rangle = 0\,,
\end{equation*}
where $\b_v (t,x) = \nabla \cdot \a_v (t,x)= \b(x -t v)$. Here, and in what follows in this appendix, the brackets~$\langle\cdot\rangle$ denote the mean of a quasiperiodic function of time, which is $\TT^d$-periodic in space, i.e., for such $f$,
\begin{equation*}
\bigl\langle f \bigr\rangle = \lim_{T\to \infty} \fint_{-T}^T \fint_{\TT^d} f(t,x) dx dt
\,. 
\end{equation*}
The enhancement of diffusivity is related to the correctors' gradient field~$\nabla\chi_e$ by the formula
\begin{equation}
\label{e.enhance.para}
e\cdot \ahom_v e  - |e|^2
=
\bigl\langle 
\bigl| \nabla \chi_e \bigr|^2 
\bigr\rangle
\,.
\end{equation}
To obtain~\eqref{e.enhance.para}, we use the equation for $\chi_e=\chi_e(t,x)$ to get 
\begin{equation*}
0 =
-
\left\langle 
\tfrac12  \partial_t \chi_e^2
\right\rangle
=
\left\langle 
\nabla \chi_e
\cdot \a_v \big( e +  \nabla \chi_e\big) 
\right\rangle
\,.
\end{equation*}
Therefore, since~$\langle \nabla \chi_e \rangle =0$, and the symmetric part of $\a_v$ is $I_d$, we obtain from the above identity that 
\begin{equation*}
e\cdot \ahom_v e
=
e\cdot \left\langle 
\a_v \left( e + \nabla \chi_e \right)
\right\rangle
=
\left\langle 
\left( e + \nabla \chi_e \right)
\cdot \a_v \left( e + \nabla \chi_e \right)
\right\rangle
=
\bigl\langle 
\left| e + \nabla \chi_e \right|^2
\bigr\rangle
=
|e|^2 +
\bigl\langle 
\left| \nabla \chi_e \right|^2
\bigr\rangle
\,.
\end{equation*}
The identity~\eqref{e.enhance.para} says that the difference between the effective diffusivity~$\ahom_v$ and the molecular diffusivity matrix~$I_d$ is proportional to the size of the correctors. Next, we show that $\bigl\langle 
\left| \nabla \chi_e  \right|^2\bigr\rangle$ is small when $|v|\gg 1$. For this purpose, we use the identity 
\begin{equation*}
\bigl\langle 
\bigl| \nabla \chi_e \bigr|^2 
\bigr\rangle
=  
\bigl\langle (\b_v \cdot e) \chi_e \bigr\rangle
\end{equation*}
and estimate the right side.

\smallskip
Assume $f$ and $g$ are quasiperiodic in $t$ and $\TT^d$-periodic in $x$. Then,  denoting $\hat{f}_k(t)$ and $\hat{g}_k(t)$ the Fourier-series coefficients of $f$ and $g$ with respect to $x$ only, we  have by Plancherel that
\begin{equation*}
\langle f \, g \rangle = \lim_{T\to\infty}\fint_{-T}^T \sum_{k \in \ZZ^d_*}  \hat{f}_k(t) \overline{\hat{g}_k}(t) dt
\,.
\end{equation*}
We wish to apply this identity to $f = \chi_e$ and $g = e\cdot \b_v$, i.e., we wish to compute $\bigl\langle (\b_v \cdot e) \chi_e \bigr\rangle$. Since translation in physical space is modulation in Fourier space, we know that 
\begin{equation*}
\overline{\hat{g}_k}(t) 
= \widehat{(e\cdot \b)}_k e^{2\pi i k\cdot v t}
= \frac{d}{dt} \left(\frac{-i}{2\pi k \cdot v} \widehat{(e\cdot \b)}_k e^{2\pi i k\cdot v t}\right)
= \frac{-i}{2\pi k \cdot v}\frac{d}{dt}\overline{\hat{g}_k}(t) 
\,.
\end{equation*}
Now, quasiperiodic functions with mean zero gradients are sublinear at infinity, and so we have via integration by parts in time, 
\begin{equation}
\label{e.temp.temp.1}
\langle f \, g \rangle
=
\lim_{T\to\infty}\fint_{-T}^T \sum_{k \in \ZZ^d_*}  \hat{f}_k(t) \overline{\hat{g}_k}(t) dt
=
\lim_{T\to\infty}\fint_{-T}^T \sum_{k \in \ZZ^d_*}\frac{i}{2\pi k \cdot v} \overline{\hat{g}_k}(t) \frac{d}{dt} \hat{f}_k(t)  dt
\,.
\end{equation}
Using the equation satisfied by $f = \chi_e$, we have that 
\begin{equation*}
\frac{d}{dt} \hat{f}_k(t) = 2\pi i k \cdot \widehat{(\a_v \cdot \nabla \chi_e)}_k(t) + \hat{g}_k(t) 
\,.
\end{equation*}
Since $k\cdot v$ is odd in $k$, the second term in the above display does not contribute to the expression in \eqref{e.temp.temp.1}. We deduce finally that 
\begin{equation}
\label{e.temp.temp.2}
\bigl\langle (\b_v \cdot e) \chi_e \bigr\rangle
=
\frac{-1}{ |v|}
\lim_{T\to\infty}\fint_{-T}^T \sum_{k \in \ZZ^d_*}\frac{1}{\frac{k}{|k|} \cdot \frac{v}{|v|}}\widehat{(e\cdot \b)}_k e^{2\pi i k\cdot v t}   \frac{k}{|k|} \cdot \widehat{(\a_v \cdot \nabla \chi_e)}_k(t)  dt
\,.
\end{equation}
Now, since $v$ is a good Diophantine direction, we have the bound
\begin{equation*}
\left|\frac{1}{\frac{k}{|k|} \cdot \frac{v}{|v|}} \right| 
\leq 
\frac{|k|^{\kappa}}{A}
\,,
\qquad \mbox{for all} \qquad
k \in \ZZ^d_*
\,.
\end{equation*}
With this information, we return to~\eqref{e.temp.temp.2}, use Plancherel and Cauchy-Schwartz to deduce that 
\begin{align*}
\bigl\langle 
\bigl| \nabla \chi_e \bigr|^2 
\bigr\rangle
=
\left| \bigl\langle (\b_v \cdot e) \chi_e \bigr\rangle \right|
&\leq
\frac{1 + \|\mathbf{m}\|_{L^\infty(\TT^d)}}{A |v|}
\bigl\langle |\nabla \chi_e|^2 \bigr\rangle^{\nicefrac 12}
\left(\sum_{k \in \ZZ^d_*}|k|^{2\kappa} \bigl| \widehat{(e\cdot \b)}_k\bigr|^2 \right)^{\nicefrac 12}
\notag\\
&\leq
\frac{1 + \|\mathbf{m}\|_{L^\infty(\TT^d)}}{A |v|}
\bigl\langle |\nabla \chi_e|^2 \bigr\rangle^{\nicefrac 12}
\|e\cdot \b\|_{\dot{H}^\kappa(\TT^d)}
\,.
\end{align*}
As a consequence,
\begin{align*}
\bigl\langle 
\bigl| \nabla \chi_e \bigr|^2 
\bigr\rangle
\leq 
\frac{(1 + \|\mathbf{m}\|_{L^\infty(\TT^d)})^2 \|e\cdot \b\|_{\dot{H}^\kappa(\TT^d)}^2}{ A^2 |v|^2 }
\end{align*}
which becomes arbitrarily small for $|v|\gg 1$.  
Returning to \eqref{e.enhance.para}, we have thus shown that a mean drift of ``generic'' direction destroys the enhancement of diffusion when $|v|\gg 1$.

\section{Fa\'a di Bruno formula and its consequences}
\label{sec:chain:rule}

In order to show that the stream function~$\phi(t,x)$ constructed in Section~\ref{s.construction} has $C^{1,\beta}$ regularity as stated in Proposition~\ref{p.SAMS.regularity}, we require explicit estimates on the derivatives of the solutions~$X$ of the flow $\partial_t X = \f(t,X)$ in terms of those of~$\f$. A qualitative version of such an estimate (for instance, the statement that $\f\in C^k \implies X\in C^k$) is of course quite classical and can be found in most introductory textbooks on ODE theory. The difference here is that we need an explicit estimate which, while it must be known, is of a form we could not find written in the literature. Like the qualitative arguments, the proof boils down to differentiating the equation many times. The only difficulty is a bookkeeping one: we must keep track of all the terms arising out of repeatedly applying the chain rule; that is, we need to use the Fa\'a di Bruno formula. 

\subsection{Useful lemmas}

Recall from~\eqref{e.barf} that for any integer $n\geq 0$,  $C^n_x$ smooth function $f$, and $R>0$, we denote
\begin{align}
\snorm{f}_{n,R} =  \frac{(n+1)^2}{ n!R^{n}} \sup_{|\aa| = n} \left\| \partial^\aa f\right\|_{L^\infty_x} 
\,.
\label{eq.barf}
\end{align}
In the above definition, the  shift factor $(n+1)^2$ could be replaced by $(n+1)^r$ for any $r>1$, at the cost of introducing $r$-dependence on our constants. We shall frequently use the identity 
\begin{align}
\sum_{\abs{\bb} = k, \bb \leq \aa} \binom{\aa}{\bb} = \binom{n}{k}  
\label{eq.barf.1}
\end{align}
for $|\aa| =n$ and $0 \leq k \leq n$. By combining this identity with the Leibniz rule, we obtain:
\begin{lemma}[\bf Product estimate]
\label{l.product}
For $f , g \in C^n_x$ we have the bound
\begin{align}
\snorm{f g}_{n,R} 
\leq 
4 \Bigl(\max_{0\leq j \leq n} \snorm{f}_{j,R}\Bigr) \Bigl(\max_{0\leq j \leq n} \snorm{g}_{j,R}\Bigr)   
\,.
 \label{e.barf.2}
\end{align}
\end{lemma}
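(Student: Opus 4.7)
The plan is to apply Leibniz's rule, unpack each derivative factor using the definition~\eqref{eq.barf}, and then contract the multi-index sum into a one-dimensional sum over the order of differentiation using the combinatorial identity~\eqref{eq.barf.1}. The remaining work is then to estimate one purely numerical sum.

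First I would fix a multi-index $\aa$ with $|\aa| = n$ and write $\partial^{\aa}(fg) = \sum_{\bb \leq \aa}\binom{\aa}{\bb}\,\partial^{\aa-\bb}f\,\partial^{\bb}g$. Bounding each factor in $L^\infty$ via~\eqref{eq.barf} (applied to $f$ at order $n-|\bb|$ and to $g$ at order $|\bb|$) produces $\|\partial^{\aa-\bb}f\|_{L^\infty}\leq \frac{(n-|\bb|)!R^{n-|\bb|}}{(n-|\bb|+1)^2}\snorm{f}_{n-|\bb|,R}$ and the analogous bound for $g$. Summing the resulting inequality, grouping by $k := |\bb|$, and invoking~\eqref{eq.barf.1} to collapse $\sum_{|\bb|=k,\bb\leq\aa}\binom{\aa}{\bb}=\binom{n}{k}$, the binomial factor together with $(n-k)!\,k!$ produces exactly $n!$, and the estimate simplifies to
\begin{equation*}
\|\partial^{\aa}(fg)\|_{L^\infty} \leq R^n\, n! \sum_{k=0}^n \frac{1}{(n-k+1)^2 (k+1)^2}\,\snorm{f}_{n-k,R}\,\snorm{g}_{k,R}.
\end{equation*}

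Taking the supremum over $|\aa|=n$, multiplying by $(n+1)^2/(n!R^n)$, and dominating $\snorm{f}_{n-k,R}$, $\snorm{g}_{k,R}$ by their maxima over $0\leq j\leq n$ reduces the lemma to the purely numerical bound
\begin{equation*}
T_n := \sum_{k=0}^n \frac{(n+1)^2}{(n-k+1)^2(k+1)^2} \leq 4 \qquad \text{for every } n\in\N_0.
\end{equation*}
To prove this, I would use the partial-fraction identity $\frac{1}{(n-k+1)(k+1)} = \frac{1}{n+2}\bigl(\frac{1}{n-k+1}+\frac{1}{k+1}\bigr)$; squaring and summing gives $T_n = \frac{(n+1)^2}{(n+2)^2}\bigl(2\sum_{j=1}^{n+1}j^{-2} + \frac{4H_{n+1}}{n+2}\bigr)$, where $H_{n+1}$ is the $(n{+}1)$-st harmonic number. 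The prefactor is strictly less than $1$, and $2\sum_{j\geq 1}j^{-2}=\pi^2/3<4$, while the harmonic correction $4H_{n+1}/(n+2)$ vanishes logarithmically as $n\to\infty$.

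The only real obstacle is verifying $T_n \leq 4$ uniformly in $n$: a crude application of $\sum 1/j^2 \leq \pi^2/6$ and $H_{n+1}/(n+2)\leq 1$ gives the looser bound $T_n\lesssim 5.3$. To obtain the sharp constant $4$ I would check the finitely many small cases $n\leq 3$ by direct enumeration (where $T_n\leq 2.89$) and then exploit the damping factor $(n+1)^2/(n+2)^2$ together with the explicit decay of $H_{n+1}/(n+2)$ for $n\geq 4$ to absorb the harmonic contribution into the gap $4-\pi^2/3$. Everything else in the argument is bookkeeping.
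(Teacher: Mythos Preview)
Your proof is correct and follows exactly the same route as the paper: Leibniz rule, collapse the multi-index sum via~\eqref{eq.barf.1}, and reduce to the numerical inequality $T_n = \sum_{k=0}^n \frac{(n+1)^2}{(k+1)^2(n-k+1)^2} \leq 4$. The paper simply asserts this last inequality without proof, so you actually go further by supplying a strategy for it.

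One small correction to that strategy: checking only $n\leq 3$ is not enough for the argument as you phrase it. For moderate $n$ the harmonic term $4H_{n+1}/(n+2)$ still exceeds the gap $4-\pi^2/3\approx 0.71$ (for instance at $n=4$ the bracket $2S_5+4H_5/6\approx 4.45$), so the damping prefactor $(n+1)^2/(n+2)^2$ is genuinely needed there and cannot be thrown away before invoking the $\pi^2/3$ limit. You would either need to retain the prefactor explicitly in the large-$n$ estimate, or extend the range of direct checks. Since $T_n\to\pi^2/3$ and the maximum of $T_n$ over all $n$ is about $3.5$ (attained near $n\approx 20$), the inequality certainly holds; this is only a matter of where you draw the line between enumeration and asymptotics.
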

\begin{proof}[Proof of Lemma~\ref{l.product}]
The Leibniz rule and \eqref{eq.barf.1} give
\begin{align*}
 \snorm{f \, g}_{n,R} 
 &\leq 
 \frac{(n+1)^2}{n!R^n} \sum_{k=0}^n \sup_{|\aa|=n} \sum_{|\bb| =k,\bb \leq \aa} \binom{\aa}{\bb} \norm{\partial^\bb f}_{L^\infty_x} \norm{\partial^{\aa-\bb} g}_{L^\infty_x}
 \notag\\
 &\leq 
 \frac{(n+1)^2}{n!R^n} \sum_{k=0}^n  \binom{n}{k} \snorm{f}_{k,R} \frac{k! R^k}{(k+1)^2} \snorm{g}_{n-k,R} \frac{(n-k)! R^{n-k}}{(n-k+1)^2} 
  \notag\\
 &\leq \left(\max_{0\leq j \leq n} \snorm{f}_{j,R}\right) \left(\max_{0\leq j \leq n} \snorm{g}_{j,R}\right)    \sum_{k=0}^n   \frac{(n+1)^2}{(k+1)^2(n-k+1)^2}\,.
\end{align*}
The proof now follows since $\sum_{k=0}^n   (n+1)^2 (k+1)^{-2}(n-k+1)^{-2} \leq 4$ for all $n\geq 0$.
\end{proof}

The workhorse of this Appendix is a multivariable version of the Fa\'a di Bruno formula, with requires some additional notation. We denote by $\NN_0$ the set of all integers strictly larger than $-1$, and by $\NN_0^d$ the set of all multi-indices $\aa = (\alpha_1,\cdots,\alpha_d)$ with $\alpha_j \in \NN_0$. For a multi-index $\aa$, we write
$|\aa| = \alpha_1 + \ldots + \alpha_d$,  $\aa! =  (\alpha_1!) \cdot \ldots \cdot (\alpha_d !)$, $\partial^{\aa} = \partial_{x_1}^{\alpha_1} \ldots \partial_{x_d}^{\alpha_d}$, and $\yy^{\aa}= (y_1^{\alpha_1}) \cdot \ldots \cdot (y_d^{\alpha_d})$, where $\yy \in \RR^d$ is a point. The following notation shall be needed below.
Let $n\geq 1$, $\aa,\bb \in \NN_0^d$ be such that $\abs{\bb} = n$, and   $1 \leq |\aa| \leq n$.
For $1\leq s \leq n$ define the set\footnote{Here $\lll_{1} < \lll_{2}$ if either $\abs{\lll_{1}} < \abs{\lll_{2}}$, or $\abs{\lll_{1}}=\abs{\lll_{2}}$ and there exists $k\in \{1,\ldots,d\}$ such that $\lll_{1k'}=\lll_{2k'}$ for $k'< k$, and $\lll_{1k}<\lll_{2k}$.}
\begin{align}
p_s(\bb,\aa)
&= \Big\{ (\kk_1,\ldots,\kk_s; \lll_1,\ldots,\lll_s) \in (\NN_0^d \times\ldots\NN_0^d ;\NN_0^d\times \ldots \NN_0^d)\colon \notag \\
&\qquad 0< |\kk_j| , 0 < \lll_1 < \ldots < \lll_s, \sum_{j=1}^s \kk_j = \aa, \sum_{i=1}^s |\kk_j| \lll_j = \bb \Big \}.
\label{eq:partition:set}
\end{align}
With this notation in hand, we recall \cite[Theorem 2.1]{CS}.

\begin{proposition}[\bf Multivariate Fa\`a di Bruno Formula]
\label{prop:multi:Faa}
Let $h\colon \RR^d \to \RR$ be~$C^\infty$ in a neighborhood of ${\yy}_{0} := \gb(\xx_0)$ and $\gb\colon \RR^d \to \RR^d$ be~$C^\infty$ in a neighborhood of $\xx_0$. Denote their composition by~$f:= h\circ \gb$. 
Then, for every multiindex~$\bb$ with $n:=\abs{\bb}\geq 1$, 
\begin{equation*}
(\partial^{\bb} f)(\xx_0) = \bb! \sum_{1\leq |\aa|\leq n} (\partial^{\aa} h)(\gb(\xx_0)) \sum_{s=1}^n \sum_{p_s(\bb,\aa)} \prod_{j=1}^s \frac{\left( (\partial^{\lll_j} \gb)(\xx_0) \right)^{\kk_j}}{(\kk_j !) (\lll_j !)^{|\kk_j|}}.
\end{equation*}
Here we adopt the convention that $0^0 :=1$.
\end{proposition}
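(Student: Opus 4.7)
The plan is to prove this formula by matching Taylor coefficients, which is the cleanest way to handle the bookkeeping of how mixed partial derivatives of the composition $f = h \circ \gb$ decompose in terms of mixed partial derivatives of $h$ and $\gb$. Without loss of generality translate so that $\xx_0 = 0$ and $\yy_0 = \gb(0) = 0$. Then the statement of the proposition is equivalent to identifying the coefficient of $\zsf^\bb$ (for $\zsf \in \RR^d$ near the origin) in the formal Taylor expansion of $f(\zsf) = h(\gb(\zsf))$ around $\zsf = 0$, using the given expansions of $h$ around $0$ and of $\gb$ around $0$.

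First I would write down the formal Taylor series
\begin{equation*}
h(\yy) = \sum_{\aa \in \NN_0^d} \frac{(\partial^\aa h)(0)}{\aa!}\, \yy^{\aa},
\qquad
\gb(\zsf) = \sum_{\lll \in \NN_0^d,\ \lll \neq 0} \frac{(\partial^\lll \gb)(0)}{\lll!}\, \zsf^{\lll},
\end{equation*}
and substitute the second into the first to get a formal series for $f(\zsf)$. Using the multinomial theorem to expand $(\gb(\zsf))^{\aa} = \prod_{i=1}^d (g_i(\zsf))^{\alpha_i}$ and collecting terms proportional to a fixed product $\prod_{j=1}^s \bigl((\partial^{\lll_j} \gb)(0)\bigr)^{\kk_j}$ where the $\lll_j$ are the distinct multi-indices appearing (ordered by the total order described in the footnote to \eqref{eq:partition:set}), I would read off the combinatorial factor. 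The identity $\sum_j \kk_j = \aa$ records how many times each partial derivative of $\gb$ is used, while $\sum_j |\kk_j| \lll_j = \bb$ records the total monomial degree in $\zsf$; this is exactly the definition of the set $p_s(\bb,\aa)$ in \eqref{eq:partition:set}. After dividing by $\bb!$ and matching the $\zsf^\bb/\bb!$ coefficient on both sides, the factor $(\kk_j !)(\lll_j !)^{|\kk_j|}$ in the denominator emerges naturally from the multinomial coefficients.

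The main obstacle is the purely combinatorial step of verifying that the coefficient produced by the substitution and expansion equals the prescribed sum over $p_s(\bb,\aa)$, and in particular that no ordered tuple is double-counted or missed. The key insight for handling this is the convention $0 < \lll_1 < \lll_2 < \cdots < \lll_s$ in \eqref{eq:partition:set}: by forcing a strict ordering on the distinct multi-indices that appear, each unordered monomial $\prod_j ((\partial^{\lll_j}\gb)(0))^{\kk_j}$ is indexed by exactly one tuple $(\kk_1,\ldots,\kk_s;\lll_1,\ldots,\lll_s)$, which resolves the overcounting that would otherwise arise from the symmetry group permuting the factors. An alternative route, which I would sketch as a consistency check, is induction on $n = |\bb|$: the base case $n = 1$ is the usual chain rule $\partial_i f = \sum_\ell (\partial_\ell h)(\gb) \partial_i g_\ell$, and the inductive step follows by applying $\partial^{\bb + e_k}$ to the formula at level $\bb$ and using the product and chain rules, then reorganizing terms to identify the induced map $p_s(\bb,\aa) \to p_s(\bb + e_k, \aa')$ that implements the recursion. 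The Taylor coefficient route is however shorter and avoids the explicit manipulation of this bijection. A complete derivation of both routes, together with the precise combinatorial identities, can be found in the reference~\cite{CS} on which the statement is based, so I would simply cite that work for the verification of the combinatorial identity and use the Taylor-coefficient argument as the conceptual guide.
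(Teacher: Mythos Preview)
Your proposal is correct, and in fact more detailed than what the paper does: the paper does not prove this proposition at all but simply recalls it as \cite[Theorem~2.1]{CS}. Your Taylor-coefficient-matching sketch is a standard and valid route to the formula, and since you ultimately defer the full combinatorial verification to \cite{CS} as well, your approach is entirely consistent with the paper's treatment.
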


Proposition~\ref{prop:multi:Faa} expands the higher order chain rule into a complicated expression. In applications, we need to have tools which allow us to contract such complicated expressions into simple ones. The first re-summation lemma that we use in the paper is similar to \cite[Lemma 3.2]{CVW}:
\begin{lemma}
\label{lem:contract:Faa:1}
Fix the dimension $d\geq 1$. With the notation of Proposition~\ref{prop:multi:Faa}, we have 
\begin{equation*}
\bb! \sum_{1\leq |\aa|\leq n} (-d)^{-\abs{\aa}} \abs{\aa}! \sum_{s=1}^n \sum_{p_s(\bb,\aa)} \prod_{j=1}^s \frac{\left( \abs{\lll_j}! \binom{\nicefrac 12}{\abs{\lll_j}} \right)^{\abs{\kk_j}}}{(\kk_j !) (\lll_j !)^{|\kk_j|}} =  2 (n+1)! \binom{\nicefrac 12}{n+1}
\,.
\end{equation*} 
\end{lemma}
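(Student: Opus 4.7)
The plan is to recognize the left-hand side as the output of the Fa\`a di Bruno formula applied to a carefully chosen composition $F = h \circ \g$, and then compute $\partial^\bb F$ directly by expanding $F$ as an explicit power series. Concretely, I would choose
\begin{equation*}
g(\xx) := (1 + x_1 + \cdots + x_d)^{\nicefrac 12},
\qquad
\g(\xx) := -\tfrac 1d \bigl( g(\xx),\ldots,g(\xx)\bigr)\in\RR^d,
\qquad
h(\yy) := -\frac{1}{y_1+\cdots+y_d},
\end{equation*}
so that $F(\xx) = h(\g(\xx)) = (1+x_1+\cdots+x_d)^{-\nicefrac 12}$, and evaluate Proposition~\ref{prop:multi:Faa} at $\xx_0 = 0$ with $|\bb| = n$. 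The point is that each of the three ingredients of the Fa\`a di Bruno formula reproduces exactly one of the three $\aa$- or $\lll_j$-dependent factors appearing on the left-hand side.

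The key computations are: (i) using the multinomial expansion of the power series $(1+u)^{\nicefrac 12}=\sum_k \binom{\nicefrac 12}{k} u^k$ and the identity $(x_1+\cdots+x_d)^k = \sum_{|\lll|=k} \tfrac{k!}{\lll!}\xx^{\lll}$, one obtains $\partial^{\lll} g(0) = |\lll|!\binom{\nicefrac 12}{|\lll|}$; consequently, since the $d$ components of $\g$ are identical, $((\partial^{\lll_j}\g)(0))^{\kk_j} = (-\nicefrac 1d)^{|\kk_j|} \bigl(|\lll_j|!\binom{\nicefrac 12}{|\lll_j|}\bigr)^{|\kk_j|}$, and the factor $(-\nicefrac 1d)^{|\aa|}$ then factors out of the product because $\sum_j |\kk_j| = |\aa|$; (ii) writing $S = y_1+\cdots+y_d$, direct differentiation of $h = -S^{-1}$ gives $\partial^{\aa} h = (-1)^{|\aa|+1}|\aa|!\,S^{-|\aa|-1}$ for every multi-index $\aa$, and since $S=-1$ at $\g(0) = -\tfrac 1d(1,\ldots,1)$, we get $(\partial^{\aa} h)(\g(0)) = |\aa|!$; (iii) the same binomial expansion as in (i), applied directly to $F(\xx) = (1+x_1+\cdots+x_d)^{-\nicefrac 12}$, yields $\partial^{\bb} F(0) = n!\binom{-\nicefrac 12}{n}$.

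Plugging (i), (ii) into Proposition~\ref{prop:multi:Faa} and equating with (iii) gives
\begin{equation*}
n!\binom{-\nicefrac 12}{n}
=
\bb! \sum_{1\leq|\aa|\leq n}(-d)^{-|\aa|}|\aa|! \sum_{s=1}^n \sum_{p_s(\bb,\aa)} \prod_{j=1}^{s} \frac{\bigl(|\lll_j|!\binom{\nicefrac 12}{|\lll_j|}\bigr)^{|\kk_j|}}{(\kk_j!)(\lll_j!)^{|\kk_j|}}\,.
\end{equation*}
It remains only to verify the elementary identity $n!\binom{-\nicefrac 12}{n} = 2(n+1)!\binom{\nicefrac 12}{n+1}$, which follows from the product formula for generalized binomial coefficients: both sides equal $(-1)^n \prod_{k=1}^n (k-\nicefrac 12)$.

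The main obstacle is not computational but conceptual: one has to guess the correct pair $(h,\g)$ so that the Fa\`a di Bruno expansion matches the left-hand side term-by-term. The factor $(-d)^{-|\aa|}$ forces the $-\tfrac{1}{d}$ scaling on $\g$; the appearance of $|\aa|!$ (rather than, say, $|\aa|!\binom{r}{|\aa|}$) forces $h$ to be a function of $y_1+\cdots+y_d$ whose one-variable profile has derivatives equal to $k!$ at the relevant point, leading to the simple pole $-S^{-1}$ based at $S=-1$; and the factor $|\lll_j|!\binom{\nicefrac 12}{|\lll_j|}$ forces $g$ to be the $\nicefrac 12$-power function. Once these choices are made, the computations in (i)--(iii) are routine and the final binomial identity is immediate.
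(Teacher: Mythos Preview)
Your proof is correct and follows essentially the same strategy as the paper: both recognize the left-hand side as the Fa\`a di Bruno expansion of a composition $h\circ\g$ in which $h$ is a simple pole in $y_1+\cdots+y_d$ (contributing the factor $|\aa|!$) and each component of $\g$ is a square-root function (contributing the factors $|\lll_j|!\binom{1/2}{|\lll_j|}$), so that the composite is a $-\nicefrac12$-power and its derivative can be computed directly. Your specific choices ($h=-S^{-1}$ at $S=-1$, $g=(1+u)^{1/2}$) differ from the paper's ($\bar h(z)=(1-z/d)^{-1}$ at $0$, $\bar g(z)=1-\sqrt{1-z}$) only by elementary changes of variable, and your packaging of the $(-d)^{-|\aa|}$ factor into $\g$ itself spares you the sign-bookkeeping step the paper performs at the end.
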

\begin{proof}[Proof of Lemma~\ref{lem:contract:Faa:1}]
Define the following functions
\begin{align*}
h(\yy) &= \bar h(y_1 + \ldots+ y_d), \qquad \bar h(z) =  \frac{1}{1 - \frac{z}{d}},
\\
g_1(\xx) = \ldots = g_d(\xx) &= \bar g(x_1 +\ldots+ x_d), \qquad \bar g(z) =  1 -  \sqrt{1-z}  \,,
\\
f(\xx) = h (g_1(\xx),\ldots,g_d(\xx)) &=  \bar f(x_1+\ldots+x_d), \qquad \bar f(z) = \bar h(d \bar g(z)) = \frac{1}{\sqrt{1-z}} \,.
\end{align*}
which are real-analytic functions in the neighborhood of $\xx = 0$. For any multi-index $\aa \in \NN_0^d$ we have that $(\partial^{\aa} f)(0) = (\partial^{ \abs{\aa}}\bar f)(0)$ and similarly for the functions $h, g_1, \ldots, g_d$. Moreover, we note the following identities\footnote{Recall that $\binom{\nicefrac 12}{n} n! = (\nicefrac 12) (\nicefrac 12 - 1) \ldots (\nicefrac 12 -n+1)$ for $n\geq 1$, and $\binom{\nicefrac 12}{0} := -1$.}
\begin{subequations}
\begin{align}
(\partial^n \bar h)(0) &= d^{-n} n! 
\label{eq:bar:h:derivatives}
\\
(\partial^n \bar g)(0) &= (-1)^{n-1} \binom{\nicefrac 12}{n} n! \geq 0 
\label{eq:bar:g:derivatives}
\\
(\partial^n \bar f)(0) &= 2 (-1)^{n} \binom{\nicefrac 12}{n+1} (n+1)! \geq 0
\label{eq:bar:f:derivatives}
\end{align}
\end{subequations}
which are valid for $n \geq 1$. Let $\bb$ be any multi-index of length $n$. We apply Proposition~\ref{prop:multi:Faa} to the function $f(x)$ defined above, and using \eqref{eq:bar:h:derivatives}--\eqref{eq:bar:g:derivatives} deduce that 
\begin{align}
(-1)^{-n} (\partial^{\bb} f)(0) 
&=(-1)^{-n}   \bb! \sum_{1\leq |\aa|\leq n} d^{-\abs{\aa}} \abs{\aa}! \sum_{s=1}^n \sum_{p_s(\bb,\aa)} \prod_{j=1}^s \frac{\left( (-1)^{\abs{\lll_j}-1} \binom{\nicefrac 12}{\abs{\lll_j}} \abs{\lll_j}! \right)^{\abs{\kk_j}}}{(\kk_j !) (\lll_j !)^{|\kk_j|}}
\notag\\
&=  \bb! \sum_{1\leq |\aa|\leq n} (-d)^{-\abs{\aa}} \abs{\aa}! \sum_{s=1}^n \sum_{p_s(\bb,\aa)} \prod_{j=1}^s \frac{\left( \binom{\nicefrac 12}{\abs{\lll_j}} \abs{\lll_j}! \right)^{\abs{\kk_j}}}{(\kk_j !) (\lll_j !)^{|\kk_j|}}
\label{eq:Faa:temp:1}
\end{align}
which is the expression that we wish to estimate. Here we have used that $\sum_j \abs{\kk_j} = \abs{\aa}$ and $\sum_j \abs{\kk_j} \abs{\lll_j} = \abs{\bb}=n$. On the other hand, we know that from \eqref{eq:bar:f:derivatives} that 
\begin{align}
(-1)^{-n} (\partial^{\bb} f)(0) = (-1)^{-n} (\partial^n \bar f)(0) = 2 \binom{\nicefrac 12}{n+1} (n+1)!\,.
\label{eq:Faa:temp:2}
\end{align}
Equating \eqref{eq:Faa:temp:1} and \eqref{eq:Faa:temp:2} concludes the proof of the lemma.
\end{proof}

Another re-summation lemma that we use is a multi-D version of~\cite[Lemma 1.4.1]{KP}:
\begin{lemma}
\label{lem:contract:Faa:2}
Let $R>0$ and $d\geq 1$. With the notation of Proposition~\ref{prop:multi:Faa}, we have that
\begin{equation*}
\bb! \sum_{1\leq |\aa|\leq n} R^{\abs{\aa}} \abs{\aa}! \sum_{s=1}^n \sum_{p_s(\bb,\aa)} \prod_{j=1}^s \frac{\left( \abs{\lll_j}!   \right)^{\abs{\kk_j}}}{(\kk_j !) (\lll_j !)^{|\kk_j|}} =  d R (1 +d R)^{n-1} n!
\end{equation*} 
\end{lemma}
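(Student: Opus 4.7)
The plan is to mimic closely the proof of Lemma~\ref{lem:contract:Faa:1}: I will choose specific real-analytic functions $h\colon \RR^d \to \RR$ and $\gb \colon \RR^d \to \RR^d$ whose derivatives at the origin produce exactly the combinatorial factors $R^{\abs{\aa}} \abs{\aa}!$ and $(\abs{\lll_j}!)^{\abs{\kk_j}}$ appearing inside the sum, and then evaluate both the Fa\`a di Bruno expansion of $(\partial^{\bb} f)(0)$, where $f = h \circ \gb$, and a direct Taylor expansion of the composition. Equating the two will yield the identity.

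Concretely, the factor $R^{\abs{\aa}} \abs{\aa}!$ suggests using $h(\yy) = \bar{h}(y_1 + \ldots + y_d)$ with $\bar{h}(z) = (1-Rz)^{-1} = \sum_{n\geq 0} R^n z^n$, so that $(\partial^{\aa} h)(0) = R^{\abs{\aa}} \abs{\aa}!$. The factor $\abs{\lll_j}!$ suggests taking $g_1 = \ldots = g_d = \bar{g}(x_1 + \ldots + x_d)$ with $\bar{g}(z) = z/(1-z) = \sum_{n\geq 1} z^n$, so that $(\partial^{\lll_j} g_i)(0) = \abs{\lll_j}!$ for every $i$; since $\yy^{\kk_j} = \prod_i y_i^{k_{j,i}}$ with $\sum_i k_{j,i} = \abs{\kk_j}$, this produces exactly $((\partial^{\lll_j}\gb)(0))^{\kk_j} = (\abs{\lll_j}!)^{\abs{\kk_j}}$. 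Applying Proposition~\ref{prop:multi:Faa} to $f = h \circ \gb$ at $\xx_0 = 0$ (noting that both $h$ and $\gb$ are real-analytic near the relevant points, with $\gb(0)=0$) then yields the left-hand side of the claimed identity as $(\partial^{\bb} f)(0)$ for any fixed $\bb$ with $\abs{\bb} = n$.

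It remains to compute $(\partial^{\bb} f)(0)$ directly. Since $f$ depends only on $z := x_1 + \ldots + x_d$, we have $(\partial^{\bb} f)(0) = (\partial_z^n \bar{f})(0)$, where
\begin{equation*}
\bar{f}(z) = \bar{h}(d\,\bar{g}(z)) = \frac{1}{1 - \tfrac{Rdz}{1-z}} = \frac{1-z}{1-(1+Rd)z}\,.
\end{equation*}
Expanding as a geometric series in $z$ gives
\begin{equation*}
\bar{f}(z) = (1-z)\sum_{n\geq 0} (1+Rd)^n z^n = 1 + \sum_{n\geq 1} \bigl[(1+Rd)^n - (1+Rd)^{n-1}\bigr] z^n = 1 + \sum_{n\geq 1} dR\,(1+dR)^{n-1} z^n\,,
\end{equation*}
so $(\partial_z^n \bar{f})(0) = dR(1+dR)^{n-1} n!$ for every $n\geq 1$. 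Equating this with the Fa\`a di Bruno expansion gives the desired identity.

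The only point requiring minor care is the bookkeeping that shows $((\partial^{\lll_j}\gb)(0))^{\kk_j} = (\abs{\lll_j}!)^{\abs{\kk_j}}$ with the convention $0^0 = 1$ in place (needed when some $k_{j,i} = 0$), but this is immediate since every component $g_i$ has the same derivative $\abs{\lll_j}!$ at the origin. No convergence issue arises as both Taylor series have a positive radius of convergence near $0$, and the identity is an identity of formal power series coefficients. I do not foresee any real obstacle beyond verifying this bookkeeping carefully.
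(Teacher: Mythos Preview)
Your proof is correct and follows essentially the same approach as the paper: choose explicit functions of the form $h(\yy)=\bar h(y_1+\cdots+y_d)$ and $g_i(\xx)=\bar g(x_1+\cdots+x_d)$, apply Proposition~\ref{prop:multi:Faa}, and compare with a direct computation of the derivatives of the composition. The only cosmetic difference is your choice $\bar h(z)=(1-Rz)^{-1}$, $\bar g(z)=z/(1-z)$ (with $\gb(0)=0$), whereas the paper uses $\bar h(z)=(1-dR(z-1))^{-1}$, $\bar g(z)=1/(d(1-z))$ (with $\gb(0)=(1/d,\ldots,1/d)$), which introduces factors of $d$ that then cancel via $\sum_j|\kk_j|=|\aa|$; your variant avoids that extra step and is marginally cleaner.
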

\begin{proof}[Proof of Lemma~\ref{lem:contract:Faa:2}]
Similarly to the proof of Lemma~\ref{lem:contract:Faa:1}, we define the functions 
\begin{align*}
h(\yy) &= \bar h(y_1 + \ldots + y_d), \qquad   \bar h(z) =  \frac{1}{1 - d R (z-1) }\, ,   
\\
g_i(\xx)  &= \bar g(x_1 +\ldots+ x_d), \qquad  \bar g(z) = \frac{1}{d(1-z)}   \,,
\\
f(\xx) = h (g_1(\xx),\ldots,g_d(\xx)) &=  \bar f(x_1+ \ldots +x_d), \qquad   \bar f(z) = \bar h(d \bar g(z)) = \frac{1-z}{ 1-(d R+1)z}  \,,   
\end{align*}
so that for any $\bb \in \NN_0^2$ with $\abs{\bb} =n$, we have
\begin{align*}
(\partial^{\bb} h)(\nicefrac 1d, \ldots, \nicefrac 1d) &=   (\partial^n \bar h)(1) = (d R)^n n! \, ,\\
(\partial^{\bb} g_i)(0,\ldots,0) &= (\partial^n \bar g_i)(0) = \frac{n!}{d} \,, \\
(\partial^{\bb} f)(0,\ldots,0) &= (\partial^n \bar f)(0) = d R (1 +d R)^{n-1} n!\,.
\end{align*}
Using the above identities and Proposition~\ref{prop:multi:Faa}, we obtain that 
\begin{align*}
d R (1 +d R)^{n-1} n! &= (\partial^{\bb} f)(0,\ldots,0)  \notag\\
&=   \bb! \sum_{1\leq |\aa|\leq n} (dR)^{\abs{\aa}} \abs{\aa}! \sum_{s=1}^n \sum_{p_s(\bb,\aa)} \prod_{j=1}^s \frac{\left(  \frac 1d \abs{\lll_j}! \right)^{\abs{\kk_j}}}{(\kk_j !) (\lll_j !)^{|\kk_j|}}
\notag\\
&=  \bb! \sum_{1\leq |\aa|\leq n} R^{\abs{\aa}} \abs{\aa}! \sum_{s=1}^n \sum_{p_s(\bb,\aa)} \prod_{j=1}^s \frac{\left(   \abs{\lll_j}! \right)^{\abs{\kk_j}}}{(\kk_j !) (\lll_j !)^{|\kk_j|}}
\end{align*}
which concludes the proof.
\end{proof}

\begin{remark}
Due to the presence of the shift factor $(n+1)^2$ in \eqref{eq.barf}, in order for Lemmas~\ref{lem:contract:Faa:1} and~\ref{lem:contract:Faa:2} to be useful, we shall need the following inequality. 
Assume that $|\bb|=n$, $1\leq |\aa|\leq n$, $1\leq s \leq n$, and $(\kk_1,\ldots,\kk_s;\lll_1,\ldots,\lll_s) \in p_s(\bb,\aa)$, as defined in \eqref{eq:partition:set}. Then, we have that 
\begin{align}
\frac{(n+1)^2}{(|\aa|+1)^2}  \prod_{j=1}^s \frac{1}{(|\lll_j|+1)^{2 |\kk_j|}} \leq 1\,.
\label{e.vomit.again.3}
\end{align}
In order to prove \eqref{e.vomit.again.3}, we note that by the definition of the partition set $p_s(\bb,\aa)$ in \eqref{eq:partition:set}, we have $|\lll_s| \geq |\lll_j| \geq 1$ for all $1\leq j \leq s$. As such, we obtain that $n = |\bb| = \sum_{j=1}^s |\kk_j| |\lll_j| \leq |\lll_s| \sum_{j=1}^s |\kk_j| = |\lll_s| |\aa|$. Thus, the lower bound $|\lll_s| \geq n/|\aa|$ emerges, and since $|\kk_s|\geq 1$, we have
\begin{align*}
\frac{(n+1)^2}{(|\aa|+1)^2}  \prod_{j=1}^s \frac{1}{(|\lll_j|+1)^{2 |\kk_j|}} 
&\leq 
\frac{(n+1)^2}{(|\aa|+1)^2}  \frac{1}{(|\lll_s|+1)^{2 |\kk_s|}} \notag\\
&\leq 
\frac{(n+1)^2}{(|\aa|+1)^2}  \frac{1}{(\frac{n}{|\aa|}+1)^{2}} 
= 
\frac{|\aa|^2}{(|\aa|+1)^2}  \frac{(n+1)^2}{(n + |\aa|)^{2}} 
\leq 
1\,.
\end{align*}
In the last inequality we have used that $|\aa| \geq 1$.
This proves \eqref{e.vomit.again.3}.
\end{remark}

The following estimate is standard for $m=\infty$, as it give the radius of analyticity of the composition of two real-analytic functions. We recall it here for the sake of completeness, and note trivially that it holds for finite $m$.

\begin{proposition}[\bf Composition estimate]
\label{prop:compose:analytic}
Let $h \in L^\infty( C^m(\RR^d))$, $\gb \in L^\infty( C^m(\RR^d))^d$, and assume that there exist positive constants $C_h, C_g, R_h, R_g \in (0,\infty)$ such that 
\begin{equation*}
\snorm{h}_{n,R_h} \leq C_h
\qquad \mbox{and} \qquad 
\snorm{\gb}_{n,R_g} \leq C_g
\end{equation*}
for all $0 \leq n\leq m$ and respectively $1\leq n \leq m$. Then,   for every  $0 \leq n\leq m$, we have 
\begin{align}
\snorm{h \circ \gb }_{n,R} \leq C_h \, 
\qquad \mbox{where} \qquad 
R
= R_g (1 + d C_g R_h)\,.
\label{e.barf.3}
\end{align}
\end{proposition}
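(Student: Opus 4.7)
The case $n=0$ is immediate because $\snorm{h\circ \gb}_{0,R} = \|h\circ \gb\|_{L^\infty} \leq \|h\|_{L^\infty} = \snorm{h}_{0,R_h}\leq C_h$ for any $R>0$. For $n\geq 1$, my plan is to insert the pointwise bounds implied by the hypotheses into the Fa\`a di Bruno formula (Proposition~\ref{prop:multi:Faa}), use the weight inequality~\eqref{e.vomit.again.3} to eliminate the polynomial-in-$n$ correction factors built into $\snorm{\cdot}_{n,R}$, and then invoke Lemma~\ref{lem:contract:Faa:2} to evaluate the resulting combinatorial sum in closed form.

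Concretely, for a fixed multi-index $\bb$ with $|\bb|=n$, the definition~\eqref{e.barf} gives $|\partial^\aa h|\leq C_h |\aa|! R_h^{|\aa|}/(|\aa|+1)^2$ and, componentwise, $|(\partial^{\lll_j}\gb)^{\kk_j}|\leq (C_g |\lll_j|!R_g^{|\lll_j|}/(|\lll_j|+1)^2)^{|\kk_j|}$. Plugging these into Proposition~\ref{prop:multi:Faa} and using the defining constraints $\sum_j |\kk_j|=|\aa|$ and $\sum_j |\kk_j||\lll_j|=n$ of the partition set $p_s(\bb,\aa)$, I extract a common factor $C_g^{|\aa|} R_g^n$ from the product over $j$. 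Multiplying by $(n+1)^2/(n! R^n)$ as required by the target norm and rearranging, this yields
\begin{equation*}
\snorm{h\circ \gb}_{n,R} \leq C_h\Bigl(\tfrac{R_g}{R}\Bigr)^{\!n}\tfrac{1}{n!}\sup_{|\bb|=n}\bb!\!\!\sum_{1\leq|\aa|\leq n}\!(R_h C_g)^{|\aa|}|\aa|!\!\sum_{s=1}^{n}\sum_{p_s(\bb,\aa)}\!W_{\aa,\lll,\kk}\prod_{j=1}^{s}\frac{(|\lll_j|!)^{|\kk_j|}}{(\kk_j!)(\lll_j!)^{|\kk_j|}},
\end{equation*}
where $W_{\aa,\lll,\kk} := (n+1)^2/\bigl[(|\aa|+1)^2\prod_j(|\lll_j|+1)^{2|\kk_j|}\bigr]$. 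Inequality~\eqref{e.vomit.again.3} asserts exactly that $W_{\aa,\lll,\kk}\leq 1$ uniformly over the partition set.

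Once the weight $W_{\aa,\lll,\kk}$ is absorbed, the remaining combinatorial sum is precisely the expression evaluated by Lemma~\ref{lem:contract:Faa:2} with parameter $R_h C_g$ in place of $R$, whose closed form is $dR_h C_g(1+dR_hC_g)^{n-1}n!$; since this quantity depends only on $n$ and not on $\bb$, the supremum over $\bb$ is superfluous. Combining the factors gives
\begin{equation*}
\snorm{h\circ \gb}_{n,R}\leq C_h\cdot\frac{dR_hC_g}{1+dR_hC_g}\cdot\Bigl(\frac{R_g(1+dR_hC_g)}{R}\Bigr)^{\!n},
\end{equation*}
and the stated choice $R = R_g(1+dC_gR_h)$ makes the last ratio equal to one, delivering $\snorm{h\circ \gb}_{n,R}\leq C_h$ as required. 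The only delicate point is the precise matching, via~\eqref{e.vomit.again.3}, between the shift weight $(n+1)^2$ built into the target norm and the weights $(|\aa|+1)^{-2}$, $(|\lll_j|+1)^{-2|\kk_j|}$ coming from the hypotheses on $h$ and $\gb$; without this matching, Lemma~\ref{lem:contract:Faa:2} would still resum the combinatorial piece but would leave an unabsorbable polynomial-in-$n$ loss, forcing a larger analyticity radius than the sharp $R = R_g(1+dC_gR_h)$.
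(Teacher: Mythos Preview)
Your proof is correct and follows essentially the same approach as the paper: apply the multivariate Fa\`a di Bruno formula (Proposition~\ref{prop:multi:Faa}), insert the hypothesis bounds, use the weight inequality~\eqref{e.vomit.again.3} to absorb the $(n+1)^2$ shift factors, and invoke the resummation identity of Lemma~\ref{lem:contract:Faa:2} with parameter $C_gR_h$ to evaluate the combinatorial sum. Your presentation makes the role of~\eqref{e.vomit.again.3} more explicit than the paper does, but the argument is the same.
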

\begin{proof}[Proof of Proposition~\ref{prop:compose:analytic}]
Let $f( \xx) = h (\gb(\xx)) \in L^\infty(C^m(\RR^d))$. 
For $n=0$ the statement \eqref{e.barf.3} holds trivially. Let $n\geq 1$ and fix $\bb \in \NN_0^d$ with $\abs{\bb} = n$. From  Proposition~\ref{prop:multi:Faa}, the assumed bounds on $h$ and $\gb$,   the re-summation formula in Lemma~\ref{lem:contract:Faa:2}, and the bound \eqref{e.vomit.again.3}, we deduce that 
\begin{align}
\norm{\partial^{\bb} f}_{L^\infty} 
&\leq \bb! \sum_{1\leq |\aa|\leq n}  C_h R_h^{\abs{\aa}} \frac{\abs{\aa}!}{(\abs{\aa}+1)^2} \sum_{s=1}^n \sum_{p_s(\bb,\aa)} \prod_{j=1}^s \frac{\left( C_g R_g^{\abs{\lll_j}} \frac{\abs{\lll_j}!}{(\abs{\lll_j}+1)^2}  \right)^{\abs{\kk_j}}}{(\kk_j !) (\lll_j !)^{|\kk_j|}}
\notag\\
&\leq C_h  R_g^n \frac{\bb!}{(\abs{\bb}+1)^2} \sum_{1\leq |\aa|\leq n}   (C_g R_h)^{\abs{\aa}} \abs{\aa}! \sum_{s=1}^n \sum_{p_s(\bb,\aa)} \prod_{j=1}^s \frac{\left(  \abs{\lll_j}!  \right)^{\abs{\kk_j}}}{(\kk_j !) (\lll_j !)^{|\kk_j|}}
\notag\\
&= C_h  R_g^n (d C_g R_h) (1+ d C_g R_h)^{n-1} \frac{n!}{(n+1)^2}
\,.
\label{e.vomit.again}
\end{align}
Re-arranging the right side of the above, and recalling \eqref{eq.barf}, finishes the proof. 
\end{proof}

\subsection{Application to the transport equation}

For given smooth functions $\f,\g\colon \R \times \Rd \to \Rd$ we first consider the solution $Y$ 
of
\begin{equation}
\label{e.transport}
\left\{
\begin{aligned}
& \bigl( \partial_t + \f \cdot \nabla \bigr) Y = \g & \mbox{in} & \ \R \times \Rd, \\
& Y(0,\cdot) = 0  & \mbox{on} & \ \Rd\,.
\end{aligned}
\right.
\end{equation}

\begin{lemma} 
\label{l.transport.reg:shift}
Assume there exist~$C_{\f}, R_{\f},C_{\g},R_{\g}> 0$ with $R_{\g} \geq R_{\f}$ and $N\in\N$ such that 
\begin{align}
\label{e.bear.salmon.1}
\max_{1\leq n\leq N} \
\sup_{t\in \R} \
\snorm{\f(t,\cdot)}_{n,R_{\f}} \leq C_{\f}\,,
\qquad \mbox{and} \qquad  
\max_{1\leq n\leq N} \
\sup_{t\in \R} \ 
\snorm{\g(t,\cdot)}_{n,R_{\g}} \leq C_{\g}\,,
\end{align}
Then, the solution $Y$ of the transport equation \eqref{e.transport} satisfies
\begin{align}
\label{e.vomit.1}
\max_{1\leq n\leq N} \ 
\sup_{t  \in [-T,T]} \ 
\frac{1}{|t|} \snorm{Y(t,\cdot)}_{n,R_{Y}(t)}    \leq 8 d C_{\g} \,,
\end{align}
where 
\begin{equation}
R_Y(t) := R_{\g} + 4  |t| d C_{\f} R_{\f}^2 \leq R_{\g} (1+ 4  |t|   d C_{\f} R_{\f}) 
\qquad \mbox{and} \qquad 
T := \frac{1}{4 d C_{\f} R_{\f} } \,.
\label{e.bear.salmon.2}
\end{equation}
\end{lemma}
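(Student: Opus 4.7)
\medskip

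\noindent\textbf{Proof proposal for Lemma B.5.} My plan is to argue by induction on the derivative order $n$, using the method of characteristics for the transport operator $\partial_t + \f \cdot \nabla$ combined with careful bookkeeping via the seminorm $\snorm{\cdot}_{n,R}$ and the product estimate of Lemma~\ref{l.product}. The base case $n=1$ will follow from the same argument applied directly. Throughout, I will let $\Phi_s(x)$ denote the flow of $\f$ starting at $x$ at time $0$, so that the PDE \eqref{e.transport} along characteristics reads $\frac{d}{ds} Y(s,\Phi_s(x)) = \g(s,\Phi_s(x))$.

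For any multi-index $\bb$ with $|\bb|=n$, applying $\partial^{\bb}$ to \eqref{e.transport} and using the Leibniz rule yields
\begin{equation*}
(\partial_t + \f\cdot\nabla)\partial^{\bb} Y = \partial^{\bb}\g - \sum_{0 < \bb' \leq \bb}\binom{\bb}{\bb'}\partial^{\bb'}\f \cdot \nabla \partial^{\bb-\bb'} Y
\,.
\end{equation*}
Integrating along characteristics of $\f$ and taking the spatial $L^{\infty}$ norm gives, for all $|t|\leq T$,
\begin{equation*}
\|\partial^{\bb} Y(t,\cdot)\|_{L^{\infty}} \leq \int_0^{|t|} \|\partial^{\bb}\g(s,\cdot)\|_{L^{\infty}} ds + \sum_{0<\bb'\leq \bb}\binom{\bb}{\bb'}\int_0^{|t|}\|\partial^{\bb'}\f(s,\cdot)\|_{L^{\infty}}\|\nabla\partial^{\bb-\bb'}Y(s,\cdot)\|_{L^{\infty}} ds.
\end{equation*}
Notice that $\nabla\partial^{\bb-\bb'}Y$ has total derivative order $n-|\bb'|+1$, which is strictly less than $n$ when $|\bb'|\geq 2$ but equals $n$ when $|\bb'|=1$. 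This dichotomy is the main technical obstacle, and it dictates the structure of the induction.

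The plan is then to introduce the monotone quantity
\begin{equation*}
M_n(t):=\sup_{|s|\leq |t|}\snorm{Y(s,\cdot)}_{n, R_Y(s)}
\end{equation*}
and prove $M_n(T)\leq 8d C_{\g}T$ by induction on $n\in\{1,\ldots,N\}$. Assuming the bound at orders $1,\ldots,n-1$, I multiply the integral inequality above by $\frac{(n+1)^2}{n!R_Y(t)^n}$ and use $R_\g\leq R_Y(t)$ to convert every factor into the $\snorm{\cdot}_{\cdot,R_Y(t)}$ scale. The terms with $|\bb'|\geq 2$ are then controlled by the inductive hypothesis together with Lemma~\ref{l.product} (its proof shows how $\sum_{|\bb'|=k,\bb'\leq\bb}\binom{\bb}{\bb'}$ combines cleanly with the shift factors $(j+1)^{-2}$), and the geometric series in the ratio $R_\f/R_Y(t)\leq 1$ sums to a constant, contributing a term of size $O(dC_{\g}C_{\f}R_{\f}|t|^2)$. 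The main obstacle, namely the $|\bb'|=1$ terms, produces a contribution bounded by $|t|\cdot\tfrac{n C_{\f}R_{\f}}{4}\cdot M_n(t)$, which is a self-coupling; here the restriction $|t|\leq T=(4dC_{\f}R_{\f})^{-1}$ ensures that its coefficient is strictly less than $\tfrac12$, so it can be absorbed into the left-hand side, at the cost of doubling the constant (this is precisely the origin of the factor $8$ instead of $4$).

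Putting everything together, I expect to obtain the recursion
\begin{equation*}
M_n(t) \leq 2C_{\g}|t| + \tfrac{1}{2}M_n(t) + O\bigl(dC_{\g}C_{\f}R_{\f}|t|^2\bigr)\cdot \max_{1\leq j\leq n-1}(M_j(t)/(|t|C_{\g})),
\end{equation*}
whence $M_n(t)\leq 8d C_{\g}|t|$ for $|t|\leq T$ follows by induction, provided the linear-in-$t$ growth of $R_Y(t)$ with slope $4dC_{\f}R_{\f}^2$ is large enough to dominate the expansion of the analyticity radius incurred at each derivative. The expansion rate $R_Y(t)=R_{\g}+4|t|dC_{\f}R_{\f}^2$ is precisely the minimal one that makes this dominance work, which explains the specific form of \eqref{e.bear.salmon.2}. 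The main technical point to verify carefully will be the combinatorial identity relating $\binom{\bb}{\bb'}\cdot \frac{|\bb'|!\,(n-|\bb'|+1)!}{(|\bb'|+1)^2(n-|\bb'|+2)^2}$ summed over $\bb'\leq \bb$ to a manageable constant times $\frac{n!}{(n+1)^2}$, but this is exactly the type of sum controlled by \eqref{eq.barf.1} and the telescoping used in the proof of Lemma~\ref{l.product}.
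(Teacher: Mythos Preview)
Your overall architecture matches the paper's: differentiate the transport equation, integrate along characteristics, split off the top-order ($|\bb'|=1$) self-coupling terms, and close by induction on $n$. However, there is a genuine gap in how you handle the self-coupling term.

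You claim that the $|\bb'|=1$ contribution is bounded by $|t|\cdot\tfrac{n C_{\f}R_{\f}}{4}\cdot M_n(t)$ and that the restriction $|t|\leq T=(4dC_{\f}R_{\f})^{-1}$ makes this coefficient smaller than $\tfrac12$. But with $|t|\leq T$ the coefficient is $\tfrac{n}{16d}$, which exceeds $\tfrac12$ as soon as $n>8d$, so the absorption fails for large $n$. The issue is that you are implicitly using the crude bound $\int_0^t\snorm{Y(s,\cdot)}_{n,R_Y(s)}\,ds\leq |t|\,M_n(t)$, which throws away the time-dependence of $R_Y(s)$.

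The paper's fix is precisely to exploit the \emph{growth} of $R_Y(s)$ inside the time integral. After converting $\|\partial^{\aa}Y(s,\cdot)\|_{L^\infty}$ back to $\snorm{Y(s,\cdot)}_{n,R_Y(s)}$ one picks up a factor of $R_Y(s)^n$ in the integrand, and the identity $R_Y'(s)=4dC_{\f}R_{\f}^2$ gives
\[
\int_0^t s\,R_Y(s)^n\,ds \;\leq\; \frac{t\,R_Y(t)^{n+1}}{4d(n+1)C_{\f}R_{\f}^2}\,.
\]
The factor $\tfrac{1}{n+1}$ produced here exactly cancels the factor of $n$ coming from the number of $|\bb'|=1$ terms, yielding an $n$-independent self-coupling coefficient (of order $\tfrac{1}{16}$) that can be absorbed uniformly in $n$. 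Your proposal correctly identifies that the linear growth rate of $R_Y$ is tuned to the problem, but you use that growth only on the left-hand side (to accommodate the radius) rather than on the right-hand side (to manufacture the $\tfrac{1}{n+1}$). Incorporating this integral trick into your argument would close the gap; without it the induction does not go through.
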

\begin{proof}[{Proof of Lemma~\ref{l.transport.reg:shift}}]
For $n=1$, upon differentiating \eqref{e.transport} we obtain
\begin{equation*}
(\partial_t + \f \cdot\nabla) \nabla Y = \nabla \g + \nabla \f \cdot \nabla Y
\,, \qquad 
\nabla Y(0,\cdot) = 0\,. 
\end{equation*}
Integrating this expression and appealing to \eqref{e.bear.salmon.1}, we obtain
\begin{equation*}
\snorm{Y(t,\cdot)}_{1,R_Y} \leq 4 R_Y^{-1} \norm{\nabla Y(t,\cdot)}_{L^\infty_x} \leq \frac{d C_{\g} R_\g  |t| e^{\frac 14 d T R_\f C_\f }}{R_Y} \leq 2 d C_\g
\end{equation*}
so that \eqref{e.vomit.1} holds when $n=1$.

\smallskip
We prove \eqref{e.vomit.1} inductively on $n$, and without loss of generality, we only prove it for $t\in(0,T]$. Let $|\aa|=n$.  Applying $\partial^\aa$ to both sides of \eqref{e.transport} yields:
\begin{align}
\label{e.transport.aa}
\left\{
\begin{aligned}
& \bigl( \partial_t  + \f \cdot \nabla \bigr)\partial^\aa Y 
= \partial^\aa\g 
+\sum_{\bb<\aa}
\binom{\aa}{\bb}\partial^{\aa-\bb}\f \cdot \nabla \partial^\bb Y
& \mbox{in} & \ \R \times \Rd, \\
& \partial^\aa Y(0,\cdot) = 0 & \mbox{on} & \ \Rd\,.
\end{aligned}
\right. 
\end{align}
Integrating \eqref{e.transport.aa},   using   assumption \eqref{e.bear.salmon.1}  and the inductive assumption \eqref{e.vomit.1}, we get
\begin{align*}
\sup_{|\aa| = n }\norm{\partial^\aa Y(t,\cdot)}_{L^\infty_x} 
&\leq t \sup_{|\aa| = n} \norm{\partial^\aa \g}_{L^\infty_{t,x}} 
  +  C_{\f}    \sup_{|\aa| = n } \sum_{|\bb| = n-1, \bb <\aa}  \sum_{\ell = 1}^d  \binom{\aa}{\bb}  \frac{R_{\f}}{2^2}  \int_0^t  \norm{\partial^{\bb + e_\ell} Y(s,\cdot)}_{L^\infty_x}  ds
\notag\\
&\quad   +16 d^2  C_{\f} C_{\g}  \sum_{k = 0}^{n-2} \sup_{|\aa| = n } \sum_{|\bb| = k, \bb <\aa}  \binom{\aa}{\bb}  \frac{R_{\f}^{n-k} (n-k)!}{(n-k+1)^2} \frac{(k+1)!}{(k+2)^2} \int_0^t s R_Y(s)^{k+1} ds 
\notag\\
&\leq t C_{\g}  R_{\g}^n \frac{n!}{(n+1)^2} 
+ \frac{d C_{\f}  R_{\f} n }{4}  \int_0^t \sup_{|\aa| = n }\norm{\partial^\aa Y(s,\cdot)}_{L^\infty_x} ds \notag\\
&\quad   +  16 d^2  C_{\f} C_{\g}  \frac{n!}{(n+1)^2} \sum_{k = 0}^{n-2}  \binom{n}{k}  \frac{R_{\f}^{n-k} (n+1)^2 (k+1)  }{ (n-k+1)^2(k+2)^{2}}   \int_0^t s R_Y(s)^{k+1} ds
\,.
\end{align*}
In the second inequality above we have   appealed to identity \eqref{eq.barf.1}. 
At this point we note that due to the definition of $R_Y(s)$ in \eqref{e.bear.salmon.2}, we have
\begin{align}
\int_0^t s R_Y(s)^j ds \leq \frac{t R_Y(t)^{j+1}}{4 d  (j+1) C_{\f} R_{\f}^2 } 
\label{eq.vomit.2}
\,.
\end{align}
It follows from the above two estimates, the   definition \eqref{eq.barf}, and the fact that by \eqref{e.bear.salmon.2} we have $R_Y(t) \geq R_{\g} \geq R_{\f}$, we obtain
\begin{align*}
\frac{\snorm{Y(\cdot,t)}_{n,R_Y(t)}}{t} 
&\leq \frac{C_{\g} R_{\g}^n}{R_Y(t)^n} + \frac{d C_{\f} R_{\f} n}{4 t R_Y(t)^n}  \int_0^t s R_Y(s)^{n} ds \sup_{s\in[0,t]}\frac{\snorm{Y(\cdot,s)}_{n,R_Y(s)}}{s} 
\notag\\
&\qquad +  \frac{16 d^2  C_{\f} C_{\g} }{t R_Y(t)^n} \sum_{k = 0}^{n-2}    \frac{R_{\f}^{n-k}(n+1)^2 (k+1) }{(n-k+1)^2(k+2)^{2}} \int_0^t s R_{Y}(s)^{k+1} ds
\notag\\
&\leq  C_{\g}  + \frac{1}{16}  \sup_{s\in[0,t]}\frac{\snorm{Y(\cdot,s)}_{n,R_Y(s)}}{s}  + 4 C_{\g} d \sum_{k = 0}^{n-2}    \frac{ (n+1)^2 }{(n-k+1)^2(k+2)^{2}}   
\,.
\end{align*}
Using the estimate $\sum_{k=0}^{n-2}   (n+1)^2 (k+1)^{-2}(n-k+1)^{-2} \leq 3/2$, and taking the supremum over $t\in [0,T]$, we obtain that 
\begin{equation*}
\sup_{t\in (0,T]} t^{-1} \snorm{Y(\cdot,t)}_{n,R_Y(t)}   \leq \frac{16}{15} \left( C_{\g} + 6 d C_{\g} \right) \leq 8  dC_{\g}\,.
\end{equation*}
The above estimate shows that \eqref{e.vomit.1} holds at level $n$, closing the inductive step.
\end{proof}

\begin{corollary}
\label{cor.transport.reg:shift} 
Under the assumptions of Lemma~\ref{l.transport.reg:shift}, for all $|t|\leq T$ and $0\leq n \leq N-1$, we have that 
\begin{equation}
\label{e.vomit.2} 
\snorm{\nabla Y(\cdot,t)}_{n,R_{\nabla Y}(t)} \leq \frac{4 d C_{\g} R_{\g}}{C_{\f} R_{\f}}
\end{equation}
where
\begin{equation}
\label{e.vomit.3} 
R_{\nabla Y}(t) = R_{\g} (1 + 4 |t| d C_{\f} R_{\f})^2 = R_Y(t) (1 + 4 |t| d C_{\f} R_{\f}) \,. 
\end{equation}
\end{corollary}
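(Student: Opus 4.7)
The plan is to reduce the desired bound to a direct application of Lemma~\ref{l.transport.reg:shift} at derivative index $n+1$, by matching the norm $\snorm{\nabla Y}_{n,R_{\nabla Y}(t)}$ to $\snorm{Y}_{n+1,R_Y(t)}$ and exploiting the slightly enlarged analyticity radius $R_{\nabla Y}(t) = R_Y(t)(1 + 4|t|dC_\f R_\f)$.

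First, I would unpack the defining weights in \eqref{e.barf}: for any $\aa$ with $|\aa|=n$, the derivative $\partial^\aa \nabla Y$ is (up to taking a maximum over components) a derivative of $Y$ of order $n+1$, so
\[
\snorm{\nabla Y(t,\cdot)}_{n,R_{\nabla Y}(t)} \;=\; \frac{(n+1)^3}{(n+2)^2}\, R_Y(t)\,\Bigl(\tfrac{R_Y(t)}{R_{\nabla Y}(t)}\Bigr)^{n} \snorm{Y(t,\cdot)}_{n+1,R_Y(t)}.
\]
Next, applying Lemma~\ref{l.transport.reg:shift} at derivative index $n+1$ (valid for $0\le n\le N-1$) gives $\snorm{Y(t,\cdot)}_{n+1,R_Y(t)}\le 8dC_\g|t|$. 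Writing $\lambda:=4|t|dC_\f R_\f\in[0,1]$, the identities $R_Y(t) = R_\g(1+\lambda)$ and $R_{\nabla Y}(t)/R_Y(t) = 1+\lambda$ together with $|t| = \lambda/(4dC_\f R_\f)$ allow me to rewrite the bound as
\[
\snorm{\nabla Y(t,\cdot)}_{n,R_{\nabla Y}(t)} \;\le\; \frac{2\,C_\g R_\g}{C_\f R_\f}\cdot \frac{\lambda\,(n+1)^3}{(n+2)^2\,(1+\lambda)^{n-1}}.
\]

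It then remains to check the purely arithmetic claim that $h(\lambda,n):=\frac{\lambda(n+1)^3}{(n+2)^2(1+\lambda)^{n-1}}\le 2d$ uniformly in $\lambda\in[0,1]$ and $n\ge 0$. This is immediate for $n=0$ (giving $\le\tfrac12$) and $n=1$ (giving $\le\tfrac{8}{9}$). For $n\ge 2$, Bernoulli's inequality $(1+\lambda)^{n-1}\ge 1+(n-1)\lambda\ge (n-1)\lambda$ yields $h(\lambda,n)\le \frac{(n+1)^3}{(n-1)(n+2)^2}$, which peaks at $n=2$ with value $\tfrac{27}{16}<2\le 2d$, and decreases to $1$ as $n\to\infty$. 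Combining with the previous display gives exactly $\snorm{\nabla Y(t,\cdot)}_{n,R_{\nabla Y}(t)}\le \tfrac{4dC_\g R_\g}{C_\f R_\f}$.

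There is no real obstacle here: all the work was done inside Lemma~\ref{l.transport.reg:shift}. The only delicate point is that the enlargement factor $(1+\lambda)^n$ in $R_{\nabla Y}(t)$ must absorb the polynomial factor $(n+1)^3/(n+2)^2$ produced by passing one derivative onto $Y$, and this absorption only works once one notices that the $|t|$-factor in the Lemma's conclusion contributes a $\lambda/(1+\lambda)^{n-1}$, keeping the product uniformly bounded even when $\lambda$ is small (where $|t|$ is small) or $n$ is large (where $(1+\lambda)^{n-1}$ beats the polynomial).
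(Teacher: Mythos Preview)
Your proposal is correct and follows essentially the same approach as the paper: relate $\snorm{\nabla Y}_{n,R_{\nabla Y}}$ to $\snorm{Y}_{n+1,R_Y}$ via the definition of the seminorm, invoke the bound from Lemma~\ref{l.transport.reg:shift} at level $n+1$, and then verify a purely arithmetic inequality. The only cosmetic differences are that the paper carries an extra factor of $d$ (from its matrix-norm convention), simplifies $\frac{(n+1)^3}{(n+2)^2}\le n$ up front, and closes with the single inequality $n\lambda\le (1+\lambda)^n$, whereas you keep the full expression and bound $h(\lambda,n)$ by a short case analysis using Bernoulli; also, your first display should strictly be ``$\le$'' rather than ``$=$'' (and note $R_Y(t)\le R_\g(1+\lambda)$ rather than equality, per \eqref{e.bear.salmon.2}), but this does not affect the argument.
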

\begin{proof}[Proof of Corollary~\ref{cor.transport.reg:shift}]
Using the definition~\eqref{eq.barf}, the bound \eqref{e.vomit.1}, and the definitions of $R_Y(t)$ in \eqref{e.bear.salmon.2}, and $R_{\nabla Y}(t)$ in \eqref{e.vomit.3}, we obtain
\begin{align*}
\snorm{\nabla Y(\cdot,t)}_{n,R_{\nabla Y}(t)}
&\leq \frac{d (n+1)^2}{n! R_{\nabla Y}(t)^n} \frac{(n+1)! R_Y(t)^{n+1}}{(n+2)^2} \snorm{Y(\cdot,t)}_{n+1,R_{Y}(t)}
\notag\\
&\leq  \frac{d n R_Y(t)^{n+1}}{R_{\nabla Y}(t)^n}   8 t d C_{\g}
\leq  \frac{16 d^2 t n C_{\g} R_{\g}}{(1+ 4 t d C_{\f} R_{\f})^n} 
\leq   \frac{4 d C_{\g} R_{\g}}{C_{\f} R_{\f}} \,.
\end{align*}
In the last inequality we have used that $n a \leq (1+a)^n$ for any $a\geq 0$. 
\end{proof}

\begin{lemma}
\label{l.transport.time.analyticity}   
If in addition to \eqref{e.bear.salmon.1} we assume that 
\begin{align}
\snorm{\partial_t^m \f}_{n,R_{\f}} \leq C_{\f}  Q_{\f}^m m! \,,
\qquad \mbox{and} \qquad  
\snorm{\partial_t^m \g}_{n,R_{\g}} \leq C_{\g}  Q_{\g}^m m! \,,
\label{eq:f:assumption:shift:dt}
\end{align}
for all $0 \leq n \leq N$ and $0\leq m \leq M$, for some constants  $Q_\g, Q_\f >0$, then the solution $Y$ of the transport equation \eqref{e.transport} satisfies the space-time derivative bounds
\begin{align}
 \sup_{t\in[-T,T]} \snorm{\partial_t^m Y(t,\cdot)}_{n,R_Y(t)} \leq \frac{2  C_\g}{C_\f R_\f} \frac{(n+m)!m! }{n!} Q^{m}\,,
 \label{e.mad.1}
 \end{align}
for all $m\leq M+1$ and $n+m \leq N$,  where $R_Y(t)$ and $T$ are as defined in \eqref{e.bear.salmon.2}, and  
\begin{align}
Q = \max\left\{ Q_\f, Q_\g,  C_\f R_\f, 16 d C_\f R_\g \right\}
\,.
 \label{e.mad.2}
\end{align}
\end{lemma}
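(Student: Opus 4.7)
I would prove~\eqref{e.mad.1} by induction on $m$, exploiting the PDE~\eqref{e.transport} to convert each extra time derivative into a spatial transport step. The base case $m=0$ is immediate: for $1\leq n\leq N$ it is exactly~\eqref{e.vomit.1} evaluated at any $t\in[-T,T]$, since $8dC_\g T = 2C_\g/(C_\f R_\f)$ by the definition of $T$ in~\eqref{e.bear.salmon.2}; for $n=0$, the method of characteristics along the flow of $\f$ gives $\|Y(t,\cdot)\|_{L^\infty}\leq |t|\, C_\g\leq TC_\g\leq 2C_\g/(C_\f R_\f)$.

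For the inductive step, fix $m\geq 1$ with $m\leq M+1$, assume the bound holds for all $m'<m$ and $n'+m'\leq N$, and let $n+m\leq N$. Differentiating $\partial_t Y = \g - \f\cdot\nabla Y$ by $\partial_t^{m-1}$ via the Leibniz rule gives
\begin{equation*}
\partial_t^m Y \;=\; \partial_t^{m-1}\g \;-\; \sum_{j=0}^{m-1}\binom{m-1}{j}\,\partial_t^j\f\cdot\nabla\partial_t^{m-1-j}Y\,.
\end{equation*}
Taking $\snorm{\cdot}_{n,R_Y(t)}$ of both sides, the first term is bounded by $\snorm{\partial_t^{m-1}\g}_{n,R_\g}\leq C_\g Q_\g^{m-1}(m-1)!$, using~\eqref{eq:f:assumption:shift:dt} and monotonicity of $\snorm{\cdot}_{n,R}$ in $R$ (since $R_Y(t)\geq R_\g$). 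For each term in the sum, the product estimate (Lemma~\ref{l.product}, summed over the $d$ coordinate directions) combined with the elementary identity $\snorm{\nabla u}_{k,R}\leq (k+1)R\snorm{u}_{k+1,R}$ (a direct consequence of~\eqref{e.barf}) and the inductive hypothesis applied at $m'=m-1-j$ and $n'=k+1\leq n+1$ controls the factor $\max_{0\leq k\leq n}\snorm{\nabla\partial_t^{m-1-j}Y}_{k,R_Y(t)}$. Since $(k+m-j)!/k!$ is increasing in $k$, the maximum occurs at $k=n$, yielding the bound $\tfrac{2C_\g R_Y(t)}{C_\f R_\f}\tfrac{(n+m-j)!(m-1-j)!}{n!}Q^{m-1-j}$.

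Plugging this into the sum and using $\binom{m-1}{j}\,j!\,(m-1-j)!=(m-1)!$, $Q_\f\leq Q$, and $(n+m-j)!\leq(n+m)!$, the second-term contribution is bounded by $\tfrac{8dC_\g R_Y(t)\,m!(n+m)!Q^{m-1}}{R_\f\,n!}$. To match the target $\tfrac{2C_\g}{C_\f R_\f}\tfrac{(n+m)!m!}{n!}Q^m$, I need $Q\geq C_\f R_\f$ to absorb the first-term piece (after also using $Q\geq Q_\g$) and $Q\geq 8dC_\f R_Y(t)$ for the second-term piece. Because $R_Y(t)\leq R_Y(T)=2R_\g$ on $[-T,T]$ (read off from~\eqref{e.bear.salmon.2} with $T=1/(4dC_\f R_\f)$), the latter reduces to $Q\geq 16dC_\f R_\g$, and all the required lower bounds on $Q$ are exactly the entries in the definition~\eqref{e.mad.2}.

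\textbf{Main obstacle.} The argument is essentially bookkeeping, and I do not anticipate a genuine obstruction. The only delicate point is ensuring that each loss in the induction is absorbed by exactly one of the four factors in~\eqref{e.mad.2}: time derivatives of $\f,\g$ are paid for by $Q_\f,Q_\g$, while the single transport step is paid for by $C_\f R_\f$ (first term) and by $16dC_\f R_\g$ (second term, via the enlarged radius $R_Y(t)\leq 2R_\g$). The last of these crucially exploits the sharp bound on $R_Y(t)$ built into Lemma~\ref{l.transport.reg:shift}; without it, iterating the spatial enlargement would break the definition of $Q$.
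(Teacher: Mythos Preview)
Your proposal is correct and follows essentially the same approach as the paper: an induction on $m$ based on the identity obtained by applying $\partial_t^{m-1}$ to $\partial_t Y=\g-\f\cdot\nabla Y$, with the base case read off from~\eqref{e.vomit.1} and $T=1/(4dC_\f R_\f)$, and the closure achieved precisely via the four entries of~\eqref{e.mad.2}. The only cosmetic difference is that the paper expands the space--time Leibniz rule directly (tracking the $(n+1)^2/((n-k+1)^2(k+2)^2)$ weights) whereas you invoke Lemma~\ref{l.product} as a black box; both routes lead to the same conditions $Q\geq C_\f R_\f$ and $Q\geq 16dC_\f R_\g$ (using $R_Y(t)\leq R_\g+R_\f\leq 2R_\g$ on $[-T,T]$).
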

\begin{proof}[Proof of Lemma~\ref{l.transport.time.analyticity}]
The bound \eqref{e.mad.1} is already known to hold when $m=0$ due to \eqref{e.vomit.1} and the definition of $T$ in \eqref{e.bear.salmon.2}. We next proceed inductively, with respect to $m$.

\smallskip

Applying $\partial_t^m \partial^{\aa}$ to \eqref{e.transport}, with $|\aa| = n$, we obtain that 
\begin{equation*}
\partial_t^{m+1} \partial^{\aa} Y 
= \partial_t^m \partial^{\aa} \g 
- \sum_{j=0}^m  \sum_{k=0}^n \sum_{|\bb| = k, \bb \leq \aa} \binom{m}{j} \binom{\aa}{\bb} \partial_t^{m-j} \partial^{\aa-\bb} \f \cdot \nabla \partial_t^j \partial^{\bb} Y
\,.
\end{equation*}
Taking the supremum over $|\aa| = n$, using the definition \eqref{eq.barf} and the identity \eqref{eq.barf.1}, for any $R>0$ we deduce
\begin{align*}
\snorm{\partial_t^{m+1} Y}_{n,R} 
&\leq \snorm{\partial_t^{m} \g}_{n,R} 
+ \frac{(n+1)^2}{n! R^n} 
\sum_{j=0}^m  \sum_{k=0}^n \binom{m}{j} \binom{n}{k} 
\frac{(n-k)! R^{n-k}}{(n-k+1)^2} \snorm{\partial_t^{m-j} \f}_{n-k,R} 
\notag\\
&\qquad \qquad \qquad \qquad \qquad \times
\frac{d (k+1)! R^{k+1}}{(k+2)^2} \snorm{\partial_t^{j} Y}_{k+1,R} 
\notag\\
&\leq \snorm{\partial_t^{m} \g}_{n,R} 
+  
\sum_{j=0}^m  \sum_{k=0}^n \binom{m}{j}  
\frac{(n+1)^2 (k+1)d  R }{(n-k+1)^2(k+2)^2} \snorm{\partial_t^{m-j} \f}_{n-k,R}  \snorm{\partial_t^{j} Y}_{k+1,R} \,.
\end{align*}
By further appealing to \eqref{eq:f:assumption:shift:dt},   choosing $R = R_Y(t) \geq  R_{\g} \geq R_{\f}$, and using that $Q \geq Q_\g,  Q_\f$, we obtain
\begin{align*}
\frac{\snorm{\partial_t^{m+1} Y}_{n,R_Y}}{(m+1)!   Q^{m+1}}
&\leq \frac{C_{\g}  }{(m+1)Q}
+  
\sum_{j=0}^m  \sum_{k=0}^n    
\frac{d (n+1)^2 (k+1)  }{(n-k+1)^2(k+2)^2 (m+1)} \frac{C_{\f}R_{Y}}{Q }
\frac{\snorm{\partial_t^{j} Y}_{k+1,R_Y}}{j! Q^{j}} \,.
\end{align*}
Since $0\leq j \leq m$, we may appeal to \eqref{e.mad.1} inductively, and obtain
\begin{align*}
\frac{\snorm{\partial_t^{j} Y}_{k+1,R_{Y}}}{j! Q^{j}}
\leq \frac{2   C_\g (k+1+j)!}{  C_\f R_\f (k+1)!}
\,.
\end{align*}
Therefore, from the above two inequalities and the bound $R_Y(t) \leq 2 R_\g$, it follows that \eqref{e.mad.1} holds at level $m+1$, if we are able to establish the bound
\begin{align*}
 \frac{C_{\g}}{Q}
+   
\sum_{j=0}^m  \sum_{k=0}^n    
\frac{(n+1)^2  }{(n-k+1)^2(k+2)^2 (m+1)} \frac{2  d C_{\f} R_\g }{Q }
 \frac{2   C_\g (k+j+1)!}{ C_\f R_\f k!}
 \leq \frac{2   C_\g (n+m+1)!}{ C_\f R_\f n!}
 \,.
 \end{align*}
For this purpose, we first note that for $0\leq k \leq n$ and $0\leq j \leq m$, we have that  $ \frac{ (k+j+1)!}{k!} \leq \frac{(n+m+1)!}{n!}$. Second, we observe that $\sum_{j=0}^m  \sum_{k=0}^n    
\frac{(n+1)^2  }{(n-k+1)^2(k+2)^2 (m+1)} \leq 4$. Thus, it is left to verify the condition
\begin{align*}
\frac{C_\f R_\f }{Q} +  \frac{16 d C_{\f} R_\g  }{Q } \leq 2\,.
\end{align*}
In turn, this condition holds due to~\eqref{e.mad.2}. This concludes the inductive proof of \eqref{e.mad.1}.
\end{proof}

\subsection{Application to ODEs}
Suppose that  $\f\colon \R \times \Rd \to \Rd$ be sufficiently smooth, divergence-free, and let $\flow$ be the corresponding flow starting from~$x$, that is, $\flow(\cdot,x)$ satisfies
\begin{equation}
\label{e.ODE.flow}
\left\{
\begin{aligned}
& \partial_t \flow(t,x) = \f(t,\flow(t,x)) & \mbox{in} & \ (-\infty,\infty) \times \Rd, \\
& \flow(0,x) = x \, ,
\end{aligned}
\right.
\end{equation}
and denote by $\backflow(t,\cdot)$ the inverse flow, which thus solves
\begin{equation}
\label{e.PDE.backwards}
\left\{
\begin{aligned}
& \partial_t  \backflow + \f \cdot \nabla \backflow = 0 & \mbox{in} & \ (-\infty,\infty) \times \Rd, \\
& \backflow(0,x) = x \,.
\end{aligned}
\right.
\end{equation}
In particular, 
\begin{equation}
Y(t,x) = X^{-1}(t,x) -x
\label{e.backwards.minus.identity}
\end{equation}
solves \eqref{e.transport} with $\g = - \f$.

\begin{proposition} 
\label{p.ODE.flow}
Suppose that   $\f$ is divergence-free, satisfies the bound  \eqref{e.bear.salmon.1},  and let $\flow$ be the  solution of \eqref{e.ODE.flow}.
Then, for every $t \in [-T,T]$, where $T$ is as defined in \eqref{e.bear.salmon.2}, we have that
\begin{align}
\norm{\nabla \flow(t,\cdot) - \Id}_{L^\infty_x} &\leq  \abs{t} d C_\f R_\f  \leq \tfrac{1}{4} 
\,.
\label{eq:grad:psi} 
\end{align}
Moreover, for every $0 \leq n \leq N-1$, we have that
\begin{equation}
\label{e.ODE.flow.estimate}
\sup_{t\in [-T,T]} \snorm{\nabla X(t,X^{-1}(t,\cdot))}_{n,R_\f (1+ 4  |t|  d C_\f R_\f)^2} 
\leq (d-1)! (20d)^{d-1}
\, .
\end{equation}
\end{proposition}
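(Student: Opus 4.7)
The plan is to prove the two estimates separately, relying throughout on the elementary bound $\norm{\nabla \f(t,\cdot)}_{L^\infty} \leq \tfrac14 C_\f R_\f$, which follows directly from $\snorm{\f}_{1,R_\f} \leq C_\f$ and definition~\eqref{eq.barf}.

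For \eqref{eq:grad:psi}, I would differentiate the flow equation \eqref{e.ODE.flow} in $x$ to obtain the linear matrix ODE $\partial_t (\nabla X) = (\nabla \f)(t,X) \nabla X$ with $\nabla X(0,x) = \Id$. Integrating and using the entry-wise matrix inequality $\norm{AB}_{L^\infty} \leq d \norm{A}_{L^\infty}\norm{B}_{L^\infty}$ gives
\begin{equation*}
\norm{\nabla X(t,\cdot) - \Id}_{L^\infty}
\leq
\int_0^{|t|} d \norm{\nabla \f(s,\cdot)}_{L^\infty}\bigl(1 + \norm{\nabla X(s,\cdot) - \Id}_{L^\infty}\bigr) ds \,.
\end{equation*}
Since $|t| \leq T = (4 d C_\f R_\f)^{-1}$, a short Gronwall-type absorption argument yields the claimed bound $\norm{\nabla X - \Id}_{L^\infty} \leq |t| d C_\f R_\f \leq \tfrac14$.

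For \eqref{e.ODE.flow.estimate}, the key algebraic observation is that the incompressibility of $\f$ makes $X(t,\cdot)$ volume-preserving, so $\det(\nabla X^{-1}) \equiv 1$, and hence
\begin{equation*}
\nabla X(t,X^{-1}(t,\cdot)) = \bigl(\nabla X^{-1}(t,\cdot)\bigr)^{-1} = \mathrm{adj}\bigl(\nabla X^{-1}(t,\cdot)\bigr)\,.
\end{equation*}
The backwards flow $Y:= X^{-1}-\mathrm{id}$ solves the transport equation \eqref{e.transport} with $\g = -\f$, so Corollary~\ref{cor.transport.reg:shift} applies directly with $C_\g=C_\f$, $R_\g=R_\f$, giving $\snorm{\nabla Y(t,\cdot)}_{n,R_{\nabla Y}(t)} \leq 4d$ at exactly the analyticity radius $R_{\nabla Y}(t) = R_\f(1+4|t|dC_\f R_\f)^2$ appearing in the statement. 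Since $\snorm{\Id}_{0,R}=1$ and $\snorm{\Id}_{n,R}=0$ for $n\geq 1$, every entry of $\nabla X^{-1} = \Id + \nabla Y$ satisfies $\max_{0\leq j \leq n} \snorm{(\nabla X^{-1})_{ij}}_{j,R_{\nabla Y}(t)} \leq 5d$.

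The remaining step is a combinatorial bookkeeping: each entry of $\mathrm{adj}(A)$ is a signed sum of $(d-1)!$ products of $d-1$ entries of $A$, so iterating the product estimate of Lemma~\ref{l.product} on each such product yields $\snorm{\mathrm{product}}_{n,R_{\nabla Y}(t)} \leq 4^{d-2}(5d)^{d-1}$, and summing over the $(d-1)!$ permutations gives
\begin{equation*}
\snorm{\mathrm{adj}(\nabla X^{-1})_{ij}}_{n,R_{\nabla Y}(t)}
\leq (d-1)! \cdot 4^{d-2}(5d)^{d-1}
= \tfrac14 (d-1)!(20d)^{d-1}
\leq (d-1)!(20d)^{d-1}\,,
\end{equation*}
which is \eqref{e.ODE.flow.estimate}. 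The only mild subtlety is that Lemma~\ref{l.product} requires the maximum of $\snorm{\cdot}_{j,R}$ over all $0\leq j\leq n$ rather than at a single fixed index, so the iteration must be set up to propagate this bound rather than the $\snorm{\cdot}_{n,R}$ seminorm alone; this is routine but is where the combinatorial constants $4^{d-2}$ and $(d-1)!$ enter, producing the final factor $(d-1)!(20d)^{d-1}$.
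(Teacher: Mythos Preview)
Your proposal is correct and follows essentially the same route as the paper's proof: the Gr\"onwall argument for \eqref{eq:grad:psi}, the cofactor/adjugate identity $\nabla X\circ X^{-1}=\mathrm{adj}(\nabla X^{-1})$ coming from incompressibility, the application of Corollary~\ref{cor.transport.reg:shift} with $\g=-\f$ to bound $\snorm{\nabla Y}_{n,R_{\nabla Y}(t)}\leq 4d$, and the iteration of Lemma~\ref{l.product} on the $(d-1)!$ monomials of degree $d-1$ to produce the constant $(d-1)!\,4^{d-2}(5d)^{d-1}\leq (d-1)!(20d)^{d-1}$. Your remark about needing the $\max_{0\leq j\leq n}\snorm{\cdot}_{j,R}$ bound in the product iteration is precisely the point the paper uses as well.
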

\begin{proof}[{Proof of Proposition~\ref{p.ODE.flow}}]
The bound \eqref{eq:grad:psi} follows by directly differentiating \eqref{e.ODE.flow}, which yields
\begin{align*}
\partial_t (\partial_i X_j - \delta_{ij}) = \partial_k \f^j \circ X  \left( \partial_i X^k  - \delta_{ik} \right) + \partial_i \f^j \circ X\,, 
\qquad (\partial_i X_j - \delta_{ij})(0,x) = 0\,,
\end{align*}
and applying Gr\"onwall's inequality.  

\smallskip

In order to prove \eqref{e.ODE.flow.estimate}, we recall that 
\begin{equation}
\nabla X \circ X^{-1} = (\nabla X^{-1})^{-1} 
\end{equation}
as $d\times d$ matrices. Since $\f$ is divergence free, ${\rm det}(\nabla X^{-1}) = 1$, and so $\nabla X \circ X^{-1}$ is nothing by the transpose of the cofactor matrix associated to $\nabla X^{-1} = \Id + \nabla Y$. In turn, every entry of this cofactor matrix is a sum of $(d-1)!$ many homogenous monomials of degree $d-1$ in the entries of the matrix $\Id + \nabla Y$. Since when $\g = -\f$ the Corollary~\ref{cor.transport.reg:shift}  yields $\snorm{\nabla Y(\cdot,t)}_{n,R_\f(1+4 t d C_\f R_\f)^2} \leq 4 d$, it follows that $\snorm{\nabla X^{-1}(\cdot,t)}_{n,R_\f(1+4 t d C_\f R_\f)^2} \leq 5d $ for all $0 \leq n\leq N-1$ and $d\geq 1$. Applying Lemma~\ref{l.product} to a  sum of $(d-1)!$ homogenous monomials of degree $d-1$ in terms of functions which obey this bound, it follows that 
\begin{equation*}
 \snorm{\nabla X (t ,X^{-1}(t, \cdot)) }_{n,R_\f(1+4 t d C_\f R_\f)^2} 
 \leq (d-1)! 4^{d-2} (5d)^{d-1} \leq (d-1)! (20 d)^{d-1}
\end{equation*}
for all $0 \leq n\leq N-1$.
\end{proof}

In the course of our proof, we shall also require the regularity of the flow $\nabla X$ itself, not just its behavior under differentiation when we compose with $X^{-1}$.  
  
\begin{proposition} 
\label{p.ODE.flow.2}
Suppose that   $\f$ satisfies the bound  \eqref{e.bear.salmon.1},  and let $\flow$ be the  solution of \eqref{e.ODE.flow}.
Then, for every $t \in [-T,T]$, where $T$ is as defined in \eqref{e.bear.salmon.2}, and every $1 \leq n \leq N-1$, we have  
\begin{equation}
\label{e.ODE.flow.estimate:new}
\sup_{t\in [-T,T]} \snorm{\nabla \flow(t,x)  }_{n,  8 d R_\f (1+ 8 d C_\f R_\f |t|)}
 \leq   6d
\,.
\end{equation}
\end{proposition}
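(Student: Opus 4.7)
The plan is to mimic the strategy of Lemma~\ref{l.transport.reg:shift}, but applied directly to the flow equation $\partial_t X = \f(t,X)$, $X(0,\cdot)=\Id$, rather than to a linear transport equation. Set $Y := X - \Id$, so that $Y(0,\cdot)=0$ and
\begin{equation*}
\partial_t Y^j(t,x) = \f^j(t, x + Y(t,x))\,.
\end{equation*}
Since $\snorm{\Id}_{n,R} = 0$ for all $n \geq 2$, and $\snorm{\Id}_{1,R} = 4/R$, it suffices to show that for all $1 \leq n \leq N-1$ and $|t|\leq T$,
\begin{equation*}
\snorm{\nabla Y(t,\cdot)}_{n,\, 8 d R_\f (1 + 8 d C_\f R_\f |t|)} \leq 6d - 1\,,
\end{equation*}
since $\snorm{\nabla X}_{n,R} \leq \snorm{\nabla Y}_{n,R} + \snorm{\Id}_{1,R}$ (and the $n=1$ piece contributes at most $1/d \leq 1$).

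The proof will proceed by induction on $n$. The base case $n=1$ (equivalently, control of $\|\nabla^2 X\|_\infty$) follows by differentiating the ODE for $\nabla X$ twice, namely
\begin{equation*}
\partial_t \nabla^2 X = \nabla^2 \f \circ X \cdot (\nabla X \otimes \nabla X) + \nabla \f \circ X \cdot \nabla^2 X\,,
\end{equation*}
combining the pointwise bound $\|\nabla X\|_\infty \leq 5/4$ from \eqref{eq:grad:psi} with the hypothesis $\snorm{\f}_{2,R_\f} \leq C_\f$ (which implies $\|\nabla^2 \f\|_\infty \lesssim C_\f R_\f^{-2}$), and applying Gr\"onwall.

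For the inductive step, I apply $\partial^{\bb}$ with $|\bb|=n+1$ to the ODE, use the Fa\`a di Bruno formula (Proposition~\ref{prop:multi:Faa}) on $\f\circ(\Id + Y)$, and separate out the single ``top-order'' term
\begin{equation*}
\nabla_k \f(t, X) \cdot \partial^{\bb} Y^k\,,
\end{equation*}
which will be absorbed by Gr\"onwall, from the remaining terms in which every factor $\partial^{\lll_j} Y$ has $|\lll_j| < n+1$. For the remainder, I integrate in time, take the $L^\infty$ norm, multiply by the prefactor $(n+2)^2/((n+1)!\, R_\sharp(t)^{n+1})$ with $R_\sharp(t) := 8d R_\f(1 + 8d C_\f R_\f |t|)$, and invoke (i) the shift identity \eqref{e.vomit.again.3}, (ii) the resummation Lemma~\ref{lem:contract:Faa:2} with $R$ replaced by $C_\f R_\f / R_\sharp$, and (iii) the inductive bound $\snorm{\nabla Y(s,\cdot)}_{n', R_\sharp(s)} \leq 6d-1$ for $n' < n+1$. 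The resulting time-integrated estimate then takes the schematic form
\begin{equation*}
\sup_{|t| \leq T}\snorm{\nabla Y(t,\cdot)}_{n+1, R_\sharp(t)} \leq \tfrac{1}{2}\sup_{|t| \leq T}\snorm{\nabla Y(t,\cdot)}_{n+1, R_\sharp(t)} + \bigl(\text{terms} \leq 3d - \tfrac12\bigr)\,,
\end{equation*}
after which absorbing the first term on the right closes the induction.

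The main obstacle is purely bookkeeping: one must verify that the two factors of $8d$ appearing in the definition of $R_\sharp(t)$ are large enough to simultaneously (a) beat the combinatorial factor $dC_g R_h$ produced by Lemma~\ref{lem:contract:Faa:2} (whose role is to control the Fa\`a di Bruno sum over partitions), and (b) produce an $R$-growth in time that dominates the Gr\"onwall exponent $e^{|t| \cdot O(dC_\f R_\f)}$ coming from the top-order term. Both conditions hold for $|t| \leq T = 1/(4dC_\f R_\f)$, since on this time interval $R_\sharp(t) \leq 24 d R_\f$, so the quantity $dC_f R_f/R_\sharp(t)$ appearing in the resummation is $\leq 1/(8)$, small enough to make the geometric sum in Lemma~\ref{lem:contract:Faa:2} converge with the required constant. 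The final constant $6d$ on the right-hand side of \eqref{e.ODE.flow.estimate:new} is then dictated by $(5/4) + (6d - 1)\cdot(1/d) < 6d$ (for the $n=1$ contribution from $\Id$) and by the closure of the induction at higher $n$.
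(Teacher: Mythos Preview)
Your overall architecture---differentiate the flow ODE, apply Fa\`a di Bruno, isolate the top-order term for Gr\"onwall, and close an induction with a time-growing radius---matches the paper. But your choice of resummation lemma introduces a genuine gap.

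When you plug the inductive hypothesis $\snorm{\nabla Y(s,\cdot)}_{m,R_\sharp(s)} \leq 6d-1$ into the Fa\`a di Bruno expansion of $\partial^{\bb}(\f\circ X)$, the relevant parameter in Lemma~\ref{lem:contract:Faa:2} is \emph{not} $dC_\f R_\f/R_\sharp$ but rather $d\, C_g R_\f$, where $C_g$ is the uniform bound on $\snorm{X}_{m,R_\sharp}$ implied by your hypothesis. A short computation gives $C_g \lesssim (6d-1)/R_\sharp$, hence $d\, C_g R_\f \gtrsim (6d-1)/8$, which is of order one. Lemma~\ref{lem:contract:Faa:2} then yields a factor $(1+dC_gR_\f)^{n}$ on the right-hand side; this grows geometrically in $n$, whereas the only compensating gain from the time integration $\int_0^t R_\sharp(s)^{n+1}\,ds$ is a single factor of $1/(n+2)$. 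The induction therefore does not close for $n$ beyond a fixed threshold, and your schematic inequality with the $\tfrac12\sup$ term cannot be obtained.

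The paper avoids this by propagating an \emph{$n$-dependent} inductive hypothesis,
\[
\snorm{X(t,\cdot)}_{n,\,8dR_\f(1+8dC_\f R_\f|t|)} \;\leq\; (-1)^{n-1}\binom{1/2}{n}\frac{1}{dR_\f}\,,
\]
and then uses Lemma~\ref{lem:contract:Faa:1} (not Lemma~\ref{lem:contract:Faa:2}) to resum the Fa\`a di Bruno expression \emph{exactly}: the specific weights $(-1)^{n-1}\binom{1/2}{n}$ are precisely those for which the identity in Lemma~\ref{lem:contract:Faa:1} applies, producing $2(n+1)!\binom{1/2}{n+1}$ on the right. After Gr\"onwall and the time integration (which gives the needed $1/(B(n+1))$), the bound at level $n$ closes with the desired constant. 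The stated estimate~\eqref{e.ODE.flow.estimate:new} is then read off from this stronger inductive bound using $4(-1)^{n-1}\binom{1/2}{n}\leq 1/n$. The moral is that for this nonlinear flow estimate, the resummation must be tailored to a decaying inductive weight; a uniform constant hypothesis is too crude.
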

\begin{proof}[{Proof of Proposition~\ref{p.ODE.flow}}]
It turns out that \eqref{e.ODE.flow.estimate} is not convenient for setting up an induction scheme. Instead, we will inductively propagate 
\begin{align}
\sup_{t\in [-T,T]} \snorm{ \flow(t,x)  }_{n,  8 d R_\f (1+ B |t|)}
\leq  (-1)^{n-1} \binom{\nicefrac 12}{n}   \frac{1}{d R_\f}  \,, 
\quad \mbox{with} \quad   B= 8 d C_\f R_\f \,,
\label{eq:Dn:Psi:induction} 
\end{align}
uniformly for $t\in [-T,T]$, for all $1\leq n \leq N$.  Without loss of generality we prove \eqref{eq:Dn:Psi:induction} only for $0 \leq t \leq T$. The  bound claimed in  \eqref{e.ODE.flow.estimate}  follows from \eqref{eq:Dn:Psi:induction} since $4 (-1)^{n-1} \binom{\nicefrac 12}{n}  \leq \nicefrac 1n$ holds for all $n\geq 1$, and by the definition of $T$
we have $1 + B T \leq 3$. Indeed, 
\begin{align*}
\snorm{ \flow(t,x)  }_{n,  8 d R_\f (1+ B |t|)}
\leq  \frac{d (n+1)^3}{(n+2)^2} \cdot 24 d R_\f \cdot \frac{1}{4 n d R_\f} 
\leq 6d 
\end{align*}

\smallskip

For $n=1$, we verify \eqref{eq:Dn:Psi:induction} by using \eqref{eq:grad:psi}. This amounts to checking 
\begin{align}
\frac{4(1 + |t| d C_\f R_\f) }{8 d R_\f (1+ B |t|)}  \leq \frac{1}{2d R_\f}  
\end{align}
which holds since $B\geq d C_\f R_\f$.

\smallskip

Next, we fix $n \geq 2$ and consider a multi-index $\bb$ with $\abs{\bb} = n$.  By Proposition~\ref{prop:multi:Faa}, we have 
\begin{align}
\partial_t \partial^{\bb} \flow_j 
&= \partial^{\bb} \flow_k (\partial_k \f_j) \circ \flow
+ \bb! \sum_{2\leq |\aa|\leq n} (\partial^{\aa} \f_j)\circ \flow  \sum_{s=1}^n \sum_{p_s(\bb,\aa)} \prod_{j=1}^s \frac{\left( \partial^{\lll_j} \flow \right)^{\kk_j}}{(\kk_j !) (\lll_j !)^{|\kk_j|}}
\notag\\
&=: \partial^{\bb} \flow_k (\partial_k \f_j) \circ \flow
+ E_{\rm old}
\,. 
\label{eq:chain:rule:steroids}
\end{align}
In \eqref{eq:chain:rule:steroids} we have singled out the term $\partial^{\aa} \f_j$ when $\abs{\aa} = 1$; this is because only this term comes paired with a derivative of $\flow$ which is exactly of order $n$; and indeed one directly verify that for each summand in $E_{\rm old}$, we have $1 \leq \abs{\lll_j} \leq n-1$. This expression is thus suitable for an inductive estimate. From \eqref{e.bear.salmon.1},  \eqref{eq:Dn:Psi:induction},   the definition of the partition set in \eqref{eq:partition:set}, the identity provided by Lemma~\ref{lem:contract:Faa:1}, and the bound given in \eqref{e.vomit.again.3},  we deduce that 
\begin{align}
\abs{E_{\rm old}} 
&\leq \bb! \sum_{2\leq |\aa|\leq n}   \frac{C_\f  R_\f^{\abs{\aa}} \abs{\aa}!}{(|\aa|+1)^2} \sum_{s=1}^n \sum_{p_s(\bb,\aa)} \prod_{j=1}^s \frac{\left( (-1)^{\abs{\lll_j}-1} \binom{\nicefrac 12}{\abs{\lll_j}} \frac{\abs{\lll_j}!}{(\abs{\lll_j}+1)^2} \frac{1}{d R_\f} ((8 d R_\f)(1  + B t))^{\abs{\lll_j}}\right)^{\abs{\kk_j}}}{(\kk_j !) (\lll_j !)^{|\kk_j|}} 
\notag\\
&\leq C_\f \frac{(-1)^n (8 d R_\f)^n \left(1 +    B t\right)^n}{(n+1)^2} \bb! \sum_{1\leq |\aa|\leq n}  (- d)^{-\abs{\aa}} \abs{\aa}! \sum_{s=1}^n \sum_{p_s(\bb,\aa)} \prod_{j=1}^s \frac{\left( \binom{\nicefrac 12}{\abs{\lll_j}} \abs{\lll_j}!    \right)^{\abs{\kk_j}}}{(\kk_j !) (\lll_j !)^{|\kk_j|}} 
\notag\\
&\leq 2 C_\f \frac{(-1)^n (8 d R_\f)^n \left(1 +    B t\right)^n}{(n+1)^2}  (n+1)! \binom{\nicefrac 12}{n+1}
\label{eq:E:old:bound}
\,.
\end{align}
With \eqref{eq:E:old:bound} in hand, we return to \eqref{eq:chain:rule:steroids}, to which we apply Gr\"onwall's inequality (recall that $\partial^{\bb} \flow|_{t=0} =0$ since $\abs{\bb}\geq 2$) and deduce that
\begin{align*}
\norm{\partial^{\bb}\flow (t,\cdot) }_{L^\infty_x}
&\leq \exp\left(T \norm{\nabla \f}_{L^\infty_{t,x}}\right)  \int_0^t \abs{E_{\rm old}(s)}   ds
\notag\\
&\leq e^{\frac{1}{16}} 2 C_\f \frac{(-1)^n (8 d R_\f)^n}{(n+1)^2}  (n+1)! \binom{\nicefrac 12}{n+1}  \int_0^t  \left( 1 +   B s\right)^n ds
\notag\\
&= e^{\frac{1}{16}} 2 C_\f \frac{(-1)^n (8 d R_\f)^n}{(n+1)^2}  (n+1)! \binom{\nicefrac 12}{n+1}  \frac{\left( 1 +    B t\right)^{n+1}}{  B (n+1)} 
\,.
\end{align*}
The   bound \eqref{eq:Dn:Psi:induction} at level $n$ now follows once we establish 
\begin{align*}
&\quad e^{\frac{1}{16}} 2 C_\f (-1)^n  \binom{\nicefrac 12}{n+1} \frac{1+B T}{B} \leq   (-1)^{n-1} \binom{\nicefrac 12}{n}  \frac{1}{d R_\f} 
\,.
\end{align*}
Observing that $ (-1) \binom{\nicefrac 12}{n+1} \binom{\nicefrac 12}{n}^{-1} = \frac{n-1/2}{n+1} \leq 1$, and appealing to the definition of $T$ in \eqref{e.bear.salmon.2}, which gives $1 + B T \leq 1 + B/(4d C_\f R_\f)$, it follows that the above inequality is implied by the bound
\begin{align*}
\frac{e^{\frac{1}{16}}}{2} \left( 1 + \frac{B}{4 d C_\f R_\f} \right) \leq \frac{B}{4 d C_\f R_\f} \,.
\end{align*}
The above estimate now is clearly true by the definition of $B$ in \eqref{eq:Dn:Psi:induction}. This concludes the proof of the inductive step for~\eqref{eq:Dn:Psi:induction}, and thus of the proposition. 
\end{proof}

\section{Ergodic lemmas for periodic functions}
\label{a.ergodic}

\subsection{Some basic ergodic lemmas in one dimension}

\begin{lemma}[Basic $L^1$ ergodic lemma]
\label{l.averages}
Assume the following:
\begin{itemize}

\item $f:\Rd \to \R$ is a $\Zd$--periodic function satisfying, for given constants~$C_f>0$ and~$r\in (0,1]$, the quantitative analyticity condition
\begin{align}
\label{e.ass.f.anal}
\big\langle \bigl| \nabla^nf \bigr| \big\rangle 
\leq 
\frac{C_f n!}{r^n}
\,, \qquad \forall n\in\N\,.
\end{align}

\item
$g\in L^1_{\mathrm{loc}}(\Rd)$ is a~$N^{-1} \Zd$--periodic function for some given integer~$N \in\N$ with~$N^{-1} \leq r$.
\end{itemize}
Then there exists a constant~$C(d)<\infty$ such that
\begin{align}
\label{e.expdecay}
\bigl| \bigl\langle fg \bigr\rangle - \bigl\langle f \bigr\rangle\, \bigl\langle g \bigr\rangle \bigr|
\leq
C C_f \bigl\langle |g|\bigr\rangle   \exp\biggl( {-} \frac{Nr}{C} \biggr). 
\end{align}
\end{lemma}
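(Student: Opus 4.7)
\medskip

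\noindent\emph{Proof proposal.} The plan is a straightforward Fourier argument on the torus. Expand both functions in Fourier series: since $f$ is $\Z^d$-periodic we write $f = \sum_{k\in\Z^d} \hat f(k)\, e^{2\pi i k\cdot x}$, and since $g$ is $N^{-1}\Z^d$-periodic its Fourier coefficients are supported on $N\Z^d$, namely $g = \sum_{k\in N\Z^d} \hat g(k)\, e^{2\pi i k\cdot x}$. Parseval's identity on $\TT^d$ therefore gives
\begin{equation*}
\langle fg\rangle - \langle f\rangle\langle g\rangle = \sum_{k\in N\Z^d\setminus\{0\}} \hat f(k)\,\overline{\hat g(k)}\,,
\end{equation*}
so everything reduces to bounding $|\hat f(k)|$ for $|k|\geq N$ and $|\hat g(k)|$ uniformly.

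The second of these is trivial: $|\hat g(k)| \leq \langle |g|\rangle$ for every $k$. The first requires exploiting the analyticity hypothesis~\eqref{e.ass.f.anal}. For any multi-index $\aa$ of order $n$ we have $(2\pi i k)^{\aa}\hat f(k) = \widehat{\partial^{\aa} f}(k)$, and hence $(2\pi)^n|k^{\aa}|\,|\hat f(k)| \leq \langle|\partial^{\aa} f|\rangle$. Taking the multi-index $\aa$ with $\alpha_j = n$ for the coordinate $j$ realizing $|k_j| = \|k\|_\infty$, and using $|k| \leq d^{1/2}\|k\|_\infty$, one obtains
\begin{equation*}
|\hat f(k)| \leq \frac{C_f\, n!\,(d^{1/2})^n}{(2\pi r|k|)^n}, \qquad \forall n\in\N.
\end{equation*}
Optimizing over $n$ (choose $n \approx 2\pi r|k|/(ed^{1/2})$ and apply Stirling $n! \leq e\, n^{n+1/2} e^{-n}$) yields $|\hat f(k)| \leq CC_f\exp(-c\,r|k|)$ for some $c=c(d)>0$ and all $k\neq 0$.

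Combining these bounds,
\begin{equation*}
\bigl|\langle fg\rangle - \langle f\rangle\langle g\rangle\bigr| \leq CC_f\,\langle|g|\rangle \sum_{k\in N\Z^d\setminus\{0\}} e^{-c r|k|} = CC_f\,\langle|g|\rangle \sum_{j\in\Z^d\setminus\{0\}} e^{-crN|j|}.
\end{equation*}
Since by hypothesis $rN\geq 1$, so $crN\geq c$, the last sum is comparable to its smallest term and is bounded by $Ce^{-crN}$, giving~\eqref{e.expdecay}. The only mildly technical step is the optimization producing the exponential decay of $\hat f(k)$; everything else is elementary Fourier analysis. No smoothness assumption on $g$ is needed because we never differentiate $g$—its role is purely to restrict the Fourier support of the remainder sum to frequencies of magnitude at least $N$.
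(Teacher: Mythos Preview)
Your proof is correct and follows essentially the same route as the paper's: both use Parseval together with the fact that the Fourier support of $g$ lies in $N\Z^d$, bound $|\hat g(k)|\leq\langle|g|\rangle$, convert the analyticity hypothesis on $f$ into exponential decay of $\hat f(k)$, and then sum. The only difference is that the paper relegates the derivation of $|\hat f(k)|\leq CC_f e^{-cr|k|}$ to ``a simple exercise,'' whereas you actually carry it out via the Stirling optimization.
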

\begin{proof}[Proof of Lemma~\ref{l.averages}]
We have that 
\begin{align*}
\langle fg \rangle - \langle f \rangle\langle g \rangle = \big\langle f \bigl( g - \langle g \rangle \bigr)  \big\rangle = \big\langle \bigl( f - \langle f \rangle \bigr) \bigl( g - \langle g \rangle \bigr)  \big\rangle
\,,
\end{align*} 
where, recall that $\average{\cdot}$ represents the average on $[0,1]^d$. Using Parseval's identity, and denoting by $\widehat{f}_k$ and $\widehat{g}_k$ the $k^{th}$ Fourier series coefficients of~$f$ and~$g$, respectively, we have 
\begin{align*}
 \big\langle \bigl( f - \langle f \rangle \bigr) \bigl( g - \langle g \rangle \bigr)  \big\rangle
 = C \sum_{k\in \Zd_*} \widehat f_k \; \overline{\widehat g_k}
\end{align*}
where $C$ is a dimensional constant (taking into account factors of $(2\pi)^d$), and $\Zd_* = \Zd \setminus \{0\}$. Moreover, since $g - \langle g \rangle$ is a zero mean $N^{-1} \Zd$--periodic function, all its {\em nontrivial} Fourier series coefficients $\widehat{g}_k$ have the property that $k$ is a nonzero integer multiple of $N$. Using this information, and letting $\tau >0$, we thus obtain that 
\begin{equation}
|\langle fg \rangle - \langle f \rangle\langle g \rangle|
\leq C \sum_{k = N \ell, \ell \in \Zd_*} e^{\tau |k|} |\widehat f_k| e^{-\tau |k|} |\widehat g_k| \,.
\label{eq:averages:temp:1}
\end{equation}
A simple exercise  shows that there exists a constant $C>0$ (which only depends on the dimension $d$) such that for $\tau = \nicefrac{r}{C}$, the condition \eqref{e.ass.f.anal} implies
\begin{equation*}
 e^{(\nicefrac{r}{C}) |k|} |\widehat f_k| \leq C C_f 
\end{equation*}
for all $k \in \Zd$. Moreover, we trivially have $|\widehat{g}_k|\leq C \average{|g|}$ uniformly in $k$. Thus, by \eqref{eq:averages:temp:1}  
\begin{equation*}
|\langle fg \rangle - \langle f \rangle\langle g \rangle|
\leq C C_f \sum_{k = N \ell, \ell \in \Zd_*}   e^{- (\nicefrac{r}{C})|k|} 
\leq C C_f \sum_{|\ell| =1}^\infty |\ell|^{d-1}  e^{- (\nicefrac{r N}{C})|\ell|} 
\end{equation*}
from which \eqref{e.expdecay} follows since $Nr \geq 1$.
\end{proof}

\begin{remark}[$L^2$ ergodic estimate]
\label{r.averages.square}
We also use the following variant of Lemma~\ref{l.averages}:
there exists a constant~$C(d)<\infty$ such that if~$f:\Rd\to \R$ is $\Zd$--periodic and satisfies
\begin{align}
\label{e.ass.f.anal.2}
\big\langle \left| \nabla^nf \right|^2 \big\rangle^{\nicefrac 12}
\leq 
\frac{C_f n!}{r^n}
\,,
\end{align}
and $g:\Rd\to \R$ is $N^{-1}\Zd$ periodic with $\langle |g|^2 \rangle <\infty$ and $Nr \geq 1$, then we have the bound
\begin{align}
\label{e.averages.square}
\left| 
\langle |f|^2 |g|^2 \rangle  
- 
\langle |f|^2 \rangle \langle |g|^2\rangle \right| 
\leq 
C C_f^2  \langle |g|^2\rangle \exp\left( -\frac{Nr}{C} \right). 
\end{align}
Indeed, we may just apply Lemma~\ref{l.averages} with $f \mapsto f^2$, $g\mapsto g^2$ and $r\mapsto \nicefrac{r}{2}$, because the Leibniz rule and assumption \eqref{e.ass.f.anal} on $f$ give
\begin{align*}
\big\langle \left| \nabla^n (f^2) \right| \big\rangle 
\leq 
\sum_{j=0}^n \binom{n}{j} 
\big\langle \left|  \nabla^j f \right|^2 \big\rangle^{\nicefrac12} 
\big\langle \left|  \nabla^{n-j} f \right|^2 \big\rangle^{\nicefrac12}
\leq C_f^2 \frac{(n+1)!}{r^n}\leq C_f^2 \frac{n!}{(\nicefrac{r}{2})^n}
 \,.
\end{align*}
\end{remark}

\begin{lemma}[$L^1$ and $L^2$ ergodic lemma with flows]
\label{l.flow.averages}
Assume the following: 

\begin{itemize}

\item $X \colon \Rd\to\Rd$ is a~$\Zd$-periodic, volume-preserving diffeomorphism
satisfying, for given constants~$C_X>0$ and~$R > 0$, the quantitative analyticity condition 
\begin{equation}
\label{e.ass.X.anal}
\norm{\nabla^n X}_{L^\infty(\Rd)} 
\leq   
C_X n! R^n  \,,
\quad \forall n\in\N\,.
\end{equation}

\item $f:\Rd \to \R$ is a $\Zd$--periodic function satisfying, for given constants~$C_f>0$ and~$r\in (0,1]$, the quantitative analyticity condition
\begin{equation}
\label{e.ass.f.anal.flows}
\big\langle \bigl| \nabla^nf \bigr| \big\rangle 
\leq 
\frac{C_f n!}{r^n}
\,, \qquad \forall n\in\N\,.
\end{equation}

\item
$g\in L^1_{\mathrm{loc}}(\Rd)$ is a~$N^{-1} \Zd$--periodic function for some given integer~$N \in\N$ satisfying
\begin{equation}
\label{e.N.ergodic.constraint}
N r \geq R (r +  d C_X)\,.
\end{equation}
\end{itemize}
Then, there exists  $C(d)<\infty$ such that
\begin{equation}
\label{e.new.expdecay}
\bigl| \bigl\langle f \,\bigl( g\circ X^{-1} \bigr) \bigr\rangle - \bigl\langle f \bigr\rangle
\,\bigl\langle g \bigr \rangle \bigr|
\leq
C C_f \, \bigl\langle |g|\bigr\rangle   
\exp\biggl( - \frac{N r }{C R (r +  d C_X )} \biggr)
\,.
\end{equation}
Moreover, if we replace~\eqref{e.ass.f.anal.flows} by the stronger assumption
\begin{equation}
\label{e.ass.f.anal.2.again}
\big\langle \left| \nabla^nf \right|^2 \big\rangle^{\nicefrac 12}
\leq 
\frac{C_f n!}{r^n}
\,,
\end{equation}
and the assumption of~$g\in L^1_{\mathrm{loc}}(\Rd)$ by~$g\in L^2_{\mathrm{loc}}(\Rd)$, then we have the estimate
\begin{equation}
\label{e.useful.expdecay}
\bigl| 
\big\langle |f|^2 |g\circ X^{-1}|^2 \big\rangle  
- 
\bigl\langle |f|^2 \bigr\rangle\, \bigl\langle |g|^2\bigr\rangle \bigr| 
\leq 
C C_f^2 \, \bigl\langle |g|^2 \bigr\rangle 
\exp\left( - \frac{N r }{C R (r +  d C_X )} \right). 
\end{equation}
\end{lemma}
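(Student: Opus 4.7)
The plan is to reduce both \eqref{e.new.expdecay} and \eqref{e.useful.expdecay} to the flow-free statements of Lemma~\ref{l.averages} and Remark~\ref{r.averages.square} by ``pulling back'' along $X$. Since $X$ is a $\ZZ^d$-periodic, volume-preserving diffeomorphism, the change of variables $y=X^{-1}(x)$ preserves $[0,1]^d$ (modulo $\ZZ^d$) and the Lebesgue measure, hence
\begin{equation*}
\bigl\langle f \,(g\circ X^{-1})\bigr\rangle = \bigl\langle (f\circ X)\, g\bigr\rangle, \qquad \bigl\langle f\bigr\rangle = \bigl\langle f\circ X\bigr\rangle\,.
\end{equation*}
Setting $\tilde f := f\circ X$, which is again $\ZZ^d$-periodic, the task becomes to control $|\langle \tilde f g\rangle - \langle \tilde f\rangle\langle g\rangle|$ and $|\langle |\tilde f|^2|g|^2\rangle - \langle|\tilde f|^2\rangle\langle|g|^2\rangle|$. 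These are exactly the two inputs we would plug into Lemma~\ref{l.averages} and Remark~\ref{r.averages.square}, provided we can supply $\tilde f$ with a quantitative analyticity estimate of the type \eqref{e.ass.f.anal} (resp.~\eqref{e.ass.f.anal.2}) with the \emph{correct} effective radius.

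The heart of the proof is thus the following claim: there exists $c=c(d)>0$ such that, with
\begin{equation*}
r_{\mathrm{new}} := \frac{c\, r}{R(r+dC_X)}\,,
\end{equation*}
one has $\langle|\nabla^n\tilde f|\rangle \leq C_f\, n!\, r_{\mathrm{new}}^{-n}$ (and analogously with $L^2$ averages, under \eqref{e.ass.f.anal.2.again}). I would derive this by applying the multivariate Fa\`a di Bruno formula of Proposition~\ref{prop:multi:Faa} to $\tilde f=f\circ X$, exploiting the volume-preserving property to trade $L^1$ (or $L^2$) averages of $(\partial^\aa f)\circ X$ for those of $\partial^\aa f$, and then using the analyticity of $f$ together with $\|\partial^{\lll_j}X\|_{L^\infty}\leq C_X|\lll_j|!\,R^{|\lll_j|}$. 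The resulting combinatorial sum is precisely the one dispatched by the re-summation identity of Lemma~\ref{lem:contract:Faa:2} (taking the parameter $R$ there to be $C_X/r$), yielding the geometric bound $\langle|\partial^\bb\tilde f|\rangle \leq C C_f\, |\bb|!\,[R(r+dC_X)/r]^{|\bb|}$.

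Once the analyticity estimate for $\tilde f$ is in hand, the hypothesis \eqref{e.N.ergodic.constraint} becomes exactly the admissibility condition $N\,r_{\mathrm{new}}\geq 1$ required by Lemma~\ref{l.averages}, and the exponent $\exp(-Nr_{\mathrm{new}}/C) = \exp(-cNr/(CR(r+dC_X)))$ matches the right-hand side of \eqref{e.new.expdecay} after adjusting the constant $C$. The $L^2$ bound \eqref{e.useful.expdecay} follows by the same route, using Minkowski's inequality in place of the triangle inequality inside the Fa\`a di Bruno expansion (so that $\langle|(\partial^\aa f)\circ X|^2\rangle^{1/2}=\langle|\partial^\aa f|^2\rangle^{1/2}$) and then invoking Remark~\ref{r.averages.square}.

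The only real obstacle is bookkeeping: keeping the exponents compatible with Lemma~\ref{lem:contract:Faa:2} requires replacing the bound $\|\nabla^n X\|_{L^\infty}\leq C_X n!R^n$ by its $\snorm{\cdot}_{n,R'}$-form with a slightly enlarged radius $R'\simeq R$, and carefully distinguishing the $n=0$ contribution (which only enters through the mean of $\tilde f$ and drops out after centering). Once this is organized, the conclusion is a routine combination of the Fa\`a di Bruno re-summation with Lemma~\ref{l.averages}, and the constant $C(d)$ tracks solely through the dimensional factors in Lemmas~\ref{lem:contract:Faa:2} and~\ref{l.averages}.
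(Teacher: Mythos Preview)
Your proposal is correct and matches the paper's approach: pull back via the volume-preserving change of variables to reduce to $\tilde f = f\circ X$ and $g$, establish analyticity of $\tilde f$ with effective radius $\tilde r = r/(R(r+dC_X))$, and then invoke Lemma~\ref{l.averages} (resp.\ Remark~\ref{r.averages.square}). The only cosmetic difference is that the paper packages the analyticity of $\tilde f$ as a direct citation of Proposition~\ref{prop:compose:analytic}, whereas you inline the Fa\`a di Bruno + Lemma~\ref{lem:contract:Faa:2} argument---and in doing so you are arguably more careful about the $L^1$/$L^2$ versus $L^\infty$ distinction, correctly using volume preservation to control $\langle|(\partial^\aa f)\circ X|^p\rangle$ rather than its sup norm.
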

\begin{proof}[Proof of Lemma~\ref{l.flow.averages}]
Let $\tilde f = f \circ X$, which is thus also $\Zd$-periodic and real-analytic. By the assumption \eqref{e.ass.f.anal} for $f$, \eqref{e.ass.X.anal} for $X$, and the composition estimate in Proposition~\ref{prop:compose:analytic}, we have
\begin{align}
 \| \nabla^n \tilde f  \|_{L^\infty(\Rd)} 
\leq 
\frac{C_f n!}{\tilde r^n}, 
\qquad\mbox{where} \qquad
\tilde r =  \frac{r }{R(r +  d C_X)} \,.
\label{e.ass.tilde.f.anal} 
\end{align}
Now since $X$ and $X^{-1}$ are volume preserving, we have that 
\begin{align*}
\average{f g\circ X^{-1}} = \average{f\circ X \, g} = \langle \tilde f \, g \rangle
\,.
\end{align*}
Thus, by \eqref{e.ass.tilde.f.anal} we may apply Lemma~\ref{l.averages} to the functions $\tilde f$ and $g$, and deduce that 
\begin{align}
\label{e.new.expdecay.tilde}
\left| \langle \tilde f \,g  \rangle - \langle \tilde f \rangle\langle g \rangle \right|
\leq
C C_f \langle |g|\rangle   \exp\left( - \frac{N \tilde r}{C} \right)
\,,
\end{align}
for $C=C(d)$, as soon as $N \tilde r \geq 1$. The bound \eqref{e.new.expdecay} now follows by appealing again to the volume preserving nature of $X$, which gives $\langle \tilde f \rangle = \langle  f \circ X \rangle = \average{f}$, and spelling out the $\tilde r$ in \eqref{e.ass.tilde.f.anal}, and using that by assumption we have $N \tilde r \geq 1$. The proof of \eqref{e.useful.expdecay} is the same as the one outlined in Remark~\ref{r.averages.square}, applied to $g$ and $\tilde f$. 
\end{proof}

\begin{lemma}[$H^{-1}$ ergodic lemma]
\label{l.Hminusone.ergodic}
Assume that~$f$ and $g$ satisfying the hypotheses of Remark~\ref{r.averages.square}, and additionally that $\average{fg} = 0$ and $\average{g}= 0$. Then, exists a constant $C = C(d)<\infty$ such that
\begin{align}
\norm{ f g}_{\dot{H}^{-1}(\TT^d)} 
\leq \frac{C}{N}  \left\langle |f|^2 \right\rangle^{\nicefrac12}  \left\langle |g|^2 \right\rangle^{\nicefrac12}  +   C_f  \left\langle |g|^2 \right\rangle^{\nicefrac12} \exp\left( - \frac{Nr}{C} \right)
\,.
\label{e.Hminusone.ergodic}
\end{align}
\end{lemma}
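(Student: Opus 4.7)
The plan is to estimate $\|fg\|_{\dot H^{-1}(\TT^d)}^2 = c \sum_{k \neq 0} |k|^{-2} |\widehat{fg}_k|^2$ by splitting the Fourier support of $fg$ at $|k| = N/2$, handling the high--frequency and low--frequency portions by completely different mechanisms. The starting point is the convolution identity
\begin{equation*}
\widehat{fg}_k \;=\; \sum_{j \in \ZZ^d \setminus \{0\}} \widehat f_{k - Nj}\, \widehat g_{Nj}\,,
\end{equation*}
which follows because $g$ is $N^{-1}\ZZ^d$--periodic with $\langle g\rangle = 0$, so $\widehat g_\ell = 0$ unless $\ell \in N\ZZ^d \setminus \{0\}$, and in particular $\sum_{j \neq 0} |\widehat g_{Nj}|^2 = \langle |g|^2\rangle$ by Parseval.

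For the high--frequency portion $|k| > N/2$, I would simply use $|k|^{-2} \leq 4/N^2$ and the trivial Parseval bound to obtain $\sum_{|k|>N/2} |k|^{-2}|\widehat{fg}_k|^2 \leq 4N^{-2} \|fg\|_{L^2}^2$, after which Remark~\ref{r.averages.square} applied directly to $|f|^2|g|^2$ gives
\begin{equation*}
\|fg\|_{L^2}^2 = \langle |f|^2|g|^2\rangle \leq \langle|f|^2\rangle\langle|g|^2\rangle + C C_f^2 \langle |g|^2\rangle e^{-Nr/C}\,.
\end{equation*}
This already accounts for the main term $\tfrac{C}{N}\langle |f|^2\rangle^{\nicefrac12}\langle|g|^2\rangle^{\nicefrac12}$ of the claim, together with an exponentially small correction.

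For the low--frequency portion $0 < |k| \leq N/2$, the key observation is that for every $j \neq 0$ we have $|k - Nj| \geq N/2$, so by Cauchy--Schwarz in $j$ followed by Parseval on $g$,
\begin{equation*}
|\widehat{fg}_k|^2 \leq \Bigl(\sum_{j \neq 0} |\widehat f_{k - Nj}|^2\Bigr)\langle|g|^2\rangle \leq \Bigl(\sum_{|m| \geq N/2} |\widehat f_m|^2\Bigr)\langle|g|^2\rangle\,.
\end{equation*}
The analyticity hypothesis $\langle|\nabla^n f|^2\rangle^{\nicefrac12} \leq C_f n!/r^n$ translates, via $\sum_m |2\pi m|^{2n}|\widehat f_m|^2 \leq (C_f n!/r^n)^2$ and an optimal choice $n \sim Nr/C$ (using Stirling), into $\sum_{|m|\geq N/2} |\widehat f_m|^2 \leq C C_f^2 e^{-Nr/C}$. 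Hence every such Fourier coefficient is itself exponentially small, and the resulting contribution to $\|fg\|_{\dot H^{-1}}^2$ is $C C_f^2 e^{-Nr/C}\langle|g|^2\rangle \cdot \sum_{0 < |k| \leq N/2}|k|^{-2}$.

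The main (and essentially the only) subtlety is that the dimensional sum $\sum_{0 < |k| \leq N/2} |k|^{-2}$ diverges logarithmically in $N$ when $d = 2$, which is precisely the case used in the main body of the paper. This $\log N$ factor must be absorbed into the exponential; this is possible thanks to the standing hypothesis $Nr \geq 1$ inherited from Remark~\ref{r.averages.square}, which allows us to bound $\log(2 + N) \cdot e^{-Nr/C}$ by $e^{-Nr/C'}$ after slightly enlarging $C$. Once this is done, adding the two contributions and taking a square root (using $\sqrt{a+b} \leq \sqrt a + \sqrt b$) delivers exactly the bound
\begin{equation*}
\|fg\|_{\dot H^{-1}(\TT^d)} \leq \frac{C}{N}\langle|f|^2\rangle^{\nicefrac12}\langle|g|^2\rangle^{\nicefrac12} + C C_f \langle|g|^2\rangle^{\nicefrac12} \exp\Bigl(-\frac{Nr}{C}\Bigr)\,,
\end{equation*}
with the hypothesis $\langle fg\rangle = 0$ ensuring the $k=0$ Fourier mode is absent so that the $\dot H^{-1}$ norm is finite.
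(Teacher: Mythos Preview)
Your high–frequency estimate ($|k|>N/2$) is fine and matches the paper's in spirit. The gap is in the low–frequency piece. Your claim that $\log(2+N)\cdot e^{-Nr/C}$ can be bounded by $e^{-Nr/C'}$ using only $Nr\ge 1$ is false: take $N=e^M$ and $r=e^{-M}$, so $Nr=1$; then $\log(2+N)\cdot e^{-Nr/C}\approx M e^{-1/C}\to\infty$, whereas $e^{-Nr/C'}=e^{-1/C'}$ stays bounded. The same objection applies (worse) in dimensions $d\ge 3$, where $\sum_{0<|k|\le N/2}|k|^{-2}\sim N^{d-2}$. Since the constant $C$ in the lemma is required to depend only on $d$, the argument as written does not prove the stated estimate.

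The fix is essentially what the paper does, and it is a small reorganization of your own computation. You already observed that for $0<|k|\le N/2$ every contributing Fourier mode of $f$ in the convolution has frequency $\ge N/2$; this says precisely that $\Proj_{0<|\cdot|\le N/2}(fg)=\Proj_{0<|\cdot|\le N/2}\bigl((\Proj_{>N/2}f)\,g\bigr)$. Instead of bounding each coefficient separately and then summing $|k|^{-2}$, bound the whole $\dot H^{-1}$ norm by the $L^2$ norm via Poincar\'e:
\[
\bigl\|\Proj_{0<|\cdot|\le N/2}(fg)\bigr\|_{\dot H^{-1}}
\le C\bigl\|(\Proj_{>N/2}f)\,g\bigr\|_{L^2}\,,
\]
and then apply Remark~\ref{r.averages.square} to the pair $(\Proj_{>N/2}f,\,g)$ (the projected $f$ obeys the same analyticity bound). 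This yields $\bigl\|(\Proj_{>N/2}f)\,g\bigr\|_{L^2}\le\bigl(\|\Proj_{>N/2}f\|_{L^2}+CC_f e^{-Nr/C}\bigr)\|g\|_{L^2}$, and your own tail estimate $\|\Proj_{>N/2}f\|_{L^2}\le CC_f e^{-Nr/C}$ finishes the job with no divergent sum. This is exactly the decomposition the paper uses: split $f$, not $fg$, into low and high frequencies.
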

\begin{proof}[Proof of Lemma~\ref{l.Hminusone.ergodic}]
Internally to this proof, for a $\TT^d$ periodic function $\varphi$ we write $\Proj_{\leq L} \varphi$ to denote the truncation of the Fourier series of $\varphi$ at frequencies $k \in \Zd$ such that $|k| \leq L$; accordingly, define $\Proj_{>L} \varphi = \varphi - \Proj_{\leq L} \varphi$. We use similar notations for $\Proj_{<L}$ and $\Proj_{\geq L}$.  We will use two useful identities. First, since $g$ has zero mean and is $N^{-1}\Zd$ periodic, then all its active frequencies are nonzero $\Zd$-multiples of $N$, and thus 
\begin{equation}
g = \Proj_{\geq N} g 
\,.
\label{eq:temp:use:1}
\end{equation}
Second, using the triangle inequality in the frequency domain combined with \eqref{eq:temp:use:1} we may write
$
\Proj_{\leq \nicefrac{N}{2}} f \; g 
= \Proj_{\leq \nicefrac{N}{2}} f \; \Proj_{\geq N} g
= \Proj_{\geq \nicefrac{N}{2}} \left( \Proj_{\leq \nicefrac{N}{2}} f \; \Proj_{\geq N} g \right)
$.
Thus, combining \eqref{eq:temp:use:1} and the previous identity, we may rewrite
\begin{equation}
f  g =  \Proj_{\geq \nicefrac{N}{2}} \left( \Proj_{\leq \nicefrac{N}{2}} f \; \Proj_{\geq N} g \right) + \Proj_{> \nicefrac{N}{2}} f \; g 
\,.
\label{eq:temp:use:3}
\end{equation}
In particular, $\average{fg}=0$ implies that $\average{ \Proj_{> \nicefrac{N}{2}} f \; g} = 0$.

For brevity of notation, let $\Lambda = (-\Delta)^{\nicefrac 12}$, so that by \eqref{eq:temp:use:3} we have
\begin{align}
 \norm{ f g}_{\dot{H}^{-1}(\TT^d)} 
 &= \norm{\Lambda^{-1}( f g)}_{L^2(\TT^d)}
  \notag\\
 &\leq \norm{\Lambda^{-1} \Proj_{\geq \nicefrac{N}{2}} \left( \Proj_{\leq \nicefrac{N}{2}} f \; \Proj_{\geq N} g \right)}_{L^2(\TT^d)} 
 + \norm{\Lambda^{-1}(\Proj_{> \nicefrac{N}{2}} f \; g )}_{L^2(\TT^d)} 
 \notag\\
 &\leq \frac{C}{N} \norm{ \Proj_{\leq \nicefrac{N}{2}} f \;  g  }_{L^2(\TT^d)} 
 + C \norm{ \Proj_{> \nicefrac{N}{2}} f \; g  }_{L^2(\TT^d)} \,,
 \label{eq:temp:use:4}
\end{align}
where $C$ is a universal constant (related to the $2\pi$ which we are not writing anywhere).
In the last inequality above we have used two bounds: $\norm{\Lambda^{-1} \Proj_{\geq L}}_{L^2 \to L^2} \leq C L^{-1}$, which is a consequence of Plancherel; and $\norm{\Lambda^{-1} \Proj_{>0}}_{L^2 \to L^2} \leq C $, which holds since for $\Zd$ periodic functions we have $\Proj_{>0} = \Proj_{\geq 1}$.  

\smallskip

Let us first inspect the first term on the right side of \eqref{eq:temp:use:4}. Since $\nabla^n$ commutes with $\Proj_{\leq L}$ and since $\norm{ \Proj_{\leq L}}_{L^2 \to L^2} \leq 1$, assumption \eqref{e.ass.f.anal.2} holds with $f$ replaced by $\Proj_{\leq \nicefrac{N}{2}} f$, with the same constants $C_f$ and $r$ (note, we can also replace $f$ by $\Proj_{> \nicefrac{N}{2}} f$, and this fact will be used later). Thus, we may apply the conclusion of Remark~\ref{r.averages.square} to the product of  $\Proj_{\leq \nicefrac{N}{2}} f$ and $g$, resulting in
\begin{equation*}
\norm{ \Proj_{\leq \nicefrac{N}{2}} f \;  g  }_{L^2(\TT^d)} 
\leq  \left(\norm{   f }_{L^2(\TT^d)} + C C_f \exp(-\nicefrac{N r}{C}) \right) \norm{ g}_{L^2(\TT^d)}
\end{equation*}
where $C = C(d)$. Similarly, we may apply the conclusion of Remark~\ref{r.averages.square} to the product of  $\Proj_{> \nicefrac{N}{2}} f$ and $g$ and deduce 
\begin{equation*}
\norm{ \Proj_{> \nicefrac{N}{2}} f \;  g  }_{L^2(\TT^d)} 
\leq  \left(\norm{\Proj_{> \nicefrac{N}{2}} f}_{L^2(\TT^d)} + C C_f \exp(-\nicefrac{N r}{C}) \right) \norm{ g}_{L^2(\TT^d)} 
\,.
\end{equation*}
Since $N\geq r^{-1} \geq 1$, combining the above two displays with \eqref{eq:temp:use:4} concludes the proof of \eqref{e.Hminusone.ergodic}, but only once we show that $\norm{\Proj_{> \nicefrac{N}{2}} f}_{L^2(\TT^d)}$  is exponential small in $- \nicefrac{Nr}{C}$.

\smallskip

For this purpose, we note that for any $\tau>0$, we have $\norm{\exp(- \tau \Lambda) \Proj_{> \nicefrac{N}{2}}}_{L^2 \to L^2} \leq \exp(- \nicefrac{N \tau}{C})$. On the other hand, by expanding the power series of~$\exp$ and appealing to \eqref{e.ass.f.anal.2}, we obtain, for every~$0< \tau < r$,
\begin{align*}
\norm{\exp(\tau \Lambda) f }_{L^2(\TT^d)}  
\leq \sum_{m\geq 0} \frac{\tau^m}{m!} \norm{\Lambda^m f}_{L^2(\TT^d)} 
\leq C_f \sum_{m\geq 0} \frac{\tau^m}{r^m}  = \frac{C_f}{1- \nicefrac{\tau}{r}}
\end{align*}
Thus, letting $\tau = \nicefrac{r}{C}$ for a suitable $C = C(d) \geq 2$, this paragraph concludes with
\begin{align*}
\norm{\Proj_{> \nicefrac{N}{2}} f}_{L^2(\TT^d)} \leq  C C_f \exp( - \nicefrac{N r}{C}) \,,
\end{align*}
thereby concluding the proof of the Lemma.
\end{proof}

\begin{remark}[$H^{-1}$ ergodic lemma with flows]
\label{r.Hminusone.flow}
Let~$f$ and $g$ be as in Lemma~\ref{l.Hminusone.ergodic}, and let $X$ be a periodic volume preserving analytic diffeomorphism as in Lemma~\ref{l.flow.averages}, with 
\begin{align*}
\norm{\nabla X - \Id}_{L^\infty(\TT^d)} + \norm{\nabla X^{-1} - \Id}_{L^\infty(\TT^d)} \leq \nicefrac 12
\,.
\end{align*}
If $\average{f \, g\circ X^{-1}} = \average{g} = 0$, 
then in analogy to how \eqref{e.averages.square} implies \eqref{e.useful.expdecay}, from \eqref{e.Hminusone.ergodic} we may deduce
\begin{align}
\norm{ f \, g\circ X^{-1}}_{\dot{H}^{-1}(\TT^d)} 
\leq \frac{C}{N}  \left\langle |f|^2 \right\rangle^{\nicefrac12}  \left\langle |g|^2 \right\rangle^{\nicefrac12}  +   C_f  \left\langle |g|^2 \right\rangle^{\nicefrac12} \exp\left( - \frac{Nr}{C R (r + d C_X)} \right)
\,.
\label{e.Hminusone.ergodic.flow}
\end{align} 
The argument goes as follows. As in the proof of Lemma~\ref{l.flow.averages}, define $\tilde f = f \circ X$, which is thus periodic and satisfies the quantitative analyticity estimates \eqref{e.ass.tilde.f.anal}. By duality, using that $X$ is volume preserving and $|\nabla X| \leq \nicefrac 32$, we have
\begin{align*}
\norm{ f \, g\circ X^{-1}}_{\dot{H}^{-1}(\TT^d)} 
&= \!\!\!  \sup_{\varphi \in C^\infty(\TT^d), \|\nabla \varphi\|_{L^2} \leq 1} \abs{\int f \, g \circ X^{-1} \, \varphi} 
\notag\\
&= \!\!\! \sup_{\varphi \in C^\infty(\TT^d), \|\nabla \varphi\|_{L^2} \leq 1} \abs{\int \tilde f  \, g   \, \varphi \circ X} 
\leq \sup_{\tilde \varphi \in C^\infty(\TT^d), \|\nabla \tilde \varphi\|_{L^2} \leq \nicefrac 32} \abs{\int \tilde f  \, g   \, \tilde \varphi } 
\leq C  \| \tilde f \, g\|_{\dot{H}^{-1}(\TT^d)} \,.
\end{align*}
Moreover, note that $\langle \tilde f \, g\rangle = \average{f \, g\circ X^{-1}} = 0$ by assumption, so we may directly apply Lemma~\ref{l.Hminusone.ergodic} to the pair $\tilde f$ and $g$, and deduce that \eqref{e.Hminusone.ergodic.flow} holds.
\end{remark}

\subsubsection*{\bf Acknowledgments}
S.A. was supported by NSF grants DMS-1954357 and DMS-2000200 and by  the Simons Programme at IHES during a sabbatical visit. 
V.V. was supported by the NSF CAREER
grant DMS-1911413. We thank an anonymous referee for pointing us to the references~\cite{Taylor,Aris} and the connection between the formula~\eqref{e.Khom.int} and Taylor dispersion.

{\small
\bibliographystyle{alpha}
\bibliography{enhance.bib}
}

\end{document}